\tikzstyle{map} = [->, font=\scriptsize]
\tikzstyle{linj} = [left hook->, font=\scriptsize]
\tikzstyle{rinj} = [right hook->, font=\scriptsize]
\tikzstyle{sur} = [->>, font=\scriptsize]
\tikzstyle{cell} = [double,double equal sign distance,-implies, shorten >= 3.75pt, shorten <= 3.75pt, font=\scriptsize]
\tikzstyle{eq} = [double,double equal sign distance]
\tikzstyle{ps} = [shorten >= 2pt]
\tikzstyle{iso} = [above, sloped, inner sep=1.5pt]
\tikzstyle{nat} = [above, sloped, inner sep=2pt]
\tikzstyle{desc} = [fill=white, inner sep=2pt]
\tikzstyle{dots} = [black, font=]
\tikzstyle{small} = [font=\scriptsize]
\tikzstyle{textbaseline} = [baseline=-2.8pt]
\tikzstyle{barred} = [decoration={markings, mark=at position 0.5 with {\draw[-] (0,-1.5pt) -- (0,1.5pt);}}, postaction ={decorate}]
\tikzstyle{math35} = [matrix of math nodes, row sep={3.25em,between origins}, column sep={3.5em,between origins}, text height=1.5ex, text depth=0.25ex, nodes in empty cells]
\tikzstyle{minimath} = [matrix of math nodes, row sep={3em,between origins}, column sep={3.25em,between origins}, font=\scriptsize, text height=1ex, text depth=0.25ex, nodes in empty cells]
\tikzstyle{scheme} = [textbaseline, x=1.6em, y=1.6em, yshift=-2.4em, font=\scriptsize, text depth=0ex, every node/.style={overlay}, execute at end picture = { \useasboundingbox ($(current bounding box.north west) + (0,0.4em)$) rectangle ($(current bounding box.south east) - (0,0.4em)$); }]
\def\slashedarrowfill@#1#2#3#4#5{%
  $\m@th\thickmuskip0mu\medmuskip\thickmuskip\thinmuskip\thickmuskip
   \relax#5#1\mkern-7mu%
   \cleaders\hbox{$#5\mkern-2mu#2\mkern-2mu$}\hfill
   \mathclap{#3}\mathclap{#2}%
   \cleaders\hbox{$#5\mkern-2mu#2\mkern-2mu$}\hfill
   \mkern-7mu#4$%
}
\def\rightslashedarrowfill@{%
  \slashedarrowfill@\relbar\relbar\mapstochar\rightarrow}
\newcommand\xslashedrightarrow[2][]{%
  \ext@arrow 0055{\rightslashedarrowfill@}{#1}{#2}}
\def\slashedrightarrow{\xslashedrightarrow{}}
\newcommand{\conc}{%
  \mathbin{
    \mathchoice
    {\raisebox{1ex}{\scalebox{.7}{$\frown$}}}
    {\raisebox{1ex}{\scalebox{.7}{$\frown$}}}
    {\raisebox{.7ex}{\scalebox{.5}{$\frown$}}}
    {\raisebox{.7ex}{\scalebox{.5}{$\frown$}}}
  }
}
\DeclareMathOperator*{\Oslash}{\scalerel*{\oslash}{\sum}}
\DeclareMathOperator{\cocart}{cocart}
\DeclareMathOperator{\cart}{cart}
\newtheorem{theorem}{Theorem}[section]
\newtheorem{lemma}[theorem]{Lemma}
\newtheorem{corollary}[theorem]{Corollary}
\newtheorem{proposition}[theorem]{Proposition}
\theoremstyle{definition}
\newtheorem{definition}[theorem]{Definition}
\theoremstyle{remark}
\newtheorem{remark}[theorem]{Remark}
\newtheorem{example}[theorem]{Example}
\providecommand{\corref}[1]{Corollary~\ref{#1}}
\providecommand{\defref}[1]{Definition~\ref{#1}}
\providecommand{\exref}[1]{Example~\ref{#1}}
\providecommand{\lemref}[1]{Lemma~\ref{#1}}
\providecommand{\propref}[1]{Proposition~\ref{#1}}
\providecommand{\thmref}[1]{Theorem~\ref{#1}}
\providecommand{\secref}[1]{Section~\ref{#1}}
\newcommand\defeq{\mathrel{\vcentcolon\Leftrightarrow}}
\renewcommand{\implies}{\Rightarrow}
\renewcommand{\iff}{\Leftrightarrow}
\providecommand{\dfn}{\coloneqq}
\providecommand{\of}{\circ}
\providecommand{\iso}{\cong}
\providecommand{\eq}{\simeq}
\providecommand{\brar}{\slashedrightarrow}
\providecommand{\xrar}[1]{\xrightarrow{#1}}
\providecommand{\xlar}[1]{\xleftarrow{#1}}
\providecommand{\xbrar}[1]{\xslashedrightarrow{#1}}
\providecommand{\Rar}{\Rightarrow}
\providecommand{\xRar}[1]{\xRightarrow{#1}}
\providecommand{\into}{\hookrightarrow}
\providecommand{\eps}{\varepsilon}
\DeclareMathOperator{\dash}{--}
\providecommand{\tens}{\otimes}
\providecommand{\ul}[1]{\underline{#1}{}}
\providecommand{\ull}[1]{\ul{\ul{#1}}{}}
\providecommand{\mf}[1]{\mathfrak{#1}}
\providecommand{\brcs}[1]{\lbrace #1 \rbrace}
\providecommand{\brks}[1]{\lbrack #1 \rbrack}
\providecommand{\pars}[1]{\left(#1\right)}
\providecommand{\bigpars}[1]{\bigl(#1\bigr)}
\providecommand{\lns}[1]{\lvert#1\rvert}
\providecommand{\angles}[1]{\langle#1\rangle}
\providecommand{\gen}[1]{\angles{#1}}
\providecommand{\NN}{\mathbb N}
\providecommand{\set}[1]{\brcs{#1}}
\providecommand{\djunion}{\sqcup}
\providecommand{\Ks}[1]{#1^\sharp} 
\providecommand{\conv}{\to} 
\providecommand{\convord}{\mathord\conv} 
\providecommand{\leqord}{\mathord\leq} 
\DeclareMathOperator{\downset}{\downarrow}
\providecommand{\natarrow}{\Rightarrow}
\providecommand{\map}[3]{#1\colon#2\to#3}
\providecommand{\nat}[3]{#1\colon#2\natarrow#3}
\providecommand{\cell}[3]{#1\colon#2\Rightarrow#3}
\providecommand{\hmap}[3]{#1\colon#2\slashedrightarrow#3}
\providecommand{\inv}[1]{{#1}^{-1}}
\DeclareMathOperator{\id}{id}
\DeclareMathOperator{\ob}{ob}
\DeclareMathOperator{\yon}{y}
\providecommand{\A}{\mathcal A}
\providecommand{\ladj}{\dashv}
\providecommand{\op}[1]{#1^\textup{op}}
\providecommand{\co}[1]{#1^\textup{co}}
\providecommand{\ps}[1]{\widehat{#1}}
\providecommand{\catvar}[1]{\mathcal{#1}}
\providecommand{\2}{\mathsf 2}
\providecommand{\A}{\catvar A}
\providecommand{\B}{\catvar B}
\providecommand{\D}{\catvar D}
\providecommand{\E}{\catvar E}
\providecommand{\J}{\catvar J}
\providecommand{\K}{\catvar K}
\renewcommand{\L}{\catvar L}
\providecommand{\V}{\catvar V}
\providecommand{\Gg}{\mathbb G}
\providecommand{\Ss}{\mathbb S}
\providecommand{\Set}{\mathsf{Set}}
\providecommand{\Cat}{\mathsf{Cat}}
\providecommand{\CAT}{\underline{\Cat}} 
\providecommand{\enCat}[1]{#1\text-\Cat}
\providecommand{\inCat}[1]{\Cat(#1)}
\providecommand{\twoCat}{2\text{-}\Cat}
\providecommand{\MonCat}{\mathsf{MonCat}}
\providecommand{\wkRepModTop}{\mathsf{wkRepModTop}}
\providecommand{\AugVirtDblCat}{\mathsf{AugVirtDblCat}}
\providecommand{\VirtDblCat}{\mathsf{VirtDblCat}}
\providecommand{\Tens}{\bigotimes}
\providecommand{\Rel}{\mathsf{Rel}}
\providecommand{\Span}[1]{\mathsf{Span}(#1)}
\providecommand{\Mat}[1]{#1\text-\mathsf{Mat}}
\providecommand{\Mod}{\mathsf{Mod}}
\providecommand{\ModRel}{\mathsf{ModRel}}
\providecommand{\Prof}{\mathsf{Prof}}
\providecommand{\enProf}[1]{#1\text-\Prof}
\providecommand{\inProf}[1]{\Prof(#1)}
\providecommand{\Rep}{\mathsf{Rep}}
\providecommand{\opRep}{\mathsf{opRep}}
\providecommand{\Alg}[1]{#1\text-\mathsf{Alg}}
\providecommand{\wAlg}[3]{#3\text-\mathsf{Alg}_{(#1,\, #2)}}
\providecommand{\lbcwAlg}[2]{#2\text-\mathsf{Alg}_{(#1,\, \textup{ps},\, \textup{lbc})}}
\providecommand{\clx}{\textup c}
\providecommand{\lax}{\textup l}
\providecommand{\psd}{\textup{ps}}
\providecommand{\Mnd}{\mathsf{Mnd}}
\providecommand{\hc}{\odot}
\providecommand{\tab}[1]{\gen{#1}}
\providecommand{\cur}[1]{#1^{\scriptscriptstyle\lambda}}
\DeclareMathOperator{\fc}{\mathsf{fc}}
\DeclareMathOperator{\bl}{\mathsf{bl}}
\providecommand{\edpi}[3]{#1_{(#2,\, #3)}}
\providecommand{\edpilbc}[2]{#1_{(#2,\, \psd,\, \textup{lbc})}}
\begin{document}
	\title{A double-dimensional approach to formal category theory}
	\author{Seerp Roald Koudenburg\thanks{Part of this paper was written during a visit to Macquarie University, where its main results formed the subject of a series of talks at the Australian Category Seminar; I am grateful to the Macquarie University Research Centre for its funding of my visit. I would like to thank Richard Garner, Mark Weber and Ram\'on Abud Alcal\'a for helpful discussions, and Robert Par\'e for suggesting the term ``augmented virtual double category''.}}
	\date{Draft, October 2019\\[0.5cm]\framebox(320,160){\begin{minipage}{25em}
		\centering{Notes}
		\normalsize\begin{itemize}
			\item The recent papers ``Augmented virtual double categories'' (\href{http://arxiv.org/abs/1910.11189}{\texttt{arxiv:1910.11189}}) and ``Formal category theory in augmented virtual double categories'' (\href{https://arxiv.org/abs/2205.04890}{\texttt{arxiv:2205.04890}}) contain streamlined, corrected and expanded versions of respectively Sections~\ref{augmented virtual double category section}, \ref{restriction section} and \ref{composition section} and Sections~\ref{Kan extension section} and \ref{yoneda embeddings section} below; I encourage readers of these sections to read the new papers instead.
			\item The first version of this draft (November 2015) used the term “hypervirtual double category” for its main notion, where presently “augmented virtual double category” is used instead.
		\end{itemize}
	\end{minipage}}}
	\maketitle
	\begin{abstract}
		Whereas formal category theory is classically considered within a $2$"/category, in this paper a double-dimensional approach is taken. More precisely we develop such theory within the setting of augmented virtual double categories, a notion extending that of virtual double category (also known as $\fc$-multicategory) by adding cells with nullary target.

This paper starts by introducing the notion of augmented virtual double category, followed by describing its basic theory and its relation to other types of double category. After this the notion of `weak' Kan extension within an augmented virtual double category is considered, together with three strengthenings. The first of these generalises Borceux-Kelly's notion of Kan extension along enriched functors, the second one generalises Street's notion of pointwise Kan extension in $2$-categories, and the third is a combination of the other two; these stronger notions are compared. The notion of yoneda embedding is then considered in an augmented virtual double category, and compared to that of a good yoneda structure on a $2$-category; the latter in the sense of Street-Walters and Weber. Conditions are given ensuring that a yoneda embedding $\map\yon A{\ps A}$ defines $\ps A$ as the free small cocompletion of $A$, in a suitable sense.

In the second half we consider formal category theory in the presence of algebraic structures. In detail: to a monad $T$ on an augmented virtual double category $\K$ several augmented virtual double categories $T\text-\mathsf{Alg}_{(v, w)}$ of $T$"/algebras are associated, one for each pair of types of weak coherence satisfied by the $T$"/algebras and their morphisms respectively. This is followed by the study of the creation of, amongst others, left Kan extensions by the forgetful functors $T\text-\mathsf{Alg}_{(v, w)} \to \K$. The main motivation of this paper is the description of conditions ensuring that yoneda embeddings in $\K$ lift along these forgetful functors, as well as ensuring that such lifted algebraic yoneda embeddings again define free small cocompletions, now in $T\text-\mathsf{Alg}_{(v, w)}$. As a first example we apply the previous to monoidal structures on categories, hence recovering Day convolution of presheaves and Im-Kelly's result on free monoidal cocompletion, as well as obtaining a ``monoidal Yoneda lemma''.
	\end{abstract}
	
	\subsection*{Motivation}\addcontentsline{toc}{subsection}{Motivation}
	Central to classical category theory is the Yoneda lemma which, for a locally small category $A$, describes the position of the representable presheaves $A(\dash, x)$ within the category $\ps A$ of all presheaves $\op A \to \Set$. More precisely, in its ``parametrised'' form, the Yoneda lemma states that the yoneda embedding \mbox{$\map\yon A{\ps A}\colon x \mapsto A(\dash, x)$} satisfies the following property: any profunctor $\hmap JAB$---that is a functor $\map J{\op A \times B}\Set$---induces a functor $\map{\cur J}B{\ps A}$ equipped with bijections
	\begin{displaymath}
		\ps A(\yon x, \cur Jy) \iso J(x, y),
	\end{displaymath}
	that combine to form an isomorphism $\ps A(\yon\dash, \cur J\dash) \iso J$ of profunctors. Indeed, $\cur J$ can be defined as $\cur Jy \dfn J(\dash, y)$.
	
	The main observation motivating this paper is that, given a monoidal structure $\tens$ on $A$, the above ``lifts'' to a ``monoidal Yoneda lemma'' as follows. Recall that a monoidal structure on $A$ induces such a structure $\ps\tens$ on $\ps A$, that is given by Day convolution \cite{Day70}
	\begin{displaymath}
		(p \mathbin{\ps\tens} q)(x) \dfn \int^{u, v \in A} A(x, u \tens v) \times pu \times qv, \qquad\qquad\qquad \text{where $p, q \in \ps A$},
	\end{displaymath}
	and with respect to which $\map\yon A{\ps A}$ forms a pseudomonoidal functor---that is, it comes equipped with coherent isomorphisms $\bar\yon \colon \yon x \mathbin{\ps\tens} \yon y \iso \yon(x \tens y)$. Thus a pseudomonoidal functor, $\map{(\yon, \bar\yon)}{(A, \tens)}{(\ps A, \ps\tens)}$ satisfies the following monoidal variant of the property above: any lax monoidal profunctor $\hmap JAB$---equipped with coherent morphisms $\map{\bar J}{J(x_1, y_1) \times J(x_2, y_2)}{J(x_1 \tens x_2, y_1 \tens y_2)}$---induces a lax monoidal functor $\map{\cur J}B{\ps A}$ equipped with an isomorphism of lax monoidal profunctors $\ps A(\yon\dash, \cur J\dash) \iso J$. In detail: we can take $\cur J$ to be as defined before, equipped with coherence morphisms $\nat{\bar{\cur J}}{\cur J y_1 \mathbin{\ps\tens} \cur J y_2}{\cur J(y_1 \tens y_2)}$ that are induced by the composites
	\begin{displaymath}
		A(x, u \tens v) \times J(u, y_1) \times J(v, y_2) \xrar{\id \times \bar J} A(x, u \tens v) \times J(u \tens v, y_1 \tens y_2) \to J(x, y_1 \tens y_2),
	\end{displaymath}
	where the unlabelled morphism is induced by the functoriality of $J$ in $A$.
	
	The principal aim of this paper is to formalise the way in which the classical Yoneda lemma in the presence of a monoidal structure leads to the monoidal Yoneda lemma, as described above; thus allowing us to
	\begin{enumerate}[label = (\alph*)]
		\item generalise the above to other types of algebra, such as double categories and representable topological spaces;
		\item characterise lax monoidal profunctors $J$ whose induced lax monoidal functors $\cur J$ are pseudomonoidal, and likewise for other types of algebraic morphism;
		\item recover classical results in the setting of monoidal categories, such as $(\yon, \bar\yon)$ exhibiting $(\ps A, \ps\tens)$ as the ``free monoidal cocompletion'' of $(A, \tens)$ (see \cite{Im-Kelly86}), and generalise these to other types of algebra.
	\end{enumerate}
	
	\subsection*{Augmented virtual double categories}\addcontentsline{toc}{subsection}{Augmented virtual double categories}
	While the classical Yoneda lemma has been formalised in the setting of $2$"/categories, in the sense of the yoneda structures of Street and Walters \cite{Street-Walters78} (see also \cite{Weber07}), it is unclear how the monoidal Yoneda lemma, as described above, fits into this framework. Indeed the problem is that it concerns two types of morphism between monoidal categories, both lax monoidal functors and lax monoidal profunctors, and it is not clear how to arrange both types into a $2$"/category equipped with a yoneda structure. In particular lax monoidal structures on representable profunctors $C(f\dash, \dash)$, where $\map fAC$ is a functor, correspond to colax monoidal structures on $f$ (see \thmref{representable horizontal T-morphisms} below for the generalisation of this to other types of algebra).
	
	The natural setting in which to consider two types of morphism is that of a (pseudo-)double category, where one type is regarded as being horizontal and the other as being vertical, while one considers square-shaped cells
	\begin{displaymath}
		\begin{tikzpicture}
			\matrix(m)[math35]{A & B \\ C & D \\};
			\path[map]	(m-1-1) edge[barred] node[above] {$J$} (m-1-2)
													edge node[left] {$f$} (m-2-1)
									(m-1-2) edge node[right] {$g$} (m-2-2)
									(m-2-1) edge[barred] node[below] {$K$} (m-2-2);
			\path[transform canvas={xshift=1.75em}]	(m-1-1) edge[cell] node[right] {$\phi$} (m-2-1);
		\end{tikzpicture}
	\end{displaymath}
	between them. Double categories however form a structure that is too strong for some of the types of algebra that we would like to consider, as it requires compositions for both types of morphism while, for example, the composite of ``double profunctors'' $\hmap JAB$ and $\hmap HBE$, between double categories $A$, $B$ and $E$, does in general not exist. While the latter is a consequence of, roughly speaking, the algebraic structures of $J$ and $H$ being incompatible in general, composites of profunctor-like morphisms may also fail to exist because of size issues. Indeed, recall from \cite{Freyd-Street95} that, for a locally small category $A$, the category of presheaves $\ps A$ need not be locally small. Consequently in describing the classical Yoneda lemma for locally small categories it is, on one hand, necessary to consider categories $A, B, \dotsc$ that might have large hom-sets while, on the other hand, the property satisfied by the yoneda embedding is stated in terms of `small' profunctors---that is small-set-valued functors $\map J{\op A \times B}\Set$---between such categories; in general, such profunctors do not compose either. This classical situation is typical: for a description of most of the variations of the Yoneda lemma given in this paper, one considers a double-dimensional setting whose objects $A, B, \dotsc$ are ``large in size'' while the size of the horizontal morphisms $\hmap JAB$ between them is ``small''.
	
	In view of the previous we, instead of double categories, consider a weaker notion as the right notion for our double-dimensional approach to formal category theory. This weaker notion is extends slightly that of `virtual double category' \cite{Cruttwell-Shulman10} which, analogous to the generalisation of monoidal category to `multicategory', does not require a horizontal composition but, instead of the square-shaped cells above, has cells of the form below, with a sequence of horizontal morphisms as horizontal source. Introduced by Burroni \cite{Burroni71} under the name `$\fc$-cat\'egorie', where $\fc$ denotes the `free category'-monad, virtual double categories have also been called `$\fc$-multicategories' \cite{Leinster04}.
	\begin{displaymath}
		\begin{tikzpicture}
			\matrix(m)[math35]{A_0 & A_1 & A_{n'} & A_n \\ C & & & D \\};
			\path[map]	(m-1-1) edge[barred] node[above] {$J_1$} (m-1-2)
													edge node[left] {$f$} (m-2-1)
									(m-1-3) edge[barred] node[above] {$J_n$} (m-1-4)
									(m-1-4) edge node[right] {$g$} (m-2-4)
									(m-2-1) edge[barred] node[below] {$K$} (m-2-4);
			\path[transform canvas={xshift=1.75em}]	(m-1-2) edge[cell] node[right] {$\phi$} (m-2-2);
			\draw				($(m-1-2)!0.5!(m-1-3)$) node {$\dotsb$};
		\end{tikzpicture}
	\end{displaymath}
	Both settings mentioned above can be considered as a virtual double category: there is a virtual double category with (possibly large) categories as objects, functors as vertical morphisms and small profunctors as horizontal morphisms, and likewise one with (possibly large) double categories, `double functors' and `small' double profunctors. These virtual double categories however miss an ingredient crucial to the theory of (double) categories: they do not contain transformations $f \Rar g$ between (double) functors $f$ and $\map gAC$ into (double) categories $C$ with large hom-sets. Indeed, for such transformations to be represented by cells of the form above we need the small (double) profunctor $K$ to consist of the hom-sets of $C$, which is not possible if some of them are large.
	
	In order to remove the inadequacy described above, we extend to the notion of virtual double category to also contain cells with empty horizontal target, as shown below, so that transformations $f \Rar g$, as described above, are represented by cells with both empty horizontal source and target. We will call this extended notion `augmented virtual double category'; a detailed definition is given in the first section below.
	\begin{displaymath}
		\begin{tikzpicture}
			\matrix(m)[math35, column sep={1.75em,between origins}]
				{A_0 & & A_1 & \dotsb & A_{n'} & & A_n \\ & & & C & & & \\};
			\path[map]	(m-1-1) edge[barred] node[above] {$J_1$} (m-1-3)
													edge node[below left] {$f$} (m-2-4)
									(m-1-5) edge[barred] node[above] {$J_n$} (m-1-7)
									(m-1-7) edge node[below right] {$g$} (m-2-4);
			\path				(m-1-4) edge[cell] node[right] {$\phi$} (m-2-4);
		\end{tikzpicture}
	\end{displaymath}
	
	\subsection*{Overview}\addcontentsline{toc}{subsection}{Overview}
	We start, in \secref{augmented virtual double category section}, by introducing the notion of augmented virtual double category. Every augmented virtual double category $\K$ contains both a virtual double category $U(\K)$, consisting of the cells in $\K$ that have nonempty horizontal target, as well as a `vertical 2-category' $V(\K)$, consisting of cells with both horizontal target and source empty. We obtain examples of augmented virtual double categories by considering monoids and bimodules in virtual double categories, just like one does in monoidal categories, followed by restricting the size the bimodules. The archetypal example of an augmented virtual double category, that of small profunctors between large categories, is thus obtained by considering monoids and bimodules in the pseudo double category of spans of large sets, followed by restricting to small-set-valued profunctors. Given a `universe enlargement' $\V \subset \V'$ of monoidal categories, we consider the enriched variant of the previous, resulting in the augmented virtual double category of $\V$-enriched profunctors between $\V'$-enriched categories. The description of the $2$"/category of augmented virtual double categories---in particular its equivalences---closes the first section.
	
	In \secref{restriction section} and \secref{composition section} the basic theory of augmented virtual double categories is introduced which, for a large part, consists of a straightforward generalisation of that for virtual double categories. Invaluable to both theories are the notions of restriction and composition of horizontal morphisms: these generalise respectively the restriction $K \of (\op f \times g)$ of a profunctor $K$, along functors $f$ and $g$, as well as the composition of profunctors. As a special cases of restriction, the notions of companion and conjoint generalise that of the (op-)representable profunctors $C(f\dash, \dash)$ and $C(\dash, f\dash)$ induced by a functor $\map fAC$, and at the end of \secref{restriction section} we characterise the locally full sub-augmented virtual double category of $\K$, obtained by restricting to representable horizontal morphisms, in terms of its vertical $2$"/category $V(\K)$. As part of the theory of composition of horizontal morphisms in \secref{composition section} horizontal units are considered: the small unit profunctor of a category $A$ exists, for example, precisely if $A$ has locally small hom-sets. Finishing the third section is a theorem proving, in the presence of all horizontal units, the equivalence of the notions of augmented virtual double category and virtual double category.
	
	Having introduced the basic theory of augmented virtual double categories we begin studying `formal category theory' within such double categories. We start \secref{Kan extension section} by rewriting the classical notion of left Kan extension in the vertical $2$-category $V(\K)$, in terms of the companions in $\K$, leading to a notion of `weak' left Kan extension in $\K$. We then consider three strengthenings of this notion: the first of these can be thought of as generalising the notion of `Kan extension along enriched functors', in the sense of e.g.\ \cite{Kelly82}; the second as generalising the notion of `pointwise Kan extension in a $2$-category', in the sense of Street \cite{Street74b}; while the third is a combination of the previous two. Besides studying the basic theory of these notions we will compare them among each other, as well as make precise their relation to the aforementioned classical notions.
	
	Being one of the main goals of this paper, \secref{yoneda embeddings section} introduces the notion of yoneda embedding in an augmented virtual double category as a vertical morphism \mbox{$\map\yon A{\ps A}$} satisfying two axioms. As is the case for the classical yoneda embedding, the first of these asks $\yon$ to be dense, while the second is the `yoneda axiom': this formally captures the fact that, in the classical setting, any small profunctor \mbox{$\map JAB$} induces a functor $\map{\cur J}B{\ps A}$, as was described above as part of the motivation. These two conditions are closely related to the axioms satisfied by morphisms that make up a `good yoneda structure' on a $2$-category, in the sense of Weber \cite{Weber07}. Slightly strengthening the original notion of `yoneda structure', introduced by Street and Walters in \cite{Street-Walters78}, a good yoneda structure consists of a collection of yoneda embeddings that satisfy a `yoneda axiom' with respect to a specified collection of `admissible' morphisms (informally these are to be thought of as ``small in size''). In contrast, our position of regarding all horizontal morphisms as being small enables us to consider just a single yoneda embedding. We make precise the relation between the yoneda embeddings in an augmented virtual double category $\K$ and the existence of a good yoneda structure on the vertical $2$-category that it $V(\K)$ contains. Given a yoneda embedding $\map\yon A{\ps A}$, the main result of \secref{yoneda embeddings section} gives conditions ensuring that it defines $\ps A$ as the `free small cocompletion' of $A$, in a suitable sense.
	
	In \secref{algebras section} we consider formal category theory within augmented virtual double categories in the presence of `algebraic structures'; the archetypal example being that of monoidal structures on categories. Like in $2$"/dimensional category theory, algebraic structures in an augmented virtual double category $\K$ are defined by monads on $\K$; in fact, any monad $T$ on $\K$ induces a strict $2$"/monad $V(T)$ on the vertical $2$-category $V(\K)$. To each monad $T$ we will associate several augmented virtual double categories $\wAlg vwT$ of weak algebras of $T$, one for each pair of weaknesses \mbox{$v, w \in \set{\textup{pseudo}, \textup{lax}, \textup{colax}}$} specifying the type of coherence satisfied by the algebra structures and algebraic morphisms. The vertical parts $V(\wAlg vwT)$ of these augmented virtual double categories are defined to coincide with the $2$-categories $\wAlg vw{V(T)}$ of weak $V(T)$"/algebras in the classical sense, while their notion of horizontal morphism generalises that of  `horizontal $T$-morphism' introduced by Grandis and Par\'e in the setting of pseudo double categories \cite{Grandis-Pare04}. The theorem closing this section characterises (op-)representable horizontal $T$-morphisms.
	
	The remaining sections, \secref{creativity section} and \secref{algebraic yoneda embeddings section}, are devoted to describing the creativity of the forgetful functors $\map U{\wAlg vwT}\K$. Generalising the main theorem of \cite{Koudenburg15a} to the setting of augmented virtual double categories, the main result of \secref{creativity section} describes the creation of algebraic left Kan extensions by $U$; it can be regarded as extending Kelly's classical result on `doctrinal adjunctions' \cite{Kelly74} to left Kan extensions. As the main result of this paper, in \secref{algebraic yoneda embeddings section} we describe the lifting of algebraic yoneda embeddings along $U$, followed by making precise its consequences, as listed at the end of the motivation above. We treat in detail the lifting of a monoidal Yoneda embedding $\map{(\yon, \bar\yon)}{(A, \tens)}{(\ps A, \ps\tens)}$, as described in the motivation, and recover Im and Kelly's result \cite{Im-Kelly86} showing that $(\yon, \bar\yon)$ defines $(\ps A, \ps\tens)$ as the `free monoidal cocompletion' of $(A, \tens)$.
	
	\tableofcontents	
	
	\section{Augmented virtual double categories}\label{augmented virtual double category section}
	
	\subsection{Definition of augmented virtual double category}
	We start by introducing the notion of augmented virtual double category. In doing so we use the well-known notion of a \emph{directed graph}, by which we mean a parallel pair of functions $A = \bigl(\mspace{-11mu}\begin{tikzpicture}[textbaseline]
		\matrix(m)[math35, column sep=1em]{A_1 & A_0 \\};
		\path[map]	(m-1-1) edge[above, transform canvas={yshift=2pt}] node {$s$} (m-1-2)
												edge[below, transform canvas={yshift=-2pt}] node {$t$} (m-1-2);
	\end{tikzpicture}\mspace{-11mu}\bigr)$ from a class $A_1$ of \emph{edges} to a class $A_0$ of \emph{vertices}. An edge $e$ with $(s,t)(e) = (x,y)$ is denoted $x \xrar e y$; the vertices $x$ and $y$ are called its \emph{source} and \emph{target}. Remember that any graph $A$ generates a \emph{free category} $\fc A$, whose underlying graph has the same vertices as $A$ while its edges $x \to y$ are (possibly empty) paths \mbox{$\ul e = (x = x_0 \xrar{e_1} x_1 \xrar{e_2} \dotsb \xrar{e_n} x_n = y)$} of edges in $A$; we write $\lns{\ul e} \dfn n$ for their lengths. Composition in $\fc A$ is given by concatenation $(\ul e, \ul f) \mapsto \ul e \conc \ul f$ of paths, while the empty path $(x)$ forms the identity at $x \in A_0$. We denote by $\bl A \subseteq \fc A$ the subgraph consisting of all paths of length $\leq 1$, which we think of as obtained from $A$ by ``freely adjoining base loops''.
	
	Given an integer $n \geq 1$ we write $n' \dfn n - 1$.
	\begin{definition} \label{augmented virtual double category}
		An \emph{augmented virtual double category} $\K$ consists of
		\begin{itemize}[label=-]
			\item a class $\K_0$ of \emph{objects} $A$, $B, \dotsc$
			\item a category $\K_\textup v$ with $\K_{\textup v0} = \K_0$, whose morphisms $\map fAC$, $\map gBD, \dotsc$ are called \emph{vertical morphisms};
			\item a directed graph $\K_\textup h$ with $\K_{\textup h0} = \K_0$, whose edges are called \emph{horizontal morphisms} and denoted by slashed arrows $\hmap JAB$, $\hmap KCD, \dotsc$;
			\item a class of \emph{cells} $\phi$, $\psi, \dotsc$ that are of the form
				\begin{equation} \label{cell}

				\end{displaymath}
			\item \emph{vertical identity cells} as on the right above, one for each $\map fAC$, that are preserved by vertical composition: $\id_h \of (\id_f) = \id_{h \of f}$; we write $\id_A \dfn \id_{\id_A}$.
		\end{itemize}
		The composition above is required to satisfy the \emph{associativity axiom}
		\begin{multline*}
			\chi \of \bigpars{\psi_1 \of (\phi_{11}, \dotsc, \phi_{1m_1}), \dotsc, \psi_n \of (\phi_{n1}, \dotsc, \phi_{nm_n})} \\
			= \bigpars{\chi \of (\psi_1, \dotsc, \psi_n)} \of (\phi_{11}, \dotsc, \phi_{nm_n}),
		\end{multline*}
		whenever the left-hand side makes sense, as well as the \emph{unit axioms}
		\begin{flalign*}
			&& \id_C \of (\phi) = \phi, \quad \id_K \of (\phi) = \phi, \quad \phi \of (\id_A) &= \phi, \quad \phi \of (\id_{J_1}, \dotsc, \id_{J_n}) = \phi & \\
			\text{and} && \psi \of (\phi_1, \dotsc, \phi_i, \id_{f_i}, \phi_{i+1}, \dotsc, \phi_n) &= \psi \of (\phi_1, \dotsc, \phi_n) &
		\end{flalign*}
		whenever these make sense and where, in the last axiom, $0 \leq i \leq n$.
	\end{definition}
	
	For a cell $\phi$ as in \eqref{cell} above we call the vertical morphisms $f$ and $g$ its \emph{vertical source} and \emph{target} respectively, and call the path of horizontal morphisms $\ul J = (J_1, \dotsc, J_n)$ its \emph{horizontal source}, while we call $\ul K$ its \emph{horizontal target}. We write $\lns \phi \dfn (\lns{\ul J}, \lns{\ul K})$ for the \emph{arity} of $\phi$. A $(n,1)$"/ary cell will be called \emph{unary}, $(n,0)$-ary cells \emph{nullary} and $(0,0)$-ary cells \emph{vertical}.
	
	When writing down paths $(J_1, \dotsc, J_n)$ of length $n \leq 1$ we will often leave out parentheses and simply write $A_0 \dfn (A_0)$ or $J_1 \dfn (A_0 \xbrar{J_1} A_1)$. Likewise in the composition of cells: $\psi \of \phi_1 \dfn \psi \of (\phi_1)$. We will often denote unary cells simply by $\cell\phi{(J_0, \dotsc, J_n)}K$, and nullary cells by $\cell\psi{(J_0, \dotsc, J_n)}C$, leaving out their vertical source and target. When drawing compositions of cells it is often helpful to depict them in full detail and, in the case of nullary cells, draw their horizontal target as a single object, as shown below.
	\begin{displaymath}
		\begin{tikzpicture}[baseline]
			\matrix(m)[math35]{A_0 \\ C \\};
			\path[map]	(m-1-1) edge[bend right=45] node[left] {$f$} (m-2-1)
													edge[bend left=45] node[right] {$g$} (m-2-1);
			\path				(m-1-1) edge[cell] node[right] {$\psi$} (m-2-1);
		\end{tikzpicture} \mspace{6mu} \begin{tikzpicture}[baseline]
			\matrix(m)[math35, column sep={1.625em,between origins}]
				{A_0 & & A_1 & \dotsb & A_{n'} & & A_n \\ & & & C & & & \\};
			\path[map]	(m-1-1) edge[barred] node[above] {$J_1$} (m-1-3)
													edge node[below left] {$f$} (m-2-4)
									(m-1-5) edge[barred] node[above] {$J_n$} (m-1-7)
									(m-1-7) edge node[below right] {$g$} (m-2-4);
			\path				(m-1-4) edge[cell] node[right] {$\psi$} (m-2-4);
		\end{tikzpicture}	\mspace{3mu} \begin{tikzpicture}[baseline]
			\matrix(m)[math35, column sep={1.75em,between origins}]{& A_0 & \\ C & & D \\};
			\path[map]	(m-1-2) edge node[left] {$f$} (m-2-1)
													edge node[right] {$g$} (m-2-3)
									(m-2-1) edge[barred] node[below] {$K$} (m-2-3);
			\path				(m-1-2) edge[cell, transform canvas={yshift=-0.25em}] node[right, inner sep=2.5pt] {$\phi$} (m-2-2);
		\end{tikzpicture} \mspace{15mu} \begin{tikzpicture}[baseline]
			\matrix(m)[math35, column sep={3.25em,between origins}]{A_0 & A_1 & A_{n'} & A_n \\ C & & & D \\};
			\path[map]	(m-1-1) edge[barred] node[above] {$J_1$} (m-1-2)
													edge node[left] {$f$} (m-2-1)
									(m-1-3) edge[barred] node[above] {$J_n$} (m-1-4)
									(m-1-4) edge node[right] {$g$} (m-2-4)
									(m-2-1) edge[barred] node[below] {$K$} (m-2-4);
			\path[transform canvas={xshift=1.625em}]	(m-1-2) edge[cell] node[right] {$\phi$} (m-2-2);
			\draw				($(m-1-2)!0.5!(m-1-3)$) node {$\dotsb$};
		\end{tikzpicture}
	\end{displaymath}
	
	A cell with identities as vertical source and target is called \emph{horizontal}. A horizontal cell $\cell\phi JK$ with unary horizontal source is called \emph{invertible} if there exists a horizontal cell $\cell\psi KJ$ such that $\phi \of \psi = \id_K$ and $\psi \of \phi = \id_J$; in that case we write $\inv\phi \dfn \psi$. When drawing diagrams we shall often depict identity morphisms by the equal sign ($=$), while we leave identity cells empty. Also, because composition of cells is associative, we will usually leave out bracketings when writing down composites.
	
	For convenience we use the `whisker' notation from $2$-category theory and define
	\begin{displaymath}
		h \of (\phi_1, \dotsc, \phi_n) \dfn \id_h \of (\phi_1, \dotsc, \phi_n) \qquad \text{and} \qquad \psi \of f \dfn \psi \of \id_f,
	\end{displaymath}
	whenever the right-hand side makes sense. Moreover, for any path
	\begin{displaymath}
		\begin{tikzpicture}
			\matrix(m)[math35]{A_0 & A_1 & A_{n'} & A_n & B_1 & B_{m'} & B_m \\ C & & & D & & & G \\};
			\path[map]	(m-1-1) edge[barred] node[above] {$J_1$} (m-1-2)
													edge node[left] {$f$} (m-2-1)
									(m-1-3) edge[barred] node[above] {$J_n$} (m-1-4)
									(m-1-4) edge[barred] node[above] {$H_1$} (m-1-5)
													edge node[right] {$g$} (m-2-4)
									(m-1-6) edge[barred] node[above] {$H_m$} (m-1-7)
									(m-1-7) edge node[right] {$h$} (m-2-7)
									(m-2-1) edge[barred] node[below] {$\ul K$} (m-2-4)
									(m-2-4) edge[barred] node[below] {$\ul L$} (m-2-7);
			\path[transform canvas={xshift=1.75em}]	(m-1-2) edge[cell] node[right] {$\phi$} (m-2-2)
									(m-1-5) edge[cell] node[right] {$\psi$} (m-2-5);
			\draw				($(m-1-2)!0.5!(m-1-3)$) node {$\dotsb$}
									($(m-1-5)!0.5!(m-1-6)$) node {$\dotsb$};
		\end{tikzpicture}
	\end{displaymath}
	with $\lns{\ul K} + \lns{\ul L} \leq 1$ we define the \emph{horizontal composite} $\cell{\phi \hc \psi}{\ul J \conc \ul H}{\ul K \conc \ul L}$ by
	\begin{displaymath}
		\phi \hc \psi \dfn \id_{\ul K \conc \ul L} \of (\phi, \psi),
	\end{displaymath}
	where $\id_{\ul K \conc \ul L}$ is to be interpreted as the identity $\map{\id_C}CC$ in case $\ul K \conc \ul L = (C)$. The following lemma follows easily from the associativity of the vertical composition in $\K$.
	\begin{lemma} \label{horizontal composition}
		Horizontal composition $(\phi, \psi) \mapsto \phi \hc \psi$, as defined above, satisfies the \emph{associativity} and \emph{unit axioms}
		\begin{displaymath}
			(\phi \hc \psi) \hc \chi = \phi \hc (\psi \hc \chi), \qquad (\id_f \hc \phi) = \phi \qquad \text{and} \qquad (\phi \hc \id_g) = \phi
		\end{displaymath}
		whenever these make sense. Moreover, horizontal and vertical composition satisfy the \emph{interchange axioms}
		\begin{flalign*}
			&& \bigpars{\psi \of (\phi_1, \dotsc, \phi_n)} \hc \bigl(\chi \of (\xi_1, \dotsc, \xi_m)\bigr ) &= (\psi \hc \chi) \of (\phi_1, \dotsc, \phi_n, \xi_1, \dotsc, \xi_m) & \\
			\text{and} && \psi \of \bigpars{\phi_1, \dotsc, (\phi_{i'} \hc \phi_{i}), \dotsc, \phi_n} &= \psi \of (\phi_1, \dotsc, \phi_{i'}, \phi_i, \dotsc, \phi_n) &
		\end{flalign*}
		whenever they make sense.
	\end{lemma}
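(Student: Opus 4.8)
The plan is to reduce every assertion to the associativity and unit axioms for \emph{vertical} composition, exploiting that horizontal composition is by definition nothing but a vertical composite with an identity cell on top: $\phi \hc \psi = \id_{\ul K \conc \ul L} \of (\phi, \psi)$. First I would isolate a single auxiliary fact and dispose of it once and for all. Namely: if $\psi$ is any cell whose horizontal source $\ul K$ is presented as a concatenation $\ul K = \ul K_1 \conc \dotsb \conc \ul K_m$ of paths each of length $\les 1$, then
\begin{displaymath}
	\psi \of (\id_{\ul K_1}, \dotsc, \id_{\ul K_m}) = \psi,
\end{displaymath}
where $\id_{\ul K_j}$ abbreviates the horizontal identity $\id_{K_j}$ when $\ul K_j = (K_j)$ and the vertical identity $\id_{C_j}$ when $\ul K_j = (C_j)$ is empty. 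To see this I would start from the unit axiom $\psi \of (\id_{J_1}, \dotsc, \id_{J_r}) = \psi$, taking $J_1, \dotsc, J_r$ to be the honest horizontal morphisms occurring in the source of $\psi$, and then insert a vertical identity $\id_{C_j}$ at each empty position by repeated use of the whiskering unit axiom $\psi \of (\dotsc, \id_{f_i}, \dotsc) = \psi \of (\dotsc)$; this produces exactly the list $(\id_{\ul K_1}, \dotsc, \id_{\ul K_m})$.

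Granting this fact, the unit and associativity laws for $\hc$ are short. For $\id_f \hc \phi$ I compute $\id_{(C) \conc \ul K} \of (\id_f, \phi) = \id_{\ul K} \of (\id_f, \phi)$, delete the leading vertical identity $\id_f$ by the whiskering axiom, and finish with $\id_{\ul K} \of (\phi) = \phi$ (the horizontal-unit axiom $\id_K \of (\phi) = \phi$ when $\phi$ is unary, the vertical-unit axiom $\id_C \of (\phi) = \phi$ when it is nullary); the law $\phi \hc \id_g = \phi$ is symmetric. For associativity I set $\ul P \coloneqq \ul K \conc \ul L \conc \ul N$ and expand the left-hand side to $\id_{\ul P} \of \bigpars{\id_{\ul K \conc \ul L} \of (\phi, \psi), \chi}$; rewriting $\chi = \id_{\ul N} \of (\chi)$ and applying the associativity axiom yields $\bigpars{\id_{\ul P} \of (\id_{\ul K \conc \ul L}, \id_{\ul N})} \of (\phi, \psi, \chi)$, which the auxiliary fact (applied to $\psi = \id_{\ul P}$, decomposed into the two pieces $\ul K \conc \ul L$ and $\ul N$) collapses to $\id_{\ul P} \of (\phi, \psi, \chi)$. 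The mirror computation reduces $\phi \hc (\psi \hc \chi)$ to the very same expression, so the two agree.

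The first interchange axiom is the most immediate: its left-hand side is by definition $\id_{\ul K \conc \ul L} \of \bigpars{\psi \of (\phi_1, \dotsc, \phi_n), \chi \of (\xi_1, \dotsc, \xi_m)}$, and one application of the associativity axiom rewrites this as $\bigpars{\id_{\ul K \conc \ul L} \of (\psi, \chi)} \of (\phi_1, \dotsc, \phi_n, \xi_1, \dotsc, \xi_m) = (\psi \hc \chi) \of (\phi_1, \dotsc, \phi_n, \xi_1, \dotsc, \xi_m)$. For the second axiom I substitute $\phi_{i'} \hc \phi_i = \id_{\ul K_{i'} \conc \ul K_i} \of (\phi_{i'}, \phi_i)$ into the relevant slot and rewrite every remaining slot $\phi_j$ as $\id_{\ul K_j} \of (\phi_j)$; the associativity axiom then factors out $\psi \of (\id_{\ul K_1}, \dotsc, \id_{\ul K_{i'} \conc \ul K_i}, \dotsc, \id_{\ul K_n})$, which by the auxiliary fact applied to the source $\ul K_1 \conc \dotsb \conc \ul K_n$ of $\psi$ (now decomposed into $n-1$ pieces) equals $\psi$, leaving precisely $\psi \of (\phi_1, \dotsc, \phi_{i'}, \phi_i, \dotsc, \phi_n)$.

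I expect the only genuine friction to be the bookkeeping that nullary cells force upon the identities: because a horizontal target of length $\les 1$ may be either an actual horizontal morphism or a bare object, each symbol $\id_{\ul P}$ secretly bifurcates into a horizontal-identity case and a vertical-identity case, and the deletion of identities in the auxiliary fact must juggle the horizontal-unit axiom against the whiskering axiom according to which case occurs. Once that auxiliary fact is established uniformly, every remaining claim is a one- or two-line manipulation, which is exactly why the lemma can be said to follow easily from the associativity of vertical composition.
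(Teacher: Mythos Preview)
Your proof is correct and is exactly the detailed working-out that the paper leaves implicit: the paper gives no proof beyond the remark that the lemma ``follows easily from the associativity of the vertical composition in $\K$'', and your argument unpacks precisely this, reducing each claim to the associativity and unit axioms of Definition~\ref{(fc, bl)-multicategory}. Your auxiliary fact $\psi \of (\id_{\ul K_1}, \dotsc, \id_{\ul K_m}) = \psi$ is the right organising lemma, and your handling of the nullary/unary bifurcation of $\id_{\ul P}$ is the only place genuine care is needed, which you correctly identify.
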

	Notice that by removing the nullary cells from \defref{augmented virtual double category} (including the vertical identity cells) we recover the classical notion of \emph{virtual double category}, in the sense of \cite{Cruttwell-Shulman10} or Section 5.1 of \cite{Leinster04}, where it is called $\fc$-multicategory. Virtual double categories where originally introduced by Burroni \cite{Burroni71}, who called them `$\fc$"/cat\'egories'. Likewise if instead we remove all horizontal morphisms, so that the only remaining cells are the vertical ones then, using both compositions $\of$ and $\hc$, we recover the notion of \emph{$2$-category}. We conclude that every augmented virtual double category $\K$ contains a virtual double category $U(\K)$ consisting of its objects, vertical and horizontal morphisms, as well as unary cells. Likewise $\K$ contains a \emph{vertical $2$-category} $V(\K)$, consisting of its objects, vertical morphisms and vertical cells.
	
	Every augmented virtual double category has a horizontal dual as follows.
	\begin{definition} \label{horizontal dual}
		Let $\K$ be an augmented virtual double category. The \emph{horizontal dual} of $\K$ is the augmented virtual double category $\co\K$ that has the same objects and vertical morphisms, that has a horizontal morphism $\hmap{\co J}AB$ for each $\hmap JBA$ in $\K$, and a cell $\co\phi$ as on the left below for each cell $\phi$ in $\K$ as on the right.
		\begin{displaymath}
			\begin{tikzpicture}[baseline]
				\matrix(m)[math35, column sep={3.25em,between origins}]{A_0 & A_1 & A_{n'} & A_n \\ C & & & D \\};
				\path[map]	(m-1-1) edge[barred] node[above] {$\co J_1$} (m-1-2)
														edge node[left] {$f$} (m-2-1)
										(m-1-3) edge[barred] node[above] {$\co J_n$} (m-1-4)
										(m-1-4) edge node[right] {$g$} (m-2-4)
										(m-2-1) edge[barred] node[below] {$\co{\ul K}$} (m-2-4);
				\path[transform canvas={xshift=1.625em}]	(m-1-2) edge[cell] node[right] {$\co\phi$} (m-2-2);
				\draw				($(m-1-2)!0.5!(m-1-3)$) node {$\dotsb$};
			\end{tikzpicture} \qquad\qquad\qquad \begin{tikzpicture}[baseline]
				\matrix(m)[math35, column sep={3.25em,between origins}]{A_n & A_{n'} & A_1 & A_0 \\ D & & & C \\};
				\path[map]	(m-1-1) edge[barred] node[above] {$J_n$} (m-1-2)
														edge node[left] {$g$} (m-2-1)
										(m-1-3) edge[barred] node[above] {$J_1$} (m-1-4)
										(m-1-4) edge node[right] {$f$} (m-2-4)
										(m-2-1) edge[barred] node[below] {$\ul K$} (m-2-4);
				\path[transform canvas={xshift=1.625em}]	(m-1-2) edge[cell] node[right] {$\phi$} (m-2-2);
				\draw				($(m-1-2)!0.5!(m-1-3)$) node {$\dotsb$};
			\end{tikzpicture}
		\end{displaymath}
		Identities and compositions in $\co\K$ are induced by those of $\K$:
		\begin{displaymath}
			\id_{\co J} \dfn \co{(\id_J)}, \quad \id_f \dfn \co{(\id_f)} \quad \text{and} \quad \co\psi \of (\co\phi_1, \dotsc, \co\phi_n) \dfn \co{\bigpars{\psi \of (\phi_n, \dotsc, \phi_1)}}.
		\end{displaymath}
	\end{definition}
	
	We end this subsection with a remark on the associativity of composition of cells in augmented virtual double categories.
	\begin{remark}
	  Consider a configuration of six composable cells described by the scheme below, with the cell $\phi_2$ nullary and the others unary. Notice that there are two ways of composing these cells if we start with composing the top two rows first: the cell $\phi_2$ can then be composed either with $\psi_1$ or with $\psi_2$. On the other hand, if we start by composing the bottom two rows then there is only one way in which we can form the composite.
	  
	  This shows why in \defref{augmented virtual double category} the associativity axiom for a three-row composite has to be ``read from left to right'': reading it in the other direction in general there might be multiple ways in which the cells in the top row can be ``distributed'' over those in the middle row.
	  \begin{displaymath}
	    \begin{tikzpicture}[scheme, x=1.8em, y=1.8em]
	      \draw (0,3) -- (3,3) -- (2.5,2) -- (2.5, 1) -- (2,0) -- (1,0) -- (0.5,1) -- (0.5,2) -- (0,3)
	            (0.5,2) -- (2.5,2)
	            (1,3) -- (1.5,2) -- (1.5,1)
	            (2,3) -- (1.5,2)
	            (0.5,1) -- (2.5,1);
	      \draw (0.75,2.5) node {$\phi_1$}
	            (1.5,2.75) node {$\phi_2$}
	            (2.25,2.5) node {$\phi_3$}
	            (1,1.5) node {$\psi_1$}
	            (2,1.5) node {$\psi_2$}
	            (1.5,0.5) node {$\chi$};
	    \end{tikzpicture}
	  \end{displaymath}
	  
	  More formally the above observation is a manifestation of the fact that, when regarded as monoids, augmented virtual double categories $\K$ (with a fixed vertical category $\K_\textup v = \mathcal C$) are monoids in a \emph{skew-monoidal category}, in the sense of Szlach\'anyi \cite{Szlachanyi12}, instead of monoids in an ordinary monoidal category. Making this statement precise will be left as future work.
	\end{remark}
	
	\subsection{Monoids and bimodules}
	Our main source of augmented virtual double categories is virtual double categories, as we will explain in this subsection. Briefly, given a virtual double category $\K$ we will consider the augmented virtual double category $\Mod(\K)$ of `monoids' and `bimodules' in $\K$, the latter in the sense of Section 5.3 of \cite{Leinster04}; see also Section 2 of \cite{Cruttwell-Shulman10}. Often we will then consider a sub-augmented virtual double category of $\Mod(\K)$ by ``restricting the size of bimodules''. For instance, instead of considering `large profunctors' between large categories, it is preferable to consider `small profunctors' between large categories.
	\begin{definition} \label{monoids and bimodules}
		Let $\K$ be a virtual double category.
		\begin{enumerate}[label =-]
			\item A \emph{monoid} $A$ in $\K$ is a quadruple $A = (A, \alpha, \bar\alpha, \tilde\alpha)$ consisting of a horizontal morphism $\hmap\alpha AA$ in $\K$ equipped with \emph{multiplication} and \emph{unit} cells
			\begin{displaymath}

			\end{displaymath}
			of bimodules, where $n \geq 1$, is given by a cell $\phi$ in $\K$ between the underlying morphisms, satisfying the \emph{external equivariance} axioms
			\begin{align*}
				\phi \of (\lambda, \id_{J_2}, \dotsc, \id_{J_n}) &= \lambda \of (\bar f, \phi) \\
				\phi \of (\id_{J_1}, \dotsc, \id_{J_{n'}}, \rho) &= \rho \of (\phi, \bar g)
			\end{align*}
			and the \emph{internal equivariance} axioms
			\begin{multline*}
				\phi \of (\id_{J_1}, \dotsc, \id_{J_{i''}}, \rho, \id_{J_i}, \id_{J_{i+1}}, \dotsc, \id_{J_n}) \\
					= \phi \of (\id_{J_1}, \dotsc, \id_{J_{i''}}, \id_{J_{i'}}, \lambda, \id_{J_{i+1}}, \dotsc, \id_{J_n})
			\end{multline*}
			for $2 \leq i \leq n$.
			\item A cell
			\begin{displaymath}
				\begin{tikzpicture}
					\matrix(m)[math35, column sep={1.75em,between origins}]{& A & \\ C & & D \\};
					\path[map]	(m-1-2) edge[transform canvas={xshift=-1pt}] node[left] {$f$} (m-2-1)
															edge[transform canvas={xshift=1pt}] node[right] {$g$} (m-2-3)
											(m-2-1) edge[barred] node[below] {$K$} (m-2-3);
					\path				(m-1-2) edge[cell, transform canvas={yshift=-0.25em}] node[right, inner sep=2.5pt] {$\phi$} (m-2-2);
				\end{tikzpicture}	
			\end{displaymath}
			of bimodules is given by a cell $\phi$ in $\K$ between the underlying morphisms, satisfying the \emph{external equivariance} axiom $\lambda \of (\bar f, \phi) = \rho \of (\phi, \bar g)$.
		\end{enumerate}
	\end{definition}
	For the next proposition notice that any module $C = (C, \gamma, \bar\gamma, \tilde\gamma)$ in a virtual double category $\K$ induces a bimodule $\hmap\gamma CC$, both whose actions are given by multiplication $\cell{\bar\gamma}{(\gamma, \gamma)}\gamma$.
	\begin{proposition} \label{(fc, bl)-multicategory of monoids}
		Given a virtual double category $\K$ consider to each cell $\phi$ of bimodules in $\K$, as on the left below, a new unary cell $\bar\phi$ as on the right, that is of the same form, and to each cell $\psi$ of bimodules as on the left, with horizontal target $\gamma$ as described above, a new nullary cell $\bar\psi$ as on the right.
		\begin{displaymath}

		\end{displaymath}
		Monoids in $\K$, the morphisms and bimodules between them, together with the cells $\bar\phi$ and $\bar\psi$ above, form an augmented virtual double category $\Mod(\K)$. Composition of cells in $\Mod(\K)$ is given by
		\begin{displaymath}
			\bar\psi \of (\bar\phi_1, \dotsc, \bar\phi_n) \dfn \overline{\bigpars{\psi' \of (\phi_1, \dotsc, \phi_n)}}
		\end{displaymath}
		where $\psi'$ is any cell of the right form in $\K$ that is obtained by composing $\psi$ with actions, on its horizontal sources and/or target, of the horizontal targets $\gamma_i$ of the cells $\bar\phi_i$ that are nullary. The identity cells in $\Mod(\K)$, for bimodules $\hmap JAB$ and morphisms $\map fAC$ of monoids, are given by
		\begin{displaymath}
			\id_J \dfn \overline{(\id_J)} \qquad \text{and} \qquad \id_f \dfn \overline{(\tilde\gamma \of f)}.
		\end{displaymath}
	\end{proposition}
	That the composition $\bar\psi \of (\bar\phi_1, \dotsc, \bar\phi_n)$ above does not depend on the choice of $\psi'$ follows from the equivariance axioms for $\psi$ (\defref{monoids and bimodules}). It is straightforward to show that these axioms, together with associativity and unitality of composition in $\K$, imply the associativity and unit axioms for the composition in $\Mod(\K)$. Later on we will see that $\K \mapsto \Mod(\K)$ forms the object-function of the composition of a pair of $2$-functors, the first described in \propref{Mod as right pseudo-adjoint} and the second in \thmref{unital fc-multicategories}. 
	
	To give examples, we assume given a category $\Set'$ of \emph{large sets} and functions between them, as well as a full subcategory $\Set \subset \Set'$ of \emph{small sets} and their functions, such that the morphisms in $\Set$ form a large set in $\Set'$.
	\begin{example} \label{enriched profunctors}
		Let $\V = (\V, \tens, I)$ be a monoidal category. The virtual double category $\Mat\V$ of \emph{$\V$-matrices} has large sets and functions as objects and vertical morphisms, while a horizontal morphism $\hmap JAB$ is a $\V$-matrix, given by a family $J(x, y)$ of $\V$"/objects indexed by pairs $(x, y) \in A \times B$. A cell $\phi$ in $\Mat\V$, of the form as above, consists of a family of $\V$-maps
		\begin{displaymath}
			\map{\phi_{(x_0, \dotsc, x_n)}}{J_1(x_0, x_1) \tens \dotsb \tens J_n(x_{n'}, x_n)}{K(fx_0, gx_n)}
		\end{displaymath}
		indexed by sequences $(x_0, \dotsc, x_n) \in A_0 \times \dotsb \times A_n$, where the tensor product is to be interpreted as the unit $I$ in case $n = 0$.
		
		The augmented virtual double category $\enProf\V \dfn \Mod(\Mat\V)$ of monoids and bimodules in $\Mat\V$ is that of \emph{$\V$-enriched} categories, $\V$-functors, $\V$-profunctors and $\V$-natural transformations. In some more detail, a horizontal morphism $\hmap JAB$ in $\enProf\V$, between $\V$"/categories $A$ and $B$, is a \emph{$\V$-profunctor} in the sense of Section~3 of \cite{Lawvere73} (called bimodule there): it consists of a family of $\V$-objects $J(x, y)$, indexed by pairs of objects $x \in A$ and $y \in B$, that is equipped with associative and unital actions
		\begin{displaymath}
			\map\lambda{A(x_1, x_2) \tens J(x_2, y)}{J(x_1, y)} \qquad \text{and} \qquad \map\rho{J(x,y_1) \tens B(y_1, y_2)}{J(x, y_2)}
		\end{displaymath}
		satisfying the usual compatibility axiom for bimodules. If $\V$ is closed symmetric monoidal, so that it can be considered as enriched over itself, then $\V$-profunctors $\hmap JAB$ can be identified with $\V$-functors of the form $\map J{\op A \tens B}\V$.
		
		A vertical cell $\cell\phi fg$ in $\enProf\V$, between $\V$-functors $f$ and $\map gAC$, is a $\V$-natural transformation $f \Rar g$ in the usual sense; see for instance Section 1.2 of \cite{Kelly82}. We conclude that the vertical $2$-category $V(\enProf\V)$, that is contained in $\enProf\V$, equals the $2$-category $\enCat\V$ of $\V$-categories, $\V$-functors and the $\V$-natural transformations between them.
		
		Taking $\V = \Set$ we recover the archetypal augmented virtual double category $\enProf\Set$ of locally small categories, functors, \emph{(small) profunctors} $\map J{\op A \times B}\Set$ and transformations. Analogously $\enProf{\Set'}$ is the augmented virtual double category of categories (with possibly large hom-sets), functors, \emph{large profunctors} $\map J{\op A \times B}\Set'$ and transformations.
	\end{example}
	
	\begin{example} \label{internal profunctors}
		Let $\E$ be a category with pullbacks. The virtual double category $\Span\E$ of \emph{spans} in $\E$ has as objects and vertical morphisms those of $\E$, while its horizontal morphisms $\hmap JAB$ are spans $A \leftarrow J \rightarrow B$ in $\E$. A cell $\phi$ in $\Span\E$, of the form as above, is a morphism
		\begin{displaymath}
			\map \phi{J_1 \times_{A_1} \dotsb \times_{A_{n'}} J_n}K
		\end{displaymath}
		in $\E$ lying over $f$ and $g$. The augmented virtual double category $\inProf\E \dfn \Mod(\Span\E)$ of monoids and bimodules in $\Span\E$ is that of \emph{internal} categories, functors, profunctors and transformations in $\E$. Analogous to previous example $V(\inProf\E) = \inCat\E$, that is the $2$-category of internal categories, functors and transformations in $\E$; the latter in the usual sense of e.g.\ Section 1 of \cite{Street74b}.
	\end{example}
	
	The following example forms the main motivation for extending the notion of virtual double category to that of augmented virtual double category.
	\begin{example} \label{(Set, Set')-Prof}
		Taking $\V = \Set'$ in \exref{enriched profunctors}, we write $\enProf{(\Set, \Set')} \subset \enProf{\Set'}$ for the sub-augmented virtual double category that is obtained by restricting to small profunctors. In detail: $\enProf{(\Set, \Set')}$ consists of all (possibly large) categories and functors, only those profunctors $\hmap JAB$ with small sets $J(x, y)$ for all \mbox{$(x, y) \in A \times B$}, and all transformations between such profunctors (including the nullary ones).
		
		Thus we have an ascending chain of augmented virtual double categories
		\begin{displaymath}
			 \enProf\Set \subset \enProf{(\Set, \Set')} \subset \enProf{\Set'},
		\end{displaymath}
		and we take the view that the classical theory of locally small categories is best considered in $\enProf{(\Set, \Set')}$. As motiviation for this view, on one hand remember from \cite{Freyd-Street95} that, for a locally small category $A$, the category $\Set^{\op A}$ of small presheaves on $A$ is locally small too precisely if $A$ is essentially small, so that small presheaves on $A$ need not form an object in $\enProf\Set$, whereas they do in $\enProf{(\Set, \Set')}$. On the other hand, writing $\map\yon A{\Set^{\op A}}$ for the yoneda embedding, Yoneda's lemma supplies, for each small profunctor $\hmap JAB$ in $\enProf{(\Set, \Set')}$, a functor $\map{\cur J}B{\Set^{\op A}}$ equipped with a natural isomorphism of small profunctors $J \iso \Set^{\op A}(\yon, \cur J)$\footnote{Indeed we take $\cur J(y) \dfn J(\dash, y)$.}; of course such a $\cur J$ does not exist for the properly large profunctors $J$ that exist in $\enProf{\Set'}$.
		
		For an advantage of working in the augmented virtual double category $\enProf{(\Set, \Set')}$ rather than the virtual double category $U\pars{\enProf{(\Set, \Set')}}$ that it contains notice that, for any two functors $f$ and $\map gAC$ into a large category $C$, the natural transformations $\nat\phi fg$ can be considered in the former but not the latter. Indeed in $\enProf{(\Set, \Set')}$ they form the vertical cells $\cell\phi fg$, which are removed from $U\pars{\enProf{(\Set, \Set')}}$. On the other hand such natural transformations correspond to cells in $\enProf{\Set'}$ of the form below, where $I_C$ is the `unit profunctor' given by the large hom-sets $I_C(x,y) = C(x,y)$ (see \exref{horizontal composites in (V, V')-Prof}), but these do not exist in either $\enProf{(\Set, \Set')}$ or $U\pars{\enProf{(\Set, \Set')}}$.
		\begin{displaymath}
			\begin{tikzpicture}
				\matrix(m)[math35, column sep={1.75em,between origins}]{& A & \\ C & & C \\};
				\path[map]	(m-1-2) edge node[left] {$f$} (m-2-1)
														edge node[right] {$g$} (m-2-3)
										(m-2-1) edge[barred] node[below] {$I_C$} (m-2-3);
				\path				(m-1-2) edge[cell, transform canvas={yshift=-0.25em}] node[right, inner sep=2.5pt] {$\phi$} (m-2-2);
			\end{tikzpicture}
		\end{displaymath}
	\end{example}
	
	As a variation on $\enProf{(\Set, \Set')}$ the following example describes the augmented virtual double category $\enProf{(\Set, \Set')}^\Ss$ of small profunctors indexed by a category $\Ss$. The case where $\Ss$ is the category $\Gg_1 = \pars{1 \rightrightarrows 0 }$ will be important to us, as the `free strict double category'-monad is defined on $\enProf{(\Set, \Set')}^{\Gg_1}$; see \exref{free strict double category-monad}.
	\begin{example} \label{indexed profunctors}
		Let $\Ss$ be a small category. We first describe the augmented virtual double category $\enProf{\Set'}^\Ss \dfn \inProf{\Set'^\Ss}$ of profunctors internal to the functor category $\Set'^\Ss$ (\exref{internal profunctors}). Its objects are \emph{large $\Ss$-indexed categories}, i.e.\ functors \mbox{$\map A\Ss{\Cat(\Set')}$}, while its vertical morphisms $\map fAC$ are the \emph{$\Ss$"/indexed functors} between them, that is natural transformations $A \Rar C$. For $s \in \Ss$ we write $A_s \dfn A(s)$; likewise $\map{A_u \dfn A(u)}{A_s}{A_t}$ for each map $\map ust \in \Ss$. A horizontal morphism $\hmap JAB$ in $\enProf{\Set'}^\Ss$ is a \emph{large $\Ss$-indexed profunctor} between $A$ and $B$, consisting of a family of profunctors $\hmap{J_s}{A_s}{B_s}$, one for each $s \in \Ss$, that is equipped with $(1,1)$-ary cells $J_u$ in $\enProf{\Set'}$, as on the left below, one for each $\map ust \in \Ss$. The assignment $u \mapsto J_u$ is required to be natural, that is $J_{v \of u} = J_v \of J_u$ and $J_{\id_s} = \id_{J_s}$.
		
		An \emph{$\Ss$-indexed cell} $\phi$ in $\enProf{\Set'}^\Ss$, of the form as in \eqref{cell}, consists of a family of cells $\phi_s$ as on the right below, one for each $s \in \Ss$, that are natural in $s$, in the sense that $\phi_t \of (J_{1u}, \dotsc, J_{nu}) = K_{1u} \of \phi_s$ if $\lns{\ul K} = 1$, and $\phi_t \of (J_{1u}, \dotsc, J_{nu}) = C_u \of \phi_s$ if $\lns{\ul K} = 0$. Finally, compositions and identities in $\enProf{\Set'}^\Ss$ are simply given indexwise: for instance $\pars{\psi \of (\phi_1, \dotsc, \phi_n)}_s = \psi_s \of (\phi_{1s}, \dotsc, \phi_{ns})$, and so on.
		\begin{displaymath}
			\begin{tikzpicture}[baseline]
					\matrix(m)[math35]{A_s & B_s \\ A_t & B_t \\};
					\path[map]	(m-1-1) edge[barred] node[above] {$J_s$} (m-1-2)
															edge node[left] {$A_u$} (m-2-1)
											(m-1-2) edge node[right] {$B_u$} (m-2-2)
											(m-2-1) edge[barred] node[below] {$J_t$} (m-2-2);
					\path[transform canvas={xshift=1.75em}]	(m-1-1) edge[cell] node[right] {$J_u$} (m-2-1);
				\end{tikzpicture} \qquad\qquad\qquad\qquad \begin{tikzpicture}[baseline]
				\matrix(m)[math35]{A_{0s} & A_{1s} & A_{n's} & A_{ns} \\ C_s & & & D_s \\};
				\path[map]	(m-1-1) edge[barred] node[above] {$J_{1s}$} (m-1-2)
														edge node[left] {$f_s$} (m-2-1)
										(m-1-3) edge[barred] node[above] {$J_{ns}$} (m-1-4)
										(m-1-4) edge node[right] {$g_s$} (m-2-4)
										(m-2-1) edge[barred] node[below] {$\ul K_s$} (m-2-4);
				\path[transform canvas={xshift=1.75em}]	(m-1-2) edge[cell] node[right, inner sep=2.5pt] {$\phi_s$} (m-2-2);
				\draw				($(m-1-2)!0.5!(m-1-3)$) node {$\dotsb$};
			\end{tikzpicture}
		\end{displaymath}
		
		Analogous to the definition of $\enProf{(\Set, \Set')}$ as a sub-augmented virtual double category of $\enProf{\Set'}$, in the previous example, we define $\enProf{(\Set, \Set')}^\Ss$ as the sub-augmented virtual double category of $\enProf{\Set'}^\Ss$ that is obtained by restricting to \emph{small $\Ss$-indexed profunctors} $\hmap JAB$, i.e.\ those with $J_s(x, y) \in \Set$ for all $s \in \Ss$, $x \in A_s$ and $y \in B_s$.
	\end{example}
	
	\subsection{Universe enlargements}
	Notice that, analogous to \exref{(Set, Set')-Prof}, we can consider sub-augmented virtual double categories $\enProf{(\mathsf{Ab}, \mathsf{Ab}')} \subset \enProf{\mathsf{Ab}'}$, $\enProf{(\Cat, \Cat')} \subset \enProf{\Cat'}$, etc., where $\mathsf{Ab} \subset \mathsf{Ab}'$, $\Cat \subset \Cat'$, etc., are the embeddings obtained by considering abelian groups, categories, etc., in the categories of sets $\Set$ and $\Set'$ respectively. Again we prefer to work in e.g.\ $\enProf{(\mathsf{Ab}, \mathsf{Ab}')}$ instead of $\enProf{\mathsf{Ab}}$ or $\enProf{\mathsf{Ab'}}$, for reasons similar to the ones given in \exref{(Set, Set')-Prof}.
	
	More generally, we will follow Kelly in Section 3.11 of \cite{Kelly82} and enrich both in a monoidal category $\V$ as well as in a `universe enlargement' $\V'$ of $\V$, as follows. We call a category $\mathcal C$ \emph{small (resp.\ large) (co-)complete} if the (co-)limit of any diagram $\map DS\mathcal C$ exists as soon as the set of morphisms $S_1$ of $S$ is small (resp.\ large); that is $S_1 \in \Set$ (resp.\ $S_1 \in \Set'$). By a \emph{closed} monoidal category $\V$ we mean a monoidal category $\V = (\V, \tens, I)$ that is equipped with, for every object $X \in \V$, a right adjoint $\brks{X, \dash}$ to the endofunctor $X \tens \dash$ on $\V$. These right adjoints combine to form an \emph{internal hom} functor $\map{\brks{\dash, \dash}}{\V \times \V}\V$.
	
	Remember that a \emph{normal monoidal} functor $\map F\V\V'$ between monoidal categories $\V$ and $\V' = (\V', \tens', I')$ is equipped with coherent natural isomorphisms $\bar F\colon FX \tens' FY \xrar\iso F(X \tens Y)$, for any pair $X, Y \in \V$, while it preserves the unit $FI = I'$. If $\V$ and $\V'$ are symmetric monoidal, with symmetries denoted $\mathfrak s$ and $\mathfrak s'$, then a monoidal functor $\map F\V\V'$ is called \emph{symmetric} monoidal whenever its coherence isomorphisms $\bar F$ make the diagrams below commute.
	\begin{displaymath}
		\begin{tikzpicture}
			\matrix(m)[math35, column sep=2em]{FX \tens' FY & F(X \tens Y) \\ FY \tens' FX & F(Y \tens X) \\};
			\path[map]	(m-1-1) edge node[above] {$\bar F$} (m-1-2)
													edge node[left] {$\mathfrak s'$} (m-2-1)
									(m-1-2) edge node[right] {$F\mathfrak s$} (m-2-2)
									(m-2-1) edge node[below] {$\bar F$} (m-2-2);
		\end{tikzpicture}
	\end{displaymath}
	Finally if $\V$ and $\V'$ are both closed, with internal hom functors $\brks{\dash, \dash}$ and $\brks{\dash, \dash}'$, then a (symmetric) monoidal functor $\map F\V\V'$ is called \emph{closed} as soon as the canonical maps $F\brks{X, Y} \to \brks{FX, FY}'$, that are adjunct to the composites below, are invertible.
	\begin{displaymath}
		FX \tens' F\brks{X, Y} \xrar{\bar F} F(X \tens \brks{X, Y}) \xrar{F\textup{ev}} FY
	\end{displaymath}
	\begin{definition} \label{universe enlargement}
		Let $\V$ be a (closed) monoidal category whose set of morphisms is large. By a \emph{universe enlargement} of $\V$ we mean a closed monoidal category $\V'$ that is equipped with a full and faithful (closed) normal monoidal functor $\V \to \V'$ satisfying the following axioms.
		\begin{enumerate}[label=(\alph*)]
			\item $\V'$ is locally large, that is $\V'(X,Y) \in \Set'$ for all $X, Y \in \V'$; 
			\item $\V'$ is large cocomplete and large complete;
			\item $\V \to \V'$ preserves all limits;
			\item $\V \to \V'$ preserves large colimits.
		\end{enumerate}
		A universe enlargement $\V \to \V'$ is called \emph{symmetric} whenever both $\V$ and $\V'$ are symmetric monoidal categories, while $\V \to \V'$ is a symmetric monoidal functor.
	\end{definition}
	We will see in \exref{equivalence induced by universe enlargement} below that any universe enlargement $\V \to \V'$ induces an `inclusion' $\enProf\V \to \enProf{\V'}$ of augmented virtual double categories, so that $\V$-categories, $\V$"/functors and $\V$-profunctors can be regarded as if enriched in $\V'$. We write \mbox{$R\V \subset \V'$} for the \emph{replete image} of the enlargement, that is the full subcategory consisting of $\V'$-objects isomorphic to objects in the image of $\V \to \V'$. Since $\V \to \V'$ is full and faithful it factors as an equivalence $\V \simeq R\V \subset \V'$, which we shall use to regard $\V$ as a subcategory of $\V'$; consequently, by a \emph{$\V$-object} we shall mean either an object in $\V$ or one in $R\V$.
	
	We shall also see that the factorisation $\V \to \V'$ through $R\V$ induces a factorisation
	\begin{displaymath}
		\begin{tikzpicture}
			\matrix(m)[math35]{ \enProf\V & & \enProf{\V'} \\ & \enProf{R\V} & \\ };
			\path[map]		(m-1-1) edge (m-1-3);
			\draw[white]	(m-1-1) edge node[sloped, black] {$\simeq$} (m-2-2)
										(m-2-2) edge node[sloped, black] {$\subset$} (m-1-3);
		\end{tikzpicture}
	\end{displaymath}
	as an `equivalence' of augmented virtual double categories, which we likewise use to identify $\V$"/categories, $\V$-functors and $\V$-profunctors with their $R\V$-enriched counterparts; using the term \emph{$\V$-category} (resp.\ \emph{$\V$-functor} and \emph{$\V$-profunctor}) in both cases.
	
	One can show that the embeddings $\Set \subset \Set'$, $\mathsf{Ab} \subset \mathsf{Ab}'$ and $\Cat \subset \Cat'$, of the categories of small sets, small abelian groups and small categories into the categories of their large counterparts, are symmetric universe enlargements in the above sense, as long as $\Set$ has infinite sets. More generally we have the following result, which summarises Sections 3.11 and 3.12 of \cite{Kelly82}.
	\begin{theorem}[Kelly]
		Let $\V$ be a (closed) (symmetric) monoidal category, with tensor product $\tens$, whose set of morphisms is large. The (symmetric) monoidal structure on $\V$ induces a closed (symmetric) monoidal structure on the category ${\Set'}^{\op\V}$ of large presheaves on $\V$, whose tensor product $\tens'$ is given by \emph{Day convolution} (see \cite{Day70} or \exref{Day convolution} below):
		\begin{displaymath}
			p \tens' q \dfn \int^{x, y \in \V} \V(\dash, x \tens y) \times px \times qy
		\end{displaymath}
		for presheaves $p$ and $\map q{\op\V}{\Set'}$. With respect to this structure the yoneda embedding $\map\yon \V{{\Set'}^{\op\V}}$ satisfies axioms \textup{(a)} to \textup{(c)} of the previous definition.
		
		Next let $\map{\yon'}\V{\V'}$ denote the factorisation of $\yon$ through the full subcategory \mbox{$\V' \subset \Set'^{\op \V}$} consisting of presheaves that preserve all large limits in $\op \V$. The induced closed (symmetric) monoidal structure on $\Set'^{\op\V}$ again induces such a structure on $\V'$. If $\V$ is closed (symmetric) monoidal then $\yon'$, with respect to the latter monoidal structure on $\V'$, forms a (symmetric) universe enlargement.
	\end{theorem}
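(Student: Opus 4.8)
The plan is to reduce both assertions to the cartesian closedness and large (co)completeness of $\Set'$, together with the (co)Yoneda lemma, treating the two halves of the statement in turn.

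For the first half, I would first invoke Day convolution \cite{Day70}: since $\Set'$ is cartesian closed and large complete, the displayed coend equips ${\Set'}^{\op\V}$ with a monoidal structure whose unit is $\yon I = \V(\dash, I)$, with associators, unitors and (when $\V$ is symmetric) symmetry induced from those of $\V$ and of $\times$ on $\Set'$. To see this structure is \emph{closed}, I would slide the hom into the defining coend and apply (co)Yoneda in $\Set'$ to obtain
\[ {\Set'}^{\op\V}(p \tens' q, r) \iso \int_u \Set'\bigpars{pu,\, \brks{q,r}(u)}, \quad\text{where}\quad \brks{q,r}(u) \coloneqq \int_v \Set'\bigpars{qv,\, r(u \tens v)}, \]
the end existing because $\V$ has a large morphism-set and $\Set'$ is large complete; note this makes ${\Set'}^{\op\V}$ closed whether or not $\V$ is. That $\yon$ is normal monoidal — in fact strong, with $\yon x \tens' \yon y \iso \yon(x \tens y)$ and $\yon I$ the unit on the nose — is the standard strong monoidality of the Day embedding, and it is symmetric when $\V$ is. Finally, when $\V$ is closed, the comparison $\yon\brks{x,y} \to \brks{\yon x, \yon y}$ is invertible, as both sides evaluate at $u$ to $\V(u \tens x, y)$ (the right-hand one by the end above and Yoneda, the left-hand one by closedness of $\V$), so $\yon$ is closed monoidal.

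I would then check axioms \textup{(a)}–\textup{(c)} for $\map\yon\V{{\Set'}^{\op\V}}$. Local largeness \textup{(a)} holds since a natural transformation is a point of the end $\int_x \Set'(px, qx)$, a large limit of large sets; large (co)completeness \textup{(b)} holds because (co)limits in a presheaf category are computed pointwise out of the large (co)complete $\Set'$; and preservation of all limits \textup{(c)} is the continuity of $\yon$. The embedding into the \emph{full} presheaf category fails \textup{(d)}, as it does not preserve colimits, and repairing exactly this defect is the point of the second half.

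For the second half I would proceed by reflection. Call a presheaf \emph{continuous} if it preserves large limits in $\op\V$; these are the objects of $\V'$. Representables are continuous, so $\yon$ corestricts to a full and faithful $\map{\yon'}\V{\V'}$. I would establish: \textup{(i)} $\V' \subset {\Set'}^{\op\V}$ is reflective, with reflector $L$; \textup{(ii)} $\V'$ is an exponential ideal, i.e.\ $\brks{q,r} \in \V'$ whenever $r \in \V'$; and \textup{(iii)} $L$ is strong monoidal. For \textup{(ii)}, when $\V$ is closed the functor $\dash \tens v$ preserves colimits, so $u \mapsto r(u \tens v)$ is continuous, and the end over $v$ commutes with the limits it preserves; hence $\brks{q,r}$ is continuous. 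Granting \textup{(i)} and \textup{(ii)}, Day's reflection theorem transports the closed monoidal structure to $\V'$ — tensor $L(\dash \tens' \dash)$, internal hom the restriction of $\brks{\dash,\dash}$ — and renders $L$ strong monoidal, giving \textup{(iii)}; since the unit $\yon I$ is already continuous we have $L(\yon I) = \yon I$, so $\yon' = L \of \yon$ is a composite of (closed, symmetric) normal monoidal functors and hence itself (closed, symmetric) normal monoidal. Axioms \textup{(a)} and \textup{(c)} pass from the ambient category to $\V'$ because $\V'$ is a full subcategory closed under limits, and \textup{(b)} follows as $\V'$ is limit-closed (complete) and reflective in a cocomplete category (cocomplete). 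The remaining axiom \textup{(d)}, that $\yon'$ preserves large colimits, is where the construction pays off: for a large colimit $x \iso \operatorname{colim}_i x_i$ in $\V$, its $\V'$-colimit is $L\bigpars{\operatorname{colim}_i \yon x_i}$, and for any $r \in \V'$ the adjunction $L \dashv {\subset}$, Yoneda, and continuity of $r$ give ${\Set'}^{\op\V}(\operatorname{colim}_i \yon x_i, r) \iso \lim_i\, r x_i \iso r(\operatorname{colim}_i x_i) \iso {\Set'}^{\op\V}(\yon(\operatorname{colim}_i x_i), r)$, whence $L\bigpars{\operatorname{colim}_i \yon x_i} \iso \yon(\operatorname{colim}_i x_i) = \yon' x$.

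The main obstacle is not any single calculation but the size bookkeeping behind step \textup{(i)}: that the continuous presheaves form a reflective subcategory. I would obtain this as an orthogonal-subcategory reflection, $\V'$ being the objects orthogonal to the (large, but legitimately bounded) family of colimit cocones of $\V$, using that ${\Set'}^{\op\V}$ is large complete, large cocomplete and generated by representables; the two-universe discipline ($\Set \subset \Set'$) is precisely what keeps these families and the reflection legitimate, and for the delicate size estimates I would defer to Sections 3.11–3.12 of \cite{Kelly82}. A secondary care point is the closedness of $\V'$ when $\V$ is not itself closed, where the exponential-ideal argument of \textup{(ii)} must be replaced; but in the closed case of principal interest the argument is exactly as above.
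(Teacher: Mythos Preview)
The paper does not give its own proof of this theorem: it is stated as a result of Kelly and attributed to Sections~3.11 and~3.12 of \cite{Kelly82}, with no proof environment following it. So there is nothing to compare against directly; your sketch is essentially the standard argument, which is Kelly's. The Day convolution construction, the closedness computation, the strong monoidality of $\yon$, and axioms (a)--(c) are all straightforward, and the reflection-plus-exponential-ideal approach to the second half, invoking Day's reflection theorem, is exactly how Kelly proceeds. You correctly identify the delicate point --- the existence of the reflector $L$ and the attendant size bookkeeping --- and correctly defer to Kelly for it. The only residual loose end is the one you flag yourself: your exponential-ideal argument for (ii) uses closedness of $\V$ (via cocontinuity of $\dash \tens v$), whereas the theorem as stated allows $\V$ merely monoidal, and $\V'$ must still be closed in that case; Kelly handles this, but your sketch does not indicate how.
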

	
	\begin{example} \label{(V, V')-Prof}
		Given a universe enlargement $\V \to \V'$ consider a $\V'$-profunctor $\hmap JAB$ in $\enProf{\V'}$ (see \exref{enriched profunctors}). We will call $J$ a \emph{$\V$-profunctor} whenever $J(x, y)$ is a $\V$-object for all pairs $x \in A$ and $y \in B$. Analogous to \exref{(Set, Set')-Prof}, we denote by $\enProf{(\V, \V')}$ the sub-augmented virtual double category of $\enProf{\V'}$ that consists of all $\V'$-categories and $\V'$-functors, as well as $\V$-profunctors and their transformations.
	\end{example}
	
	\begin{example} \label{universe enlargement of Lawvere quantale}
		A \emph{quantale} (see e.g.\ Section II.1.10 of \cite{Hofmann-Seal-Tholen14}) is a complete lattice equipped with a unital monoid structure $\tens$ that preserves suprema on both sides. Equivalently, a quantale can be thought of as a thin monoidal category $\V$ that is complete (and hence cocomplete) and that is equipped with a closed monoidal structure. Consider the quantale $\brks{-\infty, \infty}$, equipped with the reversed order $\geq$, whose monoid structure is the unique extension of addition $(+, 0)$ that preserves infima on both sides, while its closed structure similarly extends subtraction; for details see e.g.\ Section 2.2 of \cite{Willerton15}.
		
		The quantale $\brks{-\infty, \infty}$ forms a symmetric universe enlargement of the \emph{Lawvere quantale} $\brks{0, \infty}$, which it contains as a subquantale. Since the closed structure on the latter is given by \emph{truncated} subtraction the monoidal inclusion $\brks{0, \infty} \into \brks{-\infty, \infty}$ is not closed. Categories enriched in $\brks{0, \infty}$ form a fundamental example of enriched category theory: it was Lawvere who showed in \cite{Lawvere73} that they can be regarded as \emph{generalised metric spaces}: sets $A$ equipped with a (non-symmetric) distance function $A \times A \to \brks{0, \infty}$, which here we again denote by $A$. Enriching in $\brks{-\infty, \infty}$ instead allows for distance functions $\map A{A \times A}{\brks{-\infty, \infty}}$ that take negative distances; see e.g.\ Section 2.3 of \cite{Willerton15}. Both vertical morphisms $\map fAC$ and horizontal morphisms $\hmap JAB$ between generalised metric spaces are required to be \emph{non"/expanding}, that is $A(x, y) \geq C(fx, fy)$ for all $x, y \in A$ and 
		\begin{displaymath}
			A(x_1, x_2) + J(x_2, z) \geq J(x_1, z) \qquad \text{and} \qquad J(x, z_1) + B(z_1, z_2) \geq J(x, z_2)
		\end{displaymath}
		for all $x_1, x_2, x \in A$ and $z, z_1, z_2 \in B$.
	\end{example}
	
	\subsection{The \texorpdfstring{$2$}{2}-category of augmented virtual double categories}
	Having introduced the notion of augmented virtual double categories we next consider the functors between them, as well as their transformations.
	\begin{definition}
		A \emph{functor} $\map F\K\L$ between augmented virtual double categories consists of a functor $\map {F_\textup v}{\K_\textup v}{\L_\textup v}$ (which will be denoted $F$) as well as assignments mapping the horizontal morphisms and cells of $\K$ to those of $\L$, as shown below, in a way that preserves compositions and identities strictly.
		\begin{align*}
			\hmap JAB \qquad&\mapsto\qquad \hmap{FJ}{FA}{FB} \\

		\end{displaymath}
		in $\L$, one for each $\hmap JAB \in \K$, that satisfies the \emph{naturality axiom}
		\begin{displaymath}
			G\phi \of \xi_{\ul J} = \xi_{\ul K} \of F\phi
		\end{displaymath}
		whenever this makes sense, where $\xi_{\ul J} \dfn (\xi_{J_1}, \dotsc, \xi_{J_n})$ if $\ul J = (J_1, \dotsc, J_n)$ and $\xi_{\ul J} \dfn \xi_A$ if $\ul J = (A)$.
	\end{definition}
	
	Recall that the definition of augmented virtual double category reduces to that of virtual double category by restricting it to unary cells, together with removing the references to vertical identity cells; see the discussion following \lemref{horizontal composition}. Likewise the definitions above reduce to that of functor and transformation for virtual double categories as given in Section 3 of \cite{Cruttwell-Shulman10}. The latter combine into a $2$-category of virtual double categories which we denote $\VirtDblCat$.	Remember that every augmented virtual double category $\K$ contains a $2$-category $V(\K)$ and a virtual double category $U(\K)$. In the following, which is easily checked, $\twoCat$ denotes the $2$-category of $2$-categories, strict $2$-functors and $2$-natural transformations.
	\begin{proposition} \label{2-category of (fc, bl)-multicategories}
		Augmented virtual double categories, their functors and the transformations between them form a $2$-category $\AugVirtDblCat$. Both the assignments \mbox{$\K \mapsto V(\K)$} and $\K \mapsto U(\K)$ extend to strict $2$-functors
		\begin{displaymath}
			\map V\AugVirtDblCat\twoCat \quad \text{and} \quad \map U\AugVirtDblCat\VirtDblCat. 
		\end{displaymath}
	\end{proposition}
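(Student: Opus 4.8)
The plan is to write down explicitly the composition laws and identities turning $\fcblmCat$ into a $2$-category, and then to observe that both $V$ and $U$ preserve this structure because they are given by restriction. For the $1$-dimensional structure, the composite $GF$ of functors $\map F\K\L$ and $\map G\L\M$ has underlying vertical functor $G_\textup v \of F_\textup v$ and sends horizontal morphisms and cells by $J \mapsto G(FJ)$ and $\phi \mapsto G(F\phi)$; since $F$ and $G$ each preserve compositions and identities strictly, so does $GF$, the identity functors $\id_\K$ are evident, and associativity and the unit laws are immediate from those of the underlying assignments.

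Next, the $2$-dimensional structure. Given transformations $\nat\xi FG$ and $\nat\zeta GH$ of functors $\K \to \L$, I would take the vertical composite $\zeta\xi$ to have as underlying natural transformation the ordinary composite of $\xi$ and $\zeta$, with cell components $(\zeta\xi)_J \coloneqq \zeta_J \of \xi_J$. For $\nat\xi F{F'}$ between functors $\K \to \L$ and $\nat\eta G{G'}$ between functors $\L \to \M$, I would define the horizontal composite $\eta * \xi \colon GF \Rar G'F'$ by the Godement composite on underlying natural transformations, with cell components
\begin{displaymath}
	(\eta * \xi)_J \coloneqq \eta_{F'J} \of G\xi_J = G'\xi_J \of \eta_{FJ}.
\end{displaymath}
The sole point here that is not purely formal is that these two expressions agree; but this is exactly the naturality axiom of \defref{transformation} for $\eta$, applied to the $(1,1)$-ary cell $\xi_J$ in $\L$. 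The identity transformation $\id_F$ has cell components $\id_{FJ}$.

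It then remains to check that $\zeta\xi$ and $\eta * \xi$ again satisfy the naturality axiom of \defref{transformation}, and that the two compositions obey the $2$-category axioms. Both are routine bookkeeping: the naturality axiom for a composite transformation follows from those of its factors together with functoriality of the functors involved and the associativity axiom of \defref{(fc, bl)-multicategory}, while associativity, the unit laws and the interchange law reduce componentwise to the same axioms of $\L$ and $\M$ on the cell components, and to the corresponding identities in the $2$-category of categories on the underlying natural transformations.

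Finally, for the two $2$-functors: $V(F)$ and $U(F)$ are the restrictions of $F$ to the vertical and to the unary parts, which give respectively a strict $2$-functor $V(\K) \to V(\L)$ and a functor $U(\K) \to U(\L)$ of virtual double categories, since $F$ preserves both compositions $\of$ and $\hc$. On transformations, $V(\xi)$ is the $2$-natural transformation with $1$-cell components $\xi_A$---its naturality for $1$-cells being that of the underlying $\nat\xi{F_\textup v}{G_\textup v}$ and its naturality for $2$-cells being precisely the vertical case $\ul J = (A)$, $\ul K = (C)$ of the naturality axiom---while $U(\xi)$ consists of $\xi$ together with the cells $\xi_J$. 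Because both assignments are defined by restriction and the compositions and identities of $\fcblmCat$ restrict to those of $\twoCat$ and $\fcmCat$, strict functoriality of $V$ and $U$ comes for free. I expect no genuine obstacle: the only step requiring a moment's attention is the well-definedness of the cell component of $\eta * \xi$, which is the naturality axiom itself, after which every remaining identity collapses onto axioms already established.
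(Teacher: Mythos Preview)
Your proposal is correct and covers exactly the routine verifications needed. The paper does not actually give a proof of this proposition: it is stated immediately after the remark ``which is easily checked'', so your sketch supplies precisely the details the paper omits as standard, and there is nothing to compare against.
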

	
	\begin{example}
		Every lax monoidal functor $\map F\V{\catvar W}$ between monoidal categories induces a functor $\map{\Mat F}{\Mat\V}{\Mat{\catvar W}}$ between the virtual double categories of matrices in $\V$ and $\catvar W$ (\exref{enriched profunctors}) in the evident way. Likewise the components of any monoidal transformation $\nat\xi FG$ form the cell-components of an induced transformation $\nat{\Mat\xi}{\Mat F}{\Mat G}$. In fact the assignments $F \mapsto \Mat F$ and $\xi \mapsto \Mat\xi$ combine to form a strict $2$-functor $\map{\Mat{(\dash)}}{\MonCat_\textup l}{\VirtDblCat}$, where $\MonCat_\textup l$ denotes the $2$-category of monoidal categories, lax monoidal functors and monoidal transformations.
	\end{example}
	
	\begin{example} \label{Span is a 2-functor}
		Similarly any pullback-preserving functor $\map F\D\E$, between categories with pullbacks, induces a functor $\map{\Span F}{\Span\D}{\Span\E}$ between the virtual double categories of spans in $\D$ and $\E$ (see \exref{internal profunctors}). This too extends to a strict $2$-functor $\map{\Span{\dash}}{\Cat_\textup{pb}}{\VirtDblCat}$, where $\Cat_\textup{pb}$ denotes the $2$"/category of categories with pullbacks, pullback-preserving functors and all natural transformations between them.
	\end{example}
	
	\begin{proposition} \label{Mod is a 2-functor}
		The assignment $\K \mapsto \Mod(\K)$ of \propref{(fc, bl)-multicategory of monoids} extends to a strict $2$-functor $\map\Mod\VirtDblCat\AugVirtDblCat$.
	\end{proposition}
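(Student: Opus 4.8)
The plan is to define the action of $\Mod$ on the $1$- and $2$-cells of $\fcmCat$ by transporting all data through the given functor or transformation, and then to establish strict $2$-functoriality by reducing each required identity to the corresponding identity in the source virtual double category. The action on objects is already supplied by \propref{(fc, bl)-multicategory of monoids}, and since both $\fcmCat$ and $\fcblmCat$ are strict, no coherence cells will intervene.

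First I would define $\Mod F$ for a functor $\map F\K\L$. As $F$ preserves composition and identities strictly, it sends a monoid $A = (A, \alpha, \bar\alpha, \tilde\alpha)$ to $(FA, F\alpha, F\bar\alpha, F\tilde\alpha)$: the associativity and unit axioms of \defref{monoids and bimodules} are equalities between composites of cells, hence preserved. The same reasoning sends monoid morphisms $(f, \bar f)$ to $(Ff, F\bar f)$ and bimodules $(J, \lambda, \rho)$ to $(FJ, F\lambda, F\rho)$, while on cells I set $\Mod F(\bar\phi) \coloneqq \overline{F\phi}$ and $\Mod F(\bar\psi) \coloneqq \overline{F\psi}$, the external and internal equivariance axioms being preserved for the same reason. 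To see that $\Mod F$ respects vertical composition I would invoke the formula $\bar\psi \of (\bar\phi_1, \dotsc, \bar\phi_n) = \overline{\psi' \of (\phi_1, \dotsc, \phi_n)}$: applying $F$ to a chosen representative $\psi'$, built from $\psi$ by composing with the action cells of the relevant bimodules, yields a representative of the same form in $\L$, since $F$ carries those actions to the actions of the bimodules $FJ_i$. Preservation of identities is immediate from $\id_J = \overline{(\id_J)}$ and $\id_f = \overline{(\tilde\gamma \of f)}$.

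Next I would define $\Mod\xi$ for a transformation $\nat\xi FG$. Its vertical component at a monoid $A$ is the monoid morphism whose underlying vertical morphism is $\xi_A$ and whose structure cell is $\xi_\alpha$; that this is a morphism of monoids is precisely the naturality axiom of $\xi$ (\defref{transformation}, in its virtual-double-category form) applied to $\bar\alpha$ and $\tilde\alpha$, which reproduce respectively the associativity and unit axioms for monoid morphisms. Its cell component at a bimodule $\hmap JAB$ is $\overline{\xi_J}$, and the external equivariance required of $\xi_J$ is exactly naturality of $\xi$ applied to the left and right action cells $\lambda$ and $\rho$ of $J$. Finally, the naturality axiom for $\Mod\xi$ itself reduces, via the composition formula above, to the naturality axiom for $\xi$ evaluated on the underlying cell $\phi$.

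Finally I would verify strict $2$-functoriality: that $\Mod$ preserves identity functors and transformations, composition of functors, and both vertical and horizontal (whiskering) composition of transformations. Each such equation of functors or transformations of hypervirtual double categories unwinds, on underlying data, to the corresponding equation in $\fcmCat$ together with the definitions of the compositions in $\Mod$. The one place demanding genuine care is the preservation of vertical composition by $\Mod F$, because the composite in $\Mod\K$ is only defined up to the choice of the auxiliary cell $\psi'$; I expect this to be the main obstacle, and it is resolved by noting that $F$ transports the equivariance data witnessing independence of this choice, established in \propref{(fc, bl)-multicategory of monoids}, to the analogous data in $\L$.
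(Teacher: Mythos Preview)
Your proposal is correct and follows essentially the same approach as the paper's sketch: define $\Mod F$ and $\Mod\xi$ by applying $F$ and the components of $\xi$ indexwise to all the data, and verify the axioms by reducing them to preservation properties of $F$ and naturality of $\xi$. Your treatment is simply more detailed than the paper's, which records only the definitions of $\Mod F$ on a monoid and of the components of $\Mod\xi$, leaving the verifications implicit.
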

	\begin{proof}[Sketch of the proof.]
		The image $\map{\Mod F}{\Mod \K}{\Mod \L}$ of a functor $\map F\K\L$ between virtual double categories is simply given by applying $F$ indexwise; e.g.\ it maps a monoid $A = (A, \alpha, \bar \alpha, \tilde \alpha)$ in $\K$ to the monoid $(\Mod F)(A) \dfn (FA, F\alpha, F\bar \alpha, F\tilde\alpha)$ in $\L$. The image $\map{\Mod\xi}{\Mod F}{\Mod G}$ of a transformation $\nat\xi FG$ has as components the monoid morphisms $\map{(\Mod\xi)_A \dfn (\xi_A, \xi_\alpha)}{FA}{GA}$, one for each monoid $A$ in $\K$, as well as the cells of bimodules $\cell{(\Mod\xi)_J \dfn \xi_J}{FJ}{GJ}$, one for each bimodule $\hmap{J = (J, \lambda, \rho)}AB$ in $\K$.
	\end{proof}
	
	\subsection{Equivalence of augmented virtual double categories}
	Now that we have a $2$-category of augmented virtual double categories we can consider (adjoint) equivalences between such double categories.
	\begin{definition}
		An \emph{equivalence} $\K \simeq \L$ of augmented virtual double categories $\K$ and $\L$ is an equivalence in the $2$-category $\AugVirtDblCat$. That is, it is given by pair of functors \mbox{$\map F\K\L$} and $\map G\L\K$ that is equipped with invertible transformations \mbox{$\eta\colon\id_\K \iso GF$} and $\eps\colon FG \iso \id_\L$.
		
		If $\eta$ and $\eps$ satisfy the triangle identities, that is they define $F$ and $G$ as adjoint morphisms in $\AugVirtDblCat$, then the quadruple $(F, G, \eta, \eps)$ is called an \emph{adjoint equivalence} $\K \simeq \L$.
	\end{definition}
	Remember that, in any $2$-category, an equivalence $(F, G, \eta, \eps)\colon\K \simeq \L$ can be turned into an adjoint equivalence $(F, G, \eta, \eps')\colon\K \simeq \L$; see for instance Proposition~1.5.7 of \cite{Leinster04}.
	
	\begin{example} \label{equivalence induced by universe enlargement}
		Any universe enlargement $\V \to \V'$, in the sense of \defref{universe enlargement}, induces an equivalence of augmented virtual double categories as follows. Firstly applying the composite $2$-functor $\enProf{(\dash)} \dfn \Mod \of \Mat{(\dash)}$, which extends the assignment $\V \mapsto \enProf\V$ of \exref{enriched profunctors}, to $\V \to \V'$ gives a functor $\enProf\V \to \enProf{\V'}$ of augmented virtual double categories, as in the top of the diagram below.
		\begin{displaymath}
			\begin{tikzpicture}
				\matrix(m)[math35, row sep={4em,between origins}, column sep={6.5em,between origins}]
					{	\enProf\V & \enProf{\V'} \\
						\enProf{R\V} & \enProf{(\V, \V')} \\ };
				\draw[map]	(m-1-1) edge (m-1-2)
														edge (m-2-2);
				\draw[white]	(m-1-1) edge node[sloped, black] {$\simeq$} (m-2-1)
											(m-2-2)	edge node[black, sloped] {$\subset$} (m-1-2)
											(m-2-1) edge node[black, sloped] {$\subset$} (m-2-2);
			\end{tikzpicture}
		\end{displaymath}
		Next consider the factorisation $\V \simeq R\V \subset \V'$ of $\V \to \V'$ through the replete image $R\V$. Because the enlargement, and hence its factorisation $\V \simeq R\V$, is a monoidal functor, it follows from Kelly's result on `doctrinal adjunctions' \cite{Kelly74} (also see \thmref{creating adjunctions} below) that $\V \simeq R\V$ is in fact an equivalence of monoidal categories. Consequently its image under $\enProf{(\dash)}$, as on the left in the diagram above, is an equivalence of augmented virtual double categories as well.
		
		Finally notice that $\enProf{R\V} \subset \enProf{\V'}$ factors through the augmented virtual double category $\enProf{(\V, \V')}$ of $\V$-profunctors between $\V'$-categories, that was described in \exref{(V, V')-Prof}. Since each of the functors $\enProf\V \simeq \enProf{R\V} \subset \enProf{(\V, \V')} \subset \enProf{\V'}$ is full and faithful, in the sense below, it follows that both $\enProf\V \to \enProf{(\V, \V')}$ and $\enProf\V \to \enProf{\V'}$ are full and faithful too.
	\end{example}
	
	The goal of this section is to prove that, just like in classical category theory (see e.g.\ Section IV.4 of \cite{MacLane98}),  giving an equivalence $\K \simeq \L$ is the same as giving a functor $\map F\K\L$ that is `full, faithful and essentially surjective'. The following definitions generalise those for functors between double categories, as given in Section~7 of \cite{Shulman08}.
	
	We start with the notions full and faithful. Let $\map F\K\L$ be a functor between augmented virtual double categories. Its restriction $J \mapsto FJ$ to horizontal morphisms preserves sources and targets, so that it extends to an assignment $\ul J = (J_1, \dotsc, J_n) \mapsto F\ul J \dfn (FJ_1, \dotsc, FJ_n)$ on paths. Next, for each pair of morphisms $\map f{A_0}C$ and \mbox{$\map g{A_n}D$} in $\K$, and paths $\hmap{\ul J}{A_0}{A_n}$ and $\hmap{\ul K}CD$, where $\lns K \leq 1$, the functor $F$ restricts to the following assignment, between classes of cells of the forms as shown:
	\begin{displaymath}
		\Bigl\lbrace\begin{tikzpicture}[textbaseline]
			\matrix(m)[math35]{A_0 & A_n \\ C & D \\};
			\path[map]	(m-1-1) edge[barred] node[above] {$\ul J$} (m-1-2)
													edge node[left] {$f$} (m-2-1)
									(m-1-2) edge node[right] {$g$} (m-2-2)
									(m-2-1) edge[barred] node[below] {$\ul K$} (m-2-2);
			\path[transform canvas={xshift=1.75em}]	(m-1-1) edge[cell] node[right] {$\phi$} (m-2-1);
		\end{tikzpicture}\Bigr\rbrace \quad \xrar F \quad \Bigl\lbrace\begin{tikzpicture}[textbaseline]
			\matrix(m)[math35]{FA_0 & FA_n \\ FC & FD \\};
			\path[map]	(m-1-1) edge[barred] node[above] {$F\ul J$} (m-1-2)
													edge node[left] {$Ff$} (m-2-1)
									(m-1-2) edge node[right] {$Fg$} (m-2-2)
									(m-2-1) edge[barred] node[below] {$F\ul K$} (m-2-2);
			\path[transform canvas={xshift=1.75em}]	(m-1-1) edge[cell] node[right] {$\psi$} (m-2-1);
		\end{tikzpicture}\Bigr\rbrace
	\end{displaymath}
	\begin{definition} \label{full and faithful functor}
		A functor $\map F\K\L$ between augmented virtual double categories is called \emph{locally faithful} (resp.\ \emph{locally full}) if, for each $\map f{A_0}C$, $\map g{A_n}D$, $\hmap{\ul J}{A_0}{A_n}$ and $\hmap{\ul K}CD$ in $\K$, the assignment above is injective (resp.\ surjective). If moreover the restriction $\map F{\K_\textup v}{\L_\textup v}$, to the vertical categories, is faithful (resp.\ full), then $F$ is called \emph{faithful} (resp.\ \emph{full}).
	\end{definition}
	
	The following is a direct translation of Definition 7.6 of \cite{Shulman08} to the setting of augmented virtual double categories.
	\begin{definition} \label{essentially surjective}
		A functor $\map F\K\L$ of augmented virtual double categories is called \emph{essentially surjective} if we can simultaneously make the following choices:
		\begin{enumerate}[label=-]
			\item an object $A_\K \in \K$ for each object $A \in \L$, together with an isomorphism $\sigma_A\colon FA_\K \iso A$ in $\L$;
			\item a horizontal morphism $\hmap{J_\K}{A_\K}{B_\K}$ for each horizontal morphism $\hmap JAB$ in $\L$, together with an invertible cell
			\begin{displaymath}
				\begin{tikzpicture}
					\matrix(m)[math35]{FA_\K & FB_\K \\ A & B. \\};
					\path[map]	(m-1-1) edge[barred] node[above] {$FJ_\K$} (m-1-2)
															edge node[left] {$\sigma_A$} (m-2-1)
											(m-1-2) edge node[right] {$\sigma_B$} (m-2-2)
											(m-2-1) edge[barred] node[below] {$J$} (m-2-2);
					\path[transform canvas={xshift=1.75em}]	(m-1-1) edge[cell] node[right] {$\sigma_J$} (m-2-1);
				\end{tikzpicture}
			\end{displaymath}
		\end{enumerate}
	\end{definition}
	
	\begin{proposition} \label{equivalences}
		A functor $\map F\K\L$ between augmented virtual double categories is part of an adjoint equivalence $\K \simeq \L$ if and only if it is full, faithful and essentially surjective.
	\end{proposition}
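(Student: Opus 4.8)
The plan is to prove both implications, treating the forward direction as routine and concentrating the work on constructing a pseudo-inverse to $F$.

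For the ``only if'' direction, suppose $(F, G, \eta, \eps)$ is an adjoint equivalence. Applying the $2$-functor $V$ of \propref{2-category of (fc, bl)-multicategories} yields an equivalence of $2$-categories $\map{V(F)}{V(\K)}{V(\L)}$, whose underlying functor $\map{F_\textup v}{\K_\textup v}{\L_\textup v}$ is therefore full and faithful. Local faithfulness and fullness follow by conjugating cells with the invertible components $\eta_J$ and $\inv{\eta_J}$: sending a cell $\psi$ with boundary $(Ff, Fg, F\ul J, F\ul K)$ to $G\psi$ and then conjugating by the appropriate components of $\eta$ provides a two-sided inverse to $F$ on each hom-class. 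Finally $F$ is essentially surjective, witnessed by $A_\K \coloneqq GA$ and $J_\K \coloneqq GJ$, together with the invertible data $\sigma_A \coloneqq \eps_A$ and $\sigma_J \coloneqq \eps_J$.

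For the ``if'' direction, assume $F$ is full, faithful and essentially surjective, and fix the data of \defref{essentially surjective}. I would construct a functor $\map G\L\K$ by setting $GA \coloneqq A_\K$ and $GJ \coloneqq J_\K$ on objects and horizontal morphisms, and by defining $Gf$, for a vertical morphism $\map fAB$, to be the unique vertical morphism with $F(Gf) = \inv{\sigma_B} \of f \of \sigma_A$, which exists since $F$ is vertically full and faithful. On a unary cell $\phi$, with vertical source $f$, vertical target $g$ and horizontal source $\ul J = (J_1, \dotsc, J_n)$, I define $G\phi$ to be the unique cell with
\begin{displaymath}
	F(G\phi) = \inv{\sigma_K} \of \phi \of (\sigma_{J_1}, \dotsc, \sigma_{J_n});
\end{displaymath}
for a nullary cell, with common vertical target $C$, one replaces $\inv{\sigma_K}$ by the whiskering of $\phi \of (\sigma_{J_1}, \dotsc, \sigma_{J_n})$ on its target by the vertical isomorphism $\inv{\sigma_C}$. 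In either case the right-hand side has boundary $(F(Gf), F(Gg), F\ul J, FGK)$ by the defining property of $Gf$ and $Gg$, so $G\phi$ exists by local fullness and is unique by local faithfulness. Handling the unary and the new nullary cells uniformly here is the one point demanding care.

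That $G$ preserves identities and composition is checked after applying $F$, using faithfulness; the key computation is that in $F\bigl(G\psi \of (G\phi_1, \dotsc, G\phi_n)\bigr)$ the boundary cells $\sigma_{K_i}$ introduced by the conjugation of $\psi$ cancel against the $\inv{\sigma_{K_i}}$ introduced by the conjugations of the $\phi_i$, by invertibility of the $\sigma$'s together with the associativity and interchange axioms of \lemref{horizontal composition}, leaving exactly the conjugate of $\psi \of (\phi_1, \dotsc, \phi_n)$. I then define $\nat\eps{FG}{\id_\L}$ by $\eps_A \coloneqq \sigma_A$ and $\eps_J \coloneqq \sigma_J$, and $\nat\eta{\id_\K}{GF}$ by letting $\eta_A$ and $\eta_J$ be the unique vertical morphism and cell with $F\eta_A = \inv{\sigma_{FA}}$ and $F\eta_J = \inv{\sigma_{FJ}}$. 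Naturality of $\eps$ is precisely the defining equation for $G\phi$ rearranged, while naturality of $\eta$ follows, after applying $F$, from the same cancellation of $\sigma$-cells; both transformations are invertible since all their components are. This exhibits $F$ as part of an equivalence in $\fcblmCat$, and, as recalled immediately before the statement, any such equivalence may be refined---keeping $F$ and $G$ fixed---to an adjoint equivalence, which lets us avoid checking the triangle identities directly. The main obstacle I anticipate is precisely the bookkeeping in this functoriality computation, where the invertible boundary cells must be threaded through the associativity and interchange laws uniformly for unary and nullary cells.
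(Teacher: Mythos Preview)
Your proposal is correct and follows essentially the same approach as the paper: both construct the pseudo-inverse by transporting vertical morphisms and cells along the chosen isomorphisms $\sigma$, verify functoriality via faithfulness of $F$, and define $\eta$ by the equations $F\eta_A = \inv{\sigma_{FA}}$ and $F\eta_J = \inv{\sigma_{FJ}}$. The one minor difference is that the paper checks the triangle identities directly (remarking that this is easy), whereas you appeal to the general fact that any equivalence can be upgraded to an adjoint equivalence keeping $F$ and $G$ fixed; since the paper records exactly this fact just before the statement, your shortcut is entirely legitimate.
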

	\begin{proof}[Sketch of the proof]
		The `only if'-part is straightforward; we will sketch the `if'"/part. First, because $F$ is essentially surjective, we can choose objects $A_\K \in \K$, for each $A \in \L$, and horizontal morphisms $\hmap{J_\K}{A_\K}{B_\K} \in \K$, for each $\hmap JAB \in \L$, as in the definition above, together with isomorphisms $\sigma_A\colon FA_\K \iso A$ and $\sigma_J\colon FJ_\K \iso J$. Using the full and faithfulness of $F$ these choices can be extended to a functor $\map{(\dash)_\K}\L\K$ as follows: for each vertical morphism $\map fAC$ in $\L$ we define $\map{f_\K}{A_\K}{C_\K}$ to be the unique map in $\K$ such that $Ff = \inv\sigma_C \of f \of \sigma_A$, and for each cell $\cell\phi{\ul J}{\ul K}$ in $\L$ we define $\cell{\phi_\K}{\ul J_\K}{\ul K_\K}$ to be the unique cell in $\K$ such that $F\phi_\K = \inv\sigma_{\ul K} \of \phi \of \sigma_{\ul J}$, where the notation $\sigma_{\ul J}$ is as in \defref{transformation}. Using that $F$ is faithful it is easily checked that these assignments preserve the composition and identities of $\L$.
		
		Now the isomorphisms $(\sigma_A)_{A \in \L}$ and $(\sigma_J)_{J \in \L}$ combine to form a transformation $\sigma \colon F \of (\dash)_\K \iso \id_\L$. Conversely, a transformation $\eta\colon\id_K\iso (\dash)_\K \of F$ is obtained by defining $\eta_A$, where $A \in \K$, to be unique with $F\eta_A = \inv\sigma_{FA}$ and defining $\eta_J$, where $\hmap JAB$ in $\K$, such that $F\eta_J = \inv\sigma_{FJ}$. Checking that $\eta$ and $\sigma$ satisfy the triangle identities is easy.
	\end{proof}
	
	\section{Restriction of horizontal morphisms}\label{restriction section}
	Two notions that are invaluable to the theory of double categories and their generalisations are that of restricting horizontal morphisms and that of composing horizontal morphisms. In this section we consider a variation of the former notion, to one that is appropriate for augmented virtual double categories, following for a large part Section 7 of \cite{Cruttwell-Shulman10}; in the next section we consider the latter.
	
	\subsection{Cartesian cells}
	Restrictions of horizontal morphisms are defined by `cartesian cells', as follows.
	
	\begin{definition} \label{cartesian cells}
		A cell $\cell\psi{\ul J}{\ul K}$ with $\lns{\ul J} \leq 1$, as in the right-hand side below, is called \emph{cartesian} if any cell $\chi$, as on the left-hand side, factors uniquely through $\psi$ as a cell $\phi$ as shown.
		\begin{displaymath}
			\begin{tikzpicture}[textbaseline]
				\matrix(m)[math35]{X_0 & X_1 & X_{n'} & X_n \\ A & & & B \\ C & & & D \\};
				\path[map]	(m-1-1) edge[barred] node[above] {$H_1$} (m-1-2)
														edge node[left] {$h$} (m-2-1)
										(m-1-3) edge[barred] node[above] {$H_n$} (m-1-4)
										(m-1-4) edge node[right] {$k$} (m-2-4)
										(m-2-1) edge node[left] {$f$} (m-3-1)
										(m-2-4) edge node[right] {$g$} (m-3-4)
										(m-3-1) edge[barred] node[below] {$\ul K$} (m-3-4);
				\draw				($(m-1-2)!0.5!(m-1-3)$) node {$\dotsc$};
				\path[transform canvas={yshift=-1.625em}]	($(m-1-1.south)!0.5!(m-1-4.south)$) edge[cell] node[right] {$\chi$} ($(m-2-1.north)!0.5!(m-2-4.north)$);
			\end{tikzpicture} = \begin{tikzpicture}[textbaseline]
				\matrix(m)[math35]{X_0 & X_1 & X_{n'} & X_n \\ A & & & B \\ C & & & D \\};
				\path[map]	(m-1-1) edge[barred] node[above] {$H_1$} (m-1-2)
														edge node[left] {$h$} (m-2-1)
										(m-1-3) edge[barred] node[above] {$H_n$} (m-1-4)
										(m-1-4) edge node[right] {$k$} (m-2-4)
										(m-2-1) edge[barred] node[below] {$\ul J$} (m-2-4)
														edge node[left] {$f$} (m-3-1)
										(m-2-4) edge node[right] {$g$} (m-3-4)
										(m-3-1) edge[barred] node[below] {$\ul K$} (m-3-4);
				\draw				($(m-1-2)!0.5!(m-1-3)$) node {$\dotsc$};
				\path				($(m-1-1.south)!0.5!(m-1-4.south)$) edge[cell] node[right] {$\phi$} ($(m-2-1.north)!0.5!(m-2-4.north)$)
										($(m-2-1.south)!0.5!(m-2-4.south)$) edge[cell, transform canvas={yshift=-2pt}] node[right] {$\psi$} ($(m-3-1.north)!0.5!(m-3-4.north)$);
			\end{tikzpicture}
		\end{displaymath}
		Vertically dual, provided that the cell $\phi$ in the right-hand side above is unary, it is called \emph{weakly cocartesian} if any cell $\chi$ factors uniquely through $\phi$ as shown.
	\end{definition}
	
	If an $(1,n)$-ary cartesian cell $\psi$ like above exists then its horizontal source \mbox{$\hmap JAB$} is called the \emph{restriction} of $\hmap{\ul K}CD$ along $f$ and $g$, and denoted $\ul K(f, g) \dfn J$. If $\ul K = (C \xbrar K D)$ is of length $n = 1$ then we will call $K(f, g)$ \emph{unary}; in the case that $\ul K = (C)$ we call $C(f, g)$ \emph{nullary}. By their universal property any two cartesian cells defining the same restriction factor through each other as invertible horizontal cells. We will often not name cartesian cells, but simply depict them as
	\begin{displaymath}
		\begin{tikzpicture}
			\matrix(m)[math35]{A & B \\ C & D. \\};
			\path[map]	(m-1-1) edge[barred] node[above] {$\ul J$} (m-1-2)
													edge node[left] {$f$} (m-2-1)
									(m-1-2) edge node[right] {$g$} (m-2-2)
									(m-2-1) edge[barred] node[below] {$\ul K$} (m-2-2);
			\draw				($(m-1-1)!0.5!(m-2-2)$) node[font=\scriptsize] {$\cart$};
		\end{tikzpicture}
	\end{displaymath}
	
	\begin{example} \label{restrictions of V-profunctors}
		In the augmented virtual double category $\enProf{(\V, \V')}$ of $\V$-profunctors between $\V'$"/categories (\exref{(V, V')-Prof}) all unary restrictions $K(f, g)$ exist: they are simply the $\V$-profunctors given by the family of $\V$-objects $K(fx, gy)$, for all $x \in A$ and $y \in B$, that is equipped with actions induced by those of $K$. The cartesian cell $K(f, g) \Rar K$ simply consists of the identities on the $\V$"/objects $K(fx, gy)$.
		
		On the other hand, the nullary restriction $C(f, g)$ of two $\V'$-functors $f$ and $g$ does not exist in general, but it does whenever all of the hom-objects $C(fx, gy)$ are $\V$-objects: in that case the cartesian cell $C(f, g) \Rar C$ consists of the identities on these hom-objects. Specifically, in $\enProf{(\Set, \Set')}$ all nullary restrictions $C(f, g)$ exist as soon as $C$ is locally small. We will see in \exref{necessary condition for the existence of companions and conjoints} below that, in the case that either $f$ or $g$ is an identity $\V'$-functor, the previous condition is necessary for the existence of $C(f, g)$ as well.
		
		Similarly, the augmented virtual double category $\enProf\V$ of $\V$-profunctors (\exref{enriched profunctors}) admits all (both unary and nullary) restrictions.
	\end{example}
	
	\begin{example} \label{restrictions of indexed profunctors}
		Analogous to the situation for $\enProf{(\Set, \Set')}$, in the augmented virtual double category $\enProf{(\Set, \Set')}^\Ss$ of small $\Ss$-indexed profunctors (\exref{indexed profunctors}) all unary restrictions exist, while the nullary restriction $\hmap{C(f, g)}AB$ exists as soon as the hom-sets $C_s(f_sx, g_sy)$ are small for all $s \in \Ss$, $x \in A_s$ and $y \in B_s$. In either case the restrictions can be defined indexwise, by setting $\ul K(f, g)_s \dfn \ul{K_s}(f_s, g_s)$ for all $s \in \Ss$.
	\end{example}
	
	\begin{example} \label{isomorphisms have cartesian identity cells}
		For any isomorphism $\map fAC$ the vertical identity cell $\id_f$ is cartesian.
	\end{example}
	
	If the weakly cocartesian cell on the left below exists then we call its horizontal target $K$ the \emph{extension} of $\ul J$ along $f$ and $g$. Like restrictions, extensions are unique up to isomorphism. When considering the above notion of weakly cocartesian cell in a virtual double category, by restricting the factorisations to unary cells $\chi$, we recover the notion of weakly cocartesian cell that was considered in Remark 5.8 of \cite{Cruttwell-Shulman10}.
	\begin{displaymath}
		\begin{tikzpicture}[baseline]
			\matrix(m)[math35]{A & B \\ C & D \\};
			\path[map]	(m-1-1) edge[barred] node[above] {$\ul J$} (m-1-2)
													edge node[left] {$f$} (m-2-1)
									(m-1-2) edge node[right] {$g$} (m-2-2)
									(m-2-1) edge[barred] node[below] {$K$} (m-2-2);
			\draw				($(m-1-1)!0.5!(m-2-2)$) node[font=\scriptsize] {$\cocart$};
		\end{tikzpicture}
	\end{displaymath}
	
	Where it is often easy to give examples of restrictions, giving examples of extensions is usually harder. Fortunately, as we shall see in the next section (\corref{extensions and composites}), the extension of $J$ along $f$ and $g$ above coincides with the `horizontal composite' \mbox{$(C(\id, f) \hc J \hc D(g, \id))$} whenever it exists, where $\hmap{C(\id, f)}C{A_0}$ and $\hmap{C(g, \id)}{A_n}D$ are nullary restrictions in the above sense. Analogously, in \lemref{restrictions and composites} we will see that the restriction of $K$ along $f$ and $g$ coincides with the composite $(C(f, \id) \hc K \hc D(\id, g))$. Thus most results concerning weakly cocartesian cells are left to the next section, except for a characterisation of certain such cells in $\enProf{(\V, \V')}$, which is given at the end of this subsection.
	
	Cartesian cells satisfy the following pasting lemma. As a consequence, taking restrictions is `pseudofunctorial' in the sense that $\ul K(f, g)(h, k) \iso \ul K(f \of h, g \of k)$ and $K(\id, \id) \iso K$.
	\begin{lemma}[Pasting lemma] \label{pasting lemma for cartesian cells}
		If the cell $\phi$ in the composite below is cartesian then the full composite $\phi \of \psi$ is cartesian if and only if $\psi$ is.
		\begin{displaymath}

		\end{displaymath}
		Notice that the $\phi$ is cartesian precisely if the top left assignment is a bijection, and that the composite $(\eta \of h) \hc (g \of \phi)$ is cartesian precisely if the top right assignment is a bijection. The proof follows form the fact that the bottom assignment is a bijection: its inverse is given by composing the cells $\xi$ on the right with the counit of $f \ladj g$.
		
		For the final assertion notice that $\eta\colon \id_C \iso s$ forms the unit of $\id_C \ladj s$, with $\inv\eta$ the counit. Clearly the identity cell $\id_C \dfn \id_{\id_C}$ on $\id_C$ is cartesian so that, by the above, the composite $\eta \hc (s \of \id_C) = \eta$ is cartesian too.
	\end{proof}
	
	It is clear from, for instance, \cite{Cruttwell-Shulman10} and \cite{Koudenburg14a} that the notion of a (generalised) double category admitting all restrictions is a useful one. In Section~7 of the former virtual double categories having all restrictions and all `horizontal units' (which we consider in the next section) are called `virtual equipments'. Virtual equipments that have all `horizontal composites' as well can be regarded as `bicategories equipped with proarrows', in the original sense of Wood \cite{Wood82}, by combining the results in Appendix C of \cite{Shulman08} with \propref{pseudo double categories} below; whence the term `equipment'. Because important augmented virtual double categories such as $\enProf{(\V, \V')}$ do not have all units nor all nullary restrictions, but do have all unary restrictions, we have chosen the following generalisation of `equipment' as appropriate for augmented virtual double categories.
	\begin{definition} \label{augmented virtual equipment}
		An \emph{augmented virtual equipment} is an augmented virtual double category that has all unary restrictions $K(f, g)$.
	\end{definition}
	
	In the augmented virtual double category $\enProf{(\V, \V')}$, of $\V$-profunctors between $\V'$-categories, full and faithfulness of $\V'$-functors is related to cartesianness as follows.
	\begin{proposition}
		In $\enProf{(\V, \V')}$ let $\map fAC$ be a $\V'$-functor such that the hom"/objects $C(fx, fy)$ are $\V$-objects for all $x, y \in A$. The identity cell $\id_f$ is cartesian if and only if $f$ is full and faithful.
	\end{proposition}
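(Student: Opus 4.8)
The plan is to unwind the cartesianness of $\id_f$ into a concrete statement about the hom-maps of $f$ and then to recognise that statement as full and faithfulness. Write $\map{f_{a,b}}{A(a,b)}{C(fa,fb)}$ for the $\V'$-maps comprising the action of $f$ on hom-objects. By \defref{cartesian cells}, since $\id_f$ has nullary horizontal source $(A)$ and nullary target $(C)$ with vertical legs $f, f$, it is cartesian precisely when, for every path $\hmap{\ul H}{X_0}{X_m}$ and every pair $\map h{X_0}A$, $\map k{X_m}A$, postcomposition with $\id_f$ induces a bijection between the nullary cells $\cell\phi{\ul H}A$ with legs $h, k$ and the nullary cells $\cell\chi{\ul H}C$ with legs $f \of h, f \of k$. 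Recalling from \exref{enriched profunctors} that such nullary cells are exactly the equivariant families of $\V'$-maps from $H_1(w_0, w_1) \tens \dotsb \tens H_m(w_{m'}, w_m)$ to $A(hw_0, kw_m)$ (resp.\ into $C(fhw_0, fkw_m)$), and that postcomposition with $\id_f$ acts componentwise as postcomposition with the hom-maps $f_{hw_0, kw_m}$, I obtain the reformulation: $\id_f$ is cartesian if and only if postcomposition with the $f_{a,b}$ sets up such a bijection for \emph{all} choices of $\ul H$, $h$ and $k$.

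The converse implication is then immediate. If $f$ is full and faithful, each $\map{f_{a,b}}{A(a,b)}{C(fa,fb)}$ is invertible (with $C(fa,fb) \in \V$ by hypothesis), so given $\chi$ I may define $\phi$ componentwise by $\phi_{\ul w} \coloneqq \inv{(f_{hw_0,kw_m})} \of \chi_{\ul w}$; these maps assemble into a cell because $f$, being a $\V'$-functor, intertwines the $A$- and $C$-actions, and $\phi$ is the unique such factorisation since each $f_{a,b}$ is monic. Hence $\id_f$ is cartesian.

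For the forward implication I extract the hom-maps by probing with simple profunctors. Taking $X_0 = X_m$ to be the unit $\V'$-category, a single horizontal morphism $\hmap HXX$ amounts to a choice of $\V$-object $T \coloneqq H(*, *)$, while $h$ and $k$ pick out objects $x, y \in A$; the equivariance conditions become vacuous, so the bijection above reduces to the assertion that $\V'(T, f_{x,y})\colon \V'(T, A(x,y)) \to \V'(T, C(fx, fy))$ is a bijection, naturally in $T$, for every $\V$-object $T$ and all $x, y$. It remains to upgrade this to invertibility of $f_{x,y}$ itself, and this is exactly where the distinction between $\V$ and $\V'$ bites, constituting the main obstacle: since $A(x,y)$ need not be a $\V$-object, I cannot simply test with $T = A(x,y)$. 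The resolution is to invoke the density of $R\V$ in $\V'$ (which holds for any universe enlargement in the sense of \defref{universe enlargement}, $\yon$ being a dense yoneda embedding; cf.\ \cite{Kelly82}): the restricted yoneda functor $\V' \to \fun{\op{(R\V)}}{\Set'}$ is then full and faithful, and the displayed bijections say precisely that it carries $f_{x,y}$ to an isomorphism. As a full and faithful functor reflects isomorphisms, each $f_{x,y}$ is invertible, that is $f$ is full and faithful.
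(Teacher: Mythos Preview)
Your `if' direction is correct and matches the paper's argument.

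Your `only if' direction has a genuine gap. The claim that $R\V$ is dense in $\V'$ does \emph{not} follow from \defref{universe enlargement}: the axioms there (full faithfulness of $\V \to \V'$, large (co)completeness of $\V'$, and preservation of limits and large colimits) neither include nor imply density. Your parenthetical justification conflates the general notion of universe enlargement with the \emph{particular} construction via the Yoneda embedding described in Kelly's theorem immediately after \defref{universe enlargement}; that theorem exhibits one universe enlargement, not all of them. Without density, knowing that $\V'(T, f_{x,y})$ is bijective for every $\V$-object $T$ does not force $f_{x,y}$ to be invertible, precisely because $A(x,y)$ need not lie in $R\V$. Testing with $T = C(fx,fy)$ yields only a one-sided inverse, and you have no way to close the loop.

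The paper's proof avoids this obstacle by using the hypothesis on $C(fx,fy)$ more substantively: it guarantees (via \exref{restrictions of V-profunctors}) that the restriction $C(f,f)$ exists as a \emph{horizontal morphism} in $\enProf{(\V,\V')}$, defined by a cartesian cell $\cart\colon C(f,f) \Rightarrow C$ with legs $f,f$. One now has two cartesian cells into $C$ with the same legs, namely $\id_f$ (nullary source) and $\cart$ (unary source). Factoring $\id_f$ through $\cart$ gives $\phi\colon (A) \Rightarrow C(f,f)$ with components $f_{x,y}$; factoring $\cart$ through $\id_f$ gives $\psi\colon C(f,f) \Rightarrow A$ with components $C(fx,fy) \to A(x,y)$. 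The two uniqueness clauses then force $\phi \of \psi = \id_{C(f,f)}$ and $\psi \of \phi = \id_A$, so on components $\psi_{x,y}$ inverts $f_{x,y}$. This argument stays entirely inside the hypervirtual double category and never needs to compare $\V$ with $\V'$.
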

	\begin{proof}
		The `if'-part is easy: if the actions $\map{\bar f}{A(x, y)}{C(fx, fy)}$ of $f$ on hom"/objects are invertible then the unique factorsation of a cell $\cell\chi{(H_1, \dotsc, H_n)}C$ through $\id_f$, as in \defref{cartesian cells}, is given by composing the components of $\chi$ with the inverses of $\bar f$.
		
		For the converse, assume that $\id_f$ is cartesian and remember that, by the assumption on the hom-objects $C(fx, fy)$, the restriction $C(f, f)$ exists; see \exref{restrictions of V-profunctors}.
		\begin{displaymath}

		\end{displaymath}
		 Consider the unique factorisations $\phi$ and $\psi$ in the identities above: the components of $\phi$ are simply the actions $\map{\bar f}{A(x, y)}{C(fx, fy)}$, and we claim that the components $C(fx, fy) \to A(x,y)$ of $\psi$ form their inverses. This claim is a straightforward consequence of the identities $\phi \of \psi = \id_{C(f, f)}$ and $\psi \of \phi = \id_A$, which themselves follow from the equations $\cart \of \phi \of \psi = \id_f \of \psi = \cart$ and $\id_f \of \psi \of \phi = \cart \of \phi = \id_f$, as well as the uniqueness of factorisations through $\cart$ and $\id_f$. We conclude that $f$ is full and faithful, completing the proof.
	\end{proof}
	
	In view of the above result we make the following definition.
	\begin{definition} \label{full and faithful morphism}
		A vertical morphism $\map fAC$ is called \emph{full and faithful} if both its identity cell $\id_f$ is cartesian and the restriction $C(f, f)$ exists.
	\end{definition}
	It is clear that a full and faithful morphism $\map fAC$, in the above sense, is full and faithful in the vertical $2$-category $V(\K)$, of objects, vertical morphisms and vertical cells in $\K$, in the classical sense; that is, for each object $X \in \K$ the functor $\map{V(\K)(X,f)}{V(\K)(X,A)}{V(\K)(X,C)}$, given by postcomposition with $f$, is full and faithful (see e.g.\ Example 2.18 of \cite{Weber07}). The converse to this holds as soon as $\K$ has `cocartesian tabulations', as we shall see in \propref{cells corresponding to vertical cells}.
	
	Closing this subsection we characterise weakly cocartesian cells  that are of the form as on the left below, in the augmented virtual double categories $\enProf\V$ and $\enProf{(\V, \V')}$. Here $I$ denotes the unit $\V$-category consisting of a single object $*$ and hom-object $I(*, *) = I$, the unit of $\V$. Because the universe enlargement $\V \to \V'$ preserves the monoidal unit strictly, we can regard $I$ as the unit $\V'$-category as well. We identify $\V$-functors $I \to A$ with objects in $A$ and $\V$-profunctors $I \brar I$ with $\V$-objects; cells between such profunctors are identified with $\V$-maps.
	\begin{equation} \label{horizontal weakly cocartesian cell}

	\end{equation}
	If $\V$ is closed symmetric monoidal, so that each $\hmap{J_i}{A_{i'}}{A_i}$ can be identified with a $\V$-functor $\map{J_i}{\op{A_{i'}} \tens A_i}{\V}$, then colimit of $\ul J^\S(x, y)$, if it exists, is easily checked to coincide with the \emph{coend}\footnote{For the definition of the dual notion \emph{end} see Section 2.1 of \cite{Kelly82}.} $\int^{u_1 \in A_1, \dotsc, u_{n'} \in A_{n'}} J_1(x, u_1) \tens \dotsb \tens J_n(u_{n'}, y)$. Consequently we will use this coend notation for the colimit of $\ul J^\S(x,y)$, regardless of $\V$ being closed symmetric monoidal.
	
	Returning to a path of $\V$-profunctors $\hmap{\ul J}II$, as on the left of \eqref{horizontal weakly cocartesian cell} above, notice that for any cocone $\ul J^\S(*, *) \Rar \Delta X$, with $X \in \V$, its naturality with respect to the spans above coincides with the internal equivariance axioms satisfied by the cells in $\enProf\V$ with source $\ul J$. We conclude that giving a unary cell $\psi$ as on the right of \eqref{horizontal weakly cocartesian cell}, where $s \in C$ and $t \in D$, is the same as giving a cocone $\nat\gamma{\ul J^\S}{\Delta \ul L(s, t)}$. The following characterisation of weakly cocartesian cells in $\enProf\V$ is now straightforward.
	\begin{proposition} \label{weakly cocartesian cell in (V, V')-Prof}
		A cell $\phi$ in $\enProf\V$, of the form as on the left of \eqref{horizontal weakly cocartesian cell}, is weakly cocartesian precisely if its corresponding cocone $\ul J^\S(*, *) \Rar \Delta K$ is colimiting; that is, it defines the $\V$-object $K$ as the coend $\int^{u_1 \in A_1, \dotsc, u_{n'} \in A_{n'}} J_1(*, u_1) \tens \dotsb \tens J_n(u_{n'}, *)$.
		
		Furthermore, given a universe enlargement $\V \to \V'$, the inclusions
		\begin{displaymath}
			\enProf\V \to \enProf{(\V, \V')} \to \enProf{\V'}
		\end{displaymath}
		both preserve and reflect weakly cocartesian cells of the form as on the left in \eqref{horizontal weakly cocartesian cell}.
	\end{proposition}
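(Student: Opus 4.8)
The plan is to read both statements off the correspondence, recorded just before the proposition, between unary cells out of $\ul J = (J_1, \dotsc, J_n)$ and cocones $\ul J^\S(*, *) \Rar \Delta \ul L(s, t)$. First I would unwind what weak cocartesianness of $\phi$ means: every cell $\chi$ with horizontal source $\ul J$ must factor uniquely as $\chi = \psi \of \phi$ through a cell $\psi$ with source $(K)$. Translating $\chi$ and $\phi$ into cocones via the correspondence, the factorisation $\chi = \psi \of \phi$ becomes exactly the factorisation of the cocone of $\chi$ through the cocone $\gamma \colon \ul J^\S(*, *) \Rar \Delta K$ of $\phi$, along the $\V$-map $K \to \ul L(s, t)$ underlying $\psi$. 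Observing that every $\V$-object occurs as a test vertex $\ul L(s, t)$---already for $\ul L = (I \xbrar{Y} I)$ with $s = t = *$---this says precisely that $\gamma$ is colimiting, i.e.\ that $K$ is the asserted coend. This gives the first assertion.

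For the second statement I would first note that the associated diagram $\ul J^\S(*, *)$ and cocone $\gamma$ are literally the same in all three settings, taking values in $\V$ for $\enProf\V$ and in $\V'$ for the other two, where they are the image of the $\V$-valued data under $\V \to \V'$. For the inclusion $\enProf{(\V, \V')} \to \enProf{\V'}$ I expect the matter to be essentially formal: rerunning the first-part argument but now allowing the test target $\ul L$ to be \emph{nullary} shows that, already inside $\enProf{(\V, \V')}$, the vertices $\ul L(s, t)$ range over all $\V'$-objects, since any $\V'$-object is realised as a hom-object $C(s, t)$ of a suitable $\V'$-category $C$. Hence weak cocartesianness in $\enProf{(\V, \V')}$ and in $\enProf{\V'}$ both reduce to $\gamma$ being colimiting in $\V'$, and this inclusion preserves and reflects such cells on the nose.

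The substance lies in the inclusion $\enProf\V \to \enProf{(\V, \V')}$, and here I would use the two defining features of a universe enlargement. The diagram $\ul J^\S(*, *)$ is indexed by a category whose class of morphisms is a large set, that is lies in $\Set'$ (being controlled by the object-classes of the $\V$-categories $A_i$), so $\gamma$ exhibits $K$ as a large colimit in $\V$; preservation is then immediate from axiom~(d) of \defref{universe enlargement}, that $\V \to \V'$ preserves large colimits. For reflection I would invoke full and faithfulness of $\V \to \V'$: such a functor reflects a colimit as soon as it sends the candidate cocone to a colimiting one, by transporting the comparison map back across the embedding and using faithfulness for its uniqueness. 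The one point demanding care is this size bookkeeping---verifying that the coend indexing category has its morphisms in $\Set'$ so that axiom~(d) genuinely applies---together with confirming that the nullary test cells available in $\enProf{(\V, \V')}$ do supply every $\V'$-object as a vertex $\ul L(s, t)$.
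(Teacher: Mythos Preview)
Your proposal is correct and follows essentially the same approach as the paper's sketch: the first part translates weak cocartesianness into the cocone $\ul J^\S(*,*) \Rar \Delta K$ being colimiting via the cell/cocone correspondence set up before the proposition, and the second part is handled exactly as the paper does---the nullary-target trick (realising an arbitrary $\V'$-object $X$ as a hom-object $C(0,1)$ of a two-object $\V'$-category) for the inclusion $\enProf{(\V,\V')} \to \enProf{\V'}$, and preservation of large colimits together with full faithfulness of $\V \to \V'$ for the other inclusion. Your write-up is in fact slightly more explicit than the paper's sketch in flagging the size verification needed to invoke axiom~(d) and in spelling out why full faithfulness reflects colimiting cocones.
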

	\begin{proof}[Sketch of the proof.]
		Reflection along $\enProf{(\V, \V')} \to \enProf{\V'}$ is clear; for preservation notice that, for any $\V'$-object $X$, cocones $\ul J^\S(*, *) \Rar \Delta X$ correspond to nullary cells $\psi$ as on the right of \eqref{horizontal weakly cocartesian cell}, with $C$ the $\V'$"/category consisting of objects $0$ and $1$, together with hom-objects $C(0, 0) = I' = C(1, 1)$ and $C(0,1) = X$, while $\psi$ has vertical morphisms $s = 0$ and $t = 1$.
		
		For reflection and preservation along $\enProf\V \to \enProf{\V'}$ notice that the $\V'$"/cocone $\ul J^\S(*, *) \Rar \V'$ corresponding to a cell $\cell\phi{\ul J}K$ in $\enProf\V$ is isomorphic to the composition of its corresponding $\V$-cocone $\ul J^\S(*, *) \Rar \V$ with $\V \to \V'$. Use that universe enlargements preserve large colimits and are full and faithful.
	\end{proof}
	
	\subsection{Companions and conjoints}
	Here we consider nullary restrictions of the forms $C(f, \id)$ and $C(\id, f)$, where $\map fAC$ is a vertical morphism. These have been called respectively `horizontal companions' and `horizontal adjoints' in the setting of double categories \cite{Grandis-Pare04}; we follow \cite{Cruttwell-Shulman10} in calling them `companions' and `conjoints'. As foreshadowed in the discussion preceding \lemref{pasting lemma for cartesian cells}, companions and conjoints can be regarded as building blocks out of which restrictions and extensions can be built, as will be explained in next section.
	
	\begin{definition}
		Consider a vertical morphism $\map fAC$ in an augmented virtual double category. The nullary restriction $\hmap{C(f, \id)}AC$, if it exists, is called the \emph{companion} of $f$ and denoted $f_*$, while $\hmap{C(\id, f)}CA$, if it exists, is called its \emph{conjoint} and denoted $f^*$.
	\end{definition}
	
	Notice that the notions of companion and conjoint are swapped when moving from $\K$ to its horizontal dual $\co\K$.
	
	Although companions and conjoints are defined as nullary restrictions, the following lemma and its horizontal dual show that they can equivalently be defined as extensions along $f$. More precisely it gives a bijective correspondence between the cartesian cells $\phi$ defining a horizontal morphism $\hmap JAC$ as the companion of $f$ and the weakly cocartesian cells $\psi$ defining $J$ as the extension of $(A)$ along $\id_A$ and $f$, in such a way that each corresponding pair $(\psi, \phi)$ satisfies the identities below, which are called the \emph{companion identities}. Analogous identities are satisfied by corresponding pairs of a cartesian and weakly cocartesian cell defining a conjoint; these are called the \emph{conjoint identities}.
	\begin{lemma} \label{companion identities lemma}
		Consider a factorisation of the identity cell $\id_f$ of $\map fAC$ as on the left below. The following conditions are equivalent: $\psi$ is cartesian; the identity on the right holds; $\phi$ is weakly cocartesian.
	 	\begin{equation} \label{companion identities}

	  \end{equation}
	\end{lemma}
	\begin{proof}
		We will show that both $\psi$ being cartesian, as well as $\phi$ being weakly cocartesian, implies the identity on the right, while the latter implies the (co-)cartesianness of $\psi$ and $\phi$. For the first implication, notice that the identity on the left above implies that composing the left-hand side of the identity on the right either with $\psi$ or $\phi$ results in $\psi$ or $\phi$ again, respectively. By the uniqueness of factorisations through (co-)cartesian cells, it follows that the identity on the right holds as soon as $\psi$ is cartesian or $\phi$ is weakly cocartesian.
		
		For the converse assume that both identities above hold; we will show that $\psi$ is cartesian and that $\phi$ is weakly cocartesian. For the first consider a cell $\chi$ as on the left below; we have to show that it factors uniquely through $\psi$. That it factors through $\psi$ follows from the identity on the left above as follows, where the last identity is one of the interchange axioms.
		\begin{displaymath}
			\chi = (\id_f \of \id_h) \hc \chi = (\psi \of \phi \of h) \hc \chi = \psi \of \bigpars{(\phi \of h) \hc \chi}.
		\end{displaymath}
		To show the uniqueness of this factorisation consider a second factorisation $\chi = \psi \of \chi'$. Using the identity on the right above we have
		\begin{displaymath}
			(\phi \of h) \hc \chi = (\phi \of h) \hc (\psi \of \chi') = (\phi \hc \psi) \of \chi' = \chi',
		\end{displaymath}
		showing that the factorisation obtained before coincides with $\chi'$. This concludes the proof of $\psi$ being cartesian.
		\begin{displaymath}
			\begin{tikzpicture}[baseline]
				\matrix(m)[math35, column sep={1.75em,between origins}]
					{X_0 & & X_1 & \dotsb & X_{n'} & & X_n \\ & & & E & & & \\};
				\path[map]	(m-1-1) edge[barred] node[above] {$H_1$} (m-1-3)
														edge node[below left] {$f \of h$} (m-2-4)
										(m-1-5) edge[barred] node[above] {$H_n$} (m-1-7)
										(m-1-7) edge node[below right] {$k$} (m-2-4);
				\path				(m-1-4) edge[cell] node[right] {$\chi$} (m-2-4);
			\end{tikzpicture} \qquad\qquad\qquad \begin{tikzpicture}[baseline]
					\matrix(m)[math35, column sep={1.75em,between origins}]{& A & \\ E & & F \\};
					\path[map]	(m-1-2) edge node[left] {$h$} (m-2-1)
															edge node[right] {$k \of f$} (m-2-3)
											(m-2-1) edge[barred] node[below] {$L$} (m-2-3);
					\path				(m-1-2) edge[cell, transform canvas={yshift=-0.25em}] node[right, inner sep=2.5pt] {$\xi$} (m-2-2);
				\end{tikzpicture}
		\end{displaymath}
		
		To show that $\phi$ is weakly cocartesian under the identities \eqref{companion identities} consider any unary cell $\xi$ as on the right above. That it factors through $\phi$ is shown by
		\begin{displaymath}
			\xi = \xi \hc \id_{k \of f} = \xi \hc (k \of \psi \of \phi) = \bigpars{\xi \hc (k \of \psi)} \of \phi.
		\end{displaymath}
		To see that this factorisation is unique, suppose that $\xi = \xi' \of \phi$ as well. Then
		\begin{displaymath}
			\xi \hc (k \of \psi) = (\xi' \of \phi) \hc (k \of \psi) = \xi' \of (\phi \hc \psi) = \xi',
		\end{displaymath}
		which concludes the proof that $\phi$ is weakly cocartesian.
	\end{proof}
	
	As an immediate consequence we find that functors of augmented virtual double categories preserve companions and conjoints.
	\begin{corollary} \label{functors preserve companions and conjoints}
		Any functor of augmented virtual double categories preserves cartesian and weakly cocartesian cells that define companions and conjoints.
	\end{corollary}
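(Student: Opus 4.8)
The plan is to reduce everything to \lemref{companion identities lemma} and its conjoint analogue, exploiting that a functor of hypervirtual double categories preserves, strictly, all of the structure out of which the companion and conjoint identities are built. First I would recall that a cartesian cell $\cell\psi JC$ exhibiting $\hmap JAC$ as the companion of $\map fAC$ comes with a companion partner: by the universal property of $\psi$, the vertical identity cell $\id_f$ factors uniquely as $\id_f = \psi \of \phi$ for a cell $\cell\phi AJ$ with vertical source $\id_A$ and vertical target $f$, and by \lemref{companion identities lemma} this $\phi$ is weakly cocartesian while the pair $(\phi, \psi)$ satisfies the companion identities \eqref{companion identities}. Symmetrically, a weakly cocartesian $\phi$ defining $J$ as the extension of $(A)$ along $\id_A$ and $f$ determines, via the same lemma, a cartesian partner $\psi$. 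So it is enough to show that a functor $\map F\K\L$ sends such a pair $(\phi, \psi)$ to a pair satisfying the companion, respectively conjoint, identities in $\L$.

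The key observation is that the factorisation $\id_f = \psi \of \phi$ together with the companion identities \eqref{companion identities} involves nothing beyond vertical composition $\of$, the identity cells $\id_J$ and $\id_f$, and horizontal composition $\hc$ --- and $\hc$ is itself defined from $\of$ and identity cells by $\phi \hc \psi = \id_{\ul K \conc \ul L} \of (\phi, \psi)$ (see the discussion preceding \lemref{horizontal composition}), as are the whiskering conventions. Since by definition $F$ preserves $\of$ and all identity cells strictly, it also preserves $\hc$, the whiskers, and hence any equation of cells expressed purely in these terms; it of course also preserves arities, sources and targets. Applying $F$ to $\id_f = \psi \of \phi$ therefore gives a factorisation $\id_{Ff} = F\psi \of F\phi$ of exactly the shape required by \lemref{companion identities lemma} (with $f, A, C, J$ replaced by $Ff, FA, FC, FJ$), and applying $F$ to the companion identities shows $(F\phi, F\psi)$ satisfies them; the lemma, now applied in $\L$, then tells us that $F\psi$ is cartesian and $F\phi$ is weakly cocartesian. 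This disposes of the companion case.

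For conjoints I would argue in the same way, using the conjoint identities --- which are equally structural --- in place of the companion identities; alternatively, one can invoke the induced functor $\map{\co F}{\co\K}{\co\L}$ of horizontal duals (\defref{horizontal dual}) and note that a conjoint in $\K$ is a companion in $\co\K$, with $\co F$ acting as $F$ does up to the relabelling $\phi \mapsto \co\phi$, so that the conjoint statement follows from the companion one. I do not expect any genuine obstacle: the only step that deserves a second look is the bookkeeping claim that the (co-)companion identities really do mention only composition and identity cells, and not any universal properties of their own --- once that is granted, strict preservation by $F$ makes the rest automatic, and the conclusion drops out of \lemref{companion identities lemma}.
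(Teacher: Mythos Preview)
Your proposal is correct and follows the same approach as the paper: the paper's proof is a one-sentence version of exactly what you spell out, namely that functors preserve vertical composition and identities strictly, hence preserve the companion and conjoint identities of \lemref{companion identities lemma}, from which the conclusion follows by applying that lemma in $\L$.
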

	\begin{proof}
		This follows immediately from the fact that functors preserve vertical composition strictly, so that the companion and conjoint identities of (the horizontal dual of) the previous lemma are preserved.
	\end{proof}
	
	The lemma above can also be used to show that the sufficient conditions for the existence of nullary restrictions $C(f, g)$ in the augmented virtual equipments $\enProf{(\V, \V')}$ and $\enProf{(\Set, \Set')}^\Ss$, that were given in \exref{restrictions of V-profunctors} and \exref{restrictions of indexed profunctors} above, are necessary in the case of companions and conjoints, as follows.
	\begin{example} \label{necessary condition for the existence of companions and conjoints}
		In \exref{restrictions of V-profunctors} we saw that the companion $f_*$ of a $\V'$-functor \mbox{$\map fAC$} exists in $\enProf{(\V, \V')}$ as soon as the hom-objects $C(fx, y)$ are $\V$-objects, for all $x \in A$ and $y \in C$. For the converse consider cells $\cell\psi JC$ and $\cell\phi AJ$ as in the lemma above: it is straightforward to check that the companion identities for $\phi$ and $\psi$ imply that the composites below are inverses for the components $J(x, y) \to C(fx, y)$ of $\psi$, showing that $C(fx, y) \iso J(x, y)$.
		\begin{displaymath}
			C(fx, y) \xrar{\phi_x \tens' \id} J(x, fx) \tens' C(fx, y) \xrar\rho J(x, y)
		\end{displaymath}
		
		Given an indexing category $\Ss$, the previous argument can be applied at each index $s \in \Ss$ to show that the companion $f_*$ of an $\Ss$-indexed functor $\map fAC$ exists in $\enProf{(\Set, \Set')}^\Ss$ precisely if the hom-sets $C_s(f_sx, y)$ are small for all $s \in \Ss$, $x \in A_s$ and $y \in C_s$.
	\end{example}
	
	The remainder of this subsection records some useful properties of companions.
	\begin{lemma} \label{companion of a composite}
		Let $\map fAC$ and $\map hCE$ be morphisms such that the companion $\hmap{h_*}CE$ exists. The companion $(h \of f)_*$ exists if and only if the restriction $h_*(f, \id)$ does, and in that case they are isomorphic.
		\begin{displaymath}
			\begin{tikzpicture}[textbaseline]
  			\matrix(m)[math35, column sep={0.875em,between origins}]
  				{	A & & & & E \\
  					& C & & & \\
  					& & E & & \\ };
  			\path[map]	(m-1-1) edge[barred] node[above] {$J$} (m-1-5)
  													edge[transform canvas={xshift=-1pt}] node[left] {$f$} (m-2-2)
  									(m-2-2) edge[transform canvas={xshift=-1pt}] node[left] {$h$} (m-3-3);
  			\path				(m-1-3) edge[cell, transform canvas={yshift=-0.5em}] node[right] {$\phi$} (m-2-3)
  									(m-1-5) edge[eq, transform canvas={xshift=1pt}] (m-3-3);
  		\end{tikzpicture} = \begin{tikzpicture}[textbaseline]
  			\matrix(m)[math35, column sep={1.875em,between origins}]{A & & E \\ C & & E \\ & E & \\};
  			\path[map]	(m-1-1) edge[barred] node[above] {$J$} (m-1-3)
  													edge node[left] {$f$} (m-2-1)
  									(m-2-1) edge[barred] node[below] {$h_*$} (m-2-3)
  													edge[transform canvas={xshift=-2pt}] node[left] {$h$} (m-3-2);
  			\path				(m-1-3) edge[eq] (m-2-3)
  									(m-2-3) edge[eq, transform canvas={xshift=2pt}] (m-3-2)
  									(m-1-2) edge[cell] node[right] {$\psi$} (m-2-2);
        \draw[font=\scriptsize]	([yshift=0.25em]$(m-2-2)!0.5!(m-3-2)$) node {$\cart$};
  		\end{tikzpicture}
		\end{displaymath}
		In detail, in the factorisation above the cell $\phi$ is cartesian if and only if the cell $\psi$ is.
		
		Dually if the conjoint $h^*$ exists then the conjoint $(h \of f)^*$ exists precisely if the restriction $h^*(\id, f)$ does.
	\end{lemma}
	\begin{proof}
		This follows immediately from applying the pasting lemma (\lemref{pasting lemma for cartesian cells}) to the factorisation above.
	\end{proof}
	
	\begin{lemma}
	  Let $\map fAC$, $\map gBC$ and $\map hCE$ be vertical morphisms. If $h$ is full and faithful then the restriction $C(f, g)$ exists if and only if the restriction $\hmap{E(h \of f, h \of g)}AC$ does, and in that case they are isomorphic.
		\begin{displaymath}
			\begin{tikzpicture}[textbaseline]
  			\matrix(m)[math35, column sep={0.875em,between origins}]
  				{	A & & & & B \\
  					& C & & C & \\
  					& & E & & \\ };
  			\path[map]	(m-1-1) edge[barred] node[above] {$J$} (m-1-5)
  													edge[transform canvas={xshift=-1pt}] node[left] {$f$} (m-2-2)
  									(m-2-2) edge[transform canvas={xshift=-1pt}] node[left] {$h$} (m-3-3)
  									(m-1-5) edge[transform canvas={xshift=1pt}] node[right] {$g$} (m-2-4)
  									(m-2-4) edge[transform canvas={xshift=1pt}] node[right] {$h$} (m-3-3);
  			\path				(m-1-3) edge[cell, transform canvas={yshift=-0.5em}] node[right] {$\phi$} (m-2-3);
  		\end{tikzpicture} = \begin{tikzpicture}[textbaseline]
				\matrix(m)[math35, column sep={1.875em,between origins}]{A & & B \\ & C & \\ & E & \\};
				\path[map]	(m-1-1) edge[barred] node[above] {$J$} (m-1-3)
														edge[transform canvas={xshift=-2pt}] node[left] {$f$} (m-2-2)
										(m-1-3) edge[transform canvas={xshift=2pt}] node[right] {$g$} (m-2-2)
										(m-2-2) edge[bend right=45] node[left] {$h$} (m-3-2)
														edge[bend left=45] node[right] {$h$} (m-3-2);
				\path				(m-1-2) edge[cell, transform canvas={yshift=0.25em}] node[right] {$\psi$} (m-2-2)
										(m-2-2) edge[cell, transform canvas={xshift=-0.35em}] node[right] {$\id$} (m-3-2);
			\end{tikzpicture}
		\end{displaymath}
		In detail, in the factorisation above the cell $\phi$ is cartesian if and only if the cell $\psi$ is.
	\end{lemma}
	\begin{proof}
		Because $h$ is full and faithful its identity cell is cartesian by \defref{full and faithful morphism}. The proof follows immediately from applying the pasting lemma (\lemref{pasting lemma for cartesian cells}) to the factorisation above.  
	\end{proof}
	
	\begin{lemma} \label{companions of morphisms composed with an isomorphism}
		In an augmented virtual equipment consider a composite $g \of f$ of vertical morphisms. If $g$ is an isomorphism then $(g \of f)_*$ exists as soon as $f_*$ does.
	\end{lemma}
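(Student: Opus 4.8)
The plan is to realise $(g \of f)_*$ as the unary restriction $f_*(\id_A, \inv g)$ --- which exists because we are in a hypervirtual equipment --- and then to equip it with a cartesian cell built by pasting together three cartesian cells: the defining cell of that restriction, the companion cell of $f_*$, and the identity cell of the isomorphism $g$.

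First recall that $f_* = C(f, \id_C)$ is a single horizontal morphism $\hmap{f_*}AC$, together with a cartesian cell $\cell\psi{f_*}C$ whose vertical source is $f$ and whose vertical target is $\id_C$ (see \lemref{companion identities lemma}). Since $f_*$ is a single horizontal morphism and we work in a hypervirtual equipment, the unary restriction $J \coloneqq f_*(\id_A, \inv g)$, along $\id_A$ and the inverse $\map{\inv g}DC$ of $g$, exists; let $\cell\theta J{f_*}$ denote a cartesian cell defining it, so that $\theta$ has vertical source $\id_A$ and vertical target $\inv g$. Next, since $g$ is an isomorphism its vertical identity cell $\id_g$ is cartesian (\exref{isomorphisms have cartesian identity cells}); its horizontal source and target are the unit paths $(C)$ and $(D)$, and both of its vertical legs are $g$.

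The horizontal target of $\theta$ is $f_*$, which is the horizontal source of $\psi$; the horizontal target of $\psi$ is the unit path $(C)$, which is the horizontal source of $\id_g$. Hence the vertical composite
\[
	\Xi \coloneqq \id_g \of \psi \of \theta \colon J \Rar D
\]
is well-formed, and a bookkeeping of the vertical legs shows it has vertical source $g \of f \of \id_A = g \of f$ and vertical target $g \of \id_C \of \inv g = \id_D$. Applying the pasting lemma (\lemref{pasting lemma for cartesian cells}) first to $\theta$ underneath the cartesian $\psi$, and then to $\psi \of \theta$ underneath the cartesian $\id_g$, shows that $\Xi$ is cartesian. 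Therefore $\Xi$ exhibits $J$ as the nullary restriction $D(g \of f, \id_D)$, that is, as the companion $(g \of f)_*$, which thus exists.

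There is no real obstacle here; the one point to watch is that one cannot simply ``postcompose $f_*$ with $g$'', which would produce $D(g \of f, g)$, a horizontal morphism $A \brar C$ rather than $A \brar D$ --- so one must reindex $f_*$ along $\inv g$ first. As a consistency check, pseudofunctoriality of restriction gives $f_*(\id_A, \inv g) \iso C(f, \id_C)(\id_A, \inv g) \iso C(f, \inv g)$, and since the cartesian cell $\id_g$ witnesses $(C) \iso D(g, g)$ we obtain $C(f, \inv g) \iso D(g, g)(f, \inv g) \iso D(g \of f, g \of \inv g) = D(g \of f, \id_D)$, which matches the identification above.
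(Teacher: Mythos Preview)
Your proof is correct and essentially the same as the paper's. The only cosmetic difference is that the paper factors the composite $\psi \of \theta$ through the cartesian cell $\id_{\inv g}$ to extract the companion cell, whereas you compose with $\id_g$ on the other side; since $\id_g \of \id_{\inv g} = \id_{\id_D}$ these produce the same cell, and both arguments finish via the pasting lemma.
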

	\begin{proof}
		Assuming that $f_*$ exists, we consider the composite on the left-hand side below, whose top cartesian cell defines the restriction $C(f, \inv g)$ of $f_*$ along $\inv g$.
		\begin{displaymath}
			\begin{tikzpicture}[textbaseline]
  			\matrix(m)[math35, column sep={1.875em,between origins}]{A & & E \\ A & & C \\ & C & \\};
  			\path[map]	(m-1-1) edge[barred] node[above] {$C(f, \inv g)$} (m-1-3)
  									(m-1-3) edge node[right] {$\inv g$} (m-2-3)
  									(m-2-1) edge[barred] node[below] {$f_*$} (m-2-3)
  													edge[transform canvas={xshift=-2pt}] node[left] {$f$} (m-3-2);
  			\path				(m-1-1) edge[eq] (m-2-1)
  									(m-2-3) edge[eq, transform canvas={xshift=2pt}] (m-3-2);
        \draw[font=\scriptsize]	($(m-1-2)!0.5!(m-2-2)$) node {$\cart$}
        						([yshift=0.25em]$(m-2-2)!0.5!(m-3-2)$) node {$\cart$};
  		\end{tikzpicture} = \begin{tikzpicture}[textbaseline]
				\matrix(m)[math35, column sep={1.875em,between origins}]{A & & E \\ & E & \\ & C & \\};
				\path[map]	(m-1-1) edge[barred] node[above] {$C(f, \inv g)$} (m-1-3)
														edge[transform canvas={xshift=-2pt}] node[left] {$g \of f$} (m-2-2)
										(m-2-2) edge[bend right=60] node[left] {$\inv g$} (m-3-2)
														edge[bend left=60] node[right] {$\inv g$} (m-3-2);
				\path				(m-1-3) edge[eq, transform canvas={xshift=2pt}] (m-2-2)
										(m-1-2) edge[cell, transform canvas={yshift=0.25em}] node[right] {$\phi$} (m-2-2)
										(m-2-2) edge[cell, transform canvas={xshift=-0.9em}] node[right, inner sep=2.5pt] {$\id_{\inv g}$} (m-3-2);
			\end{tikzpicture}
		\end{displaymath}
		Since the vertical identity cell for $\inv g$ is cartesian (\exref{isomorphisms have cartesian identity cells}) the left-hand side factors uniquely as a cell $\phi$ as shown. Applying the pasting lemma (\lemref{pasting lemma for cartesian cells}) we find that $\phi$ is cartesian, thus defining $C(f, \inv g)$ as the companion of $g \of f$.
	\end{proof}
	
	Recall that the objects, vertical morphisms and vertical cells of any augmented virtual double category $\K$ form a $2$-category $V(\K)$. The next lemma reformulates the notion of adjunction in $V(\K)$ in terms of companions and conjoints in $\K$.
	\begin{lemma} \label{adjunctions}
		In an augmented virtual double category $\K$ let $\map fAC$ be a vertical morphism whose companion $f_*$ exists. Consider vertical cells $\eta$ and $\eps$ below as well as their factorisations through $f_*$, as shown.
		\begin{displaymath}

		\end{displaymath}
		The following are equivalent:
		\begin{enumerate}[label=\textup{(\alph*)}]
			\item	$(\eta, \eps)$ defines an adjunction $f \ladj g$ in $V(\K)$;
			\item $(\eta', \eps')$ forms a pair that defines $f_*$ as the conjoint of $g$ in $\K$.
		\end{enumerate}
	\end{lemma}
	\begin{proof}
		Notice that condition (b) is equivalent to $\eta'$ and $\eps'$ satisfying the conjoint identities $\eta' \of \eps' = \id_g$ and $\eta' \hc \eps' = \id_{f_*}$, by the horizontal dual of the previous lemma; we claim that these correspond to the two triangle identities for $\eta$ and $\eps$. Indeed we have
		\begin{align*}
			(f \of \eta) \hc (\eps \of f) = \id_f \; &\Leftrightarrow \; (f \of \eta' \of \cocart) \hc (\cart \of \eps' \of f) = \id_f \\
			&\Leftrightarrow \; \cart \of (\eta' \hc \eps') \of \cocart = \id_f \; \Leftrightarrow \; \eta' \hc \eps' = \id_{f_*},
		\end{align*}
		where the second equivalence follows from one of the interchange axioms, and the third from the vertical companion identity $\cart \of \id_{f_*} \of \cocart = \id_f$ together with the fact that factorisations through (co-)cartesian cells are unique. Likewise
		\begin{align*}
			(\eta \of g) \hc (g \of \eps) = \id_g \; &\Leftrightarrow \; (\eta' \of \cocart \of g) \hc (g \of \cart \of \eps') = \id_g \\
			&\Leftrightarrow \; \eta' \of (\cocart \hc \cart) \of \eps' = \id_g \; \Leftrightarrow \; \eta' \of \eps' = \id_g,
		\end{align*}
		where we used the horizontal companion identity.
	\end{proof}
		
	\subsection{Representability}
	In this final subsection we consider the (op-)representability of horizontal morphisms, in the sense below. Our aim is to characterise the sub-augmented virtual double categories of representable and oprepresentable horizontal morphisms, that are contained in any augmented virtual double category $\K$, in terms of strict double category $(Q \of V)(\K)$ of `quintets' in the vertical $2$-category $V(\K)$.
	
	\begin{definition} \label{representable horizontal morphism}
		A vertical morphism $\map jAB$ is said to \emph{represent} the horizontal morphism $\hmap JAB$ if there exists a cartesian cell as on the left below, that is $J$ forms the companion of $j$; in this case we say that $J$ is \emph{representable}. Horizontally dual, $J$ is called \emph{oprepresentable} whenever there exists a cartesian cell as on the right.
		\begin{displaymath}
			\begin{tikzpicture}[baseline]
				\matrix(m)[math35, column sep={1.75em,between origins}]{A & & B \\ & B & \\};
				\path[map]	(m-1-1) edge[barred] node[above] {$J$} (m-1-3)
														edge[transform canvas={xshift=-1pt}] node[left] {$j$} (m-2-2);
				\path				(m-1-3) edge[transform canvas={xshift=2pt}, eq] (m-2-2);
				\draw[font=\scriptsize]	([yshift=0.333em]$(m-1-2)!0.5!(m-2-2)$) node {$\cart$};
			\end{tikzpicture} \qquad\qquad\qquad\qquad \begin{tikzpicture}[baseline]
				\matrix(m)[math35, column sep={1.75em,between origins}]{A & & B \\ & A & \\};
				\path[map]	(m-1-1) edge[barred] node[above] {$J$} (m-1-3)
										(m-1-3)	edge[transform canvas={xshift=1pt}] node[right] {$h$} (m-2-2);
				\path				(m-1-1) edge[eq, transform canvas={xshift=-2pt}] (m-2-2);
				\draw				([yshift=0.333em]$(m-1-2)!0.5!(m-2-2)$) node[font=\scriptsize] {$\cart$};
			\end{tikzpicture}
    \end{displaymath}
	\end{definition}
	
	For an augmented virtual double category $\K$ we write $\Rep(\K) \subseteq \K$ for the sub"/augmented virtual double category that consists of all objects, all vertical morphisms, the representable horizontal morphisms, and all cells between those. The subcategory $\opRep(\K)$ generated by the oprepresentable horizontal morphisms is defined analogously; notice that $\opRep(\K) = \co{\pars{\Rep(\co\K)}}$. Because functors of augmented virtual double categories preserve companions and conjoints (\corref{functors preserve companions and conjoints}), they preserve (op-)representable horizontal morphisms as well; whence the following.
	\begin{proposition} \label{2-functor Rep}
		The assignments $\K \mapsto \Rep(\K)$ and $\K \mapsto \opRep(\K)$ extend to strict $2$-endofunctors $\Rep$ and $\opRep$ on $\AugVirtDblCat$.
	\end{proposition}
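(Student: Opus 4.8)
The plan is to exploit that $\Rep$ and $\opRep$ act on objects, functors and transformations purely by \emph{restriction}, so that strict $2$-functoriality becomes automatic; the only substantive ingredient is \corref{functors preserve companions and conjoints}.

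First I would spell out that $\Rep(\K)$ really is a sub-hypervirtual double category of $\K$ — something merely asserted in the discussion preceding the proposition. Its cells are exactly those cells of $\K$ whose horizontal source consists of representable horizontal morphisms and whose horizontal target is either a representable horizontal morphism or empty. This class is closed under the vertical composition of \defref{(fc, bl)-multicategory}: the horizontal source of a composite $\psi \of (\phi_1, \dotsc, \phi_n)$ is the concatenation $\ul J_1 \conc \dotsb \conc \ul J_n$ of the (representable) horizontal sources of the $\phi_i$, and its horizontal target is that of $\psi$. It is likewise closed under the horizontal composition $\hc$, which is defined via vertical composition, and it contains every horizontal identity cell $\id_J$ with $\hmap JAB$ representable as well as every vertical identity cell $\id_f$ (the latter involve no horizontal morphism at all). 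The associativity and unit axioms of \defref{(fc, bl)-multicategory} then hold in $\Rep(\K)$ because they hold in $\K$. The same reasoning applies to $\opRep(\K)$.

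Next, for a functor $\map F\K\L$, I would invoke \corref{functors preserve companions and conjoints}: it sends a cartesian cell exhibiting $\hmap JAB$ as the companion of a vertical morphism $\map jAB$ to a cartesian cell exhibiting $\hmap{FJ}{FA}{FB}$ as the companion of $\map{Fj}{FA}{FB}$, so $F$ carries representable horizontal morphisms to representable ones. Hence $F$ restricts to the objects, vertical morphisms, representable horizontal morphisms and cells of $\Rep(\K)$, and this restriction lands in $\Rep(\L)$, since a cell $\phi$ of $\Rep(\K)$ involves only representable horizontal morphisms and hence so does $F\phi$. Write $\Rep(F)$ for this restriction; as $F$ preserves all compositions and identities strictly, so does $\Rep(F)$, which is thus a functor $\Rep(\K) \to \Rep(\L)$. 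For a transformation $\nat\xi FG$ I would let $\Rep(\xi)$ retain the vertical components $\xi_A$ unchanged and keep only those cell-components $\xi_J$ with $\hmap JAB$ representable; these are cells of $\Rep(\L)$ by the previous remark, and the naturality axiom of \defref{transformation} for $\Rep(\xi)$ is a special case of that for $\xi$. Thus $\Rep(\xi)\colon \Rep(F) \Rar \Rep(G)$ is a transformation.

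It then remains to check strict $2$-functoriality, which is immediate because $\Rep(F)$ and $\Rep(\xi)$ are obtained from $F$ and $\xi$ by pure restriction: one has $\Rep(\id_\K) = \id_{\Rep(\K)}$ and $\Rep(G \of F) = \Rep(G) \of \Rep(F)$, and the corresponding identities for identity transformations and for both the vertical and the horizontal composition of transformations. Hence $\Rep$ is a strict $2$-endofunctor on $\fcblmCat$, and $\opRep$ is handled by the identical argument using the conjoint half of \corref{functors preserve companions and conjoints} in place of the companion half (equivalently, via $\opRep(\K) = \co{\pars{\Rep(\co\K)}}$, once the horizontal-dual construction of \defref{horizontal dual} is seen to extend to a strict $2$-endofunctor of $\fcblmCat$). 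I expect no genuine obstacle here: the whole argument is bookkeeping on top of the already-established \corref{functors preserve companions and conjoints}, and the only step warranting a moment's care is the verification, in the first paragraph, that $\Rep(\K)$ is closed under vertical composition of cells — i.e., that it is a sub-hypervirtual double category at all.
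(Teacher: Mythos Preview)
Your proposal is correct and follows exactly the approach the paper has in mind: the paper does not give an explicit proof, relying instead on the preceding sentence observing that functors preserve (op-)representable horizontal morphisms by \corref{functors preserve companions and conjoints}, ``whence the following''. Your write-up simply makes explicit the routine bookkeeping (closure of $\Rep(\K)$ under composition and identities, restriction of functors and transformations, strict $2$-functoriality by restriction) that the paper leaves to the reader.
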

	
	In \cite{Ehresmann63} Ehresmann defined, inside any $2$-category $\mathcal C$, a \emph{quintet} to be a cell of the form as on the left below.
	\begin{displaymath}
		\begin{tikzpicture}[baseline]
			\matrix(m)[math35, column sep={1.75em,between origins}]{& A & \\ C & & B \\ & D & \\};
			\path[map]	(m-1-2) edge[bend right=18] node[above left] {$f$} (m-2-1)
													edge[bend left=18] node[above right] {$j$} (m-2-3)
									(m-2-1) edge[bend right=18] node[below left] {$k$} (m-3-2)
									(m-2-3) edge[bend left=18] node[below right] {$g$} (m-3-2);
			\path				(m-1-2) edge[cell, transform canvas={yshift=-1.625em}] node[right] {$\phi$} (m-2-2);
		\end{tikzpicture}	\qquad\qquad \begin{tikzpicture}[baseline]
				\matrix(m)[math35]{A_0 & A_1 & A_{n'} & A_n \\ C & & & D \\};
				\path[map]	(m-1-1) edge[barred] node[above] {$j_1$} (m-1-2)
														edge node[left] {$f$} (m-2-1)
										(m-1-3) edge[barred] node[above] {$j_n$} (m-1-4)
										(m-1-4) edge node[right] {$g$} (m-2-4)
										(m-2-1) edge[barred] node[below] {$k$} (m-2-4);
				\path[transform canvas={xshift=1.75em}]	(m-1-2) edge[cell] node[right] {$\phi$} (m-2-2);
				\draw				($(m-1-2)!0.5!(m-1-3)$) node {$\dotsb$};
			\end{tikzpicture} \qquad\quad \begin{tikzpicture}[baseline]
				\matrix(m)[math35, column sep={1.75em,between origins}]
					{ & A_0 & & & \\
						C & & A_1 & & \\
						& & & A_{n'} &\\
						& & & & A_n \\
						& & & D & \\ };
				\path[map]	(m-1-2) edge node[above left] {$f$} (m-2-1)
														edge node[above right] {$j_1$} (m-2-3)
										(m-2-1) edge node[below left] {$k$} (m-5-4)
										(m-3-4) edge node[above right] {$j_n$} (m-4-5)
										(m-4-5) edge node[below right] {$g$} (m-5-4);
				\path				(m-2-3) edge[cell, transform canvas={shift={(-0.25em,-1.625em)}}] node[right] {$\phi$} (m-3-3)
										(m-2-3)	edge[white] node[sloped, black] {$\dotsb$} (m-3-4);
			\end{tikzpicture}
	\end{displaymath}
	While the quintets of $\mathcal C$ most naturally arrange as a `strict double category' (see \exref{strict double category of quintets} below), for now we will think of them as forming an augmented virtual double category $Q(\mathcal C)$ as follows.
	\begin{definition} \label{quintets}
		Let $\mathcal C$ be a $2$-category. The augmented virtual double category $Q(\mathcal C)$ of \emph{quintets in $\mathcal C$} has as objects those of $\mathcal C$, while both its vertical and horizontal morphisms are morphisms in $\mathcal C$. A unary cell $\phi$ in $Q(\mathcal C)$, as in the middle above, is a cell $\phi$ in $\mathcal C$ as on the right, while the nullary cells of $Q(\mathcal C)$ are cells in $\mathcal C$ as on the right but with $k = \id_C$. Composition in $Q(\mathcal C)$ is induced by that of $\mathcal C$ in the evident way.
			
		We abbreviate $\co Q(\mathcal C) \dfn \co{(Q(\co{\mathcal C}))}$. Thus, to each morphism $\map jAB$ in $\mathcal C$ there is a horizontal morphism $\hmap{\co j}BA$ in $\co Q(\mathcal C)$, and to each cell $\phi$ as on the left below there is a unary cell $\co\phi$ in $\co Q(\mathcal C)$ as on the right.
		\begin{displaymath}
			\begin{tikzpicture}[baseline]
				\matrix(m)[math35, column sep={1.75em,between origins}]
					{ & & & A_0 & \\
						& & A_1 & & C \\
						& A_{n'} & & & \\
						A_n & & & & \\
						& D & & & \\ };
				\path[map]	(m-1-4) edge node[above right] {$f$} (m-2-5)
														edge node[above left] {$j_1$} (m-2-3)
										(m-2-5) edge node[below right] {$k$} (m-5-2)
										(m-3-2) edge node[above left] {$j_n$} (m-4-1)
										(m-4-1) edge node[below left] {$g$} (m-5-2);
				\path				(m-2-3) edge[cell, transform canvas={shift={(-0.25em,-1.625em)}}] node[right] {$\phi$} (m-3-3)
										(m-2-3)	edge[white] node[sloped, black] {$\dotsb$} (m-3-2);
			\end{tikzpicture} \qquad\qquad\qquad\qquad \begin{tikzpicture}[baseline]
				\matrix(m)[math35]{A_n & A_{n'} & A_1 & A_0 \\ D & & & C \\};
				\path[map]	(m-1-1) edge[barred] node[above] {$\co j_n$} (m-1-2)
														edge node[left] {$g$} (m-2-1)
										(m-1-3) edge[barred] node[above] {$\co j_1$} (m-1-4)
										(m-1-4) edge node[right] {$f$} (m-2-4)
										(m-2-1) edge[barred] node[below] {$\co k$} (m-2-4);
				\path[transform canvas={xshift=1.75em}]	(m-1-2) edge[cell] node[right] {$\co\phi$} (m-2-2);
				\draw				($(m-1-2)!0.5!(m-1-3)$) node {$\dotsb$};
			\end{tikzpicture}
		\end{displaymath}
	\end{definition}
	
	\begin{proposition} \label{2-functor Q}
		The assignments $\mathcal C \mapsto Q(\mathcal C)$ and $\mathcal C \mapsto \co Q(\mathcal C)$ above extend to strict $2$-functors $\map Q{\twoCat}{\AugVirtDblCat}$ and $\map{\co Q}{\twoCat}{\AugVirtDblCat}$.
	\end{proposition}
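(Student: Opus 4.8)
The plan is to construct $Q$ (and then $\co Q$) directly from the $2$-categorical structure, exploiting that, by \defref{quintets}, $Q(\mathcal C)$ is built from $\mathcal C$ without any choices; strict functoriality will then be immediate from the strictness of $2$-functors and the fact that composition in $Q(\mathcal C)$ is just pasting in $\mathcal C$.

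First I would define $Q$ on a strict $2$-functor $\map F{\mathcal C}{\mathcal D}$: let $Q(F)$ apply $F$ to objects, to vertical morphisms and to horizontal morphisms, and send a cell $\phi$ of $Q(\mathcal C)$---which by \defref{quintets} is a $2$-cell of $\mathcal C$ of the displayed pasting shape, with vertical target $k$ in the unary case and $\id_C$ in the nullary case---to the $2$-cell $F\phi$ of $\mathcal D$, which has the same shape because $F$ preserves composition of morphisms and identities strictly. Since composition of cells in $Q(\mathcal C)$ is induced by pasting in $\mathcal C$, and $F$ preserves horizontal and vertical composition of $2$-cells and whiskering on the nose, $Q(F)$ preserves vertical composites $\psi \of (\phi_1, \dotsc, \phi_n)$, horizontal identity cells $\id_J$ and vertical identity cells $\id_f$; hence $Q(F)$ is a functor of hypervirtual double categories, and $Q(\id_{\mathcal C}) = \id_{Q(\mathcal C)}$ and $Q(G \of F) = Q(G) \of Q(F)$ hold by inspection.

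Next I would define $Q$ on a $2$-natural transformation $\nat\alpha FG$. By \defref{transformation} a transformation $\nat{Q(\alpha)}{Q(F)}{Q(G)}$ consists of a natural transformation on the vertical categories---for which I take the natural transformation with components $\map{\alpha_A}{FA}{GA}$ underlying $\alpha$---together with, for each horizontal morphism $\hmap JAB$ of $Q(\mathcal C)$, i.e.\ each morphism $\map jAB$ of $\mathcal C$, a $(1,1)$-ary cell $Q(\alpha)_j$ in $Q(\mathcal D)$ with vertical source $\alpha_A$, vertical target $\alpha_B$, horizontal source $Fj$ and horizontal target $Gj$; under the identifications of \defref{quintets} this is a $2$-cell $\alpha_B \of Fj \Rar Gj \of \alpha_A$ of $\mathcal D$, and since $\alpha$ is $2$-natural this square commutes strictly, so I set $Q(\alpha)_j \coloneqq \id$. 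The naturality axiom $G\phi \of Q(\alpha)_{\ul J} = Q(\alpha)_{\ul K} \of F\phi$ of \defref{transformation} then unwinds, again via \defref{quintets}, into a pasting identity in $\mathcal D$ that holds because every $Q(\alpha)_j$ is an identity $2$-cell and $\alpha$ is $2$-natural; so $Q(\alpha)$ is a transformation of hypervirtual double categories. Finally $Q(\id_F) = \id_{Q(F)}$, compatibility with vertical composition $Q(\beta \of \alpha) = Q(\beta) \of Q(\alpha)$, and with whiskering, all follow at once from the corresponding identities for $2$-functors and $2$-natural transformations, making $\map Q\twoCat\fcblmCat$ a strict $2$-functor.

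The assertion for $\co Q$ requires no new work: since $\co Q(\mathcal C) = \co{(Q(\co{\mathcal C}))}$ we have $\co Q = \co{(\dash)} \of Q \of \co{(\dash)}$, where $\co{(\dash)}$ denotes, on the left, the horizontal dualisation of \defref{horizontal dual}---which extends in the evident way to a strict $2$-endofunctor of $\fcblmCat$---and, on the right, the obvious strict $2$-endofunctor of $\twoCat$ reversing $2$-cells; being a composite of strict $2$-functors, $\co Q$ is one too. (Alternatively one repeats the construction above with the cell shapes that \defref{quintets} prescribes for $\co Q$, the argument being the horizontal dual of the one for $Q$.) I do not expect any genuine mathematical obstacle; the only point demanding care is the bookkeeping---matching the various cell shapes of \defref{quintets}, in particular unary versus nullary quintets and the role played by the vertical identity cells $\id_f$---so that each axiom of \defref{(fc, bl)-multicategory} and each naturality clause of \defref{transformation} is seen to be inherited verbatim from an identity holding in $\mathcal C$ or $\mathcal D$.
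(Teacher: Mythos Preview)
Your proposal is correct and takes essentially the same approach as the paper: $Q(F)$ is given by applying $F$ everywhere, $Q(\alpha)_j$ is the identity quintet coming from the naturality square of $\alpha$, and $\co Q$ is obtained as the composite $\co{(\dash)} \of Q \of \co{(\dash)}$. One cosmetic point: according to the conventions of \defref{quintets} the $(1,1)$-ary cell $Q(\alpha)_j$ is a $2$-cell $Gj \of \alpha_A \Rar \alpha_B \of Fj$ rather than the other way around, but since $\alpha$ is $2$-natural this is an equality and your choice of the identity $2$-cell is unaffected.
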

	\begin{proof}
		The image $\map{QF}{Q\mathcal C}{Q\mathcal D}$ of a strict $2$-functor $\map F{\mathcal C}{\mathcal D}$ is simply given by letting $F$ act on objects, morphisms and cells. The image $\nat{Q\xi}{QF}{QG}$ of a $2$"/natural transformation $\nat\xi FG$ is given by $(Q\xi)_A \dfn \xi_A$ on objects, while the cell $\cell{(Q\xi)_j}{Fj}{Gj}$ in $Q(\mathcal D)$, where $\map jAB$, is the quintet given by the naturality square $Gj \of \xi_A = \xi_B \of Fj$. Finally $\mathcal C \mapsto \co Q(\mathcal C)$ is extended by the composite of strict $2$-functors $\co Q \dfn \co{(\dash)} \of Q \of \co{(\dash)}$.
	\end{proof}
	
	Remember that any augmented virtual double category $\K$ contains a $2$-category $V(\K)$ of vertical morphisms and vertical cells. We denote by $(Q \of V)_*(\K) \subseteq (Q \of V)(\K)$ the sub-augmented virtual double category generated by all vertical morphisms, the horizontal morphisms $\hmap jAB$ that admit companions in $\K$, and all quintets between them. Notice that this extends to a sub-2-endofunctor $(Q \of V)_* \subseteq Q \of V$ on $\AugVirtDblCat$, because functors between augmented virtual double categories preserve cartesian cells that define companions (\corref{functors preserve companions and conjoints}). The sub-2-endofunctor $(\co Q \of V)^*$ is defined likewise, by mapping each $\K$ to the sub-augmented virtual double category $(\co Q \of V)^*(\K) \subseteq (\co Q \of V)(\K)$ that is generated by horizontal morphisms $\hmap{\co j}BA$ that correspond to vertical morphism $\map jAB$ that admit conjoints in $\K$.
	\begin{theorem} \label{lower star}
		Let $\K$ be an augmented virtual double category. Choosing, for each \mbox{$\hmap jAB$} in $(Q \of V)_*(\K)$, a cartesian cell $\eps_j$ that defines the companion of $j$ in $\K$, induces an equivalence 
		\begin{displaymath}
			(\dash)_*\colon (Q \of V)_*(\K) \simeq \Rep(\K)
		\end{displaymath}
		of augmented virtual double categories as follows. Restricting to the identity on objects and vertical morphisms, it maps each horizontal morphism $\hmap jAB$ in $(Q \of V)_*(K)$ to its chosen companion $j_*$, while a cell $\phi$ of $(Q \of V)_*(K)$, as in the left-hand side below, is mapped to the unique factorisation $\phi_*$ as shown; here $\ul{\eps_k} \dfn \eps_k$ if $\phi$ is unary and $\ul{\eps_k} \dfn \id_C$ otherwise.
		\begin{equation} \label{lower star on quintets}

		\end{equation}
		Letting $\K$ vary, these functors combine to form a pseudonatural transformation $\nat{(\dash)_*}{(Q \of V)_*}\Rep$ of strict $2$-endofunctors on $\AugVirtDblCat$.
		
		Analogously, choosing cartesian cells that define conjoints induces an equivalence $(\co Q \of V)^*(\K)\simeq\opRep(\K)$. Their underlying functors too combine to form a pseudonatural transformation $\nat{(\dash)^*}{(\co Q \of V)^*}\opRep$.
	\end{theorem}
	\begin{proof}
		We will construct the functors $\map{(\dash)_*}{(Q \of V)_*(\K)}{\Rep(\K)}$; show that they are full, faithful and essentially surjective, so that they are part of equivalences by \propref{equivalences}; and prove that they are pseudonatural in $\K$. The functors $\map{(\dash)^*}{(\co Q \of V)^*(\K)}{\opRep(\K)}$ can then be defined as the composites $(\dash)^* \dfn \co{(\dash)} \of (\dash)_* \of \co{(\dash)}$, where we use that companions in $\co\K$ correspond to conjoints in $\K$, so that $\co{\pars{(Q \of V)_*(\co\K)}} = (\co Q \of V)^*(\K)$ and $\co{\pars{\Rep(\co\K)}} = \opRep(\K)$.
		
		It is clear that $\phi \mapsto \phi_*$ preserves identities. To see that it preserves composites $\psi \of (\phi_1, \dotsc, \phi_n)$ too consider the following equation, where we denoted all cartesian cells that define the chosen companions simpy by `$\eps$'. Its identities follow from the identity above for $\psi$, as well as for $\phi_1, \dotsc, \phi_n$, and the definition of composition in $(Q \of V)(\K)$. We conclude that $\psi_* \of (\phi_{1*}, \dotsc, \phi_{n*})$ and $\bigpars{\psi \of (\phi_1, \dotsc, \phi_n)}_*$ coincide after composition with the cartesian cell that defines the horizontal target of $\psi_*$. By uniqueness of factorisations through cartesian cells we conclude that $\psi \of (\phi_1, \dotsc, \phi_n)$ is preserved by $(\dash)_*$.
		\begin{multline*}

		\end{multline*}
		
		To prove that $(\dash)_*$ is part of an equivalence we will show that it is full, faithfull and essentially surjective, and then apply \propref{equivalences}. That it is essentially surjective and full and faithful on vertical morphisms is clear. It remains to show that it is locally full and faithful, that is full and faithful on cells. To see this we denote, for each $\hmap jAB$ in $(Q \of V)_*(\K)$, by $\eta_j$ the weakly cocartesian cell that corresponds to $\eps_j$, such that the pair $(\eps_j, \eta_j)$ satisfies the companion identities; see \lemref{companion identities lemma}. To show faithfulness, consider cells $\phi$ and $\cell\psi{\ul j}{\ul k}$ in $(Q \of V)_*(\K)$ such that $\phi_* = \psi_*$. It follows that the left-hand sides of \eqref{lower star on quintets} coincide for $\phi$ and $\psi$ so that, by precomposing both with $(\eta_{j_1}, \dotsc, \eta_{j_n})$, $\phi = \psi$ follows from the vertical companion identities. To show fullness on unary cells, consider \mbox{$\cell\psi{(j_{1*}, \dotsc, j_{n*})}{k_*}$} in \mbox{$(Q \of V)_*(\K)$}. We claim that the composite
		\begin{displaymath}
			\phi \dfn \eps_k \of \psi \of (\eta_{j_1}', \dotsc, \eta_{j_n}'),
		\end{displaymath}
		where $\eta_{j_i}' \dfn \eta_{j_i} \of j_{i'} \of \dotsb \of j_1$ for each $i = 1, \dotsc, n$, is mapped to $\psi$ by $(\dash)_*$. Indeed, plugging $\phi$ into the left-hand side of \eqref{lower star on quintets} we find $\eps_k \of \psi = \eps_k \of \phi_*$, by using the horizontal companion identities, so that $\psi = \phi_*$ follows. The case of $\psi$ nullary is similar; simply take $\phi \dfn \psi \of (\eta_{j_1}', \dotsc, \eta_{j_n}')$ instead.
		
		We now turn to proving that the functors $(\dash)_*$ combine to form a pseudonatural transformation $(Q \of V)_* \Rar \Rep$, of strict $2$-endofunctors on $\AugVirtDblCat$. This means that we have to give an invertible transformation $\nu_F$ as on the left below, for each functor $\map F\K\L$ of augmented virtual double categories. We take $\nu_F$ to consist of identities $(\nu_F)_A = \id_{FA}$ on objects and, for each $\hmap jAB$ in $(Q \of V)_*(\K)$, the unique factorisation $\cell{(\nu_F)_j}{F(j_*)}{(Fj)_*}$ as on the right below. The latter is invertible since $F\eps_j$, on the left-hand side, is cartesian by \corref{functors preserve companions and conjoints}.
		\begin{displaymath}
			\begin{tikzpicture}[textbaseline]
				\matrix(m)[math35, column sep=1.75em]{(Q \of V)_* (\K) & \Rep(\K) \\ (Q \of V)_* (\L) & \Rep(\L) \\};
				\path[map]	(m-1-1) edge node[above] {$(\dash)_*$} (m-1-2)
														edge node[left] {$(Q \of V)_*(F)$} (m-2-1)
										(m-1-2) edge node[right] {$\Rep(F)$} (m-2-2)
										(m-2-1) edge node[below] {$(\dash)_*$} (m-2-2);
				\path				(m-1-2) edge[cell, shorten >= 1.5em, shorten <= 1.5em] node[below right] {$\nu_F$} (m-2-1);				
			\end{tikzpicture} \qquad \begin{tikzpicture}[textbaseline]
				\matrix(m)[math35, column sep={1.75em,between origins}]{FA & & FB \\ & FB & \\};
				\path[map]	(m-1-1) edge[barred] node[above] {$F(j_*)$} (m-1-3)
														edge[transform canvas={xshift=-2pt}] node[left] {$Fj$} (m-2-2);
				\path				(m-1-3) edge[eq, transform canvas={xshift=1pt}] (m-2-2);
				\path[transform canvas={shift={(-0.75em,0.25em)}}]	(m-1-2) edge[cell] node[right, inner sep=2.5pt] {$F\eps_j$} (m-2-2);
			\end{tikzpicture} = \begin{tikzpicture}[textbaseline]
  			\matrix(m)[math35, column sep={1.75em,between origins}]{FA & & FB \\ FA & & FB \\ & FB & \\};
  			\path[map]	(m-1-1) edge[barred] node[above] {$F(j_*)$} (m-1-3)
  									(m-2-1) edge[barred] node[above] {$(Fj)_*$} (m-2-3)
  													edge[transform canvas={xshift=-2pt}] node[left] {$Fj$} (m-3-2);
  			\path				(m-1-1) edge[eq] (m-2-1)
  									(m-1-3) edge[eq] (m-2-3)
  									(m-2-3) edge[eq, transform canvas={xshift=1pt}] (m-3-2);
  			\path				(m-1-2) edge[cell, transform canvas={shift={(-0.95em,0.25em)}}] node[right] {$(\nu_F)_j$} (m-2-2)
  									(m-2-2) edge[cell, transform canvas={shift={(-0.75em,0.25em)}}] node[right] {$\eps_{Fj}$} (m-3-2);
  		\end{tikzpicture}
		\end{displaymath}
		We have to show that the components of $\nu_F$ are natural with respect to the cells of $(Q \of V)_*(\K)$, in the sense of \defref{transformation}. We will do so in case of a unary cell $\cell\phi{(j_1, \dotsc, j_n)}k$; the case of nullary cells is similar. Consider the following equation, where $\eps_{Fj_i}' \dfn Fg \of Fj_n \of \dotsb \of Fj_{i+1} \of \eps_{Fj_i}$ and $\eps_{j_i}' = g \of j_n \of \dotsb \of j_{i+1} \of \eps_{j_i}$ for each $i = 1, \dotsc, n$, as in the left-hand side of \eqref{lower star on quintets}. The identities follow from \eqref{lower star on quintets} for $F\phi$, the identity above, $F$ preserves composition, the $F$-image of \eqref{lower star on quintets} for $\phi$ and the identity above again. Since factorisations through $\eps_{Fk}$, in the left and right-hand side below, are unique, we conclude that the components of $\nu_F$ are natural with respect to $\phi$. This completes the definition of the transformation $\nu_F$.
		\begin{align*}
			\eps_{Fk} \of (&F\phi)_* \of \bigpars{(\nu_F)_{j_1}, \dotsc, (\nu_F)_{j_n}} \\
			&= (F\phi \hc \eps_{Fj_1}' \hc \dotsb \hc \eps_{Fj_n}') \of \bigpars{(\nu_F)_{j_1}, \dotsc, (\nu_F)_{j_n}} = F\phi \hc F\eps_{j_1}' \dotsb \hc F\eps_{j_n}' \\
			&= F(\phi \hc \eps_{j_1}' \hc \dotsb \hc \eps_{j_n}') = F(\eps_k \of \phi_*) = \eps_{Fk} \of (\nu_F)_k \of F(\phi_*)
		\end{align*}
		
		Finally we have to show that the transformations $\nu_F$ are natural with respect to the transformations $\nat\xi FG$ in $\AugVirtDblCat$, and that they are compatible with compositions and identities, that is $\nu_{\id} = \id$ and $\nu_GF \of G\nu_F = \nu_{G \of F}$. Since the latter is straightforward to prove, we will only prove the former. Thus, for each $\hmap jAB$ in $(Q \of V)_*(\K)$, we have to show that $(\nu_G)_j \of \xi_{(j_*)} = (\xi_j)_* \of (\nu_F)_j$. Consider the equation
		\begin{displaymath}
			\eps_{Gj} \of (\nu_G)_j \of \xi_{(j_*)} = G\eps_j \of \xi_{(j_*)} = \xi_B \of F\eps_j = \xi_B \of \eps_{Fj} \of (\nu_F)_j = \eps_{Gj} \of (\xi_j)_* \of (\nu_F)_j,
		\end{displaymath}
		where we have used the defining identities for $(\nu_G)_j$ and $(\nu_F)_j$, the naturality of $\xi$, identity \eqref{lower star on quintets} for $\xi_j$, and the fact that the latter is simply the quintet given by the naturality square $Gj \of \xi_A = \xi_B \of Fj$; see the proof of \propref{2-functor Q}. Using the cartesianess of $\eps_{Gj}$ we conclude that $(\nu_G)_j \of \xi_{(j_*)} = (\xi_j)_* \of (\nu_F)_j$, proving the naturality of the transformations $\nu_F$. This concludes the proof.
	\end{proof}
	
	\section{Composition of horizontal morphisms}\label{composition section}
	We now turn to compositions of horizontal morphisms in augmented virtual double categories, as well as units for such compositions. Analogous to the situation for virtual double categories (see Section 2 of \cite{Dawson-Pare-Pronk06} or Section 5 of \cite{Cruttwell-Shulman10}), these are defined by horizontal cells that satisfy a stronger variant of the universal property for weakly cocartesian cells.
	
	\subsection{Cocartesian paths of cells}
	We start with both strengthening, as well as extending to paths, \defref{cartesian cells} of weakly cocartesian cell.
	\begin{definition} \label{cocartesian paths}
		A path of unary cells $(\phi_1, \dotsc, \phi_n)$, as in the right-hand side below, is called \emph{weakly cocartesian} if any cell $\psi$, as on the left-hand side, factors uniquely through $(\phi_1, \dotsc, \phi_n)$ as shown.
		\begin{multline*}

	\end{displaymath}
	exists with $n \geq 1$ then we call $K$ the \emph{(horizontal) composite} of $(J_1, \dotsc, J_n)$ and write $(J_1 \hc \dotsb \hc J_n) \dfn K$; in the case that $n = 0$ we call $K$ the \emph{(horizontal) unit} of $A_0$ and write $I_{A_0} \dfn K$, while we call $A_0$ \emph{unital}. By their universal property any two cocartesian horizontal cells defining the same composite or unit factor through each other as invertible horizontal cells. Like weakly cocartesian cells, we shall denote single cocartesian cells in our drawings simply by ``cocart''.
	
	\begin{example} \label{horizontal composites in V-Prof}
		Let $\V'$ be a monoidal category whose tensor product $\tens'$ preserves large colimits on both sides. Then the composite of a path $\hmap{(J_1, \dotsc, J_n)}{A_0}{A_n}$ of $\V'$-profunctors exists in $\enProf{\V'}$ as soon as for each $x \in A_0$ and $y \in A_n$ the coend, on the right-hand side below and in the sense of \propref{weakly cocartesian cell in (V, V')-Prof}, exists. 
		\begin{displaymath}
			(J_1 \hc \dotsb \hc J_n)(x, y) \dfn \int^{u_1 \in A_1, \dotsc, u_{n'} \in A_{n'}} J_1(x, u_1) \tens' \dotsb \tens' J_n(u_{n'}, y)
		\end{displaymath}
		In that case, using the assumption on $\tens'$, there is exactly one way of extending the assignment above into a $\V'$-profunctor $\hmap{(J_1 \hc \dotsb \hc J_n)}{A_0}{A_n}$ such that colimiting cocones combine into a horizontal cell $(J_1, \dotsc, J_n) \Rar (J_1 \hc \dotsb \hc J_n)$. In \propref{cocartesian cells in (V, V')-Prof} below we will see that this cell is cocartesian; in fact it will be shown that it defines $(J_1 \hc \dotsb \hc J_n)$ as a `pointwise' horizontal composite, in the sense of \secref{pointwise horizontal composites section}. The notion of pointwise horizontal composite will be important when we consider Kan extensions.
	\end{example}

	\begin{example} \label{horizontal composites in (V, V')-Prof}
		Let $\V \to \V'$ be a universe enlargement (\defref{universe enlargement}) such that the tensor product $\tens'$ of $\V'$ preserves large colimits on both sides. Remember that $\V'$ is assumed to be large cocomplete $\enProf{\V'}$ so that it has all horizontal composites by the previous example. We will see in \propref{cocartesian cells in (V, V')-Prof} below that the inclusion $\enProf{(\V, \V')} \to \enProf{\V'}$ reflects all cocartesian cells so that, for any path $\hmap{(J_1, \dotsc, J_n)}{A_0}{A_n}$ of $\V$-profunctors between $\V'$-categories, the horizontal composite $(J_1 \hc \dotsb \hc J_n)$ exists in $\enProf{(\V, \V')}$ as soon as the $\V'$-coends above are $\V$-objects, for each $x \in A_0$ and $y \in A_n$. This is the case, for instance, when the $\V'$-categories $A_1, \dotsc, A_{n'}$ are small $\V$-categories and $\V$ is small cocomplete.
		
		In \lemref{unit identities} below it is shown that, in general, the unit $I_A$ of an object $A$ coincides with the nullary restriction $A(\id, \id)$ (see \defref{cartesian cells}). Specifically, as an easy consequence of that lemma, a $\V'$-category $A$ is unital in $\enProf{(\V, \V')}$ if and only if it is a $\V$-category; in that case the cocartesian cell $A \Rar I_A$ consists of the unit $\V$-maps $\map{\tilde A}I{A(x, x)}$ of $A$.
	\end{example}
	
	\begin{example} \label{composites of non-expanding metric relations}
		In the case of $\V' = \brks{-\infty, \infty}$ (see \exref{universe enlargement of Lawvere quantale}) the horizontal composite $(J_1 \hc \dotsb \hc J_n)$ of \exref{horizontal composites in V-Prof} is given by the infima
		\begin{displaymath}
			(J_1 \hc \dotsb \hc J_n)(x, y) = \inf_{u_1 \in A_1, \dotsc, u_{n'} \in A_{n'}} J_1(x, u_1) + \dotsb + J_n(u_{n'}, y).
		\end{displaymath}
		Notice that the composite of $\brks{0, \infty}$"/profunctors is again a $\brks{0, \infty}$"/profunctor, so that in the augmented virtual double category $\enProf{(\brks{0, \infty}, \brks{-\infty, \infty})}$ all horizontal compositions exist. A generalised metric space in $\enProf{(\brks{0, \infty}, \brks{-\infty, \infty})}$ is unital precisely if its distance function takes only non"/negative values.
	\end{example}

	\begin{example}
		If $\E$ has reflexive coequalisers preserved by pullback then the augmented virtual double category $\inProf\E$ of profunctors internal to $\E$ (\exref{internal profunctors}) has all horizontal composites. The composite of internal profunctors is an ``internal coend''.
	\end{example}
	
	\begin{example}
		Given a path of small $\Ss$-indexed profunctors \mbox{$A_0 \xbrar{J_1} A_1 \dotsb A_{n'} \xbrar{J_n} A_n$} in $\enProf{(\Set, \Set')}^\Ss$ (see \exref{indexed profunctors}), it is easily checked that their composite \mbox{$(J_1 \hc \dotsb \hc J_n)$} can be defined indexwise; that is it exists whenever, for each $s \in \Ss$, the composite $(J_{1s} \hc \dotsb \hc J_{ns})$ exists in $\enProf{(\Set, \Set')}$. Combining \exref{necessary condition for the existence of companions and conjoints} and \lemref{unit identities} shows that an $\Ss$-indexed category $A$ is unital precisely if $A_s$ is locally small for each $s \in \Ss$.
	\end{example}
	
	Because the notion of weakly cocartesian cell in augmented virtual double categories restricts to the corresponding notion for virtual double categories, so do the notions of horizontal composite and horizontal unit restrict to the corresponding notions for virtual double categories.
	\begin{example} \label{monoids have units}
		Let $\K$ be a virtual double category. It was shown in Proposition 5.5 of \cite{Cruttwell-Shulman10} that the virtual double category $\Mod(\K)$ of monoids and bimodules in $\K$ (see \defref{monoids and bimodules}) has all units. More precisely, the unit bimodule $\hmap{I_A}AA$ of a monoid $A = (A, \alpha, \bar \alpha, \tilde \alpha)$ is simply $I_A \dfn \alpha$ equipped with left and right actions given by the multiplication $\cell{\bar \alpha}{(\alpha, \alpha)}\alpha$ of $A$, while the unit cell $\tilde\alpha$ forms the cocartesian cell $A \Rar I_A$ in $\Mod(\K)$.
	\end{example}
	
	(Weakly) cocartesian paths, like cartesian cells, satisfy a pasting lemma as follows. For a proof of the cocartesian case use the pasting lemma for cartesian cells (\lemref{pasting lemma for cartesian cells}).
	\begin{lemma}[Pasting lemma] \label{pasting lemma for cocartesian paths}
		Consider a configuration of unary cells below, where the vertical source of $\psi_1$ is denoted by $\map{h_0}{C_{10}}{E_0}$ and the vertical target of $\psi_n$ by $\map{h_n}{C_{nm_n}}{E_n}$.
		\begin{displaymath}
			\begin{tikzpicture}[x=0.33cm, y=0.6cm, font=\scriptsize]
				\draw	(1,2) -- (0,2) -- (0,0) -- (18,0) -- (18,2) -- (17,2)
							(2,2) -- (3,2) -- (3,1) -- (0,1)
							(7,2) -- (6,2) -- (6,1) -- (12,1) -- (12,2) -- (11,2)
							(8,2) -- (10,2)
							(9,2) -- (9,0)
							(16,2) -- (15,2) -- (15,1) -- (18,1)
							(28,2) -- (27,2) -- (27,0) -- (36,0) -- (36,2) -- (35,2)
							(29,2) -- (30,2) -- (30,1) -- (27,1)
							(34,2) -- (33,2) -- (33,1) -- (36,1);
				\draw[shift={(0.165cm, 0.3cm)}]	(1,1) node {$\phi_{11}$}
							(7,1) node {$\phi_{1m_1}$}
							(10,1) node {$\phi_{21}$}
							(16,1) node {$\phi_{2m_2}$}
							(28,1) node {$\phi_{n1}$}
							(34,1) node {$\phi_{nm_n}$}
							(4,0) node {$\psi_1$}
							(13,0) node {$\psi_2$}
							(31,0) node {$\psi_n$}
							(4,1) node {$\dotsb$}
							(13,1) node {$\dotsb$}
							(31,1) node {$\dotsb$};
				\draw[font=]	(22.5,1) node {$\dotsb$};
			\end{tikzpicture}
		\end{displaymath}
		First assume that the path $(\phi_{11}, \dotsc, \phi_{nm_n})$ is weakly cocartesian. Then the path $\bigpars{\psi_1 \of (\phi_{11}, \dotsc, \phi_{1m_1}), \dotsc, \psi_n \of (\phi_{n1}, \dotsc, \phi_{nm_n})}$ is weakly cocartesian if and only if $(\psi_1, \dotsc, \psi_n)$ is.
		
		Secondly assume that the path $(\phi_{11}, \dotsc, \phi_{nm_n})$ is cocartesian and that, for any horizontal morphisms $\hmap{J'}{A'}{E_0}$ and $\hmap{J''}{E_n}{A''}$, the restrictions $J'(\id, h_0)$ and $J''(h_n, \id)$ exist. Then the path $\bigpars{\psi_1 \of (\phi_{11}, \dotsc, \phi_{1m_1}), \dotsc, \psi_n \of (\phi_{n1}, \dotsc, \phi_{nm_n})}$ is cocartesian if and only if $(\psi_1, \dotsc, \psi_n)$ is.
	\end{lemma}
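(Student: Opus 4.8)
The plan is to establish the weakly cocartesian half directly from the universal property of \defref{cocartesian paths}, and then to derive the cocartesian half from it by means of the pasting lemma for cartesian cells (\lemref{pasting lemma for cartesian cells}) together with the pseudofunctoriality of restriction, $J'(\id, h \of f) \iso \bigpars{J'(\id, h)}(\id, f)$, recorded just after that lemma.

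\emph{The weakly cocartesian assertion} is entirely formal and uses no restrictions. Abbreviate $\Psi_i \coloneqq \psi_i \of (\phi_{i1}, \dotsc, \phi_{im_i})$, and assume $(\phi_{11}, \dotsc, \phi_{nm_n})$ weakly cocartesian. If moreover $(\psi_1, \dotsc, \psi_n)$ is weakly cocartesian, a cell $\chi$ out of the full concatenated source factors first through $(\phi_{11}, \dotsc, \phi_{nm_n})$ and then through $(\psi_1, \dotsc, \psi_n)$ --- legitimate since the whisker on the vertical source of $\chi$ absorbs that of $\psi_1$ --- producing a cell $\chi''$; the associativity axiom of \defref{(fc, bl)-multicategory} gives $\chi'' \of (\Psi_1, \dotsc, \Psi_n) = \chi$, and uniqueness at each factorisation step forces $\chi''$. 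Conversely, if $(\Psi_1, \dotsc, \Psi_n)$ is weakly cocartesian, a cell $\chi'$ out of the sources of the $\psi_i$ is precomposed with $(\phi_{11}, \dotsc, \phi_{nm_n})$ and factored through $(\Psi_1, \dotsc, \Psi_n)$; the same associativity computation together with uniqueness of factorisations through $(\phi_{11}, \dotsc, \phi_{nm_n})$ identifies the outcome as the unique factorisation of $\chi'$ through $(\psi_1, \dotsc, \psi_n)$.

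\emph{The cocartesian assertion.} Assume additionally that $J'(\id, h_0)$ and $J''(h_n, \id)$ exist for all $\hmap{J'}{A'}{E_0}$ and $\hmap{J''}{E_n}{A''}$. In the direction where $(\psi_1, \dotsc, \psi_n)$ is assumed cocartesian we check the three clauses of \defref{cocartesian paths} for $(\Psi_1, \dotsc, \Psi_n)$; in the other direction, clause (a) for $(\psi_1, \dotsc, \psi_n)$ is then exactly the standing hypothesis, weak cocartesianness comes from the first half, and clause (b) will follow from the same argument read backwards. Weak cocartesianness of $(\Psi_1, \dotsc, \Psi_n)$ is immediate. For clause (a): the vertical source of $\Psi_1$ is $h_0$ followed by the vertical source of $\phi_{11}$, so by pseudofunctoriality restricting a horizontal morphism along it amounts to restricting first along $h_0$ (exists by hypothesis) and then along the vertical source of $\phi_{11}$ (exists by clause (a) for $(\phi_{11}, \dotsc, \phi_{nm_n})$); symmetrically on the right. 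For clause (b): an extended path attached to $(\Psi_1, \dotsc, \Psi_n)$, whiskered on the left by a sequence $J'_1, \dotsc, J'_p$ ending in a cartesian cell and on the right by $J''_1, \dotsc, J''_q$ beginning with one, is rewritten as a vertical paste of two layers. The \emph{bottom} layer carries $\psi_1, \dotsc, \psi_n$ wrapped by identity cells and by the cartesian cells defining $J'_p(\id, h_0)$ and $J''_1(h_n, \id)$ --- precisely an extended path for $(\psi_1, \dotsc, \psi_n)$, hence weakly cocartesian (by hypothesis, or, in the converse direction, as the thing to be shown). The \emph{top} layer carries $\phi_{11}, \dotsc, \phi_{nm_n}$ wrapped by identity cells and by the cartesian cells defining the restrictions of $J'_p(\id, h_0)$ and $J''_1(h_n, \id)$ along the vertical source of $\phi_{11}$ and the vertical target of $\phi_{nm_n}$ --- an extended path for the cocartesian $(\phi_{11}, \dotsc, \phi_{nm_n})$, applying its clause (b) with left extra morphisms $J'_1, \dotsc, J'_{p'}, J'_p(\id, h_0)$ --- hence weakly cocartesian. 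By \lemref{pasting lemma for cartesian cells} and pseudofunctoriality, the two stacked cartesian cells on each side compose to cartesian cells defining the restrictions of $J'_p$ and $J''_1$ along the vertical source of $\Psi_1$ and the vertical target of $\Psi_n$, so the paste is, up to interchangeable cartesian cells, the extended path for $(\Psi_1, \dotsc, \Psi_n)$, and it has exactly the shape to which the weakly cocartesian assertion applies --- the top layer playing the role of the $\phi_{ij}$ and the bottom layer that of the $\psi_i$. Applying that assertion in the relevant direction completes clause (b).

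\emph{The anticipated main obstacle} is the bookkeeping in clause (b): arranging the two-layer decomposition to match the hypotheses of the weakly cocartesian assertion on the nose. One must use that clause (b) of \defref{cocartesian paths} quantifies over \emph{all} such attached paths --- equivalently, over all choices of the boundary cartesian cells, which differ only by invertible horizontal cells --- in order to pick the boundary cartesian cells of the two layers so that they compose exactly to those appearing in the extended path for $(\Psi_1, \dotsc, \Psi_n)$, and one should check that the hypothesis ``$J'(\id, h_0)$ and $J''(h_n, \id)$ exist'' is precisely what licenses both stating clause (b) for $(\psi_1, \dotsc, \psi_n)$ and invoking clauses (a) and (b) for $(\phi_{11}, \dotsc, \phi_{nm_n})$ along $J'_p(\id, h_0)$ and $J''_1(h_n, \id)$.
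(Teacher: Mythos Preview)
Your proof is correct and follows exactly the approach the paper indicates: it gives no proof of the weakly cocartesian half and, for the cocartesian half, only the one-line hint ``for a proof of the cocartesian case use the pasting lemma for cartesian cells (\lemref{pasting lemma for cartesian cells}).'' Your two-layer decomposition of the extended path --- with the top layer an extended path for $(\phi_{ij})$ and the bottom layer an extended path for $(\psi_i)$, glued via the factorisation $J'_p(\id, h_0 \of g_0) \iso \bigl(J'_p(\id, h_0)\bigr)(\id, g_0)$ supplied by that pasting lemma --- is precisely the intended argument, and your identification of the bookkeeping obstacle is accurate.
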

	
	By applying the pasting lemma to compositions $\psi \of (\phi_1, \dotsc, \phi_n)$ of horizontal cells we find that the collection of horizontal composites and horizontal units in any augmented virtual double category is coherent, in the sense that if all of the composites $(J_{11} \hc \dotsb \hc J_{1m_1})$, \dots, $(J_{n1} \hc \dotsb \hc J_{nm_n})$ exist, then the composite $(J_{11} \hc \dotsb \hc J_{nm_n})$ exists if and only if
	\begin{displaymath}
		\bigpars{(J_{11} \hc \dotsb \hc J_{1m_1}) \hc \dotsb \hc (J_{n1} \hc \dotsb \hc J_{nm_n})}
	\end{displaymath}
	does, in which case they are canonically isomorphic. Notice that this also includes isomorphisms of the form $(I_A \hc J) \iso J \iso (J \hc I_B)$, for any $\hmap JAB$, and similar.	
	
	In \secref{unital fc-multicategories section} below we will see that the notions of augmented virtual double category and virtual double category coincide as soon as all horizontal units exist. Consequently we do not distinguish between these notions and refer to either as a \emph{unital virtual double category}. Furthermore, in \corref{restrictions in terms of units} below it is shown that any unital virtual double category that is also an augmented virtual equipment (\defref{augmented virtual equipment}) admits all (both unary and nullary) restrictions; such a double category, e.g.\ $\enProf\V$ (see \exref{restrictions of V-profunctors}), we will call a \emph{unital virtual equipment}. The latter notion coincides with that of `virtual equipment', that was studied in \cite{Cruttwell-Shulman10}.
	
	We denote by $\VirtDblCat_\textup u$ the full sub-2-category of $\AugVirtDblCat$ consisting of unital virtual double categories; in \corref{functors preserve horizontal units} below we will see that, unlike functors between unital virtual double categories, any functor between unital augmented virtual double categories preserves horizontal units. We follow \cite{Cruttwell-Shulman10} in calling a functor $\map F\K\L$ \emph{strong} if it preserves horizontal composites too; that is, its image of any horizontal cocartesian cell is again cocartesian.
	
	The following is Proposition 5.14 of \cite{Cruttwell-Shulman10}.
	\begin{proposition}[Cruttwell-Shulman] \label{Mod as right pseudo-adjoint}
		Let $\K$ be a virtual double category. Mon\-oids in $\K$, the morphisms and bimodules between them, as well as the unary cells between those, in the sense of \defref{monoids and bimodules}, combine to form a unital virtual double category $\Mod(\K)$. Moreover, the assignment $\K \mapsto \Mod(\K)$ extends to a strict $2$-functor $\map \Mod\VirtDblCat{\VirtDblCat_\textup u}$ that is right pseudo-adjoint to the forgetful $2$"/functor $\VirtDblCat_\textup u \to \VirtDblCat$.
	\end{proposition}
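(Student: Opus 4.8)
The plan is to deduce this from the results established above, following the pattern of Proposition~5.14 of \cite{Cruttwell-Shulman10}. That $\Mod(\K)$ is a virtual double category is \propref{(fc, bl)-multicategory of monoids} restricted to unary cells, and \exref{monoids have units} already exhibits it as unital: the unit of a monoid $A = (A, \alpha, \bar\alpha, \tilde\alpha)$ is $I_A \coloneqq \alpha$, with both actions given by $\bar\alpha$, and $\tilde\alpha$ is the cocartesian cell $A \Rar I_A$. Since \propref{Mod is a 2-functor} already gives a strict $2$-functor $\map\Mod\fcmCat\fcblmCat$, restricting to unary cells and observing that the image now lies in the full sub-$2$-category $\fcmCat_\textup u \subseteq \fcblmCat$ yields the asserted strict $2$-functor $\map\Mod\fcmCat{\fcmCat_\textup u}$.

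For the pseudo-adjunction, write $\map U{\fcmCat_\textup u}\fcmCat$ for the restriction to $\fcmCat_\textup u$ of the forgetful $2$-functor of \propref{2-category of (fc, bl)-multicategories}; the plan is to construct $U \ladj \Mod$. Its counit $\nat\eps{U \of \Mod}{\id_\fcmCat}$ is the \emph{underlying data} transformation: the component $\map{\eps_\K}{\Mod(\K)}\K$ sends a monoid $(A, \alpha, \bar\alpha, \tilde\alpha)$ to its carrier $A$, a monoid morphism $(f, \bar f)$ to $f$, a bimodule $(J, \lambda, \rho)$ to $J$, and each cell to itself; this is a strict functor, and $\eps$ is strictly $2$-natural, because the compositions and identities of $\Mod(\K)$ are built from those of $\K$. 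The unit $\nat\eta{\id_{\fcmCat_\textup u}}{\Mod \of U}$ has component $\map{\eta_\L}\L{\Mod(\L)}$ given, on an object $A$ of a unital virtual double category $\L$, by the monoid $\eta_\L(A) \coloneqq (A, I_A, \bar{I_A}, \tilde A)$, where $\tilde A$ is the chosen cocartesian unit cell and $\cell{\bar{I_A}}{(I_A, I_A)}{I_A}$ is the canonical multiplication obtained by factoring $\id_{I_A}$ through $\tilde A$; on a vertical morphism $\map fAC$ it records the monoid morphism $(f, \phi_f)$, with $\cell{\phi_f}{I_A}{I_C}$ the factorisation of $\tilde C \of f$ through $\tilde A$; on a horizontal morphism $\hmap JAB$ it records the bimodule $(J, \lambda_J, \rho_J)$ whose actions are obtained by factoring $\id_J$ through $\tilde A$ and $\tilde B$; and, under these identifications, it is the identity on cells.

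The verification that $\eta_\L$ is a well-defined functor of hypervirtual double categories---equivalently that $\lambda_J$, $\rho_J$ and $\bar{I_A}$ are associative, unital and mutually compatible, so that in particular $\eta_\L$ preserves horizontal units and hence genuinely lands in $\fcmCat_\textup u$---is a diagram chase resting entirely on uniqueness of factorisations through the cocartesian cells $\tilde A$, exactly as in the proof that horizontal units behave like pseudomonoids. Note that $\eta$ is only \emph{pseudo}natural: for $\map H\L{\L'}$ in $\fcmCat_\textup u$ the comparison $\Mod(U H) \of \eta_\L \iso \eta_{\L'} \of H$ is built from the invertible cell $H(I_A) \iso I_{HA}$ witnessing that $H$ preserves units (\corref{functors preserve horizontal units}), and this is the sole source of non-strictness.

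Finally one checks that $(\eta, \eps)$ satisfies the triangle identities up to coherent isomorphism; equivalently, that for each unital $\L$ and each virtual double category $\K$ the assignments $F \mapsto \bar F \coloneqq \Mod(F) \of \eta_\L$ and $G \mapsto \check G \coloneqq \eps_\K \of U G$ form an equivalence $\fcmCat(U\L, \K) \simeq \fcmCat_\textup u(\L, \Mod(\K))$, pseudonaturally in $\L$ and $\K$. One composite is strict: $\check{\bar F} = \eps_\K \of \Mod(F) \of \eta_\L = F \of \eps_\L \of \eta_\L = F$, since $\eps$ is strictly natural and $\eps_\L \of \eta_\L = \id_\L$ by inspection. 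For the other, $\bar{\check G} = \Mod(\eps_\K \of G) \of \eta_\L$ sends an object $A$ of $\L$ to the monoid whose carrier is that of $G(A)$ and whose horizontal part is the carrier of the bimodule $G(I_A)$; because $G$ is a morphism of $\fcmCat_\textup u$ it preserves horizontal units, so the canonical invertible cell $G(I_A) \iso I_{G(A)}$ in $\Mod(\K)$---together with its analogue on horizontal morphisms---underlies an invertible transformation $\bar{\check G} \iso G$. I expect the main obstacle to be the routine but lengthy bookkeeping showing that all these comparison isomorphisms, together with the pseudonaturality cells of $\eta$, cohere as required; conceptually the one non-formal ingredient is \corref{functors preserve horizontal units}---that functors of hypervirtual double categories carry cocartesian cells exhibiting units to cocartesian cells---which is precisely what makes $\bar{\check G} \iso G$ invertible and so upgrades a would-be strict adjunction to a genuine pseudo-adjunction.
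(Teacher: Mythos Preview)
Your proposal is essentially correct and follows the standard approach that one finds in \cite{Cruttwell-Shulman10}. Note that the paper itself offers no proof of this proposition: it is simply quoted as Proposition~5.14 of \cite{Cruttwell-Shulman10}, so there is nothing to compare against beyond the original source.

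A couple of small points worth tightening. First, the $2$-functor $\Mod$ of \propref{Mod is a 2-functor} lands in $\fcblmCat$, and the paper identifies $\fcmCat_\textup u$ with the full sub-$2$-category of $\fcblmCat$ on the unital objects; your phrase ``restricting to unary cells'' is thus best read as invoking this identification (ultimately justified by \thmref{unital fc-multicategories}) rather than as literally discarding cells. Second, when you write $\eps_\L \of \eta_\L = \id_\L$ you are silently identifying $\eps_\L$ with $\eps_{U\L}$; this is harmless but worth making explicit. Your identification of \corref{functors preserve horizontal units} as the one non-formal ingredient---the source of the ``pseudo'' in pseudo-adjoint---is exactly right.
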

	
	To complete the picture, we will briefly describe the classical notion of `pseudo double category', as introduced by Grandis and Par\'e in the Appendix to \cite{Grandis-Pare99}; see also Section 2 of \cite{Shulman08}. In our terms, a \emph{pseudo double category} is a virtual double category containing $(1,1)$-ary cells only, which is equipped with a horizontal composition
	\begin{align*}
		A \xbrar J B \xbrar H E \mspace{-4.25mu}\qquad&\mapsto\mspace{-4.25mu}\qquad A \xbrar{J \hc H} E; \\
		\begin{tikzpicture}[textbaseline, ampersand replacement=\&]
			\matrix(m)[math35]{A \& B \& E \\ C \& D \& F \\};
			\path[map]	(m-1-1) edge[barred] node[above] {$J$} (m-1-2)
													edge node[left] {$f$} (m-2-1)
									(m-1-2) edge[barred] node[above] {$H$} (m-1-3)
													edge node[right] {$g$} (m-2-2)
									(m-1-3) edge node[right] {$h$} (m-2-3)
									(m-2-1) edge[barred] node[below] {$K$} (m-2-2)
									(m-2-2) edge[barred] node[below] {$L$} (m-2-3);
			\path[transform canvas={xshift=1.75em}]	(m-1-1) edge[cell] node[right] {$\phi$} (m-2-1)
									(m-1-2) edge[cell] node[right] {$\psi$} (m-2-2);
		\end{tikzpicture} \quad&\mapsto\quad \begin{tikzpicture}[textbaseline, ampersand replacement=\&]
			\matrix(m)[math35, column sep={5em,between origins}]{A \& E \\ C \& F, \\};
			\path[map]	(m-1-1) edge[barred] node[above] {$J \hc H$} (m-1-2)
													edge node[left] {$f$} (m-2-1)
									(m-1-2) edge node[right] {$h$} (m-2-2)
									(m-2-1) edge[barred] node[below] {$K \hc L$} (m-2-2);
			\path[transform canvas={xshift=1.5em}]	(m-1-1) edge[cell] node[right] {$\phi \hc \psi$} (m-2-1);
		\end{tikzpicture}
	\end{align*}
	as well as horizontal units $\hmap{I_A}AA$, that come with horizontal coherence cells of the forms $(J \hc H) \hc M \iso J \hc (H \hc M)$, $I_A \hc J \iso J$ and $J \hc I_B \iso J$. A pseudo double category with identity cells as coherence cells is called a \emph{strict} double category.
	
	Any pseudo double category gives rise to a virtual double category which has the same objects and morphisms, while its cells $(J_1, \dotsc, J_n) \Rar K$ are cells $J_1 \hc \dotsb \hc J_n \Rar K$ in the double category. The following result, which is Proposition 2.8 of \cite{Dawson-Pare-Pronk06} and Theorem 5.2 of \cite{Cruttwell-Shulman10} characterises the virtual double categories obtained in this way as unital virtual double categories with all horizontal composites.
	\begin{proposition}[MacG. Dawson, Par\'e and Pronk] \label{pseudo double categories}
		A virtual double category is induced by a pseudo double category if and only if it has all horizontal composites and units.
	\end{proposition}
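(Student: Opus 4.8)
The plan is to prove the two implications separately; the forward one is essentially bookkeeping, while the converse requires the actual construction of a pseudo double category.

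For the \emph{only if} direction, suppose $\K$ is the virtual double category induced by a pseudo double category $\mathbb D$. Given a path $\hmap{(J_1, \dotsc, J_n)}{A_0}{A_n}$ with $n \geq 1$, I would take $K \coloneqq J_1 \hc \dotsb \hc J_n$ computed in $\mathbb D$ (with a fixed bracketing) and let the candidate horizontal cell $(J_1, \dotsc, J_n) \Rar K$ be the one that, under the defining bijection of the induced virtual double category, corresponds to the identity cell on $K$ in $\mathbb D$; for $n = 0$ I would take $K \coloneqq I_{A_0}$ together with the cell corresponding to $\id_{I_{A_0}}$. That these cells are weakly cocartesian is immediate: by construction a cell out of $(J_1, \dotsc, J_n)$ in $\K$ \emph{is} a cell out of $J_1 \hc \dotsb \hc J_n$ in $\mathbb D$, so the required factorisations exist uniquely. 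Conditions (a) and (b) of \defref{cocartesian paths} hold because restriction along an identity morphism always exists (being the given morphism itself, witnessed by a horizontal identity cell), and the whiskered universal properties of (b) reduce, via the coherence isomorphisms of $\mathbb D$, to that same defining bijection. Hence $\K$ has all horizontal composites and units.

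For the \emph{if} direction, assume $\K$ has all horizontal composites and units. I would build a pseudo double category $\mathbb D$ having the same objects, vertical morphisms and horizontal morphisms as $\K$, with the $(1,1)$-ary cells of $\K$ as its cells and their vertical composition inherited from $\K$. Horizontal composition of morphisms $\hmap JAB$ and $\hmap HBE$ is obtained by choosing, once and for all, a cocartesian cell $(J, H) \Rar J \hc H$; horizontal composition of cells, the horizontal units $\hmap{I_A}AA$, and their cocartesian cells $(A) \Rar I_A$ are then forced by the corresponding universal properties. The associator $(J \hc H) \hc M \iso J \hc (H \hc M)$ and the unitors $I_A \hc J \iso J \iso J \hc I_B$ are precisely the canonical isomorphisms furnished by the coherence of horizontal composites and units established earlier in this section (itself a consequence of the pasting lemma \lemref{pasting lemma for cocartesian paths}). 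Functoriality of $\hc$ on cells, the interchange law, and naturality of the coherence cells are then routine consequences of the uniqueness of factorisations through cocartesian cells.

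The real work, and the step I expect to be the principal obstacle, is twofold. First one must verify that $\mathbb D$ satisfies the pentagon and triangle coherence axioms: both sides of each axiom are cells whose horizontal source is a (multiply iterated) cocartesian cell, hence are determined by their composites with that cocartesian cell, and \lemref{pasting lemma for cocartesian paths} shows these composites agree. Second one must check that the virtual double category induced by $\mathbb D$ is isomorphic to $\K$: it is the identity on objects and morphisms, and on cells it sends a cell $J_1 \hc \dotsb \hc J_n \Rar K$ of $\mathbb D$ to its precomposite with the chosen cocartesian cell $(J_1, \dotsc, J_n) \Rar J_1 \hc \dotsb \hc J_n$, which is a bijection exactly by the cocartesian universal property (and by the unit's universal property when $n = 0$). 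It is here --- in checking that this bijection is compatible with the virtual composition $\psi \of (\phi_1, \dotsc, \phi_n)$ for cells $\phi_i$ of arbitrary arity --- that the full force of \defref{cocartesian paths}, namely conditions (a) and (b) rather than mere weak cocartesianness, is needed, since the cocartesian cells must absorb the morphisms and cells surrounding them.
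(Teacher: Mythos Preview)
The paper does not prove this proposition; it simply cites it as Proposition~2.8 of \cite{Dawson-Pare-Pronk06} and Theorem~5.2 of \cite{Cruttwell-Shulman10}. Your sketch follows the standard approach of those references and is essentially correct: the forward direction is immediate from the defining bijection, while the converse constructs the pseudo structure from chosen cocartesian cells and extracts coherence from the pasting lemma (\lemref{pasting lemma for cocartesian paths}), exactly as you outline.
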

	
	In view of the proposition above, by a \emph{double category} we shall mean either an augmented virtual double category that has all horizontal composites and units or, equivalently, a pseudo double category in the classical sense. Finally, following \cite{Cruttwell-Shulman10} again, by an \emph{equipment} we shall mean a double category that has all restrictions.
	
	We close this subsection with two examples of strict double categories.
	\begin{example} \label{strict double category of quintets}
		The augmented virtual double category $Q(\catvar C)$ of quintets in a $2$-category $\catvar C$ (\defref{quintets}) is clearly a strict double category: the horizontal composite $(j \hc k)$ of two composable morphisms in $\catvar C$ is simply their composite $k \of j$.
	\end{example}

	\begin{example} \label{relations}
		Let $\2$ denote the `walking arrow'-category $(\bot \to \top)$, and notice that it admits all limits and colimits. Given sets $A$ and $B$, a $\2$"/matrix $\hmap JAB$ (\exref{enriched profunctors}) is simply a relation $J \subseteq A \times B$. When working with relations we will write $xJy$ to mean $(x, y) \in J$. Assuming the cartesian structure $(\wedge, \top)$ on $\2$, where $\wedge$ denotes conjunction, relations form a strict double category $\Rel \coloneqq \Mat\2$ as follows. It consists of sets, functions, relations, and cells that are uniquely determined by their sources and targets: a cell
		\begin{displaymath}
			\begin{tikzpicture}[textbaseline]
				\matrix(m)[math35]{A & B \\ C & D \\};
				\path[map]	(m-1-1) edge[barred] node[above] {$J$} (m-1-2)
														edge node[left] {$f$} (m-2-1)
										(m-1-2) edge node[right] {$g$} (m-2-2)
										(m-2-1) edge[barred] node[below] {$K$} (m-2-2);
				\path				(m-1-1) edge[transform canvas={xshift=1.75em}, cell]	(m-2-1);
			\end{tikzpicture}
		\end{displaymath}
		precisely if $(fx)K(gy)$ for all $xJy$. The composite $J \hc H$ of modular relations $\hmap JAB$ and $\hmap HBE$ is given by the usual composition of relations:
		\begin{displaymath}
			x(J \hc H)z \defeq \bigvee_{y \in B} (xJy \wedge yHz),
		\end{displaymath}
		where $x \in A$, $z \in E$ and $\bigvee$ denotes disjunction. Clearly $(J, H) \mapsto J \hc H$ is associative, while the relations $\hmap{I_A}AA$, defined by $x(I_A)y \defeq x = y$, form units for $\hc$.
		
		Modules in $\Rel$ form again a strict double category, which we denote $\ModRel \coloneqq \enProf\2$. Its objects are \emph{preordered sets} $(X, \leq)$, with reflexive and transitive ordering $\leq$, while its vertical morphisms are monotone functions. A horizontal morphism $\hmap JAB$ in $\enProf\2$ is a \emph{modular relation}, that is $J \subseteq A \times B$ such that $x_1 \leq x_2$, $x_2Jy_1$ and $y_1 \leq y_2$ implies $x_1Jy_2$. Cells and horizontal composites in $\ModRel$ are simply those of $\Rel$, while the unit modular relation $\hmap{I_A}AA$ is defined by $x(I_A)y \defeq x \leq y$.
	\end{example}

	\subsection{Units in terms of restrictions}
	The following lemma implies that, for any object $A$, unit $I_A$ coincides with the restriction $A(\id, \id)$.
	\begin{lemma} \label{unit identities}
		Consider cells $\phi$ and $\psi$ as in the identities below, and assume that either identity holds. The following conditions are equivalent: \textup{(a)} $\psi$ is cartesian; \textup{(b)} both identities hold; \textup{(c)} $\phi$ is weakly cocartesian; \textup{(d)} $\phi$ is cocartesian; \textup{(e)} $\phi$ is cartesian.
		\begin{displaymath}

  	\end{displaymath}
  	Consequently a cell $\phi$, of the form as in the above, is cocartesian precisely if it is cartesian.
	\end{lemma}
	\begin{proof}
		We claim that, under the assumption of the identity (A), the implications (a) $\Leftrightarrow$ (b) $\Leftrightarrow$ (c) $\Leftrightarrow$ (d) $\Rightarrow$ (e) hold. To see this first notice that the interchange axioms (\lemref{horizontal composition}) imply $\phi \of \psi = \phi \hc \psi$, so that (a), (b) and (c) are equivalent by \lemref{companion identities lemma}. Clearly (d) $\Rightarrow$ (c), while (a) $\Rightarrow$ (e) follows from applying the pasting lemma to (A). Thus it suffices to show that (b) $\Rightarrow$ (d). To do so consider any cell $\cell \chi{\ul J}{\ul K}$, where $\ul J = (A_0 \xbrar{J_1} A_1 \dotsb A_{i''} \xbrar{J_{i'}} A \xbrar{J_i} A_i \dotsb A_{n'} \xbrar{J_n} A_n)$; we have to show that it factors uniquely through $\phi$ as in \defref{cocartesian paths}. Assuming (b), it is easily seen that this factorisation is given by $\chi' \dfn \chi \of (\id_{J_1}, \dotsc, \id_{J_{i'}}, \psi, \id_{J_i}, \dotsc, \id_{J_n})$: indeed, that $\chi'$ composed with $\phi$ gives back $\chi$ follows from (A), while uniqueness of $\chi'$ follows from (J).
		
		Next we show that under (e) the identities (A) and (J) are equivalent. If (e) holds, that is $\phi$ is cartesian, then there exists a unique cell $\psi'$ such that $\id_J = \phi \of \psi'$. Because $\phi \of \psi' \of \phi = \phi$ and $\phi$ is cartesian, $\psi' \of \phi = \id_A$ follows. If (A) holds then $\psi = \psi \of \phi \of \psi' = \psi'$ follows, so that $\id_J = \phi \of \psi' = \phi \of \psi$. On the other hand if (J) then $\psi = \psi' \of \phi \of \psi = \psi'$, so that $\id_A = \psi' \of \phi = \psi \of \phi$. We have shown that (A)~$\Leftrightarrow$~(J) under assumption of (e).
		
		From the above we conclude that all five conditions are equivalent as soon as (A) holds. We complete the proof by showing that (J) $\Rightarrow$ (A) under each condition. Assume (J). If (a) holds then $\id_A$ factors as $\id_A = \psi \of \phi'$; hence $\phi = \phi \of \psi \of \phi' = \phi'$ so that (A) follows. If (c) or (d) holds then $\id_A$ factors as $\id_A = \psi' \of \phi$; hence $\psi = \psi' \of \phi \of \psi = \psi'$ so that (A) follows. Of course (b) implies (A), and we have already shown that (A) $\Leftrightarrow$ (J) under (e). This completes the proof of the main assertion.
		
		For the final assertion notice that if $\phi$ is cocartesian then we can obtain a factorisation of the form (A), while if it is cartesian then we can obtain one of the form (J), so that the equivalence follows from applying the main assertion.
	\end{proof}
	
	The following  corollary shows that an augmented virtual double category has all restrictions whenever it has all unary restrictions and horizontal units.
	\begin{corollary} \label{restrictions in terms of units}
		Consider a unital object $C$ and let $\id_C'$ denote the factorisation of $\id_C$ through the cocartesian cell defining its unit, as in the previous lemma. A nullary cell $\phi$, as on the left-hand side below, is cartesian if and only if its factorisation $\phi'$ through $\id_C'$ is cartesian.
		\begin{displaymath}
  		\begin{tikzpicture}[textbaseline]
				\matrix(m)[math35, column sep={1.75em,between origins}]{A & & B \\ & C & \\};
				\path[map]	(m-1-1) edge[barred] node[above] {$J$} (m-1-3)
														edge node[left] {$f$} (m-2-2)
										(m-1-3) edge node[right] {$g$} (m-2-2);
				\path[transform canvas={yshift=0.25em}]	(m-1-2) edge[cell] node[right, inner sep=3pt] {$\phi$} (m-2-2);
			\end{tikzpicture} = \begin{tikzpicture}[textbaseline]
  			\matrix(m)[math35, column sep={1.75em,between origins}]{A & & B \\ C & & C \\ & C & \\};
  			\path[map]	(m-1-1) edge[barred] node[above] {$J$} (m-1-3)
  													edge node[left] {$f$} (m-2-1)
  									(m-1-3) edge node[right] {$g$} (m-2-3)
  									(m-2-1) edge[barred] node[below, inner sep=1.5pt] {$I_C$} (m-2-3);
  			\path				(m-2-1)	edge[eq, transform canvas={xshift=-2pt}] (m-3-2)
  									(m-2-3) edge[eq, transform canvas={xshift=2pt}] (m-3-2);
  			\path[transform canvas={xshift=1.75em}]
										(m-1-1) edge[cell] node[right] {$\phi'$} (m-2-1);
				\path				(m-2-2) edge[transform canvas={shift={(-0.575em,0.325em)}}, cell] node[right, inner sep=3pt] {$\id_C'$} (m-3-2);
  		\end{tikzpicture}
  	\end{displaymath}
  	Consequently, if the horizontal unit $I_C$ exists in an augmented virtual equipment then so do all nullary restrictions of the form $C(f, g)$. In particular, in that case the companion and conjoint of any morphism $\map fAC$ exist.
	\end{corollary}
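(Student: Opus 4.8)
The plan is to deduce this directly from \lemref{unit identities} and the pasting lemma for cartesian cells, \lemref{pasting lemma for cartesian cells}; no new ideas are needed. First I would apply \lemref{unit identities} to the cocartesian cell $C \Rar I_C$ that defines the unit of $C$: since that cell is cocartesian it satisfies condition~(d) of the lemma, so by the stated equivalences it is also cartesian (condition~(e)), both identities (A) and (J) hold (condition~(b)), and the factorisation $\id_C'$, determined by $\id_C = \id_C' \of (C \Rar I_C)$, is cartesian (condition~(a)). Because $\id_C'$ is cartesian, every nullary cell into $C$ — and in particular the cell $\phi$ on the left-hand side — factors uniquely through $\id_C'$ as a $(1,1)$-ary cell $\cell{\phi'}J{I_C}$ with the same vertical source $f$ and target $g$; this is exactly the cell $\phi'$ appearing in the displayed identity, so the factorisation $\phi = \id_C' \of \phi'$ is well defined.

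With $\id_C'$ known to be cartesian, the biconditional ``$\phi$ is cartesian if and only if $\phi'$ is cartesian'' is then precisely \lemref{pasting lemma for cartesian cells} applied to the pasting $\phi = \id_C' \of \phi'$, with $\id_C'$ in the role of the lower (cartesian) cell and $\phi'$ in the role of the upper cell. That settles the main assertion.

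For the consequences, let $\K$ be a hypervirtual equipment (\defref{(fc, bl)-equipment}) in which $I_C$ exists, and let $\map fAC$ and $\map gBC$ be vertical morphisms. Since every unary restriction exists in $\K$, the restriction $\hmap{I_C(f, g)}AB$ exists, witnessed by a cartesian cell $\cell\psi{I_C(f, g)}{I_C}$ with vertical legs $f$ and $g$. Pasting $\id_C'$ below $\psi$ and invoking the main assertion (with $\psi$ in the role of $\phi'$) shows that $\cell{\id_C' \of \psi}{I_C(f, g)}C$ is a cartesian nullary cell with vertical legs $f$ and $g$; hence it exhibits $I_C(f, g)$ as the nullary restriction $C(f, g)$. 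Specialising to $g = \id_C$ gives the companion $f_* = C(f, \id)$, and to $f = \id_C$ gives the conjoint $f^* = C(\id, f)$. I do not expect a genuine obstacle: the only points that require a little care are matching the cell $\phi'$ of the displayed diagram with the cartesian factorisation of $\phi$ through $\id_C'$, and keeping straight which cell plays which role when invoking \lemref{pasting lemma for cartesian cells}.
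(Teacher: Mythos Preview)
Your proposal is correct and follows essentially the same approach as the paper: establish that $\id_C'$ is cartesian via \lemref{unit identities}, then invoke the pasting lemma \lemref{pasting lemma for cartesian cells}. The paper's proof is a single sentence to this effect; you have simply spelled out the details, including the construction of $C(f,g)$ as $I_C(f,g)$ for the consequences, which the paper leaves implicit.
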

	\begin{proof}
		Since the factorisation $\id_C'$ is cartesian by the previous lemma, this follows immediately from the pasting lemma.
	\end{proof}
	
	Unlike functors between virtual double categories, functors between augmented virtual double categories necessarily preserve horizontal units, as follows.
	\begin{corollary} \label{functors preserve horizontal units}
		Any functor between augmented virtual double categories preserves cocartesian cells that define horizontal units.
	\end{corollary}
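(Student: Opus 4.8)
The plan is to imitate the proof of \corref{functors preserve companions and conjoints}: a functor of hypervirtual double categories preserves vertical composition and identity cells strictly, so it preserves any characterisation of cocartesianness that can be phrased entirely in terms of such equations, and \lemref{unit identities} supplies exactly such a characterisation for cocartesian cells that define horizontal units.

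Concretely, I would start from a cocartesian cell $\cell\phi{(A)}{I_A}$ in a hypervirtual double category $\K$ --- so that $\phi$ exhibits $I_A$ as the horizontal unit of $A$ --- together with a functor $\map F\K\L$. Since $\phi$ is cocartesian, I first extract, using its universal property, the factorisation $\cell\psi{I_A}A$ of the vertical identity cell $\id_A$ through $\phi$; by the equivalences of \lemref{unit identities} (the implication (d) $\Rightarrow$ (b)) the pair $(\phi, \psi)$ then satisfies both ``unit identities'' of that lemma, namely $\psi \of \phi = \id_A$ and $\phi \of \psi = \id_{I_A}$.

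Next I apply $F$. Because $F$ preserves vertical composition and identity cells strictly, with $F(\id_A) = \id_{FA}$ and $F(\id_{I_A}) = \id_{F(I_A)}$, the images $\cell{F\phi}{(FA)}{F(I_A)}$ and $\cell{F\psi}{F(I_A)}{FA}$ --- which have the shape required by \lemref{unit identities}, their vertical legs being $F(\id_A) = \id_{FA}$ --- satisfy $F\psi \of F\phi = \id_{FA}$ and $F\phi \of F\psi = \id_{F(I_A)}$. Thus $(F\phi, F\psi)$ satisfies condition (b) of \lemref{unit identities} in $\L$, so by the equivalence (b) $\Leftrightarrow$ (d) of that lemma the cell $F\phi$ is cocartesian, i.e.\ it exhibits $F(I_A)$ as the horizontal unit $I_{FA}$ of $FA$. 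There is no genuine obstacle in this argument; the only point needing a little care is to confirm that $F\phi$ and $F\psi$ have the correct arity and shape to be fed back into \lemref{unit identities}, which is immediate from $F$ preserving identities together with the source path $(A)$ being empty.
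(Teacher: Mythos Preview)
Your proof is correct and follows essentially the same strategy as the paper's: reduce the cocartesianness of the unit cell to an equational characterisation via \lemref{unit identities}, use that functors preserve compositions and identities strictly, and then apply \lemref{unit identities} again in $\L$. The only minor difference is that you pass through condition~(b) (both unit identities) directly, whereas the paper passes through condition~(a) (cartesianness of the factorisation $\psi$) and invokes \corref{functors preserve companions and conjoints} to transport that cartesianness; since the latter corollary is itself proved by preserving the companion identities, the two arguments amount to the same thing.
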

	\begin{proof}
		Consider a cocartesian cell $\psi$ defining the horizontal unit of an object $A$. By the lemma above the vertical identity cell $\id_A$ factors uniquely through $\psi$ as a cartesian cell $\phi$ which, by \corref{functors preserve companions and conjoints}, is preserved by any functor $F$. Since $F$ preserves the factorisation of $\id_A$, mapping it to $\id_{FA} = F\psi \of F\phi$, where now $F\phi$ is cartesian, it follows from the lemma above again that $F\psi$ is cocartesian.
	\end{proof}
	
	Recall that a vertical morphism $\map fAC$ is called full and faithful whenever the restriction $C(f, f)$ exists and the vertical identity $\id_f$ is cartesian.
	\begin{corollary} \label{unit in terms of full and faithful map}
		Consider a vertical morphism $\map fAC$. If the restriction $C(f, f)$ exists then $f$ is full and faithful precisely if the factorisation of $\id_f$, as shown below, is cocartesian.
		\begin{displaymath}
	  	\begin{tikzpicture}[textbaseline]
						\matrix(m)[math35]{A \\ C \\};
						\path[map]	(m-1-1) edge[bend right=45] node[left] {$f$} (m-2-1)
																edge[bend left=45] node[right] {$f$} (m-2-1);
						\path[transform canvas={xshift=-0.5em}]	(m-1-1) edge[cell] node[right] {$\id_f$} (m-2-1);
			\end{tikzpicture} = \begin{tikzpicture}[textbaseline]
    		\matrix(m)[math35, column sep={1.75em,between origins}]{& A & \\ A & & A \\ & C & \\};
    		\path[map]	(m-2-3) edge node[right] {$f$} (m-3-2)
    								(m-2-1) edge[barred] node[below, inner sep=2pt] {$C(f, f)$} (m-2-3)
    												edge node[left] {$f$} (m-3-2);
    		\path				(m-1-2) edge[eq, transform canvas={xshift=-1pt}] (m-2-1)
    												edge[eq, transform canvas={xshift=1pt}] (m-2-3);
    		\path				(m-1-2) edge[cell, transform canvas={shift={(-0.5em,-0.5em)}}] node[right] {$\id_f'$} (m-2-2);
    		\draw				([yshift=0.25em]$(m-2-2)!0.5!(m-3-2)$) node[font=\scriptsize] {$\cart$};
  		\end{tikzpicture}
  	\end{displaymath}
	\end{corollary}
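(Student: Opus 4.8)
The plan is to deduce the equivalence from the pasting lemma for cartesian cells together with the final assertion of \lemref{unit identities}. Denote by $\cell\cart{C(f,f)}C$ the chosen cartesian cell defining the nullary restriction $C(f,f)$, with vertical source and target $f$; then, by construction, $\id_f'$ is the unique factorisation $\id_f = \cart \of \id_f'$ through $\cart$, and $\cell{\id_f'}A{C(f,f)}$ is a horizontal cell whose horizontal source is the empty path $(A)$ and whose horizontal target is the endo-horizontal morphism $\hmap{C(f,f)}AA$.

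First I would apply the pasting lemma (\lemref{pasting lemma for cartesian cells}) to the composite $\id_f = \cart \of \id_f'$: since $\cart$ is cartesian, this composite is cartesian if and only if $\id_f'$ is. By \defref{full and faithful morphism}, under the standing hypothesis that $C(f,f)$ exists, $f$ is full and faithful precisely when $\id_f$ is cartesian; hence $f$ is full and faithful if and only if $\id_f'$ is cartesian. It then remains to observe that $\id_f'$ is exactly a cell of the shape treated in \lemref{unit identities} --- a horizontal cell with empty horizontal source $(A)$ and unary horizontal target $\hmap{C(f,f)}AA$ --- whose final assertion says that such a cell is cocartesian if and only if it is cartesian. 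Chaining the two equivalences yields the statement.

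There is no serious obstacle here, only bookkeeping to keep straight. One checks that the factorisation $\id_f'$ exists and is unique --- immediate from the cartesianness of $\cart$, since $\id_f$ is a nullary cell with horizontal target $(C)$ and vertical legs $f$ and $f$; that the arities match so that the pasting lemma genuinely applies, as $\lns{C(f,f)} = 1$ and $\lns{(C)} = 0$; and that the two side conditions in the definition of a cocartesian cell (\defref{cocartesian paths}) hold automatically for $\id_f'$ --- its vertical source and target being identities, the restrictions demanded by condition~(a) have the trivial form $J'(\id,\id) \iso J'$ --- so that \lemref{unit identities} applies with no further assumptions.
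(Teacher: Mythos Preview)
Your proof is correct and follows exactly the same route as the paper's own argument: the chain
\[
\text{$f$ full and faithful} \Leftrightarrow \text{$\id_f$ cartesian} \Leftrightarrow \text{$\id_f'$ cartesian} \Leftrightarrow \text{$\id_f'$ cocartesian},
\]
with the first equivalence by \defref{full and faithful morphism}, the second by the pasting lemma (\lemref{pasting lemma for cartesian cells}), and the last by the final assertion of \lemref{unit identities}. Your additional bookkeeping about arities and the side conditions of \defref{cocartesian paths} is accurate but not strictly needed, since the final assertion of \lemref{unit identities} already delivers the cartesian/cocartesian equivalence directly.
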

	\begin{proof}
		This follows from
		\begin{displaymath}
			\text{$\id_f$ is cartesian} \Leftrightarrow \text{$\id_f'$ is cartesian} \Leftrightarrow \text{$\id_f'$ is cocartesian}
		\end{displaymath}
		where the equivalences follow from the pasting lemma and \lemref{unit identities}.
	\end{proof}
	
	\subsection{Pointwise horizontal composites} \label{pointwise horizontal composites section}
	Here we return to \exref{horizontal composites in V-Prof} to show that, under the condition considered there, the horizontal composite $(J_1 \hc \dotsb \hc J_n)$ of a path $\hmap{(J_1, \dotsc, J_n)}{A_0}{A_n}$ in $\enProf{\V'}$, of $\V'$-profunctors between $\V'$-categories, exists as soon as the coends on the right below do, for each pair $x \in A_0$ and $y \in A_n$.
		\begin{displaymath}
			(J_1 \hc \dotsb \hc J_n)(x, y) \dfn \int^{u_1 \in A_1, \dotsc, u_{n'} \in A_{n'}} J_1(x, u_1) \tens' \dotsb \tens' J_n(u_{n'}, y)
		\end{displaymath}
	In fact we first capture, in the definition below, the sense in which the assignment above defines $(J_1 \hc \dotsb \hc J_n)$ as a horizontal composite that is `pointwise'. This notion, which can be thought of informally as ``any restriction of $(J_1 \hc \dotsb \hc J_n)$ is again a composite'', will be used often in the next section, where we study Kan extensions. While the definition is stated in the general terms of a path $(\phi_1, \dotsc, \phi_n)$ of unary cells, to treat the example above we will apply it to the just the single horizontal cell $(J_1, \dotsc, J_n) \Rar (J_1 \hc \dotsb \hc J_n)$ that consists of the colimiting cocones defining the coends above.
	
	\begin{definition} \label{pointwise cocartesian path}
		Consider a path $\ul \phi = (\phi_1, \dotsc, \phi_n)$ of unary cells and of length $n \geq 1$, and assume that $\phi_n$ has non-nullary horizontal source as well as trivial vertical target, as shown in the composite in the left-hand side below. The path $\ul \phi$ is called \emph{right pointwise cocartesian} if for any morphism $\map fB{A_{nm_n}}$ the following holds: the restriction $J_{nm_n}(\id, f)$ exists if and only if $K_n(\id, f)$ does, and in that case the path $(\phi_1, \dotsc, \phi_n')$ is cocartesian, where $\phi_n'$ is the unique factorisation in
		\begin{equation} \label{pointwise cocartesian factorisation}
			\begin{tikzpicture}[textbaseline]
				\matrix(m)[math35, column sep={2em,between origins}]
					{ A_{n0} & & A_{n1} & & A_{n(m_n)'} &[0.25em] & B \\
						A_{n0} & & A_{n1} & & A_{n(m_n)'} & & A_{nm_n} \\
						& C_{n'} & & & & A_{nm_n} & \\ };
				\path[map]	(m-1-1) edge[barred] node[above] {$J_{n1}$} (m-1-3)
										(m-1-5) edge[barred] node[above, xshift=-8pt] {$J_{nm_n}(\id, f)$} (m-1-7)
										(m-1-7) edge node[right] {$f$} (m-2-7)
										(m-2-1) edge[barred] node[below] {$J_{n1}$} (m-2-3)
														edge node[left] {$f_{n'}$} (m-3-2)
										(m-2-5) edge[barred] node[below] {$J_{nm_n}$} (m-2-7)
										(m-3-2) edge[barred] node[below] {$K_n$} (m-3-6);
				\path				(m-1-3) edge[eq] (m-2-3)
										(m-1-5) edge[eq] (m-2-5)
										(m-1-1) edge[eq] (m-2-1)
										(m-2-7) edge[eq] (m-3-6)
										(m-2-4) edge[cell, transform canvas={xshift=0.125em}] node[right] {$\phi_n$} (m-3-4);
				\draw[font=\scriptsize]	($(m-1-6)!0.5!(m-2-6)$) node {$\cart$};
				\draw				($(m-1-4)!0.5!(m-2-4)$) node {$\dotsb$};
			\end{tikzpicture} = \begin{tikzpicture}[textbaseline]
				\matrix(m)[math35, column sep={2em,between origins}]
					{ A_{n0} & & A_{n1} & & A_{n(m_n)'} & & B \\
						C_{n'} & & & & & & B \\
						& C_{n'} & & & & A_{nm_n} & \\ };
				\path[map]	(m-1-1) edge[barred] node[above] {$J_1$} (m-1-3)
														edge node[left] {$f_{n'}$} (m-2-1)
										(m-1-5) edge[barred] node[above, xshift=-8pt] {$J_{nm_n}(\id, f)$} (m-1-7)
										(m-2-1) edge[barred] node[below] {$K_n(\id, f)$} (m-2-7)
										(m-2-7) edge node[right] {$f$} (m-3-6)
										(m-3-2) edge[barred] node[below] {$K_n$} (m-3-6);
				\path				(m-2-1) edge[eq] (m-3-2)
										(m-1-7) edge[eq] (m-2-7)
										(m-1-4) edge[cell] node[right] {$\phi_n'$} (m-2-4);
				\draw[font=\scriptsize]	([yshift=-0.25em]$(m-2-4)!0.5!(m-3-4)$) node {$\cart$};
				\draw				(m-1-4) node[xshift=-3pt] {$\dotsb$};
			\end{tikzpicture}.
		\end{equation}
		
		The notion of \emph{left pointwise cocartesian} path is horizontally dual. A path of unary cells that is both left and right pointwise cocartesian is called \emph{pointwise cocartesian}.
	\end{definition}
	Notice that any right pointwise, or left pointwise, cocartesian path is cocartesian in particular, by taking $f = \id_{A_{nm_n}}$ in the above. Of course a single horizontal cell $\cell\phi{(J_1, \dotsc, J_n)}K$, where $n \geq 1$, is called \emph{pointwise cocartesian} whenever the singleton path $(\phi)$ is pointwise cocartesian. In that case we call $K$ the \emph{pointwise composite} of $J_1, \dotsc, J_n$.
	
	Recall from \eqref{internal equivariance span} the construction of a diagram $\ul J^\S(x,y)$ in $\V$, for any path \mbox{$\hmap{\ul J}{A_0}{A_n}$} of $\V'$-profunctors, as well as objects $x \in A_0$ and $y \in A_n$.
	\begin{proposition} \label{cocartesian cells in (V, V')-Prof}
		Let $\V'$ be a monoidal category whose tensor product $\tens'$ preserves large coends on both sides; let $\hmap{\ul J}{A_0}{A_n}$ be a path of $\V'$-profunctors of length $n \geq 1$. A horizontal cell $\cell\phi{\ul J}K$ in $\enProf{\V'}$ is pointwise cocartesian precisely if, for each $x \in A_0$ and $y \in A_n$, its restriction $\bigpars{J_1(x, \id), \dotsc, J_n(\id, y)}\Rar K(x, y)$ defines $K(x, y)$ as the coend $\int^{u_1 \in A_1, \dotsc, u_{n'} \in A_{n'}} J_1(x, u_1) \tens' \dotsb \tens' J_n(u_{n'}, y)$, in the sense of \propref{weakly cocartesian cell in (V, V')-Prof}.
		\begin{displaymath}
			\begin{tikzpicture}
				\matrix(m)[math35]{A_0 & A_1 & A_{n'} & A_n \\ C & & & D \\};
				\path[map]	(m-1-1) edge[barred] node[above] {$J_1$} (m-1-2)
														edge node[left] {$f$} (m-2-1)
										(m-1-3) edge[barred] node[above] {$J_n$} (m-1-4)
										(m-1-4) edge node[right] {$g$} (m-2-4)
										(m-2-1) edge[barred] node[below] {$K$} (m-2-4);
				\path[transform canvas={xshift=1.75em}]	(m-1-2) edge[cell] node[right] {$\psi$} (m-2-2);
				\draw				($(m-1-2)!0.5!(m-1-3)$) node {$\dotsb$};
			\end{tikzpicture}
		\end{displaymath}
		Moreover, given a universe enlargement $\V \to \V'$ with $\V'$ as above, the inclusion $\enProf{(\V, \V')} \to \enProf{\V'}$ reflects all (pointwise) cocartesian cells while preserving the horizontal pointwise ones. In particular a cell $\psi$ as above is cocartesian in $\enProf{(\V, \V')}$ if the induced $\V'$-cocones $(f^*, J_1, \dotsc, J_n, g_*)^\S(x,y) \Rar \Delta K(x,y)$ define the $\V$-objects $K(x, y)$ as the $\V'$-coends below, for any $x \in C$ and $y \in D$. In that case if $g = \id_{A_n}$ then $\psi$ is right pointwise cocartesian.
		\begin{displaymath}
			\int^{u_0 \in A_0, \dotsc, u_n \in A_n} C(x, fu_0) \tens' J_1(u_0, u_1) \tens' \dotsb \tens' J_n(u_{n'}, u_n) \tens' D(gu_n, y)
		\end{displaymath}
	\end{proposition}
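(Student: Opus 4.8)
The plan is to deduce every clause from the characterisation of horizontal weakly cocartesian cells in \propref{weakly cocartesian cell in (V, V')-Prof}, using two standing facts about $\enProf{\V'}$: it has all (unary and nullary) restrictions (\exref{restrictions of V-profunctors}), so the existence clauses in Definitions~\ref{cocartesian paths} and~\ref{pointwise cocartesian path} are vacuous there; and $\tens'$ preserves large coends in each variable, which is Fubini's theorem for coends.

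The technical core is a single lemma: a horizontal cell $\cell\phi{\ul J}K$ in $\enProf{\V'}$ that realises its target objectwise as the coend $K(x,y)\iso\int^{u_1\in A_1,\dotsc,u_{n'}\in A_{n'}}J_1(x,u_1)\tens'\dotsb\tens'J_n(u_{n'},y)$, via the restriction of its cocone, is cocartesian. To prove it I would check the three clauses of \defref{cocartesian paths}: weak cocartesianness is \propref{weakly cocartesian cell in (V, V')-Prof}; clause~(a) is automatic; and for clause~(b) one observes that, object‑pair by object‑pair, the cocone attached to a padded path is obtained from the colimiting cocone for $K(x,y)$ by tensoring on either side with the inserted factors, hence stays colimiting precisely because $\tens'$ is cocontinuous in each variable, so weak cocartesianness again follows from \propref{weakly cocartesian cell in (V, V')-Prof}. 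Granting the lemma, the first equivalence follows. For the forward implication, a pointwise cocartesian $\phi$ is in particular cocartesian, and unwinding clause~(b) with $p=q=1$ (padding $\phi$ by the companion $A_0(x,\id)$ of $x$ on the left and the conjoint $A_n(\id,y)$ of $y$ on the right, using $J_1(x,\id)\iso A_0(x,\id)\hc J_1$ and dually by \lemref{restrictions and composites}) produces a cell of the shape treated in \propref{weakly cocartesian cell in (V, V')-Prof}, forcing each $K(x,y)$ to be the stated coend. Conversely, given the coend description, $\phi$ is cocartesian by the lemma, and every further restriction $\phi'$ whose cocartesianness \defref{pointwise cocartesian path} demands has, by a one‑line Fubini computation with $J_n(\id,f)$ in place of $J_n$, a target that is again objectwise a coend of the same shape, so $\phi'$ too is cocartesian by the lemma — that is, $\phi$ is pointwise cocartesian. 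I expect the bulk of the work, and the main obstacle, to be exactly this bookkeeping: keeping track of which coend results from each restriction or padding and matching it against \propref{weakly cocartesian cell in (V, V')-Prof}.

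For the universe‑enlargement clauses, reflection of (pointwise) cocartesian cells along $\enProf{(\V,\V')}\to\enProf{\V'}$ is formal: every cell of $\enProf{(\V,\V')}$ against which a universal property must be tested is also a cell of $\enProf{\V'}$, the unique factorisation it admits there has the same horizontal sources and targets — all $\V$-profunctors — hence lies in $\enProf{(\V,\V')}$, and $\enProf{(\V,\V')}$ has all unary restrictions, so the existence clauses are met. Preservation of horizontal pointwise cocartesian cells then follows by combining this with the first equivalence, since a $\V$-profunctor is the coend of \propref{weakly cocartesian cell in (V, V')-Prof} in $\enProf{(\V,\V')}$ exactly when it is computed as such in $\V'$; thus ``horizontal pointwise cocartesian'' is literally the same condition in $\enProf{(\V,\V')}$ and in $\enProf{\V'}$.

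Finally, for a general cell $\cell\psi{(J_1,\dotsc,J_n)}K$ over $(f,g)$ I would transpose to the horizontal case. The conjoint $f^*$ and companion $g_*$ exist in $\enProf{\V'}$ (both are nullary restrictions), and horizontally composing $\psi$ with the cartesian cells defining $f^*$ and $g_*$ yields a horizontal cell $\cell{\bar\psi}{(f^*,J_1,\dotsc,J_n,g_*)}K$, the inverse operation being horizontal composition with the weakly cocartesian cells defining $f^*$ and $g_*$; these are mutually inverse by the companion and conjoint identities of \lemref{companion identities lemma}, $\psi$ is cocartesian if and only if $\bar\psi$ is (this is the passage between extensions and horizontal composites, \corref{extensions and composites}), and the cocone of $\bar\psi$ over $(f^*,J_1,\dotsc,J_n,g_*)^\S(x,y)$ is precisely the cocone displayed in the statement, whose ``slim'' coend in the sense of the first claim is the displayed fat coend. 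Hence if those cocones realise $K(x,y)$ as the fat coend, the first claim applied to $\bar\psi$ makes $\bar\psi$ pointwise cocartesian in $\enProf{\V'}$, so $\psi$ is cocartesian there, hence — by reflection — cocartesian in $\enProf{(\V,\V')}$. When $g=\id_{A_n}$ one has $g_*\iso I_{A_n}$, so the fat coend collapses, via the unit coherence isomorphism $J_n\hc I_{A_n}\iso J_n$ (co‑Yoneda in $u_n$), to $\int^{u_0\in A_0,\dotsc,u_{n'}\in A_{n'}}C(x,fu_0)\tens'J_1(u_0,u_1)\tens'\dotsb\tens'J_n(u_{n'},y)$; restricting $\psi$ along any $\map hB{A_n}$ on the right and applying the same collapse shows the restricted cell realises a fat coend of the same shape, so it is cocartesian by what was just proved — which is exactly right pointwise cocartesianness of $\psi$.
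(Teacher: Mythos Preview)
Your proposal is correct and follows the same line of argument as the paper, with the same key ingredients: the componentwise coend characterisation of \propref{weakly cocartesian cell in (V, V')-Prof}, cocontinuity of $\tens'$ to handle padding, full-and-faithfulness of the inclusion for reflection, and the companion/conjoint transposition of \corref{extensions and composites} for the general cell $\psi$.

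One imprecision is worth flagging. In your technical lemma, clause~(b) of \defref{cocartesian paths} cannot literally be deduced from \propref{weakly cocartesian cell in (V, V')-Prof}: that proposition only treats cells whose outer objects are the unit $\V'$-category $I$, whereas a padded path has arbitrary outer objects $A'_0$ and $A''_q$. What you correctly identify --- that the colimiting cocone for $K(x,y)$, tensored on either side with the inserted factors, stays colimiting by cocontinuity of $\tens'$ --- is exactly the argument the paper carries out directly: it fixes sequences $\ul x'$ and $\ul x''$, observes that the restricted cocone factors uniquely through the tensored colimiting cocone, and then checks that the resulting family of factorisations $\chi'_{\ul x' \conc \ul x''}$ satisfies the equivariance axioms needed to assemble into a cell. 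So your idea is right, but the conclusion does not come packaged from \propref{weakly cocartesian cell in (V, V')-Prof}; it is that componentwise-plus-naturality argument itself.

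A smaller organisational difference: for the forward implication of the first equivalence you go via clause~(b) with $p=q=1$ and then invoke the composite--restriction identifications of \lemref{restrictions and composites}; the paper instead reads the restricted cell $\bigpars{J_1(x,\id),\dotsc,J_n(\id,y)}\Rar K(x,y)$ straight off \defref{pointwise cocartesian path} and applies \propref{weakly cocartesian cell in (V, V')-Prof} directly. Both routes are valid and amount to the same co-Yoneda manipulation.
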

	\begin{proof}
		The `precisely'-part follows immediately from the previous definition and \propref{weakly cocartesian cell in (V, V')-Prof}, as the restrictions $\bigpars{J_1(x, \id), \dotsc, J_n(\id, y)} \Rar K(x,y)$ are obtained from $\phi$ by composing it with the cartesian cells defining $J_1(x, \id)$ and $J_n(\id, y)$ and then factorising the result through the cartesian cell that defines $K(x, y)$.
		
		For the `if'-part consider $\V'$-functors $\map f{A'}{A_0}$ and $\map g{A''}{A_n}$; we have to show that any cell $\chi$ in $\enProf{\V'}$, of the form below, factors uniquely through the restriction $\cell{\phi'}{\bigpars{J_1(f, \id), \dotsc, J_n(\id, g)}}{K(f, g)}$ of $\phi$.
		\begin{displaymath}
			\begin{tikzpicture}
				\matrix(m)[math35]
					{ A'_0 & A'_1 & A'_{p'} & A' & A_1 & A_{n'} & A'' & A''_1 & A''_{q'} & A''_q \\
						C & & & & & & & & & D \\ };
				\path[map]	(m-1-1) edge[barred] node[above] {$J'_1$} (m-1-2)
														edge node[left] {$h$} (m-2-1)
										(m-1-3) edge[barred] node[above] {$J'_p$} (m-1-4)
										(m-1-4) edge[barred] node[above, xshift=2pt] {$J_1(f, \id)$} (m-1-5)
										(m-1-6) edge[barred] node[above] {$J_n(\id, g)$} (m-1-7)
										(m-1-7) edge[barred] node[above] {$J''_1$} (m-1-8)
										(m-1-9) edge[barred] node[above] {$J''_q$} (m-1-10)
										(m-1-10) edge node[right] {$k$} (m-2-10)
										(m-2-1) edge[barred] node[below] {$\ul L$} (m-2-10);
				\path				($(m-1-1.south)!0.5!(m-1-10.south)$) edge[cell] node[right] {$\chi$} ($(m-2-1.north)!0.5!(m-2-10.north)$);
				\draw				($(m-1-2)!0.5!(m-1-3)$) node {$\dotsb$}
										($(m-1-5)!0.5!(m-1-6)$) node {$\dotsb$}
										($(m-1-8)!0.5!(m-1-9)$) node {$\dotsb$};
			\end{tikzpicture}
		\end{displaymath}
		To do so notice that $\chi$ restricts, for any sequences of objects $\ul x' \in \ob A'_0 \times \dotsb \times \ob A'$ and $\ul x'' \in \ob A'' \times \dotsb \times \ob A''_q$, as a family of cocones below, where $\ul J^\S(fx', gx'')$ is the diagram described in \eqref{internal equivariance span}.
		\begin{multline*}
			J'_1(x'_0, x'_1) \tens' \dotsb \tens' J'_p(x'_{p'}, x') \tens' \ul J^\S(fx', gx'') \\
			\tens' J''_1(x'', x''_1) \tens' \dotsb \tens' J''_q(x''_{q'}, x''_q) \Rar \Delta\ul L(hx'_0, kx''_q)
		\end{multline*}
		By the assumptions, this cocone factors uniquely through
		\begin{displaymath}
			 J'_1(x'_0, x'_1) \tens' \dotsb \tens' J'_p(x'_{p'}, x') \tens' \phi'_{(x', x'')} \tens' J''_1(x'', x''_1) \tens' \dotsb \tens' J''_q(x''_{q'}, x''_q),
		\end{displaymath}
		where $\phi'_{(x', x'')}$ denotes the restriction of $\phi'$ to $x' \in A'$ and $x'' \in A''$, as a $\V'$-map
		\begin{multline*}
			\chi'_{\ul x' \conc \ul x''}\colon J'_1(x'_0, x'_1) \tens' \dotsb \tens' J'_p(x'_{p'}, x') \tens' K(fx', gx'') \\
			\tens' J''_1(x'', x''_1) \tens' \dotsb \tens' J''_q(x''_{q'}, x''_q) \to \ul L(hx'_0, kx''_q).
		\end{multline*}
		It remains to check that these $\V'$-maps combined satisfy the equivariance axioms of \defref{monoids and bimodules}, so that they form a cell $\cell{\chi'}{\bigpars{J'_1, \dotsc, J'_p, K(f, g), J''_1, \dotsc, J''_q}}{\ul L}$, which is the factorisation of $\chi$ through $\phi'$. This is straightforward: it follows from the uniqueness of factorisations through the restrictions $\phi'_{(x', x'')}$, together with the equivariance axioms for $\phi'$ and $\chi$.
		
		Because $\enProf{(\V, \V')} \to \enProf{\V'}$ is full and faithful reflection of cocartesian cells follows, while its preservation of horizontal cocartesian cells follows from the first assertion combined with its preservation of horizontal weakly cocartesian cells (\propref{weakly cocartesian cell in (V, V')-Prof}). We conclude that a cell $\psi$, as in the statement, is cocartesian in $\enProf{(\V, \V')}$ as soon as it is cocartesian in $\enProf{\V'}$. Using \corref{extensions and composites}, the latter is implied by $\cart \hc \psi \hc \cart$, with the cartesian cells defining $f^*$ and $g_*$, being cocartesian in $\enProf{\V'}$, from which the final assertion follows. That, in the case $g = \id_{A_n}$, any restriction of $\psi$ along $J_n(\id, h)$, where $\map hB{A_n}$ and in the sense of \defref{pointwise cocartesian path}, is again cocartesian is clear, so that $\psi$ is right pointwise cocartesian in this case. This completes the proof.
	\end{proof}
		
	Pointwise cocartesian paths are coherent in the following sense.
	\begin{lemma} \label{coherence of pointwise cocartesian paths}
		If the path $\ul \phi = (\phi_1, \dotsc, \phi_n)$ is right pointwise cocartesian then any path of the form $(\phi_1, \dotsc, \phi_n')$, as in \defref{pointwise cocartesian path}, is again right pointwise cocartesian.
	\end{lemma}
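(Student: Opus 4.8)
The plan is to reduce the right pointwise cocartesianness of the path $(\phi_1, \dotsc, \phi_n')$ along an arbitrary morphism to that of $\ul\phi$ along a composite, using the pseudofunctoriality of restriction supplied by the pasting lemma for cartesian cells (\lemref{pasting lemma for cartesian cells}). Write $f\colon B \to A_{nm_n}$ for the morphism along which $\phi_n'$ is formed in \eqref{pointwise cocartesian factorisation}, and let $\cart_J\colon J_{nm_n}(\id, f) \Rar J_{nm_n}$ and $\cart_K\colon K_n(\id, f) \Rar K_n$ be the chosen cartesian cells, so that $\phi_n'$ is characterised by $\phi_n \of (\id_{J_{n1}}, \dotsc, \id_{J_{n(m_n)'}}, \cart_J) = \cart_K \of \phi_n'$. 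Since $\phi_n$ has non-nullary horizontal source, so does $\phi_n'$, and the vertical target of $\phi_n'$ is trivial, so \defref{pointwise cocartesian path} applies to $(\phi_1, \dotsc, \phi_n')$; fix $g\colon B' \to B$.

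First I would dispose of the existence claim. By the pasting lemma a vertical composite of two cartesian cells is again cartesian, so — since $J_{nm_n}(\id, f)$ exists, being the last horizontal morphism in the source of $\phi_n'$ — the restriction $J_{nm_n}(\id, f)(\id, g)$ exists if and only if $J_{nm_n}(\id, f \of g)$ does, and then they may be taken equal with defining cartesian cell the composite $\cart_{J, fg} \coloneqq \cart_J \of \cart_{J,g}$, where $\cart_{J,g}\colon J_{nm_n}(\id,f)(\id,g) \Rar J_{nm_n}(\id,f)$ is cartesian; likewise $K_n(\id,f)(\id,g) = K_n(\id, f \of g)$ with defining cell $\cart_{K,fg} \coloneqq \cart_K \of \cart_{K,g}$, since $K_n(\id,f)$ is the horizontal target of $\phi_n'$. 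As $\ul\phi$ is right pointwise cocartesian, $J_{nm_n}(\id, f\of g)$ exists if and only if $K_n(\id, f\of g)$ does, whence $J_{nm_n}(\id,f)(\id,g)$ exists if and only if $K_n(\id,f)(\id,g)$ does.

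Now assume these restrictions exist and let $\phi_n''$ be the factorisation obtained from $\phi_n'$ and $g$ via \eqref{pointwise cocartesian factorisation} using the cartesian cells $\cart_{J,g}$ and $\cart_{K,g}$, and let $\hat\phi_n$ be the factorisation of $\phi_n$ obtained from $\ul\phi$ being right pointwise cocartesian at $f \of g$ using $\cart_{J,fg}$ and $\cart_{K,fg}$. A short computation using only the associativity and unit axioms for vertical composition, the defining identities of $\phi_n'$ and $\phi_n''$, and the choices $\cart_{J,fg} = \cart_J \of \cart_{J,g}$ and $\cart_{K,fg} = \cart_K \of \cart_{K,g}$ gives $\cart_{K,fg} \of \hat\phi_n = \phi_n \of (\id_{J_{n1}}, \dotsc, \id_{J_{n(m_n)'}}, \cart_{J,fg}) = \cart_{K,fg} \of \phi_n''$; since $\cart_{K,fg}$ is cartesian this forces $\hat\phi_n = \phi_n''$. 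By right pointwise cocartesianness of $\ul\phi$ at $f\of g$ the path $(\phi_1, \dotsc, \hat\phi_n) = (\phi_1, \dotsc, \phi_n'')$ is cocartesian, which is the required conclusion for the chosen cartesian cells. Finally, any other choice of cartesian cells defining $J_{nm_n}(\id,f)(\id,g)$ and $K_n(\id,f)(\id,g)$ alters $\phi_n''$ only by pre- and post-composition with invertible horizontal cells, which are cocartesian, so by the pasting lemma for cocartesian paths (\lemref{pasting lemma for cocartesian paths}) cocartesianness of $(\phi_1, \dotsc, \phi_n'')$ is independent of that choice.

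The main obstacle is the bookkeeping hidden in the phrase ``a short computation'': it amounts to repeatedly splitting a path of cells of the form $(\id_{J_{n1}}, \dotsc, \id_{J_{n(m_n)'}}, \cart_J \of \cart_{J,g})$ as a composite of two paths and invoking the associativity axiom, and it is essential that $\cart_{J,fg}$ and $\cart_{K,fg}$ were chosen compatibly (via the pasting lemma) so that the two factorisations line up on the nose rather than merely up to isomorphism; everything else is routine.
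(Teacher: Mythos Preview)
Your proof is correct and follows essentially the same route as the paper's: both arguments use the pasting lemma for cartesian cells to identify the iterated restriction along $g$ then $f$ with the restriction along $f\of g$, then verify that the factorisation $\phi_n''$ of $\phi_n'$ along $g$ coincides with the factorisation of $\phi_n$ along $f\of g$, so that cocartesianness of $(\phi_1,\dotsc,\phi_n'')$ follows from right pointwise cocartesianness of $\ul\phi$ at $f\of g$. The paper compresses your computation into a single schematic equation and omits your closing remark on independence of the choice of cartesian cells, but the substance is the same.
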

	\begin{proof}
		Consider $\map fB{A_{nm_n}}$ as in \defref{pointwise cocartesian path}, and let $\phi_n'$ be the factorisation in \eqref{pointwise cocartesian factorisation}. Then, for any $\map hCB$ the following are equivalent: $J_{nm_n}(\id, f)(\id, h)$ exists; $J_{nm_n}(\id, h \of f)$ exists; $K_n(\id, h \of f)$ exists; $K_n(\id, f)(\id, h)$ exists, by using the pasting lemma for cartesian cells and the fact that $\ul \phi$ is pointwise cocartesian. This shows that the first assertion of \defref{pointwise cocartesian path} holds. Next consider the unique factorisation $\phi_n''$ in $\phi'_n \of (\id, \dotsc, \id, \cart) = \cart \of \phi_n''$, where the cartesian cells define $J_{nm_n}(\id, f)(\id, h)$ and $K_n(\id, f)(\id, h)$ respectively; we have to show that $(\phi_1, \dotsc, \phi_n'')$ is cocartesian. To see this consider the following equation, where the identities follow form the definitions of $\phi_n''$ and $\phi_n'$ respectively.
		\begin{displaymath}
			\begin{tikzpicture}[textbaseline, x=1.5em, y=1.5em, yshift=-2.25em, font=\scriptsize]
				\draw	(1,3) -- (0,3) -- (0,1) -- (0.5,0) -- (2.5,0) -- (3,1) -- (3,3) -- (2,3) (0,1) -- (3,1) (0,2) -- (3,2);
				\draw[shift={(0.5,0.5)}]	(1,2.5) node[xshift=0.5pt] {$\dotsb$}
							(1,2) node {$\phi_n''$}
							(1,1) node {c}
							(1,0) node {c};
			\end{tikzpicture} \mspace{9mu} = \mspace{9mu} \begin{tikzpicture}[textbaseline, x=1.5em, y=1.5em, yshift=-2.25em, font=\scriptsize]
				\draw	(0,2) -- (1,2) -- (1,3) -- (0,3) -- (0,1) -- (0.5,0) -- (2.5,0) -- (3,1) -- (3,3) -- (2,3) -- (2,2) -- (3,2) (0,1) -- (3,1);
				\draw[shift={(0.5,0.5)}]	(1,2) node[xshift=0.5pt] {$\dotsb$}
							(2,2) node {c}
							(1,1) node {$\phi_n'$}
							(1,0) node {c};
			\end{tikzpicture} \mspace{9mu} = \mspace{9mu} \begin{tikzpicture}[textbaseline, x=1.5em, y=1.5em, yshift=-2.25em, font=\scriptsize]
				\draw	(0,1) -- (1,1) -- (1,3) -- (0,3) -- (0,1) -- (0.5,0) -- (2.5,0) -- (3,1) -- (3,3) -- (2,3) -- (2,1) -- (3,1) (0,2) -- (1,2) (2,2) -- (3,2);
				\draw[shift={(0.5,0.5)}]	(1,2) node[xshift=0.5pt] {$\dotsb$}
							(2,2) node {c}
							(1,1) node[xshift=0.5pt] {$\dotsb$}
							(1,0) node {$\phi_n$}
							(2,1) node {c};
			\end{tikzpicture}
		\end{displaymath}
		Since the composites of cartesian cells in the left-hand and right-hand sides are again cartesian by the pasting lemma, we conclude that $(\phi_1, \dotsc, \phi_n'')$ is cocartesian, as $(\phi_1, \dotsc, \phi_n)$ is pointwise cocartesian. This concludes the proof.
	\end{proof}
	
	The pasting lemma for cocartesian paths (\lemref{pasting lemma for cocartesian paths}) induces one for pointwise cocartesian paths, as follows.
	\begin{lemma}[Pasting lemma] \label{pasting lemma for right pointwise cocartesian paths}
		Consider a configuration of unary cells as below and assume that the path $(\phi_{11}, \dotsc, \phi_{nm_n})$ is right pointwise cocartesian. The path $\bigpars{\psi_1 \of (\phi_{11}, \dotsc, \phi_{1m_1}), \dotsc, \psi_n \of (\phi_{n1}, \dotsc, \phi_{nm_n})}$ is right pointwise cocartesian if and only if $(\psi_1, \dotsc, \psi_n)$ is.
		\begin{displaymath}
			\begin{tikzpicture}[x=0.33cm, y=0.6cm, font=\scriptsize]
				\draw	(1,2) -- (0,2) -- (0,0) -- (18,0) -- (18,2) -- (17,2)
							(2,2) -- (3,2) -- (3,1) -- (0,1)
							(7,2) -- (6,2) -- (6,1) -- (12,1) -- (12,2) -- (11,2)
							(8,2) -- (10,2)
							(9,2) -- (9,0)
							(16,2) -- (15,2) -- (15,1) -- (18,1)
							(28,2) -- (27,2) -- (27,0) -- (36,0) -- (36,2) -- (35,2)
							(29,2) -- (30,2) -- (30,1) -- (27,1)
							(34,2) -- (33,2) -- (33,1) -- (36,1);
				\draw[shift={(0.165cm, 0.3cm)}]	(1,1) node {$\phi_{11}$}
							(7,1) node {$\phi_{1m_1}$}
							(10,1) node {$\phi_{21}$}
							(16,1) node {$\phi_{2m_2}$}
							(28,1) node {$\phi_{n1}$}
							(34,1) node {$\phi_{nm_n}$}
							(4,0) node {$\psi_1$}
							(13,0) node {$\psi_2$}
							(31,0) node {$\psi_n$}
							(4,1) node {$\dotsb$}
							(13,1) node {$\dotsb$}
							(31,1) node {$\dotsb$};
				\draw[font=]	(22.5,1) node {$\dotsb$};
			\end{tikzpicture}
		\end{displaymath}
	\end{lemma}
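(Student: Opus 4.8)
The plan is to unfold \defref{pointwise cocartesian path} for both outer paths and reduce the statement to the ordinary pasting lemma \lemref{pasting lemma for cocartesian paths}, applied separately at each morphism into the final object. Write $\ul\phi \coloneqq (\phi_{11}, \dotsc, \phi_{nm_n})$ for the full top path and $\chi_i \coloneqq \psi_i \of (\phi_{i1}, \dotsc, \phi_{im_i})$, so that the path in question is $\ul\chi \coloneqq (\chi_1, \dotsc, \chi_n)$. Since $\ul\phi$ is right pointwise cocartesian its last cell $\phi_{nm_n}$ has non-nullary horizontal source and trivial vertical target, hence so does $\chi_n$; moreover both ``$\ul\chi$ is right pointwise cocartesian'' and ``$(\psi_1, \dotsc, \psi_n)$ is right pointwise cocartesian'' are meaningful exactly when $\psi_n$ has trivial vertical target, which we may therefore assume. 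Fix a morphism $\map fB{A_{nm_n}}$ into the common final object, and write $J$ for the last horizontal morphism in the source of $\phi_{nm_n}$ (equivalently, of $\chi_n$), $K$ for the horizontal target of $\phi_{nm_n}$ (equivalently, the last horizontal morphism in the source of $\psi_n$), and $M$ for the common horizontal target of $\psi_n$ and $\chi_n$.

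First I would settle the restriction-existence clause of \defref{pointwise cocartesian path}. Right pointwise cocartesianness of $\ul\phi$ says that $J(\id, f)$ exists if and only if $K(\id, f)$ does, and this equivalence serves as a pivot: the condition for $\ul\chi$---that $J(\id, f)$ exists iff $M(\id, f)$ does---is equivalent to the condition for $(\psi_1, \dotsc, \psi_n)$---that $K(\id, f)$ exists iff $M(\id, f)$ does. So the two restriction-existence conditions coincide for every such $f$.

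Now assume all the restrictions above exist, and form the factorisations $\phi_{nm_n}'$, $\psi_n'$ and $\chi_n'$ of $\phi_{nm_n}$, $\psi_n$ and $\chi_n$ as in \eqref{pointwise cocartesian factorisation}, restricting $J$ to $J(\id, f)$, $K$ to $K(\id, f)$ and $M$ to $M(\id, f)$ respectively. The key step is the identity
\begin{displaymath}
	\chi_n' = \psi_n' \of (\phi_{n1}, \dotsc, \phi_{n(m_n)'}, \phi_{nm_n}'),
\end{displaymath}
which I would prove by postcomposing its right-hand side with the cartesian cell defining $M(\id, f)$ and repeatedly applying associativity of vertical composition together with the defining identities of $\psi_n'$ and $\phi_{nm_n}'$; the result equals $\chi_n \of (\id, \dotsc, \id, \cart)$, which by definition of $\chi_n'$ is that same cartesian cell postcomposed with $\chi_n'$, so equality follows from uniqueness of factorisations through cartesian cells. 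Granting this identity, the path $(\chi_1, \dotsc, \chi_{n-1}, \chi_n')$ is exactly the pasting of the blocks $(\phi_{11}, \dotsc, \phi_{1m_1}), \dotsc, (\phi_{n1}, \dotsc, \phi_{n(m_n)'}, \phi_{nm_n}')$ along $\psi_1, \dotsc, \psi_{n-1}, \psi_n'$; its full top path $(\phi_{11}, \dotsc, \phi_{n(m_n)'}, \phi_{nm_n}')$ is cocartesian because $\ul\phi$ is right pointwise cocartesian, and the restriction-existence hypotheses needed to invoke \lemref{pasting lemma for cocartesian paths} hold here (the outer right-hand vertical map being an identity, the left-hand one being carried from the ambient configuration). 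That lemma then gives that $(\chi_1, \dotsc, \chi_{n-1}, \chi_n')$ is cocartesian if and only if $(\psi_1, \dotsc, \psi_{n-1}, \psi_n')$ is. Since $f$ was arbitrary, combining this with the second paragraph is precisely the assertion that $\ul\chi$ is right pointwise cocartesian if and only if $(\psi_1, \dotsc, \psi_n)$ is. The hard part will be the verification of the displayed identity $\chi_n' = \psi_n' \of (\phi_{n1}, \dotsc, \phi_{nm_n}')$: it amounts only to associativity and unit bookkeeping, but one must line up the vertical sources and targets and the several cartesian cells appearing in \eqref{pointwise cocartesian factorisation} for all three of $\phi_{nm_n}$, $\psi_n$ and $\chi_n$, which is the kind of thing that is tedious to get exactly right.
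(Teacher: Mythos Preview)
Your proposal is correct and follows essentially the same route as the paper: establish the identity $\chi_n' = \psi_n' \of (\phi_{n1}, \dotsc, \phi_{nm_n}')$ by chasing the three cartesian factorisations, and then invoke the ordinary pasting lemma \lemref{pasting lemma for cocartesian paths} at each $f$. The paper presents the key identity via a three-step schematic equation rather than your uniqueness argument, but the content is identical; your treatment of the restriction-existence clause is in fact slightly more explicit than the paper's.
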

	\begin{proof}
		That both the path $(\phi_{11}, \dotsc, \phi_{nm_n})$ as well as one of the paths $(\psi_1, \dotsc, \psi_n)$ or $\bigpars{\psi_1 \of (\phi_{11}, \dotsc, \phi_{1m_1}), \dotsc, \psi_n \of (\phi_{n1}, \dotsc, \phi_{nm_n})}$ are right pointwise cocartesian means that the vertical targets of $\phi_{nm_n}$ and $\psi_n$ are both the identity on a single object, say, $A$. It also means that, for any $\map fBA$, that the existence of the following restrictions along $f$ are equivalent: that along the horizontal target of $\psi_n$; that along the horizontal target of $\phi_{nm_n}$; that along the last horizontal source of $\phi_{nm_n}$. In the case that these restrictions exist we can obtain factorisations $\psi_n'$ and $\phi_{nm_n}'$, as in \defref{pointwise cocartesian path}, such that the equations
		\begin{displaymath}
			\begin{tikzpicture}[textbaseline, x=1.125em, y=1.25em, yshift=-1.875em, font=\scriptsize]
				\draw (0,2) -- (1,2) -- (1,3) -- (0,3) -- (0,0) -- (9,0) -- (9,3) -- (8,3) -- (8,2) -- (9,2) (3,2) -- (2,2) -- (2,3) -- (3,3) -- (3,1) -- (0,1) (6,2) -- (7,2) -- (7,3) -- (6,3) -- (6,1) -- (9,1);
				\draw	(1.5,2.5) node[xshift=0.75pt] {$\dotsb$}
							(1.5,1.5) node {$\phi_{n1}$}
							(4.5,2) node[font=] {$\dotsb$}
							(4.5,0.5) node {$\psi_n$}
							(7.5,2.5) node[xshift=0.75pt] {$\dotsb$}
							(7.5,1.5) node {$\phi_{nm_n}$}
							(8.5,2.5) node {c};
			\end{tikzpicture} \mspace{6mu} = \mspace{6mu} \begin{tikzpicture}[textbaseline, x=1.125em, y=1.25em, yshift=-1.875em, font=\scriptsize]
				\draw (1,3) -- (0,3) -- (0,0) -- (9,0) -- (9,3) -- (8,3) (2,3) -- (3,3) -- (3,1) -- (0,1) (7,3) -- (6,3) -- (6,1) -- (9,1) (0,2) -- (3,2) (6,2) -- (9,2);
				\draw	(1.5,3) node[xshift=0.75pt] {$\dotsb$}
							(1.5,2.5) node {$\phi_{n1}$}
							(4.5,2.5) node[font=] {$\dotsb$}
							(4.5,1.5) node[font=] {$\dotsb$}
							(4.5,0.5) node {$\psi_n$}
							(7.5,3) node[xshift=0.75pt] {$\dotsb$}
							(7.5,2.5) node {$\phi_{nm_n}'$}
							(7.5,1.5) node {c};
			\end{tikzpicture} \mspace{6mu} = \mspace{6mu} \begin{tikzpicture}[textbaseline, x=1.125em, y=1.25em, yshift=-1.875em, font=\scriptsize]
				\draw (1,3) -- (0,3) -- (0,0) -- (9,0) -- (9,3) -- (8,3) (2,3) -- (3,3) -- (3,2) -- (0,2) (7,3) -- (6,3) -- (6,2) -- (9,2) (0,1) -- (9,1);
				\draw	(1.5,3) node[xshift=0.75pt] {$\dotsb$}
							(1.5,2.5) node {$\phi_{n1}$}
							(4.5,2.5) node[font=] {$\dotsb$}
							(4.5,1.5) node {$\psi_n'$}
							(7.5,3) node[xshift=0.75pt] {$\dotsb$}
							(7.5,2.5) node {$\phi_{nm_n}'$}
							(4.5,0.5) node {c};
			\end{tikzpicture}
		\end{displaymath}
		hold, where `c' denotes any of the three cartesian cells defining the restrictions along $f$. From this we conclude that the unique factorisation that corresponds to $\psi_n \of (\phi_{n1}, \dotsc, \phi_{nm_n})$, as in \defref{pointwise cocartesian path} and with respect to the restrictions along $f$, equals $\psi_n' \of (\phi_{n1}, \dotsc, \phi_{nm_n}')$. The proof now follows from the fact that, assuming that $(\phi_{11}, \dotsc, \phi_{nm_n}')$ is cocartesian, the cocartesianness of $(\psi_1, \dotsc, \psi_n')$ is equivalent to that of $\bigpars{\psi_1 \of (\phi_{11}, \dotsc, \phi_{1m_1}), \dotsc, \psi_n' \of (\phi_{n1}, \dotsc, \phi_{nm_n}')}$, by the pasting lemma (\lemref{pasting lemma for cocartesian paths}).
	\end{proof}
	
	\subsection{Restrictions and extensions in terms of companions and conjoints}
	Here we make precise the fact that restrictions and extensions can be defined by composing horizontal morphisms with companions and conjoints, as was described in the discussion following \exref{restrictions of V-profunctors}.
	
	We start with restrictions. In the setting of virtual double categories the `only if'-part of the following lemma has been proved as Theorem 7.16 of \cite{Cruttwell-Shulman10}. In the next section we will see that, when considered in an augmented virtual equipment, the composite of $f_* \conc \ul K \conc g^*$ considered below is in fact a `pointwise' composite.
	\begin{lemma} \label{restrictions and composites}
		In an augmented virtual double category $\K$ assume that the companion $\hmap{f_*}AC$ and the conjoint $\hmap{g^*}DB$ exist. Then, for each path $\hmap{\ul K}CD$ of length $\leq 1$, the restriction $\ul K(f, g)$ exists if and only if the horizontal composite of the path $f_* \conc \ul K \conc g^*$ does, and in that case they are isomorphic.
		\begin{equation} \label{first restriction identity}

  	\end{displaymath}
  	
  	Analogous assertions hold for one-sided restrictions: $K(f, \id)$ exists precisely if $f_* \hc K$ does, while $K(\id, g)$ exists if and only if $K \hc g^*$ does. Finally if $\K$ is an augmented virtual equipment then the cocartesian cell $\cell\phi{f_* \conc \ul K \conc g^*}J$ above is pointwise cocartesian; the same holds for cocartesian cells of the forms $(f_*, K) \Rar K(f, \id)$ and $(K, g_*) \Rar K(\id, g)$.
	\end{lemma}
	\begin{proof}
		Assuming that \eqref{first restriction identity} holds, it follows from the companion identities and the conjoint identities (see \lemref{companion identities lemma} and its horizontal dual) that precomposing the composite on the left-hand side above with $\phi$ results in $\phi$, while postcomposing it with $\psi$ gives back $\psi$. Using the uniqueness of factorisations through (co-)cartesian cells, we conclude that either $\psi$ or $\phi$ being (co-)cartesian implies the identity above.
		
		Conversely, assume that identities above holds; we will prove that $\psi$ is cartesian and $\phi$ cocartesian. For the first it suffices to show that the following assignment of cells is invertible. To see that it is notice that, as a consequence of the assumed identities, the inverse can be given as $\chi \mapsto \phi \of (\cocart \of h, \chi, \cocart \of k)$, where the weakly cocartesian cells define $f_*$ and $g^*$ respectively.
		\begin{displaymath}

		\end{displaymath}
		is a bijection. That it is follows from the assumed identities: the inverse is given by $\chi \mapsto \chi \of (\ul\id, \cocart, \psi, \cocart, \ul \id)$. This completes the proof of the main assertion.
		
		For the final assertion, assume that $\K$ is an augmented virtual equipment. We will prove that the cocartesian cells of the forms $(f_*, K) \Rar K$ are pointwise cocartesian; the other two cases can then be easily derived. Thus we consider morphisms $\map pXA$ and $\map qYD$; we have to show that the composite of the top two rows in the left-hand side below factors as a cocartesian cell through $K(f \of p, q)$. To see this consider the equation below, whose identities follow from applying \eqref{first restriction identity} to the bottom two cells of the left-hand side; applying \eqref{first restriction identity} to the top two cells of the third composite; factorising the composite of the two cartesian cells in the third composite through the bottom cartesian cell in the right-hand side which, by the pasting lemma, results in a cartesian cell as shown.
		\begin{displaymath}

		\end{displaymath}
		By the uniqueness of factorisations through the bottom cartesian cell in the left-hand side and right-hand side above, we conclude that the composite of the top two rows in the former factors as a cocartesian cell followed by a cartesian one, as required.
		
		A horizontal dual argument applies to cocartesian cells $(K, g^*) \Rar K(\id, g)$. The proof for a cocartesian cell of the general form $(f_*, K, g^*) \Rar K(f, g)$ then follows by writing it as a composite of the cocartesian cells $(f_*, K) \Rar K(f, \id)$ and $\bigpars{K(f, \id), g^*} \Rar K(f, g)$, followed by applying the pasting lemma (\lemref{pasting lemma for right pointwise cocartesian paths}). This completes the proof.
	\end{proof}
	
	As corollaries we find that functors of augmented virtual double categories behave well with respect to restrictions and full and faithful morphisms, as follows. The first of these is a variation on the corresponding result for functors between double categories; see Proposition 6.8 of \cite{Shulman08}.
	\begin{corollary} \label{functors preserving cartesian cells}
		Let $\map F\K\L$ be a functor between augmented virtual double categories. Consider morphisms $\map fAC$ and $\map gBD$ in $\K$ and let $\hmap{\ul K}CD$ be a path of length $\leq 1$. If the companion $\hmap{f_*}AC$ and the conjoint $\hmap{g^*}DB$ exist then $F$ preserves both the cartesian cell defining the restriction $\ul K(f, g)$ as well as the cocartesian cell defining the composite of the path $f_* \conc \ul K \conc g^*$.
		
		Under the same conditions the cartesian cells defining the restrictions of the form $K(f, \id)$ and $K(\id, g)$, as well as the cocartesian ones defining the composites of the form $(f_* \hc K)$ and $(K \hc g^*)$, are preserved by $F$.
	\end{corollary}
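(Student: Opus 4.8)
The plan is to deduce this entirely from \lemref{restrictions and composites} and \corref{functors preserve companions and conjoints}, so that the corollary becomes a matter of transporting a defining identity along $F$. First I would fix a cartesian cell $\cell\psi J{\ul K}$ exhibiting $J$ as the restriction $\ul K(f, g)$, with vertical source $f$ and target $g$. Since the companion $f_*$ and conjoint $g^*$ are assumed to exist, one can form the cell on the left-hand side of \eqref{first restriction identity}, namely the horizontal composite $\cart \hc \id_{\ul K} \hc \cart$ of the cartesian cells defining $f_*$ and $g^*$, and factor it uniquely through $\psi$ as a cell $\cell\phi{f_* \conc \ul K \conc g^*}J$. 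By \lemref{restrictions and composites} this $\phi$ is cocartesian — hence exhibits $J$ as the horizontal composite of $f_* \conc \ul K \conc g^*$ — and, moreover, the accompanying ``companion/conjoint identity'' of that lemma holds: pasting $\phi$ and $\psi$ with the weakly cocartesian cells defining $f_*$ and $g^*$ yields $\id_J$.

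The key step is then to apply $F$ to this last identity. Because $F$ preserves companions and conjoints (\corref{functors preserve companions and conjoints}), the morphisms $F(f_*)$ and $F(g^*)$ are the companion $(Ff)_*$ and the conjoint $(Fg)^*$ in $\L$, and both the cartesian and the weakly cocartesian cells defining $f_*$ and $g^*$ are sent by $F$ to the corresponding cells defining $(Ff)_*$ and $(Fg)^*$. Since $F$ preserves horizontal composition, vertical composition and identity cells strictly, the $F$-image of the companion/conjoint identity above is exactly the companion/conjoint identity of \lemref{restrictions and composites} for the data $Ff$, $Fg$, $F\ul K$, $FJ$, $F\phi$, $F\psi$. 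Invoking the equivalence of that lemma in the reverse direction then gives at once that $F\psi$ is cartesian (so $F$ preserves the cartesian cell defining $\ul K(f, g)$) and that $F\phi$ is cocartesian (so $F$ preserves the cocartesian cell defining the composite of $f_* \conc \ul K \conc g^*$). Since any other cartesian, respectively cocartesian, cell with the same boundary differs from $\psi$, respectively $\phi$, by composition with an invertible horizontal cell, and $F$ carries invertible horizontal cells to invertible horizontal cells, the conclusion does not depend on the choices made.

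The one-sided statements follow in exactly the same fashion from the one-sided clauses of \lemref{restrictions and composites}: when $f_*$ exists, $K(f, \id)$ is the composite $f_* \hc K$, witnessed by a cartesian/cocartesian pair subject to a companion identity whose $F$-image is again such a pair for $Ff$; dually for $K(\id, g)$ using the conjoint $g^*$. I expect no genuine obstacle here — the entire content is already packaged in \lemref{restrictions and composites} (which characterises the relevant (co)cartesian cells by an equation assembled solely from companion and conjoint data) and in \corref{functors preserve companions and conjoints} (which transports that structural data along $F$). The only point deserving a moment's care is noticing that the reverse implication of \lemref{restrictions and composites} uses the \emph{weakly cocartesian} cells defining $f_*$ and $g^*$, not merely the cartesian ones, and checking that these too are covered by \corref{functors preserve companions and conjoints}; this is immediate from the statement of that corollary.
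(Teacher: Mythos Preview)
Your proposal is correct and follows essentially the same approach as the paper: both arguments transport the two identities of \lemref{restrictions and composites} along $F$, using \corref{functors preserve companions and conjoints} to ensure the (weakly co-)cartesian companion/conjoint cells appearing in them are preserved, and then read off cartesianness of $F\psi$ and cocartesianness of $F\phi$ from the equivalence in that lemma applied in $\L$. Your write-up simply unpacks in more detail what the paper compresses into a single sentence.
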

	\begin{proof}
		This follows from the fact that $F$ preserves the identities of the previous lemma, as well as the (weakly co-)cartesian cells that they contain; the latter by \corref{functors preserve companions and conjoints}.
	\end{proof}
	
	\begin{corollary}
		A full and faithful morphism $\map fAC$ is preserved by any functor as soon as its companion $f_*$ and conjoint $f^*$ exist.
	\end{corollary}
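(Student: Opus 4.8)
The plan is to deduce the corollary from \corref{functors preserving cartesian cells} and \corref{functors preserve horizontal units}, with \corref{unit in terms of full and faithful map} serving to translate between ``full and faithful'' in $\K$ and in $\L$. So let $\map F\K\L$ be an arbitrary functor and $\map fAC$ a full and faithful morphism whose companion $f_*$ and conjoint $f^*$ exist. By \defref{full and faithful morphism} the nullary restriction $C(f,f)$ exists; write $\zeta$ for a cartesian cell defining it, and let $\id_f = \zeta \of \id_f'$ be the factorisation of $\id_f$ through $\zeta$. By \corref{unit in terms of full and faithful map} the cell $\id_f'$ is cocartesian. The observation that makes everything work is that $\id_f'$ has empty horizontal source and both vertical legs equal to $\id_A$, so that it is nothing but a cocartesian cell exhibiting $C(f,f)$ as the horizontal unit $I_A$.

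First I would verify that the restriction $(FC)(Ff,Ff)$ exists in $\L$. For this I would regard $C(f,f)$ as the nullary restriction of the trivial path $(C)$ along $f$ and $f$; since $f_*$ and $f^*$ are present, \corref{functors preserving cartesian cells} applies (with $\ul K = (C)$ and both vertical morphisms equal to $f$) and shows that $F\zeta$ is again cartesian. Hence $F\zeta$ defines the restriction $(FC)(Ff,Ff)$, which therefore exists and is isomorphic to $FC(f,f)$. Next I would feed the cocartesian cell $\id_f'$ into \corref{functors preserve horizontal units}: being a cocartesian cell defining a horizontal unit, its image $F\id_f'$ is again cocartesian.

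It then remains only to put the pieces together. Since functors preserve vertical composition strictly, $\id_{Ff} = F\id_f = F\pars{\zeta \of \id_f'} = F\zeta \of F\id_f'$, and this exhibits the factorisation of $\id_{Ff}$ through the cartesian cell $F\zeta$ defining $(FC)(Ff,Ff)$ with cocartesian ``remainder'' $F\id_f'$. By uniqueness of factorisations through cartesian cells, this is (up to isomorphism) exactly the factorisation appearing in \corref{unit in terms of full and faithful map} applied to $Ff$, so that corollary lets us conclude that $Ff$ is full and faithful. I do not expect a genuine obstacle here; the only point deserving care is making sure that \corref{functors preserving cartesian cells} really does identify $F\zeta$ as a cartesian cell for $(FC)(Ff,Ff)$ --- that is, that the hypotheses on $f_*$ and $f^*$ are used exactly at that spot --- after which everything is formal.
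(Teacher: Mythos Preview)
Your proof is correct and follows essentially the same approach as the paper's: use \corref{functors preserving cartesian cells} (with the hypotheses on $f_*$ and $f^*$) to see that $F\zeta$ is cartesian, use \corref{functors preserve horizontal units} to see that $F\id_f'$ is cocartesian, and then invoke \corref{unit in terms of full and faithful map} for $Ff$. The paper's proof is terser but identical in structure; your explicit identification of $\id_f'$ as a unit-defining cocartesian cell is exactly the point the paper uses implicitly.
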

	\begin{proof}
		This follows from the fact that $F$ preserves the factorisation of $\id_f$ that is considered in \corref{unit in terms of full and faithful map}. Indeed by the previous corollary it preserves the cartesian cell defining $C(f, f)$ while, by \corref{functors preserve horizontal units}, it preserves the cocartesian cell that defines $C(f,f)$ as the horizontal unit too.
	\end{proof}
	
	Next we turn to describing (weakly) cocartesian cells in terms of companions and conjoints.
	\begin{lemma} \label{cocartesian cells for companions and conjoint}
		Let $\map fAC$ be a vertical morphism. If the restriction $J''(f, \id)$ exists for every $\hmap{J''}C{A''}$ then the weakly cocartesian cell defining the companion $\hmap{f_*}AC$ is cocartesian. A horizontal dual result holds for the weakly cocartesian cell defining the conjoint $\hmap{f^*}CA$.
	\end{lemma}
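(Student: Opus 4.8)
The plan is to verify directly that a weakly cocartesian cell $\cell\eta{(A)}{f_*}$ defining the companion satisfies conditions \textup{(a)} and \textup{(b)} of \defref{cocartesian paths}; the statement for the conjoint then follows by applying the result to the horizontal dual $\co\K$. First I would use \lemref{companion identities lemma} to record that $\eta$ is the companion partner of a cartesian cell $\cell\eps{f_*}C$, so that the companion identities $\eps \of \eta = \id_f$ and $\eta \hc \eps = \id_{f_*}$ are available throughout. Condition \textup{(a)} is immediate: restriction along a pair of identities always exists, the horizontal identity cell serving as its cartesian cell (it is cartesian by the unit axiom $\id_{J'} \of \phi = \phi$), while the restrictions $J''(f, \id)$ exist by hypothesis.

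The substance is condition \textup{(b)}. Fix horizontal morphisms $\hmap{J'_1}{A'_0}{A'_1}, \dotsc, \hmap{J'_p}{A'_{p'}}A$ and $\hmap{J''_1}C{A''_1}, \dotsc, \hmap{J''_q}{A''_{q'}}{A''_q}$ with $p, q \geq 1$. Using the horizontal identity cell as the cartesian cell defining $J'_p(\id, \id_A)$, the path occurring in \textup{(b)} may be taken, up to invertible horizontal cells, to be
\begin{displaymath}
	P = (\id_{J'_1}, \dotsc, \id_{J'_p}, \eta, \delta, \id_{J''_2}, \dotsc, \id_{J''_q}),
\end{displaymath}
where $\cell\delta{J''_1(f, \id)}{J''_1}$ is the cartesian cell defining $J''_1(f, \id)$, which exists by hypothesis. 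The key step is to introduce the cell $\cell\zeta{(f_*, J''_1)}{J''_1(f, \id)}$ obtained by factoring $\eps \hc \id_{J''_1}$ through $\delta$ (this makes sense since both cells have vertical legs $f$ and $\id_{A''_1}$), and to establish the identity
\begin{displaymath}
	\zeta \of (\eta, \delta) = \id_{J''_1(f, \id)}.
\end{displaymath}
I would prove this by postcomposing with $\delta$ and simplifying: associativity, the interchange axioms of \lemref{horizontal composition}, the defining property $\delta \of \zeta = \eps \hc \id_{J''_1}$ and the companion identity $\eps \of \eta = \id_f$ give $\delta \of \bigpars{\zeta \of (\eta, \delta)} = (\eps \hc \id_{J''_1}) \of (\eta, \delta) = (\eps \of \eta) \hc \delta = \delta$, whence the claim follows by uniqueness of factorisations through the cartesian cell $\delta$.

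Granting this identity, the argument concludes quickly. Given a cell $\chi$ with horizontal source $(J'_1, \dotsc, J'_p, J''_1(f, \id), J''_2, \dotsc, J''_q)$, the cell obtained by precomposing $\chi$ with $(\id_{J'_1}, \dotsc, \id_{J'_p}, \zeta, \id_{J''_2}, \dotsc, \id_{J''_q})$ is a factorisation of $\chi$ through $P$, by associativity together with the identity above and the unit axiom; uniqueness of this factorisation follows from the dual manipulation, which instead invokes $\delta \of \zeta = \eps \hc \id_{J''_1}$ and the second companion identity $\eta \hc \eps = \id_{f_*}$. The one point requiring care is bookkeeping rather than content: the cell $\eta$ has empty horizontal source, so one must be mindful of how it sits inside $P$ and of how cells get distributed when the associativity axiom is applied (merging $\eta$ with $\delta$ into a single cell $\eta \hc \delta$ via interchange keeps all cells with non-empty source if one prefers). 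Everything else is a routine calculation with the companion identities, the interchange and unit axioms, and the cartesianness of $\delta$.
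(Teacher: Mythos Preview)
Your argument is correct and rests on the same two ingredients as the paper's proof: the cell $\zeta$ factoring $\eps \hc \id_{J''_1}$ through $\delta$, and the identity $\zeta \of (\eta, \delta) = \id_{J''_1(f,\id)}$. The paper packages these differently: it observes that $\zeta$ is precisely the cocartesian cell supplied by \lemref{restrictions and composites} (defining $J''_1(f,\id)$ as the composite $f_* \hc J''_1$), stacks the weakly cocartesian path $(\id, \dotsc, \id, \zeta, \id, \dotsc, \id)$ on top of $P$, and then invokes the pasting lemma (\lemref{pasting lemma for cocartesian paths}) to reduce to showing that the full composite is the identity path. Your direct construction of the inverse via precomposition with $(\id, \dotsc, \id, \zeta, \id, \dotsc, \id)$ is exactly what underlies that pasting-lemma step; the paper's route is shorter because both the existence of $\zeta$ and its key identity are already on record in \lemref{restrictions and composites}.

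One genuine slip in your bookkeeping remark: the horizontal composite $\eta \hc \delta$ is \emph{not} defined, since both cells have unary horizontal target and so their concatenated target has length $2$. The correct way to handle the empty-source cell $\eta$ in the uniqueness direction is to first insert a vertical identity $\id_{\id_A}$ into the inner path using the unit axiom, then apply associativity so that $\eta$ is fed $\id_{\id_A}$ and $\delta$ is fed $\zeta$; this yields $\xi \of (\id, \dotsc, \id, \eta, \delta \of \zeta, \id, \dotsc, \id)$, after which splitting $\delta \of \zeta = \eps \hc \id_{J''_1}$ via interchange and then combining $\eta \hc \eps = \id_{f_*}$ (this composite \emph{is} defined, having unary target) gives $\xi$ back. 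That is the manipulation your sketch gestures at, but the suggested merger $\eta \hc \delta$ is the wrong one.
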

	\begin{proof}
		Consider paths $\hmap{\ul J'}{A_0}A$ and $\hmap{\ul J''}C{A''_q}$; we have to show that the bottom path in
		\begin{displaymath}

		\end{displaymath}
		is weakly cocartesian. By the pasting lemma we may equivalently show that the full path above is weakly cocartesian, where the top weakly cocartesian cell corresponds to the cartesian cell as in the previous lemma; it defines $J''_1(f, \id)$ as the horizontal composite of $f_*$ and $J''_1$. But this follows easily from the fact that the latter two cells compose as the cartesian cell defining $f_*$ and the identity on $J''_1$ (see \lemref{restrictions and composites}), and the horizontal companion identity for $f_*$ (see \lemref{companion identities lemma}).
	\end{proof}
	
	Together with the pasting lemma for cocartesian paths (\lemref{pasting lemma for cocartesian paths}) the previous result implies the following corollary, whose first part is a variation of Theorem 7.20 of \cite{Cruttwell-Shulman10} for virtual double categories.
	\begin{corollary} \label{extensions and composites}
		If the conjoint of $\map f{A_0}C$ and the composite $(f^* \hc J_1 \hc \dotsb \hc J_n)$ exist then the composite on the left below is weakly cocartesian.
		\begin{displaymath}
			\begin{tikzpicture}[baseline]
				\matrix(m)[math35, column sep={1.75em,between origins}]{& A_0 & & A_1 & & A_{n'} & & A_n & \\ C & & A_0 & & A_1 & & A_{n'} & & A_n \\ & C & & & & & & A_n & \\};
				\path[map]	(m-1-2) edge[barred] node[above] {$J_1$} (m-1-4)
														edge[transform canvas={xshift=-2pt}] node[above left] {$f$} (m-2-1)
										(m-1-6) edge[barred] node[above] {$J_n$} (m-1-8)
										(m-2-1) edge[barred] node[below] {$f^*$} (m-2-3)
										(m-2-3) edge[barred] node[above] {$J_1$} (m-2-5)
										(m-2-7) edge[barred] node[above] {$J_n$} (m-2-9)
										(m-3-2) edge[barred] node[below] {$(f^* \hc J_1 \hc \dotsb \hc J_n)$} (m-3-8);
				\path				(m-1-2) edge[eq, transform canvas={xshift=2pt}] (m-2-3)
										(m-1-4) edge[eq, transform canvas={xshift=1pt}] (m-2-5)
										(m-1-6) edge[eq] (m-2-7)
										(m-1-8) edge[eq] (m-2-9)
										(m-2-1) edge[eq] (m-3-2)
										(m-2-9) edge[eq] (m-3-8);
				\draw				($(m-1-6)!0.5!(m-2-5)$) node {$\dotsb$};
				\draw[font=\scriptsize]	([xshift=-1pt]$(m-1-2)!0.666!(m-2-2)$) node {$\cocart$}
										($(m-2-1)!0.5!(m-3-9)$) node {$\cocart$};
			\end{tikzpicture} \qquad \begin{tikzpicture}[baseline]
				\matrix(m)[math35]{A_0 & A_1 & A_{n'} & A_n \\ C & & & A_n \\};
				\path[map]	(m-1-1) edge[barred] node[above] {$J_1$} (m-1-2)
														edge node[left] {$f$} (m-2-1)
										(m-1-3) edge[barred] node[above] {$J_n$} (m-1-4)
										(m-2-1) edge[barred] node[below] {$K$} (m-2-4);
				\path       (m-1-4) edge[eq] (m-2-4)
				            (m-1-2) edge[cell, transform canvas={xshift=1.75em}] node[right] {$\phi$} (m-2-2);
				\draw				($(m-1-2)!0.5!(m-1-3)$) node {$\dotsb$};
			\end{tikzpicture}
		\end{displaymath}
		If for any horizontal morphisms $\hmap{J'}{A'}C$ and $\hmap{J''}D{A''}$ the restrictions $J'(\id, f)$ and $J''(g, \id)$ exist then it is cocartesian as well, while it is right pointwise cocartesian as soon as the composite $(f^* \hc J_1 \hc \dotsb \hc J_n)$ is; in that case any weakly cocartesian cell as on the right above is (right pointwise) cocartesian.
		
		Analogous results hold for composites of the forms $(J_1 \hc \dotsb \hc J_n \hc g_*)$ and $(f^* \hc J_1 \hc \dotsb \hc J_n \hc g_*)$, where $\map g{A_n} D$.
	\end{corollary}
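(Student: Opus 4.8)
The plan is to split the composite on the left into the weakly cocartesian cell defining the conjoint $f^*$ followed by the cocartesian cell defining the horizontal composite $(f^* \hc J_1 \hc \dotsb \hc J_n)$, and then to run the two pasting lemmas of this section together with \lemref{companion identities lemma} and \lemref{cocartesian cells for companions and conjoint}.

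Write $\eta$ for the weakly cocartesian cell exhibiting $f^*$ as the extension of $(A_0)$ along $f$ and $\id_{A_0}$, as produced by the horizontal dual of \lemref{companion identities lemma}, and $\theta$ for the cocartesian cell exhibiting $(f^* \hc J_1 \hc \dotsb \hc J_n)$ as the composite of $(f^*, J_1, \dotsc, J_n)$; the composite on the left is then $\theta \of (\eta, \id_{J_1}, \dotsc, \id_{J_n})$. Since $\eta$ has nullary horizontal source, I would first check directly that the path $(\eta, \id_{J_1}, \dotsc, \id_{J_n})$ is weakly cocartesian: a cell $\chi$ on $(J_1, \dotsc, J_n)$ with intended vertical source $h \of f$ factors through it uniquely as $(h \of \eps) \hc \chi$, where $\eps$ is the cartesian cell defining $f^*$, the two verifications reducing — through the interchange and unit axioms — to the conjoint identities $\eps \of \eta = \id_f$ and $\eps \hc \eta = \id_{f^*}$. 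The pasting lemma for cocartesian paths (\lemref{pasting lemma for cocartesian paths}) then shows that $\theta \of (\eta, \id_{J_1}, \dotsc, \id_{J_n})$, being this weakly cocartesian path composed with the cocartesian cell $\theta$, is weakly cocartesian; this is the first assertion.

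For the strengthenings I would proceed as follows. If the restrictions demanded in \defref{cocartesian paths} of the path $(\eta, \id_{J_1}, \dotsc, \id_{J_n})$ exist — which, since its vertical target is an identity, amounts to $J'(\id, f)$ existing for all $\hmap{J'}{A'}C$ — then the horizontal dual of \lemref{cocartesian cells for companions and conjoint} makes $\eta$ cocartesian, so the path is cocartesian and the cocartesian half of \lemref{pasting lemma for cocartesian paths} yields that $\theta \of (\eta, \id_{J_1}, \dotsc, \id_{J_n})$ is cocartesian. If moreover $\theta$ is right pointwise cocartesian, \lemref{pasting lemma for right pointwise cocartesian paths} refines this to $\theta \of (\eta, \id_{J_1}, \dotsc, \id_{J_n})$ being right pointwise cocartesian, using that right pointwise cocartesianness of $\theta$ already supplies the restrictions needed above. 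Finally, any weakly cocartesian cell $\phi$ of the form shown on the right shares its horizontal source and vertical legs with $\theta \of (\eta, \id_{J_1}, \dotsc, \id_{J_n})$, hence factors through the latter as an invertible horizontal cell; as invertible horizontal cells are both cartesian and cocartesian, the pasting lemmas transport (right pointwise) cocartesianness to $\phi$.

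The statements for $(J_1 \hc \dotsb \hc J_n \hc g_*)$ follow by passing to the horizontal dual $\co\K$, where companions and conjoints are swapped and this composite acquires the shape $g^* \hc \dotsb$ already handled; and the two-sided composite $(f^* \hc J_1 \hc \dotsb \hc J_n \hc g_*)$ is obtained by pasting the two one-sided composites and applying the pasting lemmas once more, exactly as in the closing paragraph of the proof of \lemref{restrictions and composites}. I expect the only genuine difficulty to be book-keeping: exploiting that $\eta$ has nullary source when verifying that the concatenation $(\eta, \id_{J_1}, \dotsc, \id_{J_n})$ is (weakly) cocartesian, and tracking precisely which restrictions each step requires.
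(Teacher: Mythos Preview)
Your proof is correct and follows the same route as the paper, which simply cites the horizontal dual of \lemref{cocartesian cells for companions and conjoint} together with the pasting lemmas \lemref{pasting lemma for cocartesian paths} and \lemref{pasting lemma for right pointwise cocartesian paths}; your direct verification that the path $(\eta, \id_{J_1}, \dotsc, \id_{J_n})$ is weakly cocartesian via the conjoint identities is exactly the content of that lemma's proof, spelled out.

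One small slip: right pointwise cocartesianness of $\theta$ does \emph{not} supply the restrictions $J'(\id, f)$ needed to make $(\eta, \id_{J_1}, \dotsc, \id_{J_n})$ cocartesian, and hence right pointwise cocartesian, so that \lemref{pasting lemma for right pointwise cocartesian paths} applies. What right pointwise cocartesianness of $\theta$ gives you is information about restrictions $J_n(\id, h)$ on the right, whereas here you need restrictions along $f$ on the left. Those restrictions are the separate hypothesis already stated in the corollary, and that hypothesis carries through to the right pointwise clause as well; just drop the parenthetical and keep the restriction assumption in force throughout.
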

	
	\subsection{The equivalence of unital augmented virtual double categories and unital virtual double categories}\label{unital fc-multicategories section}
	Closing this section, here we will show the equivalence of the notions of augmented virtual double category and virtual double category in the case that all horizontal units exist. We shall denote by $\VirtDblCat_\textup u \subset \VirtDblCat$ the locally full sub-$2$-category consisting of virtual double categories that have all horizontal units, and the \emph{normal} functors---that preserve the cocartesian cells defining horizontal units---between them, and all transformations between those. Likewise $\AugVirtDblCat_\textup u \subset \AugVirtDblCat$ denotes the full sub-$2$-category consisting of augmented virtual double categories that have all horizontal units; remember that any functor between augmented virtual double categories preserves horizontal units (\corref{functors preserve horizontal units}).
	
	Remember the strict $2$-functor $\map U{\AugVirtDblCat}{\VirtDblCat}$ of \propref{2-category of (fc, bl)-multicategories}, that maps any augmented virtual double category $\K$ to its underlying virtual double category $U(\K)$ consisting of its unary cells. Clearly the cocartesian cells in an augmented virtual double category $\K$ form again cocartesian cells in $U(\K)$, so that $U$ restricts to a strict $2$-functor $\map U{\AugVirtDblCat_\textup u}{\VirtDblCat_\textup u}$.	
	\begin{theorem} \label{unital fc-multicategories}
		Let $\K$ be a virtual double category that has all horizontal units and suppose that, for each object $A$ in $\K$, a cocartesian cell $\eta_A$ that defines its unit $I_A$ has been chosen. Consider to each unary cell $\phi$ of $\K$, as on the left below, a new unary cell $\bar\phi$ as on the right, that is of the same form, and to each unary cell $\psi$ as on the left a new nullary cell $\bar\psi$ as on the right.
		\begin{displaymath}

		\end{displaymath}
		The objects and morphisms of $\K$, together with the unary cells $\bar\phi$ and the nullary cells $\bar\psi$, form an augmented virtual double category $N(\K)$ that has all horizontal units. Composition of cells in $N(\K)$ is given by
		\begin{equation} \label{composition in N(K)}
			\bar\psi \of (\bar\phi_1, \dotsc, \bar\phi_n) \dfn \overline{\psi' \of (\phi_1, \dotsc, \phi_n)}
		\end{equation}
		where $\psi'$ is the unique factorisation of $\psi$ through the cocartesian path of cells $(\eta_{\bar\phi_1}, \dotsc, \eta_{\bar\phi_n})$, where $\eta_{\bar\phi_i} \dfn \eta_{C_{i'}}$ if $\bar\phi_i$ is nullary with horizontal target $C_{i'}$ and $\eta_{\bar\phi_i} \dfn \id_{K_i}$ if $\bar\phi_i$ is unary with horizontal target $\hmap{K_i}{C_{i'}}{C_i}$. The identity cells in $\K$, for morphisms $\hmap JAB$ and $\map fAC$, are given by
		\begin{displaymath}
			\id_J \dfn \overline{\id_J} \qquad \text{and} \qquad \id_f \dfn \overline{\eta_C \of f}.
		\end{displaymath}
		
		The strict $2$-functor $\map U{\AugVirtDblCat_\textup u}{\VirtDblCat_\textup u}$ and the assignment $\K \mapsto N(\K)$ extend to form a strict $2$-equivalence $\AugVirtDblCat_\textup u \simeq \VirtDblCat_\textup u$.
	\end{theorem}
	
	\begin{example} \label{unital fc-multicategory of monoids}
		Let $\K$ be any virtual double category. As we have seen in \exref{monoids have units}, the monoids and bimodules in $\K$ form a unital virtual double category $\Mod(\K)$. The corresponding unital augmented virtual double category was described in \propref{(fc, bl)-multicategory of monoids}.
	\end{example}
		
	\begin{proof}[Proof of \thmref{unital fc-multicategories}]
		That the composition for $N(\K)$ defined above satisfies the associativity and unit axioms is a straightforward consequence of the those axioms in $\K$, together with the uniqueness of factorisations $\psi'$ through cocartesian paths.
		
		To show that $N(\K)$ has all horizontal units let $A$ be any object in $N(\K)$; we claim that $\cell{\bar{\eta_A}}A{I_A}$ defines $I_A$ as the horizontal unit of $A$. To see this, consider the identity of $I_A$ as a nullary cell $\cell{\overline{\id_{I_A}}}{I_A}A$ in $N(\K)$; we will show that $\bar{\eta_A}$ and $\overline{\id_{I_A}}$ satisfy the identities of \lemref{unit identities}. Indeed, we have $\overline{\id_{I_A}} \of \bar{\eta_A} = \overline{(\id_{I_A} \of \eta_A)} = \bar{\eta_A} = \id_A$ (the identity cell for $A$ in $N(\K)$). On the other hand we have
		\begin{displaymath}
			\bar\eta_A \hc \overline{\id_{I_A}} = \overline{\id_{I_A}' \of (\eta_A, \id_{I_A})} = \overline{(\id_{I_A})} = \id_{I_A},
		\end{displaymath}
		where the right-hand side denotes the identity cell for $I_A$ in $N(\K)$ and where $\id_{I_A}'$ is the unique factorisation $\id_{I_A}' \of (\id_{I_A}, \eta_A) = \id_{I_A}$ in $\K$, as before. The first identity above follows from the definition $\hc$ (see the discussion preceding \lemref{horizontal composition}), while the second identity follows from the fact that
		\begin{displaymath}
			 \id_{I_A}' \of (\eta_A, \id_{I_A}) \of \eta_A = \id_{I_A}' \of (\id_{I_A}, \eta_A) \of \eta_A = \id_{I_A} \of \eta_A
		\end{displaymath}
		in $\K$, so that $\id_{I_A}' \of (\eta_A, \id_{I_A}) = \id_{I_A}$ by the uniqueness of factorisations through $\eta_A$.
		
		We conclude that $N(\K)$ forms a well-defined augmented virtual double category that has all horizontal units.	Next we extend the assignment $\K \mapsto N(\K)$ to a strict $2$-functor $\map N{\VirtDblCat_\textup u}{\AugVirtDblCat_\textup u}$. For the action of $N$ on morphisms consider a normal functor $\map F\K\L$ between unital virtual double categories. Since $F$ preserves the cocartesian cells $\eta_A$ of $\K$ we can obtain, for each object $A \in K$, an invertible horizontal cell $\cell{(F_I)_A}{FI_A}{I_{FA}}$ in $\L$ that is the unique factorisation in
		\begin{equation} \label{unitors}
			\begin{tikzpicture}[textbaseline]
				\matrix(m)[math35, column sep={1.75em,between origins}]{& FA & \\ FA & & FA \\};
				\path[map]	(m-2-1) edge[barred] node[below] {$I_{FA}$} (m-2-3);
				\path				(m-1-2) edge[transform canvas={xshift=-2pt}, eq] (m-2-1)
														edge[transform canvas={xshift=2pt},eq] (m-2-3)
														edge[transform canvas={shift={(-0.7em,-0.5em)}}, cell] node[right] {$\eta_{FA}$} (m-2-2);
			\end{tikzpicture} = \begin{tikzpicture}[textbaseline]
				\matrix(m)[math35, column sep={1.75em,between origins}]{& FA & \\ FA & & FA \\ FA & & FA \\};
				\path[map]	(m-2-1) edge[barred] node[below] {$FI_A$} (m-2-3)
										(m-3-1) edge[barred] node[below] {$I_{FA}$} (m-3-3);
				\path				(m-1-2) edge[transform canvas={xshift=-2pt}, eq] (m-2-1)
														edge[transform canvas={xshift=2pt},eq] (m-2-3)
														edge[transform canvas={shift={(-0.7em,-0.5em)}}, cell] node[right] {$F\eta_A$} (m-2-2)
										(m-2-1) edge[eq] (m-3-1)
										(m-2-2) edge[transform canvas={shift={(-1.05em,-0.15em)}}, cell] node[right] {$(F_I)_A$} (m-3-2)
										(m-2-3) edge[eq] (m-3-3);
			\end{tikzpicture}.
		\end{equation}
		We define $\map{NF}{N(\K)}{N(\L)}$ as follows. On objects and morphisms it simply acts like $F$. To define its action on cells we define, for each $\bar\phi$ in $N(\K)$, the cell $\delta_{\bar\phi}$ in $\L$ by $\delta_{\bar\phi} = (F_I)_C$ if $\bar\phi$ is nullary, and $\delta_{\bar\phi} = \id_{FK}$ otherwise, and set $(NF)(\bar\phi) = \overline{(\delta_{\bar\phi} \of F\phi)}$. That this assignment preserves identity cells is easily checked; that it preserves any composition $\bar\psi \of (\bar\phi_1, \dotsc, \bar\phi_n)$ in $N(\K)$, as in \eqref{composition in N(K)}, is shown by
		\begin{align*}
			(NF)(\bar\psi) \of \bigl(&(NF)(\bar\phi_1), \dotsc, (NF)(\bar\phi_n)\bigr) \\
			& = \overline{\delta_{\bar\psi} \of F\psi} \of \pars{\overline{\delta_{\bar\phi_1} \of F\phi_i}, \dotsc, \overline{\delta_{\bar\phi_n} \of F\phi_n}} \\
			& = \overline{\delta_{\bar\psi} \of (F\psi)' \of (\delta_{\bar\phi_1} \of F\phi_1, \dotsc, \delta_{\bar\phi_n} \of F\phi_n)} \\ 
			& = \overline{\delta_{\bar\psi} \of F(\psi') \of (F\phi_1, \dotsc, F\phi_n)} = \overline{\delta_{\bar\psi} \of F\bigpars{\psi' \of (\phi_1, \dotsc, \phi_n)}} \\
			& = (NF)\bigpars{\overline{\psi' \of (\phi_1, \dotsc, \phi_n)}} = (NF)\bigpars{\bar\psi \of (\bar\phi_1, \dotsc, \bar\phi_n)},
		\end{align*}
		where the third identity is shown as follows. The cells $(F\psi)'$ and $\psi'$, on either side, are the factorisations in $F\psi = (F\psi)' \of (\eta_{(NF)(\bar\phi_1)}, \dotsc, \eta_{(NF)(\bar\phi_n)})$ and $\psi = \psi' \of (\eta_{\bar\phi_1}, \dotsc, \eta_{\bar\phi_n})$ respectively. The identity follows from the fact that
		\begin{align*}
			(F\psi)' \of (\delta_{\bar\phi_1}, \dotsc, \delta_{\bar\phi_n}) \of (&F\eta_{\bar\phi_1}, \dotsc, F\eta_{\bar\phi_n}) = (F\psi)' \of \bigpars{\eta_{(NF)(\bar\phi_1)}, \dotsc, \eta_{(NF)(\bar\phi_n)}} \\
			&= F\psi = F(\psi') \of (F\eta_{\bar\phi_1}, \dotsc, F\eta_{\bar\phi_n})
		\end{align*}
		together with the uniqueness of factorisations through the path $(F\eta_{\bar\phi_1}, \dotsc, F\eta_{\bar\phi_n})$, which is cocartesian because $F$ is normal. This concludes the definition of $N$ on morphisms.
		
		Next consider a transformation $\nat\xi FG$ between normal functors $F$ and $\map G\K\L$ of unital virtual double categories. We claim that the components of $\xi$ again form a transformation $NF \Rar NG$, which we take to be the image $N\xi$. For instance, to see that the components of $\xi$ are natural with respect to a nullary cell $\cell{\bar\phi}{\ul J}C$ in $N(\K)$, notice that
		\begin{align*}
			(NG)(\bar\phi&) \of (\bar\xi_{J_1}, \dotsc, \bar\xi_{J_n}) = \overline{(G_I)_C \of G\phi} \of (\bar\xi_{J_1}, \dotsc, \bar\xi_{J_n}) \\
			& = \overline{(G_I)_C \of G\phi \of (\xi_{J_1}, \dotsc, \xi_{J_n})} = \overline{(G_I)_C \of \xi_{I_C} \of F\phi} \\
			& = \overline{(\eta_{GC} \of  \xi_C)' \of (F_I)_C \of F\phi} = \xi_C \of (NF)(\bar\phi)
		\end{align*}
		where the last identity follows from the definition of $\id_{\xi_C}$ in $N(\L)$, while the penultimate one follows from the fact that
		\begin{align*}
			(G_I)_C \of \xi_{I_C} \of F\eta_C &= (G_I)_C \of G\eta_C \of \xi_C = \eta_{GC} \of \xi_C \\
			&= (\eta_{GC} \of \xi_C)' \of \eta_{FC} = (\eta_{GC} \of \xi_C)' \of (F_I)_C \of F\eta_C,
		\end{align*}
		by using that $F\eta_C$ is cocartesian.
		
		That the assignments $\K \mapsto N(\K)$, $F \mapsto NF$ and $\xi \mapsto N\xi$ combine the form a strict $2$-functor $\map N{\VirtDblCat_\textup u}{\AugVirtDblCat_\textup u}$ follows easily from the uniqueness of the factorisations \eqref{unitors}. It is also clear that the identity $(U \of N)(\K) = \K$ extends to an identity of strict $2$-functors $U \of N = \id$. Thus, it remains to show the existence of an invertible $2$-natural transformation $\tau\colon \id \iso N \of U$. Given a unital augmented virtual double category $\K$ we define the functor $\map{\tau_\K}\K{(N \of U)(\K)}$ as follows. It is the identity on objects and morphisms, it is given by $\phi \mapsto \bar\phi$ on unary cells and by $\psi \mapsto \overline{\eta_C \of \psi}$ on nullary cells $\cell\psi{\ul J}C$. That these assignments preserve composites and identity cells is easily checked; that the family $\tau = (\tau_\K)_{\K}$ is $2$-natural is clear. Finally, an inverse functor $\map\sigma{(N \of U)(\K)}\K$ can be given as the identity on objects and morphisms, as $\bar\phi \mapsto \phi$ on unary cells and as $\bar\psi \mapsto \eps_C \of \psi$ on nullary cells $\cell{\bar\psi}{\ul J}C$, where $\eps_C$ is the nullary cartesian cell $I_C \Rar C$ that corresponds to $\cell{\eta_C}C{I_C}$ as in \lemref{unit identities}. This completes the proof.
	\end{proof}
	
	\section{Kan extensions}\label{Kan extension section}
	Now that we have most of the necessary terminology of augmented virtual double categories in place, we can begin studying `formal category theory' within such double categories. In this section we generalise the notion of Kan extension in double categories to the setting of an augmented virtual double category $\K$, and consider three strengthenings of this notion. The first of these generalises the notion of `Kan extension along enriched functors', as introduced by Dubuc in \cite{Dubuc70} (see also Section~4 of \cite{Kelly82}), while the second generalises the notion of `pointwise Kan extension in a $2$-category', that was introduced by Street in \cite{Street74b}; the third then combines the latter two strengthenings. We will compare these strengthenings among each other, as well as compare them with the classical such notions for the vertical $2$-category $V(\K)$ that is contained in $\K$. In the next section we consider the notion of yoneda embedding.
	
	\subsection{Weak Kan extensions}
	We start with the notion of `weak' left Kan extension, which generalises the notion of Kan extension for double categories that was given in Section 2 of \cite{Grandis-Pare08}.
	\begin{definition} \label{weak left Kan extension}
		The nullary cell $\eta$, in the composite on the right-hand side below, is said to define $\map l{A_n}M$ as the \emph{weak left Kan extension} of $\map d{A_0}M$ along $(J_1, \dotsc, J_n)$ if any nullary cell $\phi$, as on the left-hand side, factors uniquely through $\eta$ as a vertical cell $\phi'$, as shown.
		\begin{displaymath}
			\begin{tikzpicture}[textbaseline]
				\matrix(m)[math35, yshift=1.625em]{A_0 & A_1 & A_{n'} & A_n \\};
				\draw	([yshift=-3.25em]$(m-1-1)!0.5!(m-1-4)$) node (M) {$M$};
				\path[map]	(m-1-1) edge[barred] node[above] {$J_1$} (m-1-2)
														edge[transform canvas={yshift=-2pt}] node[below left] {$d$} (M)
										(m-1-3) edge[barred] node[above] {$J_n$} (m-1-4)
										(m-1-4) edge[transform canvas={yshift=-2pt}] node[below right] {$k$} (M);
				\path				($(m-1-2.south)!0.5!(m-1-3.south)$) edge[cell] node[right] {$\phi$} (M);
				\draw				($(m-1-2)!0.5!(m-1-3)$) node {$\dotsb$};
										
			\end{tikzpicture} = \begin{tikzpicture}[textbaseline]
				\matrix(m)[math35]{A_0 & A_1 & A_{n'} & A_n \\ & & & M \\};
				\path[map]	(m-1-1) edge[barred] node[above] {$J_1$} (m-1-2)
														edge[transform canvas={shift={(-2pt,-2pt)}}] node[below left] {$d$} (m-2-4)
										(m-1-3) edge[barred] node[above] {$J_n$} (m-1-4)
										(m-1-4) edge[bend right=45] node[left] {$l$} (m-2-4)
														edge[bend left=45] node[right] {$k$} (m-2-4);
				\path				($(m-1-1.south)!0.5!(m-1-4.south)$) edge[transform canvas={shift={(0.3em, 0.5em)}}, cell] node[right] {$\eta$} ($(m-2-1.north)!0.5!(m-2-4.north)$)
										(m-1-4) edge[cell, transform canvas={xshift=-0.25em}] node[right] {$\phi'$} (m-2-4);
				\draw				($(m-1-2)!0.5!(m-1-3)$) node {$\dotsb$};
										
			\end{tikzpicture}
		\end{displaymath}
	\end{definition}
	As usual any two nullary cells defining the same weak left Kan extension factor through each other as invertible vertical cells. 
	
	\begin{example} \label{Kan extensions in quintets}
		In $Q(\mathcal C)$, the double category of quintets in a $2$"/category $\mathcal C$ (see \defref{quintets}, the notion of weak Kan extension coincides with the usual $2$"/categorical notion of Kan extension given in Section~2 of \cite{Street72} (or see e.g.\ Section~2 of \cite{Weber07}).
	\end{example}
	
	The definition above generalises the classical notion of left Kan extension in a $2$-category in the following sense. Remember that any augmented virtual double category $\K$ contains a $2$-category $V(\K)$ consisting of its objects, vertical morphisms and vertical cells.
	
	\begin{proposition} \label{weak left Kan extensions along companions}
		In an augmented virtual double category $\K$ consider a vertical cell $\eta$, as on the left-hand side below, and assume that the companion $j_*$ of $\map jAB$ exists.
		\begin{displaymath}

		\end{displaymath}
		Now the cell $\eta'$ defines $l$ as a weak left Kan extension in $\K$ precisely if the top assignment on the left is bijective, while $\eta$ defines $l$ as a left Kan extension in $V(\K)$ precisely if the top assignment on the right is bijective. The proof follows from the fact that the bottom assignment is bijective, by the universal property of the weakly cocartesian cell that defines $j_*$.
	\end{proof}
	
	\subsection{(Pointwise) Kan extensions}
	\defref{weak left Kan extension} is strengthened to give a notion of left Kan extension in augmented virtual double categories as follows. This generalises the corresponding notion for double categories, that was given in Section 3 of \cite{Koudenburg14a} under the name `pointwise left Kan extension'.
	\begin{definition} \label{left Kan extension}
		The nullary cell $\eta$, in the composite on the right-hand side below, is said to define $\map l{A_n}M$ as the \emph{left Kan extension} of $\map d{A_0}M$ along $(J_1, \dotsc, J_n)$ if any nullary cell $\phi$, this time of the more general form as on the left-hand side below, factors uniquely through $\eta$ as a nullary cell $\phi'$, as shown.
		\begin{displaymath}

		\end{displaymath}
	\end{definition}
	Clearly every left Kan extension is a weak left Kan extension, by restricting the universal property above to cells $\phi$ with $m = 0$. The following result, which is an immediate consequence of \propref{left Kan extensions preserved by restriction} below, leads us to the definition of pointwise left Kan extensions. Recall that an object $A$ is called unital whenever its horizontal unit $\hmap{I_A}AA$ exists.
	\begin{corollary} \label{left Kan extensions with unital sources}
		In an augmented virtual equipment consider a nullary cell $\eta$ as in the composite below, where $n \geq 1$, and suppose that the object $A_n$ is unital. The cell $\eta$ defines $l$ as the left Kan extension of $d$ along $(J_1, \dotsc, J_n)$ precisely if, for each vertical morphism $\map fB{A_n}$, the full composite defines $l \of f$ as the left Kan extension of $d$ along $\bigpars{J_1, \dotsc, J_n(\id, f)}$.
		\begin{displaymath}
			\begin{tikzpicture}
				\matrix(m)[math35]{A_0 & A_1 & A_{n'} & B \\ A_0 & A_1 & A_{n'} & A_n \\};
				\draw	([yshift=-6.5em]$(m-1-1)!0.5!(m-1-4)$) node (M) {$M$};
				\path[map]	(m-1-1) edge[barred] node[above] {$J_1$} (m-1-2)
														
										(m-1-3) edge[barred] node[above, xshift=-3pt] {$J_n(\id, f)$} (m-1-4)
										(m-1-4) edge node[right] {$f$} (m-2-4)
										(m-2-4) edge[transform canvas={yshift=-2pt}] node[below right] {$l$} (M)
										(m-2-1) edge[barred] node[below, inner sep=2.5pt] {$J_1$} (m-2-2)
														edge[transform canvas={yshift=-2pt}] node[below left] {$d$} (M)
										(m-2-3) edge[barred] node[below, inner sep=2.5pt] {$J_n$} (m-2-4);
				\path				(m-1-1) edge[eq] (m-2-1)
										(m-1-2) edge[eq] (m-2-2)
										(m-1-3) edge[eq, transform canvas={xshift=-2pt}] (m-2-3);
				\path				($(m-2-2.south)!0.5!(m-2-3.south)$) edge[cell] node[right] {$\eta$} (M);
				\draw				($(m-1-2)!0.5!(m-2-3)$) node {$\dotsb$}
										($(m-1-3)!0.5!(m-2-4)$) node[font=\scriptsize] {$\cart$};
			\end{tikzpicture}
		\end{displaymath}
	\end{corollary}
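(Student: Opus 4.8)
The plan is to treat the two implications separately; the ``if'' half is a triviality and the ``only if'' half carries the content.

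For the ``if'' half, suppose that for every $\map fB{A_n}$ for which $J_n(\id, f)$ exists the composite in the statement defines $l \of f$ as the left Kan extension of $d$ along $\bigpars{J_1, \dotsc, J_n(\id, f)}$. Taking $f = \id_{A_n}$, the vertical identity cell $\id_{\id_{A_n}}$ is cartesian (\exref{isomorphisms have cartesian identity cells}), so that $J_n(\id_{A_n}, \id_{A_n})$ may be taken to be $J_n$ itself and the displayed composite reduces to $\eta$, while $l \of \id_{A_n} = l$; hence $\eta$ defines $l$ as the left Kan extension of $d$ along $(J_1, \dotsc, J_n)$. Note that unitality of $A_n$ plays no role in this direction.

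For the ``only if'' half, assume $\eta$ defines $l$ as the left Kan extension of $d$ along $\ul J = (J_1, \dotsc, J_n)$, let $\map fB{A_n}$ be such that $J_n(\id, f)$ exists, and write $\eta'$ for the composite $\eta \of (\id_{J_1}, \dotsc, \id_{J_{n'}}, \cart)$ of the statement, where $\cart$ is the cartesian cell $J_n(\id, f) \Rar J_n$ over $\id$ and $f$; its vertical source is $d$ and its vertical target is $l \of f$. Since $A_n$ is unital, \corref{restrictions in terms of units} supplies the conjoint $\hmap{f^*}{A_n}B$, and \lemref{restrictions and composites} identifies $J_n(\id, f)$ with the horizontal composite $(J_n \hc f^*)$; fix a cocartesian cell $\cocart\colon (J_n, f^*) \Rar J_n(\id, f)$ witnessing this, together with the weakly cocartesian cell $\eta_{f^*}\colon B \Rar f^*$ and the cartesian cell $\eps_{f^*}\colon f^* \Rar A_n$ from the conjoint identities (the horizontal dual of \lemref{companion identities lemma}). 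The proof then assembles, for each path $(H_1, \dotsc, H_m)$ out of $B$ and each $\map k{B_m}M$, a three-step chain of bijections between classes of nullary cells with vertical source $d$ (resp.\ $l$, resp.\ $l \of f$) and vertical target $k$: precomposition with $\cocart$ in the position of $J_n(\id, f)$, together with the coherence of horizontal composites, identifies nullary cells out of $\bigpars{J_1, \dotsc, J_{n'}, J_n(\id, f), H_1, \dotsc, H_m}$ with nullary cells out of $(J_1, \dotsc, J_n, f^*, H_1, \dotsc, H_m)$; the universal property of $\eta$ as a left Kan extension (\defref{left Kan extension}) identifies the latter with nullary cells out of $(f^*, H_1, \dotsc, H_m)$ with left leg $l$; and precomposition with the weakly cocartesian path $(\eta_{f^*}, \id_{H_1}, \dotsc, \id_{H_m})$ identifies these with nullary cells out of $(H_1, \dotsc, H_m)$ with left leg $l \of f$.

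It then remains to check that the composite of these three bijections is precisely the operation ``factor through $\eta'$''. Unwinding the definitions of horizontal and vertical composition, this reduces — after one diagram chase, using the interchange axioms of \lemref{horizontal composition} — to the identity $\cart \of \cocart = \id_{J_n} \hc \eps_{f^*}$ relating $\cart$, $\cocart$ and the cartesian cell $\eps_{f^*}$ defining $f^*$, which is the one-sided instance of the identity recorded in \lemref{restrictions and composites}. I expect this last step to be the main obstacle, but a bookkeeping one rather than a conceptual one: one must keep track of which object is required to be unital — it is $A_n$, which is exactly why the conjoint $f^*$ of $f$, and not a companion of some morphism out of $B$, is the correct auxiliary, since $B$ need not be unital — and one must verify that inserting and deleting the $f^*$-leg is compatible with composition of cells, so that the ``factors uniquely'' clauses genuinely transport along the chain. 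Alternatively, and more economically, once the forthcoming \propref{left Kan extensions preserved by restriction} is available this corollary is its immediate specialisation to the unital case, the hypothesis on $A_n$ serving only to guarantee the existence of the conjoint $f^*$ that that proposition requires.
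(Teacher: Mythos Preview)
Your proposal is correct and aligns with the paper's proof: the paper handles the ``if'' direction exactly as you do (take $f = \id_{A_n}$), and for ``only if'' it simply observes that unitality of $A_n$ supplies all conjoints $f^*$ via \corref{restrictions in terms of units} and then invokes \propref{left Kan extensions preserved by restriction} --- precisely the shortcut you note in your final sentence. Your three-step bijection chain is an inlined version of the proof of that proposition (combining \propref{restriction as left Kan extension}, the horizontal pasting lemma, and the cocartesian identification $(J_n, f^*) \Rar J_n(\id, f)$ of \lemref{restrictions and composites}), so there is no genuine difference in approach; one minor terminological point is that the path $(\eta_{f^*}, \id_{H_1}, \dotsc, \id_{H_m})$ is not literally ``weakly cocartesian'' in the sense of \defref{cocartesian paths}, but the bijection you need is exactly what the conjoint identities and \propref{restriction as left Kan extension} provide.
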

	\begin{proof}
		For the `if'-part take $f = \id_{A_n}$ and use that $J_n(\id, \id) \iso J_n$. For the `only if'-part remember $A_n$ being unital ensures that all conjoints $\hmap{f^*}{A_n}B$ exist, by \corref{restrictions in terms of units}, and apply \propref{left Kan extensions preserved by restriction} below.
	\end{proof}
	
	\begin{definition} \label{pointwise left Kan extension}
		A nullary cell $\eta$ as in the composite below, where $n \geq 1$, is said to define $l$ as the \emph{pointwise} (weak) left Kan extension of $d$ along $(J_1, \dotsc, J_n)$ if, for any $\map fB{A_n}$ such that $J_n(\id, f)$ exists, the full composite defines $l \of f$ as the (weak) left Kan extension of $d$ along $\bigpars{J_1, \dotsc, J_n(\id, f)}$.
		\begin{displaymath}
			\begin{tikzpicture}
				\matrix(m)[math35]{A_0 & A_1 & A_{n'} & B \\ A_0 & A_1 & A_{n'} & A_n \\};
				\draw	([yshift=-6.5em]$(m-1-1)!0.5!(m-1-4)$) node (M) {$M$};
				\path[map]	(m-1-1) edge[barred] node[above] {$J_1$} (m-1-2)
														
										(m-1-3) edge[barred] node[above, xshift=-3pt] {$J_n(\id, f)$} (m-1-4)
										(m-1-4) edge node[right] {$f$} (m-2-4)
										(m-2-4) edge[transform canvas={yshift=-2pt}] node[below right] {$l$} (M)
										(m-2-1) edge[barred] node[below, inner sep=2.5pt] {$J_1$} (m-2-2)
														edge[transform canvas={yshift=-2pt}] node[below left] {$d$} (M)
										(m-2-3) edge[barred] node[below, inner sep=2.5pt] {$J_n$} (m-2-4);
				\path				(m-1-1) edge[eq] (m-2-1)
										(m-1-2) edge[eq] (m-2-2)
										(m-1-3) edge[eq, transform canvas={xshift=-2pt}] (m-2-3);
				\path				($(m-2-2.south)!0.5!(m-2-3.south)$) edge[cell] node[right] {$\eta$} (M);
				\draw				($(m-1-2)!0.5!(m-2-3)$) node {$\dotsb$}
										($(m-1-3)!0.5!(m-2-4)$) node[font=\scriptsize] {$\cart$};
			\end{tikzpicture}
		\end{displaymath}
	\end{definition}
	Clearly every pointwise left Kan extension is a pointwise weak left Kan extension; that it is a left Kan extension too follows from the following lemma. As a consequence of \corref{left Kan extensions with unital sources}, all left Kan extensions in a unital virtual equipment, such as $\enProf\V$ of $\V$-profunctors (\exref{enriched profunctors}), are pointwise left Kan extensions. In \secref{pointwise Kan extensions section} we will see that all pointwise weak left Kan extensions in an augmented virtual double category $\K$ are pointwise left Kan extensions as soon as $\K$ has `cocartesian tabulations'; e.g.\ $\K = \enProf{(\Set, \Set')}$ of \exref{(V, V')-Prof}. It follows that in a unital virtual equipment admitting cocartesian tabulations, such as $\enProf\2$ (see \exref{relations} and \exref{tabulations of 2-profunctors}), the notions of left Kan extension, pointwise weak left Kan extension, and pointwise left Kan extension coincide.
	
	\begin{lemma} \label{properties of pointwise left Kan extensions}
		Consider a nullary cell $\eta$ as in the composite above, that defines $l$ as the pointwise (weak) left Kan extension of $d$ along $(J_1, \dotsc, J_n)$. The following hold:
		\begin{enumerate}[label=\textup{(\alph*)}]
			\item $\eta$ defines $l$ as the (weak) left Kan extension of $d$ along $(J_1, \dotsc, J_n)$;
			\item for each $\map fB{A_n}$ the composite above defines $l \of f$ as the pointwise (weak) left Kan extension of $d$ along $\bigpars{J_1, \dotsc, J_n(\id, f)}$.
		\end{enumerate}
	\end{lemma}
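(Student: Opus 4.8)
The plan is to reduce both parts to two elementary facts. The first is the pseudofunctoriality of restriction recorded after \lemref{pasting lemma for cartesian cells}: there are canonical isomorphisms $K(\id, \id) \iso K$ and $K(\id, f)(\id, g) \iso K(\id, f \of g)$, compatible with the cartesian cells defining the restrictions, and moreover, assuming $K(\id, f)$ exists, the nested restriction $K(\id, f)(\id, g)$ exists if and only if $K(\id, f \of g)$ does. Both the isomorphism and this equivalence of existence follow from the pasting lemma for cartesian cells, used in both its `if' and `only if' directions: the composite of the cartesian cells defining $K(\id, f)(\id, g) \Rar K(\id, f) \Rar K$ is cartesian and so defines $K(\id, f \of g)$, and conversely the cartesian cell $K(\id, f \of g) \Rar K$ factors through $K(\id, f) \Rar K$ as a cell which is then cartesian by the pasting lemma. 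The second fact is that both notions of (weak) left Kan extension, in \defref{left Kan extension} and \defref{weak left Kan extension}, are evidently invariant under precomposing the defining cell, on its horizontal source, with an invertible horizontal cell; this lets us pass freely between isomorphic horizontal source paths.

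For part (a) I would simply instantiate the hypothesis at $f = \id_{A_n}$. Since $\id_{J_n}$ is cartesian (because $\id_{J_n} \of \phi = \phi$ by the unit axioms), it defines $J_n(\id, \id_{A_n}) \iso J_n$, so the `full composite' of \defref{pointwise left Kan extension} for $f = \id_{A_n}$ is $\eta$ itself, up to composition with an invertible horizontal cell. The hypothesis thus says that $\eta$ defines $l = l \of \id_{A_n}$ as the (weak) left Kan extension of $d$ along $(J_1, \dotsc, J_n(\id, \id)) \iso (J_1, \dotsc, J_n)$, and by invariance the same holds along $(J_1, \dotsc, J_n)$, which is (a).

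For part (b), write $\eta_f$ for the composite of $\eta$ with the chosen cartesian cell $J_n(\id, f) \Rar J_n$ appearing in the statement; I must verify the defining property of \defref{pointwise left Kan extension} for $\eta_f$. So fix $\map gCB$ with $J_n(\id, f)(\id, g)$ existing; by the first fact this is equivalent to $J_n(\id, f \of g)$ existing, and the composite of the cartesian cells $J_n(\id, f)(\id, g) \Rar J_n(\id, f) \Rar J_n$ is cartesian, hence differs from the chosen cartesian cell $J_n(\id, f \of g) \Rar J_n$ only by an invertible horizontal cell $\theta$. By associativity of vertical composition, the composite of $\eta_f$ with the cartesian cell $J_n(\id, f)(\id, g) \Rar J_n(\id, f)$ equals the composite of $\eta$ with $J_n(\id, f \of g) \Rar J_n$, precomposed on its last horizontal source with $\theta$. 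Since $\eta$ is a pointwise (weak) left Kan extension, the latter composite, taken at the morphism $f \of g$, defines $l \of (f \of g) = (l \of f) \of g$ as the (weak) left Kan extension of $d$ along $(J_1, \dotsc, J_n(\id, f \of g))$; by invariance under $\theta$ the same holds along $(J_1, \dotsc, J_n(\id, f)(\id, g))$. As $g$ was arbitrary, $\eta_f$ defines $l \of f$ as the pointwise (weak) left Kan extension of $d$ along $(J_1, \dotsc, J_n(\id, f))$.

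Neither part presents a genuine obstacle; the only point needing care is the bidirectional use of the pasting lemma to see that $J_n(\id, f)(\id, g)$ exists precisely when $J_n(\id, f \of g)$ does, and that the corresponding cartesian cells agree up to canonical isomorphism. Everything else — associativity and the unit axioms for vertical composition, and the invertibility of the mediating horizontal cells — is formal.
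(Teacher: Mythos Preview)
Your proposal is correct and follows exactly the same approach as the paper's proof, which is simply the two sentences: for (a) take $f = \id_{A_n}$ and use $J_n(\id,\id) \iso J_n$; for (b) use $J_n(\id,f)(\id,g) \iso J_n(\id, f \of g)$. You have just spelled out the details (the bidirectional use of the pasting lemma for existence, and invariance under invertible horizontal cells) that the paper leaves implicit.
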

	\begin{proof}
		For (a) take $f = \id_{A_n}$ in the previous definition and use that $J_n(\id, \id) \iso J_n$. For (b) remember that $J(\id, f)(\id, g) \iso J(\id, g \of f)$ for composable $\map gCB$ and $\map fB{A_n}$.
	\end{proof}
	
	\begin{example}
		For a Kan extension that fails to be a pointwise weak Kan extension consider the augmented virtual double category $\enProf{(\brks{0, \infty}, \brks{-\infty, \infty})}$ described in \exref{universe enlargement of Lawvere quantale}. Let $M$ be the generalised metric space with two points as pictured on the left below and $I$ be the `unit $\brks{-\infty, \infty}$"/category', consisting of a single point $*$ with $I(*, *) = 0$. Let $\hmap JIM$ be given by $J(*, d) = 3$ and $J(*, u) = 2$; it is straightforward to check the existence of the cell $\eta$ in the middle below.
		\begin{displaymath}
			\begin{tikzpicture}[textbaseline]
				\matrix(m)[math35, column sep=2.5em]{ & \\};
				\path[map]	(m-1-1) edge[bend left=40] node[above] {$-1$} (m-1-2)
										(m-1-2)	edge[bend left=40] node[below] {$1$} (m-1-1);
				\draw[map, transform canvas={yshift=3pt}]	(m-1-1) arc(-20:280:0.15) node[above, inner sep=10pt] {$0$};
				\draw[map, transform canvas={yshift=3pt}]	(m-1-2) arc(200:-100:0.15) node[above, inner sep=10pt] {$0$};
				\fill	(m-1-1) circle (1pt) node[below left] {$d$}
							(m-1-2) circle (1pt) node[below right, xshift=-2pt] {$u$};
			\end{tikzpicture} \qquad\qquad\qquad\qquad \begin{tikzpicture}[textbaseline]
				\matrix(m)[math35, column sep={1.75em,between origins}]{I & & M \\ & M & \\};
				\path[map]	(m-1-1) edge[barred] node[above] {$J$} (m-1-3)
														edge[transform canvas={xshift=-2pt}] node[left] {$d$} (m-2-2);
				\path[map, transform canvas={yshift=0.25em}]	(m-1-2) edge[cell] node[right] {$\eta$} (m-2-2);
				\path				(m-1-3) edge[eq, transform canvas={xshift=2pt}] (m-2-2);
			\end{tikzpicture} \qquad\qquad\qquad\qquad \begin{tikzpicture}[textbaseline]
				\matrix(m)[math35, column sep={1.75em,between origins}]{M & & B \\ & M & \\};
				\path[map]	(m-1-1) edge[barred] node[above] {$H$} (m-1-3)
										(m-1-3)	edge[transform canvas={xshift=2pt}] node[right] {$k$} (m-2-2);
				\path[map, transform canvas={yshift=0.25em}]	(m-1-2) edge[cell] node[right] {$\phi'$} (m-2-2);
				\path				(m-1-1) edge[eq, transform canvas={xshift=-2pt}] (m-2-2);
			\end{tikzpicture}
		\end{displaymath}
		
		The cell $\eta$ fails to define $\id_M$ as a pointwise weak left Kan extension. Indeed the universal property of its restriction along the map $I \to M\colon * \mapsto u$ reduces to
		\begin{displaymath}
			2 \geq M(d, z) \quad \implies \quad 0 \geq M(u, z)
		\end{displaymath}
		for all $z \in M$, but this fails for $z = d$. To see that $\eta$ does define $\id_M$ as a left Kan extension first notice that in the universal property of \defref{left Kan extension} we may restrict to cells $\phi$ with $m = 0$ or $m = 1$: similar to the argument that proves \propref{left Kan extensions in double categories} below, this is a consequence of the fact that $\enProf{(\brks{0, \infty}, \brks{-\infty, \infty})}$ has all horizontal composites (\exref{composites of non-expanding metric relations}). Hence to prove the universal property it suffices to show that all vertical cells $\id_M \Rightarrow h$, where $\map hMM$, exist in $\enProf{(\brks{0, \infty}, \brks{-\infty, \infty})}$ as well as all cells $\phi'$ as on the right above. That they do exist is easily seen, by using the fact that the morphisms $\map hMM$ and $\hmap HMB$ are required to be non"/expanding (see \exref{universe enlargement of Lawvere quantale}).
	\end{example}
	
	To close this subsection we give an important example of pointwise left Kan extension, that of the restriction along a vertical morphism, as follows. In fact, such Kan extension is `absolute', in the following sense: a nullary cell $\eta$, as in \defref{weak left Kan extension}, is said to define $l$ as an \emph{absolute} (pointwise) (weak) left Kan extension if, for any vertical morphism $\map gMN$, the composite $g \of \eta$ defines $g \of l$ as the (pointwise) (weak) left Kan extension of $g \of d$ along $(J_1, \dotsc, J_n)$.
	\begin{proposition} \label{restriction as left Kan extension}
		Let $\map jBA$ be a vertical morphism. Any nullary cartesian cell
		\begin{displaymath}

		\end{displaymath}
		To see that it is, we claim that an inverse can be given by $\phi \mapsto \phi \of (\cocart, \id, \dotsc, \id)$, where $\cocart$ denotes the weakly cocartesian cell that corresponds to the cartesian cell defining $j^*$, as described before \lemref{companion identities lemma}. The proof of the claim is a straightforward calculation using the conjoint identities satisfied by $\cart$ and $\cocart$ (horizontally dual to those in \lemref{companion identities lemma}) as well as the interchange axioms (\lemref{horizontal composition}). We conclude that $j$ forms the absolute left Kan extension of $\id_A$ along $j^*$. That it is in fact an absolute pointwise left Kan extension follows from the simple fact that $j^*(\id, f) \iso (j \of f)^*$ for any $\map fCB$.
	\end{proof}

	\subsection{Pasting lemmas}	
	In this subsection we describe two important pasting lemmas for Kan extension as well as their consequences, which will be used throughout. We begin with the horizontal pasting lemma for Kan extensions which, in view of \propref{enriched left Kan extensions in terms of weighted colimits} below, generalises a classical result on the iteration of Kan extensions of enriched functors; see page~42 of \cite{Dubuc70} or Theorem 4.47 of \cite{Kelly82}.
	\begin{lemma}[Horizontal pasting lemma] \label{horizontal pasting lemma}
		In an augmented virtual double category consider horizontally composable nullary cells
		\begin{displaymath}
			\begin{tikzpicture}[textbaseline]
				\matrix(m)[math35]
					{	A_0 & A_1 & A_{n'} & A_n & B_1 & B_{m'} & B_m \\
						& & & M & & & \\};
				\path[map]	(m-1-1) edge[barred] node[above] {$J_1$} (m-1-2)
														edge[transform canvas={yshift=-2pt}] node[below left] {$d$} (m-2-4)
										(m-1-3) edge[barred] node[above] {$J_n$} (m-1-4)
										(m-1-4) edge[barred] node[above] {$H_1$} (m-1-5)
														edge node[right] {$l$} (m-2-4)
										(m-1-6) edge[barred] node[above] {$H_m$} (m-1-7)
										(m-1-7) edge[transform canvas={yshift=-2pt}] node[below right] {$k$} (m-2-4);
				\draw[transform canvas={xshift=-0.5pt}]	($(m-1-2)!0.5!(m-1-3)$) node {$\dotsb$}
										($(m-1-5)!0.5!(m-1-6)$) node {$\dotsb$};
				\path				($(m-1-1.south)!0.5!(m-1-4.south)$)	edge[cell, transform canvas={shift={(1em,0.333em)}}] node[right] {$\eta$} ($(m-2-1.north)!0.5!(m-2-4.north)$)
										($(m-1-4.south)!0.5!(m-1-7.south)$) edge[cell, transform canvas={shift={(-1em,0.333em)}}] node[right] {$\zeta$} ($(m-2-4.north)!0.5!(m-2-7.north)$);
			\end{tikzpicture}
		\end{displaymath}
		and suppose that $\eta$ defines $l$ as the left Kan extension of $d$ along $(J_1, \dotsc, J_n)$. Then $\eta \hc \zeta$ defines $k$ as the (weak) left Kan extension of $d$ along $(J_1, \dotsc, J_n, H_1, \dotsc, H_m)$ precisely if $\zeta$ defines $k$ as the (weak) left Kan extension of $l$ along $(H_1, \dotsc, H_m)$.
		
		Analogously $\eta \hc \zeta$ defines $k$ as the pointwise (weak) left Kan extension precisely if $\zeta$ does.
	\end{lemma}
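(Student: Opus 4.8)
The plan is to run the standard ``two bijections compose to one'' argument for pasting of left Kan extensions, the one subtlety being that the \emph{strong} universal property of $\eta$ (\defref{left Kan extension})---which is exactly the stated hypothesis---is precisely what lets one slide an arbitrary block of horizontal morphisms past $\eta$ on the right. First I would fix a nullary cell $\phi$ with vertical source $d$ whose horizontal source is $(J_1, \dotsc, J_n, H_1, \dotsc, H_m, G_1, \dotsc, G_p)$, where $p = 0$ in the weak case and $p \ges 0$ is arbitrary in the non-weak case. Since $\eta$ defines $l$ as the left Kan extension of $d$ along $(J_1, \dotsc, J_n)$ in the sense of \defref{left Kan extension}, and that definition permits exactly such appended morphisms, $\phi$ factors uniquely through $\eta$ as a nullary cell $\bar\phi$ with vertical source $l$ and horizontal source $(H_1, \dotsc, H_m, G_1, \dotsc, G_p)$, that is $\phi = \eta \hc \bar\phi$. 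This sets up a bijection $\phi \leftrightarrow \bar\phi$ between nullary cells with vertical source $d$ over $(J_1, \dotsc, J_n, H_1, \dotsc, H_m, G_1, \dotsc, G_p)$ and nullary cells with vertical source $l$ over $(H_1, \dotsc, H_m, G_1, \dotsc, G_p)$.

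Next I would check that factoring $\phi$ through $\eta \hc \zeta$ amounts to factoring the corresponding $\bar\phi$ through $\zeta$. Given any factorisation $\phi = (\eta \hc \zeta) \hc \phi''$ (a vertical cell $\phi''$ when $p = 0$, a nullary cell over $(G_1, \dotsc, G_p)$ in general), associativity of horizontal composition and the interchange axioms (\lemref{horizontal composition}) rewrite the right-hand side as $\eta \hc (\zeta \hc \phi'')$; uniqueness of factorisations through $\eta$ then forces $\bar\phi = \zeta \hc \phi''$, and conversely any factorisation of $\bar\phi$ through $\zeta$ yields one of $\phi$ through $\eta \hc \zeta$. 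Composing this with the bijection of the previous paragraph, $\eta \hc \zeta$ enjoys the universal property defining $k$ as the (weak) left Kan extension of $d$ along $(J_1, \dotsc, J_n, H_1, \dotsc, H_m)$ if and only if $\zeta$ enjoys the universal property defining $k$ as the (weak) left Kan extension of $l$ along $(H_1, \dotsc, H_m)$; taking $p$ fixed at $0$ gives the weak statement and $p$ arbitrary the non-weak one. I expect the only real work here to be bookkeeping---keeping track of which block of horizontal morphisms is being pushed through which cell, and spelling out the associativity/interchange rearrangement---rather than any genuine obstacle.

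For the pointwise assertion I would unwind \defref{pointwise left Kan extension} directly, noting that the restriction involved only touches the $\zeta$-side. For $\map fB{B_m}$ with $H_m(\id, f)$ existing, the composite of $\eta \hc \zeta$ with the cartesian cell exhibiting $H_m(\id, f)$ equals $\eta \hc \zeta'$, where $\zeta'$ is the analogous composite built from $\zeta$ alone, with vertical target $k \of f$. Since $\eta$ still defines $l$ as the left Kan extension of $d$ along $(J_1, \dotsc, J_n)$, the statement already proved---applied now with $\zeta'$ in place of $\zeta$, whose horizontal source is $(H_1, \dotsc, H_m(\id, f))$---says that $\eta \hc \zeta'$ defines $k \of f$ as the (weak) left Kan extension of $d$ along $(J_1, \dotsc, J_n, H_1, \dotsc, H_m(\id, f))$ exactly when $\zeta'$ defines $k \of f$ as the (weak) left Kan extension of $l$ along $(H_1, \dotsc, H_m(\id, f))$. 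Quantifying over all such $f$ gives that $\eta \hc \zeta$ defines $k$ as the pointwise (weak) left Kan extension precisely when $\zeta$ does, which finishes the proof.
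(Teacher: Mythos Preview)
Your proposal is correct and follows essentially the same approach as the paper: both set up the commutative triangle of assignments $\phi'' \mapsto \zeta \hc \phi''$, $\phi' \mapsto \eta \hc \phi'$, and $\phi'' \mapsto (\eta \hc \zeta) \hc \phi''$ between the three relevant collections of nullary cells, and observe that since $\eta \hc \dash$ is a bijection by hypothesis, the other two are bijections simultaneously. Your treatment of the pointwise case is in fact more explicit than the paper's, which dismisses it as ``analogous''; your observation that the restriction along $f$ only touches the $\zeta$-side, reducing to the already-proved non-pointwise statement applied to $\zeta'$, is exactly the right way to unpack that analogy.
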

	This lemma restricts to the classical pasting lemma for left Kan extensions in a $2$"/category $\mathcal C$ (see e.g.\ Proposition~1 of \cite{Street-Walters78}) when applied to the double category of quintets $Q(\mathcal C)$ (see \exref{Kan extensions in quintets}).
	\begin{proof}
		We will consider the case of left Kan extensions; the pointwise/weak cases are analogous. Given any path $\hmap{\ul K}{B_m}{C_p}$ and any morphism $\map h{C_p}M$ consider the following commutative diagram of assignments between collections of cells, that are of the form as shown.
		\begin{displaymath}
			\begin{tikzpicture}
				\matrix(n)[math35, column sep={2em,between origins}]{A_n & & C_p \\ & M & \\};
				\path[map]	(n-1-1) edge[barred] node[above] {$\ul H \conc \ul K$} (n-1-3)
														edge[transform canvas={xshift=-1pt}] node[left] {$l$} (n-2-2)
										(n-1-3) edge[transform canvas={xshift=1pt}] node[right] {$h$} (n-2-2);
				\path[transform canvas={yshift=0.25em}]	(n-1-2) edge[cell] node[right] {$\phi'$} (n-2-2);
				
				\matrix(n)[math35, column sep={1.75em,between origins}, xshift=-11em]{B_m & & C_p \\ & M & \\};
				\path[map]	(n-1-1) edge[barred] node[above] {$\ul K$} (n-1-3)
														edge[transform canvas={xshift=-2pt}] node[left] {$k$} (n-2-2)
										(n-1-3) edge[transform canvas={xshift=2pt}] node[right] {$h$} (n-2-2);
				\path[transform canvas={yshift=0.25em}]	(n-1-2) edge[cell] node[right, inner sep=3pt] {$\phi''$} (n-2-2);
				
				\matrix(n)[math35, column sep={2.5em,between origins}, xshift=11.6em]{A_0 & & C_p \\ & M & \\};
				\path[map]	(n-1-1) edge[barred] node[above] {$\ul J \conc \ul H \conc \ul K$} (n-1-3)
														edge node[left] {$d$} (n-2-2)
										(n-1-3) edge node[right] {$h$} (n-2-2);
				\path[transform canvas={yshift=0.25em}]	(n-1-2) edge[cell] node[right] {$\phi$} (n-2-2);
				
				\draw[font=\Large]	(-2.9em,0) node {$\lbrace$}
										(2.8em,0) node {$\rbrace$}
										(-13.75em,0) node {$\lbrace$}
										(-8.3em,0) node {$\rbrace$}
										(8.2em,0) node {$\lbrace$}
										(14.9em,0) node {$\rbrace$};
				\path[map]	(-7.3em,0) edge node[above] {$\zeta \hc \dash$} (-3.9em,0)
										(3.8em,0) edge node[above] {$\eta \hc \dash$} (7.2em,0)
										(-8.5em,-2.5em) edge[bend right=20] node[below] {$\eta \hc \zeta \hc \dash$} (9em,-2.5em);
			\end{tikzpicture}
		\end{displaymath}
		The proof now follows from the fact that, by definition, the cell $\eta$ (resp.\ $\zeta$ or \mbox{$\eta \hc \zeta$}) defines $l$ (resp.\ $k$) as a left Kan extension whenever the assignment given by horizontal composition with $\eta$ (resp. $\zeta$ or $\eta \hc \zeta$) above is a bijection.
	\end{proof}
	
	Next is the vertical pasting lemma.
	\begin{lemma}[Vertical pasting lemma] \label{vertical pasting lemma}
		Assume that, in the composite below, the path $(\phi_1, \dotsc, \phi_n)$ is cocartesian. The nullary cell $\eta$ defines $l$ as the (weak) left Kan extension of $d$ along $(K_1, \dotsc, K_n)$ precisely if the full composite defines $l$ as the (weak) left Kan extension of $d \of f_0$ along $(J_{11}, \dotsc, J_{nm_n})$.
		\begin{displaymath}
			\begin{tikzpicture}
				\matrix(m)[math35, column sep={4em,between origins}]
					{	A_{10} & A_{11} & A_{1m'_1} & A_{1m_1} &[4em] A_{n0} & A_{n1} & A_{nm'_n} & A_{nm_n} \\
						C_0 & & & C_1 & C_{n'} & & & A_{nm_n} \\};
				\draw				([yshift=-3.25em]$(m-2-1)!0.5!(m-2-8)$) node (M) {$M$};
				\path[map]	(m-1-1) edge[barred] node[above] {$J_{11}$} (m-1-2)
														edge node[left] {$f_0$} (m-2-1)
										(m-1-3) edge[barred] node[above] {$J_{1m_1}$} (m-1-4)
										(m-1-4) edge node[right] {$f_1$} (m-2-4)
										(m-1-5)	edge[barred] node[above] {$J_{n1}$} (m-1-6)
														edge node[left] {$f_{n'}$} (m-2-5)
										(m-1-7) 	edge[barred] node[above] {$J_{nm_n}$} (m-1-8)
										(m-2-1) edge[barred] node[below] {$K_1$} (m-2-4)
														edge[transform canvas={yshift=-2pt}] node[below left] {$d$} (M)
										(m-2-5)	edge[barred] node[below] {$K_n$} (m-2-8)
										(m-2-8)	edge[transform canvas={yshift=-2pt}] node[below right] {$l$} (M);
				\path				($(m-1-1.south)!0.5!(m-1-4.south)$) edge[cell] node[right] {$\phi_1$} ($(m-2-1.north)!0.5!(m-2-4.north)$)
										($(m-1-5.south)!0.5!(m-1-8.south)$) edge[cell] node[right] {$\phi_n$} ($(m-2-5.north)!0.5!(m-2-8.north)$)
										($(m-2-1.south)!0.5!(m-2-8.south)$) edge[cell] node[right] {$\eta$} (M)
										(m-1-8)	edge[eq] (m-2-8);
				\draw[transform canvas={xshift=-1pt}]	($(m-1-2)!0.5!(m-1-3)$) node {$\dotsb$}
										($(m-1-6)!0.5!(m-1-7)$) node {$\dotsb$};
				\draw				($(m-1-4)!0.5!(m-2-5)$) node {$\dotsb$};
			\end{tikzpicture}
		\end{displaymath}
		
		Analogously, if $(\phi_1, \dotsc, \phi_n)$ is right pointwise cocartesian (\defref{pointwise cocartesian path}) then $\eta$ defines $l$ as a pointwise (weak) left Kan extension of $d$ along $(K_1, \dotsc, K_n)$ precisely if the full composite defines $l$ as the pointwise (weak) left Kan extension of $d \of f_0$ along $(J_{11}, \dotsc, J_{nm_n})$.
	\end{lemma}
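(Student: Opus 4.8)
The plan is to prove the three equivalences in the order weak, then non-weak, then pointwise, with the later cases building on the earlier ones and with the underlying mechanism throughout being the diagram chase used in the proof of \lemref{horizontal pasting lemma}.

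For the weak and the non-weak cases together, I would fix a trailing path $\hmap{\ul H = (H_1, \dotsc, H_m)}{A_{nm_n}}{B_m}$ --- forced to be empty in the weak case, arbitrary in the non-weak one --- together with a vertical morphism $\map k{B_m}M$, and compare three collections of cells: the nullary cells $\ul H \Rar M$ with vertical source $l$ and target $k$; the nullary cells $(K_1, \dotsc, K_n) \conc \ul H \Rar M$ with vertical source $d$ and target $k$; and the nullary cells $(J_{11}, \dotsc, J_{nm_n}) \conc \ul H \Rar M$ with vertical source $d \of f_0$ and target $k$. Pasting with $\eta$ gives an assignment from the first collection to the second, precomposition with $(\phi_1, \dotsc, \phi_n, \id_{H_1}, \dotsc, \id_{H_m})$ one from the second to the third, and by the interchange axiom (\lemref{horizontal composition}) their composite sends $\phi'$ to $\bigpars{\eta \of (\phi_1, \dotsc, \phi_n)} \hc \phi'$, that is, to $\phi'$ pasted with the full composite. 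Thus $\eta$ defines $l$ as the (weak) left Kan extension of $d$ along $(K_1, \dotsc, K_n)$ precisely if the first assignment is bijective for all $\ul H$ and $k$, the full composite defines $l$ as the (weak) left Kan extension of $d \of f_0$ along $(J_{11}, \dotsc, J_{nm_n})$ precisely if the composite assignment is, and the two statements are equivalent because the middle assignment is always bijective. The middle assignment is bijective exactly because the padded path $(\phi_1, \dotsc, \phi_n, \id_{H_1}, \dotsc, \id_{H_m})$ is weakly cocartesian: when $\lns{\ul H} = 0$ this is the weak cocartesianness of $(\phi_1, \dotsc, \phi_n)$, which holds since this path is cocartesian; when $\lns{\ul H} \geq 1$ it is an instance of condition~\textup{(b)} of \defref{cocartesian paths}, with empty prepended part, where one uses that the vertical target $f_n$ of $\phi_n$ equals $\id_{A_{nm_n}}$ so that the cartesian cells $H_i(\id, \id) \iso H_i$ occurring there are invertible.

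For the pointwise case, $(\phi_1, \dotsc, \phi_n)$ is now right pointwise cocartesian. I would fix $\map fB{A_{nm_n}}$; by \defref{pointwise cocartesian path} the restriction $J_{nm_n}(\id, f)$ exists if and only if $K_n(\id, f)$ does, and when they do that definition supplies the factorisation $\phi_n'$ of \eqref{pointwise cocartesian factorisation} and guarantees that $(\phi_1, \dotsc, \phi_n')$ is cocartesian. A short calculation with the defining identity of $\phi_n'$ and the associativity axiom then yields the identity $\bigpars{\eta \of (\id_{K_1}, \dotsc, \id_{K_{n'}}, \cart)} \of (\phi_1, \dotsc, \phi_n') = \bigpars{\eta \of (\phi_1, \dotsc, \phi_n)} \of (\id_{J_{11}}, \dotsc, \cart')$, in which $\cart$ and $\cart'$ denote the cartesian cells $K_n(\id, f) \Rar K_n$ and $J_{nm_n}(\id, f) \Rar J_{nm_n}$; in words, restricting $\eta$ along $f$ and then pasting with $(\phi_1, \dotsc, \phi_n')$ is the same as restricting the full composite along $f$. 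Applying the weak and non-weak cases already proved --- now to the cocartesian path $(\phi_1, \dotsc, \phi_n')$ and the cell $\eta \of (\id_{K_1}, \dotsc, \id_{K_{n'}}, \cart)$ --- shows that this restriction of $\eta$ defines $l \of f$ as the (weak) left Kan extension of $d$ along $\bigpars{K_1, \dotsc, K_n(\id, f)}$ if and only if the corresponding restriction of the full composite defines $l \of f$ as the (weak) left Kan extension of $d \of f_0$ along $\bigpars{J_{11}, \dotsc, J_{nm_n}(\id, f)}$. Since the two sides quantify over the same morphisms $f$, by the `exists if and only if' clause, this is exactly the asserted equivalence for pointwise (weak) left Kan extensions.

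The main obstacle is the bookkeeping in the non-weak case: pinning down precisely which instance of condition~\textup{(b)} of \defref{cocartesian paths} delivers weak cocartesianness of the padded path $(\phi_1, \dotsc, \phi_n, \id_{H_1}, \dotsc, \id_{H_m})$ --- and, should that condition be read as requiring a nonempty prepended part, supplementing it with \lemref{pasting lemma for cocartesian paths} --- along with the explicit verification of the displayed identity in the pointwise case. Everything else is a formal consequence of the universal properties, along the lines of \lemref{horizontal pasting lemma}, and I do not expect any genuinely new idea to be needed.
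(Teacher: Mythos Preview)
Your proposal is correct and follows essentially the same approach as the paper: both set up a commutative triangle of assignments between three collections of cells and conclude by observing that the ``precompose with the padded path'' assignment is a bijection because $(\phi_1,\dotsc,\phi_n)$ (or $(\phi_1,\dotsc,\phi_n')$ in the pointwise case) is cocartesian. The paper organises things slightly differently---it treats the pointwise case with general $f$ and trailing $\ul H$ first and obtains the weak and non-weak cases by specialising $f=\id_{A_{nm_n}}$ and $\ul H=()$---whereas you build up from weak to non-weak to pointwise; but the underlying diagram chase is identical. You are in fact more careful than the paper on the one subtle point: the paper simply asserts that cocartesianness makes the right-hand assignment a bijection, while you explicitly identify this as the $p=0$ instance of condition~(b) of \defref{cocartesian paths} and flag the possible need to supplement it with \lemref{pasting lemma for cocartesian paths}.
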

	\begin{proof}
		We will consider in the case of pointwise left Kan extensions; the other cases follow by, in the following, either taking $f = \id_{A_{nm_n}}$ or choosing the path $\ul H$ to be empty. Hence we assume that $(\phi_1, \dotsc, \phi_n)$ is right pointwise cocartesian. Let $\map fB{A_{nm_n}}$ be any morphism and remember from \defref{pointwise cocartesian path} that the restriction $J_{nm_n}(\id, f)$ exists if and only if $K_n(\id, f)$ does. In that case $(\phi_1, \dotsc, \phi_n')$ is cocartesian, where $\phi_n'$ is obtained from $\phi_n$ as in the factorsation \eqref{pointwise cocartesian factorisation}. We consider the following assignments between collections of cells, that are of the form as shown, where $\zeta \dfn \eta \of (\id, \dotsc, \id, \cart)$, with $\cart$ the cartesian cell defining $K_n(\id, f)$, and $\theta \dfn \eta \of (\phi_1, \dotsc, \phi_n) \of (\id, \dotsc, \id, \cart)$, with $\cart$ the cartesian cell defining $J_{nm_n}(\id, f)$. It follows from \eqref{pointwise cocartesian factorisation} that the diagram commutes.
		\begin{displaymath}

		\end{displaymath}
		Now notice that $\eta$ defines $l$ as a pointwise left Kan extension precisely if, for every $\map fB{A_{nm_n}}$, the top assignment is a bijection, while $\eta \of (\phi_1, \dotsc, \phi_n)$ does so precisely if the left assignment is a bijection, for every $\map fB{A_{nm_n}}$. Thus the proof follows from the fact that $(\phi_1, \dotsc, \phi'_n)$ is cocartesian, so that the assignment on the right is a bijection.
	\end{proof}
	
	Applying the vertical pasting lemma to a single horizontal cocartesian cell we obtain the following corollary.
		\begin{corollary} \label{Kan extensions and composites}
		In an augmented virtual double category $\K$ consider a nullary cell $\eta$ as on the left-hand side below.
		\begin{displaymath}
			\begin{tikzpicture}[textbaseline]
				\matrix(m)[math35, yshift=1.625em]{A_0 & A_1 & A_{n'} & A_n \\};
				\draw	([yshift=-3.25em]$(m-1-1)!0.5!(m-1-4)$) node (M) {$M$};
				\path[map]	(m-1-1) edge[barred] node[above] {$J_1$} (m-1-2)
														edge[transform canvas={yshift=-2pt}] node[below left] {$d$} (M)
										(m-1-3) edge[barred] node[above] {$J_n$} (m-1-4)
										(m-1-4) edge[transform canvas={yshift=-2pt}] node[below right] {$l$} (M);
				\path				($(m-1-2.south)!0.5!(m-1-3.south)$) edge[cell] node[right] {$\eta$} (M);
				\draw				($(m-1-2)!0.5!(m-1-3)$) node {$\dotsb$};
										
			\end{tikzpicture} = \begin{tikzpicture}[textbaseline]
				\matrix(m)[math35, yshift=1.625em]{A_0 & A_1 & A_{n'} & A_n \\ A_0 & & & A_n \\};
				\draw	([yshift=-6.5em]$(m-1-1)!0.5!(m-1-4)$) node (M) {$M$};
				\path[map]	(m-1-1) edge[barred] node[above] {$J_1$} (m-1-2)
														
										(m-1-3) edge[barred] node[above] {$J_n$} (m-1-4)
										(m-2-4) edge[transform canvas={yshift=-2pt}] node[below right] {$l$} (M)
										(m-2-1) edge[barred] node[below] {$(J_1 \hc \dotsb \hc J_n)$} (m-2-4)
														edge[transform canvas={yshift=-2pt}] node[below left] {$d$} (M);
				\path				(m-1-1) edge[eq] (m-2-1)
										(m-1-4) edge[eq] (m-2-4);
				\path				($(m-2-2.south)!0.5!(m-2-3.south)$) edge[cell, transform canvas={yshift=-0.25em}] node[right] {$\eta'$} (M);
				\draw				($(m-1-2)!0.5!(m-1-3)$) node {$\dotsb$}
										($(m-1-1)!0.5!(m-2-4)$) node[font=\scriptsize] {$\cocart$};
			\end{tikzpicture}
		\end{displaymath}
		If the composite $(J_1 \hc \dotsb \hc J_n)$ exists then $\eta$ defines $l$ as the (weak) left Kan extension of $d$ along $(J_1, \dotsc, J_n)$ precisely if its factorisation $\eta'$, as shown, defines $l$ as the (weak) left Kan extension of $d$ along $(J_1 \hc \dotsb \hc J_n)$.
		
		Moreover if the cocartesian cell above is right pointwise (\defref{pointwise cocartesian path}), then the analogous result for cells $\eta$ defining $l$ as a pointwise (weak) left Kan extension holds.
	\end{corollary}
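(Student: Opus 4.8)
The statement is an instance of the vertical pasting lemma (\lemref{vertical pasting lemma}) applied to a single horizontal cocartesian cell, so the plan is simply to exhibit $\eta'$, as drawn, as the factorisation that the pasting lemma takes $\eta$ to, and then to read off the conclusion. First I would recall that, by definition of horizontal composite, the cocartesian cell $(J_1, \dotsc, J_n) \Rar (J_1 \hc \dotsb \hc J_n)$ is a cocartesian \emph{path} of length one; thus the hypothesis of \lemref{vertical pasting lemma}, with $n$ replaced by $1$ there, is met: the single-entry path consisting of this cocartesian cell is cocartesian. The factorisation of $\eta$ through this cocartesian cell exists and is unique precisely by the universal property of cocartesian paths (\defref{cocartesian paths}, with the trivial vertical morphisms $f_0 = \id_{A_0}$ and target $l$); this is exactly the cell $\eta'$ in the display.

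Next I would invoke \lemref{vertical pasting lemma} directly: with the cocartesian cell playing the role of ``$(\phi_1, \dotsc, \phi_n)$'' and $\eta'$ playing the role of ``$\eta$'' in that lemma, while $d \of f_0 = d$ since $f_0 = \id_{A_0}$, the lemma states that $\eta'$ defines $l$ as the (weak) left Kan extension of $d$ along $(J_1 \hc \dotsb \hc J_n)$ if and only if the full composite --- which is $\eta$ by the defining identity of $\eta'$ --- defines $l$ as the (weak) left Kan extension of $d$ along $(J_1, \dotsc, J_n)$. This is the first assertion. For the pointwise part, I would note that by \lemref{restrictions and composites} (final clause), or simply by hypothesis, the cocartesian cell is assumed right pointwise cocartesian, so the corresponding clause of \lemref{vertical pasting lemma} --- the one about right pointwise cocartesian paths --- applies verbatim, yielding the pointwise (weak) version.

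The only point requiring a word of care is the bookkeeping of which vertical morphisms are trivial: in \lemref{vertical pasting lemma} the cell $\eta$ has horizontal source $(K_1, \dotsc, K_n)$ and the outer composite has horizontal source $(J_{11}, \dotsc, J_{nm_n})$ with a possibly nontrivial $f_0$; here $f_0 = \id$, the single $K$ is $(J_1 \hc \dotsb \hc J_n)$, and the single inner path $(J_{11}, \dotsc, J_{1m_1})$ is $(J_1, \dotsc, J_n)$, so no genuine composition with a nonidentity vertical morphism occurs. I do not expect any real obstacle: once the cocartesian cell is recognised as a length-one cocartesian path and the correspondence $\eta \leftrightarrow \eta'$ is identified with the factorisation in \lemref{vertical pasting lemma}, the result is immediate. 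Thus the proof is a one-line appeal:

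\begin{proof}
	This is the vertical pasting lemma (\lemref{vertical pasting lemma}) applied to the single-entry path consisting of the cocartesian cell $(J_1, \dotsc, J_n) \Rar (J_1 \hc \dotsb \hc J_n)$, which is a cocartesian path by definition of horizontal composite. Indeed the factorisation $\eta'$ of $\eta$ through this cocartesian cell, as shown, is precisely the cell that \lemref{vertical pasting lemma} relates to $\eta$ (with trivial vertical source $f_0 = \id_{A_0}$), so that $\eta'$ defines $l$ as the (weak) left Kan extension of $d$ along $(J_1 \hc \dotsb \hc J_n)$ if and only if $\eta$ defines $l$ as the (weak) left Kan extension of $d$ along $(J_1, \dotsc, J_n)$. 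If moreover the cocartesian cell is right pointwise cocartesian then the corresponding clause of \lemref{vertical pasting lemma} gives the same equivalence for pointwise (weak) left Kan extensions.
\end{proof}
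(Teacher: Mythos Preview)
Your proposal is correct and takes essentially the same approach as the paper, which simply observes that the corollary follows by applying the vertical pasting lemma (\lemref{vertical pasting lemma}) to the single horizontal cocartesian cell defining $(J_1 \hc \dotsb \hc J_n)$. Your careful bookkeeping of the trivial vertical morphisms and the identification of $\eta'$ as the relevant factorisation spells out precisely what the paper leaves implicit.
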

	
	The following result was used in the proof of \corref{left Kan extensions with unital sources}.
	\begin{proposition} \label{left Kan extensions preserved by restriction}
		Assume that the nullary cell $\eta$ in the composites below defines $l$ as the left Kan extension of $d$ along $(J_1, \dotsc, J_n)$ and let $\map fB{A_n}$ be a morphism whose conjoint exists. The composite on the left defines $l \of f$ as the left Kan extension of $d$ along $(J_1, \dotsc, J_n, f^*)$.
		\begin{displaymath}

		\end{displaymath}
		If the restriction $J_n(\id, f)$ exists as well then the composite on the right defines $l \of f$ as the left Kan extension of $d$ along $\bigpars{J_1, \dotsc, J_n(\id, f)}$.
	\end{proposition}
	\begin{proof}
		The composite on the left above can be rewritten as $\eta \hc (l \of \cart)$ by the axioms for horizontal composition (\lemref{horizontal composition}). Since both $\eta$ and $l \of \cart$ define left Kan extensions, the first by assumption and the latter by \propref{restriction as left Kan extension}, so does their composite by the horizontal pasting lemma.
		
		For the second assertion assume that the restriction $J_n(\id, f)$ exists and is defined by the cartesian cell in the composite above. By \lemref{restrictions and composites} we know that $J_n(\id, f)$ forms the horizontal composite of $J_n$ and $f^*$, whose defining cocartesian cell $\cell\eps{(J_n, f^*)}{J_n(\id, f)}$ is the unique factorisation of the composite $\id_{J_n} \hc \cart$, in the composite on the left, through the cartesian cell defining $J_n(\id, f)$, in the composite on the right. We conclude that the composite on the left factors through that on the right as the path of cells $(\id_{J_1}, \dotsc, \id_{J_{n'}}, \eps)$. Since $\eps$ is cocartesian it follows from the vertical pasting lemma that either composite defines $l \of f$ as a left Kan extension if the other does, from which the proof follows.
	\end{proof}
	
	Similar to the previous result is the following.
	\begin{proposition} \label{Kan extension along conjoints}
		Let $\map fAC$ be a morphism whose conjoint exists. The nullary cell $\eta$ in the composite below defines $l$ as the (pointwise) (weak) left Kan extension of $d \of f$ along $(J_1, \dotsc, J_n)$ precisely if the full composite defines $l$ as the (pointwise) (weak) left Kan extension of $d$ along $(f^*, J_1, \dotsc, J_n)$.
		\begin{displaymath}
			\begin{tikzpicture}
				\matrix(m)[math35, column sep={1.75em,between origins}]
					{ C & & A & & A_1 & & A_{n'} & & A_n \\
						& C & & & & & & & \\};
				\draw				([yshift=-6.5em]$(m-1-1)!0.5!(m-1-9)$)	node (M) {$M$};
				\path[map]	(m-1-1) edge[barred] node[above] {$f^*$} (m-1-3)
										(m-1-3) edge[barred] node[above] {$J_1$} (m-1-5)
														edge[transform canvas={xshift=2pt}]	node[right] {$f$} (m-2-2)
										(m-1-7) edge[barred] node[above] {$J_n$} (m-1-9)
										(m-1-9) edge node[below right] {$l$} (M)
										(m-2-2) edge node[below left] {$d$} (M);
				\path				(m-1-1) edge[eq, transform canvas={xshift=-2pt}]	(m-2-2)
										($(m-1-1.south)!0.5!(m-1-9.south)$) edge[cell, transform canvas={yshift=-1.75em}] node[right] {$\eta$} ($(m-2-1.north)!0.5!(m-2-9.north)$);
				\draw				($(m-1-5)!0.5!(m-1-7)$) node {$\dotsb$}
										([yshift=0.333em]$(m-1-2)!0.5!(m-2-2)$) node[font=\scriptsize] {$\cart$};
			\end{tikzpicture}
		\end{displaymath}
	\end{proposition}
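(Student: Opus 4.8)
The plan is to deduce this from \propref{restriction as left Kan extension} together with the horizontal pasting lemma \lemref{horizontal pasting lemma}, in close parallel with the proof of the first part of \propref{left Kan extensions preserved by restriction} --- of which the present statement is, roughly, the ``left\nobreakdash-handed'' counterpart. First I would unwind the composite displayed in the statement: writing $\cell\cart{f^*}C$ for the nullary cartesian cell defining the conjoint $f^*$ of $\map fAC$, the full composite equals $(d \of \cart) \hc \eta$, where $d \of \cart$ denotes the whisker of $\map dCM$ onto $\cart$, that is a nullary cell $f^* \Rar M$ with vertical legs $d$ and $d \of f$. This identification is immediate from the definitions of the whisker notation and of horizontal composition $\hc$; see the discussion preceding \lemref{horizontal composition}.

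Next I would recall that, by \propref{restriction as left Kan extension} applied to the vertical morphism $f$ itself, the cartesian cell $\cart$ defines $f$ as the \emph{absolute} pointwise left Kan extension of $\id_C$ along $f^*$. Invoking absoluteness for the vertical morphism $\map dCM$, it follows that $d \of \cart$ defines $d \of f$ as the absolute pointwise left Kan extension of $d \of \id_C = d$ along the single horizontal morphism $f^*$; in particular, by \lemref{properties of pointwise left Kan extensions}\,(a), $d \of \cart$ defines $d \of f$ as a (plain) left Kan extension of $d$ along $f^*$.

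Finally I would apply the horizontal pasting lemma \lemref{horizontal pasting lemma} to the horizontally composable pair $(d \of \cart, \eta)$, whose first cell $d \of \cart$ defines $d \of f$ as the left Kan extension of $d$ along $f^*$ by the previous paragraph. The lemma then yields that $(d \of \cart) \hc \eta$ defines $l$ as the (pointwise) (weak) left Kan extension of $d$ along $(f^*, J_1, \dotsc, J_n)$ if and only if $\eta$ defines $l$ as the (pointwise) (weak) left Kan extension of $d \of f$ along $(J_1, \dotsc, J_n)$, all four variants being covered by the ``(weak)'' and the ``pointwise'' clauses of that lemma. I do not expect a genuine obstacle: the argument is short, and the only points that require care are the correct matching of variances when invoking \propref{restriction as left Kan extension} (with ``$j$'' there taken to be our $f$) and the verification that the displayed composite is literally $(d \of \cart) \hc \eta$. (One could also argue directly, building the inverse factorisation by hand from the conjoint identities of the horizontal dual of \lemref{companion identities lemma} and the interchange axioms of \lemref{horizontal composition}, much as in the proof of \propref{restriction as left Kan extension}; but the pasting-lemma route is cleaner and reuses existing infrastructure.)
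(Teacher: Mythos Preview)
Your proposal is correct and follows essentially the same approach as the paper: rewrite the displayed composite as $(d \of \cart) \hc \eta$ via the axioms for horizontal composition, invoke \propref{restriction as left Kan extension} (with absoluteness applied to $d$) so that $d \of \cart$ defines $d \of f$ as a left Kan extension of $d$ along $f^*$, and then conclude by the horizontal pasting lemma \lemref{horizontal pasting lemma}. The paper's proof is a single sentence to this effect; your version simply spells out the same steps in more detail.
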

	\begin{proof}
		The composite above can be rewritten as $(d \of \cart) \hc \eta$ by the axioms for horizontal composition, so that the proof follows from \propref{restriction as left Kan extension} and the horizontal pasting lemma.
	\end{proof}

	The following three propositions generalise well known results for $2$-categories and enriched categories. The first of these generalises Proposition~22 of \cite{Street74b} (for 2-categories) and Proposition~4.23 of \cite{Kelly82} (for enriched categories); see also Proposition~14 of \cite{Wood82} for the analogous result in proarrow equipments. Remember that a vertical morphism $\map fAC$ is called full and faithful (\defref{full and faithful morphism}) whenever the restriction $C(f, f)$ exists and the identity cell $\id_f$ is cartesian.
	\begin{proposition} \label{pointwise left Kan extension along full and faithful map}
		Let $\map f{A_n}B$ be a full and faithful morphism whose companion $f_*$ exists and assume that the nullary cell $\eta$ in the composite below defines $l$ as the pointwise (weak) left Kan extension of $d$ along $(J_1, \dotsc, J_n, f_*)$. The full composite defines $l \of f$ as the (weak) left Kan extension of $d$ along $(J_1, \dotsc, J_n)$; in particular if $n = 0$ then the composite, in that case a vertical cell, is invertible.
		\begin{displaymath}
			\begin{tikzpicture}
				\matrix(m)[math35, column sep={1.75em,between origins}]
					{	& A_0 & & A_1 & & A_{n'} & & A_n & \\
						A_0 & & A_1 & & A_{n'} & & A_n & & B \\};
				\draw				([yshift=-3.25em]$(m-2-1)!0.5!(m-2-9)$) node (M) {$M$};
				\path[map]	(m-1-2) edge[barred] node[above] {$J_1$} (m-1-4)
										(m-1-6) edge[barred] node[above] {$J_n$} (m-1-8)
										(m-1-8) edge[transform canvas={xshift=2pt}]	node[right] {$f$} (m-2-9)
										(m-2-1) edge[barred] node[above] {$J_1$} (m-2-3)
														edge[transform canvas={yshift=-2pt}] node[below left] {$d$} (M)
										(m-2-5) edge[barred] node[above] {$J_n$} (m-2-7)
										(m-2-7) edge[barred] node[below, inner sep=2pt] {$f_*$} (m-2-9)
										(m-2-9) edge[transform canvas={yshift=-2pt}] node[below right] {$l$} (M);
				\draw				([xshift=-2pt]$(m-1-4)!0.5!(m-2-5)$) node {$\dotsb$}
										([yshift=-0.5em]$(m-1-8)!0.5!(m-2-8)$) node[font=\scriptsize] {$\cocart$};
				\path				($(m-2-1.south)!0.5!(m-2-9.south)$) edge[cell] node[right] {$\eta$} (M)
										(m-1-2) edge[eq, transform canvas={xshift=-2pt}] (m-2-1)
										(m-1-4) edge[eq, transform canvas={xshift=-2pt}] (m-2-3)
										(m-1-6) edge[eq, transform canvas={xshift=-2pt}] (m-2-5)
										(m-1-8)	edge[eq, transform canvas={xshift=-2pt}] (m-2-7);
			\end{tikzpicture}
		\end{displaymath}
	\end{proposition}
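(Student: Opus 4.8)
The plan is to trade, using the pointwise hypothesis on $\eta$, the last leg $f_*$ of the path for its restriction $f_*(\id, f)$, then to recognise this restriction as the horizontal unit $I_{A_n}$ using that $f$ is full and faithful, and finally to discard the resulting unit leg via the vertical pasting lemma.

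First I would make sense of the restriction $f_*(\id, f)$. Since $f$ is full and faithful, the restriction $B(f, f)$ exists by \defref{full and faithful morphism}; factoring the cartesian cell $B(f, f) \Rar B$ through the cartesian cell $f_* \Rar B$ defining $f_*$ gives, by the pasting lemma \lemref{pasting lemma for cartesian cells}, a cartesian cell $\cell c{B(f, f)}{f_*}$ exhibiting $B(f, f)$ as $f_*(\id, f)$. Then \defref{pointwise left Kan extension}, applied to the last leg $f_*$ of the horizontal source of $\eta$ and to the morphism $f \colon A_n \to B$, shows that the whiskered composite $\eta_1 \coloneqq \eta \of (\id_{J_1}, \dotsc, \id_{J_n}, c)$ defines $l \of f$ as the (weak) left Kan extension of $d$ along $(J_1, \dotsc, J_n, B(f, f))$.

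Next I would identify $B(f, f)$ with the horizontal unit of $A_n$. By \corref{unit in terms of full and faithful map}, the full‑and‑faithfulness of $f$ is equivalent to the factorisation $\cell{\id_f'}{(A_n)}{B(f, f)}$ of $\id_f$ being cocartesian; since a cocartesian cell with nullary horizontal source defines a horizontal unit, $B(f, f) = I_{A_n}$ and $A_n$ is unital. Moreover $c \of \id_f'$ and the weakly cocartesian cell $\phi \colon (A_n) \Rar f_*$ of \lemref{companion identities lemma} are both factorisations of $\id_f$ through the cartesian cell defining $f_*$, hence coincide by uniqueness of such factorisations. Consequently the composite appearing in the statement, which by the interchange axioms equals $\eta \of (\id_{J_1}, \dotsc, \id_{J_n}, \phi)$, is in turn equal to $\eta_1 \of (\id_{J_1}, \dotsc, \id_{J_n}, \id_f')$.

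Finally I would apply the vertical pasting lemma. The $(n+1)$-cell path $(\id_{J_1}, \dotsc, \id_{J_n}, \id_f')$ is cocartesian — prepending identity cells to the cocartesian unit cell $\id_f'$ leaves cocartesianness intact — and the concatenation of its horizontal targets is exactly the horizontal source $(J_1, \dotsc, J_n, B(f, f)) = (J_1, \dotsc, J_n, I_{A_n})$ of $\eta_1$. Hence \lemref{vertical pasting lemma} shows that $\eta_1 \of (\id_{J_1}, \dotsc, \id_{J_n}, \id_f')$ — that is, the composite in the statement — defines $l \of f$ as the (weak) left Kan extension of $d$ along $(J_1, \dotsc, J_n)$; for $n = 0$ this is exactly the assertion that the resulting vertical cell $d \Rar l \of f$ is invertible, since a (weak) left Kan extension along the empty path is nothing but an isomorphism. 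The main obstacle I anticipate is the bookkeeping in the last two steps: matching the whiskered composites so that the composite displayed in the statement is genuinely recognised as $\eta_1 \of (\id_{J_1}, \dotsc, \id_{J_n}, \id_f')$, and confirming that the identity‑plus‑unit path is cocartesian in the full sense of \defref{cocartesian paths} — the latter being the unit‑coherence fact that appending a horizontal unit to the source of a Kan extension changes nothing, which should follow from \lemref{pasting lemma for cocartesian paths}. The conceptual steps themselves are immediate once those identifications are in hand.
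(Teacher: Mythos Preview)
Your proposal is correct and follows essentially the same approach as the paper's proof: factor the weakly cocartesian cell defining $f_*$ as the cocartesian cell $\id_f'$ (which exhibits $B(f,f)$ as the unit $I_{A_n}$ by \corref{unit in terms of full and faithful map}) followed by the cartesian cell $c \colon B(f,f) \Rar f_*$, use the pointwise hypothesis on $\eta$ to see that $\eta \of (\id_{J_1}, \dotsc, \id_{J_n}, c)$ defines a (weak) left Kan extension, and then apply the vertical pasting lemma with the cocartesian cell $\id_f'$; the $n=0$ case is \lemref{vertical cells defining left Kan extensions}. Your caution about the path $(\id_{J_1}, \dotsc, \id_{J_n}, \id_f')$ being cocartesian in the sense of \defref{cocartesian paths} is warranted but routine, and the paper leaves this implicit as well.
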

	\begin{proof}
		By applying \corref{unit in terms of full and faithful map} to $\map f{A_n}B$,  and by factorising the identity considered there through the cartesian cell that defines $f_*$, we find that the weakly cocartesian cell in the composite above equals the composite below where $\id_f'$, which is the factorisation of $\id_f$ through $B(f, f)$, is cocartesian. That $\eta$ composed with the cartesian cell below defines $l \of f$ as the (weak) left Kan extension of $d$ along $\bigpars{J_1, \dotsc, J_n, B(f, f)}$ is by definition (\defref{pointwise left Kan extension}). That the resulting composite $\eta \of (\id_{J_1}, \dotsc, \id_{J_n}, \cart)$, after being composed with $\id_f'$, again defines $l \of f$ as a (weak) left Kan extension follows from the vertical pasting lemma. Since the resulting composite coincides with the composite above, this proves the main assertion.
		\begin{displaymath}
			\begin{tikzpicture}
				\matrix(m)[math35, column sep={1.75em,between origins}]{& A_n & \\ A_n & & A_n \\ A_n & & B \\};
				\path[map]	(m-2-1) edge[barred] node[below] {$B(f, f)$} (m-2-3)
										(m-2-3) edge node[right] {$f$} (m-3-3)
										(m-3-1) edge[barred] node[below] {$f_*$} (m-3-3);
				\path				(m-1-2) edge[transform canvas={xshift=-2pt}, eq] (m-2-1)
														edge[transform canvas={xshift=2pt},eq] (m-2-3)
														edge[transform canvas={shift={(-0.5em,-0.5em)}}, cell] node[right, inner sep=2pt] {$\id_f'$} (m-2-2)
										(m-2-1) edge[eq] (m-3-1);
				\draw[font=\scriptsize]	([yshift=-0.25em]$(m-2-1)!0.5!(m-3-3)$) node {$\cart$};
			\end{tikzpicture}
		\end{displaymath}
		The final assertion, for the case $n = 0$, follows immediately from the easy lemma below.
	\end{proof}
	
	\begin{lemma} \label{vertical cells defining left Kan extensions}
		A vertical cell defines a (weak) left Kan extension if and only if it is invertible.
	\end{lemma}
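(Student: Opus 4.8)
The plan is to analyze what the universal property of a vertical left Kan extension says when the horizontal source is empty. A vertical cell $\eta$ defining $l$ as the weak left Kan extension of $d$ along the empty path $(A)$ means that any vertical cell $\phi\colon d \Rar k$ (for varying $k\colon A \to M$) factors uniquely through $\eta$ as a vertical cell $\phi'\colon l \Rar k$. In other words, postcomposition with $\eta$ induces a bijection $V(\K)(A,M)(l,k) \to V(\K)(A,M)(d,k)$ natural in $k$ — that is, $\eta$ exhibits $l$ as isomorphic to $d$ representably in the hom-category $V(\K)(A,M)$, via a Yoneda argument.

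First I would spell out the ``if'' direction: suppose $\eta\colon d \Rar l$ is an invertible vertical cell. Given any vertical cell $\phi\colon d \Rar k$, the only candidate factorisation is $\phi' \coloneqq \phi \of \inv\eta$ (using vertical composition of vertical cells in $V(\K)$), and $\phi' \of \eta = \phi \of \inv\eta \of \eta = \phi$; uniqueness is immediate since $\eta$ is invertible and $\psi \of \eta = \phi' \of \eta$ forces $\psi = \phi'$ by cancelling $\eta$. Hence $\eta$ defines $l$ as the weak left Kan extension of $d$ along $(A)$.

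Next, the ``only if'' direction: suppose $\eta\colon d \Rar l$ defines $l$ as a weak left Kan extension of $d$ along $(A)$. Apply the universal property to the vertical cell $\id_d\colon d \Rar d$ itself (taking $k = d$): there is a unique vertical cell $\theta\colon l \Rar d$ with $\theta \of \eta = \id_d$. It remains to check $\eta \of \theta = \id_l$. For this, note $\eta \of \theta \of \eta = \eta \of \id_d = \eta = \id_l \of \eta$; both $\eta \of \theta$ and $\id_l$ are vertical cells $l \Rar l$ that factor $\eta$ through $\eta$, so by the uniqueness clause of the universal property (applied with $k = M$'s relevant morphism being $l$, i.e.\ $\phi = \eta$) we get $\eta \of \theta = \id_l$. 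Thus $\eta$ is invertible with inverse $\theta$.

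I do not expect a real obstacle here — the result is genuinely the elementary observation that a representing object for a representable functor on a hom-category is unique up to the canonical isomorphism, specialised to nullary Kan extensions. The one point requiring a little care is that the ``weak'' in the statement is the right level of generality: the argument only uses factorisations of \emph{nullary} cells with empty horizontal source, so it applies verbatim to weak left Kan extensions (and a fortiori to left Kan extensions and pointwise ones, since those are special cases), and no composition with nontrivial horizontal morphisms is ever needed. I would phrase the whole proof in two or three sentences invoking the universal property with the test cell $\id_d$.
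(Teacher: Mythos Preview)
Your proof is correct and is precisely the elementary argument the paper has in mind; the paper states this lemma without proof, calling it ``easy'', and your representability/Yoneda-style argument using the test cell $\id_d$ is the standard way to unpack it.
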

	
	When applied in the augmented virtual double categories $Q(\mathcal C)$, of quintets (see \defref{quintets}), or $\enProf\V$, of $\V$"/profunctors (see \exref{enriched profunctors}), the following result reduces to the classical description of (enriched) right adjoints as (enriched) left Kan extensions, see Proposition~2 of \cite{Street-Walters78} and Theorem~4.81 of \cite{Kelly82} respectively. Consider a nullary cell $\eta$, as in \defref{weak left Kan extension}, that defines a morphism $\map lBM$ as a (pointwise) (weak) left Kan extension. We say that $\map gMN$ \emph{preserves $l$} if the composite $g \of \eta$ defines $g \of l$ as a (pointwise) (weak) left Kan extension.
	\begin{proposition} \label{adjunctions in terms of left Kan extensions}
		Let $\map fAC$ be a morphism whose companion $f_*$ exists. For a factorisation 
		\begin{displaymath}
			\begin{tikzpicture}[textbaseline]
				\matrix(m)[math35, column sep={1.75em,between origins}]{& A & \\ & & C \\ & A & \\};
				\path[map]	(m-1-2) edge[bend left = 18] node[right] {$f$} (m-2-3)
										(m-2-3) edge[bend left = 18] node[right] {$g$} (m-3-2);
				\path				(m-1-2) edge[bend right = 45, eq] (m-3-2);
				\path[transform canvas={yshift=-1.625em}]	(m-1-2) edge[cell] node[right] {$\eta$} (m-2-2);
			\end{tikzpicture} = \begin{tikzpicture}[textbaseline]
    		\matrix(m)[math35, column sep={1.75em,between origins}]{& A & \\ A & & C \\ & A & \\};
    		\path[map]	(m-1-2) edge[transform canvas={xshift=2pt}] node[right] {$f$} (m-2-3)
    								(m-2-1) edge[barred] node[below, inner sep=2pt] {$f_*$} (m-2-3)
    								(m-2-3)	edge[transform canvas={xshift=2pt}] node[right] {$g$} (m-3-2);
    		\path				(m-1-2) edge[eq, transform canvas={xshift=-2pt}] (m-2-1)
    								(m-2-1) edge[eq, transform canvas={xshift=-2pt}] (m-3-2);
    		\path				(m-2-2) edge[cell, transform canvas={yshift=0.1em}]	node[right, inner sep=3pt] {$\eta'$} (m-3-2);
    		\draw				([yshift=-0.5em]$(m-1-2)!0.5!(m-2-2)$) node[font=\scriptsize] {$\cocart$};
  		\end{tikzpicture}
		\end{displaymath}
		the following are equivalent:
		\begin{enumerate}[label=\textup{(\alph*)}]
			\item $\eta$ is the unit of an adjunction $f \ladj g$;
			\item $\eta'$ is cartesian;
			\item $\eta'$ defines $g$ as the weak left Kan extension of $f$ along $\id_A$, which is preserved by $f$;
			\item $\eta'$ defines $g$ as the absolute pointwise left Kan extension of $f$ along $\id_A$.
		\end{enumerate}
		Furthermore, under these conditions and assuming that the restriction $C(f, f)$ exists, $f$ is full and faithful if and only if $\eta$ is invertible.
	\end{proposition}
	\begin{proof}
		We have (a) $\Rightarrow$ (b) by \lemref{adjunctions} and (b) $\Rightarrow$ (d) by \propref{restriction as left Kan extension} while clearly (d) $\Rightarrow$ (c), so that (c) $\Rightarrow$ (a) remains. Assuming (c) we consider the factorisation $\eps$ below.
		\begin{displaymath}
			\begin{tikzpicture}[textbaseline]
				\matrix(m)[math35, column sep={1.75em,between origins}]{A & & C \\ & C & \\};
				\path[map]	(m-1-1) edge[barred] node[above] {$f_*$} (m-1-3)
														edge[transform canvas={xshift=-1pt}] node[left] {$f$} (m-2-2);
				\path				(m-1-3) edge[transform canvas={xshift=2pt}, eq] (m-2-2);
				\draw[font=\scriptsize]	([yshift=0.333em]$(m-1-2)!0.5!(m-2-2)$) node {$\cart$};
			\end{tikzpicture} = \begin{tikzpicture}[textbaseline]
				\matrix(m)[math35, column sep={1.75em,between origins}]{A & & C & \\ & A & & \\ & & C & \\ };
				\path[map]	(m-1-1) edge[barred] node[above] {$f_*$} (m-1-3)
										(m-1-3) edge[transform canvas={xshift=2pt}] node[right] {$g$} (m-2-2)
										(m-2-2) edge[bend right=18] node[below left] {$f$} (m-3-3);
				\path				(m-1-1) edge[eq, transform canvas={xshift=-2pt}] (m-2-2)
										(m-1-3) edge[eq, bend left=45] (m-3-3)
										(m-1-3) edge[cell, transform canvas={yshift=-1.625em}] node[right] {$\eps$} (m-2-3)
										(m-1-2) edge[cell, transform canvas={yshift=0.1em}]	node[right, inner sep=3pt] {$\eta'$} (m-2-2);
			\end{tikzpicture}
		\end{displaymath}
		We claim that $\eta$ and $\eps$ satisfy the triangle identities, thus proving (a). The identity $(f \of \eta) \hc (\eps \of f) = \id_f$ is obtained by precomposing the identity above with the cocartesian cell defining $f_*$. The other triangle identity follows from
		\begin{displaymath}
			\eta' \hc (\eta \of g) \hc (g \of \eps) = \eta \hc (f \of \eta') \hc \eps = (\eta' \of \cocart) \hc (g \of \cart) = \eta'
		\end{displaymath}
		together with the fact that factorisations through $\eta'$ are unique.
		
		In the final assertion `only if' follows from \propref{pointwise left Kan extension along full and faithful map}. The `if'-part follows from \lemref{horizontal composition with (co-)units preserves nullary cartesian cells}: the invertible cell $\eta$ is cartesian so that, composing with the counit $\cell\eps{f \of g}{\id_C}$, the identity cell $\id_f = (f \of \eta) \hc (\eps \of f)$ is cartesian too.
	\end{proof}

	The following proposition too is well known for $2$-categories (see e.g.\ Proposition~2.19(1) of \cite{Weber07}) and enriched functors (see e.g.\ Section 4.1 of \cite{Kelly82}). We say that a vertical morphism $\map fMN$ is \emph{cocontinuous} if, for any nullary cell
	\begin{displaymath}
		\begin{tikzpicture}
			\matrix(m)[math35]{A_0 & A_1 & A_{n'} & A_n \\};
			\draw	([yshift=-3.25em]$(m-1-1)!0.5!(m-1-4)$) node (M) {$M$};
			\path[map]	(m-1-1) edge[barred] node[above] {$J_1$} (m-1-2)
													edge[transform canvas={yshift=-2pt}] node[below left] {$d$} (M)
									(m-1-3) edge[barred] node[above] {$J_n$} (m-1-4)
									(m-1-4) edge[transform canvas={yshift=-2pt}] node[below right] {$l$} (M);
			\path				($(m-1-2.south)!0.5!(m-1-3.south)$) edge[cell] node[right] {$\zeta$} (M);
			\draw				($(m-1-2)!0.5!(m-1-3)$) node {$\dotsb$};
		\end{tikzpicture}
	\end{displaymath}
	that defines $l$ as the left Kan extension of $d$ along $(J_1, \dotsc, J_n)$, the composite $f \of \zeta$ defines $f \of l$ as the left Kan extension of $f \of d$ along $(J_1, \dotsc, J_n)$. Notice that if $l$ is a pointwise left Kan extension then $f \of l$ is again pointwise.
	\begin{proposition} \label{left adjoints preserve pointwise left Kan extensions}
		Left adjoints are cocontinuous.
	\end{proposition}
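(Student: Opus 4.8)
The plan is to mimic the classical $2$-categorical argument, transporting the universal property of $\zeta$ along the hom-bijections induced by the adjunction. Fix a right adjoint $\map gNM$ of $f$ in $V(\K)$, with unit $\nat{\bar\eta}{\id_M}{g \of f}$ and counit $\nat{\bar\eps}{f \of g}{\id_N}$ satisfying the triangle identities, and let $\cell\zeta{(J_1, \dotsc, J_n)}M$ define $l$ as the left Kan extension of $d$ along $(J_1, \dotsc, J_n)$. By \defref{left Kan extension} it suffices to show that, for every path $\hmap{(H_1, \dotsc, H_m)}{A_n}{B_m}$ and every $\map k{B_m}N$, the assignment $\psi' \mapsto (f \of \zeta) \hc \psi'$, from nullary cells $\cell{\psi'}{(H_1, \dotsc, H_m)}N$ with vertical source $f \of l$ and target $k$ to nullary cells $\cell\phi{(J_1, \dotsc, J_n, H_1, \dotsc, H_m)}N$ with vertical source $f \of d$ and target $k$, is a bijection.

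The key step is to produce the two `transpose' operations. On the one hand, whiskering by $g$ on the bottom and then $\hc$-precomposing by the vertical cell $\bar\eta \of l$ gives an assignment $\psi' \mapsto (\bar\eta \of l) \hc (g \of \psi')$ from the cells $\psi'$ above to nullary cells $(H_1, \dotsc, H_m) \Rightarrow M$ with vertical source $l$ and target $g \of k$; likewise $\phi \mapsto (\bar\eta \of d) \hc (g \of \phi)$ on the longer horizontal source. Dually, whiskering by $f$ and $\hc$-postcomposing by $\bar\eps \of k$ gives assignments in the other direction. I would first check, using the interchange axioms (\lemref{horizontal composition}) together with the triangle identities for $\bar\eta$ and $\bar\eps$, that these are mutually inverse bijections --- this is the formal analogue, parametrised by the cell $\psi'$ resp.\ $\phi$, of the statement that the hom-bijection of an adjunction is indeed a bijection. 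Secondly I would check that the square of assignments thus obtained is compatible with $\hc$-precomposition, in the sense that the transpose of $(f \of \zeta) \hc \psi'$ equals $\zeta \hc (\text{transpose of }\psi')$; this amounts to the identity $(\bar\eta \of d) \hc (g \of f \of \zeta) = \zeta \hc (\bar\eta \of l)$, i.e.\ the naturality of $\bar\eta$ with respect to the nullary cell $\zeta$, again an instance of the interchange axioms. Both verifications are routine bookkeeping; the only real care needed is keeping track of which composition ($\of$ or $\hc$) realises each whiskering, and at which end.

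Granting these, the proof concludes formally. Given $\phi$ as above, let $\bar\phi$ be its transpose, a nullary cell $(J_1, \dotsc, J_n, H_1, \dotsc, H_m) \Rightarrow M$ with vertical source $d$ and target $g \of k$; by the universal property of $\zeta$ it factors uniquely as $\bar\phi = \zeta \hc \psi$ with $\cell\psi{(H_1, \dotsc, H_m)}M$ of vertical source $l$ and target $g \of k$. Then the back-transpose $\psi'$ of $\psi$ satisfies $(f \of \zeta) \hc \psi' = \phi$ by the compatibility of the square, and it is the unique such cell since the transpose is a bijection and factorisations through $\zeta$ are unique. Thus $f \of \zeta$ defines $f \of l$ as the left Kan extension of $f \of d$ along $(J_1, \dotsc, J_n)$. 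The remark preceding the statement --- that $f \of l$ is again pointwise when $l$ is --- then follows by applying what we have just proved to the restricted cells $\zeta \of (\id_{J_1}, \dotsc, \id_{J_{n'}}, \cart)$ of \defref{pointwise left Kan extension}, using that $f \of \bigpars{\zeta \of (\id, \dotsc, \cart)} = (f \of \zeta) \of (\id, \dotsc, \cart)$.

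The main obstacle, as indicated, is the verification that the two transpose operations are mutually inverse bijections and are compatible with $\hc$-precomposition by $\zeta$; once that bit of cell-calculus is in place the argument is a transcription of the classical $2$-categorical proof that left adjoints preserve left Kan extensions.
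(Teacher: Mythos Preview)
Your proposal is correct and follows essentially the same approach as the paper: both set up a commuting square of assignments whose vertical maps are the transpose bijections $\psi' \mapsto (\bar\eta \of l) \hc (g \of \psi')$ and $\phi \mapsto (\bar\eta \of d) \hc (g \of \phi)$, and whose horizontal maps are $\hc$-precomposition by $f \of \zeta$ and by $\zeta$ respectively; the paper simply draws this square and invokes the triangle identities to invert the vertical maps, exactly as you outline. Your explicit treatment of the pointwise case is a small addition, the paper only remarks on it without proof.
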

	\begin{proof}
		Let $\map fMN$ be left adjoint to $\map gNM$, with vertical cells $\cell\eta{\id_M}{g\of f}$ and $\cell\eps{f \of g}{\id_N}$ forming the unit and counit. Consider any nullary cell $\zeta$ as above, that defines $l$ as a left Kan extension; we have to show that $f \of \zeta$ defines $f \of l$ as a left Kan extension. To this end consider the commuting diagram of assignments between collections of cells below.
		\begin{displaymath}

		\end{displaymath}
		The vertically drawn assignments above are bijections. Indeed, the triangle identities for $\eta$ and $\eps$ imply that their inverses are given by composition with $\eps$ on the right. The bottom assignment is a bijection as well, because $\zeta$ defines $l$ as a left Kan extension. We conclude that the top assignment is a bijection too, showing that $f \of \zeta$ defines $f \of l$ as left Kan extension as required.
	\end{proof}
	
	The following result shows that the condition of \defref{left Kan extension} can be simplified in an double category. Combined with \corref{Kan extensions and composites} it follows that the latter, when considered in a double category, coincides with Definition 3.10 of \cite{Koudenburg14a}.
	\begin{proposition}	\label{left Kan extensions in double categories}
		When considered in a double category the condition of \defref{left Kan extension} can be reduced to the existence of unique factorisations of nullary cells of the form $\cell\phi{(J_1, \dotsc, J_n, H)}M$.
	\end{proposition}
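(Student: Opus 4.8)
The ``only if''-direction is trivial: the condition of \defref{left Kan extension} contains, as the special case $m = 1$, the unique factorisation of every nullary cell $\cell\phi{(J_1, \dotsc, J_n, H)}M$ through $\eta$.

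For the converse, suppose $\eta$ is a nullary cell as in \defref{left Kan extension} through which every nullary cell $\cell\phi{(J_1, \dotsc, J_n, H)}M$ factors uniquely; we must recover the full universal property, that is, unique factorisations of all nullary cells $\cell\phi{(J_1, \dotsc, J_n, H_1, \dotsc, H_m)}M$ with $m \geq 0$. The plan is to collapse the path $(H_1, \dotsc, H_m)$ to a single horizontal morphism: write $H$ for the horizontal composite $(H_1 \hc \dotsb \hc H_m)$ when $m \geq 1$, and $H \coloneqq I_{A_n}$ when $m = 0$. Since $\K$ is a double category $H$ exists in either case, and we let $\gamma$ be the cocartesian cell $(H_1, \dotsc, H_m) \Rightarrow H$ defining it (the cocartesian cell defining the unit when $m = 0$). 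First I would check that the left-padded path $(\id_{J_1}, \dotsc, \id_{J_n}, \gamma)$ is again weakly cocartesian; precomposition with it then yields a bijection between nullary cells $(J_1, \dotsc, J_n, H_1, \dotsc, H_m) \Rightarrow M$ with vertical source $d$ and nullary cells $(J_1, \dotsc, J_n, H) \Rightarrow M$ with vertical source $d$, while precomposition with $\gamma$ alone yields a bijection between nullary cells with vertical source $l$ out of $(H_1, \dotsc, H_m)$ and those out of $(H)$. By the interchange axioms of \lemref{horizontal composition} these two bijections intertwine the operation $\eta \hc (\dash)$, so the hypothesis (the $m = 1$ case) transports to the required unique-factorisation property for $\cell\phi{(J_1, \dotsc, J_n, H_1, \dotsc, H_m)}M$. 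The case $m = 0$ is the instance $H = I_{A_n}$, where one uses in addition that vertical cells $l \Rightarrow k$ correspond, via the cocartesian cell defining $I_{A_n}$, to nullary cells $(I_{A_n}) \Rightarrow M$ with the given vertical source and target (\lemref{unit identities}).

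The main obstacle is the coherence bookkeeping around the claim that the padded path $(\id_{J_1}, \dotsc, \id_{J_n}, \gamma)$ is weakly cocartesian, together with the routine re-association of the nested horizontal and vertical composites. For this I would invoke condition (b) in \defref{cocartesian paths} applied to the cocartesian cell $\gamma$ (which handles prepending the $J_i$ via the essentially-identity cartesian cells $J_i(\id, \id) \iso J_i$), together with the coherence of horizontal composites and units recorded in the discussion after \lemref{pasting lemma for cocartesian paths}; a minor wrinkle is that condition (b) formally also requires a nonempty right-hand padding, which can be absorbed afterwards using the coherence isomorphisms $(H \hc I_B) \iso H$ and $(H_m \hc I_B) \iso H_m$. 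None of these steps is deep, but they must be carried out with some care.
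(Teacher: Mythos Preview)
Your proof is correct and follows essentially the same route as the paper: both arguments collapse the path $(H_1,\dotsc,H_m)$ into a single horizontal morphism via its cocartesian cell and then transport the unique-factorisation property along the resulting bijections, using the commutative square whose horizontal maps are $\eta \hc (\dash)$ and whose vertical maps are precomposition with the (padded) cocartesian path. The paper packages this as the slightly more general \lemref{unique factorisations factorised through a cocartesian cell on the right}, which allows the cocartesian path to have nontrivial vertical boundary (and hence involves the cartesian cell defining $J_n(\id,f_0)$); this extra generality is used later in the proof of \thmref{pointwise Kan extensions in terms of pointwise weak Kan extensions}, but for the present proposition the instance $f_0=\id$ reduces exactly to your argument. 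Your worry about the left padding $(\id_{J_1},\dotsc,\id_{J_n},\gamma)$ is handled in the paper simply by invoking that $\gamma$ is \emph{cocartesian} rather than merely weakly cocartesian: condition~(b) of \defref{cocartesian paths} is precisely the statement that such paddings remain weakly cocartesian, so no further coherence bookkeeping is required beyond what you sketch.
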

	
	We shall prove a more general result instead, in the form of the following lemma; applying it to the cocartesian cells that define composites of paths $(H_1, \dotsc, H_m)$, as considered in \defref{left Kan extension}, proves the proposition above.
	\begin{lemma} \label{unique factorisations factorised through a cocartesian cell on the right}
		Consider a path of cells
		\begin{displaymath}

		\end{displaymath}
		Now the vertical assignments are bijective, because the path $(\phi_1, \dotsc, \phi_m)$ is cocartesian, so that the proof follows from the fact that unique factorisation through the cell $\eta$ is, by definition, the top assignment being bijective, while unique factorisation through the composite $\eta \of (\id, \dotsc, \cart)$ is the bottom one being bijective.
	\end{proof}
	
	\subsection{Pointwise Kan extensions in \texorpdfstring{$\enProf{(\V, \V')}$}{(V, V')-Prof}}\label{Kan extensions in (V, V')-Prof}
	As an example, here we study pointwise left Kan extensions in the augmented virtual equipment $\enProf{(\V, \V')}$, of $\V$-profunctors between $\V'$-categories (see \exref{(V, V')-Prof}), and show that they can be described in terms of (generalised) `$\V$-weighted colimits', as one would expect. The results of this section are straightforward generalisations of those in Section 2.21 of \cite{Koudenburg15a}.
	
	In the case of a closed symmetric monoidal category $\V$ the notion of Kan extension along $\V$-functors evolved as follows. Firstly Day and Kelly gave a definition for $\V$-functors into a cocomplete $\V$-category in \cite{Day-Kelly69}, which was extended to $\V$"/functors into a copowered $\V$-category by Dubuc \cite{Dubuc70}, and later to $\V$-functors into a general $\V$-category by Borceux and Kelly \cite{Borceux-Kelly75}. Following this, \cite{Kelly82} describes how to remove the assumption of a closed structure on $\V$, by considering a universe enlargement $\V \to \V'$ with $\V'$ instead being closed symmetric monoidal.

	Throughout this subsection let $\V \to \V'$ be a universe enlargement (\defref{universe enlargement}); remember that we assume only a monoidal structure on $\V$, and a closed monoidal structure on $\V'$. Recall that we write $I$ for the unit $\V$-category, with the unit $I = I(*, *)$ of $\V$ as its single hom-object, and that we regard $I$ as the unit $\V'$"/category as well. We identify $\V'$-functors $I \to A$ with objects in $A$ and $\V$-profunctors $I \brar I$ with $\V$-objects; cells between such profunctors are identified with $\V$-maps. Notice that the horizontal composite $(H_1 \hc \dotsb \hc H_n)$ of $\V$-profunctors $\hmap{H_i}II$ always exists; it corresponds to the tensor product of $\V$-objects $H_1 \tens' \dotsb \tens' H_n$.
	
	By a \emph{$\V$-weight} on a $\V$-category $A$ we mean a path $(A \xbrar{J_1} A_1, \dotsc, A_{n'} \xbrar{J_n} I)$ of $\V$-profunctors between $\V$-categories. It is called \emph{small} if each of the $\V$-categories $A, A_1, \dotsc, A_{n'}$ is small. As usual we denote a singleton $\V$-weight $(J)$ on $A$ simply by $J$.
	\begin{definition} \label{weighted colimit}
		Let $(J_1, \dotsc, J_n)$ a $\V$-weight on a $\V$-category $A$, $\map dAM$ be a $\V'$-functor, and $l$ an object of $M$. A cell $\eta$ in $\enProf{(\V, \V')}$, as in the right-hand side below, is said to define $l$ as the \emph{$(J_1, \dotsc, J_n)$-weighted colimit of $d$} if every cell $\phi$ below factors uniquely through $\eta$ as shown.
		\begin{displaymath}
			\begin{tikzpicture}[textbaseline]
				\matrix(m)[math35, yshift=1.625em]{A & A_1 & A_{n'} & I & I \\};
				\draw				([yshift=-3.25em]$(m-1-1)!0.5!(m-1-5)$) node (M) {$M$};
				\path[map]	(m-1-1) edge[barred] node[above] {$J_1$} (m-1-2)
														edge[transform canvas={yshift=-2pt}] node[below left] {$d$} (M)
										(m-1-3) edge[barred] node[above] {$J_n$} (m-1-4)
										(m-1-4) edge[barred] node[above] {$H$} (m-1-5)
										(m-1-5) edge[transform canvas={yshift=-2pt}] node[below right] {$k$} (M);
				\path				(m-1-3) edge[cell] node[right] {$\phi$} (M);
				\draw				($(m-1-2)!0.5!(m-1-3)$) node {$\dotsb$};
			\end{tikzpicture} = \begin{tikzpicture}[textbaseline]
				\matrix(m)[math35]{A & A_1 & A_{n'} & I & I \\ & & & M & \\};
				\path[map]	(m-1-1) edge[barred] node[above] {$J_1$} (m-1-2)
														edge[transform canvas={yshift=-2pt}] node[below left] {$d$} (m-2-4)
										(m-1-3) edge[barred] node[above] {$J_n$} (m-1-4)
										(m-1-4) edge[barred] node[above] {$H$} (m-1-5)
														edge node[left] {$l$} (m-2-4)
										(m-1-5) edge[transform canvas={yshift=-2pt}] node[below right] {$k$} (m-2-4);
				\path				(m-1-2) edge[cell, transform canvas={shift={(3em,0.333em)}}] node[right] {$\eta$} (m-2-2)
										(m-1-4) edge[cell, transform canvas={shift={(1em,0.333em)}}] node[right] {$\phi'$} (m-2-4);
				\draw				($(m-1-2)!0.5!(m-1-3)$) node {$\dotsb$};
			\end{tikzpicture}
  	\end{displaymath}
	\end{definition}
	Since $\V \to \V'$ induces a full and faithful inclusion $\enProf\V \to \enProf{(\V, \V')}$ (see \exref{equivalence induced by universe enlargement}) it follows that, in the case $\eta$ is contained in $\enProf\V$, the existence of the factorisations above reduces to the existence of such factorisations in $\enProf\V$.
	
	When reduced to the case of $(1,0)$-ary cells $\cell\eta{J_1}M$ in $\enProf\V$, the definition above extends the classical definition of weighted colimit as follows. Note that if $\V$ is closed symmetric monoidal then singleton $\V$-weights $J$ on $A$ can be identified with $\V$-functors $\map J{\op A}\V$, using the isomorphism $\op A \tens I \iso \op A$ (see \exref{enriched profunctors}). This recovers the classical definition of $\V$-weight given in e.g.\ Section 3.1 of \cite{Kelly82}, where such $\V$-functors are called `indexing types'. Using this identification the cell $\cell\eta{J_1}M$ can be regarded as a $\V$-natural transformation between the $\V$-functors $\map {J_1}{\op A}\V$ and $\map{M(d,l)}{\op A}\V$.
	\begin{proposition} \label{classical definition of weighted colimit}
		Let $d$, $J_1$, $l$ and $\eta$ be as above. If $\V$ is closed symmetric monoidal then the cell $\eta$ defines $l$ as the $J$-weighted colimit of $d$, in the above sense, precisely if the pair $(l, \eta)$, where $\eta$ is regarded as a $\V$"/natural transformation $J_1 \Rar M(d, l)$ of $\V$-functors $\op A \to \V$, forms the `colimit of $d$ indexed by $J_1$' in the sense of Section 3.1 of \cite{Kelly82}.
	\end{proposition}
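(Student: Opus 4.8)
The plan is to unwind both sides of the claimed equivalence to the same universal mapping property, exploiting the identifications that have already been set up: a singleton $\V$-weight $J_1$ on $A$ corresponds to a $\V$-functor $\op A \to \V$ via $\op A \tens I \iso \op A$, a cell $\cell\eta{J_1}M$ corresponds to a $\V$-natural transformation $J_1 \Rar M(d, l)$, and, since $\eta$ lies in $\enProf\V$ and $\enProf\V \to \enProf{(\V, \V')}$ is full and faithful, the factorisations required in \defref{weighted colimit} may be computed entirely inside $\enProf\V$.

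First I would spell out what the general factorisation condition of \defref{weighted colimit} becomes when $n = 1$: it says that for every $\V$-object $H$ and every $\V$-functor $\map k IM$, i.e.\ every object $k \in M$, and every cell $\cell\phi{(J_1, H)}M$, there is a unique vertical cell $\phi'$, i.e.\ a unique $\V'$-map $H \to M(l, k)$, with $\phi = \eta \hc \phi'$ in the appropriate sense. Now a cell $\cell\phi{(J_1, H)}M$ in $\enProf{(\V, \V')}$ is precisely a $\V$-natural family $J_1 \tens' H \Rar M(d, k)$, which by the closed structure of $\V'$ and the tensor-hom adjunction corresponds to a $\V'$-map $H \to \{J_1, M(d, k)\}' = [\op A, \V](J_1, M(d, k))$, the hom-$\V'$-object of $\V$-natural transformations (here using that $J_1$ is $\V$-valued so this end exists in $\V'$). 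Under this identification the assignment $\phi \mapsto \phi'$ being a bijection, naturally in $k$, says exactly that postcomposition with $\eta$ induces an isomorphism $M(l, k) \iso [\op A, \V](J_1, M(d, k))$, $\V'$-naturally in $k$. This last statement is, verbatim, the defining property of the $J_1$-indexed colimit of $d$ as in Section 3.1 of \cite{Kelly82}.

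The second half is therefore mostly bookkeeping: I would check that the two naturality conditions match up — the $\V$-naturality of $\phi$ in its two middle slots corresponds to the cocone/$\V$-naturality conditions that Kelly's definition imposes on $\eta$, and the requirement that $\phi'$ be a \emph{vertical} cell (hence genuinely a $\V'$-map $H \to M(l, k)$, not just a cell of higher arity) is what forces the representation to be by the single hom-object $M(l, k)$ rather than by some composite. I would also note that the ``unit'' case $H = I$ recovers Kelly's bijection $M(l, k) \iso [\op A, \V](J_1, M(d, k))$ on underlying sets, and that allowing general $H$ upgrades this to the enriched statement; this is the standard ``enriched versus ordinary'' passage, and is legitimate because $\enProf{(\V, \V')}$ has all the relevant nullary restrictions and because the isomorphism is being tested against all $H$.

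The main obstacle I anticipate is purely notational rather than mathematical: reconciling the bracketing conventions and the precise form of the horizontal composite $\eta \hc \phi'$ with Kelly's ``$\V$-natural in the variable $k$'' formulation, and being careful that when $\V$ is only monoidal (not closed) one cannot form the internal hom in $\V$, so the hom-object of $\V$-natural transformations must be taken in $\V'$ — but the hypothesis of the proposition is exactly that $\V$ is closed symmetric monoidal, so $[\op A, \V]$ makes sense and Kelly's setup applies directly. A minor point to handle with care is that Kelly works with $\V$-categories throughout, whereas here $M$ is only a $\V'$-category; but $d$, $l$ and the weight live over $\V$, and the colimit is tested by mapping out of $l$ into arbitrary $k \in M$, so the comparison goes through once one observes that $M(l, k)$ and $M(d, k)$ are the relevant $\V'$-objects on both sides.
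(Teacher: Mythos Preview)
Your approach is correct and is essentially what the paper does: the paper's proof simply observes that the factorisations reduce to $\enProf\V$ and then cites the horizontal dual of Proposition~2.23 of \cite{Koudenburg15a} as applying verbatim, and your sketch is exactly the direct unwinding that that cited result carries out. One small clarification: in the setup of this proposition $\eta$ lies in $\enProf\V$, so $M$ is already a $\V$-category and there is no need to worry about $M(l,k)$ being merely a $\V'$-object; your closing caveat about $M$ being only a $\V'$-category is unnecessary here, and you may use the closed structure of $\V$ directly rather than that of $\V'$.
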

	\begin{proof}
		Since the factorisations of the definition above reduce to factorisations in $\enProf\V$, as discussed above, the horizontal dual of the proof of Proposition 2.23 of \cite{Koudenburg15a} applies verbatim; the only difference is that there $A$ is assumed to be small.
	\end{proof}
	
	Next we describe Kan extensions in $\enProf{(\V, \V')}$ in terms of weighted colimits and, as a consequence, obtain an `enriched variant' of \propref{weak left Kan extensions along companions}. Remember that the vertical cells of $\enProf\V$ are $\V$-natural transformations between functors, in the classical sense.
	\begin{proposition}\hspace{-0.25cm}\footnote{\textbf{This result is false.} See Example~1.10 of \cite{Koudenburg22} instead.} \label{enriched left Kan extensions in terms of weighted colimits}
		Consider a cell $\eta$ in $\enProf{(\V, \V')}$ as in the composite on the left below, where $A_0, \dotsc, A_{n'}$ are $\V$-categories. It defines $l$ as the pointwise left Kan extension of $d$ along $(J_1, \dotsc, J_n)$ if and only if, for each $y \in A_n$, the full composite defines $ly$ as the $\bigpars{J_1, \dotsc, J_n(\id, y)}$-weighted colimit of $d$. In particular, pointwise left Kan extensions along a $\V$-weight $\hmap JAI$ coincide with $J$-weighted colimits.
		\begin{equation} \label{pointwise Kan extension at a point}

		\end{equation}
		
		Finally assume that $\V$ is closed symmetric monoidal, and consider a factorsiation in $\enProf\V$ as on the right above. The cell $\zeta'$ defines $l$ as a pointwise left Kan extension in $\enProf{(\V, \V')}$ precisely if the $\V$-natural transformation $\nat\zeta d{l \of j}$ `exhibits $l$ as the left Kan extension of $d$ along $j$', in the sense of Section 4.1 of \cite{Kelly82}.
	\end{proposition}
	\begin{proof}
		The `only if'-part of the first assertion follows immediately from \defref{pointwise left Kan extension}, by restricting the universal property of \defref{left Kan extension}, for the composite on the left above, to cells of the form $(J_1, \dotsc, J_n(\id, y), H) \Rar M$, where $\hmap HII$.
		
		For the `if'-part consider any $\V'$-functor $\map fB{A_n}$ as well as any cell $\phi$ as in the composite on the left below; we have to show that it factors uniquely through the composition on the right, as a cell $\cell{\phi'}{(H_1, \dotsc, H_m)}M$.
		\begin{displaymath}

		\end{displaymath}
		In order to obtain the $(x, x_1, \dotsc, x_m)$-component of $\phi'$, where $x \in B$ and each $x_i \in B_i$, consider the full composite on the left, and notice that it can be regarded as a cell $\bigpars{J_1, \dotsc, J_n(\id, fx), H_1(x, x_1) \tens' \dotsb \tens' H_m(x_{m'}, x_m)} \Rar M$. By assumption on $\eta$, the latter cells factor uniquely through the composite on the left of \eqref{pointwise Kan extension at a point}, with $y = fx$, as $\V'$-maps
		\begin{displaymath}
			\map{\phi'_{(x, x_1, \dotsc, x_m)}}{H_1(x, x_1) \tens' \dotsb \tens' H_m(x_{m'}, x_m)}{M(lfx, kx_m)}
		\end{displaymath}
		which, we claim, combine to form a cell $\cell{\phi'}{(H_1, \dotsc, H_m)}M$. The unique factorisations above then guarantee that $\phi'$ is unique such that $\phi = \eta \hc \phi'$. The proof of the claim consists of showing that the $\V'$-maps $\phi'_{(x, x_1, \dotsc, x_m)}$ are $\V'$-natural in each of the coordinates $x$, $x_1$, \dots, $x_m$. This is a straightforward consequence of the $\V'$-naturality of the cells $\eta$ and $\phi$, together with the uniqueness of the factorisations above, which we leave to the interested reader (see also the proof of Proposition~2.24 of \cite{Koudenburg15a} where this claim was proved in the horizontal dual case for cells $\eta$ and $\phi$ with $n$, $m = 1$). This completes the proof of the first assertion.
		
		To prove the second assertion notice that $\zeta'$ is given by the $\V$-maps
		\begin{displaymath}
			B(jx, y) \xrar l M(ljx, ly) \xrar{M(\zeta_x, ly)} M(dx, ly),
		\end{displaymath}
		for pairs $x \in A$ and $y \in B$. The proof follows from applying to $\zeta$ the first assertion above, \propref{classical definition of weighted colimit} and the horizontal dual of condition (ii) of Theorem 4.6 of \cite{Kelly82}, which lists equivalent conditions defining a $\V$-natural transformation to `exhibit a right Kan extension'.
	\end{proof}
	
	As is to be expected the pointwise left Kan extension along a $\V$-profunctor $\hmap JAB$ can be constructed out of weighted colimits whenever $B$ is a $\V$-category, as follows.
	\begin{proposition}
		In $\enProf{(\V, \V')}$ consider a $\V$-profunctor $\hmap JAB$ between $\V$"/categories $A$ and $B$, as well as a $\V'$-functor $\map dAM$. Assume given, for each $y \in B$, a cell $\eta_y$ as on the left below, that defines the $J(\id, y)$-weighted colimit of $d$.
		\begin{displaymath}

		\end{displaymath}
		That these factorisations combine to form a $\V'$-functor $\map lBM$, that is they preserve composition and units, follows easily from the fact that the action of $B$ on $J$ is associative and unital, together with the uniqueness of the factorisations through the cells $\eta_y$. Having constructed $\map lBM$, notice that the factorisations above imply that the cells $\eta_y$ combine to form a nullary cell $\cell\eta JM$ as asserted; in fact, their uniqueness implies that $l$ is unique with this property. That $\eta$ defines $l$ as a pointwise left Kan extension follows from the previous proposition and the fact that, for each $y \in B$, it restricts along $J(\id, y)$ to a cell that defines a weighted colimit.
	\end{proof}
	
	We record the following simple lemma for later use. Remember that the $2$"/category $\enCat{\V'}$ of $\V'$-categories admits a monoidal structure when $\V'$ is symmetric monoidal; see for instance Section 1.4 of \cite{Kelly82}. It is straightforward to show that this structure extends to a `monoidal structure' on the augmented virtual double category $\enProf{\V'}$ of $\V'$-profunctors, in a sense that is the evident adaptation of that of `monoidal double category', as given in Definition 9.1 of \cite{Shulman08}. While we shall not make this precise, we will use its underlying tensor product $\map\tens{\enProf{\V'} \times \enProf{\V'}}{\enProf{\V'}}$. On $\V'$-categories and $\V'$-functors it is given as usual, while the tensor product $\hmap{J \tens H}{A \tens C}{B \tens D}$ of $\V'$-profunctors is given by $(J \tens H)\bigpars{(x,z)(y,w)} \dfn J(x,y) \tens H(z,w)$. Its action on a pair of cells in $\enProf{\V'}$ is likewise given by tensoring their components, as well as using the symmetric structure of $\V'$. Given a symmetric universe enlargement $\V \to \V'$, notice that the tensor product on $\enProf{\V'}$ restricts to one on $\enProf{(\V, \V')}$.
	\begin{lemma}	\label{enriched left Kan extensions along tensor products}
		Let $\V \to \V'$ be a symmetric universe enlargement, $A$ and $B$ be $\V$"/categories, and $J$ a $\V$-weight on $B$. A cell $\eta$ in $\enProf{(\V, \V')}$, as in the composite below, defines $l$ as a pointwise left Kan extension of $d$ along $I_A \tens J$ precisely if, for each $u \in A$, the full composite, whose top cell is induced by the unit \mbox{$\map{\tilde A_u}{I'}{A(u, u)}$}, defines $lu$ as the $J$-weighted colimit of $d \of (u \tens \id)$
		\begin{displaymath}
			\begin{tikzpicture}
				\matrix(m)[math35, column sep={2.25em,between origins}]{B & & I' \\ A \tens B & & A \\ & M & \\};
  			\path[map]	(m-1-1) edge[barred] node[above] {$J$} (m-1-3)
  													edge node[left] {$u \tens \id$} (m-2-1)	
  									(m-1-3) edge node[right] {$u$} (m-2-3)
  									(m-2-1) edge[barred] node[below, inner sep=2pt] {$I_A \tens J$} (m-2-3)
  													edge[transform canvas={xshift=-2pt}] node[below left] {$d$} (m-3-2)
  									(m-2-3) edge[transform canvas={xshift=2pt}] node[below right] {$l$} (m-3-2);
  			\path				(m-1-1) edge[cell, transform canvas={xshift=0.7em}] node[right] {$\tilde A_u \tens \id_J$} (m-2-1)
  									(m-2-2) edge[cell] node[right] {$\eta$} (m-3-2);
  		\end{tikzpicture}
		\end{displaymath}
	\end{lemma}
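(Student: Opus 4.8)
The plan is to deduce this from \propref{enriched left Kan extensions in terms of weighted colimits} together with the description of horizontal units and restrictions in $\enProf{(\V, \V')}$. First I would unwind what the pointwise left Kan extension of $d$ along $I_A \tens J$ amounts to pointwise. By \propref{enriched left Kan extensions in terms of weighted colimits}, $\eta$ defines $l$ as such an extension precisely if, for each object $(u, z)$ of $A \tens B$, the restriction of $\eta$ along $(I_A \tens J)(\id, (u,z))$ defines $l(u,z)$ as the weighted colimit of $d$ with that weight. The key computation is that
\begin{displaymath}
	(I_A \tens J)\bigpars{\id, (u, z)} \iso I_A(\id, u) \tens J(\id, z) \iso A(\id, u) \tens J(\id, z),
\end{displaymath}
using \lemref{unit identities} (so that $I_A = A(\id, \id)$, whence $I_A(\id, u) = A(\id, u)$ as a $\V$-weight that at $x \in A$ has value $A(x, u)$). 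So the pointwise left Kan extension condition is equivalent to: for all $u \in A$ and $z \in B$, the appropriate cell defines $l(u, z)$ as the $\bigpars{A(\id, u) \tens J(\id, z)}$-weighted colimit of $d$.

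Next I would identify this family of weighted colimits with the single family over $u \in A$ appearing in the statement. The cell $\map{\tilde A_u}{I'}{A(u,u)}$ is the unit $\V$-map picking out the identity of $u$; horizontally composing $\tilde A_u \tens \id_J$ with the cartesian cell defining $A(\id, u) \tens J = I_A(\id, u) \tens J$ as a restriction of $I_A \tens J$ along $u \tens \id$ produces, by the companion/conjoint machinery, exactly the restriction that represents ``evaluation of $d$ at objects of the form $(u, \dash)$''. More precisely: the composite in the statement, restricted further along $J(\id, z)$ for a variable $z \in B$, should be shown to coincide with the cell whose weighted-colimit universal property characterises $l(u,z)$ above --- this is because $\bigpars{A(\id,u) \tens J}(\id, z) \iso A(\id, u) \tens J(\id, z)$ and the $\tilde A_u$-component simply substitutes the identity of $u$ into the first coordinate, which is the hom-object pairing built into the definition of $d \of (u \tens \id)$. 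Running \propref{enriched left Kan extensions in terms of weighted colimits} once more, now applied to the weight $J$ on $B$ and the functor $d \of (u \tens \id)$, the composite in the statement defines $lu$ as the pointwise left Kan extension of $d \of (u \tens \id)$ along $J$ exactly when, for each $z$, it defines $l(u,z)$ as the $J(\id, z)$-weighted colimit of $d \of (u \tens \id)$; and the latter colimit coincides with the $\bigpars{A(\id, u) \tens J(\id, z)}$-weighted colimit of $d$ by the coend computation above (the coend over the extra $A$-variable collapses against the representable $A(\dash, u)$, by the enriched Yoneda reduction $\int^{v \in A} A(v, u) \tens T(v) \iso T(u)$).

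The one genuinely fiddly point --- and the step I expect to be the main obstacle --- is verifying that the two families of cells match up \emph{as cells}, not merely that the objects $l(u,z)$ satisfy corresponding universal properties: one must check that the $\V'$-functor $\map lBM$ (or $\map l{A \tens B}M$) assembled on the $B$-side of the second application is literally the one induced on the left-hand side of the first, i.e. that the actions of $l$ on hom-objects agree. This is where I would invoke the uniqueness clauses: both $l$'s are determined as the unique $\V'$-functors making their respective cells $\eta$ compatible with the given weighted-colimit cells, and the coend/Yoneda identification intertwines the two compatibility conditions, so uniqueness forces the $l$'s to agree. Concretely I would write out the hom-object action of $l$ as the unique factorisation through $\eta_z$ (as in the proof of the preceding proposition) on each side and observe that the $\tilde A_u$-whiskering is exactly the coherence isomorphism translating one factorisation problem into the other. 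Once this bookkeeping is in place, the equivalence in the statement is immediate, and the lemma follows.
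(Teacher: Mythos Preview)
Your overall shape is right --- invoke \propref{enriched left Kan extensions in terms of weighted colimits} to reduce $\eta$ being a pointwise left Kan extension to a family of weighted-colimit conditions, then compare those with the $J$-weighted colimits of $d \of (u \tens \id)$ --- but the execution has one confusion and one genuine gap, and the paper's route closes the gap with a different (and cleaner) tool than the one you reach for.

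First the confusion: the target of $I_A \tens J$ is $A$ (via $A \tens I' \iso A$), not $A \tens B$, so \propref{enriched left Kan extensions in terms of weighted colimits} asks you to restrict at objects $u \in A$, giving the weight $(I_A \tens J)(\id, u) \iso u^* \tens J$ on $A \tens B$. Your variable $z$ is spurious (it can only be $* \in I$), and $l(u,z)$ should just be $lu$. This is cosmetic --- set $z = *$ and your formulas become correct --- but it obscures what is going on.

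The substantive gap is your step 2. You want to pass from the $(u^* \tens J)$-weighted colimit of $d$ to the $J$-weighted colimit of $d \of (u \tens \id)$ via the coend Yoneda reduction $\int^{v} A(v,u) \tens T(v) \iso T(u)$, and you correctly flag that this only compares \emph{objects}, not the defining \emph{cells}. Your proposed fix (``invoke uniqueness clauses'') is not a proof: you would need to exhibit the isomorphism of universal properties, not merely of representing objects, and the bookkeeping you sketch does not do that. The paper avoids this entirely. It observes that the top cell $\tilde A_u \tens \id_J$ factors through the cartesian cell defining $(I_A \tens J)(\id, u) \iso u^* \tens J$ as a \emph{right pointwise cocartesian} cell (verified via \propref{cocartesian cells in (V, V')-Prof}); this cocartesianness \emph{is} the Yoneda reduction, but stated at the level of cells rather than objects. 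Then the vertical pasting lemma (\lemref{vertical pasting lemma}) immediately gives that composing with this cocartesian cell preserves and reflects the property ``defines a pointwise left Kan extension'', hence the equivalence of the two weighted-colimit conditions. That single cocartesian-plus-pasting step replaces your entire ``fiddly point'', and it is the idea your argument is missing.
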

	\begin{proof}
		By \propref{enriched left Kan extensions in terms of weighted colimits} the cell $\eta$ defines $l$ as a pointwise left Kan extension precisely if, for all $u \in A$, its restriction along $(I_A \tens J)(\id, u) \iso u^* \tens J$ defines $lu$ as a weighted colimit. Using \propref{cocartesian cells in (V, V')-Prof} it is easy to see that $\tilde A_u \tens \id_J$ factors through $u^* \tens J$ as a right pointwise cocartesian cell, so that the proof follows from the vertical pasting lemma (\lemref{vertical pasting lemma}).
	\end{proof}
	
	\subsection{Tabulations}
	Here the notion of tabulation, that was introduced by Grandis and Par\'e in \cite{Grandis-Pare99} in the setting of double categories, is generalised to the setting of augmented virtual double categories. In the next subsection we show that in augmented virtual double categories that have all `cocartesian tabulations', all pointwise Kan weak extensions are pointwise Kan extensions (see \defref{pointwise left Kan extension}).
	
	\begin{definition} \label{tabulation}
		Given a horizontal morphism $\hmap JAB$ in an augmented virtual double category $\K$, the \emph{tabulation} $\tab J$ of $J$ consists of an object $\tab J$ equipped with a unary cell $\pi$ as on the left below, satisfying the following $1$-dimensional and $2$"/dimensional universal properties.
		\begin{displaymath}
			\begin{tikzpicture}[baseline]
  			\matrix(m)[math35, column sep={1.75em,between origins}]{& \tab J & \\ A & & B \\};
				\path[map]	(m-1-2) edge[transform canvas={xshift=-2pt}] node[left] {$\pi_A$} (m-2-1)
														edge[transform canvas={xshift=2pt}] node[right] {$\pi_B$} (m-2-3)
										(m-2-1) edge[barred] node[below] {$J$} (m-2-3);
				\path				(m-1-2) edge[cell, transform canvas={yshift=-0.25em}] node[right, inner sep=2.5pt] {$\pi$} (m-2-2);
			\end{tikzpicture} \qquad\qquad\qquad\qquad\qquad \begin{tikzpicture}[baseline]
  			\matrix(m)[math35, column sep={1.75em,between origins}]{& X & \\ A & & B \\};
				\path[map]	(m-1-2) edge[transform canvas={xshift=-2pt}] node[left] {$\phi_A$} (m-2-1)
														edge[transform canvas={xshift=2pt}] node[right] {$\phi_B$} (m-2-3)
										(m-2-1) edge[barred] node[below] {$J$} (m-2-3);
				\path				(m-1-2) edge[cell, transform canvas={yshift=-0.25em}] node[right, inner sep=2.5pt] {$\phi$} (m-2-2);
			\end{tikzpicture}
		\end{displaymath}
		Given another unary cell $\phi$ as on the right above, the $1$-dimensional property states that there exists a unique vertical morphism $\map{\phi'}X{\tab J}$ such that $\pi \of \id_{\phi'} = \phi$.
		
		The $2$-dimensional property is the following. Suppose we are given a further unary cell $\psi$ as in the identity below, which factors through $\pi$ as $\map{\psi'}Y{\tab J}$, like $\phi$ factors as $\phi'$. Then for any pair of cells $\xi_A$ and $\xi_B$ as below, so that the identity holds, there exists a unique cell $\xi'$ as on the right below such that $\pi_A \of \xi' = \xi_A$ and $\pi_B \of \xi' = \xi_B$.
		\begin{displaymath}

	\end{displaymath}
	commute in $J$. The functors $\pi_A$ and $\pi_B$ are the projections while the cell $\nat \pi{\tab J}J$ maps $(x, u, y)$ to $u \in J(x, y)$. It is straightforward to check that $\pi$ satisfies the universal properties above, and that it is cocartesian.
	
		Vertically dual, the cotabulation $\brks J$ is \emph{cograph} of $J$, as follows. Its set of objects is the disjoint union $\ob\brks J \dfn \ob A \djunion \ob B$ while its hom-sets are given by
		\begin{displaymath}
			\brks J(x, y) \dfn \begin{cases}
				A(x, y) & \text{if $x, y \in A$;} \\
				J(x, y) & \text{if $x \in A$ and $y \in B$;} \\
				B(x, y) & \text{if $x, y \in B$;} \\
				\emptyset & \text{otherwise.}
			\end{cases}
		\end{displaymath}
		The functors $\sigma_A$ and $\sigma_B$ are the embeddings of $A$ and $B$ into $\brks J$, while the cell $\nat\sigma J{\brks J}$ consists of the identities on the sets $J(x, y)$. Again it is straightforward to check that $\sigma$ satisfies the universal properties and that it is cartesian.
	\end{example}
	
	\begin{example} \label{tabulations of indexed profunctors}
		The augmented virtual equipment $\enProf{(\Set, \Set')}^\Ss$ of $\Ss$-indexed profunctors (see \exref{indexed profunctors}) has all cocartesian tabulations as well as cartesian cotabulations: in each index they can be constructed as in the previous example.
	\end{example}	
	
	\begin{example} \label{tabulations of modular relations}
		In the strict equipment $\enProf\2$ of modular relations, described in \exref{relations}, the tabulation $\tab J$ of $\hmap JAB$ is the set $J \subseteq A \times B$ itself, equipped with the product order
		\begin{displaymath}
			(x_1, y_1) \leq (x_2, y_2) \defeq (x_1 \leq x_2) \wedge (y_1 \leq y_2).
		\end{displaymath}
		Taking the monotone maps $\pi_A$ and $\pi_B$ to be the projections, it is straightforward to check that the evident cell $\tab J \Rar J$ satisfies the universal properties and is weakly cocartesian; applying \corref{extensions and composites} we conclude that it is cocartesian.
	\end{example}
	
	\begin{example} \label{tabulations of 2-profunctors}
		In the augmented virtual equipment $\enProf{(\Cat, \Cat')}$ the tabulation $\tab J$ of a (small) $2$-profunctor $\map J{\op A \times B}\Cat$, where $A$ and $B$ are (possibly large) $2$"/categories, has as underlying category the graph of the profunctor underlying $J$, while its cells $(s, t) \Rar (s', t')$ are pairs $(\delta, \eps)$ of cells $\cell\delta s{s'}$ in $A$ and $\cell\eps t{t'}$ in $B$ that make the diagram on the left below commute in $J$.
		\begin{displaymath}
			\begin{tikzpicture}[textbaseline]
				\matrix(m)[math35]{x & y \\ x' & y' \\};
				\path[map]	(m-1-1) edge node[above] {$u$} (m-1-2)
														edge[bend right=35] node[left] {$s$} (m-2-1)
														edge[bend left=35] node[right, inner sep=2pt] {$s'$} (m-2-1)
										(m-1-2) edge[bend right=35] node[left, inner sep=2pt] {$t$} (m-2-2)
														edge[bend left=35] node[right] {$t'$} (m-2-2)
										(m-2-1) edge node[below] {$u'$} (m-2-2);
				\path				([xshift=-0.8em]$(m-1-1)!0.5!(m-2-1)$) edge[cell] node[above] {$\delta$} ([xshift=0.9em]$(m-1-1)!0.5!(m-2-1)$)
										([xshift=-0.8em]$(m-1-2)!0.5!(m-2-2)$) edge[cell] node[above] {$\eps$} ([xshift=0.9em]$(m-1-2)!0.5!(m-2-2)$);
			\end{tikzpicture} \qquad J(*,*) = \bigpars{\begin{tikzpicture}[textbaseline, font=\scriptsize, ampersand replacement=\&]
				\matrix(m)[math35, column sep=2.5em]{ \& \\};
				\path[map]	(m-1-1) edge[bend left=40] node[above] {$u$} (m-1-2)
														edge[bend right=40] node[below] {$v$} (m-1-2);
				\fill	(m-1-1) circle (1pt) node[left] {$x$}
							(m-1-2) circle (1pt) node[right] {$y$};
				\path	($(m-1-1)!0.5!(m-1-2)+(0,0.8em)$) edge[cell] ($(m-1-1)!0.5!(m-1-2)-(0,0.8em)$);
			\end{tikzpicture}} \qquad K(*,*) = \bigpars{\begin{tikzpicture}[textbaseline, font=\scriptsize, ampersand replacement=\&]
				\matrix(m)[math35, column sep=2.5em]{ \& \\};
				\path[map]	(m-1-1) edge[bend left=40] node[above] {$u'$} (m-1-2)
														edge[bend right=40] node[below] {$v'$} (m-1-2);
				\fill	(m-1-1) circle (1pt) node[left] {$x'$}
							(m-1-2) circle (1pt) node[right] {$y'$};
			\end{tikzpicture}}
		\end{displaymath}

		Tabulations of $2$-profunctors fail to be cocartesian in general. As an example consider two $2$-profunctors $J$ and $\hmap K11$, where $1$ denotes the terminal $2$-category with single object $*$, whose single images $J(*,*)$ and $K(*,*)$ are the `free living' cell and parallel pair of arrows respectively, as shown above. The tabulation $\tab J$ is discrete with objects $(*, u, *)$ and $(*, v, *)$, so that the assignments $(*, u, *) \mapsto u'$ and $(*, v, *) \mapsto v'$ define a cell $\cell\phi{\tab J}K$; it is easily checked that $\phi$ does not factor through $\cell\pi{\tab J}J$.
	\end{example}
	
	Consider an augmented virtual double category $\K$ that has all tabulations. Choosing a tabulation $\tab J$ for every $\hmap JAB$ in $\K$ gives an assignment $J \mapsto \tab J$ which we extend to paths $\ul J = (A_0 \xbrar{J_1} A_1, \dotsc, A_{n'} \xbrar{J_n} A_n)$ by mapping $\ul J$ to the top $\tab{J_1, \dotsc, J_n}$ of the `piramid' below, where each $\pi_i$ denotes the chosen tabulation of $\hmap{J_i(\pi_{A_{i'}}, \id)}{\tab{J_1, \dotsc, J_{i-1}}}{A_i}$. For each $1 \leq i \leq n$ we denote the composite $\tab{J_1, \dotsc, J_n} \to \tab{J_1, \dotsc, J_{n'}} \to \dotsb \to \tab{J_1, \dotsc, J_i} \xrar{\pi_{A_i}} A_i$ again by $\pi_{A_i}$.
	\begin{equation} \label{piramid}

	\end{equation}
	Importantly, if the tabulations of $\K$ are cocartesian then so is each of the rows above and hence, by the pasting lemma (\lemref{pasting lemma for cocartesian paths}), so is the path consisting of the columns that make up the piramid.
	
	\begin{example}
		For profunctors $\hmap{J_1}{A_0}{A_1}$, $\dotsc,$ $\hmap{J_n}{A_{n'}}{A_n}$, the tabulation $\tab{J_1, \dotsc, J_n}$ in $\enProf{(\Set, \Set')}$ is (isomorphic to) the category whose objects are given by paths $(x_0 \xrar{u_1} x_1, \dotsc, x_{n'} \xrar{u_n} x_n)$, where each $x_i \in A_i$ and each $u_i \in J_i$, while its morphisms are sequences of commutative squares
		\begin{displaymath}
			\begin{tikzpicture}
				\matrix(m)[math35]{x_0 & x_1 &[1.75em] x_{n'} & x_n \\ x'_0 & x'_1 & x'_{n'} & x'_n, \\};
				\path[map]	(m-1-1) edge node[above] {$u_1$} (m-1-2)
														edge node[left] {$s_0$} (m-2-1)
										(m-1-2) edge node[right] {$s_1$} (m-2-2)
										(m-1-3) edge node[above] {$u_n$} (m-1-4)
														edge node[left] {$s_{n'}$} (m-2-3)
										(m-1-4) edge node[right] {$s_n$} (m-2-4)
										(m-2-1) edge node[below] {$u'_1$} (m-2-2)
										(m-2-3) edge node[below] {$u'_n$} (m-2-4);
				\draw				($(m-1-2)!0.5!(m-2-3)$) node {$\dotsb$};
			\end{tikzpicture}
		\end{displaymath}
		where each $s_i \in A_i$.
	\end{example}
	
	Besides horizontal composites, the tabulations $\tab{J_1, \dotsc, J_n}$ can also be used to reduce $(n, m)$-ary cells to $(1, m)$-ary ones, as follows. Similar to \corref{Kan extensions and composites}, this reduction preserves cells that define Kan extensions, as shown below.
	\begin{proposition} \label{cells corresponding to vertical cells}
		Consider an augmented virtual double category $\K$ that has cocartesian tabulations and assume that a tabulation $\tab{J_1, \dotsc, J_n}$ has been chosen for each path of horizontal morphisms $(J_1, \dotsc, J_n)$, as in \eqref{piramid}. The following hold.
		\begin{enumerate}[label=\textup{(\alph*)}]
			\item There exists a bijective correspondence between nullary cells $\phi$, of the form as on the left below, and nullary cells $\psi$, of the form as in the middle, which preserves cells that define $k$ as a (weak) left Kan extension.
			\item If $A_n$ is unital then the cells $\psi$ in turn are in bijective correspondence with nullary cells $\chi$, that are of the form as on the right below; this too preserves cells defining (weak) left Kan extensions.
			\item Given a vertical morphism $\map fMN$ such that the restriction $N(f, f)$ exists, $f$ is full and faithful in $\K$ precisely if it is so in the vertical $2$-category $V(\K)$ (see \defref{full and faithful morphism}).
		\end{enumerate}
		\begin{displaymath}
			\begin{tikzpicture}[baseline]
				\matrix(m)[math35]{A_0 & A_1 & A_{n'} & A_n \\};
				\draw	([yshift=-3.25em]$(m-1-1)!0.5!(m-1-4)$) node (M) {$M$};
				\path[map]	(m-1-1) edge[barred] node[above] {$J_1$} (m-1-2)
														edge[transform canvas={yshift=-2pt}] node[below left] {$d$} (M)
										(m-1-3) edge[barred] node[above] {$J_n$} (m-1-4)
										(m-1-4) edge[transform canvas={yshift=-2pt}] node[below right] {$k$} (M);
				\path				($(m-1-1.south)!0.5!(m-1-4.south)$) edge[cell] node[right] {$\phi$} (M);
				\draw				($(m-1-2)!0.5!(m-1-3)$) node {$\dotsb$};
			\end{tikzpicture} \quad \begin{tikzpicture}[baseline]
  			\matrix(m)[math35, column sep={5.5em,between origins}]{ \tab{J_1, \dotsc, J_{n'}} & A_n \\};
  			\draw				([yshift=-3.25em]$(m-1-1)!0.5!(m-1-2)$) node (M) {$M$};
  			\path[map]  (m-1-1) edge[barred] node[above, inner sep=4pt] {$J_n(\pi_{A_{n'}}, \id)$} (m-1-2)
														edge[transform canvas={xshift=-2pt}] node[left] {$d \of \pi_{A_0}$} (M)
										(m-1-2)	edge[transform canvas={xshift=2pt}] node[right] {$k$} (M);
				\path				($(m-1-1.south)!0.5!(m-1-2.south)$) edge[cell] node[right] {$\psi$} (M);
			\end{tikzpicture} \quad \begin{tikzpicture}[baseline]
  			\matrix(m)[math35, column sep={5em,between origins}]{ \tab{J_1, \dotsc, J_n} & A_n \\};
  			\draw				([yshift=-3.25em]$(m-1-1)!0.5!(m-1-2)$) node (M) {$M$};
  			\path[map]  (m-1-1) edge[barred] node[above, inner sep=4pt] {$\pi_{A_n*}$} (m-1-2)
														edge[transform canvas={xshift=-2pt}] node[left] {$d \of \pi_{A_0}$} (M)
										(m-1-2)	edge[transform canvas={xshift=2pt}] node[right] {$k$} (M);
				\path				($(m-1-1.south)!0.5!(m-1-2.south)$) edge[cell] node[right] {$\chi$} (M);
			\end{tikzpicture}
		\end{displaymath}		
	\end{proposition}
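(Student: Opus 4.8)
The plan is to obtain both (a) and (b) as instances of the vertical pasting lemma (\lemref{vertical pasting lemma}), applied to cocartesian paths built from the tabulation cells, and then to deduce (c) by combining (a) with the observation that a cocartesian tabulation cell reduces cells on a single horizontal morphism to vertical cells.

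For (a): apply the piramid construction of \eqref{piramid} to the shorter path $(J_1, \dotsc, J_{n'})$, leaving $J_n$ (restricted to $J_n(\pi_{A_{n'}}, \id)$) at the end; by the remark following \eqref{piramid} together with the pasting lemma for cocartesian paths (\lemref{pasting lemma for cocartesian paths}), the resulting columns form a cocartesian path whose ``big'' horizontal source is $(J_n(\pi_{A_{n'}}, \id))$, from $\tab{J_1, \dotsc, J_{n'}}$ to $A_n$, and whose ``small'' horizontal target is $(J_1, \dotsc, J_n)$, from $A_0$ to $A_n$. Factoring a nullary cell $\phi$ of the first form through this cocartesian path produces precisely a cell $\psi$ of the second form, and this factorisation is the asserted bijection; crucially the far-left vertical morphism of the first cell of the path is exactly $\pi_{A_0}$ (the composite of the tabulation projections), so the vertical source of $\psi$ comes out as $d \of \pi_{A_0}$ as claimed. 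That this correspondence preserves the property of defining a (weak) left Kan extension is then immediate from \lemref{vertical pasting lemma}, which treats the weak and non-weak cases uniformly; the case $n = 1$ is the trivial one, where $\tab{J_1, \dotsc, J_{n'}} = A_0$ and $J_n(\id, \id) \iso J_n$.

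For (b): since $A_n$ is unital, \corref{restrictions in terms of units} guarantees that the conjoint $\pi_{A_n}^*$ exists. Now $\tab{J_1, \dotsc, J_n}$ is by definition the cocartesian tabulation of $\hmap{J_n(\pi_{A_{n'}}, \id)}{\tab{J_1, \dotsc, J_{n'}}}{A_n}$, with cocartesian defining cell $\pi_n$; combining the cocartesianness of $\pi_n$ with \propref{restriction as left Kan extension} (restrictions are absolute pointwise left Kan extensions) and the coherence of composites identifies a cocartesian cell relating $(\pi_{A_n}^*)$ to $(J_n(\pi_{A_{n'}}, \id))$ with the correct vertical data at $\tab{J_1, \dotsc, J_n}$. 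Factoring $\psi$ through this cell and invoking \lemref{vertical pasting lemma} once more (equivalently \corref{Kan extensions and composites}) yields the bijection $\psi \leftrightarrow \chi$, again preserving (weak) left Kan extensions. The delicate point here is pinning down exactly which cocartesian cell is meant and checking the bookkeeping of vertical sources and of conjoint/companion directions; I expect this to be the main obstacle of the whole proof.

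For (c): one implication — full and faithful in $\K$ implies full and faithful in $V(\K)$ — is already recorded in the text following \defref{full and faithful morphism}. For the converse, assume $N(f, f)$ exists and $f$ is full and faithful in $V(\K)$; we must show $\id_f$ is cartesian, i.e.\ that for every path $\hmap{\ul H}{X_0}{X_n}$ and morphisms $\phi_A, \phi_B$ into $M$, nullary cells on $\ul H$ with vertical sources $f\phi_A, f\phi_B$ factor uniquely through $\id_f$ as cells with vertical sources $\phi_A, \phi_B$. For $n = 0$ this is precisely the full and faithfulness of $V(\K)(X_0, f)$. For $n \geq 1$, first apply part (a) (to the targets $N$ and $M$ in turn; its bijection, being precomposition with a fixed cocartesian path, commutes with whiskering by $f$) to reduce to cells on the single morphism $J_n(\pi_{X_{n'}}, \id)$ from $\tab{H_1, \dotsc, H_{n'}}$ to $X_n$, and then use the cocartesian tabulation of that morphism (which exists by hypothesis, and likewise commutes with whiskering by $f$) to reduce further to vertical cells out of $\tab{J_1, \dotsc, J_n}$ of the form $J_n(\pi_{X_{n'}}, \id)$; the required unique factorisation through $\id_f$ then amounts exactly to $V(\K)\bigpars{\tab{\dotsc}, f}$ being full and faithful, which holds by hypothesis. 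Hence $\id_f$ is cartesian, completing the proof.
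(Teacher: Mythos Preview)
Your treatment of (a) and (c) matches the paper's. For (a) the paper precomposes $\phi$ with the cocartesian path formed by all cells of the pyramid \eqref{piramid} except the top one and invokes \lemref{vertical pasting lemma}, exactly as you do. For (c) the paper uses the \emph{full} pyramid (including $\pi_n$) to pass directly from cells on $(J_1,\dotsc,J_n)$ to vertical cells out of $\tab{J_1,\dotsc,J_n}$, then runs the commuting-square argument you describe; your two-step route through (a) followed by one more tabulation is the same thing decomposed.

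For (b), however, you have a gap. You correctly note that unitality of $A_n$ supplies the conjoint via \corref{restrictions in terms of units}, and that $\pi_n$ is cocartesian; but your appeal to \propref{restriction as left Kan extension} and ``coherence of composites'' does not produce the cocartesian cell you need, and that proposition concerns left Kan extensions rather than cocartesianness. The paper's construction is concrete and different from what you sketch: first factor $\psi$ through the cocartesian unit cell $A_n \Rar I_{A_n}$ to obtain a cell $\psi'\colon \bigl(J_n(\pi_{A_{n'}},\id),\, I_{A_n}\bigr) \Rar M$; then precompose $\psi'$ with the two-cell path $(\pi_n,\cart)$, where $\cart$ is the cartesian cell for $\pi_{A_n}^*$. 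Both the unit cell and this path are cocartesian, so the assignment $\psi \mapsto \chi$ is bijective, and preservation of (weak) left Kan extensions again follows from \lemref{vertical pasting lemma}. The key intermediary you are missing is the explicit passage through $I_{A_n}$.
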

	\begin{proof}
		The correspondence of (a) is given by precomposing the nullary cells $\phi$ with the cocartesian path consisting of all cells that make up the piramid \eqref{piramid}, except the top one. That this preserves cells defining (weak) left Kan extensions follows immediately from the vertical pasting lemma (\lemref{vertical pasting lemma}).
		
		For the correspondence of (b), between the nullary cells $\psi$ and $\chi$, assume that the horizontal unit $I_{A_n}$ exists. The correspondence is given by the assignment on the left below, where $\psi'$ is the unique factorisation of $\psi$ through the cocartesian cell that defines $I_{A_n}$. That this assignment is bijective follows from the fact that both the latter cell and the path $(\pi_n, \cart)$ are cocartesian. That it preserves cells defining (weak) left Kan extensions follows from the same fact, using the vertical pasting lemma.
		\begin{displaymath}
			\psi \quad \mapsto \quad 

		\end{displaymath}
		Now the assumption on $f$ means that the assignment on the right is a bijection and, because the path \eqref{piramid} is cocartesian, so are the horizontal maps. We conclude that the assignment on the left is a bijection too, so that $\id_f$ is cartesian; that is $f$ is full and faithful in $\K$. This completes the proof.
	\end{proof}
	
	Closing this subsection, the following proposition describes the relation between representable nullary restrictions in an augmented virtual double category $\K$ and \emph{absolute left liftings} (as introduced in Section 1 of \cite{Street-Walters78}; or see Section 2.4 of \cite{Weber07}) in its vertical $2$-category $V(\K)$.
	\begin{proposition} \label{restrictions and absolute left liftings}
		In an augmented virtual double category $\K$ consider the factorisation below. The vertical cell $\psi$ defines $j$ as an absolute left lifting of $f$ along $g$ in $V(\K)$ whenever its factorisation $\psi'$ is cartesian in $\K$. The converse holds as soon as $\K$ has all cocartesian tabulations.
		\begin{displaymath}
			\begin{tikzpicture}[textbaseline]
				\matrix(m)[math35, column sep={1.75em,between origins}]{& A & \\ & & B \\ & C & \\};
				\path[map]	(m-1-2) edge[bend left = 18] node[above right] {$j$} (m-2-3)
														edge[bend right = 45] node[left] {$f$} (m-3-2)
										(m-2-3) edge[bend left = 18] node[below right] {$g$} (m-3-2);
				\path[transform canvas={yshift=-1.625em}]	(m-1-2) edge[cell] node[right] {$\psi$} (m-2-2);
			\end{tikzpicture} = \begin{tikzpicture}[textbaseline]
    		\matrix(m)[math35, column sep={1.75em,between origins}]{& A & \\ A & & B \\ & C & \\};
    		\path[map]	(m-1-2) edge[transform canvas={xshift=2pt}] node[right] {$j$} (m-2-3)
    								(m-2-1) edge[barred] node[below, inner sep=2pt] {$j_*$} (m-2-3)
    												edge[transform canvas={xshift=-2pt}] node[left] {$f$} (m-3-2)
    								(m-2-3)	edge[transform canvas={xshift=2pt}] node[right] {$g$} (m-3-2);
    		\path				(m-1-2) edge[eq, transform canvas={xshift=-2pt}] (m-2-1);
    		\path				(m-2-2) edge[cell, transform canvas={yshift=0.1em}]	node[right, inner sep=3pt] {$\psi'$} (m-3-2);
    		\draw				([yshift=-0.5em]$(m-1-2)!0.5!(m-2-2)$) node[font=\scriptsize] {$\cocart$};
  		\end{tikzpicture}
  	\end{displaymath}
	\end{proposition}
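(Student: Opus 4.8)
The proposition packages two implications: first, that cartesianness of $\psi'$ always forces $\psi$ to exhibit $j$ as an absolute left lifting in $V(\K)$; and second, that under the hypothesis of cocartesian tabulations the converse holds. I would prove them in that order.

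For the first implication the plan is simply to restrict the universal property of the cartesian cell $\psi'$ to nullary cells whose horizontal source is an empty path. For every pair of vertical morphisms $h\colon X \to A$ and $k\colon X \to B$, cartesianness of $\psi'$ gives a bijection between nullary cells $(X) \Rar C$ over $(f \of h, g \of k)$ — that is, $2$-cells $f \of h \Rar g \of k$ in $V(\K)$ — and cells $(X) \Rar j_*$ over $(h, k)$. By the companion identities for $j_*$ (\lemref{companion identities lemma}), the latter cells are in turn in bijection with $2$-cells $j \of h \Rar k$ in $V(\K)$, via postcomposition with the cartesian cell $j_* \Rar B$. Chasing these bijections through the factorisation $\psi = \psi' \of (\text{cocart})$ and the interchange axioms (\lemref{horizontal composition}) identifies the resulting bijection $\{\,j \of h \Rar k\,\} \to \{\,f \of h \Rar g \of k\,\}$ with postcomposition by $g$ followed by precomposition with $\psi \of h$. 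Since this holds for all $h$ and $k$, that is precisely the statement that $\psi$ exhibits $j$ as an absolute left lifting of $f$ along $g$.

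For the converse one must upgrade the absolute‑left‑lifting hypothesis, which a priori only controls cells with empty horizontal source, to the full cartesianness of $\psi'$, whose universal property concerns nullary cells $\chi\colon \ul H \Rar C$ over $(f \of h, g \of k)$ for an arbitrary path $\ul H\colon X \brar Y$. Here the plan is to absorb the path $\ul H$ into the precomposing morphism using the cocartesian tabulation: form $\tab{\ul H}$ together with its projections $\pi_X\colon \tab{\ul H} \to X$ and $\pi_Y\colon \tab{\ul H} \to Y$ and the cocartesian path of cells making up the configuration~\eqref{piramid}. Precomposition with that cocartesian path identifies nullary cells $\chi\colon \ul H \Rar C$ over $(f \of h, g \of k)$ with $2$-cells $f \of (h \of \pi_X) \Rar g \of (k \of \pi_Y)$ in $V(\K)$, and likewise identifies cells $\ul H \Rar j_*$ over $(h, k)$ with $2$-cells $j \of (h \of \pi_X) \Rar k \of \pi_Y$ (again via the companion identities for $j_*$, as in the proof of \propref{cells corresponding to vertical cells}). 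Under these identifications the operation ``factor through $\psi'$'' becomes postcomposition by $g$ and precomposition with $\psi \of (h \of \pi_X)$, which is a bijection by the absolute‑left‑lifting hypothesis applied to the morphisms $h \of \pi_X\colon \tab{\ul H} \to A$ and $k \of \pi_Y\colon \tab{\ul H} \to B$. Hence every such $\chi$ factors uniquely through $\psi'$, so $\psi'$ is cartesian.

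The main obstacle is the bookkeeping in the converse: one must verify carefully that precomposition with the tabulation pyramid really does yield the claimed bijections on cells into both $C$ and $j_*$ — this combines \propref{cells corresponding to vertical cells}(a)–(b) with the companion yoga for $j_*$ — and that the square relating ``factor through $\psi'$'' to the left‑lifting bijection commutes. Each of these is a routine application of the pasting lemma for cocartesian paths (\lemref{pasting lemma for cocartesian paths}) and the interchange axioms, but they require tracking several layers of cells simultaneously.
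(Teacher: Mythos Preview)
Your proposal is correct and follows essentially the same approach as the paper's proof: both directions are handled by setting up a commuting triangle of assignments relating cells into $j_*$, $2$-cells $j\of p \Rar q$, and $2$-cells $f\of p \Rar g\of q$, using the companion identities for $j_*$ on one leg and $\psi'\of\dash$ on another; for the converse both you and the paper absorb an arbitrary horizontal source $\ul H$ into the vertical data via the cocartesian tabulation pyramid~\eqref{piramid}, reducing to the empty-source case already handled.
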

	
	For an example of a cell $\psi$ defining an absolute left lifting whose factorisation $\psi'$, as above, is not cartesian see \exref{no good yoneda structure on (Cat, Cat')-Prof} below.
	
	\begin{proof}
		For any vertical morphisms $\map pXA$ and $\map qXB$ consider the commuting diagram of assignments between collections of cells in $\K$ on the left below, where the assignment on the left is given by composing with the cartesian cell that defines $j_*$. By definition, the vertical cell $\psi$ defines $j$ as the absolute left lifting of $f$ along $g$ in $V(\K)$ when the bottom assignment is a bijection, so that the proof of the first assertion follows from the fact that, assuming that $\psi'$ is cartesian, both the assignment on the left and that at the top are bijections.
		\begin{displaymath}

		\end{displaymath}
		For the converse assume that $\K$ has cocartesian tabulations and that $\psi$ defines $j$ as an absolute left lifting. It follows that, for any vertical morphisms $p$ an $q$, the top assignment in the diagram on the left above is a bijection; that is, any vertical cell $\chi$ as in the collection on the right factors uniquely through $\psi'$. To prove that $\psi'$ is cartesian in $\K$ we have to show that any nullary cell $\chi'$ as on the right above factors uniquely through $\psi'$ as well. But this follows the fact that any such $\chi'$ corresponds to a cell of the form $\chi$, by composing it with the piramid-shaped cocartesian path that defines the tabulation $\tab{H_1, \dotsc, H_m}$, as in \eqref{piramid}; under this correspondence the factorisation of $\chi'$ through $\psi'$ corresponds to that of $\chi$. This completes the proof.
	\end{proof}
	
	\subsection{Pointwise Kan extensions in terms of pointwise weak Kan extensions} \label{pointwise Kan extensions section}
	 In the theorem below we prove that the notions of pointwise weak Kan extension and pointwise Kan extension coincide in augmented virtual double categories that have cocartesian trabulations. This is analogous to the situation for double categories, as described in Section~5 of \cite{Koudenburg14a}, and reminiscent of Street's definition of pointwise Kan extension in $2$-categories (see \cite{Street74b}), which is given in terms of ordinary Kan extensions.
	\begin{theorem} \label{pointwise Kan extensions in terms of pointwise weak Kan extensions}
		In an augmented virtual double category that has cocartesian tabulations, all pointwise weak left Kan extensions are pointwise left Kan extensions.
	\end{theorem}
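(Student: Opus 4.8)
The plan is to establish the stronger‑looking (but in fact equivalent) statement that every pointwise weak left Kan extension is a left Kan extension in the sense of \defref{left Kan extension}; the pointwise strengthening then follows for free, since by \lemref{properties of pointwise left Kan extensions}(b) every restriction of a pointwise weak left Kan extension is again one, so that \defref{pointwise left Kan extension} applies at once. Thus fix a nullary cell $\cell\eta{(J_1, \dotsc, J_n)}M$, with $n \geq 1$, defining $l$ as the pointwise weak left Kan extension of $d$. I would show that an arbitrary test cell $\cell\phi{(J_1, \dotsc, J_n, H_1, \dotsc, H_m)}M$ as in \defref{left Kan extension} factors uniquely through $\eta$, arguing by induction on the length $m$ of the trailing path, uniformly over all pointwise weak left Kan extensions. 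The base case $m = 0$ is precisely the weak left Kan extension property, which holds by \lemref{properties of pointwise left Kan extensions}(a).

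For the inductive step $m \geq 1$, write $B$ for the common vertical target of $J_n$ and source of $H_1$, and form the cocartesian tabulation $\tab{H_1}$, with projections $\pi_B$ and $\pi_{B_1}$ and cocartesian defining cell $\pi_{H_1}\colon (\tab{H_1}) \Rar H_1$. Since this cell is cocartesian, all the restrictions it references exist (\defref{cocartesian paths}(a)), in particular $J_n(\id, \pi_B)$ and $H_2(\pi_{B_1}, \id)$; and whiskering $\pi_{H_1}$ by $(J_1, \dotsc, J_n)$ on the left and $(H_2, \dotsc, H_m)$ on the right (the whiskered path of \defref{cocartesian paths}(b), together with its evident degenerate form when $m = 1$, which is covered by \lemref{pasting lemma for cocartesian paths}) produces a cocartesian path whose precomposition sets up a bijection between test cells $\cell\phi{(J_1, \dotsc, J_n, H_1, \dotsc, H_m)}M$ and test cells $\cell\psi{(J_1, \dotsc, J_{n'}, J_n(\id, \pi_B), H_2(\pi_{B_1}, \id), H_3, \dotsc, H_m)}M$. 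Writing $\eta_{\pi_B} \coloneqq \eta \of (\id_{J_1}, \dotsc, \id_{J_{n'}}, \cart)$ for the restriction of $\eta$ along $\pi_B$, where $\cart$ is the cartesian cell defining $J_n(\id, \pi_B)$: because this same cartesian cell occurs in the whiskered cocartesian path, the displayed bijection carries factorisations $\phi = \eta \hc \phi'$ to factorisations $\psi = \eta_{\pi_B} \hc \psi'$, where $\psi'$ has trailing path $(H_2(\pi_{B_1}, \id), H_3, \dotsc, H_m)$ of length $m - 1$. By \lemref{properties of pointwise left Kan extensions}(b) the cell $\eta_{\pi_B}$ defines $l \of \pi_B$ as a pointwise weak left Kan extension of $d$ along $(J_1, \dotsc, J_{n'}, J_n(\id, \pi_B))$, so the induction hypothesis shows it is a left Kan extension; hence $\psi$ factors uniquely through $\eta_{\pi_B}$, and transporting back along the bijection, $\phi$ factors uniquely through $\eta$. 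This closes the induction.

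The step I expect to require the most care is this inductive step: identifying the whiskered cocartesian path precisely — especially the degenerate case $m = 1$, where there is nothing to the right of $H_1$ — and checking that the resulting bijection on test cells is compatible with horizontal composition with $\eta$ on the one side and $\eta_{\pi_B}$ on the other. Both are unwindings of the definitions of vertical and horizontal composition against the universal property of \defref{cocartesian paths}, but they are fiddly. One could lighten the indices by first using \propref{cells corresponding to vertical cells}(a) to reduce to the case $n = 1$ — modulo checking that the reduction along the piramid path \eqref{piramid} preserves pointwiseness — though this does not eliminate the essential argument above.
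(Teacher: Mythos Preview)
Your argument is correct and rests on the same mechanism as the paper's: collapse the trailing path $(H_1, \dotsc, H_m)$ via cocartesian tabulations, observe that the induced bijection on test cells is compatible with factorisation through $\eta$ versus its restriction, and invoke the pointwise weak property of $\eta$ on the restricted side. The difference is organisational. You proceed by induction on $m$, tabulating $H_1$ alone at each step and appealing to the induction hypothesis for the shortened trailing path $(H_2(\pi_{B_1}, \id), H_3, \dotsc, H_m)$; the paper instead builds the iterated tabulation $\tab{H_1, \dotsc, H_m}$ via the pyramid \eqref{piramid} once and for all, so that the trailing path is absorbed in a single application of \lemref{unique factorisations factorised through a cocartesian cell on the right} and the factorisation reduces immediately to the weak ($m=0$) case. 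The fiddly compatibility check you flag is exactly the content of \lemref{unique factorisations factorised through a cocartesian cell on the right}, which you are re-deriving in special cases rather than invoking; citing it would both shorten your inductive step and dispose of the degenerate case $m = 1$ cleanly, since the required cocartesian subpath is then just $(\pi_{H_1})$ itself. Your inductive route buys nothing extra here, but it is a perfectly valid unwinding of the paper's argument.

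One phrasing to tighten: you write that the induction hypothesis shows $\eta_{\pi_B}$ ``is a left Kan extension'', but strictly the hypothesis only gives the factorisation property for trailing paths of length $< m$; that is all you need, since $\psi$ has trailing length $m-1$, and you should say so.
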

	\begin{proof}
		Consider a nullary cell $\eta$ as on the left below and assume that it defines $l$ as the pointwise weak left Kan extension of $d$ along $(J_1, \dotsc, J_n)$. Below we prove that $\eta$ defines $l$ as a left Kan extension; thus, from applying the proof to composites of $\eta$ with a cartesian cell defining restrictions $J_n(\id, f)$, which again define weak left Kan extensions by \lemref{properties of pointwise left Kan extensions}(b), we conclude that $\eta$ defines $l$ as a pointwise left Kan extension.
		\begin{displaymath}
			\begin{tikzpicture}[baseline]
				\matrix(m)[math35]{A_0 & A_1 & A_{n'} & A_n \\};
				\draw	([yshift=-3.25em]$(m-1-1)!0.5!(m-1-4)$) node (M) {$M$};
				\path[map]	(m-1-1) edge[barred] node[above] {$J_1$} (m-1-2)
														edge[transform canvas={yshift=-2pt}] node[below left] {$d$} (M)
										(m-1-3) edge[barred] node[above] {$J_n$} (m-1-4)
										(m-1-4) edge[transform canvas={yshift=-2pt}] node[below right] {$l$} (M);
				\path				($(m-1-1.south)!0.5!(m-1-4.south)$) edge[cell] node[right] {$\eta$} (M);
				\draw				($(m-1-2)!0.5!(m-1-3)$) node {$\dotsb$};
			\end{tikzpicture}\begin{tikzpicture}[baseline]
				\matrix(m)[math35]{A_0 & A_1 &[-0.5em] A_{n'} & A_n & B_1 &[-0.5em] B_{m'} & B_m \\};
				\draw				([yshift=-3.25em]$(m-1-1)!0.5!(m-1-7)$) node (M) {$M$};
				\path[map]	(m-1-1) edge[barred] node[above] {$J_1$} (m-1-2)
														edge[transform canvas={yshift=-2pt}] node[below left] {$d$} (M)
										(m-1-3) edge[barred] node[above] {$J_n$} (m-1-4)
										(m-1-4) edge[barred] node[above] {$H_1$} (m-1-5)
										(m-1-6) edge[barred] node[above] {$H_m$} (m-1-7)
										(m-1-7) edge[transform canvas={yshift=-2pt}] node[below right] {$k$} (M);
				\draw				($(m-1-2)!0.5!(m-1-3)$) node {$\dotsb$}
										($(m-1-5)!0.5!(m-1-6)$) node {$\dotsb$};
				\path				($(m-1-1.south)!0.5!(m-1-7.south)$) edge[cell] node[right] {$\phi$} (M);
			\end{tikzpicture}
		\end{displaymath}

		Hence consider a cell $\phi$ as on the right above; we have to show that it factors uniquely through $\eta$. To do so we consider the tabulation $\tab{H_1, \dotsc, H_m}$ analogous to \eqref{piramid}; remember that, since the tabulations used in constructing $\tab{H_1, \dotsc, H_m}$ are cocartesian, they form a piramid-shaped cocartesian path, as was discussed following \eqref{piramid}. It follows that, under precomposition with this path, cells of the form $\phi$ on the right above correspond bijectively to cells $\psi$ as on the left below. Using \lemref{unique factorisations factorised through a cocartesian cell on the right} we conclude that, under this correspondence, a unique factorsation of the cells $\phi$ through the composite on the left above corresponds to a unique factorisation of the cells $\psi$ through the composite on the right below, as vertical cells of the form $l \of \pi_{A_n} \Rar k \of \pi_{B_m}$.
		\begin{displaymath}
			\begin{tikzpicture}[baseline]
				\matrix(m)[math35, yshift=1.625em]{A_0 & A_1 & A_{n'} &[2em] \tab{H_1, \dotsc, H_m} \\};
				\draw				([yshift=-3.25em]$(m-1-1)!0.5!(m-1-4)$) node (M) {$M$};
				\path[map]	(m-1-1) edge[barred] node[above] {$J_1$} (m-1-2)
														edge[transform canvas={yshift=-2pt}] node[below left] {$d$} (M)
										(m-1-3) edge[barred] node[above, shift={(-3pt,2pt)}] {$J_n(\id, \pi_{A_n})$} (m-1-4)
										(m-1-4) edge[transform canvas={yshift=-2pt}] node[below right] {$k\of \pi_{B_m}$} (M);
				\draw				($(m-1-2)!0.5!(m-1-3)$) node {$\dotsb$};
				\path				($(m-1-1.south)!0.5!(m-1-4.south)$) edge[cell] node[right] {$\psi$} (M);
			\end{tikzpicture} \quad\quad\mspace{-11mu} \begin{tikzpicture}[baseline, yshift=1.625em]
				\matrix(m)[math35]{A_0 & A_1 & A_{n'} &[2em] \tab{H_1, \dotsc, H_m} \\ A_0 & A_1 & A_{n'} & A_n \\};
				\draw	([yshift=-6.5em]$(m-1-1)!0.5!(m-1-4)$) node (M) {$M$};
				\path[map]	(m-1-1) edge[barred] node[above] {$J_1$} (m-1-2)
														
										(m-1-3) edge[barred] node[above, shift={(-3pt,2pt)}] {$J_n(\id, \pi_{A_n})$} (m-1-4)
										(m-1-4) edge node[right] {$\pi_{A_n}$} (m-2-4)
										(m-2-4) edge[transform canvas={yshift=-2pt}] node[below right] {$l$} (M)
										(m-2-1) edge[barred] node[below, inner sep=2.5pt] {$J_1$} (m-2-2)
														edge[transform canvas={yshift=-2pt}] node[below left] {$d$} (M)
										(m-2-3) edge[barred] node[below, inner sep=2.5pt] {$J_n$} (m-2-4);
				\path				(m-1-1) edge[eq] (m-2-1)
										(m-1-2) edge[eq] (m-2-2)
										(m-1-3) edge[eq] (m-2-3);
				\path				($(m-2-1.south)!0.5!(m-2-4.south)$) edge[cell] node[right] {$\eta$} (M);
				\draw				($(m-1-2)!0.5!(m-2-3)$) node {$\dotsb$}
										($(m-1-3)!0.5!(m-2-4)$) node[font=\scriptsize] {$\cart$};
			\end{tikzpicture}
		\end{displaymath}
		That the cells $\psi$ do factorise uniquely through the composite on the right follows, by definition, from the assumption that $\eta$ defines $l$ as a pointwise weak left Kan extension. This completes the proof.
	\end{proof}
	
	In closing this section we show how \thmref{pointwise Kan extensions in terms of pointwise weak Kan extensions} can be used to extend \propref{weak left Kan extensions along companions} to the pointwise case, in the setting of augmented virtual equipments (\defref{augmented virtual equipment}) that have cocartesian tabulations. This generalises the corresponding result for double categories given in \cite{Koudenburg14a}.
	\begin{proposition} \label{pointwise left Kan extensions along companions}
		Consider the following factorisation in an augmented virtual equipment that has cocartesian tabulations.
		\begin{displaymath}
			\begin{tikzpicture}[textbaseline]
				\matrix(m)[math35, column sep={1.75em,between origins}]{& A & \\ & & B \\ & M & \\};
				\path[map]	(m-1-2) edge[bend left = 18] node[above right] {$j$} (m-2-3)
														edge[bend right = 45] node[left] {$d$} (m-3-2)
										(m-2-3) edge[bend left = 18] node[below right] {$l$} (m-3-2);
				\path[transform canvas={yshift=-1.625em}]	(m-1-2) edge[cell] node[right] {$\eta$} (m-2-2);
			\end{tikzpicture} = \begin{tikzpicture}[textbaseline]
    		\matrix(m)[math35, column sep={1.75em,between origins}]{& A & \\ A & & B \\ & M & \\};
    		\path[map]	(m-1-2) edge[transform canvas={xshift=2pt}] node[right] {$j$} (m-2-3)
    								(m-2-1) edge[barred] node[below, inner sep=2pt] {$j_*$} (m-2-3)
    												edge[transform canvas={xshift=-2pt}] node[left] {$d$} (m-3-2)
    								(m-2-3)	edge[transform canvas={xshift=2pt}] node[right] {$l$} (m-3-2);
    		\path				(m-1-2) edge[eq, transform canvas={xshift=-2pt}] (m-2-1);
    		\path				(m-2-2) edge[cell, transform canvas={yshift=0.1em}]	node[right, inner sep=3pt] {$\eta'$} (m-3-2);
    		\draw				([yshift=-0.5em]$(m-1-2)!0.5!(m-2-2)$) node[font=\scriptsize] {$\cocart$};
  		\end{tikzpicture}
  	\end{displaymath}
		The vertical cell $\eta$ defines $l$ as the pointwise left Kan extension of $d$ along $j$ in the $2$"/category $V(\K)$, in the sense of Section 4 of \cite{Street74b} (or see Section 2.4 of \cite{Weber07}), precisely if its factorisation $\eta'$ defines $l$ as the pointwise left Kan extension of $d$ along $j_*$ in $\K$.
	\end{proposition}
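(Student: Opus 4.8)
The plan is to reduce the statement to its ``weak'' counterpart. Since $\K$ is a hypervirtual equipment that has cocartesian tabulations, \thmref{pointwise Kan extensions in terms of pointwise weak Kan extensions} shows that $\eta'$ defines $l$ as the pointwise left Kan extension of $d$ along $j_*$ in $\K$ if and only if it defines $l$ as the pointwise \emph{weak} left Kan extension of $d$ along $j_*$. It therefore suffices to show that $\eta$ defines $l$ as the pointwise left Kan extension of $d$ along $j$ in $V(\K)$ precisely if $\eta'$ defines $l$ as the pointwise weak left Kan extension of $d$ along $j_*$ in $\K$. Since $\K$ is a hypervirtual equipment, the restriction $j_*(\id, f)$ exists for every vertical morphism $\map f{B'}B$, so both universal properties are tested against the same family of morphisms; I would compare them $f$ by $f$.

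Fix $\map f{B'}B$. The main conceptual ingredient is that the comma object $j \downarrow f$ appearing in Street's pointwise property (in the sense of Section~4 of \cite{Street74}, or Section~2.4 of \cite{Weber07}) is realised inside $\K$ as the tabulation $P \coloneqq \tab{j_*(\id, f)}$: its legs $\map{\pi_A}PA$ and $\map{\pi_{B'}}P{B'}$ are the comma projections, and the comma $2$-cell $\nat\kappa{j \of \pi_A}{f \of \pi_{B'}}$ is the one classified by the tabulation cell $P \Rar j_*(\id, f)$ followed by the cartesian cell $j_*(\id, f) \Rar j_*$; that this tabulation cell is cocartesian is exactly the hypothesis on $\K$. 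Using this, the chain of equivalences at $f$ runs: Street's property at $f$ says that the $2$-cell $d \of \pi_A \Rar l \of f \of \pi_{B'}$ pasted from $\eta$ and $\kappa$ exhibits $l \of f$ as the left Kan extension of $d \of \pi_A$ along $\pi_{B'}$ in $V(\K)$; by \propref{weak left Kan extensions along companions} (applied with $d \of \pi_A$, $\pi_{B'}$ and $l \of f$ playing the roles of $d$, $j$ and $l$) this is equivalent to saying that its factorisation through the cocartesian cell defining $(\pi_{B'})_*$ exhibits $l \of f$ as the weak left Kan extension of $d \of \pi_A$ along $(\pi_{B'})_*$ in $\K$; and, using the cocartesianness of the tabulation cell together with the Kan-extension results of this section for composites and conjoints (\propref{Kan extension along conjoints}, \corref{Kan extensions and composites}) and the two pasting lemmas, this is in turn equivalent to saying that $\eta'$, composed with the cartesian cell $j_*(\id, f) \Rar j_*$, exhibits $l \of f$ as the weak left Kan extension of $d$ along $j_*(\id, f)$. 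Quantifying over all $f$ and comparing with \defref{pointwise left Kan extension} applied to $\eta'$ along $j_*$ gives the reduced statement, which \thmref{pointwise Kan extensions in terms of pointwise weak Kan extensions} then promotes to the desired one.

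The step I expect to be the main obstacle is the last equivalence in that chain: not the conceptual point but the bookkeeping needed to see that the cell produced is \emph{exactly} the composite of $\eta'$ with the cartesian cell $j_*(\id, f) \Rar j_*$, rather than merely a cell defining the same weak left Kan extension. This is a diagram chase that unfolds the factorisation $\eta = \cocart \of \eta'$ that defines $\eta'$, rewrites the pasted $2$-cell using the companion and conjoint identities (\lemref{companion identities lemma} and its horizontal dual) and the decomposition of the cocartesian tabulation cell of $j_*(\id, f)$, and then collapses the result with the horizontal and vertical pasting lemmas. I would also invoke \propref{cells corresponding to vertical cells}(a), whose content is precisely that passing between nullary cells over a cocartesian tabulation and vertical cells in $V(\K)$ preserves (weak) left Kan extensions, to keep the translation between the $V(\K)$- and $\K$-pictures rigorous throughout.
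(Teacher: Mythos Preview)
Your overall strategy matches the paper's: reduce to the pointwise weak case via \thmref{pointwise Kan extensions in terms of pointwise weak Kan extensions}, identify the comma object $j/f$ with the tabulation $\tab{B(j,f)}$ via the lemma immediately preceding the proposition, and compare the two universal properties one $f$ at a time.

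There is, however, a genuine gap in your chain of equivalences. You apply \propref{weak left Kan extensions along companions} with $\pi_{B'}$ in the role of $j$, and this requires the companion $(\pi_{B'})_*$ to exist. Nothing in the hypotheses guarantees this: a hypervirtual equipment only has all \emph{unary} restrictions, while companions are nullary restrictions, and the object $B'$ (the domain of an arbitrary $\map f{B'}B$) need not be unital. So the detour through $(\pi_{B'})_*$ cannot be carried out in general, and neither \propref{Kan extension along conjoints} nor \propref{cells corresponding to vertical cells}(a) repairs this: the former would require the conjoint $\pi_A^*$, which is equally unavailable, and the latter relates $(n,0)$-ary cells to $(1,0)$-ary cells rather than to vertical ones.

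The paper avoids the detour entirely. For each $f$ it sets up a single commuting triangle of assignments between three classes of cells: vertical cells $l \of f \Rar k$; the vertical cells over the comma object that Street's condition tests against; and nullary cells $B(j,f) \Rar M$ with boundary $d$, $k$. The map from the third class to the second, given by precomposition with the tabulation cell $\pi$, is always a bijection precisely because $\pi$ is cocartesian; the other two legs are bijections exactly when, respectively, Street's condition holds at $f$ and $\eta'$ composed with the cartesian cell $B(j,f) \Rar j_*$ defines a weak left Kan extension. Commutativity of the triangle then gives the equivalence at $f$ directly, with no auxiliary companion of a projection needed. This is the step where your ``bookkeeping'' worry should be replaced by the cocartesianness of $\pi$ itself.
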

	Street's notion of pointwise Kan extension in a $2$-category uses the well known notion of \emph{comma object}; see \cite{Street74b}, or \cite{Weber07} where it is called lax pullback. Instead of recalling the definition of comma object we record the following lemma, which relates it to that of tabulation.
	\begin{lemma}
		Consider vertical morphisms $\map fAC$ and $\map gBC$ in an augmented virtual double category $\K$. If both the nullary cartesian cell and the tabulation below exist then their composite defines $\tab{C(f, g)}$ as the comma object $f \slash g$ of $f$ and $g$ in $V(\K)$.
		\begin{displaymath}
			\begin{tikzpicture}
    		\matrix(m)[math35, column sep={1.75em,between origins}]{& \tab{C(f,g)} & \\ A & & B \\ & C & \\};
    		\path[map]	(m-1-2)	edge[transform canvas={xshift=-2pt}] node[left] {$\pi_A$} (m-2-1)
    												edge[transform canvas={xshift=2pt}] node[right] {$\pi_B$} (m-2-3)
    								(m-2-1) edge[barred] node[below, inner sep=2pt] {$C(f,g)$} (m-2-3)
    												edge[transform canvas={xshift=-2pt}] node[below left] {$f$} (m-3-2)
    								(m-2-3)	edge[transform canvas={xshift=2pt}] node[right] {$g$} (m-3-2);
    		\path				(m-1-2) edge[cell, transform canvas={yshift=-0.25em}]	node[right] {$\pi$} (m-2-2);
    		\draw				([yshift=0.1em]$(m-2-2)!0.5!(m-3-2)$) node[font=\scriptsize] {$\cart$};
  		\end{tikzpicture}
		\end{displaymath}
	\end{lemma}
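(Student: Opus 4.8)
The plan is to verify the $1$-dimensional and $2$-dimensional universal properties of the comma object $f\slash g$ directly, using the universal properties of the cartesian cell $\sigma\colon C(f,g)\Rar C$ (in the sense of \defref{cartesian cells}) and of the tabulation $\pi$ of $C(f,g)$ (in the sense of \defref{tabulation}) in succession. Recall that the comma object $f\slash g$ is defined by a cell $\kappa$ with vertical projections $\map pXA$, $\map qXB$ and a $2$-cell $\cell\kappa{f\of p}{g\of q}$ satisfying: (1-dim) every cell $\cell\chi{f\of u}{g\of v}$, with $\map uYA$ and $\map vYB$, factors as $\chi = \kappa\of\id_w$ for a unique $\map wY{f\slash g}$; and (2-dim) a corresponding uniqueness property for cells between such data. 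So first I would exhibit the comparison cell: the composite in the statement is a nullary cell out of $\tab{C(f,g)}$ which, by whiskering $\pi_A$ and $\pi_B$ and using that the bottom cell is $\sigma$, yields a vertical $2$-cell $\cell{\bar\kappa}{f\of\pi_A}{g\of\pi_B}$ in $V(\K)$; this will be the candidate defining cell.

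For the $1$-dimensional property, given $\map uYA$, $\map vYB$ and a vertical cell $\cell\chi{f\of u}{g\of v}$, I would first use the universal property of $\sigma$ (nullary cartesian) to factor the nullary cell "$\chi$ viewed as a cell $(Y)\Rar C$ with legs $f\of u$, $g\of v$" uniquely through $\sigma$ as a nullary cell $\cell{\chi'}{(Y)}{C(f,g)}$ with legs $u$ and $v$; then apply the $1$-dimensional universal property of the tabulation $\pi$ to $\chi'$ to obtain a unique $\map wY{\tab{C(f,g)}}$ with $\pi\of\id_w = \chi'$, hence $\pi_A\of\id_w = u$, $\pi_B\of\id_w = v$ and $\bar\kappa\of\id_w = \chi$. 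Uniqueness of $w$ follows by tracing back through the two uniqueness statements. The $2$-dimensional property is handled the same way one level up: a cell between two such wedges corresponds, via the cartesianness of $\sigma$, to a cell between the corresponding wedges into $C(f,g)$, and then the $2$-dimensional universal property of $\pi$ produces the required unique comparison cell into $\tab{C(f,g)}$; here one must check that the compatibility hypothesis of \defref{tabulation} (the equation relating $\xi_A$ and $\xi_B$) matches the analogous hypothesis for the comma object after transporting along $\sigma$, which again is immediate from the uniqueness of factorisations through the cartesian cell.

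The only genuinely delicate point is bookkeeping: one must be careful that the nullary cells being fed to $\sigma$ are of exactly the right shape (legs $f\of u$ and $g\of v$ versus $u$ and $v$), and that whiskering the tabulation cell $\pi$ by $\pi_A,\pi_B$ and then postcomposing the bottom edge with $f,g$ indeed reconstructs $\bar\kappa$ — this is just the pasting $\sigma\of\pi = $ (the composite in the statement), read off from the diagram. Neither half of the argument is hard; it is essentially the observation that "factor through $\sigma$, then factor through $\pi$" composes two universal properties into one, which is precisely the universal property of the comma object. I expect the main obstacle to be purely notational, namely keeping the vertical sources and targets of the many nullary cells straight while invoking \defref{cartesian cells} and \defref{tabulation} in the correct order.
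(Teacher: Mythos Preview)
Your proposal is correct and follows exactly the approach of the paper: the paper's proof is a single sentence observing that the universal properties of the comma object in $V(\K)$ correspond precisely to those of the tabulation of $C(f,g)$ after factoring through the nullary cartesian cell defining $C(f,g)$. You have simply spelled out this correspondence in detail for both the $1$-dimensional and $2$-dimensional properties, which is entirely appropriate.
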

	\begin{proof}
		This follows immediately from the fact that the universal properties defining the comma object of $f$ and $g$ in $V(\K)$ correspond precisely to universal properties satisfied by the tabulation of $C(f,g)$ in $\K$, by factorising them through the nullary cartesian cell that defines $C(f, g)$.
	\end{proof}
	\begin{proof}[Proof of \propref{pointwise left Kan extensions along companions}]
		Given any $\map fCB$ we consider the composite on the left below, where $\eta = \eta' \of \cocart$ and where $\cart \of \pi$ defines the comma object $j \slash f$, by the previous lemma. Hence $\eta$ defines $l$ as the pointwise left Kan extension in the $2$-category $V(\K)$ if, for any morphism $f$, the composite on the left defines $l \of f$ as a left Kan extension. The latter in turn means that the top assignment in the commutative diagram on the right, of assignments between collections of cells in $\K$ as shown, is a bijection.
		\begin{displaymath}

		\end{displaymath}
		The bottom assignment on the other hand is a bijection, for every $f$, precisely when $\eta'$ defines $l$ as the pointwise left Kan extension of $d$ along $j_*$: this is a consequence of \thmref{pointwise Kan extensions in terms of pointwise weak Kan extensions} and the fact that the composite $\cell{\cocart \hc \cart}{B(j, f)}{j_*}$, in the composite on the left, is cartesian. The proof now follows from the fact that the right assignment above is a bijection, as $\pi$ is cocartesian.
	\end{proof}
	
	The following is Example 2.24 of \cite{Koudenburg13}.
	\begin{example}
		Recall from \exref{tabulations of 2-profunctors} that tabulations of $2$-profunctors are in general not cocartesian. As a consequence the equivalence described in \propref{pointwise left Kan extensions along companions} above fails to hold in the augmented virtual equipment $\enProf{(\Cat, \Cat')}$. For a counter example consider the $2$-natural transformation of $2$-functors below, where $1$ is the terminal $2$-category and the collapsing $2$-functor on the left has the `free living' cell as source and the free living parallel pair of arrows as target. It is straightforward to check that this transformation defines the collapsing $2$-functor as the enriched left Kan extension of $x'$ along $x$, in the sense of e.g.\ Section 4.1 of \cite{Kelly82}, while the pointwise left Kan extension of $x'$ along $x$ in the $2$-category $\twoCat$, of $2$-categories, $2$-functors and $2$-natural transformations, and in the sense of Section 4 of \cite{Street74b}, does not exist.
		\begin{displaymath}
			\begin{tikzpicture}
				\matrix(m)[math35, column sep=2.5em, xshift=5em]{ & \\};
				\path[map]	(m-1-1) edge[bend left=40] (m-1-2)
														edge[bend right=40] (m-1-2);
				\fill[font=\scriptsize]	(m-1-1) circle (1pt) node[left, inner sep=2pt] {$x$}
							(m-1-2) circle (1pt) node[right, inner sep=2pt] {$y$};
				\path	($(m-1-1)!0.5!(m-1-2)+(0,0.8em)$) edge[cell] ($(m-1-1)!0.5!(m-1-2)-(0,0.8em)$);
				
				\matrix(m)[math35, column sep=2.5em, yshift=-5em]{ & \\};
				\path[map]	(m-1-1) edge[bend left=40] (m-1-2)
														edge[bend right=40] (m-1-2);
				\fill[font=\scriptsize]	(m-1-1) circle (1pt) node[left, inner sep=1pt] {$x'$}
							(m-1-2) circle (1pt) node[right, inner sep=2pt] {$y'$};
				
				\draw	(-5em,0) node {$1$};
				
				\draw[font=\Large]	(2.5em,0) node {$($}
							(7.5em,0) node {$)$}
							(-2.75em,-5em) node {$($}
							(2.75em,-5em) node {$)$};
				
				\path[map]	(-3em, 0) edge node[above] {$x$} (0.5em, 0)
										(-4.5em, -1.25em) edge node[below left] {$x'$} (-2.75em, -3.75em)
										(4.5em, -1.25em) edge node[below right] {(collapse onto $x'$)} (2.75em, -3.75em);
				
				\path				(-1em,-2.5em) edge[cell] node[above] {(identity at $x'$)} (1em,-2.5em);
			\end{tikzpicture}
		\end{displaymath}
	\end{example}

	\section{Yoneda embeddings}\label{yoneda embeddings section}
	We are now ready to introduce the main notion of this paper, that of yoneda embedding in augmented virtual double categories. Informally a yoneda embedding is a vertical morphism that is `dense' (as defined below) and that satifies an axiom that ``formally captures the Yoneda's lemma''. These two conditions are closely related to the axioms satisfied by morphisms that make up a `yoneda structure' on a $2$-category, as introduced by Street and Walters in \cite{Street-Walters78}; see also \cite{Weber07}. Our position of regarding horizontal morphisms as ``being small in size'' enables us to give a relatively simple definition of a single yoneda embedding. This in contrast to the definition of yoneda structure on a $2$-category, which consists of a collection of yoneda embeddings that satisfy a ``formal Yoneda's lemma'' with respect to a specified collection of `admissible' morphisms (informally these are to be thought of as ``small in size'').
	
	\subsection{Definition of yoneda embedding}
	We start with the notion of density. Recall from Section 5.1 of \cite{Kelly82} that one way of defining a $\V$-functor $\map fAM$ to be \emph{dense} is to require that its identity $\V$"/natural transformation $\nat{\id_f}ff$ defines $\id_M$ as the (enriched) left Kan extension of $f$ along itself. As we have seen in \propref{enriched left Kan extensions in terms of weighted colimits}, the latter is equivalent to asking that the cartesian cell defining the companion $\hmap{f_*}AM$, in $\enProf{(\V, \V')}$, defines $\id_M$ as the pointwise left Kan extension of $f$ along $f_*$. In general, for a vertical morphism $\map fAM$ in any augmented virtual double category, further equivalent conditions are given by the following lemma. We call $f$ \emph{dense} if these equivalent conditions are satisfied.
	\begin{lemma} \label{density axioms}
		For a vertical morphism $\map fAM$ in an augmented virtual double category the following conditions are equivalent:
		\begin{enumerate}[label=\textup{(\alph*)}]
			\item	if a cell $\eta$, as on the left below, is cartesian then it defines $l$ as the left Kan extension of $f$ along $J$;
			\item if a cell $\eta$ as below is cartesian then it defines $l$ as the pointwise left Kan extension of $f$ along $J$.
		\end{enumerate}
		If the companion $\hmap{f_*}AM$ exists then the following condition is equivalent too:
		\begin{enumerate}
			\item[\textup{(c)}]	the nullary cartesian cell on the right below, that defines the companion $f_*$, defines $\id_M$ as the pointwise left Kan extension of $f$ along $f_*$.
		\end{enumerate}
			\begin{displaymath}

	  \end{displaymath}
		
		(b) $\Rightarrow$ (c) is clear. For the converse consider a cartesian cell $\eta$ as on the right above and let $\eta'$ be its factorisation as shown; it is cartesian because $\eta$ is, by the pasting lemma. Assuming (c) we may apply \lemref{properties of pointwise left Kan extensions}(b) to find that $\eta$ defines $l$ as a pointwise left Kan extension; we conclude that (c) $\Rightarrow$ (b).
	\end{proof}
	
	Following the definition of dense morphism we define yoneda embeddings.
	\begin{definition} \label{yoneda embedding}
		A dense vertical morphism $\map\yon A{\ps A}$ in an augmented virtual double category is called a \emph{yoneda embedding} if it satisfies the \emph{yoneda axiom}: for every horizontal morphism $\hmap JAB$ there exists a cartesian cell
		\begin{displaymath}
			\begin{tikzpicture}
				\matrix(m)[math35, column sep={1.75em,between origins}]{A & & B \\ & \ps A. & \\};
				\path[map]	(m-1-1) edge[barred] node[above] {$J$} (m-1-3)
														edge[transform canvas={xshift=-2pt}] node[left] {$\yon$} (m-2-2)
										(m-1-3) edge[transform canvas={xshift=2pt}] node[right] {$\cur J$} (m-2-2);
				\draw				([yshift=0.333em]$(m-1-2)!0.5!(m-2-2)$) node[font=\scriptsize] {$\cart$};
			\end{tikzpicture}
	  \end{displaymath}
	\end{definition}
	
	Notice that the density of $\map\yon A{\ps A}$ implies that the vertical morphism $\cur J$ above is unique up to vertical isomorphism. If the yoneda embedding above exists then we call $\ps A$ the \emph{object of presheaves on $A$}, or \emph{presheaf object} for short. Yoneda embeddings $\map\yon A{\ps A}$ such that all restrictions $\ps A(\yon, f)$ exist, for any $\map fB{\ps A}$, are especially pleasant to work with; we call them \emph{good} yoneda embeddings. Notice that, when considered in an augmented virtual equipment, the latter condition reduces to the existence of the companion $\hmap{\yon_*}A{\ps A}$, since $\ps A(\yon, f) \iso \yon_*(\id, f)$.
	
	Writing $H(\K)(A, B)$ for the category of horizontal morphism $A \brar B$ and the horizontal cells between them, we remark that choosing a restriction $\ps A(\yon, g)$ for each $\map gB{\ps A}$ induces a functor $\map{\ps A(\yon, \dash)}{V(\K)(B, \ps A)}{H(\K)(A, B)}$ whose action $\phi \mapsto \ps A(\yon, \phi)$ on cells is defined by the unique factorisations
	\begin{displaymath}
		\begin{tikzpicture}[textbaseline]
			\matrix(m)[math35, column sep={1.75em,between origins}]{A & & B \\ & \ps A & \\};
				\path[map]	(m-1-1) edge[barred] node[above] {$\ps A(\yon, f)$} (m-1-3)
														edge[transform canvas={xshift=-2pt}] node[left] {$\yon$} (m-2-2)
										(m-1-3) edge[transform canvas={xshift=3pt}] node[above left, inner sep=-0.75pt] {$f$} (m-2-2)
										(m-1-3) edge[bend left=65] node[below right] {$g$} (m-2-2);
				\draw				([yshift=0.25em]$(m-1-2)!0.5!(m-2-2)$) node[font=\scriptsize] {$\cart$}
										(m-1-2) edge[cell, transform canvas={shift={(1.5em,-0.35em)}}] node[right, inner sep=3pt] {$\phi$} (m-2-2);
		\end{tikzpicture} = \begin{tikzpicture}[textbaseline]
			\matrix(m)[math35, column sep={1.75em,between origins}]{A & & B \\ A & & B. \\ & \ps A & \\};
			\path[map]	(m-1-1) edge[barred] node[above] {$\ps A(\yon, f)$} (m-1-3)
									(m-2-3) edge[ps, transform canvas={xshift=1pt}] node[right] {$g$} (m-3-2)
									(m-2-1) edge[barred] node[below, inner sep=1.5pt] {$\ps A(\yon, g)$} (m-2-3)
													edge[ps, transform canvas={xshift=-1pt}] node[left] {$\yon$} (m-3-2);
			\path				(m-1-2) edge[cell, transform canvas={xshift=-1.275em}] node[right] {$\ps A(\yon, \phi)$} (m-2-2)
									(m-1-1) edge[eq] (m-2-1)
									(m-1-3) edge[eq] (m-2-3);
			\draw				([yshift=0.2em]$(m-2-2)!0.5!(m-3-2)$) node[font=\scriptsize] {$\cart$};
		\end{tikzpicture}
	\end{displaymath}
	Notice that the density of $\yon$ ensures that $\ps A(\yon, \dash)$ is full and faithful. Given a vertical morphism $\map\yon A{\ps A}$ in a `bicategory equipped with proarrows' (see the remarks preceding \defref{augmented virtual equipment}), Melli\`es and Tabareau consider in \cite{Mellies-Tabareau08} a functor analogous to $\ps A(\yon, \dash)$ above; they define $\yon$ to be a `yoneda situation' if both $\yon$ and this functor are full and faithful.
	
	At the end of this subsection we will show that a $\V'$-category $A$ admits a good yoneda embedding in $\enProf{(\V, \V')}$ (\exref{(V, V')-Prof}) precisely if $A$ is a $\V$-category. First we record some basic properties of yoneda embeddings; the following are variations of Lemma 3.2 and Corollary 3.5 of \cite{Weber07}.
	\begin{lemma} \label{left Kan extensions along yoneda embeddings}
		Let $\map\yon A{\ps A}$ be a yoneda embedding. If the nullary cell below defines $l$ as the (weak) left Kan extension of $J$ along $\yon$ then it is cartesian and, moreover, it defines $l$ as a pointwise left Kan extension.
		\begin{displaymath}
			\begin{tikzpicture}
				\matrix(m)[math35, column sep={1.75em,between origins}]{A & & B \\ & \ps A & \\};
				\path[map]	(m-1-1) edge[barred] node[above] {$J$} (m-1-3)
														edge[transform canvas={xshift=-2pt}] node[left] {$\yon$} (m-2-2)
										(m-1-3) edge[transform canvas={xshift=2pt}] node[right] {$l$} (m-2-2);
				\path[transform canvas={yshift=0.25em}]	(m-1-2) edge[cell] node[right, inner sep=3pt] {$\eta$} (m-2-2);
			\end{tikzpicture}
	  \end{displaymath}
	\end{lemma}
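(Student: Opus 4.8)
The plan is to exploit the yoneda axiom to compare an arbitrary cartesian cell defining $\cur J$ with the hypothetical Kan-extension cell $\eta$, and then transport pointwiseness across the resulting isomorphism. First I would invoke the yoneda axiom for $\hmap JAB$ to obtain a cartesian cell $\cell\theta{(J)}{\cur J}$ with $\yon$ as vertical source; this is cartesian by definition. Since $\yon$ is dense, condition (b) of \lemref{density axioms} applies to $\theta$, so $\theta$ in fact defines $\cur J$ as the \emph{pointwise} left Kan extension of $\yon$ along $J$. (One small point: \lemref{density axioms}(b) is phrased for cartesian cells $\eta$ with horizontal source a single morphism $J$, which is exactly the shape here, so it applies directly.)

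Next I would use the universal property of weak left Kan extension to compare $\eta$ and $\theta$. Both $\eta$ (by hypothesis) and $\theta$ (just established) define weak left Kan extensions of $J$ along $\yon$, so by the uniqueness clause noted after \defref{weak left Kan extension} there is an invertible vertical cell $\cell\iota l{\cur J}$ with $\iota \of \eta = \theta$, i.e.\ $\eta$ and $\theta$ factor through each other as invertible vertical cells. Then I would argue that pointwiseness and cartesianness are invariant under composition with an invertible vertical cell on the vertical-target side: composing a cartesian cell with an invertible vertical cell yields a cartesian cell (this is immediate from the factorisation property, or from the pasting lemma \lemref{pasting lemma for cartesian cells} together with \exref{isomorphisms have cartesian identity cells}, since $\id_\iota$ is cartesian when $\iota$ is an isomorphism); and likewise a (pointwise) left Kan extension cell composed with an invertible vertical cell is again one, by the last sentence of \lemref{properties of pointwise left Kan extensions} or directly from \lemref{vertical cells defining left Kan extensions}. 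Since $\theta$ is cartesian and is a pointwise left Kan extension, transporting back along $\inv\iota$ shows $\eta$ is cartesian and is a pointwise left Kan extension, which is exactly the claim.

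The main obstacle I anticipate is bookkeeping rather than conceptual: making sure the ``transport along an invertible vertical cell'' steps are justified cleanly in the hypervirtual setting, where one must be careful that composing a nullary cell with $\id_\iota$ on the target side behaves as expected with respect to both the cartesian factorisation property and the Kan-extension factorisation property. I would handle cartesianness via the pasting lemma (the composite of the cartesian $\theta$ with the cartesian $\id_{\inv\iota}$ is cartesian, and that composite is $\eta$), and pointwiseness by observing that for any $\map fB{A_n}$ the composite of $\eta$ with a cartesian cell defining $J(\id,f)$ differs from the corresponding composite of $\theta$ only by postcomposition with $\inv\iota$, so \lemref{vertical cells defining left Kan extensions} reduces the pointwise-left-Kan-extension condition for the former to that for the latter. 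Everything else is a direct appeal to \lemref{density axioms} and the uniqueness of weak left Kan extensions.
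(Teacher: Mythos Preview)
Your proposal is correct and follows essentially the same approach as the paper's own proof: obtain the cartesian cell $\theta$ from the yoneda axiom, use density via \lemref{density axioms}(b) to see that $\theta$ defines a pointwise left Kan extension, compare $\eta$ and $\theta$ by uniqueness of (weak) left Kan extensions to get an invertible vertical cell, and transport cartesianness and pointwiseness along it. Your version is slightly more explicit about the transport step (invoking \exref{isomorphisms have cartesian identity cells} and the pasting lemma), which the paper simply asserts.
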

	\begin{proof}
		By the yoneda axiom there exists a morphism $\map{\cur J}B{\ps A}$ such that $J$ is the nullary restriction of $\ps A$ along $\yon$ and $g$, and the cartesian cell $\eps$ defining $J$ as this restriction defines $\cur J$ as a pointwise left Kan extension of $\yon_A$ along $J$ by \lemref{density axioms}(b). Since $\eta$ defines the same left Kan extension we conclude that it factors through $\eps$ as an invertible vertical cell $g \iso l$; it follows that, like $\eps$, $\eta$ is cartesian and defines $l$ as a pointwise left Kan extension.
	\end{proof}
	
	\begin{lemma}\hspace{-0.25cm}\footnote{\textbf{This result is false.} See Lemma~4.3 and Proposition~4.16 of \cite{Koudenburg22} instead.} \label{yoneda embedding full and faithful}
		For a yoneda embedding $\map\yon A{\ps A}$ the following hold:
		\begin{enumerate}[label=\textup{(\alph*)}]
			\item if $\yon$ is a good yoneda embedding then it is full and faithful, $A$ is unital, while $\cur{I_A} \iso \yon$;
			\item any $\map fAC$, such that the companion $f_*$ and the restriction $C(f, f)$ exist, is full and faithful precisely if the composite
			\begin{displaymath}
				\begin{tikzpicture}
	    		\matrix(m)[math35, column sep={1.75em,between origins}]{& A & \\ A & & C \\ & \ps A & \\};
  	  		\path[map]	(m-1-2) edge[transform canvas={xshift=2pt}] node[right] {$f$} (m-2-3)
  	  								(m-2-1) edge[barred] node[below, inner sep=2pt] {$f_*$} (m-2-3)
  	  												edge[transform canvas={xshift=-2pt}] node[left] {$\yon$} (m-3-2)
  	  								(m-2-3)	edge[transform canvas={xshift=2pt}] node[right] {$\cur{f_*}$} (m-3-2);
  	  		\path				(m-1-2) edge[eq, transform canvas={xshift=-2pt}] (m-2-1);
  	  		\draw				([yshift=-0.5em]$(m-1-2)!0.5!(m-2-2)$) node[font=\scriptsize] {$\cocart$}
  	  								([yshift=0.25em]$(m-2-2)!0.5!(m-3-2)$) node[font=\scriptsize] {$\cart$};
  			\end{tikzpicture}
			\end{displaymath}
			is invertible, where the cartesian cell exists by the yoneda axiom.
		\end{enumerate}
	\end{lemma}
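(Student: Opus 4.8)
The plan is to establish (b) first and then obtain (a) as a special case. For (b), I would begin by unpacking the displayed composite. Applying the yoneda axiom to $J = f_*$ produces the vertical morphism $\cur{f_*}$ together with a cartesian cell $f_* \Rightarrow \ps A$ with legs $\yon$ and $\cur{f_*}$, which by density (\lemref{density axioms}(b)) exhibits $\cur{f_*}$ as the pointwise left Kan extension of $\yon$ along $f_*$. Pasting this cartesian cell onto the cartesian cell $C(f,f) \Rightarrow f_*$ — using that $C(f,f) \iso f_*(\id, f)$ by pseudofunctoriality of restriction (\lemref{pasting lemma for cartesian cells}) — gives a cartesian cell $\chi\colon C(f,f) \Rightarrow \ps A$ with legs $\yon$ and $\cur{f_*}\of f$, which again by density exhibits $\cur{f_*}\of f$ as the pointwise left Kan extension of $\yon$ along $C(f,f)$. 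The key bookkeeping step is to identify the displayed composite $c\colon \yon \Rightarrow \cur{f_*}\of f$ with $\chi \of \id_f'$, where $\id_f'$ is the factorisation of $\id_f$ through the cartesian cell $C(f,f)\Rightarrow C$ appearing in \corref{unit in terms of full and faithful map}. This follows from the companion identity $\varepsilon \of \phi = \id_f$ (for the cartesian cell $\varepsilon$ and weakly cocartesian cell $\phi$ defining $f_*$ as the companion of $f$, \lemref{companion identities lemma}), from the factorisation $\id_f = (C(f,f)\Rightarrow C)\of \id_f'$ of \corref{unit in terms of full and faithful map}, and from uniqueness of factorisations through cartesian cells.

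With $c = \chi \of \id_f'$ in hand, the equivalence rests on the dictionary between empty paths and horizontal units. If $f$ is full and faithful, then $\id_f'$ is cocartesian (\corref{unit in terms of full and faithful map}), hence exhibits $C(f,f)$ as the horizontal unit $I_A$ (\lemref{unit identities}); so $c = \chi \of \id_f'$ is precisely the composite of the cartesian cell $\chi$ — which exhibits $\cur{f_*}\of f$ as the pointwise left Kan extension of $\yon$ along $I_A$ — with the cocartesian cell defining $I_A$, and therefore by \corref{Kan extensions and composites} it exhibits $\cur{f_*}\of f$ as a weak left Kan extension of $\yon$ along the empty path, so $c$ is invertible by \lemref{vertical cells defining left Kan extensions}. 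Conversely, if $c$ is invertible then \lemref{vertical cells defining left Kan extensions} shows $c$ exhibits $\cur{f_*}\of f$ as such a weak left Kan extension along the empty path; since $\chi$ is cartesian and $c = \chi \of \id_f'$, the comparison cell $\id_f'$ must then inherit the universal property of a cocartesian cell (this is \corref{Kan extensions and composites} read in the other direction), so $\id_f'$ is cocartesian and $f$ is full and faithful by \corref{unit in terms of full and faithful map}. I expect this reverse implication to be the main obstacle: it requires care to deduce cocartesianness of the comparison $\id_f'$ without assuming in advance that $C(f,f)$ is a horizontal unit, essentially rerunning the bijection-of-assignments argument behind \corref{Kan extensions and composites}.

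For (a), since $\yon$ is good the restrictions $\yon_* = \ps A(\yon, \id)$ and $\ps A(\yon, \yon) = \yon_*(\id, \yon)$ exist, so part (b) applies with $f = \yon$. Applying the yoneda axiom to $J = \yon_*$ gives $\cur{\yon_*}$ with $\yon_* \iso \ps A(\yon, \cur{\yon_*})$; comparing with $\yon_* = \ps A(\yon, \id_{\ps A})$ and using that the representing morphism is unique up to isomorphism (a consequence of density) yields $\cur{\yon_*} \iso \id_{\ps A}$. Hence the composite $c$ of (b) for $f = \yon$ is, up to this isomorphism, the composite of the weakly cocartesian and cartesian cells that define $\yon_*$ as the companion of $\yon$, which equals $\id_\yon$ by the companion identities and is in particular invertible; therefore $\yon$ is full and faithful by (b). Finally, $\id_\yon$ being cartesian, \corref{unit in terms of full and faithful map} shows the factorisation of $\id_\yon$ through $\ps A(\yon, \yon)$ is cocartesian, so $\ps A(\yon, \yon) \iso I_A$ by \lemref{unit identities}; in particular $A$ is unital.
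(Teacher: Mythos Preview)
Your decomposition $c = \chi \of \id_f'$ (with $\chi$ the composite of the two cartesian cells) and your treatment of part (a) and of the forward implication in (b) are correct and match the paper's. The gap, which you rightly flag, lies in the converse of (b): you cannot ``read \corref{Kan extensions and composites} in the other direction''. That corollary---and the vertical pasting lemma underlying it---says that \emph{given} a cocartesian cell, a Kan-extension property transfers across it; it does not say that if both $\chi$ and $\chi \of \id_f'$ define left Kan extensions then $\id_f'$ must be cocartesian. Rerunning the bijection-of-assignments argument does not help, because that argument factors cells through a cocartesian cell that is \emph{assumed} to exist. So as stated your converse direction is incomplete.

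The paper supplies the missing idea: by \lemref{unit identities}, for a cell of the shape of $\id_f'$ (empty horizontal source, identity vertical legs), cocartesianness and \emph{cartesianness} coincide. This lets one work with cartesian cells instead, where the pasting lemma (\lemref{pasting lemma for cartesian cells}) is genuinely bidirectional: since $\chi$ is cartesian, the full composite $c = \chi \of \id_f'$ is cartesian precisely when $\id_f'$ is. The paper then closes the chain
\[
  f\text{ ff} \;\Leftrightarrow\; \id_f'\text{ cocartesian} \;\Leftrightarrow\; \id_f'\text{ cartesian} \;\Leftrightarrow\; c\text{ cartesian} \;\Leftrightarrow\; c\text{ defines a left Kan extension} \;\Leftrightarrow\; c\text{ invertible},
\]
invoking density of $\yon$, \lemref{left Kan extensions along yoneda embeddings} and \lemref{vertical cells defining left Kan extensions} for the last two equivalences. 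The single ingredient you are missing is therefore \lemref{unit identities}; once the problem is transported from cocartesian to cartesian cells, both directions of (b) follow from the standard pasting lemma rather than from any putative converse to \corref{Kan extensions and composites}.
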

	\begin{proof}
		(a). That $\yon$ is full and faithful follows from applying part (b) to the vertical companion identity of $\yon$ (see \lemref{companion identities lemma}); here we use that $\yon$ is a good yoneda embedding, so that the restriction $\ps A(\yon, \yon)$ exists. That $A$ is unital then follows from \corref{unit in terms of full and faithful map}, which also implies that $I_A \iso \ps A(\yon, \yon)$. Composing the latter with the cell defining $\ps A(\yon, \yon)$ we obtain a cartesian cell that defines $\yon$ as the left Kan extension of $I_A$ along $\yon$, as follows from the density of $\yon$. We conclude $\cur{I_A} \iso \yon$ by the uniqueness of Kan extensions.
		
		(b). First notice that the composite above can be rewritten as the composite below, where the cell $\id_f'$ is the factorisation of $\id_f$ through the cartesian cell defining $C(f, f)$, as in \corref{unit in terms of full and faithful map}. Indeed this follows from the fact that, when composed with the cartesian cell defining $f_*$, both the weakly cocartesian cell above as well as the composite of the two top cells below equal $\id_f$.
		\begin{displaymath}
			\begin{tikzpicture}
	    		\matrix(m)[math35, column sep={1.75em,between origins}]{& A & \\ A & & A \\ A & & C \\ & \ps A & \\};
  	  		\path[map]	(m-2-1) edge[barred] node[below] {$C(f, f)$} (m-2-3)
  	  								(m-2-3) edge node[right] {$f$} (m-3-3)
  	  								(m-3-1) edge[barred] node[below, inner sep=2pt] {$f_*$} (m-3-3)
  	  												edge[transform canvas={xshift=-2pt}] node[left] {$\yon$} (m-4-2)
  	  								(m-3-3)	edge[transform canvas={xshift=2pt}] node[right] {$\cur{f_*}$} (m-4-2);
  	  		\path				(m-1-2) edge[cell, transform canvas={shift={(-0.5em,-0.5em)}}] node[right] {$\id_f'$} (m-2-2)
  	  								(m-1-2) edge[eq, transform canvas={xshift=-2pt}] (m-2-1)
  	  												edge[eq, transform canvas={xshift=2pt}] (m-2-3)
  	  								(m-2-1) edge[eq] (m-3-1);
  	  		\draw				([yshift=-0.25em]$(m-2-2)!0.5!(m-3-2)$) node[font=\scriptsize] {$\cart$}
  	  								([yshift=0.25em]$(m-3-2)!0.5!(m-4-2)$) node[font=\scriptsize] {$\cart$};
  			\end{tikzpicture}
		\end{displaymath}
		The proof follows by noticing that the following conditions are equivalent: $f$ is full and faithful; $\id_f'$ is cocartesian (by \corref{unit in terms of full and faithful map}); $\id_f'$ is cartesian (by \lemref{unit identities}); the composite above is cartesian (by the pasting lemma); the composite defines a left Kan extension (because $\yon$ is dense and the previous lemma); the composite is invertible (by \lemref{vertical cells defining left Kan extensions}).
	\end{proof}
	
	As an example we now consider yoneda embeddings in the augmented virtual double category $\enProf{(\V, \V')}$ of $\V$-profunctors between $\V'$-categories, that is associated to a universe enlargement $\V \to \V'$ as described in \exref{(V, V')-Prof}. In the case that $\V'$ is closed symmetric monoidal the construction of such yoneda embeddings is of course classical, see for instance Section 2 of \cite{Kelly82}. The nuts and bolts description below, which leaves many easy details to check, shows that the classical construction extends to the case in which $\V'$ is merely closed monoidal.
	
	Let $A$ be a (possibly large) $\V'$-category; by a \emph{$\V$-presheaf} $p$ on $A$ we mean a $\V$"/profunctor $\hmap pAI$. Thus $p$ consists of $\V$-objects $px$, one for each $x \in A$, equipped with an action of $A$ given by $\V'$-maps
	\begin{displaymath}
		\map\lambda{A(x,y) \tens' py}{px},
	\end{displaymath}
	that is associative and unital. If both $\V$ and $\V'$ are closed symmetric monoidal then $\V$-presheaves on a $\V$-category $A$ can be identified with $\V$-functors $\op A \to \V$.
	
	The $\V$-presheaves on $A$ arrange into a $\V'$-category $\ps A$ as follows. To define the hom-object $\ps A(p, q)$ of $\V$-presheaves $p$ and $q$, we write $\brks{p, q}'$ for the inclusion into $\V'$ of its subcategory consisting of the cospans
	\begin{equation} \label{cospans for V'-category of presheaves}
		\begin{tikzpicture}[textbaseline]
			\matrix(m)[math35, column sep={4.5em,between origins}]
				{	\brks{px, qx}' & & \brks{py, qy}', \\ & \brks{A(x, y) \tens' py, qx}' & \\ };
			\path[map]	(m-1-1) edge (m-2-2)
									(m-1-3) edge (m-2-2);
		\end{tikzpicture}
	\end{equation}
	where $x$ and $y$ range over the objects in $A$, whose legs are adjunct to the $\V'$-maps
	\begin{flalign*}
		&& A(x,y)& \tens' py \tens' \brks{px, qx}' \xrar{\lambda \tens' \id} px \tens' \brks{px, qx}' \xrar{\textup{ev}} qx& \\
		\text{and} && A(x,y)& \tens' py \tens' \brks{py, qy}' \xrar{\id \tens' \textup{ev}} A(x,y) \tens' qy \xrar\lambda qx,&
	\end{flalign*}
	under the hom-tensor adjunction $X \tens' \dash \ladj \brks{X, \dash}'$ of $\V'$. Since $A$ is large the limit of $\brks{p, q}'$ exists in $\V'$, which we take as the hom-object $\ps A(p, q)$. In the case that $\V'$ is closed symmetric monoidal it easy to check that $\ps A(p, q)$ forms the end $\int_{x \in A} \brks{px, qx}'$, in the sense of Section 2.1 of \cite{Kelly82}. Composition $\ps A(p, q) \tens' \ps A(q, r) \to \ps A(p, r)$ is the factorisation of the cone $\Delta\bigpars{\ps A(p, q) \tens' \ps A(q, r)}\Rar\brks{p, r}'$ that uniquely extends the family of $\V'$-maps adjunct to
	\begin{displaymath}
		px \tens' \ps A(p,q) \tens' \ps A(q, r) \to px \tens' \brks{px, qx}' \tens' \brks{qx, rx}' \xrar{\textup{ev} \tens' \id} qx \tens' \brks{qx, rx}' \xrar{\textup{ev}} rx
	\end{displaymath}
	where the first map is given by projections. Similarly the units $I' \to \ps A(p, p)$ are induced by the adjuncts to the unitors $px \tens' I' \xrar{\mathfrak r} px$ of $\V'$. This completes the definition of the $\V'$-category $\ps A$ of $\V$-presheaves on $A$; notice that, because $A$ and $\V$ are large while $\V'$ is locally large, $\ps A$ is again a large $\V'$-category.
	\begin{proposition} \label{enriched yoneda embeddings}
		Let $\V \to \V'$ be a universe enlargement. A large $\V'$-category $A$ admits a good yoneda embedding in $\enProf{(\V, \V')}$ if and only if it is a $\V$-category. More precisely, in that case $\map \yon A{\ps A}$ can be given with the $\V'$-category $\ps A$ as defined above and $\yon x = A(\dash, x)$ while, for each $\V$-profunctor $\hmap JAB$, the cartesian cell
		\begin{displaymath}
			\begin{tikzpicture}
				\matrix(m)[math35, column sep={1.75em,between origins}]{A & & B \\ & \ps A & \\};
				\path[map]	(m-1-1) edge[barred] node[above] {$J$} (m-1-3)
														edge[transform canvas={xshift=-2pt}] node[left] {$\yon$} (m-2-2)
										(m-1-3) edge[transform canvas={xshift=2pt}] node[right] {$\cur J$} (m-2-2);
				\draw				([yshift=0.333em]$(m-1-2)!0.5!(m-2-2)$) node[font=\scriptsize] {$\cart$};
			\end{tikzpicture}
	  \end{displaymath}
	  has $\cur J y = J(\dash, y)$ as vertical target.
	  
	  Finally also assume that $\V$ is closed monoidal, and that $\V \to \V'$ is a closed monoidal functor. For any $\V$-presheaves $p$ and $q$ on $A$ the diagram $\brks{p, q}'$ in $\V'$, as described above, factors (up to natural isomorphism) through $\V \to \V'$ as a diagram $\brks{p, q}$ in $\V$, that is obtained by replacing the inner-homs $\brks{\dash, \dash}'$ of $\V'$ in \eqref{cospans for V'-category of presheaves} by those $\brks{\dash, \dash}$ of $\V$. In this case $\ps A$ is a $\V$-category if and only if, for any pair of $\V$-presheaves $p$ and $q$ on $A$, the factorisation diagram $\brks{p, q}$ in $\V$ has a limit.
	\end{proposition}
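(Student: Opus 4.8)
The plan is to treat the three assertions in turn, merging the first two.

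For the equivalence and the explicit construction, I would start with the ``if'' direction: suppose $A$ is a $\V$-category and form $\ps A$ as in the statement. First I would check that $\yon\colon x \mapsto A(\dash, x)$ is a well-defined $\V'$-functor---each $A(\dash, x)$ is a $\V$-presheaf since $A(y,x)$ is a $\V$-object with left action the composition of $A$, and the action of $\yon$ on $A(x,y)$ is obtained by factoring through the limit cone defining $\ps A(\yon x, \yon y)$ the cone whose legs are adjunct to the composition maps of $A$. The heart of the matter is the \emph{enriched Yoneda lemma} in this generality: for every $\V$-presheaf $q$ and every $x \in A$ the canonical $\V'$-map $\ps A(\yon x, q) \to qx$ (evaluation at $x$ precomposed with the unit $I' \to A(x,x)$) is invertible, naturally in both variables, with inverse built from the action $\lambda\colon A(z,x) \tens' qx \to qz$; this is a direct ``nuts and bolts'' computation with the limit cones defining the hom-objects of $\ps A$. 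Granting it, for a $\V$-profunctor $\hmap JAB$ the assignment $\cur J\colon y \mapsto J(\dash, y)$ is a $\V'$-functor by the same recipe as $\yon$ (with $J$ on the right in place of $A$), the Yoneda lemma furnishes natural isomorphisms $\ps A(\yon x, \cur J y) \iso J(x,y)$, and these components assemble into a cell $J \Rar \ps A$ over $\yon$ and $\cur J$ which is cartesian because it exhibits $J$ as the nullary restriction $\ps A(\yon, \cur J)$---the latter existing since $\ps A(\yon x, \cur J y) \iso J(x,y)$ is a $\V$-object (see \exref{restrictions of V-profunctors}). This yields the yoneda axiom with $\cur J y = J(\dash, y)$ as claimed, and the companion $\yon_*$ exists because all $\ps A(\yon x, q) \iso qx$ are $\V$-objects, so $\yon$ is \emph{good} once density is established. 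For density I would invoke \lemref{density axioms}(c): the cartesian cell defining $\yon_*$ defines $\id_{\ps A}$ as the pointwise left Kan extension of $\yon$ along $\yon_*$ exactly when, by \propref{enriched left Kan extensions in terms of weighted colimits}, each $q \in \ps A$ is the $\yon_*(\id, q)$-weighted colimit of $\yon$; since $\yon_*(\id, q) \iso \ps A(\yon, q) \iso q$ by the Yoneda lemma, this is the co-Yoneda statement that every $\V$-presheaf is its own colimit of representables, again verified directly from the construction of $\ps A$. For the ``only if'' direction: a good yoneda embedding forces $A$ to be unital by \lemref{yoneda embedding full and faithful}(a), and a $\V'$-category is unital in $\enProf{(\V, \V')}$ precisely when it is a $\V$-category (\exref{horizontal composites in (V, V')-Prof}).

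For the factorisation of $\brks{p,q}'$, assume $\V$ closed monoidal and $\V \to \V'$ a closed monoidal functor. Every object in a cospan \eqref{cospans for V'-category of presheaves} has the form $\brks{X,Y}'$ with $X, Y$ $\V$-objects---$px, qx$ since $p, q$ are $\V$-presheaves, and $A(x,y) \tens' py$ since $A(x,y)$ is a $\V$-object (by the first part) and $\V \to \V'$ preserves $\tens$. Because $\V \to \V'$ is closed, the comparison maps $\brks{X,Y} \to \brks{X,Y}'$ are invertible on $\V$-objects, so each such object is naturally the image of its $\V$-counterpart; the two legs of each cospan are adjunct to $\V'$-maps between $\V$-objects, hence to $\V$-maps by full and faithfulness, and closedness makes the transposition compatible, so the legs too are images of $\V$-maps. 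Therefore $\brks{p,q}'$ is naturally isomorphic to the image under $\V \to \V'$ of the diagram $\brks{p,q}$ obtained by the stated substitution of inner-homs. Finally, for the criterion, recall that $\ps A$ is a $\V$-category iff every $\ps A(p,q) = \lim \brks{p,q}'$ is a $\V$-object. If each $\brks{p,q}$ has a limit $L$ in $\V$, then, $\V \to \V'$ preserving all limits (axiom (c) of \defref{universe enlargement}) and $\brks{p,q}'$ being the image of $\brks{p,q}$, we get $\lim \brks{p,q}' \iso L$, a $\V$-object; so $\ps A$ is a $\V$-category. Conversely, if $\ps A(p,q)$ is a $\V$-object, the $\V'$-limit cone over $\brks{p,q}'$ has vertex and legs among $\V$-objects and $\V$-maps, so full and faithfulness of $\V \to \V'$ lets every $\V$-cone over $\brks{p,q}$ factor uniquely through $\ps A(p,q)$ inside $\V$; hence $\ps A(p,q)$ is the limit of $\brks{p,q}$ in $\V$.

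The main obstacle is the first part: the explicit, reference-free verification of the Yoneda and co-Yoneda lemmas for $\ps A$ when $\V$ carries only a (possibly non-closed, non-symmetric) monoidal structure, where one cannot appeal to the end-calculus of \cite{Kelly82} and must instead argue with the limit cones defining $\ps A(p,q)$ directly, tracking naturality in both arguments throughout. Everything else is bookkeeping with the universe-enlargement axioms and the Kan-extension machinery of \secref{Kan extension section}.
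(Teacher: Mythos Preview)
Your proposal is correct and follows essentially the same route as the paper: the ``only if'' direction via \lemref{yoneda embedding full and faithful}(a) and \exref{horizontal composites in (V, V')-Prof}, the ``if'' direction by building $\yon$ and $\cur J$ explicitly and proving the Yoneda isomorphism $\ps A(\yon x, q) \iso qx$ by hand, and the final assertion via closedness, full faithfulness, and limit preservation of the enlargement. The only organisational difference is that for density you verify condition (c) of \lemref{density axioms} while the paper verifies condition (b), but both reduce via \propref{enriched left Kan extensions in terms of weighted colimits} to the same co-Yoneda computation---that each presheaf $p$ is the $p$-weighted colimit of $\yon$---which the paper carries out by factoring an arbitrary cell $\phi$ through the limiting cone defining $\ps A(p,q)$.
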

	\begin{proof}
		For the `only if'-part notice that the existence of a good yoneda embedding implies that $A$ is unital by \lemref{yoneda embedding full and faithful}(a) and thus a $\V$-category as we saw in \exref{horizontal composites in (V, V')-Prof}.
		
		For the `if'-part assume that $A$ is a $\V$-category, so that $\yon$ can be defined on objects as described above. In detail: $\yon x$ is the $\V$-presheaf given by the hom-objects $(\yon x)(s) = A(s, x)$, equipped with actions $A(s, t) \tens' (\yon x)(t) \to (\yon x)(s)$ that are given by the composition of $A$. On hom-objects $\yon$ is given by the unique factorisations $\map{\bar\yon}{A(x, y)}{\ps A(\yon x, \yon y)}$ of the cones $\Delta A(x,y) \Rar \brks{\yon x, \yon y}'$ that are unique extensions of the adjuncts to $(\yon x)(s) \tens' A(x, y) \to (\yon y)(s)$, which in turn are again given by the composition of $A$. A straightforward calculation shows that $\bar\yon$ preserves composition and units.
		
		It remains to show that $\yon$ is dense, admits a companion $\hmap{\yon_*}A{\ps A}$, and satisfies the yoneda axiom. For the latter consider a $\V$-profunctor $\hmap JAB$; we have to supply a cartesian cell as above. As mentioned in the statement, the $\V$-functor $\map{\cur J}B{\ps A}$ maps $y \in B$ to the $\V$-presheaf given by $(\cur J y)(s) = J(s, y)$ on objects, while its action \mbox{$A(s, t) \tens' J(t, y) \to J(s, y)$} is simply that of $A$ on $J$. Its action on hom-objects $\map{\bar{\cur J}}{B(y, z)}{\ps A(\cur Jy, \cur Jz)}$ is induced by the adjuncts to the actions $J(x, y) \tens' B(y, z) \to J(x, z)$ of $B$ on $J$. Checking that $\bar{\cur J}$ preserves composition and units is again straightforward while, in order to show that the cartesian cell above exists, it suffices to show that $J(x, y) \iso \ps A(\yon x, \cur Jy)$, natural in $x$ and $y$. First notice that natural $\V'$-maps $J(x, y) \to \ps A(\yon x, \cur Jy)$ can be obtained from the adjuncts of the maps $(\yon x)(s) \tens' J(x, y) \to J(s, y)$ that form the action of $A$ on $J$; it is then not hard to show that the composites
		\begin{displaymath}
			\ps A(\yon x, \cur Jy) \xrar{\inv{\mathfrak l}} I' \tens' \ps A(\yon x, \cur Jy) \to A(x, x) \tens' \brks{A(x, x), J(x, y)}' \xrar{\textup{ev}} J(x,y)
		\end{displaymath}
		form inverses, where the middle map is the tensor product of the unit $I' \to A(x, x)$ and the projection $\ps A(\yon x, \cur Jy) \to \brks{A(x, x), J(x, y)}'$.
		
		Next, by applying the above to a $\V$-presheaf $\hmap pAI$ we obtain an isomorphism $px \iso \ps A(\yon x, p)$. We conclude that $\ps A(\yon x, p)$ is a $\V$-object for each $x \in A$ and $p \in \ps A$ so that, by \exref{restrictions of V-profunctors}, the companion $\hmap{\yon_*}A{\ps A}$ exists. Finally to prove that $\yon$ is dense we will show that, for any $\V$-profunctor $\hmap JAB$, the cartesian cell above defines $\map{\cur J}B{\ps A}$ as a pointwise left Kan extension. Remember that, by \propref{enriched left Kan extensions in terms of weighted colimits}, we may equivalently show that its restriction along \mbox{$\hmap{J(\id, y)}AI$} defines $\cur Jy$ as the $J(\id, y)$-weighted colimit of $\yon$. Hence it suffices to prove that, for every $\V$-presheaf $p$ on $A$, the cartesian cell in the composite on the right below, that consists of the isomorphisms $px \iso \ps A(\yon x, p)$, defines $\map pI{\ps A}$ as the $\ps A(\yon, p)$-weighted colimit of $\yon$.
		\begin{displaymath}
			\begin{tikzpicture}[textbaseline]
				\matrix(m)[math35]{A & I & I \\ & \ps A & \\};
				\path[map]	(m-1-1) edge[barred] node[above] {$p$} (m-1-2)
														edge node[below left] {$\yon$} (m-2-2)
										(m-1-2) edge[barred] node[above] {$H$} (m-1-3)
										(m-1-3) edge node[below right] {$q$} (m-2-2);
				\path				(m-1-2) edge[cell, transform canvas={yshift=0.25em}] node[right] {$\phi$} (m-2-2);
			\end{tikzpicture} = \begin{tikzpicture}[textbaseline]
				\matrix(m)[math35]{A & I & I \\ & \ps A & \\};
				\path[map]	(m-1-1) edge[barred] node[above] {$p$} (m-1-2)
														edge[transform canvas={xshift=-2pt}] node[below left] {$\yon$} node[sloped, above, inner sep=6.5pt, font=\scriptsize, pos=0.55] {$\cart$} (m-2-2)
										(m-1-2) edge[barred] node[above] {$H$} (m-1-3)
														edge[ps] node[right, inner sep=1pt] {$p$} (m-2-2)
										(m-1-3) edge[transform canvas={xshift=2pt}] node[below right] {$q$} (m-2-2);
				\path				(m-1-2) edge[cell, transform canvas={shift={(1.1em,0.333em)}}] node[right] {$\phi'$} (m-2-2);
			\end{tikzpicture}
		\end{displaymath}
		To see this consider any cell $\phi$ in $\enProf{(\V, \V')}$ that is of the form as on left above; we have to prove that it factors as shown. To see this remember that $\phi$ is given by a family $\V'$-maps $\map{\phi_x}{px \tens' H}{\ps A(\yon x, q)}$ that are natural in $x \in A$. After composition with $\ps A(\yon x, q) \iso qx$ it is easy to check that this family corresponds to a cone $\nat{\phi''}{\Delta H}{\brks{p, q}'}$, under the adjunctions $px \tens' \dash \ladj \brks{px, \dash}'$. The latter in turn factors uniquely through the limiting cone $\Delta \ps A(p, q) \Rar \brks{p, q}'$ as a $\V'$-map $\map{\phi'}H{\ps A(p, q)}$. A straightforward calculation will show that $\phi'$, when regarded as a cell as in the composite on the right above, forms the unique factorisation of $\phi$ as required. This completes the construction of the yoneda embedding $\map\yon A{\ps A}$ for a $\V$-category $A$.
		
		Briefly, on the final assertion: having constructed the diagram $\brks{p, q}$ in $\V$ as described, an isomorphism between its composite with $\V \to \V'$ and $\brks{p, q}'$ can be obtained from the assumption that $\V \to \V'$ is closed monoidal (see \defref{universe enlargement}). Now use the fact that $\V \to \V'$ is full and faithful, and that it preserves limits.
	\end{proof}
	
	Closing this subsection the following proposition compares, in the case of a suitable augmented virtual equipment $\K$, the collection of good yoneda embeddings in $\K$ to the notion of \emph{good yoneda structure} on the vertical $2$-category $V(\K)$; the latter, as introduced by Weber in \cite{Weber07}, is a strengthening of the original notion of yoneda structure given by Street and Walters in \cite{Street-Walters78}. In the statement and proof of the proposition below the terminology and notation of \cite{Weber07} are used; for the notion of (co-)tabulation see \defref{tabulation}.
	\begin{proposition}
		Consider the following types of structure on an augmented virtual equipment $\K$:
		\begin{enumerate}
			\item[\textup{(E)}] a collection of good yoneda embeddings $\map{\yon_A}A{\ps A}$, one for each unital object $A$ of $\K$;
			\item[\textup{(S)}] a good yoneda structure on $V(\K)$, in the sense of Section 3 of \cite{Weber07}, in which a vertical morphism $\map fAC$ is admissible precisely if its companion $\hmap{f_*}AC$ exists in $\K$.
		\end{enumerate}
		If $\K$ has all cocartesian tabulations then a structure of type \textup{(E)} induces one of type \textup{(S)}. Conversely a structure of type \textup{(S)} induces one of type \textup{(E)} whenever $\K$ has cartesian cotabulations $\brks J$ for all horizontal morphisms $\hmap JAB$ with $A$ unital as well, in a way such that the insertion $\map{\sigma_A}A{\brks J}$, as is part of the defining cell $\sigma$ below, admits a companion $\sigma_{A*}$.
		\begin{displaymath}
			\begin{tikzpicture}[textbaseline]
				\matrix(m)[math35, column sep={1.75em,between origins}]{A & & B \\ & \brks J & \\};
				\path[map]	(m-1-1) edge[barred] node[above] {$J$} (m-1-3)
														edge[transform canvas={xshift=-2pt}] node[left] {$\sigma_A$} (m-2-2)
										(m-1-3) edge[transform canvas={xshift=2pt}] node[right] {$\sigma_B$} (m-2-2);
				\path				(m-1-2) edge[cell, transform canvas={yshift=0.25em}] node[right] {$\sigma$} (m-2-2);
			\end{tikzpicture}
		\end{displaymath}
	\end{proposition}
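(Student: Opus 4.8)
The plan is to treat the two implications as vertical-dual mirror images of one another, in each case matching the data against Weber's list of axioms for a good yoneda structure and transporting the relevant pointwise-left-extension information between $\K$ and its vertical $2$-category $V(\K)$ by means of companions and conjoints. Recall that, since $\K$ is a hypervirtual equipment, a good yoneda embedding is simply a yoneda embedding $\map\yon A{\ps A}$ whose companion $\yon_*$ exists, and that by \lemref{yoneda embedding full and faithful}(a) such a $\yon$ is full and faithful and $A$ is unital.

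For (E) $\Rightarrow$ (S): given the good yoneda embeddings $\map{\yon_A}A{\ps A}$ (one per unital object), declare a vertical morphism admissible exactly when its companion exists, and an object admissible exactly when it is unital; in particular each $\yon_A$ is then admissible. For an admissible $\map fAC$ with $A$ admissible I would apply the yoneda axiom of \defref{yoneda embedding} to $J = f_*$ to obtain $\map{\cur{f_*}}C{\ps A}$, set $B(f,1) := \cur{f_*}$, and take the structural $2$-cell $\chi^f\colon\yon_A \Rightarrow \cur{f_*}\of f$ to be the composite of the weakly cocartesian cell defining $f_*$ with the cartesian yoneda cell, exactly as in the composite appearing in \lemref{yoneda embedding full and faithful}(b). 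I would then run through Weber's axioms: $\yon_A$ is full and faithful by \lemref{yoneda embedding full and faithful}(a); the cartesian yoneda cell defines $\cur{f_*}$ as a pointwise left Kan extension of $\yon_A$ along $f_*$ in $\K$ by \lemref{density axioms}(b) (using density of $\yon_A$) and hence, by \propref{pointwise left Kan extensions along companions} applied with $j = f$, $d = \yon_A$, $l = \cur{f_*}$, its factorisation $\chi^f$ defines $\cur{f_*}$ as a pointwise left Kan extension of $\yon_A$ along $f$ in $V(\K)$, which is the ``goodness'' clause — this is exactly where the hypothesis that $\K$ has all opcartesian tabulations is used; and the composition and invertibility axioms for the family $(\chi^f)$ follow from uniqueness of pointwise left Kan extensions together with the pasting lemmas (\lemref{horizontal pasting lemma}, \lemref{vertical pasting lemma}) and, for $f = \yon_A$, from \lemref{vertical cells defining left Kan extensions}.

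For (S) $\Rightarrow$ (E): for each unital object $A$ the yoneda structure supplies $\map{\yon_A}A{\ps A}$, and $\yon_A$ is good since it is admissible, so $\yon_{A*}$ exists. It remains to verify that $\yon_A$ is dense and satisfies the yoneda axiom. For the yoneda axiom, given $\hmap JAB$ with $A$ unital I would form the cartesian cotabulation $\brks J$ provided by the hypothesis; its insertion $\map{\sigma_A}A{\brks J}$ admits a companion, hence is admissible, so the structure produces $\map{\brks J(\sigma_A,1)}{\brks J}{\ps A}$ together with an isomorphism $\ps A(\yon_A,\brks J(\sigma_A,1)\of g) \iso \brks J(\sigma_A,g)$. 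Setting $\cur J := \brks J(\sigma_A,1)\of\sigma_B$ and using that $\sigma$ is cartesian, so $J \iso \brks J(\sigma_A,\sigma_B)$, one gets $\ps A(\yon_A,\cur J) \iso J$, and pasting the corresponding cartesian cells yields the cartesian cell required by \defref{yoneda embedding}. Density is then obtained by the same device applied to $J = \yon_{A*}$: the construction gives $\map{\cur{\yon_{A*}}}{\ps A}{\ps A}$ with $\ps A(\yon_A,\cur{\yon_{A*}}) \iso \yon_{A*} \iso \ps A(\yon_A,\id_{\ps A})$, full and faithfulness of $\yon_A$ forces $\cur{\yon_{A*}} \iso \id_{\ps A}$, and the $V(\K)$-density of $\yon_A$ (part of the structure) transfers along the cartesian cotabulation of $\yon_{A*}$ to exhibit the cartesian cell defining $\yon_{A*}$ as a pointwise left Kan extension in $\K$, i.e.\ condition \lemref{density axioms}(c). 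Finally I would check that the two recipes are mutually inverse up to the appropriate vertical isomorphisms.

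The hard part will be the transfer of pointwise left Kan extension data between $\K$ and $V(\K)$ and its exact reconciliation with Weber's axiomatisation. In the direction (E) $\Rightarrow$ (S) this is packaged cleanly by \propref{pointwise left Kan extensions along companions}, which is why opcartesian tabulations suffice there; but in the direction (S) $\Rightarrow$ (E) no analogous transfer result along companions is available, so every pointwise-left-extension statement must instead be routed through the cartesian cotabulations $\brks J$ and the admissibility of their insertions $\sigma_A$ — in effect re-proving, in cotabulation form, a vertical-dual analogue of \propref{pointwise left Kan extensions along companions} as one goes. I expect the bookkeeping needed to make the density argument in this second direction precise to be the single most delicate point.
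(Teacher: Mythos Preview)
Your plan contains the right ingredients but misidentifies Weber's axioms, and this causes a genuine gap in the (E) $\Rightarrow$ (S) direction. Weber's good yoneda structure requires (Axiom~2) that $\chi^f$ exhibit $f$ as an \emph{absolute left lifting} of $\yon_A$ through $B(f,1)$, and (Axiom~3*, the goodness clause) that \emph{every} cell $\phi$ exhibiting $f$ as an absolute left lifting of $\yon_A$ through some $g$ also exhibit $g$ as a pointwise left Kan extension of $\yon_A$ along $f$. You verify only that the particular cell $\chi^f$ exhibits a pointwise left Kan extension---this is a single instance of Axiom~3*, not the axiom itself---and you do not address Axiom~2 at all. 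The ``composition and invertibility axioms'' you mention are not part of Weber's formulation. The paper's key tool for both axioms is \propref{restrictions and absolute left liftings}, which under cocartesian tabulations gives a bijection between nullary cartesian cells in $\K$ and absolute left liftings in $V(\K)$: Axiom~2 then follows because the cartesian yoneda cell for $f_*$ corresponds to $\chi^f$, and Axiom~3* follows by converting an \emph{arbitrary} absolute-left-lifting cell $\phi$ back to a cartesian cell $\phi'$ in $\K$, to which density of $\yon_A$ (\lemref{density axioms}(b)) and \propref{pointwise left Kan extensions along companions} apply.

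In the (S) $\Rightarrow$ (E) direction your yoneda-axiom argument is on the right track but elides the crucial step of producing a \emph{cartesian cell in $\K$} from the $V(\K)$-level isomorphism $\ps A(\yon_A,\brks J(\sigma_A,1)\of g)\iso\brks J(\sigma_A,g)$. The paper again uses \propref{restrictions and absolute left liftings}, now in the converse direction (which does not need tabulations), to turn the absolute-left-lifting property of $\chi^{\sigma_A}$ into a cartesian cell $\chi'\colon\sigma_{A*}\Rar\ps A$ in $\K$, which it pastes with the cartesian factorisation $\sigma'$ of $\sigma$ through $\sigma_{A*}$. Your density argument is also problematic: density of $\yon_A$ is not ``part of the structure'' but a consequence (Weber's Proposition~3.4(1)), and your step inferring $\cur{\yon_{A*}}\iso\id_{\ps A}$ from $\ps A(\yon_A,\cur{\yon_{A*}})\iso\ps A(\yon_A,\id_{\ps A})$ via full-and-faithfulness of $\yon_A$ presupposes something very close to the density you are trying to prove. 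The paper instead invokes Weber's Proposition~3.4(1) directly for $V(\K)$-density and then transfers to $\K$ via \propref{pointwise left Kan extensions along companions} and \lemref{density axioms}(c).
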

	That the hypothesis of $\sigma_A$ having a companion does not depend on the choice of $\sigma$ and $\brks J$ is a consequence of \lemref{companions of morphisms composed with an isomorphism}.
	\begin{proof}
		Assume that $\K$ has cocartesian tabulations, and that a good yoneda embedding $\map{\yon_A}A{\ps A}$ is given for each unital object $A$ in $\K$; we define a good yoneda structure on $V(\K)$ as follows. Firstly we define a vertical morphism $\map fAC$ to be admissible if its companion $f_*$ exists, as described above; that these form a right ideal follows from the fact that $(g \of f)_* \iso g_*(f, \id)$ (a consequence of the pasting lemma) and the fact that $\K$ has all unary restrictions. Recall that an object $A$ is called admissible whenever its identity $\id_A$ is admissible which, by \lemref{unit identities}, is equivalent to $A$ being unital. Thus, secondly, for every admissible $A$ we can take the yoneda embedding $\map{\yon_A}A{\ps A}$ the chosen one; that it is admissible follows from the fact that it is a good yoneda embedding, see \defref{yoneda embedding}.
		
		Lastly we have to supply, for any vertical morphism $\map fAC$ with both $A$ and $f$ admissible, a morphism $\map{C(f, 1)}C{\ps A}$ that is equipped with a cell $\chi^f$: we take $C(f, 1) \dfn \cur{(f_*)}$, that is given by the yoneda axiom and equipped with the cartesian cell in the composite on the left below, while we set $\chi^f$ to be the full composite.
		\begin{displaymath}
		\chi^f \dfn \begin{tikzpicture}[textbaseline]
    		\matrix(m)[math35, column sep={1.75em,between origins}]{& A & \\ A & & C \\ & \ps A & \\};
    		\path[map]	(m-1-2) edge[transform canvas={xshift=2pt}] node[right] {$f$} (m-2-3)
    								(m-2-1) edge[barred] node[below, inner sep=2pt] {$f_*$} (m-2-3)
    												edge[transform canvas={xshift=-2pt}] node[left] {$\yon_A$} (m-3-2)
    								(m-2-3)	edge[transform canvas={xshift=2pt}] node[right] {$C(f, 1)$} (m-3-2);
    		\path				(m-1-2) edge[eq, transform canvas={xshift=-2pt}] (m-2-1);
    		\draw				([yshift=-0.5em]$(m-1-2)!0.5!(m-2-2)$) node[font=\scriptsize] {$\cocart$}
    								([yshift=0.25em]$(m-2-2)!0.5!(m-3-2)$) node[font=\scriptsize] {$\cart$};
  		\end{tikzpicture} \qquad\qquad\qquad \begin{tikzpicture}[textbaseline]
				\matrix(m)[math35, column sep={1.75em,between origins}]{& A & \\ & & C \\ & \ps A & \\};
				\path[map]	(m-1-2) edge[bend left = 18] node[above right] {$f$} (m-2-3)
														edge[bend right = 45] node[left] {$\yon_A$} (m-3-2)
										(m-2-3) edge[bend left = 18] node[below right] {$g$} (m-3-2);
				\path[transform canvas={yshift=-1.625em}]	(m-1-2) edge[cell] node[right] {$\phi$} (m-2-2);
			\end{tikzpicture} = \begin{tikzpicture}[textbaseline]
    		\matrix(m)[math35, column sep={1.75em,between origins}]{& A & \\ A & & C \\ & \ps A & \\};
    		\path[map]	(m-1-2) edge[transform canvas={xshift=2pt}] node[right] {$f$} (m-2-3)
    								(m-2-1) edge[barred] node[below, inner sep=2pt] {$f_*$} (m-2-3)
    												edge[transform canvas={xshift=-2pt}] node[left] {$\yon_A$} (m-3-2)
    								(m-2-3)	edge[transform canvas={xshift=2pt}] node[right] {$g$} (m-3-2);
    		\path				(m-1-2) edge[eq, transform canvas={xshift=-2pt}] (m-2-1);
    		\path				(m-2-2) edge[cell, transform canvas={yshift=0.1em}]	node[right, inner sep=3pt] {$\phi'$} (m-3-2);
    		\draw				([yshift=-0.5em]$(m-1-2)!0.5!(m-2-2)$) node[font=\scriptsize] {$\cocart$};
  		\end{tikzpicture}
		\end{displaymath}
		Having completed giving the data for the yoneda structure on $V(\K)$; it remains to check that it satisfies two axioms. The first of these states that the cells $\chi^f$ define $f$ as an absolute left lifting of $\yon_A$ along $C(f, 1)$, which follows from \propref{restrictions and absolute left liftings}. The second axiom states that, for any cell $\phi$ as on the right above with $A$ and $f$ admissible, defines $g$ as a pointwise left Kan extension of $\yon_A$ along $f$ as soon as it defines $f$ as an absolute left lifting of $\yon_A$ along $g$. Considering the factorisation $\phi'$ of $\phi$ through $f_*$ as shown, this is a consequence of combining \propref{restrictions and absolute left liftings}, the density of $\yon_A$ (\lemref{density axioms}(b)) and \propref{pointwise left Kan extensions along companions}.
		
		For the converse assume that $\K$ has all cartesian cotabulations $\brks J$ for $\hmap JAB$ with $A$ unital, in the sense described in the statement. Consider a good yoneda structure on $V(\K)$ in which $\map fAC$ is admissible precisely if $f_*$ exists in $\K$. Hence, for every unital $A$, an admissible morphism $\map{\yon_A}A{\ps A}$ is given, which we claim to be a good yoneda embedding in our sense. Indeed, to see that it satisfies the yoneda axiom consider any $\hmap JAB$ and let $\sigma$, as in the statement, be the cell defining its cotabulation $\brks J$; we have to provide the corresponding $\map{\cur J}B{\ps A}$ as well as its defining cartesian cell (see \defref{yoneda embedding}). We set $\cur J \dfn \brks J(\sigma_A, 1) \of \sigma_B$, where $\brks J(\sigma_A, 1)$ is supplied by the good yoneda structure; it is equipped with a vertical cell $\cell{\chi^f}{\yon_A}{\brks J(\sigma_A, 1) \of \sigma_A}$ that defines $\sigma_A$ as an absolute left lifting of $\yon_A$ through $\brks J(\sigma_A, 1)$. Hence, by \propref{restrictions and absolute left liftings}, the factorisation $\chi'$ of $\chi^f$ through $\sigma_{A*}$, as in the composite below, is cartesian. Since the defining cell $\sigma$ of $\brks J$ is assumed to be cartesian it factors through $\sigma_{A*}$ as a cartesian cell $\sigma'$ as well, forming the top cell below. Applying the pasting lemma we find that the full composite is cartesian, so that we can take it to be the cartesian cell defining $\cur J = \brks J(\sigma_A, 1) \of \sigma_B$.
		\begin{displaymath}
			\begin{tikzpicture}[textbaseline]
  			\matrix(m)[math35, column sep={1.75em,between origins}]{A & & B \\ A & & \brks J \\ & \ps A & \\};
  			\path[map]	(m-1-1) edge[barred] node[above] {$J$} (m-1-3)
  									(m-1-3) edge node[right] {$\sigma_B$} (m-2-3)
  									(m-2-1) edge[barred] node[below, inner sep=2pt] {$\sigma_{A*}$} (m-2-3)
  													edge[transform canvas={xshift=-2pt}] node[left] {$\yon_A$} (m-3-2)
  									(m-2-3) edge[transform canvas={xshift=2pt}] node[below right] {$\brks J(\sigma_A, 1)$} (m-3-2);
  			\path				(m-1-1) edge[eq] (m-2-1)
  									(m-1-2) edge[cell] node[right] {$\sigma'$} (m-2-2)
  									(m-2-2) edge[cell, transform canvas={yshift=0.25em}] node[right] {$\chi'$} (m-3-2);
  		\end{tikzpicture}
		\end{displaymath}

		It remains to prove that $\yon_A$ is dense. This follows easily from Proposition~3.4(1) of \cite{Weber07}, showing that the identity cell $\id_{\yon_A}$ defines $\id_{\ps A}$ as the pointwise left Kan extension of $\yon_A$ along $\yon_A$ in $V(\K)$, together with \propref{pointwise left Kan extensions along companions} and \lemref{density axioms}(c) above. This completes the proof.
	\end{proof}
	
	\begin{example} \label{no good yoneda structure on (Cat, Cat')-Prof}
		Because the augmented virtual equipment $\inProf{\Cat, \Cat'}$ of small $2$"/profunctors does not have cocartesian tabulations (see \exref{tabulations of 2-profunctors}), we cannot apply the previous proposition to obtain a good yoneda structure on $\inProf{\Cat, \Cat'}$, in the sense of \cite{Weber07}, out of the good yoneda embeddings obtained in \propref{enriched yoneda embeddings}. In fact the following argument, given in Remark 9 of \cite{Walker18}, shows that the good yoneda embedding $\map{\yon_1}1{\ps 1}$, for the terminal $2$"/category $1$, cannot be part of a good yoneda structure on the $2$"/category $\twoCat' = V(\inProf{\Cat, \Cat'})$ of (large) $2$"/categories.
		
		We consider the natural numbers monoid $\NN$ as a one object category and we again write $1$ and $\NN$ for the functors $1 \to \CAT$ that pick out $1$ and $\NN$ respectively. Also we identify $\ps 1$ with the $2$"/category $\CAT$ of small categories, functors and transformations, so that $\yon_1 = 1$. Consider the cell $!$ in $\twoCat'$ on the left below, that is uniquely determined by its source and target, as well as its factorisation $!'$ through the unit $2$"/profunctor $I_1$. It is easily seen that $!$ defines $\id_1$ as the absolute left lifting of $1$ along $\NN$, but that it does not define $\NN$ as a left Kan extension of $1$ along $\id_1$, hence contradicting axiom (2) in Definition 3.1 of \cite{Weber07}, where the notion of good yoneda structure is defined.
		\begin{displaymath}
			\begin{tikzpicture}[textbaseline]
				\matrix(m)[math35, column sep={1.75em,between origins}]{& 1 & \\ & & 1 \\ & \CAT & \\};
				\path[map]	(m-1-2) edge[bend left = 18] node[right] {$\id_1$} (m-2-3)
														edge[bend right = 45] node[left] {$1$} (m-3-2)
										(m-2-3) edge[bend left = 18] node[right] {$\NN$} (m-3-2);
				\path[transform canvas={yshift=-1.625em}]	(m-1-2) edge[cell] node[right] {$!$} (m-2-2);
			\end{tikzpicture} = \begin{tikzpicture}[textbaseline]
    		\matrix(m)[math35, column sep={1.75em,between origins}]{& 1 & \\ 1 & & 1 \\ & \CAT & \\};
    		\path[map]	(m-1-2) edge[transform canvas={xshift=2pt}] node[right] {$\id_1$} (m-2-3)
    								(m-2-1) edge[barred] node[below, inner sep=2pt] {$I_1$} (m-2-3)
    												edge[transform canvas={xshift=-2pt}] node[left] {$1$} (m-3-2)
    								(m-2-3)	edge[transform canvas={xshift=2pt}] node[right] {$\NN$} (m-3-2);
    		\path				(m-1-2) edge[eq, transform canvas={xshift=-2pt}] (m-2-1);
    		\path				(m-2-2) edge[cell, transform canvas={yshift=0.1em}]	node[right, inner sep=3pt] {$!'$} (m-3-2);
    		\draw				([yshift=-0.5em]$(m-1-2)!0.5!(m-2-2)$) node[font=\scriptsize] {$\cocart$};
  		\end{tikzpicture} \qquad\qquad\qquad\qquad \begin{tikzpicture}[textbaseline]
				\matrix(m)[math35, column sep={2em,between origins}]{1 & & 1 \\ & \CAT & \\};
				\path[map]	(m-1-1) edge[barred] node[above] {$\NN$} (m-1-3)
														edge[transform canvas={xshift=-2pt}] node[left] {$1$} (m-2-2)
										(m-1-3) edge[transform canvas={xshift=2pt}] node[right] {$\NN$} (m-2-2);
				\path				(m-1-2) edge[cell, transform canvas={yshift=0.25em}] node[right] {$\phi$} (m-2-2);
			\end{tikzpicture}
		\end{displaymath}
		
		This example does not contradict the fact that $\map{\yon_1 = 1}1{\CAT}$ is a good yoneda embedding in our sense, because the corresponding density axiom concerns cartesian cells with vertical source $1$ (see \lemref{density axioms}) instead of absolute left liftings of $1$. It is easily checked that the factorisation $!'$ of $!$ is not cartesian: writing $\hmap{\NN}11$ for the $2$"/profunctor defined by $\NN(*, *) = \NN$ the cell $\phi$ on the right above, that is given by the obvious isomorphism $\NN(*, *) \iso \CAT(1, \NN)$, does not factor through $!'$.
	\end{example}
	
	\subsection{Restriction of presheaves}
	Vertical morphisms $\map fAC$ induce morphisms $\map{\ps f}{\ps C}{\ps A}$ between presheaf objects given by ``restriction along $f$'' as follows. Given yoneda embeddings \mbox{$\map{\yon_A}A\ps A$} and $\map{\yon_C}C\ps C$, as well as any morphism $\map fAC$ such that the companion $\hmap{(\yon_C \of f)_*}A\ps C$ exists, we set $\ps f \dfn \cur{(\yon_C \of f)_*}$ as defined by the cartesian cell below that exists by the yoneda axiom.
	\begin{equation} \label{definition of ps f}
		\begin{tikzpicture}
				\matrix(m)[math35, column sep={2em,between origins}]{A & & \ps C \\ & \ps A & \\};
				\path[map]	(m-1-1) edge[barred] node[above] {$(\yon_C \of f)_*$} (m-1-3)
														edge[transform canvas={xshift=-2pt}] node[left] {$\yon_A$} (m-2-2)
										(m-1-3) edge[transform canvas={xshift=2pt}] node[right] {$\ps f$} (m-2-2);
				\draw				([yshift=0.333em]$(m-1-2)!0.5!(m-2-2)$) node[font=\scriptsize] {$\cart$};
			\end{tikzpicture} 
	\end{equation}
	Recall that $(\yon_C \of f)_* \iso \yon_{C*}(f, \id)$ by \lemref{companion of a composite}; in an augmented virtual equipment the latter exists as soon as the yoneda embbeding $\yon_C$ is good. Notice that the cell above defines $\ps f$ uniquely up to isomorphism, because $\yon_A$ is dense.
	\begin{example}
		For any $\V$-functor $\map fAC$ in $\enProf{(\V, \V')}$ the $\V'$-functor $\map{\ps f}{\ps C}{\ps A}$ exists. It is given by precomposition with $f$; more precisely, we have $\ps f(p) \iso p(f, \id)$ for every $\V$-presheaf $p \in \ps C$.
	\end{example}
	
	Combined with \lemref{yoneda embedding full and faithful}(a), the final assertion of the following proposition is analogous to the third axiom satisfied by a Yoneda structure in the sense of \cite{Street-Walters78}, see its Section~3, while the second assertion below is an adaptation of its Proposition~12.
	\begin{proposition}
		Consider yoneda embeddings $\map{\yon_A}A{\ps A}$ and $\map{\yon_C}C{\ps C}$ as well as morphisms $\map fAC$ and $\hmap KCD$ in an augmented virtual equipment. Assume that $\yon_C$ is a good yoneda embedding (\defref{yoneda embedding}), so that $\map{\ps f}{\ps C}{\ps A}$ can be defined as in \eqref{definition of ps f}. If the companion of $f$ exists then $\ps f \of \cur K \iso \cur{K(f, \id)}$ while the pointwise left Kan extension $\cur K$ of $\yon_C$ along $K$, as obtained by the yoneda axiom (\defref{yoneda embedding}) and defined by the cartesian cell below, is preserved by $\ps f$.
		\begin{displaymath}
			\begin{tikzpicture}
				\matrix(m)[math35, column sep={1.75em,between origins}]{C & & D \\ & \ps C & \\};
				\path[map]	(m-1-1) edge[barred] node[above] {$K$} (m-1-3)
														edge[transform canvas={xshift=-2pt}] node[left] {$\yon_C$} (m-2-2)
										(m-1-3) edge[transform canvas={xshift=2pt}] node[right] {$\cur K$} (m-2-2);
				\draw				([yshift=0.333em]$(m-1-2)!0.5!(m-2-2)$) node[font=\scriptsize] {$\cart$};
			\end{tikzpicture}
	  \end{displaymath}
		
		In particular $\ps f \of \yon_C \iso \cur{f_*}$, and $\ps f \of \cur{h_*} \iso \cur{(h \of f)_*}$ for any $\map hCE$ whose companion exists.
	\end{proposition}
	\begin{proof}
		First notice that, by \lemref{companion of a composite} and \lemref{restrictions and composites}, $(\yon_C \of f)_* \iso \yon_{C*}(f, \id) \iso (f_* \hc \yon_{C*})$ and $K(f, \id) \iso f_* \hc K$; the restrictions of $\yon_{C*}$ and $K$ here exist in any augmented virtual equipment. Consider the composite on the left-hand side below, where the bottom cartesian cell is that of \eqref{definition of ps f} and where the one denote $\cart'$ is the factorisation of the cartesian cell defining $\cur{h_*}$, as supplied by the yoneda axiom (\defref{yoneda embedding}), through the cartesian cell defining $\yon_{C*}$. The cocartesian cell defines $(\yon_C \of f)_*$ as the horizontal composite of $f_*$ and $\yon_{C*}$.
		\begin{displaymath}

		\end{displaymath}
		
		Because $\yon_A$ is dense (\lemref{density axioms}) the bottom cell in the composite on the left"/hand side above defines $\ps f$ as a pointwise left Kan extension. Since its cocartesian cell is pointwise (\lemref{restrictions and composites}), by \corref{Kan extensions and composites} the full composite defines $\ps f \of \cur K$ as a left Kan extension. Likewise, by factoring the left-hand side through the cocartesian cell $(f_*, K) \Rar K(f, \id)$, we obtain a cell that defines $\ps f \of \cur K$ as the left Kan extension of $\yon_A$ along $K(f, \id)$. Since $\cur{K(f, \id)}$ too is the left Kan extension of $\yon_A$ along $K(f, \id)$, the assertion $\ps f \of \cur K \iso \cur{K(f, \id)}$ now follows from the uniqueness of Kan extensions.
		
		For the preservation of the pointwise left Kan extension $\cur K$ by $\ps f$ notice that we can decompose $\cart' = \cocart \hc \cart$ as in the top rows of the composites in the identity above: this follows immediately from the fact that $\cart'$ is the factorisation of $\cart$ through $\yon_{A*}$. Because $\yon_A$ is full and faithful (\lemref{yoneda embedding full and faithful}(a)), the first column of the composite on the right-hand side defines $\ps f \of \yon_A$ as a left Kan extension by \propref{pointwise left Kan extension along full and faithful map}. Applying the horizontal pasting lemma (\lemref{horizontal pasting lemma}) we find that its second column $\ps f \of \cart$ defines $\ps f \of \cur K$ as a pointwise left Kan extension of $\ps f \of \yon_A$ along $K$.
		
		For the final assertions take $K = I_C$ and $K = h_*$ respectively, and use that $\cur{I_C} \iso \yon_C$, by \lemref{yoneda embedding full and faithful}(a), and that $h_*(f, \id) \iso (h \of f)_*$, by \lemref{companion of a composite}.
	\end{proof}
	
	Following \cite{Street-Walters78} and \cite{Weber07}, we pause to investigate the existence of left and right adjoints to $\ps f$; in particular \propref{right adjoint to ps f} and part of \propref{left adjoint to ps f} below are adaptations of Proposition~13 of \cite{Street-Walters78} and Theorem 3.20(2) of \cite{Weber07} respectively, to the setting of augmented virtual double categories. We start with the description of the right adjoint.
	\begin{proposition} \label{right adjoint to ps f}
		In an augmented virtual equipment $\K$ let $\map{\yon_A}A{\ps A}$ and $\map{\yon_C}C{\ps C}$ be yoneda embeddings and $\map fAC$ a morphism. Assume that $\yon_C$ is a good yoneda embedding (\defref{yoneda embedding}), so that $\map{\ps f}{\ps C}{\ps A}$ can be defined as in \eqref{definition of ps f}. If the companion of $\map{\cur{f_*}}C{\ps A}$ exists then $\ps f$ has a right adjoint $f^\flat \dfn \cur{(\cur{f_*})_*}$ that is defined by the cartesian cell
		\begin{displaymath}
			\begin{tikzpicture}
				\matrix(m)[math35, column sep={1.75em,between origins}]{C & & \ps A \\ & \ps C. & \\};
				\path[map]	(m-1-1) edge[barred] node[above] {$(\cur{f_*})_*$} (m-1-3)
														edge[transform canvas={xshift=-2pt}] node[left] {$\yon_C$} (m-2-2)
										(m-1-3) edge[transform canvas={xshift=2pt}] node[right] {$f^\flat$} (m-2-2);
				\draw				([yshift=0.333em]$(m-1-2)!0.5!(m-2-2)$) node[font=\scriptsize] {$\cart$};
			\end{tikzpicture}
		\end{displaymath}
	\end{proposition}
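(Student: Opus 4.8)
The plan is to set $f^\flat \coloneqq \cur{(\ps f \of \yon_C)_*}$, using the yoneda axiom for $\yon_C$ applied to the horizontal morphism $\hmap{(\ps f \of \yon_C)_*}C{\ps A}$ — this uses precisely the hypothesis that $(\ps f \of \yon_C)_*$ exists — and then to obtain the adjunction $\ps f \ladj f^\flat$ by recognising $f^\flat$ as a left Kan extension of $\id_{\ps C}$ along $\ps f$ that is assembled from Kan extensions along companions. First I would record two preliminary facts. By the yoneda axiom together with the density of $\yon_A$ (\lemref{density axioms}, cf.\ \lemref{left Kan extensions along yoneda embeddings}), the cartesian cell \eqref{definition of ps f} not only witnesses $(\yon_C \of f)_* \iso \ps A(\yon_A, \ps f)$ but also defines $\ps f$ as the pointwise left Kan extension of $\yon_A$ along $(\yon_C \of f)_*$; symmetrically, the cartesian cell defining $f^\flat$ witnesses $(\ps f \of \yon_C)_* \iso \ps C(\yon_C, f^\flat)$ and defines $f^\flat$ as the pointwise left Kan extension of $\yon_C$ along $(\ps f \of \yon_C)_*$.

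The key computation is that $(\ps f \of \yon_C)_*$ is a composite involving $(\ps f)_*$. By pseudofunctoriality of restriction (\lemref{pasting lemma for cartesian cells}) we have $(\ps f \of \yon_C)_* \iso (\ps f)_*(\yon_C, \id)$, and by \lemref{restrictions and composites} the latter is the horizontal composite $(\yon_C)_* \hc (\ps f)_*$, whose defining cocartesian cell is pointwise cocartesian because $\K$ is a hypervirtual equipment. Hence \corref{Kan extensions and composites} shows that $f^\flat$ is the pointwise left Kan extension of $\yon_C$ along the path $\bigpars{(\yon_C)_*, (\ps f)_*}$. Since $\yon_C$ is dense, the cartesian cell defining the companion $(\yon_C)_* = \ps C(\yon_C, \id)$ defines $\id_{\ps C}$ as the pointwise left Kan extension of $\yon_C$ along $(\yon_C)_*$ (\lemref{density axioms}(c)); so the horizontal pasting lemma (\lemref{horizontal pasting lemma}) lets us cancel this first leg and conclude that $f^\flat$ is the pointwise left Kan extension of $\id_{\ps C}$ along the companion $(\ps f)_*$.

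It remains to convert this into an adjunction, and this is where the real work lies. By \propref{weak left Kan extensions along companions} (with $j \coloneqq \ps f$) the nullary cell witnessing the statement just obtained corresponds to a vertical cell $\eta\colon \id_{\ps C} \Rar f^\flat \of \ps f$ that defines $f^\flat$ as the left Kan extension of $\id_{\ps C}$ along $\ps f$ in $V(\K)$; by \lemref{adjunctions in terms of left Kan extensions} it then suffices to see that this left Kan extension is preserved by $\ps f$, equivalently that it is absolute. I expect this to be the main obstacle. It should be resolved by a pasting argument exploiting that $(\ps f)_*$ is a companion and that $\ps f = \cur{(\yon_C \of f)_*}$ is itself a pointwise left Kan extension along a companion, which forces $\ps f$ to be cocontinuous; equivalently, and perhaps more cleanly, one shows that the nullary cell exhibiting "$f^\flat$ is the pointwise left Kan extension of $\id_{\ps C}$ along $(\ps f)_*$" is in fact cartesian, so that it presents $(\ps f)_*$ as the conjoint of $f^\flat$ (compare \propref{restriction as left Kan extension}), whence $\ps f \ladj f^\flat$ by the lemma characterising adjunctions in $V(\K)$ through conjoints that precedes \lemref{adjunctions in terms of left Kan extensions}. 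Once the adjunction is in hand, the displayed cartesian cell is, by construction, the one defining $f^\flat = \cur{(\ps f \of \yon_C)_*}$ via the yoneda axiom, completing the proof.
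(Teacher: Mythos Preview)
Your setup through the horizontal pasting step matches the paper's argument essentially exactly: the paper also identifies $(\ps f \of \yon_C)_*$ as the pointwise composite of $(\yon_{C*}, \ps f_*)$, uses \corref{Kan extensions and composites} together with density of $\yon_C$ to obtain a cell $\eta$ exhibiting $f^\flat$ as the pointwise left Kan extension of $\id_{\ps C}$ along $\ps f_*$, and then reduces via \lemref{adjunctions in terms of left Kan extensions} and \propref{weak left Kan extensions along companions} to showing that $\ps f \of \eta$ again defines a left Kan extension.

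The gap is in your sketch of that final step. The assertion that $\ps f$ is cocontinuous ``because it is itself a pointwise left Kan extension along a companion'' is not available here: the nearest result, \lemref{pointwise left Kan extensions along yoneda embeddings}, requires the standing hypotheses of \thmref{presheaf objects as free cocompletions}, and in any case $\ps f$ is a Kan extension along $(\yon_C \of f)_*$, not along $\yon_{A*}$. Your alternative---showing the nullary cell is cartesian so that $\ps f_* \iso (f^\flat)^*$---is just a restatement of the adjunction one is trying to prove, and does not reduce the difficulty.

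The paper's resolution is a concrete pasting identity that your sketch does not reach, and it is where the hypothesis that $f_*$ exists (which you never use) actually enters. Since $(\yon_C \of f)_* \iso (f_* \hc \yon_{C*})$, one can paste the cocartesian cell for this composite beneath the defining identity for $\eta$ to obtain an equality of composites both of whose first columns are the cocartesian cell $(f_*, \yon_{C*}) \Rar (\yon_C \of f)_*$ followed by the cartesian cell defining $\ps f$. By density of $\yon_A$ that first column defines a pointwise left Kan extension; the vertical pasting lemma then shows the full left-hand side does too, and the horizontal pasting lemma applied to the right-hand side isolates $\ps f \of \eta$ as a left Kan extension, as required.
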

	\begin{proof}
		Notice that $\cur{f_*} \iso \ps f \of \yon_C$ by the previous proposition. Consider the identity below where the cocartesian cell is the factorisation of the cocartesian cell defining $(\cur{f_*})_*$ as the companion of $\ps f \of \yon_C$, through $\yon_{C*}$. Notice that the cartesian cells in either side define pointwise left Kan extensions because $\yon_C$ is dense (see \lemref{density axioms}); in particular the cell $\eta$ is the factorisation of the composite on the left-hand side through the cartesian cell defining $\yon_{C*}$.
		\begin{displaymath}
			\begin{tikzpicture}[textbaseline]
				\matrix(m)[math35, column sep={1.75em,between origins}]{C & & \ps C \\ C & & \ps A \\ & \ps C. & \\};
				\path[map]	(m-1-1) edge[barred] node[above] {$\yon_{C*}$} (m-1-3)
										(m-1-3) edge[ps] node[right] {$\ps f$} (m-2-3)
										(m-2-1) edge[barred] node[below, inner sep=1.5pt] {$(\cur{f_*})_*$} (m-2-3)
														edge[transform canvas={xshift=-2pt}] node[left] {$\yon_C$} (m-3-2)
										(m-2-3) edge[transform canvas={xshift=2pt}] node[right] {$f^\flat$} (m-3-2);
				\path				(m-1-1) edge[eq] (m-2-1);
				\draw[font=\scriptsize]	($(m-1-2)!0.5!(m-2-2)$) node {$\cocart$}
																([yshift=0.25em]$(m-2-2)!0.5!(m-3-2)$) node {$\cart$};
			\end{tikzpicture} = \begin{tikzpicture}[textbaseline]
				\matrix(m)[math35, column sep={1.75em,between origins}]{C & & \ps C & \\ & \ps C & & \ps A \\ & & \ps C & \\ };
				\path[map]	(m-1-1) edge[barred] node[above] {$\yon_{C*}$} (m-1-3)
														edge[ps, transform canvas={xshift=-1pt}] node[left] {$\yon_C$} (m-2-2)
										(m-1-3) edge[ps, bend left=18] node[above right] {$\ps f$} (m-2-4)
										(m-2-4) edge[bend left=18] node[below right] {$f^\flat$} (m-3-3);
				\path				(m-1-3) edge[eq, ps, transform canvas={xshift=1pt}] (m-2-2)
										(m-2-2) edge[eq, bend right=18] (m-3-3)
										(m-1-3) edge[cell, transform canvas={yshift=-1.625em}] node[right] {$\eta$} (m-2-3);
				\draw				([yshift=0.333em]$(m-1-2)!0.5!(m-2-2)$) node[font=\scriptsize] {$\cart$};
			\end{tikzpicture}
		\end{displaymath}
		Composing both sides with the cocartesian cell defining $\yon_{C*}$ we obtain, on the left"/hand side, a composite that defines $f^\flat \of \ps f$ as a left Kan extension in $V(\K)$, by \propref{weak left Kan extensions along companions}. Likewise on the right"/hand side we obtain the composite $\id_{\yon_C} \hc \eta$, in which the identity cell $\id_{\yon_C}$ defines $\id_{\ps C}$ as the left Kan extension of $\yon_C$ along $\yon_C$. Using the horizontal pasting lemma (\lemref{horizontal pasting lemma}) we conclude that $\eta$ defines $f^\flat$ as the left Kan extension of $\id_{\ps C}$ along $\ps f$ in $V(\K)$.
		
		Using \propref{adjunctions in terms of left Kan extensions} the proof now follows by showing that $\ps f \of \eta$ defines $\ps f \of f^\flat$ as a left Kan extension. But this follows directly from the previous proposition: $\ps f$ preserves both the pointwise Kan extensions defined by the composite on the left-hand side and the cartesian cell in the right-hand side so that, by combining \propref{weak left Kan extensions along companions} and \lemref{horizontal pasting lemma} once more, it preserves $f^\flat$. 
	\end{proof}

	The following is a variation on Corollary~14 of \cite{Street-Walters78}.
	\begin{corollary}
		In an augmented virtual equipment let $\map{\yon_A}A{\ps A}$ be a good yoneda embedding. The good yoneda embedding $\map{\yon_{\ps A}}{\ps A}{\ps {\ps{A\mspace{0 mu}}}}$, if it exists, forms the right adjoint to $\map{\ps{\yon_A}}{\ps{\ps{A\mspace{0 mu}}}}{\ps A}$.
	\end{corollary}
	\begin{proof}
		The cartesian cell defining $\yon_{A*}$ also defines $\id_{\ps A}$ as a left Kan extension because $\yon_A$ is dense (see \lemref{density axioms}); it follows that $\cur{(\yon_{A*})} \iso \id_{\ps A}$. Hence $\cur{(\yon_{A*})}$ has a companion so that, by the previous proposition, $\ps{\yon_A}$ has a right adjoint
		\begin{displaymath}
			\yon_A^\flat = \cur{\bigpars{\cur{(\yon_{A*})}}_*} \iso \cur{(\id_{\ps A*})} \iso \cur{I_{\ps A}} \iso \yon_{\ps A}
		\end{displaymath}
		where the last isomorphism follows from \lemref{yoneda embedding full and faithful}(a).
	\end{proof}
		
	The following proposition, which is a variation on Lemma 3.18 of \cite{Weber07}, allows us to describe the left adjoint to $\map{\ps f}{\ps A}{\ps C}$, in the proposition that follows.
	\begin{proposition} \label{pointwise left Kan extensions along yoneda embeddings are left adjoints}
		Let $\map\yon A{\ps A}$ be a good yoneda embedding. Any pointwise weak left Kan extension $\map l{\ps A}M$ of $\map dAM$ along $\yon_*$, such that the companion $d_*$ exists, has a right adjoint $\cur{d_*}$ defined by the cartesian cell
		\begin{displaymath}
			\begin{tikzpicture}
				\matrix(m)[math35, column sep={1.75em,between origins}]{A & & M \\ & \ps A. & \\};
				\path[map]	(m-1-1) edge[barred] node[above] {$d_*$} (m-1-3)
														edge[transform canvas={xshift=-2pt}] node[left] {$\yon$} (m-2-2)
										(m-1-3) edge[transform canvas={xshift=2pt}] node[right] {$\cur{d_*}$} (m-2-2);
				\draw				([yshift=0.333em]$(m-1-2)!0.5!(m-2-2)$) node[font=\scriptsize] {$\cart$};
			\end{tikzpicture}
		\end{displaymath}
	\end{proposition}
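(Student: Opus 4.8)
The plan is to exhibit the unit $\cell\eta{\id_{\ps A}}{\cur{d_*} \of l}$ of the claimed adjunction $l \ladj \cur{d_*}$ in the vertical $2$-category $V(\K)$ and then invoke \lemref{adjunctions in terms of left Kan extensions}: it will suffice to show that $\eta$ exhibits $\cur{d_*}$ as the left Kan extension of $\id_{\ps A}$ along $l$ in $V(\K)$ and that this Kan extension is preserved by $l$; since right adjoints are unique, this also identifies the right adjoint with the specific morphism $\cur{d_*}$ defined by the displayed cartesian cell. This is exactly the route taken in the proof of \propref{right adjoint to ps f}, of which the present statement is a cleaner variant — there the relevant left Kan extension structure must be extracted from the yoneda axiom, whereas here $l$ is a pointwise weak left Kan extension by hypothesis. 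Throughout I would use that $\yon$, being a good yoneda embedding, is full and faithful and that $\ps A$ is unital (\lemref{yoneda embedding full and faithful}(a)), that $\yon_*$ and all restrictions $\ps A(\yon, g)$ exist, and that by density of $\yon$ (\lemref{density axioms}) every cartesian cell with vertical source $\yon$ — in particular the one defining $\cur{d_*}$ and the one defining $\yon_*$ — exhibits its vertical target as a \emph{pointwise} left Kan extension of $\yon$.

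For the construction of $\eta$: first, using \lemref{restrictions and composites} together with $\ps A$ unital and $\yon$ good, I would identify $d_* \iso \yon_*(\id, \cur{d_*})$ and observe that the restriction $M(d, l) \iso \ps A(\yon, \cur{d_*} \of l)$ exists. Because $l$ is a pointwise weak left Kan extension, restricting its defining cell $\cell{\bar\eta}{\yon_*}M$ along the cartesian cell $d_* \Rar \yon_*$ yields a nullary cell exhibiting $l \of \cur{d_*}$ as the weak left Kan extension of $d$ along $d_*$ (\defref{pointwise left Kan extension} and \lemref{properties of pointwise left Kan extensions}(b)). Factoring $\bar\eta$ through the cartesian cell defining $M(d, l)$, composing with the cartesian cell defining $\cur{d_*} \of l$ as $\ps A(\yon, \cur{d_*} \of l)$, and then factoring the resulting cell (which has vertical source $\yon$ and vertical target $\cur{d_*} \of l$) through the cartesian cell exhibiting $\id_{\ps A}$ as the pointwise left Kan extension of $\yon$ along $\yon_*$, produces the required vertical cell $\eta$ — the analogue of the cell $\eta$ built by a factorisation in \propref{right adjoint to ps f}. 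Symmetrically, factoring the cartesian cell $d_* \Rar M$ exhibiting $d_*$ as the companion of $d$ through the cell displaying $l \of \cur{d_*}$ as a weak left Kan extension of $d$ along $d_*$ gives the counit $\cell{\bar\eps}{l \of \cur{d_*}}{\id_M}$.

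The main obstacle is verifying that $\eta$ exhibits $\cur{d_*}$ as a left Kan extension of $\id_{\ps A}$ along $l$ in $V(\K)$ preserved by $l$; I expect this to require the same pasting bookkeeping as the last part of the proof of \propref{right adjoint to ps f}. Concretely, I would rewrite the composite of the cartesian cell defining $\cur{d_*}$ as the pointwise left Kan extension of $\yon$ along $d_*$, the cocartesian cell exhibiting $d_*$ as a pointwise horizontal composite (via \lemref{restrictions and composites}), and the cartesian cell defining $\id_{\ps A}$ as the pointwise left Kan extension of $\yon$ along $\yon_*$, apply \corref{Kan extensions and composites} together with the horizontal and vertical pasting lemmas (\lemref{horizontal pasting lemma}, \lemref{vertical pasting lemma}) to recognise $\eta$ as a pointwise left Kan extension in $\K$, and then run one further pasting computation showing that $l \of \eta$ again defines a left Kan extension. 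When a companion $l_*$ exists the translation between the $\K$-level and $V(\K)$-level statements is \propref{weak left Kan extensions along companions}; in general \propref{weak left Kan extensions along companions} still applies to $l$ itself (using that $\yon_*$ exists) and the remaining identities are read off directly in $V(\K)$. With this in hand, \lemref{adjunctions in terms of left Kan extensions} delivers $l \ladj \cur{d_*}$ with $\eta$ as unit, and the triangle identities — equivalently, that $\bar\eps$ is the corresponding counit — follow from the uniqueness of factorisations through the cartesian, cocartesian, and Kan-extension-defining cells used above, together with \lemref{vertical cells defining left Kan extensions}.
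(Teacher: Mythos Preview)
Your overall strategy matches the paper's: construct the unit $\zeta\colon\id_{\ps A}\Rightarrow\cur{d_*}\of l$ (you call it $\eta$), show it exhibits $\cur{d_*}$ as the left Kan extension of $\id_{\ps A}$ along $l$ in $V(\K)$ preserved by $l$, and then invoke \lemref{adjunctions in terms of left Kan extensions}. The construction of the unit you sketch (factoring through the cartesian cell defining $\yon_*$) is also essentially the paper's.

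The gap is in your step ``recognise $\eta$ as a pointwise left Kan extension in $\K$'' and the translation to $V(\K)$. The unit is a \emph{vertical} cell $\id_{\ps A}\Rightarrow\cur{d_*}\of l$; to say it defines a left Kan extension along $l$ in $V(\K)$ via \propref{weak left Kan extensions along companions} you would need the companion $l_*$, which is not assumed to exist. Your fallback ``in general \propref{weak left Kan extensions along companions} still applies to $l$ itself (using that $\yon_*$ exists)'' does not parse: that proposition translates along the companion of the morphism you are extending \emph{along}, and here that morphism is $l$, not $\yon$. The analogy with \propref{right adjoint to ps f} breaks down for the same reason: there the companion $\ps f_*$ is assumed to exist, so the Kan extension can be handled entirely at the $\K$-level.

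The paper closes this gap with an auxiliary lemma, proved just before the proposition: for a good yoneda embedding $\yon$, a vertical cell $\zeta\colon\id_{\ps A}\Rightarrow l\of j$ defines $l$ as the left Kan extension of $\id_{\ps A}$ along $j$ in $V(\K)$ precisely if $\zeta\of\yon$ defines $l$ as the left Kan extension of $\yon$ along $j\of\yon$. This is what lets one use $\yon_*$ instead of the missing $l_*$: one checks the Kan-extension property after precomposing with $\yon$ (where \propref{weak left Kan extensions along companions} does apply), and the lemma lifts it back. The same device is used again for preservation by $l$. Without isolating this step your argument does not go through; adding it, the rest of your plan is sound.
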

	
	The following proposition generalises Proposition 3.3 of \cite{Day-Lack07}.
	\begin{proposition} \label{left adjoint to ps f}
		Let $\map{\yon_A}A{\ps A}$ and $\map{\yon_C}C{\ps C}$ be good yoneda embeddings and consider a morphism $\map fAC$. Assume that the pointwise left Kan extension $\map{f^\sharp}{\ps A}{\ps C}$ of $\yon_C \of f$ along $\yon_{A*}$ exists, as is implied by the existence of the right pointwise cocartesian cell
			\begin{displaymath}
				\begin{tikzpicture}
					\matrix(m)[math35]{A & \ps A \\ C & \ps A. \\};
					\path[map]	(m-1-1) edge[barred] node[above] {$\yon_{A*}$} (m-1-2)
															edge node[left] {$f$} (m-2-1)
											(m-2-1) edge[barred] node[below] {$K$} (m-2-2);
					\path				(m-1-2) edge[eq, ps] (m-2-2);
					\draw				($(m-1-1)!0.5!(m-2-2)$) node[font=\scriptsize] {$\cocart$};
				\end{tikzpicture}
			\end{displaymath}
			The right adjoint to $f^\sharp$ exists if and only if the companion $\hmap{(\yon_C \of f)_*}A{\ps C}$ does, in which case $\map{\ps f}{\ps C}{\ps A}$, as defined in \eqref{definition of ps f}, forms the right adjoint.
	\end{proposition}
	\begin{proof}
		For the first assertion take $f^\sharp \dfn \cur K$ as given by the yoneda axiom. That it forms the pointwise weak left Kan extension of $\yon_C \of f$ along $\yon_{A*}$ follows from applying the vertical pasting lemma (\lemref{vertical pasting lemma}) to the cartesian cell defining $\cur K$ composed with the cocartesian cell above.
	
		The `if'"/part of the second assertion: since the companion $(\yon_C \of f)_*$ exists, it follows from the previous proposition that $f^\sharp$ has a right adjoint that is given by $\cur{(\yon_C \of f)_*}$. For the final assertion above notice that the latter is isomorphic to $\ps f$ by definition: see \eqref{definition of ps f}.
		
		For the `only if'"/part let us denote the right adjoint to $f^\sharp$ by $\map r{\ps C}{\ps A}$; we claim that the restriction $\hmap{\ps A(\yon_A, r)}A{\ps C}$, which exists because $y_A$ is a good yoneda embedding (see \defref{yoneda embedding}), forms the companion of $y_C \of f$.
		\begin{displaymath}
			\begin{tikzpicture}
				\matrix(m)[math35, column sep={1.75em,between origins}]{A & & \ps C & \\ & \ps A & & \\ & & \ps C & \\ };
				\path[map]	(m-1-1) edge[barred] node[above] {$\ps A(\yon_A, r)$} (m-1-3)
														edge[ps, transform canvas={xshift=-1pt}] node[left] {$\yon_A$} (m-2-2)
										(m-1-3) edge[ps, transform canvas={xshift=1pt}] node[right] {$r$} (m-2-2)
										(m-2-2) edge[bend right=18] node[below left] {$f^\sharp$} (m-3-3);
				\path				(m-1-3) edge[eq, ps, bend left=45] (m-3-3)
										(m-1-3) edge[cell, transform canvas={yshift=-1.625em}] node[right] {$\eps$} (m-2-3);
				\draw				([yshift=0.25em]$(m-1-2)!0.5!(m-2-2)$) node[font=\scriptsize] {$\cart$};
			\end{tikzpicture}
		\end{displaymath}
		To see this consider the composite above, where $\eps$ is the counit of $f^\sharp \ladj r$; it is cartesian by \lemref{horizontal composition with (co-)units preserves nullary cartesian cells}. Because $f^\sharp$ is the pointwise Kan extension along the full and faithful yoneda embedding $\yon_A$ (see \lemref{yoneda embedding full and faithful}), we have $\yon_C \of f \iso f^\sharp \of \yon_A$ by \propref{pointwise left Kan extension along full and faithful map}. Composing the composite above with this isomorphism we obtain the cartesian cell that defines $\ps A(\yon_A, r)$ as the companion of $\yon_C \of f$.
	\end{proof}
	
	In the proof of \propref{pointwise left Kan extensions along yoneda embeddings are left adjoints} we will use the following lemma.
	\begin{lemma}
		Let $\map\yon A{\ps A}$ be a good yoneda embedding in an augmented virtual double category $\K$. A vertical cell
		\begin{displaymath}
			\begin{tikzpicture}
				\matrix(m)[math35, column sep={1.75em,between origins}]{& \ps A & \\ & & B \\ & \ps A & \\};
				\path[map]	(m-1-2) edge[bend left = 18] node[above right] {$j$} (m-2-3)
										(m-2-3) edge[bend left = 18] node[below right] {$l$} (m-3-2);
				\path				(m-1-2) edge[eq, bend right = 45] (m-3-2);
				\path[transform canvas={yshift=-1.625em}]	(m-1-2) edge[cell] node[right] {$\eta$} (m-2-2);
			\end{tikzpicture}
		\end{displaymath}
		defines $l$ as the left Kan extension of $\id_{\ps A}$ along $j$ in $V(\K)$ precisely if $\eta \of \yon$ defines $l$ as the left Kan extension of $\yon$ along $j \of \yon$.
	\end{lemma}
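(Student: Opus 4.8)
The plan is to reduce the assertion to a purely $2$-categorical fact about $V(\K)$: that composing a universal arrow can be broken up through $\yon$. The only input from the ambient hypervirtual double category is a reformulation of the density of $\yon$, in the sense of \lemref{density axioms}, as the statement that the identity cell $\id_\yon$ defines $\id_{\ps A}$ as the left Kan extension of $\yon$ along $\yon$ in $V(\K)$. To obtain this, first note that, $\yon$ being a \emph{good} yoneda embedding, the nullary restriction $\ps A(\yon, \id_{\ps A}) = \yon_*$ exists, so that \lemref{density axioms}(c) applies: the cartesian cell defining $\yon_*$ defines $\id_{\ps A}$ as the pointwise --- and in particular weak --- left Kan extension of $\yon$ along $\yon_*$ in $\K$. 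Since, by the companion identities (\lemref{companion identities lemma}), this cartesian cell is precisely the factorisation of $\id_\yon$ through the cocartesian cell defining $\yon_*$, \propref{weak left Kan extensions along companions} then gives the desired reformulation. Concretely, it says that for every vertical morphism $\map m{\ps A}{\ps A}$ the whiskering map $\gamma \mapsto \gamma \of \yon$ is a bijection $V(\K)(\ps A, \ps A)(\id_{\ps A}, m) \xrar\iso V(\K)(A, \ps A)(\yon, m \of \yon)$.

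With this at hand I would fix an arbitrary vertical morphism $\map kB{\ps A}$ and consider the triangle of maps of hom-sets
\[
	V(\K)(B, \ps A)(l, k) \xrar{\alpha_k} V(\K)(\ps A, \ps A)(\id_{\ps A}, k \of j) \xrar{\beta_k} V(\K)(A, \ps A)(\yon, k \of j \of \yon),
\]
in which $\alpha_k$ is composition with $\eta$, sending $\phi$ to $\eta \hc (\phi \of j)$; $\beta_k$ is whiskering with $\yon$, sending $\gamma$ to $\gamma \of \yon$; and the composite $\beta_k \of \alpha_k$ is composition with $\eta \of \yon$, sending $\phi$ to $(\eta \of \yon) \hc \bigpars{\phi \of (j \of \yon)}$. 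That the triangle commutes is immediate from the interchange and associativity axioms for $V(\K)$ (\lemref{horizontal composition}), since whiskering by $\yon$ preserves the composition $\hc$ and is associative with respect to composition of vertical morphisms, so that $\bigpars{\eta \hc (\phi \of j)} \of \yon = (\eta \of \yon) \hc \bigpars{\phi \of (j \of \yon)}$. By the previous paragraph $\beta_k$ is a bijection for every $k$, whence $\alpha_k$ is a bijection (for a given $k$, or for all $k$) if and only if $\beta_k \of \alpha_k$ is. The former condition, for all $k$, says precisely that $\eta$ defines $l$ as the left Kan extension of $\id_{\ps A}$ along $j$; the latter says that $\eta \of \yon$ defines $l$ as the left Kan extension of $\yon$ along $j \of \yon$. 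This establishes both implications simultaneously.

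I do not expect a serious obstacle. The one point that needs care is the first step: density of $\yon$ must be unwound through \lemref{density axioms}(c) and \propref{weak left Kan extensions along companions}, and for this one must know that the companion $\yon_*$ is available --- which is exactly what `goodness' of the yoneda embedding supplies. It is also worth keeping in mind that every Kan extension occurring in this lemma is the ordinary $2$-categorical one inside $V(\K)$, so no weak or pointwise distinction intervenes; the verification of commutativity and of the bijectivity claims is then pure bookkeeping with the interchange axioms and the defining universal properties.
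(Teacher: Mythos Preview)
Your proof is correct and follows essentially the same strategy as the paper's: set up a commuting diagram of hom-sets in which one leg is a bijection furnished by the density of $\yon$, so that the other two legs are bijective together. The only difference is packaging --- the paper factors the density bijection through nullary cells with horizontal source $\yon_*$ (using the cartesian and weakly cocartesian cells for $\yon_*$ separately), whereas you collapse those two steps into one by invoking \propref{weak left Kan extensions along companions}, which is exactly the lemma that performs that translation; this yields a slightly leaner argument entirely inside $V(\K)$.
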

	\begin{proof}
		Consider the commuting diagram of assignments below, between collections of cells that are of the form as shown. The two bijections here are given by composition with the cartesian and weakly cocartesian cell that define $\yon_*$; that they are bijections follows from the fact that the cartesian cell defines $\id_{\ps A}$ as a left Kan extension, and the universal property of weakly cocartesian cells.
		\begin{displaymath}

		\end{displaymath}
		The proof now follows from the fact that, by definition, $\eta$ defines $l$ as a left Kan extension if the top left assignment is a bijection, while $\eta \of \yon$ does so whenever the bottom assignment is a bijection.
	\end{proof}
	
	\begin{proof}[Proof of \propref{pointwise left Kan extensions along yoneda embeddings are left adjoints}]
		Let us write $\eta$ for the cell that defines $l$ as the pointwise weak left Kan extension of $d$ along $\yon_*$, as on the left below. Notice that, because $\yon$ is full and faithful (\lemref{yoneda embedding full and faithful}), composing $\eta$ with the weakly cocartesian cell defining $\yon_*$ results in an invertible vertical cell by \propref{pointwise left Kan extension along full and faithful map}. Writing $\eta'$ for the factorisation of $\eta$ through $d_*$, it follows that the composite of the top two cells in the left-hand side of the equation below is weakly cocartesian, so that the full composite defines $\cur{d_*}$ as the left Kan extension of $\yon$ along $l \of \yon$ in $V(\K)$, by \propref{weak left Kan extensions along companions}.
		\begin{displaymath}

	  \end{displaymath}
	  Now the composite of the bottom two cells in the left-hand side above factorises uniquely through the cartesian cell defining $\yon_*$ as a vertical cell $\zeta$, as shown in the first identity. Using the companion identity for $\yon_*$ the second identity follows. We conclude that $\zeta \of \yon$ defines $\cur{d_*}$ as the left Kan extension of $\yon$ along $l \of \yon$ in $V(\K)$ which, by the lemma above, implies that $\zeta$ defines $\cur{d_*}$ as the left Kan extension of $\id_{\ps A}$ along $l$. We will shown that it is preserved by $l$, so that the proof follows from \propref{adjunctions in terms of left Kan extensions}. To this end consider the identity below, where $\cart'$ denotes the factorisation of the cartesian cell defining $\cur{d_*}$ through $\yon_*$; the identity itself follows from the first identity above.
	  \begin{displaymath}

	  \end{displaymath}
	  By the pasting lemma $\cart'$ is again cartesian; hence, because $\eta$ defines a pointwise weak left Kan extension, so does the composite of the bottom two cells in the left-hand side above. Remembering that $\eta' \of \cocart$ is weakly cocartesian, it follows that the full left-hand side defines $l \of \cur{d_*}$ as a left Kan extension in $V(\K)$ by \propref{weak left Kan extensions along companions}. Since $\eta \of \cocart$ in the right-hand side is invertible we conclude that $l \of \zeta \of \yon$ defines $l \of \cur{d_*}$ as a left Kan extension which, by the previous lemma, implies that $l \of \zeta$ does too, as required.
	\end{proof}
	
	\subsection{Exact paths of cells}
	In the definition below the classical notion of `carr\'e exact', as studied by Guitart \cite{Guitart80}, is generalised to the setting of augmented virtual double categories. This notion will be used throughout the remainder.
	\begin{definition} \label{left exact}
		Consider a path $\ul \phi = (\phi_1, \dotsc, \phi_n)$ of unary cells as in the composite below,  where $n \geq 1$, and let $\map d{C_0}M$ be any vertical morphism. The path $\ul \phi$ is called \emph{(weak) left $d$-exact} if for any nullary cell $\eta$ as below, that defines $l$ as the (weak) left Kan extension of $d$ along $(K_1, \dotsc, K_n)$, the full composite defines $l \of f_n$ as the (weak) left Kan extension of $d \of f_0$ along $(J_{11}, \dotsc, J_{nm_n})$.
		\begin{displaymath}
			\begin{tikzpicture}
				\matrix(m)[math35, column sep={4em,between origins}]
					{	A_{10} & A_{11} & A_{1m'_1} & A_{1m_1} &[4em] A_{n0} & A_{n1} & A_{nm'_n} & A_{nm_n} \\
						C_0 & & & C_1 & C_{n'} & & & C_n \\};
				\draw				([yshift=-3.25em]$(m-2-1)!0.5!(m-2-8)$) node (M) {$M$};
				\path[map]	(m-1-1) edge[barred] node[above] {$J_{11}$} (m-1-2)
														edge node[left] {$f_0$} (m-2-1)
										(m-1-3) edge[barred] node[above] {$J_{1m_1}$} (m-1-4)
										(m-1-4) edge node[right] {$f_1$} (m-2-4)
										(m-1-5)	edge[barred] node[above] {$J_{n1}$} (m-1-6)
														edge node[left] {$f_{n'}$} (m-2-5)
										(m-1-7) edge[barred] node[above] {$J_{nm_n}$} (m-1-8)
										(m-1-8) edge node[right] {$f_n$} (m-2-8)
										(m-2-1) edge[barred] node[below] {$K_1$} (m-2-4)
														edge[transform canvas={yshift=-2pt}] node[below left] {$d$} (M)
										(m-2-5)	edge[barred] node[below] {$K_n$} (m-2-8)
										(m-2-8)	edge[transform canvas={yshift=-2pt}] node[below right] {$l$} (M);
				\path				($(m-1-1.south)!0.5!(m-1-4.south)$) edge[cell] node[right] {$\phi_1$} ($(m-2-1.north)!0.5!(m-2-4.north)$)
										($(m-1-5.south)!0.5!(m-1-8.south)$) edge[cell] node[right] {$\phi_n$} ($(m-2-5.north)!0.5!(m-2-8.north)$)
										($(m-2-1.south)!0.5!(m-2-8.south)$) edge[cell] node[right] {$\eta$} (M);
				\draw[transform canvas={xshift=-1pt}]	($(m-1-2)!0.5!(m-1-3)$) node {$\dotsb$}
										($(m-1-6)!0.5!(m-1-7)$) node {$\dotsb$};
				\draw				($(m-1-4)!0.5!(m-2-5)$) node {$\dotsb$};
			\end{tikzpicture}
		\end{displaymath}
		If $\ul \phi$ is (weak) left $d$-exact for any $\map d{C_0}M$, where $M$ varies, then it is called \emph{(weak) left exact}.
		
		Analogously, the path $\ul \phi$ above is called \emph{pointwise} (weak) left $d$-exact if for any cell $\eta$ above, that defines $l$ a pointwise (weak) left Kan extension of $d$ along $(K_1, \dotsc, K_n)$, the full composite defines $l \of f_n$ as the pointwise (weak) left Kan extension of $d \of f_0$ along $(J_{11}, \dotsc, J_{nm_n})$. A path that is pointwise (weak) left $d$-exact for all $\map d{C_0}M$ is called \emph{pointwise} (weak) left exact.
		
		Moreover, we say that $\ul \phi$ \emph{satisfies the left Beck-Chevalley condition} if the restriction $K_n(\id, f_n)$ exists and the path $(\phi_1, \dotsc, \phi_{n'}, \phi_n')$ is right pointwise cocartesian (\defref{pointwise cocartesian path}), where $\phi_n'$ is the unique factorisation in
		\begin{displaymath}
			\phi_n = \begin{tikzpicture}[textbaseline]
				\matrix(m)[math35, column sep={4em,between origins}]{A_{10} & A_{11} & A_{nm_n'} & A_{nm_n} \\ C_{n'} & & & A_{nm_n} \\ C_{n'} & & & C_n. \\};
				\path[map]	(m-1-1) edge[barred] node[above] {$J_{11}$} (m-1-2)
														edge node[left] {$f_{n'}$} (m-2-1)
										(m-1-3) edge[barred] node[above] {$J_{nm_n}$} (m-1-4)
										(m-2-1) edge[barred] node[below] {$K_n(\id, f_n)$} (m-2-4)
										(m-2-4) edge node[right] {$f_n$} (m-3-4)
										(m-3-1) edge[barred] node[below] {$K_n$} (m-3-4);
				\draw				([xshift=-1pt]$(m-1-2)!0.5!(m-1-3)$) node {$\dotsc$}
										([yshift=-0.333em]$(m-2-1)!0.5!(m-3-4)$) node[font=\scriptsize] {$\cart$};
				\path				(m-1-4) edge[eq] (m-2-4)
										(m-2-1) edge[eq] (m-3-1)
										($(m-1-1.south)!0.5!(m-1-4.south)$) edge[cell] node[right] {$\phi_n'$} ($(m-2-1.north)!0.5!(m-2-4.north)$);
			\end{tikzpicture}
		\end{displaymath}
		A single cell $\phi$ is said to satisfy the left Beck-Chevalley condition if the single path $(\phi)$ does.
		
		We shall also use the horizontally dual condition: we say that the path $\ul \phi$ above \emph{satisfies the right Beck-Chevalley condition} if the restriction $K_1(f_0, \id)$ exists and $(\phi_1', \phi_2, \dotsc, \phi_n)$ is left pointwise cocartesian, where $\phi_1'$ is the unique factorisation of $\phi_1$ through the cartesian cell defining $K_1(f_0, \id)$.
	\end{definition}
	
	\begin{example}
		Consider a cell
		\begin{displaymath}
			\begin{tikzpicture}
				\matrix(m)[math35]{A & B \\ C & D \\};
				\path[map]	(m-1-1) edge[barred] node[above] {$J$} (m-1-2)
														edge node[left] {$f$} (m-2-1)
										(m-1-2) edge node[right] {$g$} (m-2-2)
										(m-2-1) edge[barred] node[below] {$K$} (m-2-2);
				\path[transform canvas={xshift=1.75em}]	(m-1-1) edge[cell] node[right] {$\phi$} (m-2-1);
			\end{tikzpicture}
		\end{displaymath}
		in the augmented virtual equipment $\enProf{(\Set, \Set')}$ of (unenriched) profunctors, and assume that $C$ is small. It follows that both the conjoint $f^*$ and the pointwise composite $(f^* \hc J)$ exist; see \exref{restrictions of V-profunctors} and \exref{horizontal composites in (V, V')-Prof}. The latter forms the extension of $J$ along $f$ by \corref{extensions and composites}, whose defining cocartesian cell is right pointwise, so that $\phi$ satisfies the left Beck-Chevalley condition precisely if its factorisation $\cell{\phi''}{(f^* \hc J)}{K(\id, g)}$, that is induced by the cell
		\begin{displaymath}
			(f^*, J) \Rar K(\id, g) \colon (x \xrar s fy, y \xrar u z) \mapsto (x \xrar{s} fy \xrar{\phi u} gz),
		\end{displaymath}
		is invertible. If $J = j_*$ and $K = k_*$ for functors $\map jAB$ and $\map kCD$, so that $\phi$ corresponds to a transformation $\nat\phi{k \of f}{g \of j}$, this recovers the definition of exactness given in \cite{Guitart80}.
	\end{example}
	
	The Beck-Chevalley condition is preserved under concatenation, as follows. In particular, a path $(\phi_1, \dotsc, \phi_n)$ satisfies the left Beck-Chevalley condition whenever each of the cells $\phi_i$ do.
	\begin{lemma} \label{concatenation of paths satisfying the Beck-Chevalley condition}
		Consider composable paths of cells $\ul \phi = (\phi_1, \dotsc, \phi_n)$ and $\ul \psi = (\psi_1, \dotsc, \psi_n)$. If they both satisfy the left Beck-Chevalley condition then so does their concatenation $\ul \phi \conc \ul \psi = (\phi_1, \dotsc, \phi_n, \psi_1, \dotsc, \psi_m)$.
	\end{lemma}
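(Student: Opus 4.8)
Write $\ul\phi = (\phi_1, \dotsc, \phi_n)$ and $\ul\psi = (\psi_1, \dotsc, \psi_m)$, with $\phi_i$ of vertical target $f_i$ and horizontal target $K_i$, and $\psi_j$ of vertical target $g_j$ and horizontal target $L_j$; composability means that the vertical source of $\psi_1$ is $f_n$, so that $\ul\phi \conc \ul\psi = (\phi_1, \dotsc, \phi_n, \psi_1, \dotsc, \psi_m)$. Unwinding \defref{left exact}, the concatenation satisfies the left Beck-Chevalley condition exactly when the restriction $L_m(\id, g_m)$ exists and the path $(\phi_1, \dotsc, \phi_n, \psi_1, \dotsc, \psi_{m-1}, \psi_m')$ is right pointwise cocartesian, where $\psi_m$ factors as $\psi_m = \cart \of \psi_m'$ through the cartesian cell $\cart \colon L_m(\id, g_m) \Rightarrow L_m$. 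Since the last cell of $\ul\phi \conc \ul\psi$ is the last cell of $\ul\psi$, both the existence of $L_m(\id, g_m)$ and the cell $\psi_m'$ are supplied verbatim by the hypothesis that $\ul\psi$ satisfies the condition; so the whole statement reduces to showing that $(\phi_1, \dotsc, \phi_n, \psi_1, \dotsc, \psi_{m-1}, \psi_m')$ is right pointwise cocartesian.

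To prove this I would first unwind \defref{pointwise cocartesian path}: for a morphism $h$ into the domain of the last horizontal source of $\psi_m'$, the relevant restriction along $h$ exists if and only if it does for the path $(\psi_1, \dotsc, \psi_{m-1}, \psi_m')$ --- which it does, that path being right pointwise cocartesian by hypothesis --- and in that case it remains to see that $(\phi_1, \dotsc, \phi_n, \psi_1, \dotsc, \psi_{m-1}, \psi_m'')$ is cocartesian, $\psi_m''$ being the resulting restriction of $\psi_m'$; the corresponding path $(\psi_1, \dotsc, \psi_{m-1}, \psi_m'')$ is already cocartesian. On the other side, the hypothesis that $\ul\phi$ satisfies the condition gives that $K_n(\id, f_n)$ exists and that $(\phi_1, \dotsc, \phi_{n'}, \phi_n')$ is right pointwise cocartesian --- in particular cocartesian --- with $\phi_n = \cart \of \phi_n'$ for the cartesian cell $\cart \colon K_n(\id, f_n) \Rightarrow K_n$. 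So everything comes down to gluing the cocartesian path $(\phi_1, \dotsc, \phi_{n'}, \phi_n')$ and the cocartesian path $(\psi_1, \dotsc, \psi_{m-1}, \psi_m'')$ along the cartesian seam $\cart$ into a single cocartesian path, while retaining the ``pointwise'' strengthening of \defref{pointwise cocartesian path}.

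The plan for this gluing is to present $(\phi_1, \dotsc, \phi_n, \psi_1, \dotsc, \psi_{m-1}, \psi_m'')$ as a vertical composite of two layers: a top layer consisting of $(\phi_1, \dotsc, \phi_{n'}, \phi_n')$ with identity cells appended along the horizontal sources of $\ul\psi$, and a bottom layer consisting of identity cells on $K_1, \dotsc, K_{n'}$, the cartesian cell $\cart$, and $\psi_1, \dotsc, \psi_{m-1}, \psi_m''$; composing layer-wise then reproduces the path, using $\cart \of \phi_n' = \phi_n$ and that identity cells compose away. The top layer is cocartesian because appending identity cells to a cocartesian path yields one, by condition (b) of \defref{cocartesian paths} together with the coherence of composites, and \lemref{pasting lemma for cocartesian paths} then reduces cocartesianness of the whole to that of the bottom layer; the ``pointwise'' bookkeeping is carried out in parallel by \lemref{pasting lemma for right pointwise cocartesian paths} and \lemref{coherence of pointwise cocartesian paths}.

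The main obstacle is precisely this bottom-layer step: the cell $\cart$ is cartesian rather than cocartesian, so it cannot be handled as an isolated cocartesian cell, and one must show that conjugating it against the adjacent $\psi$-block --- across the non-identity vertical morphism $f_n$ at the seam --- still produces a cocartesian path. Here I expect the argument to rest on \lemref{restrictions and composites}, which identifies $K_n(\id, f_n)$ with the pointwise composite $(K_n \hc f_n^*)$ and exhibits $\cart$ as the canonical ``compose-then-cancel'' cell built from companion and conjoint data, together with \corref{extensions and composites}, so that the cartesian seam is absorbed into conjoint-companion building blocks whose composability with a cocartesian path is already under the control of the pasting lemmas. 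Once the two-layer argument goes through, the displayed corollary --- that $(\phi_1, \dotsc, \phi_n)$ satisfies the left Beck-Chevalley condition whenever each $\phi_i$ does --- follows by iterating the statement $n - 1$ times.
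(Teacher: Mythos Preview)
Your two-layer decomposition matches the paper's exactly, and your handling of the top row and the invocation of the pasting lemmas is right. The divergence is in the bottom row $(\id_{K_1}, \dotsc, \id_{K_{n'}}, \cart, \psi_1, \dotsc, \psi_{m-1}, \psi_m')$, which you flag as ``the main obstacle'' and propose to attack by absorbing the cartesian seam into conjoint--companion data via \lemref{restrictions and composites} and \corref{extensions and composites}. That route is a detour, and it quietly imports a hypothesis you do not have: those results presuppose the conjoint $f_n^*$ exists, whereas the left Beck--Chevalley condition on $\ul\phi$ only gives you the restriction $K_n(\id, f_n)$.

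What you are missing is that condition~(b) of \defref{cocartesian paths} already handles the seam for free. The bottom row is literally of the form displayed there, with $p = n$, $J'_i = K_i$, and the cartesian cell for $K_n(\id, f_n)$ in the role of the cell defining $J'_p(\id, f_0)$; since $(\psi_1, \dotsc, \psi_m')$ is cocartesian, condition~(b) says the bottom row is weakly cocartesian. It is then genuinely cocartesian because its outer vertical morphisms are identities, so its own conditions~(a) and~(b) collapse back to those of $(\psi_1, \dotsc, \psi_m')$; and it is right pointwise cocartesian because restricting its last cell produces $(\id_{K_1}, \dotsc, \id_{K_{n'}}, \cart, \psi_1, \dotsc, \psi_m'')$, to which the same observation applies via the right pointwise cocartesianness of $(\psi_1, \dotsc, \psi_m')$. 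This is what the paper means by ``clear from the definition''. You spotted condition~(b) but applied it only to the top layer; it is the bottom layer where it does the real work, precisely because the definition of cocartesian path was built to absorb a cartesian restriction cell on the left.
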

	\begin{proof}
		That $\ul \psi$ satisfies the left Beck-Chevalley condition means that $(\psi_1, \dotsc, \psi_m')$ is right pointwise cocartesian, where $\psi_m'$ is the unique factorisation of $\psi_m$ through the restriction of its horizontal target along its vertical target, as in the definition above. We have to show that the concatenation $(\phi_1, \dotsc, \phi_n, \psi_1, \dotsc, \psi_n')$ is again right pointwise cocartesian. To see this we substitute $\phi_n = \cart \of \phi_n'$, again as in the definition above, thus obtaining the composite of paths that is drawn schematically below.
		\begin{displaymath}
			\begin{tikzpicture}[x=1.5em, y=1.5em, font=\scriptsize]
				\draw	(1,2) -- (0,2) -- (0,1) -- (3,1) -- (3,2) -- (2,2) (7,2) -- (6,2) -- (6,0) -- (12,0) -- (12,1) -- (11,1) (8,2) -- (9,2) -- (9,0) (6,1) -- (10,1) (16,1) -- (15,1) -- (15,0) -- (18,0) -- (18,1) -- (17,1);
				\draw	(1.5,1.5) node {$\phi_1$}
							(7.5,1.5) node {$\phi_n'$}
							(7.5,0.5) node {$\cart$}
							(10.5,0.5) node {$\psi_1$}
							(16.5,0.5) node {$\psi_m'$}
							(1.5,2) node {$\dotsb$}
							(7.5,2) node {$\dotsb$}
							(10.5,1) node {$\dotsb$}
							(16.5,1) node {$\dotsb$}
							(4.5,1.5) node[font=] {$\dotsb$}
							(13.5,0.5) node[font=] {$\dotsb$};
			\end{tikzpicture}
		\end{displaymath}
		It is clear form the definition of cocartesian paths (\defref{cocartesian paths}) that the bottom row here is right pointwise cocartesian, because $(\psi_1, \dotsc, \psi_m')$ is so. Clearly also the top row is right pointwise cocartesian, so that the full path is right-cocartesian by the pasting lemma \lemref{pasting lemma for right pointwise cocartesian paths}, as required.
	\end{proof}
	
	The following generalises Theorem 1.7 of \cite{Guitart80}.
	\begin{proposition} \label{Beck-Chevalley}
		For the path $\ul \phi = (\phi_1, \dotsc, \phi_n)$ of \defref{left exact} the implication $\textup{(a)} \Rightarrow \textup{(b)}$ holds for the following conditions:
		\begin{enumerate}[label=\textup{(\alph*)}]
			\item $\ul \phi$ satisfies the left Beck-Chevalley condition;
			\item $\ul \phi$ is pointwise weak left exact as well as pointwise left exact.
		\end{enumerate}
		Moreover, in the case that $n = 1$ while both the restriction $K_1(\id, f_1)$ and the yoneda embedding $\map\yon{C_0}{\ps{C_0}}$ exist, consider the further condition
		\begin{enumerate}[label=\textup{(c)}]
			\item $\phi_1$ is pointwise left $\yon$-exact.
		\end{enumerate}
		Then the conditions \textup{(a)} and \textup{(c)} are equivalent whenever the right pointwise cocartesian cell below exists, while \textup{(b)} and \textup{(c)} are equivalent as soon as both the conjoint $\hmap{f_0^*}{C_0}{A_{10}}$ and the right pointwise composite $(f_0^* \hc J_{11} \hc \dotsb \hc J_{1m_1})$ exist.
		\begin{displaymath}
			\begin{tikzpicture}
				\matrix(m)[math35, column sep={4em,between origins}]{A_{10} & A_{11} & A_{1m_1'} & A_{1m_1} \\ C_0 & & & A_{1m_1} \\};
				\path[map]	(m-1-1) edge[barred] node[above] {$J_{11}$} (m-1-2)
														edge node[left] {$f_0$} (m-2-1)
										(m-1-3) edge[barred] node[above] {$J_{1m_1}$} (m-1-4)
										(m-2-1) edge[barred] node[below] {$H$} (m-2-4);
				\path				(m-1-4) edge[eq] (m-2-4);
				\draw				([xshift=-1pt]$(m-1-2)!0.5!(m-1-3)$) node {$\dotsb$}
										($(m-1-1)!0.5!(m-2-4)$) node[font=\scriptsize] {$\cocart$};
			\end{tikzpicture}
		\end{displaymath}
	\end{proposition}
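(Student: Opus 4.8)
The plan is to prove $\textup{(a)} \Rightarrow \textup{(b)}$ first, and then derive the two equivalences with $\textup{(c)}$ from it together with the pasting and composition machinery already available. For $\textup{(a)} \Rightarrow \textup{(b)}$, suppose $\ul\phi$ satisfies the left Beck--Chevalley condition, so that the path $(\phi_1, \dotsc, \phi_{n'}, \phi_n')$ is right pointwise cocartesian, where $\phi_n'$ is the factorisation in \defref{left exact} through the cartesian cell defining $K_n(\id, f_n)$. Given a nullary cell $\eta$ defining $l$ as the pointwise (weak) left Kan extension of $d$ along $(K_1, \dotsc, K_n)$, I would first replace $\eta$ by $\eta \of (\id, \dotsc, \id, \cart)$, which defines $l \of f_n$ as the pointwise (weak) left Kan extension of $d$ along $\bigpars{K_1, \dotsc, K_n(\id, f_n)}$ by \lemref{properties of pointwise left Kan extensions}(b). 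Now the full composite in \defref{left exact}, after substituting $\phi_n = \cart \of \phi_n'$, is the paste of the right pointwise cocartesian path $(\phi_1, \dotsc, \phi_{n'}, \phi_n')$ on top of this modified $\eta$; by the vertical pasting lemma (\lemref{vertical pasting lemma}, pointwise version) this defines $l \of f_n$ as the pointwise (weak) left Kan extension of $d \of f_0$ along $(J_{11}, \dotsc, J_{nm_n})$. That is exactly the pointwise exactness of $\ul\phi$ for the arbitrary $d$, hence $\ul\phi$ is both pointwise weak left exact and pointwise left exact.

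For the equivalence $\textup{(a)} \Leftrightarrow \textup{(c)}$ in the case $n = 1$: here $\textup{(a)}$ says $(\phi_1')$ is right pointwise cocartesian, where $\phi_1'$ factors $\phi_1$ through the cartesian cell defining $K_1(\id, f_1)$. The direction $\textup{(a)} \Rightarrow \textup{(c)}$ is a special case of $\textup{(a)} \Rightarrow \textup{(b)}$. For $\textup{(c)} \Rightarrow \textup{(a)}$ I would use the right pointwise cocartesian cell $\cell{}{(J_{11}, \dotsc, J_{1m_1})}H$ whose existence is hypothesised; by \corref{extensions and composites} this $H$ is (a pointwise version of) the extension of the path $(J_{11}, \dotsc, J_{1m_1})$, and by the yoneda axiom $\map{\cur H}{A_{1m_1}}{\ps{C_0}}$ is defined by a cartesian cell which, by \lemref{left Kan extensions along yoneda embeddings}, exhibits $\cur H$ as a pointwise left Kan extension of $\yon \of f_0$ along this $H$. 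Applying $\textup{(c)}$ (pointwise $\yon$-exactness of $\phi_1$) to the corresponding Kan extension data, one obtains that a suitable composite is again a pointwise left Kan extension; comparing with the cartesian cell supplied by the yoneda axiom and using uniqueness of factorisations through cartesian cells forces the factorisation $\phi_1'$ composed with the relevant cartesian cell to be cocartesian. Here I would lean on \propref{restrictions and absolute left liftings}, \lemref{restrictions and composites} (which identifies restrictions with pointwise composites with conjoints), and the density of $\yon$ via \lemref{density axioms}(b), to convert the Kan-extension statement back into the cocartesianness of $\phi_1'$.

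For $\textup{(b)} \Leftrightarrow \textup{(c)}$ under the extra hypothesis that $f_0^*$ and the right pointwise composite $(f_0^* \hc J_{11} \hc \dotsb \hc J_{1m_1})$ exist: the point is that pointwise $\yon$-exactness is equivalent to pointwise exactness for all $d$, once $\yon$ is present. One direction is trivial since $\textup{(b)}$ applies in particular to $d = \yon$. For the converse, given arbitrary $\map d{C_0}M$ with pointwise (weak) left Kan extension $l$ along $K_1$, I would factor $d$ through $\yon$ in the sense that $d_*$ (if it exists) is the pointwise horizontal composite $(\yon_* \hc \cur{d_*})$ — or, more robustly, use \propref{Kan extension along conjoints} and \propref{restriction as left Kan extension} to relate Kan extensions along $(J_{11}, \dotsc, J_{1m_1})$ to those along $(f_0^*, J_{11}, \dotsc, J_{1m_1})$, together with the horizontal pasting lemma. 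The existence of $(f_0^* \hc J_{11} \hc \dotsb \hc J_{1m_1})$ lets me invoke \corref{extensions and composites} and \corref{Kan extensions and composites} to pass freely between the path and its composite, reducing the general-$d$ statement to the $\yon$ statement via \propref{pointwise left Kan extension along full and faithful map} applied to the full and faithful $\yon$. The main obstacle I anticipate is precisely this last reduction: unwinding the interplay between the conjoint $f_0^*$, the right pointwise composite, the yoneda embedding, and the various pasting lemmas so that ``pointwise $\yon$-exact'' genuinely upgrades to ``pointwise exact'' — the bookkeeping of which cells are pointwise cocartesian and which are merely cocartesian is where the argument is most delicate, and one must be careful that the weak versus non-weak variants track correctly through \corref{Kan extensions and composites} and \propref{pointwise left Kan extension along full and faithful map}.
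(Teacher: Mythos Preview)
Your argument for $\textup{(a)} \Rightarrow \textup{(b)}$ is correct and matches the paper exactly. Your sketch for $\textup{(c)} \Rightarrow \textup{(a)}$ is headed in the right direction but misses the precise mechanism: the paper factors $\phi_1'$ further through the hypothesised right pointwise cocartesian cell $(J_{11},\dotsc,J_{1m_1}) \Rightarrow H$ as a horizontal cell $\phi_1''$, then uses \lemref{left Kan extensions along yoneda embeddings} and the pasting lemma to conclude that $\phi_1''$ is \emph{cartesian}. Being a horizontal cell, cartesian means invertible, whence $\phi_1' = \phi_1'' \of \cocart$ is right pointwise cocartesian. You gesture at this but never isolate the ``horizontal cartesian implies invertible'' step, and your citation of \propref{restrictions and absolute left liftings} is a red herring; it plays no role.

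The real gap is in $\textup{(c)} \Rightarrow \textup{(b)}$. Your plan --- reduce arbitrary $d$ to $\yon$ via full faithfulness of $\yon$ and \propref{pointwise left Kan extension along full and faithful map} --- is not the paper's approach and you yourself flag it as the part you cannot complete. The paper instead bootstraps through $\textup{(a)}$: it replaces $\phi_1$ by the composite $\cart \hc \phi_1$, where $\cart$ is the cartesian cell defining $f_0^*$. This new cell has identity vertical source, and by \propref{Kan extension along conjoints} it is pointwise left $\yon$-exact iff $\phi_1$ is, and pointwise left exact iff $\phi_1$ is. The hypothesised right pointwise composite $(f_0^* \hc J_{11} \hc \dotsb \hc J_{1m_1})$ is exactly the $H$ needed to apply the already-proved $\textup{(c)} \Rightarrow \textup{(a)}$ to $\cart \hc \phi_1$; then $\textup{(a)} \Rightarrow \textup{(b)}$ finishes. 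This is both shorter and avoids the delicate tracking of weak versus non-weak that worried you.
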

	\begin{proof}
		To show (a) $\Rightarrow$ (b), consider a nullary cell $\cell\eta{(K_1, \dotsc, K_n)}M$ that defines the pointwise (weak) left Kan extension of a vertical morphism $\map d{C_0}M$, as in \defref{left exact} above. Writing $\phi_n = \cart \of \phi_n'$ as before, it follows that $\eta \of (\id, \dotsc, \cart)$ defines a pointwise (weak) left Kan extension by \lemref{properties of pointwise left Kan extensions}(b), so that the full composite $\eta \of (\id, \dotsc, \cart) \of (\phi_1, \dotsc, \phi_n') = \eta \of (\phi_1, \dotsc, \phi_n)$ defines a pointwise (weak) left Kan extension by the vertical pasting lemma for Kan extensions (\lemref{vertical pasting lemma}), showing that (b) holds.
		
		Restricting to $n = 1$, it is clear that (b) $\Rightarrow$ (c), so that proving (c)~$\Rightarrow$~(a) and (c)~$\Rightarrow$~(b), under the respective assumptions, suffices. To prove (a) $\Rightarrow$ (c) consider the factorisation $\phi_1'$ of $\phi_1$ as given in \defref{left exact}; we have to show that it is right pointwise cocartesian. We write $\phi_1''$ for its further factorisation through the right pointwise cocartesian cell above, so that the identity on the left below follows. Here, on both sides, the bottom nullary cartesian cell is that given by the yoneda axiom (\defref{yoneda embedding}); remember it defines $g$ as a pointwise weak left Kan extension because $\yon$ is dense (\lemref{density axioms}). Assuming condition (c), it follows that the full composite on the left-hand side defines $l \of f_1$ as a pointwise left Kan extension.
		\begin{displaymath}

		\end{displaymath}
		Since the right-hand side of the identity above defines a pointwise left Kan extension, so does the composite of its bottom three cells, as follows from the vertical pasting lemma (\lemref{vertical pasting lemma}) applied to its right pointwise cocartesian top cell. From \lemref{left Kan extensions along yoneda embeddings} we conclude that the latter composite is cartesian so that, by the pasting lemma, the cell $\phi''_1$ is cartesian. Being a horizontal cell, it follows that $\phi''_1$ is invertible; hence $\phi'_1 = \phi''_1 \of \cocart$ is right pointwise cocartesian, as required.
		
		Finally, to prove (c) $\Rightarrow$ (b), consider the composite on the right above. Assuming that the right pointwise composite $(f_0^* \hc J_{11} \hc \dotsb \hc J_{1m_1})$ exists, we may apply (c)~$\Rightarrow$~(a) $\Rightarrow$ (b) to find that it is pointwise left exact. Using \propref{Kan extension along conjoints} we conclude that $\phi_1$ is itself pointwise left exact. This concludes the proof.
	\end{proof}
	
	\subsection{Presheaf objects as free cocompletions}
	We now turn to proving that, under mild conditions, a presheaf object $\ps M$ forms the `free cocompletion of $M$', in a sense that will be made precise. This generalises the situation of $\V$-enriched category theory, where the $\V$-category of presheaves $\ps M$ on a small $\V$-category $M$ forms the free cocompletion of $M$ under small weighted colimits; see for instance Theorem 4.51 of \cite{Kelly82}. On the other hand, for any yoneda embedding $\map\yon M{\ps M}$ in a $2$-category in the sense of \cite{Street-Walters78} and \cite{Weber07}, the conditions ensuring that $\ps M$ is cocomplete seem to be unclear.
	
	In the result below we start by characterising the cocompleteness of presheaf objects, that are defined by good yoneda embeddings, in terms of left exact cells. In particular it follows that presheaf objects are cocomplete in a strong sense, that is they admit many pointwise left Kan extensions. Remember that the yoneda embedding $\yon$ is called good if all restrictions $\ps M(\yon, d)$ exist, where $\map dA{\ps M}$; for the notion of pointwise composition see \defref{pointwise cocartesian path}.
	\begin{proposition} \label{cocompleteness of presheaf objects}
		Let $\map\yon M{\ps M}$ be a good yoneda embedding. The pointwise left Kan extension of $\map dA{\ps M}$ along $\hmap JAB$ exists precisely when a pointwise left $\yon$-exact cell of the form below does. This is the case, for instance, if $\phi$ defines $K$ as the right pointwise composite $\bigpars{\ps M(\yon, d) \hc J}$.
		\begin{displaymath}
			\begin{tikzpicture}
				\matrix(m)[math35]{ M & A & B \\ M & & B \\ };
				\path[map]	(m-1-1) edge[barred] node[above] {$\ps M(\yon, d)$} (m-1-2)
										(m-1-2) edge[barred] node[above] {$J$} (m-1-3)
										(m-2-1) edge[barred] node[below] {$K$} (m-2-3);
				\path				(m-1-1) edge[eq] (m-2-1)
										(m-1-3) edge[eq] (m-2-3)
										(m-1-2) edge[cell] node[right] {$\phi$} (m-2-2);
			\end{tikzpicture}
		\end{displaymath}
	\end{proposition}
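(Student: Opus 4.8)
The plan is to reduce the existence of the pointwise left Kan extension of $d$ along $J$ to the existence of a pointwise left Kan extension of $\yon$, exploiting that $\yon$ is good and dense. First I would record the basic ingredient: since $\yon$ is good the companion $\yon_*$ exists, so $\ps M(\yon, d) \iso \yon_*(\id, d)$ exists as a unary restriction, and composing the $(1,0)$-ary cartesian cell $\yon_* \Rar \ps M$ (over $\yon$ and $\id_{\ps M}$) with the $(1,1)$-ary cartesian cell $\ps M(\yon, d) \Rar \yon_*$ yields, by \lemref{pasting lemma for cartesian cells}, a $(1,0)$-ary cartesian cell $\cell{\eps_d}{\ps M(\yon, d)}{\ps M}$ with vertical source $\yon$ and vertical target $d$. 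By \lemref{density axioms}(b) this $\eps_d$ defines $d$ as the pointwise left Kan extension of $\yon$ along $\ps M(\yon, d)$. The cell $\eps_l$ for $\map l B{\ps M}$ is constructed in exactly the same way.

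Step 1: I would show that the pointwise left Kan extension of $d$ along $J$ exists if and only if the pointwise left Kan extension of $\yon$ along $(\ps M(\yon, d), J)$ does. Since $\eps_d$ defines a left Kan extension, horizontal composition with it is, by \lemref{horizontal pasting lemma}, a bijection between the nullary cells $\cell\eta JM$ over $(d, l)$ and the nullary cells $(\ps M(\yon, d), J) \Rar \ps M$ over $(\yon, l)$, namely $\eta \mapsto \eps_d \hc \eta$; and the same lemma gives that $\eta$ defines $l$ as the pointwise left Kan extension of $d$ along $J$ precisely when $\eps_d \hc \eta$ defines $l$ as the pointwise left Kan extension of $\yon$ along $(\ps M(\yon, d), J)$. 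Hence the two Kan extensions exist simultaneously.

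Step 2: I would then relate the latter to pointwise left $\yon$-exact cells $\cell\phi{(\ps M(\yon, d), J)}K$. Such a $\phi$ is a horizontal cell (vertical legs $\id_M$, $\id_B$), so unwinding \defref{left exact} its $\yon$-exactness says exactly: postcomposing with $\phi$ any cell that defines a pointwise left Kan extension of $\yon$ along $K$ gives a cell defining a pointwise left Kan extension of $\yon$ along $(\ps M(\yon, d), J)$. For ``$\Leftarrow$'': given $\phi$, the yoneda axiom provides a cartesian cell $K \Rar \ps M$ over $(\yon, \cur K)$, which by \lemref{density axioms}(b) defines $\cur K$ as the pointwise left Kan extension of $\yon$ along $K$; composing it with $\phi$ and using $\yon$-exactness gives a cell defining $\cur K$ as the pointwise left Kan extension of $\yon$ along $(\ps M(\yon, d), J)$, so by Step 1 the pointwise left Kan extension of $d$ along $J$ exists. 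For ``$\Rightarrow$'': if $l$ is the pointwise left Kan extension of $d$ along $J$, Step 1 gives a cell $\tau$ defining $l$ as the pointwise left Kan extension of $\yon$ along $(\ps M(\yon, d), J)$; I set $K \coloneqq \ps M(\yon, l)$ and let $\phi$ be the unique factorisation of $\tau$ through the cartesian cell $\eps_l$, which forces $\phi$ to be horizontal. Any cell defining a pointwise left Kan extension of $\yon$ along $K$ differs from $\eps_l$ by an invertible vertical cell, so postcomposing it with $\phi$ recovers $\tau$ up to such a cell and hence still defines a pointwise left Kan extension of $\yon$ along $(\ps M(\yon, d), J)$; thus $\phi$ is pointwise left $\yon$-exact. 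Finally, for the ``for instance'' clause: if $\phi$ exhibits $K$ as the right pointwise composite $\bigpars{\ps M(\yon, d) \hc J}$, the ``moreover'' part of \corref{Kan extensions and composites} says precomposition with $\phi$ transports pointwise left Kan extensions along $K$ to pointwise left Kan extensions along $(\ps M(\yon, d), J)$, which is precisely pointwise left $\yon$-exactness.

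The main obstacle I anticipate is the bookkeeping in Step 1: verifying that horizontal composition with the cartesian cell $\eps_d$ genuinely restricts to a bijection on the relevant classes of nullary cells and respects the pointwise property in both directions, so that existence (and not merely the defining universal property) transfers — together with checking that \defref{left exact}, specialised to a purely horizontal cell, really says what is claimed. The identification $\ps M(\yon, d) \iso \yon_*(\id, d)$ and the construction of $\eps_d$ from goodness are routine uses of \lemref{pasting lemma for cartesian cells}, and the uniqueness-of-Kan-extensions step in Step 2 is standard, so these should not cause trouble.
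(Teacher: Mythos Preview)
Your proposal is correct and follows essentially the same route as the paper's proof: both directions hinge on the fact that the cartesian cells $\eps_d$ and $\eps_l$ define pointwise left Kan extensions by density of $\yon$, the horizontal pasting lemma then transfers the Kan extension of $d$ along $J$ to one of $\yon$ along $(\ps M(\yon,d),J)$, and the yoneda axiom together with the factorisation through $\eps_l$ handles the two implications; the final clause is dispatched via \corref{Kan extensions and composites} in both. Your explicit separation into Step~1 and Step~2 makes the structure slightly clearer than the paper's more integrated presentation, but the ingredients and their order are the same.
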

	\begin{proof}
		For the `when'-part consider $l \dfn \cur K$ given by the yoneda axiom and defined by the cartesian cell in the left-hand side below. This cell defines $l$ as a pointwise left Kan extension because $\yon$ is dense (\lemref{density axioms}) so that, by the assumption on $\phi$, the full composite on the left-hand side again defines $l$ as a pointwise left Kan extension. Next consider the cartesian cell in the right-hand side, which exists because $\yon$ is a good yoneda embedding. This too defines a left Kan extension, again by density of $\yon$, so that the left-hand side factors through it as a cell $\eta$, as shown. Applying the horizontal pasting lemma (\lemref{horizontal pasting lemma}) we conclude that $\eta$ defines $l$ as the pointwise left Kan extension of $d$ along $J$.
		\begin{equation} \label{pointwise left Kan extension into presheaf object}
			\begin{tikzpicture}[textbaseline]
				\matrix(m)[math35]{M & A & B \\ M & & B \\ & \ps M & \\};
				\path[map]	(m-1-1) edge[barred] node[above] {$\ps M(\yon, d)$} (m-1-2)
										(m-1-2) edge[barred] node[above] {$J$} (m-1-3)
										(m-2-1) edge[barred] node[below, inner sep=2.5pt] {$K$} (m-2-3)
														edge node[below left] {$\yon$} (m-3-2)
										(m-2-3) edge node[below right] {$l$} (m-3-2);
				\path				(m-1-1) edge[eq] (m-2-1)
										(m-1-3) edge[eq] (m-2-3)
										(m-1-2) edge[cell] node[right] {$\phi$} (m-2-2);
				\draw[font=\scriptsize]	([yshift=0.25em]$(m-2-2)!0.5!(m-3-2)$) node {$\cart$};
			\end{tikzpicture} = \begin{tikzpicture}[textbaseline]
				\matrix(m)[math35]{M & A & B \\ & \ps M & \\};
				\path[map]	(m-1-1) edge[barred] node[above] {$\ps M(\yon, d)$} (m-1-2)
														edge[transform canvas={xshift=-2pt}] node[below left] {$\yon$} node[sloped, above, inner sep=6.5pt, font=\scriptsize, pos=0.55] {$\cart$} (m-2-2)
										(m-1-2) edge[barred] node[above] {$J$} (m-1-3)
														edge[ps] node[right, inner sep=1.5pt] {$d$} (m-2-2)
										(m-1-3) edge[transform canvas={xshift=2pt}] node[below right] {$l$} (m-2-2);
				\path				(m-1-2) edge[cell, transform canvas={shift={(1.1em,0.333em)}}] node[right] {$\eta$} (m-2-2);
			\end{tikzpicture}
		\end{equation}
		
		For the `precisely'-part assume that the cell $\eta$ in the right-hand side above defines $l$ as the pointwise left Kan extension of $d$ along $J$. We compose it with the cartesian cell defining the restriction $\ps M(\yon, d)$, which exists because $\yon$ is a good yoneda embedding, and factor the result through the cartesian cell defining the restriction $K \dfn \ps M(\yon, l)$, obtaining a horizontal cell $\phi$ as shown. Now the cartesian cells above define pointwise left Kan extensions because $\yon$ is dense; it follows that the full right-hand side does as well, by the horizontal pasting lemma. We conclude that composing $\phi$ with a cell that defines the pointwise left Kan extension of $\yon$ along $K$ results in a composite that again defines a pointwise left Kan extension, showing that $\phi$ is pointwise left $\yon$-exact as required. Completing the proof, the final assertion follows directly from \corref{Kan extensions and composites}.
	\end{proof}
	Notice in particular that presheaf objects $\ps N$ in augmented virtual equipments, that are defined by good yoneda embeddings, admit all pointwise left Kan extensions of the identity $\id_{\ps N}$, along any horizontal morphisms $\hmap J{\ps N}B$: indeed in this case the pointwise composite $\bigpars{\ps N(\yon, \id) \hc J}$ coincides with the restriction $J(\yon, \id)$; see \lemref{restrictions and composites}. Thus the object $M = \ps N$ statisfies the equivalent conditions of the lemma below; we will call such objects $M$ \emph{total}. This term is motivated by \cite{Street-Walters78}, where an `admissible' object $C$ in a $2$-category is called total when its yoneda embedding $\map\yon C{\ps C}$ admits a left adjoint. Moreover in \cite{Day-Street86} a $\V$"/category $M$ is called total when it admits, for all $\V$-presheaves $\map J{\op M}\V$, the $J$-weighted colimit of the identity $\id_M$; that this is equivalent to the condition (a) below, considered in $\enProf\V$, follows easily from the results of \secref{Kan extensions in (V, V')-Prof}. The equivalences (c)~$\Leftrightarrow$~(a)~$\Leftrightarrow$~(b) below are generalisations of Theorems 5.2 and 5.3 of \cite{Kelly86}, where they are proved in the case of $\enProf{(\V, \V')}$.
	\begin{lemma} \label{total objects}
		For an object $M$ the following conditions are equivalent:
		\begin{enumerate}[label=\textup{(\alph*)}]
			\item for any $\hmap JMB$ the pointwise left Kan extension of $\id_M$ along $J$ exists;
			\item for any right pointwise cocartesian cell
				\begin{displaymath}
					\begin{tikzpicture}
						\matrix(m)[math35]{A & B \\ M & B \\};
						\path[map]	(m-1-1) edge[barred] node[above] {$J$} (m-1-2)
																edge node[left] {$d$} (m-2-1)
												(m-2-1) edge[barred] node[below] {$K$} (m-2-2);
						\path				(m-1-2) edge[eq] (m-2-2);
						\draw				($(m-1-1)!0.5!(m-2-2)$) node[font=\scriptsize] {$\cocart$};
					\end{tikzpicture}
				\end{displaymath}
				the pointwise left Kan extension of $d$ along $J$ exists.
		\end{enumerate}
		Given a good yoneda embedding $\map\yon M{\ps M}$ the following condition is equivalent too:
		\begin{enumerate}
			\item[\textup{(c)}] $\yon$ admits a left adjoint $\map c{\ps M}M$.
		\end{enumerate}
	\end{lemma}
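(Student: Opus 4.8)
The plan is to prove $(a)\Leftrightarrow(b)$ first, and then $(a)\Leftrightarrow(c)$ using the good yoneda embedding $\map\yon M{\ps M}$.

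For $(b)\Rightarrow(a)$, given $\hmap JMB$ I would instantiate the data of $(b)$ by taking $A=M$, $d=\id_M$, $K=J$ and $\phi=\id_J$; since the horizontal identity cell $\id_J$ is right pointwise cocartesian (it exhibits $J$ as the horizontal composite of the singleton path $(J)$, and all restrictions $J(\id,f)$ are trivially preserved), condition $(b)$ yields at once the pointwise left Kan extension of $\id_M$ along $J$. Conversely, for $(a)\Rightarrow(b)$, given a right pointwise cocartesian cell $\cell\phi JK$ with vertical source $\map dAM$ as in $(b)$, condition $(a)$ supplies a nullary cell $\eta$ defining the pointwise left Kan extension of $\id_M$ along $K$; applying the vertical pasting lemma (\lemref{vertical pasting lemma}) to the right pointwise cocartesian singleton path $(\phi)$ then shows that the composite $\eta\of\phi$ defines $l$ as the pointwise left Kan extension of $\id_M\of d=d$ along $J$, as required.

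For the second equivalence I would first record the standing facts: being a good yoneda embedding, $\yon$ is full and faithful and $M$ is unital (\lemref{yoneda embedding full and faithful}(a)), so $(\id_M)_*\iso I_M$ exists (\lemref{unit identities}) and $\yon_*=\ps M(\yon,\id_{\ps M})$ exists by goodness. For $(a)\Rightarrow(c)$ I would apply $(a)$ to the horizontal morphism $\yon_*$ to obtain the pointwise — hence pointwise weak — left Kan extension $\map c{\ps M}M$ of $\id_M$ along $\yon_*$. \propref{pointwise left Kan extensions along yoneda embeddings are left adjoints}, taken with $d=\id_M$, then gives that $c$ has a right adjoint $\cur{(\id_M)_*}$; using $(\id_M)_*\iso I_M$, this right adjoint is defined by the cartesian cell of the yoneda axiom for $J=I_M$, i.e.\ it is $\cur{I_M}$. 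It remains to identify $\cur{I_M}$ with $\yon$: since $\yon$ is full and faithful and $\ps M(\yon,\yon)$ exists, \corref{unit in terms of full and faithful map} gives $\ps M(\yon,\yon)\iso I_M$; and as $\yon$ is dense, the cartesian cells defining $\ps M(\yon,\cur{I_M})\iso I_M$ and $\ps M(\yon,\yon)\iso I_M$ both exhibit their vertical targets as the pointwise left Kan extension of $\yon$ along $I_M$ (\lemref{density axioms}(b)), so $\cur{I_M}\iso\yon$ by uniqueness of pointwise left Kan extensions. Hence $c\ladj\yon$.

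For $(c)\Rightarrow(a)$, given a left adjoint $\map c{\ps M}M$ of $\yon$ and an arbitrary $\hmap JMB$, I would use that the cartesian cell of the yoneda axiom exhibits $\cur J$ as the pointwise left Kan extension of $\yon$ along $J$ (density, \lemref{density axioms}(b)); since $c$, being a left adjoint, is cocontinuous (\propref{left adjoints preserve pointwise left Kan extensions}), postcomposition with $c$ exhibits $c\of\cur J$ as the pointwise left Kan extension of $c\of\yon$ along $J$, and $c\of\yon\iso\id_M$ because $\yon$ is full and faithful (the counit of $c\ladj\yon$ is invertible), so $c\of\cur J$ is the wanted pointwise left Kan extension of $\id_M$ along $J$. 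The main obstacle I anticipate is the bookkeeping in $(a)\Rightarrow(c)$: one must check that the Kan extension produced by $(a)$ is precisely the object $l$ to which \propref{pointwise left Kan extensions along yoneda embeddings are left adjoints} applies, and — more delicately — that the resulting right adjoint $\cur{(\id_M)_*}$ genuinely coincides with $\yon$ rather than with some a priori unrelated presheaf-valued morphism; the identification via \corref{unit in terms of full and faithful map} together with uniqueness of pointwise Kan extensions is the crux of the argument.
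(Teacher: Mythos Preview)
Your proof is correct. The implications $(a)\Leftrightarrow(b)$ and $(c)\Rightarrow(a)$ are handled exactly as in the paper: the identity cell for $(b)\Rightarrow(a)$, the vertical pasting lemma for $(a)\Rightarrow(b)$, and density plus cocontinuity of the left adjoint for $(c)\Rightarrow(a)$.

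The difference is in $(a)\Rightarrow(c)$. The paper does not invoke \propref{pointwise left Kan extensions along yoneda embeddings are left adjoints}; instead it constructs the adjunction $c\ladj\yon$ by hand: taking $c$ as the pointwise left Kan extension of $\id_M$ along $\yon_*$ with defining cell $\gamma$, it obtains the unit $\cell\zeta{\id_{\ps M}}{\yon\of c}$ as the factorisation of $\yon\of\gamma$ through the cartesian cell defining $\yon_*$, takes the counit $\cell\delta{c\of\yon}{\id_M}$ to be the inverse of $\gamma\of\cocart$ (invertible by \propref{pointwise left Kan extension along full and faithful map} since $\yon$ is full and faithful), and then verifies both triangle identities directly. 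Your route is shorter and cleaner: you reuse \propref{pointwise left Kan extensions along yoneda embeddings are left adjoints} as a black box, which already packages the construction of the adjunction, and then only need the identification $\cur{I_M}\iso\yon$ via density and \corref{unit in terms of full and faithful map}. The paper's approach is more self-contained and makes the unit and counit explicit, but essentially repeats reasoning already present in the proof of the proposition you cite; your approach exploits that earlier work and avoids the bookkeeping of checking triangle identities.
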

	\begin{proof}
		(a) $\Rightarrow$ (b) follows from applying the vertical pasting lemma (\lemref{vertical pasting lemma}) to the composites as on the left below, where the cell $\eta$ defines $\map lBM$ as the pointwise left Kan extension of $\id_M$ along $K$.
		
		(b) $\Rightarrow$ (a) simply follows from taking the identity on $J$ as cocartesian cell.
		
		(c) $\Rightarrow$ (a) follows from considering composites as on the second left below, where the cartesian cell is given by the yoneda axiom (\defref{yoneda embedding}) and $\cell\eps{c \of \yon}{\id_M}$ denotes the counit of $c \ladj \yon$; the latter is invertible because $\yon$ is full and faithful (see \lemref{yoneda embedding full and faithful} and the horizontal dual of \propref{adjunctions in terms of left Kan extensions}). Use the fact that the left adjoint $c$ is cocontinuous; see \propref{left adjoints preserve pointwise left Kan extensions}.
		\begin{displaymath}

		\end{displaymath}
		
		(a) $\Rightarrow$ (c). Let $\map c{\ps M}M$ denote the pointwise left Kan extension of $\id_M$ along $\yon_*$ and let $\gamma$ denotes its defining cell, as in the left-hand side of the identity on the right above; we claim that $c$ forms the left adjoint of $\yon$. To prove this, as the unit cell $\cell\zeta{\id_{\ps M}}{\yon \of c}$ we take the unique factorisation of $\yon \of \gamma$ through the cartesian cell defining $\yon_*$, as shown above; recall that the latter defines $\id_{\ps M}$ as a pointwise left Kan extension because $\yon$ is dense (\lemref{density axioms}). For the counit $\cell\delta{c \of \yon}{\id_M}$ remember that $\yon$ is full and faithful (\lemref{yoneda embedding full and faithful}) so that, by \propref{pointwise left Kan extension along full and faithful map}, the composite of $\gamma$ with the weakly cocartesian cell defining $\yon_*$ is an invertible vertical cell $\id_M \Rar c \of \yon$; we take $\delta$ to be its inverse. The triangle identity $\id_{\yon} = (\zeta \of \yon) \hc (c \of \delta)$ follows directly from the identity above, by composing it first with the weakly cocartesian cell defining $\yon_*$ and then with $\delta$.
		\begin{displaymath}

		\end{displaymath}
		To prove the second triangle identity $(c \of \zeta) \hc (\delta \of c) = \id_C$ notice that, after composition with the cartesian cell defining $\yon_*$, it coincides with the left-hand and right-hand sides of the equation above. The first identity here follows from the definition of $\zeta$, while the second one follows by precomposition with the weakly cocartesian cell $\cocart$ that defines $\yon_*$, the uniqueness of factorisations through the latter, the identity $\gamma \of \cocart = \inv\delta$ and the interchange axiom (\lemref{horizontal composition}). The second triangle identity now follows from the fact that the left-hand side above defines $c$ as a pointwise left Kan extension: indeed, its composition with the invertible cell $\inv\delta = \gamma \of \cocart$ equals $\gamma$ by the horizontal companion identity (see \lemref{companion identities lemma}), and $\gamma$ defines $c$ as a pointwise left Kan extension. This completes the proof.
	\end{proof}
	
	Having seen that presheaf objects are cocomplete in a strong sense, we now describe the way in which they form `small cocompletions'. As there is no clear notion of `smallness' for objects in a general augmented virtual double category, we regard the notion of `smallness' as variable, as follows.
	\begin{definition} \label{cocompletion}
		Let $\K$ be an augmented virtual double category. By a \emph{left extension diagram} in $\K$ we mean a span of the form $M \xlar d A \xbrar J B$. A collection $\mathcal S$ of left extension diagrams is called an \emph{ideal} if $(f \of d, J) \in \catvar S$ for all $(d, J) \in \catvar S$ and $f \in \K$ composable with $d$; given such an ideal and an object $M$ we write $\mathcal S(M) \subset \mathcal S$ for the subcollection of spans of the form $M \xlar d A \xbrar J B$. Moreover:
		\begin{enumerate}[label=-]
			\item $M$ is called \emph{$\mathcal S$-cocomplete} if, for any $(d, J) \in \mathcal S(M)$, the pointwise left Kan extension of $d$ along $J$ exists;
			\item $\map fMN$ is called \emph{$\mathcal S$-cocontinuous} if, for any $(d, J) \in \mathcal S(M)$ and any nullary cell $\eta$ that defines a morphism $\map lBM$ as the pointwise left Kan extension of $d$ along $J$, the composite $f \of \eta$ defines $f \of l$ as a pointwise left Kan extension;
			\item $\map wM{\bar M}$ is said to define $\bar M$ as the \emph{free $\mathcal S$-cocompletion} of $M$ if $\bar M$ is $\mathcal S$"/cocomplete and, for any $\mathcal S$-cocomplete $N$, the composite
			\begin{displaymath}
				V_\textup{$\mathcal S$-cocts}(\K)(\bar M, N) \subseteq V(\K)(\bar M, N) \xrar{V(\K)(w, N)} V(\K)(M, N)
			\end{displaymath}
			is an equivalence, where $V_\textup{$\mathcal S$-cocts}(\K)$ denotes the sub-$2$-category of $\mathcal S$"/cocontinuous morphisms in $V(\K)$.
		\end{enumerate}
	\end{definition}
	
	We can now state the main theorem of this subsection, using the notion of left exactness (\defref{left exact}). Afterwards some examples are given. Recall that, for condition (e) below, it suffices that the cell $\phi$ defines $K$ as the right pointwise composite $\bigpars{\ps M(\yon, d) \hc J}$ (\defref{pointwise cocartesian path}).
	\begin{theorem} \label{presheaf objects as free cocompletions}
		Let $\map\yon M{\ps M}$ be a good yoneda embedding in an augmented virtual double category $\K$ and let $\mathcal S$ be an ideal of left extension diagrams in $\K$. If
		\begin{enumerate}
			\item[\textup{(e)}] a pointwise left $\yon$-exact cell
				\begin{displaymath}
					\begin{tikzpicture}
						\matrix(m)[math35]{ M & A & B \\ M & & B \\ };
						\path[map]	(m-1-1) edge[barred] node[above] {$\ps M(\yon, d)$} (m-1-2)
												(m-1-2) edge[barred] node[above] {$J$} (m-1-3)
												(m-2-1) edge[barred] node[below] {$K$} (m-2-3);
						\path				(m-1-1) edge[eq] (m-2-1)
												(m-1-3) edge[eq] (m-2-3)
												(m-1-2) edge[cell] node[right] {$\phi$} (m-2-2);
					\end{tikzpicture}
				\end{displaymath}
				 exists for every $(d, J) \in \mathcal S(\ps M)$;
			\item[\textup{(y)}] $(f, \yon_*) \in \mathcal S$ for all $\map fMN$,
		\end{enumerate}
		then $\yon$ defines $\ps M$ as the free $\mathcal S$-cocompletion of $M$.
	\end{theorem}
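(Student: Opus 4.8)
The plan is to verify the two clauses in the definition of free $\mathcal S$-cocompletion (\defref{cocompletion}): that $\ps M$ is $\mathcal S$-cocomplete, and that for every $\mathcal S$-cocomplete $N$ the whiskering functor $V(\K)(\yon, N)$ restricts to an equivalence $V_\textup{$\mathcal S$-cocts}(\K)(\ps M, N) \to V(\K)(M, N)$. The first is immediate from condition (e) together with \propref{cocompleteness of presheaf objects}: for $(d, J) \in \mathcal S(\ps M)$ the pointwise left $\yon$-exact cell supplied by (e) is exactly what that proposition needs in order to produce the pointwise left Kan extension of $d$ along $J$. I also record, for use below, that $\yon$ is full and faithful and $M$ is unital, by \lemref{yoneda embedding full and faithful}(a).

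The first step towards the equivalence is the observation that every $\mathcal S$-cocontinuous $\map l{\ps M}N$ is, canonically, the pointwise left Kan extension of $l \of \yon$ along $\yon_*$. Indeed, since $\yon$ is dense, \lemref{density axioms}(c) says that the cartesian cell $\eps$ defining $\yon_*$ as a companion defines $\id_{\ps M}$ as the pointwise left Kan extension of $\yon$ along $\yon_*$; as $(\yon, \yon_*) \in \mathcal S(\ps M)$ by condition (y), $\mathcal S$-cocontinuity of $l$ makes $l \of \eps$ define $l$ as the pointwise left Kan extension of $l \of \yon$ along $\yon_*$. Full faithfulness of $V(\K)(\yon, N)$ then follows by a universal-property argument: for $\mathcal S$-cocontinuous $l$, $l'$, whiskering by $\yon$ factors as the bijection $V(\K)(\ps M, N)(l, l') \cong \set{\text{nullary cells }(\yon_*) \Rar N\text{ with vertical source }l \of \yon\text{ and target }l'}$ coming from $l \of \eps$ being a weak left Kan extension (\lemref{properties of pointwise left Kan extensions}(a) and \defref{weak left Kan extension}), followed by the bijection with $V(\K)(M, N)(l \of \yon, l' \of \yon)$ induced by composition with the weakly cocartesian cell $\eta_\yon$ that is the companion-identity partner of $\eps$ (\lemref{companion identities lemma}); that this composite is the whiskering is forced by the companion identities $\eps \of \eta_\yon = \id_\yon$ and $\eta_\yon \hc \eps = \id_{\yon_*}$.

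For essential surjectivity, given $\map gMN$ let $\bar g$ be the pointwise left Kan extension of $g$ along $\yon_*$; it exists since $N$ is $\mathcal S$-cocomplete and $(g, \yon_*) \in \mathcal S(N)$ by (y). Because $\yon$ is full and faithful, the case $n = 0$ of \propref{pointwise left Kan extension along full and faithful map} shows that the defining cell of $\bar g$, composed with the weakly cocartesian cell of $\yon_*$, is an invertible vertical cell, so $\bar g \of \yon \iso g$. What remains — and is the main obstacle — is to prove that $\bar g$ is $\mathcal S$-cocontinuous. Given $(d, J) \in \mathcal S(\ps M)$, say $\map dA{\ps M}$ and $\hmap JAB$, and a nullary cell $\eta$ defining $\map lB{\ps M}$ as the pointwise left Kan extension of $d$ along $J$, condition (e) and the proof of \propref{cocompleteness of presheaf objects} provide a pointwise left $\yon$-exact cell $\cell\phi{(\ps M(\yon, d), J)}K$ with $l = \cur K$ and the identity \eqref{pointwise left Kan extension into presheaf object}. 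Whiskering that identity with $\bar g$ and analysing both sides is the crux: \lemref{left Kan extensions along yoneda embeddings} controls the cartesian cells with vertical source $\yon$ occurring there, \propref{left Kan extensions preserved by restriction} together with \lemref{properties of pointwise left Kan extensions}(b) reexpresses the $\bar g$-whiskerings of the cartesian cells defining $\ps M(\yon, d)$ and $K$ as cells exhibiting $\bar g \of d$ and $\bar g \of l$ as pointwise left Kan extensions of $g$ along $\ps M(\yon, d)$ and $K$ respectively, and the exactness of $\phi$ — if need be upgraded to pointwise left exactness via the implication $\textup{(c)} \Rightarrow \textup{(b)}$ of \propref{Beck-Chevalley}, using that $M$ is unital — together with the horizontal pasting lemma \lemref{horizontal pasting lemma} then lets one cancel $\bar g \of \phi$ and conclude that $\bar g \of \eta$ defines $\bar g \of l$ as the pointwise left Kan extension of $\bar g \of d$ along $J$. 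Throughout, the genuine difficulty is bookkeeping the various (co)cartesian factorisations; the pasting lemmas of \secref{Kan extension section} do the substantive work.
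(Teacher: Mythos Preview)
Your overall approach coincides with the paper's: obtain $\mathcal S$-cocompleteness from condition~(e) via \propref{cocompleteness of presheaf objects}, then prove the equivalence by full faithfulness (using density of $\yon$ and the companion identities) and essential surjectivity (constructing $\bar g$ as the pointwise left Kan extension of $g$ along $\yon_*$ from condition~(y), and checking $\bar g \of \yon \iso g$ via \propref{pointwise left Kan extension along full and faithful map}). The paper packages the hard step---that any pointwise left Kan extension along $\yon_*$ is $\mathcal S$-cocontinuous---as a separate \lemref{pointwise left Kan extensions along yoneda embeddings}, and the equivalence itself as \lemref{cocompletion equivalence}; your sketch inlines precisely these two lemmas.

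The genuine gap is your proposed upgrade of $\phi$ from pointwise left $\yon$-exact to pointwise left exact via the implication (c)$\Rightarrow$(b) of \propref{Beck-Chevalley}. That implication requires, besides $M$ being unital, the existence of the right pointwise composite $(\id_M^* \hc \ps M(\yon, d) \hc J) \iso (\ps M(\yon, d) \hc J)$---and this is \emph{not} among the hypotheses: condition~(e) only supplies a pointwise left $\yon$-exact cell out of that path, not its composite. The paper's \lemref{pointwise left Kan extensions along yoneda embeddings} does not attempt this upgrade. Instead it factors both sides of \eqref{pointwise left Kan extension into presheaf object} through the cartesian cell defining $\yon_*$ and composes with the cell $\zeta$ that exhibits $\bar g$ as a pointwise Kan extension along $\yon_*$. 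The resulting full composite can be read as $\zeta$ followed by a cartesian cell $\cart'\colon K \Rar \yon_*$ followed by $\phi$; the paper then uses \lemref{properties of pointwise left Kan extensions}(b), applied to the pointwise $\zeta$, in tandem with the $\yon$-exactness of $\phi$ directly, and finishes with the horizontal pasting lemma to isolate $\bar g \of \eta$. The point is that one should not upgrade $\phi$'s exactness at all: the factorisation through $\yon_*$ is arranged precisely so that $\yon$-exactness is exactly the hypothesis consumed.
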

	\begin{proof}
		Condition (e) ensures that $\ps M$ is $\mathcal S$-cocomplete by \propref{cocompleteness of presheaf objects}. That precomposition with $\yon$ induces an equivalence $V_\textup{$\mathcal S$-cocts}(\K)(\ps M, N) \simeq V(\K)(M, N)$, for any $\mathcal S$-cocomplete $N$, is shown in \lemref{cocompletion equivalence} below.
	\end{proof}
	
	\begin{example} \label{V-small}
		Taking $\K = \enProf{(\V, \V')}$ (\exref{(V, V')-Prof}), consider the ideal of left extension diagrams
		\begin{displaymath}
			\mathcal S = \set{(d, J) \mid \textup{$\hmap JAB$ is a $\V$-profunctor between $\V$-categories, with $A$ small}}.
		\end{displaymath}
		If $\V$ is closed monoidal, small cocomplete and small complete, and with $\tens$ preserving small colimits on both sides then, for any small $\V$-category $M$, the yoneda embedding $\map\yon M{\ps M}$ satisfies the conditions of the theorem (as follows from \exref{horizontal composites in (V, V')-Prof} and \propref{enriched yoneda embeddings}), so that the $\V$-category $\ps M$ of presheaves forms the free $\mathcal S$-cocompletion of $M$.
		
		If $\V$ is closed symmetric monoidal then, in view of the results of \secref{Kan extensions in (V, V')-Prof}, $\mathcal S$-cocompleteness coincides with the classical notion of (small) cocompleteness for $\V$-categories, in the sense of e.g.\ of Section 3.2 of \cite{Kelly82}. The theorem in this case recovers the fact that, for a small $\V$-category $M$, the $\V$-category of presheaves $\ps M$ forms the free small cocompletion of $M$; see Theorem 4.51 of \cite{Kelly82}.
	\end{example}
	\begin{example}
		To give a useful ideal $\mathcal C$ of left extension diagrams in a general augmented virtual double category $\K$, let us call a horizontal morphism $\hmap JAB$ \emph{left composable} when the right pointwise composite $(H \hc J)$ exists for any $\hmap HCA$; we set
		\begin{displaymath}
			\mathcal C = \set{(d, J) \mid \textup{$J$ is left composable}}.
		\end{displaymath}
		Clearly a good yoneda embedding $\map\yon M{\ps M}$ in $\K$ satisfies the conditions of the theorem as soon as its companion $\yon_*$ is left composable so that, in that case, $\ps M$ forms the free $\mathcal C$-cocompletion of $M$.
		
		Notice that if $\K = \enProf{(\V, \V')}$, with $\V$ as in the previous example, then the ideal $\mathcal S$ considered there is contained in $\mathcal C$, so that any $\mathcal C$-cocomplete $\V$-category is $\mathcal S$-cocomplete. Under what conditions the converse holds I do not know.
	\end{example}
	
	In the proof of \lemref{cocompletion equivalence} below the following proposition is used, which is a weak variant of \propref{pointwise left Kan extensions along yoneda embeddings are left adjoints}.
	\begin{lemma} \label{pointwise left Kan extensions along yoneda embeddings}
		Let $\map\yon M{\ps M}$ and $\mathcal S$ be as in \thmref{presheaf objects as free cocompletions}. Any pointwise left Kan extension along $\yon_*$ is $\mathcal S$-cocontinuous.
	\end{lemma}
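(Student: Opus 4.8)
The plan is to use \propref{cocompleteness of presheaf objects} to reduce the claim to the single assertion that $l'$ preserves the pointwise left Kan extensions of $\yon$ defined by the cartesian cells of the yoneda axiom, and then to prove that assertion by pinning down how $l'$ acts on those cells. So I would fix a cell $\eta'$ exhibiting $\map{l'}{\ps M}N$ as the pointwise left Kan extension of $\map{d'}MN$ along $\yon_*$, together with $(d, J) \in \mathcal S(\ps M)$ (where $\map dA{\ps M}$ and $\hmap JAB$) and a cell $\eta$ defining $\map lB{\ps M}$ as the pointwise left Kan extension of $d$ along $J$; the goal is $l' \of \eta$ defining $l' \of l$ as a pointwise left Kan extension. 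By condition \textup{(e)} of \thmref{presheaf objects as free cocompletions} and \eqref{pointwise left Kan extension into presheaf object}, $\eta$ is (up to isomorphism) the unique factorisation $c \hc \eta = \cart_K \of \phi$, with $\phi$ a pointwise left $\yon$-exact cell $(\ps M(\yon, d), J) \Rar K$, with $c$ the cartesian cell defining $\ps M(\yon, d)$, and with $\cart_K$ the cartesian cell of the yoneda axiom defining $l = \cur K$; moreover, by density (\lemref{density axioms}), $c$ defines $d$ as the pointwise left Kan extension of $\yon$ along $\ps M(\yon, d)$ and $\cart_K \of \phi$ defines $l$ as such along $(\ps M(\yon, d), J)$.

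The key step. Since $\yon$ is a good yoneda embedding it is full and faithful and $M$ is unital (\lemref{yoneda embedding full and faithful}); write $\gamma$ and $\bar\eta$ for the cartesian and weakly cocartesian cells defining $\yon_*$, so that $\gamma \of \bar\eta = \id_\yon$ (\lemref{companion identities lemma}). Applying \propref{pointwise left Kan extension along full and faithful map} with $n = 0$ to $\eta'$ shows that $\iota \coloneqq \eta' \of \bar\eta$ is an invertible vertical cell $d' \iso l' \of \yon$. I would then check the identity $\eta' = \iota \hc (l' \of \gamma)$: composing the right-hand side with $\bar\eta$ and using the interchange and unit axioms (\lemref{horizontal composition}) together with $\gamma \of \bar\eta = \id_\yon$ gives $\iota \hc \bigpars{l' \of (\gamma \of \bar\eta)} = \iota \hc \id_{l' \of \yon} = \iota = \eta' \of \bar\eta$, and since $\bar\eta$ is weakly cocartesian its factorisations are unique. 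Consequently, for any $\map gC{\ps M}$, writing $\gamma_g \coloneqq \gamma \of r_g$ for the cartesian cell defining $\ps M(\yon, g) = \yon_*(\id, g)$ (which exists as $\yon$ is good), $r_g$ being the cartesian cell of this restriction and $\gamma_g$ cartesian by the pasting lemma (\lemref{pasting lemma for cartesian cells}), one gets $l' \of \gamma_g = \inv\iota \hc (\eta' \of r_g)$; since $\eta' \of r_g$ defines $l' \of g$ as the pointwise left Kan extension of $d'$ along $\ps M(\yon, g)$ (\defref{pointwise left Kan extension}, \lemref{properties of pointwise left Kan extensions}) and $\inv\iota$ is invertible, $l' \of \gamma_g$ defines $l' \of g$ as the pointwise left Kan extension of $l' \of \yon$ along $\ps M(\yon, g)$ — that is, $l'$ preserves the pointwise left Kan extension that $\gamma_g$ defines (\lemref{density axioms}).

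Putting it together. By \propref{Beck-Chevalley} a pointwise left $\yon$-exact cell like $\phi$ satisfies the left Beck--Chevalley condition, hence is pointwise left exact — so pointwise left $(l' \of \yon)$-exact — provided the relevant pointwise composite $(\ps M(\yon, d) \hc J)$ is available (as it is, for instance, whenever $\phi$ is taken to be the cocartesian cell of \propref{cocompleteness of presheaf objects}'s final assertion). Now $c = \gamma_d$, and $\cart_K$ differs from $\gamma_l$ by an invertible horizontal cell, so the key step gives that $l' \of c$ defines $l' \of d$ as the pointwise left Kan extension of $l' \of \yon$ along $\ps M(\yon, d)$ and that $l' \of \cart_K$ defines $l' \of l$ as such along $K$; combining the latter with the pointwise left exactness of $\phi$, $(l' \of \cart_K) \of \phi$ defines $l' \of l$ as the pointwise left Kan extension of $l' \of \yon$ along $(\ps M(\yon, d), J)$. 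Since $(l' \of c) \hc (l' \of \eta) = l' \of (c \hc \eta) = l' \of (\cart_K \of \phi) = (l' \of \cart_K) \of \phi$, the horizontal pasting lemma (\lemref{horizontal pasting lemma}) applied to $l' \of c$ then yields that $l' \of \eta$ defines $l' \of l$ as the pointwise left Kan extension of $l' \of d$ along $J$, as required.

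The main obstacle is expected to be the key identity $\eta' = \iota \hc (l' \of \gamma)$ of the second paragraph — the precise sense in which $l'$ commutes with the yoneda embedding's Kan extensions — and, secondarily, checking that \propref{Beck-Chevalley} applies so that the exactness of $\phi$ may be upgraded from $\yon$ to $l' \of \yon$; everything else should be routine bookkeeping with the pasting lemmas and the universal property of (weakly) (co)cartesian cells.
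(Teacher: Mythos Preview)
Your approach is essentially the same as the paper's. Both reduce, via the factorisation of~\eqref{pointwise left Kan extension into presheaf object} through $\yon_*$, to writing the relevant composite as $(\zeta \of \cart') \hc (k \of \eta) = \zeta \of \cart_K' \of \phi$ (in the paper's notation) and then applying the horizontal pasting lemma; your ``key step'' identity $\eta' = \iota \hc (l' \of \gamma)$ is just a more explicit repackaging, modulo the invertible cell $\iota$, of what the paper obtains directly from the pointwise property of $\zeta$ restricted along $d$ and along $l$.

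Where your argument and the paper's both turn delicate is precisely the step you flag as secondary: one needs $(l' \of \cart_K) \of \phi$ (equivalently $\zeta \of \cart_K' \of \phi$) to define a pointwise left Kan extension, and for this the $\yon$-exactness of $\phi$ supplied by condition~(e) is not obviously sufficient---it controls Kan extensions \emph{of $\yon$}, not of $e$ (or $l' \of \yon$). Your appeal to \propref{Beck-Chevalley} closes this only when the right pointwise composite $\bigpars{\ps M(\yon,d) \hc J}$ exists, an additional hypothesis not implied by~(e) in general. The paper's own proof is equally terse at exactly this point, simply invoking $\yon$-exactness alongside \lemref{properties of pointwise left Kan extensions}(b) without saying how a condition on Kan extensions of $\yon$ yields one on Kan extensions of $e$. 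So you have not introduced a new gap; but you should promote this from a secondary concern to the main one, and note that the argument as written is complete only under that extra assumption (which does hold in the paper's intended examples, where $\phi$ may be taken cocartesian).
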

	\begin{proof}
		Suppose that the cell $\zeta$ in the composite below defines $l$ as the pointwise left Kan extension of $e$ along $\yon_*$. We have to show that for any left extension diagram $(d, J) \in \mathcal S(\ps M)$ and any cell $\eta$, as below, that defines a morphism $\map lB{\ps M}$ as the pointwise left Kan extension of $d$ along $J$, the composite $k \of \eta$ defines $k \of l$ as a pointwise left Kan extension as well.
		\begin{displaymath}
			\begin{tikzpicture}
				\matrix(m)[math35, column sep={1.75em,between origins}]
					{ M & & A & & B \\
						& M & & \ps M & \\
						& & N & & \\ };
				\path[map]	(m-1-1) edge[barred] node[above] {$\ps M(\yon, d)$} (m-1-3)
										(m-1-3) edge[barred] node[above] {$J$} (m-1-5)
														edge[ps] node[left] {$d$} (m-2-4)
										(m-1-5) edge[ps, transform canvas={xshift=2pt}] node[right] {$l$} (m-2-4)
										(m-2-2) edge[barred] node[below, inner sep=2pt] {$\yon_*$} (m-2-4)
														edge[transform canvas={xshift=-2pt}] node[left] {$e$} (m-3-3)
										(m-2-4) edge[transform canvas={xshift=2pt}] node[right] {$k$} (m-3-3);
				\path				(m-1-1) edge[eq, transform canvas={xshift=-1pt}] (m-2-2)
										(m-2-3) edge[cell, transform canvas={yshift=0.25em}] node[right] {$\zeta$} (m-3-3)
										(m-1-4) edge[cell, transform canvas={yshift=0.25em}] node[right] {$\eta$} (m-2-4);
				\draw				($(m-1-1)!0.5!(m-2-4)$) node[font=\scriptsize] {$\cart'$};
			\end{tikzpicture}
		\end{displaymath}
		To see this first notice that, by the uniqueness of Kan extensions, we may without loss of generality assume that $\eta$ is the cell obtained in the `when'-part of \propref{cocompleteness of presheaf objects}, as the unique factorisation in the left-hand side of \eqref{pointwise left Kan extension into presheaf object}; here we use that the pointwise left $y$-exact cell $\cell\phi{\bigpars{\ps M(\yon, d), J}}K$, as considered there, exists by condition (e) of \thmref{presheaf objects as free cocompletions}. Now the full left-hand side of \eqref{pointwise left Kan extension into presheaf object}, after factorising it through the cartesian cell defining $\yon_*$, coincides with the top row in the composite above, while its right-hand side factors through $\yon_*$ as a cartesian cell (by the pasting lemma for cartesian cells), precomposed with $\phi$. Using \lemref{properties of pointwise left Kan extensions}(b) and the assumption that $\phi$ is pointwise left $\yon$-exact, we conclude that the full composite above defines $k \of l$ as a pointwise left Kan extension. Finally, because the factorisation $\cart'$ too is cartesian by the pasting lemma, we find that the first column $\zeta \of \cart'$ above defines a left Kan extension as well. Using the horizontal pasting lemma we conclude that second column, that is $k \of \eta$, also defines a pointwise left Kan extension, as required.
	\end{proof}
	
	\begin{lemma} \label{cocompletion equivalence}
		Let $\map\yon M{\ps M}$ and $\mathcal S$ be as in \thmref{presheaf objects as free cocompletions}. For any object $N$ the top leg of the diagram
		\begin{displaymath}
			\begin{tikzpicture}
				\matrix(m)[math35, column sep=3em]
					{	V(\K)(\ps M, N) & V(\K)(M, N) \\
						V_\textup{$\mathcal S$-cocts}(\K)(\ps M, N) & V(\K)(M, N)' \\ };
				\path[map]	(m-1-1) edge node[above] {$V(\K)(\yon, N)$} (m-1-2)
														edge[white] node[black, sloped, font=] {$\supseteq$} (m-2-1)
										(m-1-2) edge[white] node[sloped, black, font=] {$\supseteq$} (m-2-2)
										(m-2-1) edge[dashed] node[below] {$\eq$} (m-2-2);
			\end{tikzpicture}
		\end{displaymath}
			factors through the full subcategory $V(\K)(M, N)'$ of $V(\K)(M, N)$, that is generated by all $\map gMN$ whose pointwise left Kan extension along $\yon_*$ exists in $\K$, as an equivalence as shown.
		
		In particular, if $N$ is $\mathcal S$-cocomplete, then the factorisation above reduces to an equivalence $V_\textup{$\mathcal S$-cocts}(\K)(\ps M, N) \eq V(\K)(M, N)$.
	\end{lemma}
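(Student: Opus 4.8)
The plan is to exhibit an explicit pseudo-inverse to the (restricted) top leg, namely ``left Kan extension along $\yon_*$''. Note first that the companion $\hmap{\yon_*}M{\ps M}$ exists, since $\yon$ is a good yoneda embedding, so that $\yon_* \iso \ps M(\yon, \id)$. By construction the objects of $V(\K)(M, N)'$ are precisely those $\map gMN$ admitting a pointwise left Kan extension along $\yon_*$; for each such $g$ let $L(g)\colon \ps M \to N$ denote this Kan extension. The universal property of pointwise left Kan extensions turns $L$ into a functor $V(\K)(M, N)' \to V(\K)(\ps M, N)$: a vertical cell $g \Rar g'$ induces a unique comparison $L(g) \Rar L(g')$, and functoriality follows from the uniqueness of factorisations. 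By \lemref{pointwise left Kan extensions along yoneda embeddings} each $L(g)$ is $\mathcal S$-cocontinuous, so $L$ corestricts to a functor $V(\K)(M, N)' \to V_\textup{$\mathcal S$-cocts}(\K)(\ps M, N)$.

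For the first triangle identity, $V(\K)(\yon, N) \of L \iso \id$: since $\yon$ is full and faithful by \lemref{yoneda embedding full and faithful}(a) and its companion $\yon_*$ exists, \propref{pointwise left Kan extension along full and faithful map} applied with $n = 0$ shows that the composite of the nullary cell defining $L(g)$ with the weakly cocartesian cell defining $\yon_*$ is an invertible vertical cell $g \iso L(g) \of \yon$. Its naturality in $g$ is immediate from the uniqueness clause in the universal property defining $L$ on cells.

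For the second triangle identity, $L \of V(\K)(\yon, N) \iso \id$ on $\mathcal S$-cocontinuous morphisms: by the density of $\yon$, \lemref{density axioms}(c) shows that the nullary cartesian cell defining $\yon_*$ exhibits $\id_{\ps M}$ as the pointwise left Kan extension of $\yon$ along $\yon_*$. Taking $f = \yon$ in hypothesis (y) of \thmref{presheaf objects as free cocompletions} gives $(\yon, \yon_*) \in \mathcal S(\ps M)$, so any $\mathcal S$-cocontinuous $\map k{\ps M}N$ carries this cell to one exhibiting $k = k \of \id_{\ps M}$ as the pointwise left Kan extension of $k \of \yon$ along $\yon_*$; hence $k \of \yon \in V(\K)(M, N)'$ and $L(k \of \yon) \iso k$, again naturally by uniqueness. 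This shows that precomposition with $\yon$ does factor the restricted top leg through $V(\K)(M, N)'$, and that the resulting functor is an equivalence with $L$ as pseudo-inverse. Finally, if $N$ is $\mathcal S$-cocomplete then $(g, \yon_*) \in \mathcal S(N)$ for every $\map gMN$, once more by hypothesis (y), so every such $g$ admits a pointwise left Kan extension along $\yon_*$; thus $V(\K)(M, N)' = V(\K)(M, N)$ and the factorisation reduces to the equivalence $V_\textup{$\mathcal S$-cocts}(\K)(\ps M, N) \eq V(\K)(M, N)$.

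I expect the only genuine work to be bookkeeping: writing out the whiskering diagrams that make $L$ functorial and the two families of comparison cells natural, and keeping track of which instances of the hypotheses are invoked --- the instance $f = \yon$ for the second triangle identity and the instance $f = g$ for the ``in particular'' clause. No idea beyond the universal properties already assembled in \propref{pointwise left Kan extension along full and faithful map}, \lemref{density axioms}(c) and \lemref{pointwise left Kan extensions along yoneda embeddings} should be needed.
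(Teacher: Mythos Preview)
Your proposal is correct and uses the same ingredients as the paper's proof: \lemref{pointwise left Kan extensions along yoneda embeddings} for $\mathcal S$-cocontinuity of the Kan extension, \propref{pointwise left Kan extension along full and faithful map} together with \lemref{yoneda embedding full and faithful}(a) for one direction, density of $\yon$ combined with the instance $(\yon,\yon_*)\in\mathcal S$ of hypothesis~(y) for the other, and hypothesis~(y) again for the final clause. The only difference is packaging: the paper shows the restricted functor is essentially surjective and full and faithful, while you build the pseudo-inverse $L$ explicitly and verify the two triangle isomorphisms; these are equivalent standard verifications of an equivalence and involve no additional ideas.
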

	\begin{proof}
		Firstly, for the final assertion, simply notice that condition (y) of \thmref{presheaf objects as free cocompletions} ensures that $V(\K)(M, N)' = V(K)(M, N)$ for all $\mathcal S$-cocomplete $N$. Next to see that the top leg of the diagram above factors consider any $\mathcal S$"/cocontinuous $\map f{\ps M}N$; we have to show that the pointwise left Kan extension of $f \of \yon$ along $\yon_*$ exists. Since $(\yon, \yon_*) \in \mathcal S$ by the same condition it clearly does: it is $f$ itself, defined by the composite on the left below.
		
		To prove that the factorisation is an equivalence we will show that it is essentially surjective and full and faithful; for the former consider any $g \in V(\K)(M, N)'$. By definition of $V(\K)(M, N)'$ the pointwise left Kan extension $\map l{\ps M}N$ of $g$ along $\yon_*$ exists; we denote its defining cell by $\eta$, as in the middle below. By \lemref{pointwise left Kan extensions along yoneda embeddings} $l$ is $\mathcal S$-cocontinuous while, by precomposing $\eta$ with the weakly cocartesian cell defining $\yon_*$, we obtain a vertical isomorphism $g \iso f \of \yon$, as follows from the fact that $\yon$ is full and faithful (\lemref{yoneda embedding full and faithful}) and \propref{pointwise left Kan extension along full and faithful map}. This shows essential surjectivity.
		
		Finally, to prove full and faithfulness, consider any vertical cell $\cell\phi{f \of \yon}{g \of \yon}$, where $f$ and $g$ are $\mathcal S$-cocontinuous. We have to show that there exists a unique vertical cell $\cell{\phi'}fg$ such that $\phi = \phi' \of \yon$.
		\begin{displaymath}

		\end{displaymath}
		Since $\yon$ is dense, the cartesian cell in the right-hand side of the identity above defines a left Kan extension, by \lemref{density axioms}. It follows that its composition with $f$ does too, by $\mathcal S$-cocompleteness of $f$ and condition (y) of \thmref{presheaf objects as free cocompletions} again, so that the composite on the left-hand side factors uniquely as a cell $\phi'$ as shown. Composing both sides with the weakly cocartesian cell that defines $\yon_*$, we conclude that $\phi'$ is unique such that $\phi = \phi' \of \yon$, as required. This completes the proof.
	\end{proof}
		
	\section{Algebras of monads}\label{algebras section}
	With our study of `ordinary' category theory within augmented virtual double categories complete, we now turn to such theory in the presence of `algebraic structures'; the archetypal example being that of monoidal structures on categories. Like in $2$"/dimensional category theory, algebraic structures in an augmented virtual double category $\K$ are defined by monads on $\K$. In fact, any monad $T$ on $\K$ induces a strict $2$"/monad $V(T)$ on the vertical $2$-category $V(\K)$, under the $2$-functor $\map V\AugVirtDblCat\twoCat$. Given a monad $T$ we will consider an augmented virtual double category $\Alg T$ of (weak) algebras of $T$, whose vertical part $V(\Alg T)$ coincides with the $2$-category $\Alg{V(T)}$ of (weak) $V(T)$-algebras in the classical sense. The notion of horizontal morphism in $\Alg T$ generalises that of  `horizontal $T$-morphism' introduced by Grandis and Par\'e in the setting of pseudo double categories \cite{Grandis-Pare04}; see also \cite{Koudenburg15a}.
	
	In closing this section we will use the characterisation of (op-)representable horizontal morphisms given in \thmref{lower star} to characterise (op-)representable horizontal $T$-morphisms.
	
	\subsection{Monads on an augmented virtual double category}
	We start with monads on augmented virtual double categories. They are simply monads in the $2$-category $\AugVirtDblCat$ as follows.
	\begin{definition} \label{monad}
		By a \emph{monad} $T$ on an augmented virtual double category $\K$ we mean a monad $T = (T, \mu, \iota)$ on $\K$ in the $2$-category $\AugVirtDblCat$. As usual, $T$ consists of an endofunctor $\map T\K\K$ equipped with \emph{multiplication} and \emph{unit} transformations $\nat\mu{T^2}T$ and $\nat\iota{\id_\K}T$, which satisfy the associativity and unit identities $T \of T \mu = T \of \mu T$ and $\mu \of T\iota = \id_T = \mu \of \iota T$.
		
		We call $T$ \emph{strong} whenever its underlying endofunctor is strong, that is preserves all horizontal cocartesian cells.
	\end{definition}
	Notice that, under the strict $2$-functor $\map V\AugVirtDblCat\twoCat$ of \propref{2-category of (fc, bl)-multicategories}, every monad $T$ on an augmented virtual double category $\K$ induces a strict $2$-monad $V(T)$ on its vertical $2$-category $V(\K)$.  Remember that most of our examples of augmented virtual double categories are obtained by considering monoids and bimodules in a virtual double category. Unsurprisingly our examples of monads are images under the strict $2$-functor $\map\Mod\VirtDblCat\AugVirtDblCat$ of \propref{Mod is a 2-functor} as well, as described below. We shall only describe three examples of monads; in the remainder their corresponding algebraic structures will be studied in detail. For other examples we refer to Section 3 of \cite{Cruttwell-Shulman10}, where several monads on virtual double categories are described; many of these can be regarded as monads on augmented virtual double categories under the equivalence of \thmref{unital fc-multicategories}. The examples of `cartesian monads' on categories $\E$ with pullbacks, that are given in Section~4.1 of \cite{Leinster04}, form another source of monads on augmented virtual double categories, by taking their images under the composite $2$-functor $\inProf{\dash} = \Mod \of \Span{\dash}$; see \exref{Span is a 2-functor} and \propref{Mod is a 2-functor}.
	
	\begin{example} \label{free strict monoidal V-category monad}
		Let $\V = (\V, \tens, I)$ be a symmetric monoidal category with an initial object $\emptyset$ preserved by $\tens$ on both sides. The \emph{`free monoid'-monad} $T$ on the virtual double category $\Mat\V$ of $\V$-matrices (\exref{enriched profunctors}) is given as follows. It maps each large set $A$ to its free monoid $TA \dfn \coprod_{n \geq 0} A^{\times n}$; likewise, on a function $\map fAC$ it is given by $Tf \dfn \coprod_{n \geq 0} f^{\times n}$, while its image of the $\V$-matrix $\hmap JAB$ is given by
		\begin{displaymath}
			(TJ)(\ul x, \ul y) = \begin{cases}
				\Tens_{i = 1}^n J(x_i, y_i) & \text{if $\lns{\ul x} = n = \lns{\ul y}$;}\\
				\emptyset	& \text{otherwise,}
			\end{cases}
		\end{displaymath}
		where $\lns{\ul x}$ and $\lns{\ul y}$ denote the lengths of $\ul x$ and $\ul y$. The components of the image $\cell{T\phi}{(TJ_1, \dotsc, TJ_n)}{TK}$ of a cell $\phi$ are given by tensor products of the components of $\phi$, after using the while the symmetry of $\V$ to put the tensor factors in the right order. The multiplication of $T$, finally, is given by concatenation of sequences on sets, while on $\V$-matrices it is induced by the associator of $\V$.
		
		Applying $\Mod$ to $T$ we obtain the \emph{`free strict monoidal $\V$-category'-monad} on $\enProf\V$, which we again denote $T$. It maps a $\V$-category $A$ to the freely generated strict monoidal $\V$-category $TA \dfn \coprod_{n \geq 0} A^{\tens n}$, while its image of a $\V$-profunctor $\hmap JAB$ consists of the $\V$-objects $(TJ)(\ul x, \ul y)$ above, on which $TA$ and $TB$ act indexwise. For further details we refer to Example 3.11 of \cite{Koudenburg15a} where, in the case of a cocomplete $\V$ whose tensor product preserves colimits on both sides, the restriction of $T$ to the double category of small $\V$-categories is described in detail.
		
		For a closed symmetric monoidal $\V'$ that is large cocomplete, so that $\enProf{\V'}$ is a double category (\exref{horizontal composites in (V, V')-Prof}), showing that $T$ is a strong monad on $\enProf{\V'}$ is straightforward. In proving this the ``Fubini theorem'' for coends is useful; see Section~2.1 of \cite{Kelly82} for the dual theorem for ends. Now, given a symmetric universe enlargement \mbox{$\V \to \V'$} with $\V$ as in the above, notice that $T$ restricts to a strong monad on $\enProf{(\V, \V')}$, whose strongness is a consequence of the reflection of cocartesian cells along the inclusion $\enProf{(\V, \V')} \to \enProf{\V'}$ (\propref{cocartesian cells in (V, V')-Prof}).
	\end{example}
	
	\begin{example} \label{free strict double category-monad}
		Let $\Gg_1$ be the indexing category $\Gg_1 = (1 \rightrightarrows 0)$, so that the functor category $\Set'^{\Gg_1}$ is that of large directed graphs. Consider the \emph{`free category'-monad} on $\Set'^{\Gg_1}$: it maps a graph $A = \bigl(\mspace{-11mu}\begin{tikzpicture}[textbaseline]
			\matrix(m)[math35, column sep=1em]{A_1 & A_0 \\};
			\path[map]	(m-1-1) edge[above, transform canvas={yshift=2pt}] node {$s$} (m-1-2)
													edge[below, transform canvas={yshift=-2pt}] node {$t$} (m-1-2);
		\end{tikzpicture}\mspace{-11mu}\bigr)$ to its free category $TA$, with the same vertices as $A$ and, as edges, (possibly empty) paths \mbox{$\ul e = (x_0 \xrar{e_1} x_1 \xrar{e_2} \dotsb \xrar{e_n} x_n)$} of edges in $A$; formally
		\begin{displaymath}
			(TA)_0 = A_0 \qquad \text{and} \qquad (TA)_1 = A_0 \amalg \coprod_{n \geq 1} \overbrace{A_1 \times_{A_0} A_1 \times_{A_0} \dotsb \times_{A_0} A_1}^{\text{$n$ times}}.
		\end{displaymath}
		Multiplication of $T$ is given by concatenation of paths, while its unit inserts edges as singleton paths.
		
		Checking that $T$ preserves the indexwise pullbacks of $\Set'^{\Gg_1}$ is straightforward; see for instance Proposition 2.3 of \cite{Dawson-Pare-Pronk06}. Hence we may apply $\inProf{\dash} = \Mod \of \Span{\dash}$ (see \exref{Span is a 2-functor} and \propref{Mod is a 2-functor}) to obtain a monad on the equipment $\enProf{\Set'}^{\Gg_1} \dfn \inProf{\Set'^{\Gg_1}}$ of large $\Gg_1$-indexed profunctors: this is the \emph{`free strict double category'-monad}, which we again denote $T$. We shall briefly describe $\enProf{\Set'}^{\Gg_1}$ and the action of $T$.
		
		A $\Gg_1$-indexed category $\A = \bigl(\mspace{-11mu}\begin{tikzpicture}[textbaseline]
			\matrix(m)[math35, column sep=1em]{\A_1 & \A_0 \\};
			\path[map]	(m-1-1) edge[above, transform canvas={yshift=2pt}] node {$\sigma$} (m-1-2)
													edge[below, transform canvas={yshift=-2pt}] node {$\tau$} (m-1-2);
		\end{tikzpicture}\mspace{-11mu}\bigr)$ will be regarded as being a virtual double category $\A$ that has $(1, 1)$-ary cells only; its objects and vertical morphisms form the category $\A_0$ while its horizontal morphisms and cells form $\A_1$. The free strict double category $T\A$ has the same objects and vertical morphisms as $\A$, while its horizontal morphisms and cells are (possible empty) paths of horizontal morphisms and cells in $\A$. A large $\Gg_1$-indexed profunctor $\hmap \J\A\B$ consists of large profunctors $\hmap{\J_0}{\A_0}{\B_0}$ and $\hmap{\J_1}{\A_1}{\B_1}$, equipped with natural transformations $\nat{\J_\sigma}{\J_1}{\J_0(\sigma_A, \sigma_B)}$ and $\nat{\J_\tau}{\J_1}{\J_0(\tau_A, \tau_B)}$. We think of the elements $u \in \J_0(A, C)$ as vertical morphisms $\map uAC$, and of those $\theta \in \J_1(J, K)$ as cells as below, where $u = \J_\sigma(\theta)$ and $v = \J_\tau(\theta)$.
		\begin{displaymath}
			\begin{tikzpicture}
				\matrix(m)[math35]{A & B \\ C & D \\};
				\path[map]	(m-1-1) edge[barred] node[above] {$J$} (m-1-2)
														edge node[left] {$u$} (m-2-1)
										(m-1-2) edge node[right] {$v$} (m-2-2)
										(m-2-1) edge[barred] node[below] {$K$} (m-2-2);
				\path[transform canvas={xshift=1.75em}]	(m-1-1) edge[cell] node[right] {$\theta$} (m-2-1);
			\end{tikzpicture}
		\end{displaymath}
		Analogous to its action on $\Gg_1$-indexed categories, $T$ maps $\hmap \J\A\B$ to the $\Gg_1$"/indexed profunctor $\hmap{T\J}{T\A}{T\B}$ that has the same vertical morphisms as $\J$ while its cells are paths of cells of $\J$.
		
		Finally, notice that $T$ restricts to the augmented virtual equipment $\enProf{(\Set, \Set')}^{\Gg_1}$ of small $\Gg_1$-indexed profunctors between large $\Gg_1$-indexed categories, that is those $\hmap\J\A\B$ with small sets $\J_0(A, C)$ and $\J_1(J, K)$ of vertical morphisms and cells. We remark that $T$ is not strong; for an example of a composite $\Gg_1$-indexed profunctor that is not preserved by $T$ see Proposition 3.25 of \cite{Koudenburg13}.
	\end{example}
	
	\subsection{Algebras of \texorpdfstring{$2$}2-monads}
	Here we recall from \cite{Street74b} several notions of weak algebra of a $2$-monad, and apply them to the vertical parts $V(T)$ of monads $T$ on augmented virtual double categories.
	\begin{definition} \label{lax algebra}
		Let $T = (T, \mu, \iota)$ be a strict $2$-monad on a $2$-category $\catvar C$.
		\begin{enumerate}[label=-]
			\item A \emph{lax $T$-algebra} $A$ is a quadruple $A = (A, a, \bar a, \tilde a)$ consisting of an object $A$ in $\catvar C$ equipped with a morphism $\map a{TA}A$, its \emph{structure morphism}, and cells $\bar a$ and $\tilde a$ as on the left below, its \emph{associator} and \emph{unitor}, that satisfy the usual coherence axioms; see for instance Section 2 of \cite{Street74b} or Section 3 of \cite{Koudenburg15a}.
			
			A lax $T$-algebra $A$ with an invertible unitor $\tilde a$ is called \emph{normal}. If both the associator and the unitor are invertible then $A$ is called a \emph{pseudo} $T$-algebra; if they are identity cells then $A$ is called \emph{strict}.
			\begin{displaymath}
				\begin{tikzpicture}[baseline]
					\matrix(m)[math35, column sep={1.75em,between origins}]{& T^2 A & \\ TA & & TA \\ & A & \\};
					\path[map]	(m-1-2) edge[bend right=18] node[above left] {$Ta$} (m-2-1)
															edge[bend left=18] node[above right] {$\mu_A$} (m-2-3)
											(m-2-1) edge[bend right=18] node[below left] {$a$} (m-3-2)
											(m-2-3) edge[bend left=18] node[below right] {$a$} (m-3-2);
					\path				(m-1-2) edge[cell, transform canvas={yshift=-1.625em}] node[right] {$\bar a$} (m-2-2);
				\end{tikzpicture} \qquad \qquad \begin{tikzpicture}[baseline]
					\matrix(m)[math35, column sep={1.75em,between origins}]{& A & \\ & & TA \\ & A & \\};
					\path[map]	(m-1-2) edge[bend left=18] node[above right] {$\iota_A$} (m-2-3)
											(m-2-3) edge[bend left=18] node[below right] {$a$} (m-3-2);
					\path				(m-1-2) edge[cell, transform canvas={yshift=-1.625em}] node[right] {$\tilde a$} (m-2-2)
											(m-1-2) edge[eq, bend right=45] (m-3-2);
				\end{tikzpicture} \qquad \qquad \begin{tikzpicture}[baseline]
					\matrix(m)[math35, column sep={1.75em,between origins}]{& TA & \\ TC & & A \\ & C & \\};
					\path[map]	(m-1-2) edge[bend right=18] node[above left] {$Tf$} (m-2-1)
															edge[bend left=18] node[above right] {$a$} (m-2-3)
											(m-2-1) edge[bend right=18] node[below left] {$c$} (m-3-2)
											(m-2-3) edge[bend left=18] node[below right] {$f$} (m-3-2);
					\path				(m-1-2) edge[cell, transform canvas={yshift=-1.625em}] node[right] {$\bar f$} (m-2-2);
				\end{tikzpicture}
			\end{displaymath}
			\item Given lax $T$-algebras $A = (A, a, \bar a, \tilde a)$ and $C = (C, c, \bar c, \tilde c)$, a \emph{lax $T$-morphism} $A \to C$ is a morphism $\map fAC$ of $\catvar C$ that is equipped with a \emph{structure cell} $\bar f$ as on the right above, which is required to satisfy an associativity and unit axiom; see \cite{Street74b} or \cite{Koudenburg15a}.
		
			Dually, in the notion of an \emph{colax $T$-morphism} $\map hAC$ the direction of the structure cell $\cell{\bar h}{h \of a}{c \of Th}$ is reversed. A (co-)lax morphism is called a \emph{pseudo $T$-morphism} if its structure cell is invertible.
			\item Given lax $T$-morphisms $f$ and $\map gAC$, a \emph{$T$-cell} $f \Rar g$ is a cell $\cell\phi fg$ in $\catvar C$ satisfying
			\begin{displaymath}
				\begin{tikzpicture}[textbaseline]
					\matrix(m)[math35, column sep={1.75em,between origins}]{& TA & \\ TC & & A \\ & C & \\};
					\path[map]	(m-1-2) edge[bend right=35] node[above left] {$Tg$} (m-2-1)
															edge[bend left=35] node[below right, inner sep=0pt, yshift=-2pt] {$Tf$} (m-2-1)
															edge[bend left=18] node[above right] {$a$} (m-2-3)
											(m-2-1) edge[bend right=18] node[below left] {$c$} (m-3-2)
											(m-2-3) edge[bend left=18] node[below right] {$f$} (m-3-2);
					\path				(m-1-1) edge[cell, transform canvas={xshift=0.4em}] node[right, inner sep=2pt] {$T\phi$} (m-2-1)
											(m-1-2) edge[cell, transform canvas={yshift=-2.4375em}] node[right] {$\bar f$} (m-2-2);
				\end{tikzpicture} = \begin{tikzpicture}[textbaseline]
					\matrix(m)[math35, column sep={1.75em,between origins}]{& TA & \\ TC & & A \\ & C & \\};
					\path[map]	(m-1-2) edge[bend right=18] node[above left] {$Tg$} (m-2-1)
															edge[bend left=18] node[above right] {$a$} (m-2-3)
											(m-2-1) edge[bend right=18] node[below left] {$c$} (m-3-2)
											(m-2-3) edge[bend right=35] node[above left, inner sep=1.5pt] {$g$} (m-3-2)
															edge[bend left=35] node[below right] {$f$} (m-3-2);
					\path				(m-1-2) edge[cell, transform canvas={yshift=-0.8125em}] node[right] {$\bar g$} (m-2-2)
											(m-2-2) edge[cell, transform canvas={xshift=0.875em}] node[right, inner sep=2.5pt] {$\phi$} (m-3-2);
				\end{tikzpicture}.
			\end{displaymath}
			Likewise a $T$-cell between colax $T$-morphisms $h$ and $\map kAC$ is a cell \mbox{$\cell\phi hk$} satisfying $\bar h \hc (c \of T\phi) = (\phi \of a) \hc \bar k$.
		\end{enumerate}
		Lax $T$-algebras, lax $T$-morphisms and the $T$-cells between them form a $2$-category denoted $\wAlg\lax\lax T$. We denote by $\wAlg{\textup{nl}}\lax T$ and $\wAlg\psd\lax T$ the sub-$2$-categories consisting of normal lax $T$-algebras and pseudo $T$-algebras. Likewise (normal) lax $T$"/algebras, or pseudo $T$-algebras, together with colax $T$-morphisms and their $T$"/cells form a hierarchy of $2$-categories $\wAlg\psd\clx T \subset \wAlg{\textup{nl}}\clx T \subset \wAlg\lax\clx T$.
		
		Writing $\co T$ for the induced strict $2$-monad on $\co{\mathcal C}$, by an \emph{colax $T$-algebra} we mean a lax $\co T$-algebra; that is the notion of colax $T$-algebra is obtained by reversing the direction of the associator and unitor cells of lax $T$-algebras. The notions of lax $T$"/morphism and colax $T$-morphism for colax $T$-algebras are defined analogously to those for lax $T$-algebras: in fact the $2$-category of colax $T$-algebras, lax $T$"/morphisms and $T$-cells is defined as $\wAlg\clx\lax T \dfn \co{\bigpars{\wAlg\lax\clx{\co T}}}$, while that of colax $T$"/algebras, colax $T$-morphism and $T$-cells is defined as $\wAlg\clx\clx T \dfn \co{\bigpars{\wAlg\lax\lax{\co T}}}$. As before we denote their sub-$2$-categories consisting of normal colax $T$-algebras by $\wAlg{\textup{no}}\lax T$ and $\wAlg{\textup{no}}\clx T$. Notice that we need not distinguish between ``pseudo lax algebras'' and ``pseudo colax algebras'', since inverting associator and unitor cells induces an isomorphism $\wAlg\psd\lax T \iso \co{\bigpars{\wAlg\psd\clx{\co T}}}$, and similar for colax morphisms.
	\end{definition}
	
	\begin{example} \label{monoidal V-categories}
		Consider the `free strict monoidal $\V$-category'-monad $T$ on $\enProf\V$ (\exref{free strict monoidal V-category monad}), and its vertical part $V(T)$ on $\enCat\V = V(\enProf\V)$. A lax $V(T)$"/algebra is a \emph{lax monoidal $\V$-category}, that is a large $\V$-category $A$ equipped with tensor product $\V$-functors
		\begin{displaymath}
			\map\oslash{TA}A\colon (x_1, \dotsc, x_n) \mapsto (x_1 \oslash \dotsb \oslash x_n),
		\end{displaymath}
		where $n \geq 0$, together with $\V$-natural \emph{unitors}\footnote{As is the custom, by a map $\map{\mathfrak i}x{(x)}$ in the $\V$-category $A$ we mean a $\V$-map $\map{\mathfrak i}I{A(x, (x))}$.} $\map{\mathfrak i}x{(x)}$ and \emph{associators}
		\begin{displaymath}
			\map{\mathfrak a}{\bigpars{(x_{11} \oslash \dotsb \oslash x_{1m_1}) \oslash \dotsb \oslash (x_{n1} \oslash \dotsb \oslash x_{nm_n})}}{(x_{11} \oslash \dotsb \oslash x_{nm_n})}
		\end{displaymath}
		satisfying the usual coherence axioms; see for instance Definition 3.1.1 of \cite{Leinster04}. In a \emph{monoidal $\V$-category} unitors and associators are invertible; in an \emph{colax monoidal $\V$-category} their direction is reversed.
		
		A lax $V(T)$-morphism $\map fAC$ between lax monoidal $\V$-categories is a \emph{lax monoidal $\V$-functor}, that is a $\V$-functor $\map fAC$ equipped with $\V$-natural \emph{compositors} $\map{f_\oslash}{(fx_1 \oslash \dotsb \oslash fx_n)}{f(x_1 \oslash \dotsb \oslash x_n)}$, that are compatible with the coherence maps of $A$ and $C$; see Definition 3.1.3 of \cite{Leinster04}. In the notion of \emph{colax monoidal $\V$-functor} the direction of the compositors is reversed. The $V(T)$-cells finally are \emph{monoidal $\V$-natural transformations} $\nat\xi fg$, whose components satisfy the coherence axiom $f_\oslash \of (\xi_{x_1} \oslash \dotsb \oslash \xi_{x_n}) = \xi_{(x_1 \oslash \dotsb \oslash x_n)} \of g_\oslash$. We remark that, in relation to the classical definitions of monoidal category/functor/transformation, the preceding (co-)lax definitions are often called \emph{unbiased} because, unlike the classical notions, they are not ``biased'' towards binary tensor products.
	\end{example}
	
	\begin{example} \label{lax double categories}
		Consider the `free strict double category'-monad $T$ on $\enProf{\Set'}^{\Gg_1}$, (\exref{free strict double category-monad}), and its vertical part $V(T)$ on the $2$-category $\Cat'^{\Gg_1}$ of large $\Gg_1$"/indexed categories. A lax $V(T)$-algebra $\A$ is a \emph{lax double category} as follows. It consists of a large $\Gg_1$-indexed category $\A$ equipped with compatible \emph{horizontal compositions} for its horizontal morphisms and cells:
		\begin{align*}
			(A_0 \xbrar{J_1} A_1, \dotsc, A_{n'} \xbrar{J_n} A_n) \quad &\mapsto \qquad\mspace{-2.5mu} A_0 \xbrar{(J_1 \hc \dotsb \hc J_n)} A_n \\
			\Biggl(\begin{tikzpicture}[textbaseline, ampersand replacement=\&]
				\matrix(m)[math35]{A_0 \& A_1 \\ C_0 \& C_1 \\};
				\path[map]	(m-1-1) edge[barred] node[above] {$j_1$} (m-1-2)
														edge node[left] {$f_0$} (m-2-1)
										(m-1-2) edge node[right] {$f_1$} (m-2-2)
										(m-2-1) edge[barred] node[below] {$K_1$} (m-2-2);
				\path[transform canvas={xshift=1.75em}]	(m-1-1) edge[cell] node[right] {$\phi_1$} (m-2-1);
			\end{tikzpicture}, \dotsc, \begin{tikzpicture}[textbaseline, ampersand replacement=\&]
				\matrix(m)[math35]{A_{n'} \& A_n \\ C_{n'} \& C_n \\};
				\path[map]	(m-1-1) edge[barred] node[above] {$J_n$} (m-1-2)
														edge node[left] {$f_{n'}$} (m-2-1)
										(m-1-2) edge node[right] {$f_n$} (m-2-2)
										(m-2-1) edge[barred] node[below] {$K_n$} (m-2-2);
				\path[transform canvas={xshift=1.75em}]	(m-1-1) edge[cell] node[right] {$\phi_n$} (m-2-1);
			\end{tikzpicture}\Biggr) \quad &\mapsto \quad \begin{tikzpicture}[textbaseline, ampersand replacement=\&]
				\matrix(m)[math35, column sep={7em,between origins}]{A_0 \& A_n \\ C_0 \& C_n. \\};
				\path[map]	(m-1-1) edge[barred] node[above] {$(J_1 \hc \dotsb \hc J_n)$} (m-1-2)
														edge node[left] {$f_0$} (m-2-1)
										(m-1-2) edge node[right] {$f_n$} (m-2-2)
										(m-2-1) edge[barred] node[below] {$(K_1 \hc \dotsb \hc K_n)$} (m-2-2);
				\path				(m-1-1) edge[cell, transform canvas={xshift=0.75em}] node[right] {$(\phi_1 \hc \dotsb \hc \phi_n)$} (m-2-1);
			\end{tikzpicture}
		\end{align*}
		The second of these is required to be functorial with respect to the vertical composition in $\A$; this condition is known as the \emph{interchange axiom}. Furthermore, $\A$ is equipped with horizontal \emph{unitor cells} $\cell{\mathfrak i}J{(J)}$, one for each horizontal morphism $\hmap JAB$, and horizontal \emph{associator cells}
		\begin{displaymath}
			\cell{\mathfrak a}{\bigpars{(J_{11} \hc \dotsb \hc J_{1m_1}) \hc \dotsb \hc (J_{n1} \hc \dotsb \hc J_{nm_n})}}{(J_{11} \hc \dotsb \hc J_{nm_n})},
		\end{displaymath}
		one for each double path $\ull J$ of horizonal morphisms. These coherence cells are required to be natural with respect to the vertical composition of cells in $\A$ and to satisfy the usual coherence axioms, analogous to those of a lax monoidal category. For each $A \in \A$ we call the horizontal composite $\hmap{(A)}AA$ of the empty path $(A)$ the \emph{(horizontal) unit} of $A$. By requiring that the associator and unitor cells be invertible we obtain the notion of `weak double category', considered in Section 5.2 of \cite{Leinster04}; notice that this an ``unbiased'' variant of the `pseudo double categories' that were considered after \propref{Mod as right pseudo-adjoint}.
		
		\emph{Lax double functors} $\map F\A{\catvar C}$ are defined analogously to lax monoidal functors: they are $\Gg_1$-indexed functors equipped with natural horizontal \emph{compositor cells}
		\begin{displaymath}
			\cell{F_\hc}{(Fj_1 \hc \dotsb \hc Fj_n)}{F(j_1 \hc \dotsb \hc j_n)}
		\end{displaymath}
		satisfying the usual coherence axioms. Likewise \emph{double transformations} $\nat\xi FG$ are $\Gg_1$"/indexed natural transformations satisfying the coherence axiom
		\begin{displaymath}
			G_\hc \of (\xi_{J_1} \hc \dotsb \hc \xi_{J_n}) = \xi_{(J_1 \hc \dotsb \hc J_n)} \of F_\hc.
		\end{displaymath}
	\end{example}
	
	\subsection{Augmented virtual double categories of \texorpdfstring{$T$}{T}"/algebras}
	We turn to the augmented virtual double categories $\wAlg vwT$ associated to a monad $T = (T, \mu, \iota)$ on an augmented virtual double category $\K$. Remember that $T$ induces a strict $2$"/monad on the vertical $2$"/category $V(\K)$ that is contained in $\K$. To start, by an \emph{(co"/)lax $T$"/algebra} we shall simply mean an (co"/)lax $V(T)$"/algebra, and by an \emph{(co"/)lax vertical $T$"/morphism} between such algebras we shall mean an (co"/)lax $V(T)$"/morphism. The following definition generalises to augmented virtual double categories the notion of `horizontal $T$"/morphism', given in \cite{Koudenburg15a}, which is itself a slight generalisation of the `algebraic arrows' that were introduced in Section 7 of \cite{Grandis-Pare04}.
	\begin{definition} \label{horizontal T-morphism}
		Let $T = (T, \mu, \iota)$ be a monad on an augmented virtual double category $\K$. Given lax $T$"/algebras $A = (A, a, \bar a, \tilde a)$ and $B = (B, b, \bar b, \tilde b)$, a \emph{horizontal $T$"/morphism} $A \slashedrightarrow B$ is a horizontal morphism $\hmap JAB$ equipped with a \emph{structure cell}
		\begin{displaymath}

		\end{align*}
	\end{definition}
	
	\begin{example} \label{monoidal V-profunctor}
		Let $T$ be the `free strict monoidal $\V$-category'-monad on $\enProf{(\V, \V')}$, described in \exref{free strict monoidal V-category monad}. Given lax monoidal $\V'$-categories $A$ and $B$, a horizontal $T$-morphism $A \brar B$ is a \emph{monoidal $\V$-profunctor}, that is a $\V$-profunctor $\hmap JAB$ equipped with a monoidal structure given by $\V'$-maps
		\begin{displaymath}
			\map{J_\oslash}{J(x_1, y_1) \tens' \dotsb \tens' J(x_n, y_n)}{J\bigpars{(x_1 \oslash \dotsb \oslash x_n), (y_1 \oslash \dotsb \oslash y_n)}},
		\end{displaymath}
		where $x_1, \dotsc, x_n \in A$, $y_1, \dotsc, y_n \in B$ and, in the right-hand side, the tensor products of $A$ and $B$ are denoted by $\oslash$. These $\V'$-maps are required to be compatible with the actions of $TA$ and $TB$, and to satisfy associativity and unit axioms; see Example~3.23 of \cite{Koudenburg15a}.
		
		We remark that if $A$ is a monoidal $\V'$-category, that is with invertible associator and unitor, while $\V'$ is closed symmetric monoidal, so that $\V$ can be regarded as a monoidal $\V'$-category itself, then monoidal $\V'$-profunctors $\hmap JAB$ in the sense above can be identified with lax monoidal $\V'$-functors $\op A \tens' B \to \V'$ whose images are $\V$-objects.
		
		A monoidal $\V$-profunctor $\hmap JAB$ satisfies the left Beck-Chevalley condition whenever, for each $x \in A$ and $y_1, \dotsc, y_n \in B$, the family of $\V'$-maps
		\begin{displaymath}

		\end{displaymath}
		holds, where in the left-hand side $\ul{\bar K} \dfn \bar K$ if $\ul K = (C \xbrar K D)$ and $\ul{\bar K} \dfn \id_c$ if $\ul K = (C)$; analogously the path $(\bar J_1, \dotsc, \bar J_n)$ of structure cells in the right-hand side is to be interpreted as the identity cell $\id_{A_0}$ in the case that $\ul J$ is empty.
		
		Likewise a cell $\phi$ as above, but with colax vertical $T$-morphisms $f$ and $g$, is called a \emph{$T$-cell} whenever $\bar f \hc (\ul{\bar K} \of T\phi) = \bigpars{\phi \of (\bar J_1, \dotsc, \bar J_n)} \hc \bar g$.
	\end{definition}
	
	In the case of the `free strict monoidal $\V$-category'-monad, $(1,1)$-ary $T$-cells were described in Example 3.25 of \cite{Koudenburg15a}.
	
	\begin{example}
		For a $\Gg_1$-indexed cell $\xi$ in $\enProf{(\Set, \Set')}^{\Gg_1}$, as in the top right below, where $F$ and $G$ are lax double functors and the paths $\ul \J$ and $\ul \K$ are small double profunctors, the $T$-cell axiom states the following. Consider matrices of cells as on the left, where each row $(\theta_{i1}, \dotsc, \theta_{im})$ forms a composable path of cells in $\J_i$; remember that $\xi$ consists of cells $\cell{\xi(\theta_{1k}, \dotsc, \theta_{nk})}{FJ_{0k}}{GJ_{nk}}$ in $\K$, one for each column $(\theta_{1k}, \dotsc, \theta_{nk})$. The $T$-axiom for $\xi$ means that the diagram on the bottom right commutes, for each matrix $(\theta_{ik})$; in that case we call $\xi$ a \emph{double transformation}.
		\begin{displaymath}

		\end{displaymath}
	\end{example}
	
	\begin{proposition} \label{(fc, bl)-multicategory of horizontal T-morphisms}
		Let $T$ be a monad on an augmented virtual double category $\K$ and let `weak' mean either `lax' or `colax'. The structure on $\K$ lifts to make lax $T$-algebras, weak vertical $T$-morphisms, horizontal $T$-morphisms and $T$-cells into an augmented virtual double category $\wAlg\lax wT$, such that $V(\wAlg\lax wT) = \wAlg\lax w{V(T)}$.
	\end{proposition}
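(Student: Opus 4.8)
The plan is to exhibit $\wAlg\lax wT$ by specifying its data and checking the axioms of \defref{(fc, bl)-multicategory}, where throughout `weak' means `lax' in the main case and `colax' by a dual argument. The objects are lax $T$-algebras; the vertical morphisms are weak vertical $T$-morphisms, composed as in the $2$-category $\wAlg\lax w{V(T)}$; the horizontal morphisms are horizontal $T$-morphisms (\defref{horizontal T-morphism}); and the cells are $T$-cells (\defref{T-cell}). All compositions and identity cells are \emph{inherited} from $\K$: the vertical composite $\psi \of (\phi_1, \dotsc, \phi_n)$ of $T$-cells is the corresponding composite of their underlying cells in $\K$, and the horizontal and vertical identity cells $\id_J$ and $\id_f$ are those of $\K$. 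Since a $T$-cell is nothing but a cell in $\K$ satisfying an additional identity, once we know that these inherited data are again $T$-cells the associativity and unit axioms of \defref{(fc, bl)-multicategory} hold automatically, being inherited from $\K$, and likewise $\id_h \of (\id_f) = \id_{h \of f}$.

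That the identity cells are $T$-cells is the easy part: in each of the two cases the $T$-cell axiom reduces, using that the strict functor $T$ preserves identity cells (\propref{2-category of (fc, bl)-multicategories}) together with the unit axioms of $\K$, to a trivial identity; note that the coherence axioms of the structure cell $\bar f$ of a weak vertical $T$-morphism $f$ are \emph{not} needed here, as they are already built into $f$ being a $V(T)$-morphism (\defref{lax algebra}). One also has to observe that the underlying cell of a composite of $T$-cells has the correct vertical source and target, which holds because weak vertical $T$-morphisms compose, and the correct horizontal source and target, which is clear.

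The substance of the proof --- and the step I expect to be the main obstacle --- is the verification that the vertical composite $\psi \of (\phi_1, \dotsc, \phi_n)$ of $T$-cells again satisfies the $T$-cell axiom. This is a pasting argument: one begins with the left-hand side of the $T$-cell axiom for $\psi \of (\phi_1, \dotsc, \phi_n)$, rewrites $T\bigpars{\psi \of (\phi_1, \dotsc, \phi_n)}$ as $T\psi \of (T\phi_1, \dotsc, T\phi_n)$ since $T$ is a strict functor, applies in turn the $T$-cell axiom of $\psi$ and then of each $\phi_i$, and finally uses the naturality of $\mu$ and $\iota$ together with the associativity and unit axioms for the structure cells of the horizontal $T$-morphisms appearing in the horizontal source (\defref{horizontal T-morphism}); reassembling yields the right-hand side. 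The real difficulty is purely combinatorial: in a hypervirtual double category $\psi$ may be unary or nullary, and each $\phi_i$ may be unary or nullary, so one has to keep careful track of where the structure cells $\bar J_i$ (and, when a $\phi_i$ or $\psi$ is nullary, the structure cells of the relevant $T$-algebras, exactly as in the composition rule for $\Mod(\K)$) should be inserted. I would organise this by first treating the case that $\psi$ and all $\phi_i$ are unary, and then obtaining the nullary cases by collapsing the relevant horizontal targets to objects and replacing the corresponding structure cells by identities, following the conventions in \defref{T-cell}. The colax case is identical after reversing all structure cells, or formally by replacing $T$ by $\co T$ on $\co\K$.

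It remains to check the identity $V(\wAlg\lax wT) = \wAlg\lax w{V(T)}$. A $(0,0)$-ary cell of $\wAlg\lax wT$ is, by \defref{T-cell} specialised to the vertical case, precisely a vertical cell of $V(\K)$ between weak vertical $T$-morphisms whose defining identity is exactly the $V(T)$-cell axiom of \defref{lax algebra}; moreover both compositions $\of$ and $\hc$ of such vertical cells are computed in $\K$, hence coincide with the corresponding compositions in $\wAlg\lax w{V(T)}$. This gives the asserted equality of $2$-categories and completes the plan.
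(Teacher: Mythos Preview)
Your overall plan matches the paper's: lift all data from $\K$, observe that the identity cells are $T$-cells, and verify that vertical composites of $T$-cells are $T$-cells by first applying the $T$-cell axiom for $\psi$ and then that of each $\phi_i$; the hypervirtual double category axioms then follow from those of $\K$, and the identification $V(\wAlg\lax wT) = \wAlg\lax w{V(T)}$ is as you say.

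There is, however, a misidentification of the auxiliary facts needed in the reassembling step. The $T$-cell axiom of \defref{T-cell} does not involve $\mu$ or $\iota$ at all, so neither their naturality nor the associativity and unit axioms for the structure cells $\bar J_i$ of \defref{horizontal T-morphism} (which govern precisely how $\bar J$ interacts with $\mu$ and $\iota$) play any role here. What actually does the work, after one has applied the $T$-cell axiom for $\psi$ and then for each $\phi_i$, is the \emph{interchange axioms} of \lemref{horizontal composition}: the individual $T$-cell axioms leave the structure cells $\bar f_i$ of the intermediate vertical $T$-morphisms and the $\bar J_{ij}$ arranged diagonally, and interchange is what reorganises this pasting into the form required by the right-hand side of the $T$-cell axiom for $\psi \of (\phi_1,\dotsc,\phi_n)$. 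The paper records exactly this: the second identity in its schematic computation ``follows from the $T$-cell axioms for $\phi_1, \dotsc, \phi_n$ and the interchange axioms''. If you attempt the pasting concretely you will see that $\mu$ and $\iota$ never appear.

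A minor remark: because \defref{T-cell} already absorbs the nullary cases via the conventions $\ul{\bar K} = \id_c$ and $(\bar J_1,\dotsc,\bar J_n) = \id_{a_0}$, the argument is uniform in the arities and the paper does not separate cases; your plan to treat them by specialisation is harmless but unnecessary.
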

	Analogous to the notation for $2$-monads we set $\wAlg\clx\lax T \dfn \co{\bigpars{\wAlg\lax\clx{\co T}}}$ and $\wAlg\clx\clx T \dfn \co{\bigpars{\wAlg\lax\lax{\co T}}}$. For $v \in \set{\clx, \lax}$ we write $\wAlg v\psd T$ for the sub"/augmented virtual double category of $\wAlg v\lax T$ (or, equivalently, $\wAlg v\clx T$) obtained by restricting to pseudo vertical $T$-morphisms. By $\lbcwAlg vT \subset \wAlg v\psd T$ we denote the sub-augmented virtual double category obtained by further restricting to horizontal $T$-morphisms that satisfy the left Beck-Chevalley condition (see \defref{horizontal T-morphism}).
	
	\begin{proof}
		We treat the case of lax morphisms; that of colax morphisms is similar. The structure on $\wAlg\lax\lax T$ is completely determined by the requirement that it is lifted from $\K$, and that it reduces to $\wAlg\lax\lax{V(T)}$. In particular the structure cell of a vertical composite $h \of f$ is given by $\overline{h \of f} \dfn (\bar h \of Tf) \hc (h \of \bar f)$, as usual. Checking that this structure on $\wAlg\lax\lax T$ is well-defined is straightforward. Indeed that the composite $\psi \of (\phi_1, \dotsc, \phi_n)$ of $T$-cells, as in \eqref{vertical composite}, is again a $T$-cell is shown by the equality (drawn schematically, leaving out all details except the shape of cells)
		\begin{align*}
			&
,
		\end{align*}
		where the first identity follows from the fact that $T$ preserves composites and the $T$-cell axiom for $\psi$, and the second from the $T$-cell axioms for $\phi_1, \dotsc, \phi_n$ and the interchange axioms (\lemref{horizontal composition}). This leaves checking that the identity cells of $\K$ form $T$-cells, which clearly is the case.
	\end{proof}
	
	The following example recovers, for a strict $2$-monad $T$, the strict double category of $T$-algebras, lax and colax $T$-morphisms, and the appropriate notion of cell between those, that is considered in Example 4.8 of \cite{Shulman11}.
	\begin{example} \label{T-quintets}
		Let $T = (T, \mu, \iota)$ be a strict $2$-monad on a $2$-category $\mathcal C$. Its image under the strict $2$-functor $\map Q\twoCat\AugVirtDblCat$ of \propref{2-functor Q} forms a monad on the strict double category $Q(\mathcal C)$ of quintets in $\mathcal C$ (see \defref{quintets} and \exref{strict double category of quintets}). Since $V \of Q = \id$, $Q(T)$"/algebras are simply $T$"/algebras, while the corresponding notions of vertical morphism coincide as well. Moreover horizontal $Q(T)$"/morphisms are precisely colax $T$"/morphisms; in fact we have $\wAlg v\clx{Q(T)} = Q(\wAlg v\clx T)$, for each $v \in \set{\clx, \lax, \psd}$, while $\wAlg v\lax{Q(T)}$ has as cells quintets $\phi$ in $\mathcal C$ as on the left below, where $\map fAC$ and $\map gBD$ are lax $T$"/morphisms and $\map jAB$ and $\map kCD$ are colax $T$"/morphisms, such that the identity on the right is satisfied. Quintets like these are called `generalised $T$-transformations' in \cite{Shulman11}; we will call them \emph{$T$-quintets}.
		\begin{displaymath}

		\end{displaymath}
		
		Horizontally dual $\wAlg v\lax{\co Q(T)} = \co Q\bigpars{\wAlg v\lax T}$ while $\wAlg v\clx{\co Q(T)}$ consists of weak $T$-algebras and both colax $T$-morphisms and lax $T$-morphisms as vertical and horizontal morphisms respectively.
	\end{example}
	
	\begin{example}
		If in the previous example $T$ is the restriction of `free strict double category'-monad, as in \exref{lax double categories}, to the $2$-category of $\Gg_1$-indexed categories, functors and transformations, then a $T$-quintet $\cell\xi{K \of F}{G \of H}$, where $F$ and $G$ are lax double functors and $H$ and $K$ are colax ones, is given by a $\Gg_1$-indexed transformation $\cell\xi{K \of F}{G \of J}$ that makes the diagrams
		\begin{displaymath}
			\begin{tikzpicture}[baseline]
				\matrix(m)[math35, column sep={12em,between origins}]
					{ K(FJ_1 \hc \dotsb \hc FJ_n) & (KFJ_1 \hc \dotsb \hc KFJ_n) \\
						KF(J_1 \hc \dotsb \hc J_n) & (GHJ_1 \hc \dotsb \hc GHJ_n) \\
						GH(J_1 \hc \dotsb \hc J_n) & G(HJ_1 \hc \dotsb \hc HJ_n) \\ };
				\path	(m-1-1) edge[cell] node[above] {$K_\hc$} (m-1-2)
											edge[cell] node[left] {$KF_\hc$} (m-2-1)
							(m-1-2) edge[cell] node[right] {$(\xi_{J_1} \hc \dotsb \hc \xi_{J_n})$} (m-2-2)
							(m-2-1) edge[cell] node[left] {$\xi_{(J_1 \hc \dotsb \hc J_n)}$} (m-3-1)
							(m-2-2) edge[cell] node[right] {$G_\hc$} (m-3-2)
							(m-3-1) edge[cell] node[below] {$GH_\hc$} (m-3-2);
			\end{tikzpicture}
		\end{displaymath}
		commute, for each path $\ul J$ of horizontal morphisms in the source of $F$ and $H$. We will call these quintets \emph{double quintets}.
		
		The (biased variant of the) notion of double quintet was introduced by Grandis and Paré in Section~2.2 of \cite{Grandis-Pare04}, without name. In fact, the sub-strict double category of $\wAlg\lax\lax{Q(T)}$ consisting of all pseudo double categories coincides with (the biased variant of) the strict double category $\mathbb D\text{bl}$ considered there, which forms an important object of study in the subsequent \cite{Grandis-Pare08} and \cite{Grandis-Pare07}. We remark that, unfortunately, when applied to $\mathbb D\text{bl}$ our notion of pointwise right Kan extension (horizontally dual to \defref{pointwise left Kan extension}) is not the right one. Indeed the right notion, which generalises those of e.g.\ restriction and tabulation in a pseudo double category (see \cite{Grandis-Pare07}), defines the pointwise right Kan extension of a lax double functor $\map DAM$ along a colax double functor $\map JAB$ as a \emph{normal} lax double functor $\map RBM$ (i.e.\ one that preserves horizontal units strictly), equipped with a universal double quintet $\nat\eps {R \of J}D$. I do not know how to (smoothly) reconcile these notions.
	\end{example}
	
	\subsection{Representable horizontal \texorpdfstring{$T$}{T}-morphisms} \label{representable horizontal T-morphisms section}
	Let $T$ be a monad on an augmented virtual double category $\K$. In this section we will characterise representable horizontal $T$-morphisms (\defref{horizontal T-morphism}) in terms of colax $T$"/morphisms. This characterisation is a consequence of that of \thmref{lower star}, which characterises representable horizontal morphisms in $\K$.
	
	In detail, we will characterise the full sub-augmented virtual double categories
	\begin{displaymath}
		\wAlg\lax\lax{\Rep(T)} \subseteq \wAlg\lax\lax T \supseteq \wAlg\lax\lax{\opRep(T)}
	\end{displaymath}
	generated by (op-)representable horizontal $T$-morphisms, where $\Rep$ and $\opRep$ denote the strict $2$-endofunctors on $\AugVirtDblCat$ of \propref{2-functor Rep}, that restrict to (op-)representable horizontal morphisms. To do so recall from \thmref{lower star} the sub-$2$-endofunctor $(Q \of V)_* \subseteq Q \of V$ on $\AugVirtDblCat$, that maps $\K$ to the sub"/augmented virtual double category $(Q \of V)_*(\K) \subseteq (Q \of V)(\K)$ consisting of all objects, all vertical morphisms, those horizontal morphisms $\hmap jAB$ that admit companions in $\K$, and all quintets between them. Horizontally dual, the sub"/$2$"/endofunctor $(\co Q \of V)^* \subseteq (\co Q \of V)$ restricts to horizontal morphisms $\hmap jAB$ that admit conjoints. We will show that the sub-augmented virtual double categories of $\wAlg\lax\lax T$ above are equivalent to augmented virtual double categories $\wAlg\lax\lax{(Q \of V)_*(T)}$ and $\wAlg\lax\lax{(\co Q \of V)^*(T)}$ of $T$-quintets respectively; see \exref{T-quintets}.
	
	\begin{theorem} \label{representable horizontal T-morphisms}
		Let $T$ be a monad on an augmented virtual double category $\K$ and let $v, w \in \set{\clx, \lax, \psd}$. Choosing, for each $\hmap jAB$ in $(Q \of V)_*(\K)$, a cartesian cell $\eps_j$ that defines the companion $j_*$ in $\K$ induces an equivalence
		\begin{displaymath}
			(\dash)_*^T\colon\wAlg vw{(Q \of V)_*(T)}\simeq\wAlg vw{\Rep(T)}
		\end{displaymath}
		of augmented virtual double categories. While it restricts to the identity on weak $T$"/algebras and weak $T$"/morphisms, $(\dash)_*^T$ maps each colax $T$"/morphism $\map{(j, \bar j)}AB$ to the horizontal $T$"/morphism $\hmap{(j_*, \bar{j_*})}AB$, where $\bar{j_*}$ is the unique factorisation in
		\begin{displaymath}
			\begin{tikzpicture}[textbaseline]
				\matrix(m)[math35, column sep={1.75em,between origins}]{& TA & & TB \\ A & & TB \\ & B & \\};
				\path[map]	(m-1-2) edge[barred] node[above] {$Tj_*$} (m-1-4)
														edge[transform canvas={xshift=-1pt}] node[left] {$Tj$} (m-2-3)
														edge[bend right = 18] node[left] {$a$} (m-2-1)
										(m-2-1) edge[bend right = 18] node[left] {$j$} (m-3-2)
										(m-2-3) edge[bend left = 18] node[right] {$b$} (m-3-2);
				\path				(m-1-2) edge[cell, transform canvas={yshift=-1.625em}] node[right] {$\bar j$} (m-2-2)
										(m-1-4) edge[eq, transform canvas={xshift=2pt}] (m-2-3)
										(m-1-3) edge[cell, transform canvas={shift={(-0.625em,0.25em)}}] node[right] {$T\eps_j$} (m-2-3);
			\end{tikzpicture} = \begin{tikzpicture}[textbaseline]
  			\matrix(m)[math35, column sep={1.75em,between origins}]{TA & & TB \\ A & & B \\ & B. & \\};
  			\path[map]	(m-1-1) edge[barred] node[above] {$Tj_*$} (m-1-3)
  													edge node[left] {$a$} (m-2-1)	
  									(m-1-3) edge node[right] {$b$} (m-2-3)
  									(m-2-1) edge[barred] node[below] {$j_*$} (m-2-3)
  													edge[transform canvas={xshift=-1pt}] node[left] {$j$} (m-3-2);
  			\path				(m-1-1) edge[cell, transform canvas={xshift=1.75em}] node[right] {$\bar{j_*}$} (m-2-1)
  									(m-2-3) edge[eq, transform canvas={xshift=1pt}] (m-3-2)
  									(m-2-2) edge[cell, transform canvas={xshift=-0.375em}] node[right] {$\eps_j$} (m-3-2);
  		\end{tikzpicture}
		\end{displaymath}
		The assignment $\bar j \mapsto \bar{j_*}$ forms a bijective correspondence between colax $T$"/morphism structures on $\map jAB$ and horizontal $T$"/morphism structures on $\hmap{j_*}AB$.
			
		Horizontally dual, choosing cartesian cells that define conjoints induces an equivalence $\wAlg vw{(\co Q \of V)^*(T)} \simeq \wAlg vw{\opRep(T)}$ while, for each $\map jBA$, lax $T$-morphism structures on $j$ correspond bijectively to horizontal $T$-morphism structures on $j_*$.
	\end{theorem}
	\begin{proof}
		By \thmref{lower star} a choice of companions $j_*$, for each horzontal morphism $\hmap jAB$ in $(Q \of V)_*(\K)$, induces an equivalence $(\dash)_*\colon (Q \of V)_*(\K) \simeq \Rep(\K)$ that restricts to the identity on objects and vertical morphisms, while it maps $\hmap jAB$ to its companion $\hmap{j_*}AB$. Define $\map{(\dash)_*^T}{\wAlg vw{(Q \of V)_*(T)}}{\wAlg vw{\Rep(T)}}$ on objects and morphisms as in the statement. To check that, for each colax $T$-morphism $\map{(j, \bar j)}AB$, its image $(j_*, \bar{j_*})$ satisfies the coherence axioms of \defref{horizontal T-morphism}, consider the invertible horizontal cell $\gamma_j\colon Tj_* \iso (Tj)_*$ that is the factorisation of $T\eps_j$ through $\eps_{Tj}$; here we use that $T$ preserves $\eps_j$ by \corref{functors preserve companions and conjoints}. Comparing the factorisation above with the action \eqref{lower star on quintets} of $(\dash)_*$ on quintets in $\K$, it follows that $\bar{j_*} = (\bar j)_* \of \gamma_j$. Similarly, by postcomposing the coherence axioms for $\bar{j_*}$ with $\eps_j$, we find that they coincide with the $(\dash)_*$ images of the coherence axioms for $\bar j$, followed by precomposition with $T\gamma_j$ in the case of the associativity axiom. We conclude that the coherence axioms for $\bar{j_*}$ are induced by those for $\bar j$. Even better, because $(\dash)_*$ is full and faithful, it follows that the assignment $\bar j \mapsto \bar{j_*}$ gives a bijection between the colax $T$-morphism structures on $j$ and the horizontal $T$-morphism structures on $j_*$, as asserted.
		
		Next, on $T$-quintets we let $(\dash)_*^T$ act simply by $\phi \mapsto \phi_*$, as in \eqref{lower star on quintets}. Similar to the argument above, postcomposing the $T$-cell axiom for $\phi_*$ with $\eps_k$, shows that it coincides with the $(\dash)_*$-image of the $T$-quintet axiom for $\phi$ after precomposing it with the invertible horizontal cells $\gamma_{j_1}, \dotsc, \gamma_{j_n}$. We conclude that the $T$-quintet axiom for $\phi$ and the $T$-cell axiom for $\phi_*$ are equivalent, showing that the assignment $\phi \mapsto \phi_*$ is well-defined. As before, combining this with the fact that $(\dash)_*$ is full and faithful we conclude that $\map{(\dash)_*^T}{\wAlg vw{(Q \of V)_*(T)}}{\wAlg vw{\Rep(T)}}$, whose definition is now complete, is again full and faithful.
		
		To prove that $(\dash)_*^T$ is an equivalence it remains to show that it is essentially surjective, by \propref{equivalences}. To see this consider any horizontal $T$-morphism $\hmap{(J, \bar J)}AB$, with $J$ represented by $\map jAB$. By factoring $\eps_j$ through the cartesian cell that defines $J$ as a companion of $j$ we obtain a horizontal isomorphism $\delta\colon j_* \iso J$. Clearly the composite $\bar{j_*} \dfn \brks{Tj_* \xRar{T\delta} TJ \xRar{\bar J} J \xRar{\inv\delta} j_*}$ forms a horizontal $T$-morphism structure on $j_*$ so that, as we have already seen, it is the image of $(j, \bar j)$ under $(\dash)_*^T$ for some colax $T$-morphism structure cell $\bar j$. Finally, by definition of $\bar{j_*}$, the cell $\delta$ forms a horizontal $T$-cell $(j_*, \bar{j_*}) \iso (J, \bar J)$, showing that $(\dash)_*^T$ is essentially surjective. This completes the proof.
	\end{proof}
	\begin{remark}
		We remark that the theorem can be proved by formal means as well, as follows. First show that the assignment \mbox{$T \mapsto \wAlg\lax\lax T$} extends to a strict $2$"/functor $\map{\wAlg\lax\lax{(\dash)}}\Mnd\AugVirtDblCat$, where $\Mnd$ denotes the $2$"/category of monads on augmented virtual double categories, the lax morphisms between them and the cells between those, analogous to the corresponding notions for monads on $2$"/categories; see \cite{Street72} or Section~6.1 of \cite{Leinster04}. The pseudonaturality of \mbox{$(\dash)_*\colon (Q \of V)_*(\K) \simeq \Rep(\K)$} (\thmref{lower star}) with respect to $T$ can then be used to lift $(\dash)_*$ to an equivalence $(Q \of V)_*(T) \simeq \Rep(T)$ in $\Mnd$; apply $\wAlg\lax\lax{(\dash)}$ to obtain $\wAlg\lax\lax{(Q \of V)_*(T)} \simeq \wAlg\lax\lax{\Rep(T)}$.
	\end{remark}
	
	\section{Creativity of the forgetful functors \texorpdfstring{$\map U{\Alg T}\K$}{U: T-Alg → K}}\label{creativity section}
	The classical notion of the creation of limits by ordinary functors can be generalised to the creation of ``universal constructions'' by functors between augmented virtual double categories, where by universal constructions we mean restrictions, horizontal composites, Kan extensions, et cetera. In this section, after making this notion precise, we describe the creation of universal constructions by the forgetful functors $\map U{\wAlg vwT}\K$, where $T$ is a monad on an augmented virtual double category $\K$ and $v, w \in \set{\clx, \lax, \psd}$.
	
	In any augmented virtual double category $\K$ consider a ``formal universal construction'' $\mathbf U$, whether it exists or not, that is defined by a universal cell. For example $\mathbf U$ can be one of
	\begin{enumerate}[label=-]
		\item ``the restriction of $\hmap{\ul K}CD$ along $\map fAC$ and $\map gBD$'';
		\item ``the horizontal composite of $A_0 \xbrar{J_1} A_1$, \dots, $A_{n'} \xbrar{J_n} A_n$'';
		\item ``the left Kan extension of $\map d{A_0}M$ along $A_0 \xbrar{J_1} A_1$, \dots, $A_{n'} \xbrar{J_n} A_n$''.
	\end{enumerate}
	Hence, for any universal construction $\mathbf U$ in $\K$ as above, as well as a cell $\phi$ in $\K$, we have a proposition ``$\phi$ defines $\mathbf U$ in $\K$'' that either holds or fails. Furthermore given any functor $\map F\K\L$, we can assign to each formal universal construction $\mathbf U$ in $\K$ its ``image'' $F\brks{\mathbf U}$, by replacing each reference to a $\K$-morphism in $\mathbf U$ by its image under $F$. For example, the image of the first universal property above is ``the restriction of $\hmap{F\ul K}{FC}{FD}$ along $\map{Ff}{FA}{FC}$ and $\map{Fg}{FB}{FD}$''. The following definition is a direct translation of the classical notion of limit-creating functor, see e.g.\ Section~V.1 of \cite{MacLane98}.
	\begin{definition} \label{creation}
		Let $\map F\K\L$ be a functor of augmented virtual double categories, and consider a formal universal construction $\mathbf U$ in $\K$. We say that $F$ \emph{creates $\mathbf U$} if for any cell $\psi$ that defines $F\brks{\mathbf U}$ in $\L$ the following holds:
		\begin{enumerate}[label=-]
			\item there exists a unique cell $\phi$ in $\K$ such that $F\phi = \psi$;
			\item $\phi$ defines $\mathbf U$ in $\K$.
		\end{enumerate}
	\end{definition}
	We remark that creation of $\mathbf U$ by $F$ implies that $F$ \emph{reflects} $\mathbf U$, that is for all cells $\phi$ in $\K$ we have: if $F\phi$ defines $F\brks{\mathbf U}$ in $\L$ then $\phi$ defines $\mathbf U$ in $\K$. Notice that creation of $\mathbf U$ by $F$, together with the existence of $F\brks{\mathbf U}$ in $\L$, implies  \emph{preservation} of $\mathbf U$ by $F$: if a cell $\phi$ defines $\mathbf U$ in $\K$ then $F\phi$ defines $F\brks{\mathbf U}$ in $\L$.
	
	\subsection{Restrictions}
	We start with the creation of restrictions in $\wAlg vwT$. The following result generalises Proposition 4.1 of \cite{Koudenburg15a}.
	\begin{proposition} \label{creating restrictions}
		For a monad $T$ on an augmented virtual double category $\K$ the following hold for each $v \in \set{\clx, \lax, \psd}$:
		\begin{enumerate}[label=\textup{(\alph*)}]
			\item $\map U{\wAlg v\clx T}\K$ creates restrictions $\ul K(f, g)$ where $g$ is a pseudo $T$"/morphism;
			\item $\map U{\wAlg v\lax T}\K$ creates restrictions $\ul K(f, g)$ where $f$ is a pseudo $T$"/morphism;
			\item $\map U{\wAlg v\psd T}\K$ creates all restrictions.
		\end{enumerate}
	\end{proposition}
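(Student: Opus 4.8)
The plan is to prove all four parts in one stroke. Since a restriction $\ul K(f, g)$ is by definition the horizontal source of a cartesian cell, \defref{creation} tells us what must be shown: given a cartesian cell $\cell\psi{\ul K(f, g)}{\ul K}$ in $\K$ with $\ul K$ a horizontal $T$-morphism and with $f$, $g$ vertical $T$-morphisms of the coherence types pertinent to the part at hand, the horizontal morphism $\ul K(f, g)$ should carry a \emph{unique} horizontal $T$-morphism structure making $\psi$ a $T$-cell, and with that structure $\psi$ should be cartesian in the relevant hypervirtual double category of $T$-algebras. I would first record that the construction below uses only the structure cells $\bar{\ul K}$, $\bar f$ and $\bar g$ together with the monad data $(\mu, \iota)$, never the associators or unitors of the algebras involved, so that the algebra-coherence parameter $v$ is a fixed but arbitrary spectator throughout.

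Next I would build the structure cell $\overline{\ul K(f, g)}$. Writing $a$, $b$, $c$, $d$ for the structure maps of the source and target objects of $\ul K(f, g)$ and of $\ul K$, it suffices by the universal property of $\psi$ to produce a cell $\cell{}{T\ul K(f, g)}{\ul K}$ with vertical source $f \of a$ and vertical target $g \of b$; then $\overline{\ul K(f, g)}$ is its unique factorisation through $\psi$. Pasting $T\psi$ onto $\bar{\ul K}$ (read as $\id_c$ when $\ul K$ has length zero) gives such a cell, but with vertical source $c \of Tf$ and vertical target $d \of Tg$, and I would repair the two edges by whiskering with $\bar f$ on the source side and with $\bar g$ on the target side. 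The source side needs $\bar f$ to run from $f \of a$ to $c \of Tf$: this holds on the nose when $f$ is colax (part (a)), after inverting when $f$ is pseudo (parts (b), (c)), and vacuously when $f = \id$ (part (d), where $c \of Tf = c = f \of a$). Dually the target side needs $\bar g$ to run from $d \of Tg$ to $g \of b$: this holds on the nose when $g$ is lax, and after inverting when $g$ is pseudo --- which is exactly the extra hypothesis on $g$ in part (a), and automatic in parts (c), (d). Checking that $\overline{\ul K(f, g)}$ satisfies the associativity and unit axioms of \defref{horizontal T-morphism} is then a diagram chase: postcomposing each axiom with $\psi$ and expanding reduces it to the corresponding axioms for $\bar{\ul K}$, the $T$-morphism axioms relating $\bar f$, $\bar g$ to $\bar a$, $\bar b$, $\mu$ and $\iota$, the naturality of $\mu$, $\iota$ and the functoriality of $T$, whereupon one concludes by uniqueness of factorisations through $\psi$. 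For uniqueness of the lift I would note that the $T$-cell axiom for $\psi$ has the shape $\psi \of \overline{\ul K(f, g)} \hc \bar g = \bar f \hc (\bar{\ul K} \of T\psi)$ (and its lax mirror image), in which the edge-repairing cell on the pseudo side is invertible, so that this equation pins down $\psi \of \overline{\ul K(f, g)}$ and hence, by cartesianness of $\psi$, the structure cell itself.

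Then I would check that $\psi$ is cartesian in the $T$-algebra setting. Given a $T$-cell $\chi$ with horizontal target $\ul K$ of the appropriate shape, its underlying cell in $\K$ factors uniquely through $\psi$ as some $\phi$, and the task is to see that $\phi$ is again a $T$-cell. Postcomposing the desired $T$-cell axiom for $\phi$ with $\psi$ and invoking the $T$-cell axiom for $\psi$, the interchange axioms of \lemref{horizontal composition}, and the $T$-cell axiom for $\chi = \psi \of \phi$, the two sides become equal; since postcomposition with the cartesian cell $\psi$ is injective on cells with horizontal target $\ul K(f, g)$, the axiom for $\phi$ follows. This settles parts (a)--(c).

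For part (d) there is one extra thing to verify: that the lift $\overline{K(\id, g)}$, which exists by the above with $f = \id$ and $g$ pseudo (as $\lbcwAlg vT \subseteq \wAlg v\psd T$), satisfies the left Beck-Chevalley condition of \defref{left exact}, so that it lives in $\lbcwAlg vT$. The hypothesis that $K(\id, g)$ is preserved by $T$ means that $T$ of the cartesian cell defining $K(\id, g)$ is cartesian, whence $T\bigpars{K(\id, g)}$ is canonically the restriction $(TK)(\id, Tg)$. Unwinding the definition of $\overline{K(\id, g)}$, it decomposes --- up to the cartesian cells relating the various iterated restrictions and up to the invertible $\bar g$ --- as $T$ of a cartesian cell followed by the factorisation $\bar K'$ of $\bar K$ through the restriction of $K$ along its vertical target. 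Since the Beck-Chevalley condition for $K$ says precisely that $\bar K'$ is right pointwise cocartesian, applying the pasting lemma for cartesian cells (\lemref{pasting lemma for cartesian cells}) and that for right pointwise cocartesian paths (\lemref{pasting lemma for right pointwise cocartesian paths}) to this decomposition lets one read off that the corresponding factorisation of $\overline{K(\id, g)}$ is right pointwise cocartesian, i.e.\ that $\overline{K(\id, g)}$ satisfies the left Beck-Chevalley condition. The main obstacle throughout is bookkeeping: keeping the whiskering directions and the lax/colax/pseudo distinctions consistent in the construction and the coherence chase, and, for part (d), matching up the iterated restrictions of $K(\id, g)$ with those appearing in the Beck-Chevalley condition for $K$ precisely enough for the pasting lemmas to bite.
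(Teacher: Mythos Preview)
Your proposal is correct and follows essentially the same approach as the paper's sketch: the paper too constructs the structure cell on $\ul K(f,g)$ as the unique factorisation through the cartesian cell $\psi$ of the pasting $\inv{\bar f} \hc (\ul{\bar K} \of T\psi) \hc \bar g$ (working out part~(b) with $v = \lax$ as the representative case and deferring the routine checks to Proposition~4.1 of \cite{Koudenburg15a}), and for part~(d) it exhibits $\bar J$ as a right pointwise cocartesian cell $\bar K''$ followed by a cartesian cell, via the chain of equalities that you summarise as ``unwinding the definition''. The only minor difference is that the paper appeals to \lemref{coherence of pointwise cocartesian paths} rather than the pasting lemma \lemref{pasting lemma for right pointwise cocartesian paths} at the final step of~(d), but these are interchangeable here.
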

	\begin{proof}[Sketch of the proof]
		We will sketch the proof of part (b) in the case $v = \lax$. Consider a pseudo $T$-morphism $\map fAC$, a lax vertical $T$-morphism $\map gBD$ and a path $\ul K$ of horizontal $T$-morphisms of length $\leq 1$, and assume given a cartesian cell $\phi$ in $\K$ as in the composite on the left-hand side below. We obtain a structure cell $\bar J$ on the horizontal source $\hmap JAB$ of $\phi$ by taking the unique factorisation in
		\begin{displaymath}
			\begin{tikzpicture}[textbaseline]
				\matrix(m)[math35, column sep={1.75em,between origins}]
					{ & TA & & & & TB & \\
						A & & TC & & TD & & B \\
						& C & & & & D & \\ };
				\path[map]	(m-1-2) edge[barred] node[above] {$TJ$} (m-1-6)
														edge node[left] {$a$} (m-2-1)
														edge node[right] {$Tf$} (m-2-3)
										(m-1-6) edge node[right, inner sep=2pt] {$Tg$} (m-2-5)
														edge node[right] {$b$} (m-2-7)
										(m-2-1) edge node[left] {$f$} (m-3-2)
										(m-2-3) edge[barred] node[below] {$T\ul K$} (m-2-5)
														edge node[right] {$c$} (m-3-2)
										(m-2-5) edge node[left] {$d$} (m-3-6)
										(m-2-7) edge node[right] {$g$} (m-3-6)
										(m-3-2) edge[barred] node[below] {$\ul K$} (m-3-6);
				\path				(m-1-2) edge[cell, transform canvas={shift={(-0.8em,-1.625em)}}] node[right] {$\inv{\bar f}$} (m-2-2)
										(m-1-4) edge[cell] node[right] {$T\phi$} (m-2-4)
										(m-2-4) edge[cell, transform canvas={yshift=-0.25em}] node[right] {$\ul{\bar K}$} (m-3-4)
										(m-1-6) edge[cell, transform canvas={yshift=-1.625em}] node[right] {$\bar g$} (m-2-6);
			\end{tikzpicture} = \begin{tikzpicture}[textbaseline]
    		\matrix(m)[math35]{TA & TB \\ A & B \\ C & D \\};
    		\path[map]  (m-1-1) edge[barred] node[above] {$TJ$} (m-1-2)
        		                edge node[left] {$a$} (m-2-1)
            		    (m-1-2) edge node[right] {$b$} (m-2-2)
            		    (m-2-1) edge[barred] node[below] {$J$} (m-2-2)
            		            edge node[left] {$f$} (m-3-1)
            		    (m-2-2) edge node[right] {$g$} (m-3-2)
            		    (m-3-1) edge[barred] node[below] {$\ul K$} (m-3-2);
    		\path[transform canvas={xshift=1.75em}]
        		        (m-1-1) edge[cell] node[right] {$\bar J$} (m-2-1)
        		        (m-2-1) edge[transform canvas={yshift=-0.25em}, cell] node[right] {$\phi$} (m-3-1);
  		\end{tikzpicture}
		\end{displaymath}
		where, in the left-hand side, $\inv{\bar f}$ denotes the inverse of the structure cell of $f$, while $\ul{\bar K} \dfn \bar K$ if $\ul K = (C \xbrar K D)$ and $\ul{\bar K} \dfn \id_c$ if $\ul K = (C)$. We claim that $\bar J$ forms a well-defined $T$-structure on $J$, that this structure is unique in making $\phi$ into a $T$-cell and that, thus a $T$-cell, $\phi$ is cartesian in $\wAlg\lax\lax T$. In checking these claims the proof of Proposition 4.1 of \cite{Koudenburg15a} applies almost verbatim, except that here $\ul K$ might be empty and that the factorisations $\ul H \Rar J$ through $\phi$ have paths $\ul H$ as horizontal source, rather than a single horizontal $T$-morphism $H$.
  \end{proof}		
  
  \begin{example}
		In $\enProf{(\V, \V')}$ the canonical monoidal structure on the restriction $K(f, g)$ of a monoidal $\V$-profunctor $\hmap KCD$ (\defref{monoidal V-profunctor}) along a monoidal $\V'$-functor $\map fAC$ and a lax monoidal $\V'$-functor $\map gBD$ is given by the $\V$-maps
		\begin{multline*}
			K(fx_1, gy_1) \tens' \dotsb \tens' K(fx_n, gy_n) \xrar{K_\oslash} K\bigpars{(fx_1 \oslash \dotsb \oslash fx_n), (gy_1 \oslash \dotsb \oslash gy_n)} \\
			\to K\bigpars{f(x_1 \oslash \dotsb \oslash x_n), g(y_1 \oslash \dotsb \oslash y_n)}
		\end{multline*}
		where the second map is given by the actions of the structure transformations $\inv f_\oslash$ and $g_\oslash$.
	\end{example}
  
  The following lemma shows the necessity of $f$ being a pseudo $T$-morphism for the creation of $f^*$ and $f_*$ by $\map U{\wAlg v\clx T}\K$ and $\map U{\wAlg v\lax T}\K$ respectively.
  \begin{lemma} \label{necessary condition for the creation of companions and conjoints}
    Let $T$ be a monad on an augmented virtual double category $\K$ and let \mbox{$\map fAC$} a lax $T$-morphism between weak $T$-algebras. If the companion of $f$ exists in $\K$ then $f_*$ is created by the forgetful functor $\map U{\wAlg v\lax T}\K$ precisely if $f$ is a pseudo $T$-morphism. An analogous result holds for the creation of conjoints of colax $T$-morphisms by the forgetful functor $\map U{\wAlg v\clx T}\K$.
  \end{lemma}
  \begin{proof}
    The `if'"/part follows immediately from \propref{creating restrictions} above. For the converse assume that $\map U{\wAlg v\lax T}\K$ creates $f_*$; that is, there exists a $T$-structure $\bar{f_*}$ on $f_*$, as in the composite below, that makes the cartesian cell defining $f_*$ into a cartesian $T$-cell. It follows that the corresponding cocartesian cell lifts to form a cocartesian $T$-cell as well.
    \begin{displaymath}
      \begin{tikzpicture}[textbaseline]
	    	\matrix(m)[math35, column sep={1.75em,between origins}]{& TA & \\ TA & & TC \\ A & & C \\ & C & \\};
  	  	\path[map]	(m-1-2) edge[transform canvas={xshift=3pt}] node[right] {$Tf$} (m-2-3)
  	  	            (m-2-1) edge[barred] node[below, inner sep=2pt] {$Tf_*$} (m-2-3)
                	  				edge node[left] {$a$} (m-3-1)
                	  (m-2-3) edge node[right] {$c$} (m-3-3)
  	  							(m-3-1) edge[barred] node[below, inner sep=2.5pt] {$f_*$} (m-3-3)
  	  											edge[transform canvas={xshift=-2pt}] node[left] {$f$} (m-4-2);
  	  	\path				(m-3-3)	edge[eq, transform canvas={xshift=1pt}] (m-4-2)
  	  							(m-2-2) edge[cell, transform canvas={yshift=-0.25em}] node[right] {$\bar{f_*}$} (m-3-2)
  	  							(m-1-2) edge[eq, transform canvas={xshift=-4pt}] (m-2-1);
  	  	\draw				([yshift=-0.5em]$(m-1-2)!0.5!(m-2-2)$) node[font=\scriptsize] {$T\!\cocart$}
  	  							([yshift=0.25em]$(m-3-2)!0.5!(m-4-2)$) node[font=\scriptsize] {$\cart$};
  		\end{tikzpicture}
    \end{displaymath}
    It is easily checked that the composite above forms the inverse for the structure cell $\bar f$ of $f$, by using the $T$-cell axioms for the cartesian and cocartesian cells above as well as the companion identities (\lemref{companion identities lemma}).
  \end{proof}
  
  Using \propref{creating restrictions} the following result describes the creation of restrictions $K(\id, g)$ in $\lbcwAlg vT$; recall that the latter has as horizontal morphisms the horizontal $T$-morphisms whose structure cells satisfy the left Beck-Chevalley condition (\defref{left exact}).
  \begin{proposition} \label{creating restrictions satisfying the left Beck-Chevalley condition}
    Let $T$ be a monad on an augmented virtual double category $\K$. The forgetful functors $\map U{\lbcwAlg vT}\K$, where $v \in \set{\clx, \lax, \psd}$, create all unary restrictions $K(\id, g)$ that are preserved by $T$.
  \end{proposition}
  \begin{proof}
	  First notice that the inclusion $\lbcwAlg vT \to \wAlg v\psd T$ reflects restrictions, and that $\map U{\wAlg v\psd T}\K$ creates the restriction $J \dfn K(\id, g)$ by \propref{creating restrictions}; as in the proof there we denote by $\cell\phi JK$ the defining cartesian cell. Thus it suffices to show that the structure cell $\bar J$ satisfies the left Beck-Chevalley condition whenever $\bar K$ does, provided that $g$ is a pseudo $T$-morphism and that $T\phi$ is again cartesian. To see this consider the following equation: in the first identity we factor $\bar K$ as a right pointwise cocartesian cell $\bar K'$ through $K(\id, d)$, as is possible by the Beck-Chevalley condition, while in the second identity the composite $\bar K' \of T\phi$ (where $T\phi$ is again cartesian) is factored through the restriction $H \dfn K(\id, d \of Tg)$; that the resulting cell $\bar K''$ is again right pointwise cocartesian follows from \lemref{coherence of pointwise cocartesian paths}.
		\begin{displaymath}

		\end{displaymath}
		Finally notice that the composite of the bottom two rows in the right-hand side above is cartesian, by the pasting lemma and the fact that $\bar g$ is invertible. Since, by the definition of $\bar J$ in the proof of \propref{creating restrictions}, each of the composites above factors through $\phi$ as the structure cell $\bar J$ we conclude, by factoring the right-hand side and using the pasting lemma again, that $\bar J$ coincides with $\bar K''$ composed with the cartesian cell that defines $H$ as the restriction of $J$ along $b$. Since $\bar K''$ is right pointwise cocartesian, this shows that $\bar J$ satisfies the left Beck-Chevalley condition, completing the proof.
	\end{proof}
	
	The following theorem describes the creation of adjunctions in $\wAlg v\lax T$. Its first assertion is Kelly's result on `doctrinal adjunction'; see Theorem~1.5 of \cite{Kelly74}. In Remark 5.25 of \cite{Shulman11} it is also observed that Kelly's result can be naturally described in the language of double categories, by using the so-called ``mate correspondence'' that is induced by companions.
  
  \begin{theorem} \label{creating adjunctions}
  	Let $T$ be a monad on an augmented virtual double category $\K$ and let $\map fAC$ be a lax $T$-morphism between weak $T$-algebras, where `weak' means either `colax', `lax' or `pseudo'. If $f$ admits both a companion $f_*$ and a right adjoint in $\K$ then $\map U{\wAlg v\lax T}\K$ creates the right adjoint of $f$ precisely if $f$ is a pseudo $T$"/morphism. In that case $\map U{\wAlg v\psd T}\K$ creates the right adjoint of $f$ precisely if the unique factorisation $\bar{f_*}$, in the right-hand side below, is left $\id_A$"/exact.
  	\begin{displaymath}
			\begin{tikzpicture}[textbaseline]
				\matrix(m)[math35, column sep={1.75em,between origins}]{& TA & & TC \\ A & & TC \\ & C & \\};
				\path[map]	(m-1-2) edge[barred] node[above] {$Tf_*$} (m-1-4)
														edge[transform canvas={xshift=-1pt}] node[left] {$Tf$} (m-2-3)
														edge[bend right = 18] node[left] {$a$} (m-2-1)
										(m-2-1) edge[bend right = 18] node[left] {$f$} (m-3-2)
										(m-2-3) edge[bend left = 18] node[right] {$c$} (m-3-2);
				\path				(m-1-2) edge[cell, transform canvas={shift={(-0.75em,-1.625em)}}] node[right] {$\inv{\bar f}$} (m-2-2)
										(m-1-4) edge[eq, transform canvas={xshift=2pt}] (m-2-3);
				\draw				([yshift=0.333em]$(m-1-3)!0.5!(m-2-3)$) node[font=\scriptsize] {$T\!\cart$};
			\end{tikzpicture} = \begin{tikzpicture}[textbaseline]
				\matrix(m)[math35, column sep={1.75em,between origins}]{TA & & TC \\ A & & C \\ & C & \\};
				\path[map]	(m-1-1) edge[barred] node[above] {$Tf_*$} (m-1-3)
														edge node[left] {$a$} (m-2-1)	
										(m-1-3) edge node[right] {$c$} (m-2-3)
										(m-2-1) edge[barred] node[below] {$f_*$} (m-2-3)
														edge[transform canvas={xshift=-1pt}] node[left] {$f$} (m-3-2);
				\path				(m-1-1) edge[cell, transform canvas={xshift=1.75em}] node[right] {$\bar{f_*}$} (m-2-1)
										(m-2-3) edge[eq, transform canvas={xshift=1pt}] (m-3-2);
				\draw				([yshift=0.25em]$(m-2-2)!0.5!(m-3-2)$) node[font=\scriptsize] {$\cart$};
 			\end{tikzpicture}
		\end{displaymath}
  \end{theorem}
  \begin{proof}
  	Let $\map gCA$ be the right adjoint of $f$ in $\K$, defined by unit and counit cells $\cell\eta{\id_A}{g \of f}$ and $\cell\eps{f \of g}{\id_C}$. Remember from \lemref{adjunctions} that the triangle identities for $\eta$ and $\eps$ are equivalent to the conjoint identities for the factorisations $\eta'$ and $\eps'$ in
  	\begin{displaymath}
  		\eta = \eta' \of \cocart \qquad \text{and} \qquad \eps = \cart \of \eps',
  	\end{displaymath}
  	where $\cart$ and $\cocart$ are the (co"/)cartesian cells defining $f_*$. For the `if'"/part of the first assertion, assume that $\map fAC$ is a pseudo $T$"/morphism. By \propref{creating restrictions}(b) the companion of $f$ is created by $\map U{\wAlg v\lax T}\K$ and, by applying \thmref{representable horizontal T-morphisms}, the resulting horizontal $T$-structure on $f_*$ induces a lax $T$"/morphism structure on $g$, making $\eta'$ into a $T$-cell. In fact $\eta'$ lifts to a cartesian $T$-cell, again by \propref{creating restrictions}, so that $\eps'$ lifts to form a $T$-cell as well: it is the unique factorisation of $\id_g$ through $\eta'$. We conclude that both $\eta$ and $\eps$ are composites of $T$-cells, and hence they are $T$-cells themselves.
  	
  	For the converse assume that $g$ admits the structure of a lax $T$"/morphism, such that $\eta$ and $\eps$ become $T$-cells. Applying \propref{creating restrictions} to $\eta'$, which defines $f_*$ as the conjoint of $g$, we obtain a horizontal $T$-morphism structure on $f_*$, making $\eta'$ into a cartesian $T$-cell. With respect to this structure $\eps'$, being the factorisation of $\id_g$ through $\eta'$, is a cocartesian $T$-cell; it follows that the (co"/)cartesian cells defining $f_*$ are $T$-cells as well. We conclude that $f_*$ is created by $\map U{\wAlg v\lax T}\K$, so that $f$ is a pseudo $T$"/morphism by \lemref{necessary condition for the creation of companions and conjoints}.
  	
  	For the final assertion assume that $f$ is a pseudo $T$"/morphism, so that the right adjoint $\map gCA$ of $f$ is created by $\map U{\wAlg v\lax T}\K$. Notice that the factorisation $\bar{f_*}$, as described in the statement, forms the structure cell making $f_*$ into a horizontal $T$-morphism. Applying \propref{adjunctions in terms of left Kan extensions}(c) to $Tf \ladj Tg$, combined with \propref{weak left Kan extensions along companions}, we find that $a \of T\eta'$ in the right"/hand side of
  	\begin{displaymath}
  		\eta' \of \bar{f_*} = (a \of T\eta') \hc \bar g,
  	\end{displaymath}
  	which is the $T$-cell axiom for $\eta'$, defines $a \of Tg$ as a left Kan extension. Because $\eta'$ itself defines $g$ as a left Kan extension too the final assertion follows. This concludes the proof.
  \end{proof}
	
	\subsection{Horizontal composites}
	The following proposition describes the creation of horizontal composites by the forgetful functors $\map U{\wAlg vwT}\K$. Remember that a horizontal $T$-morphism $\hmap{(J, \bar J)}AB$ is said to satisfy the left Beck-Chevalley condition as soon as its structure cell $\bar J$ does, in the sense of \defref{left exact}.
	\begin{proposition}
		Consider a monad $T$ on an augmented virtual double category $\K$ and let \mbox{$v, w \in \set{\clx, \lax, \psd}$}. A horizontal composite $(J_1 \hc \dotsb \hc J_n)$ of horizontal $T$-morphisms is created by $\map U{\wAlg vwT}\K$ whenever it is preserved by both $T$ and $T^2$.
		
		In the case $v = \lax$ or $\psd$ an analogous result holds for right pointwise horizontal composites $(J_1 \hc \dotsb \hc J_n)$ (\defref{pointwise cocartesian path}), provided that the restrictions $J_n(\id, f)$ and $(J_1 \hc \dotsb \hc J_n)(\id, f)$ exist in $\K$, for all $\map fB{A_n}$, and that they are preserved by $T$.
		
		Finally, if the horizontal $T$-morphisms $J_1, \dotsc, J_n$ satisfy the left Beck-Chevalley condition then, under the assumptions above, $\map U{\lbcwAlg vT}\K$ creates the horizontal composite $(J_1 \hc \dotsb \hc J_n)$ as well.
	\end{proposition}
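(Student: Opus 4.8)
The plan is to build on the preceding proposition on the creation of restrictions, \propref{creating restrictions}, together with the relationship established in \lemref{restrictions and composites} and \corref{extensions and composites} between horizontal composites and companions/conjoints. The key observation is that a horizontal composite $(J_1 \hc \dotsb \hc J_n)$ in $\K$ is defined by a cocartesian cell, and such cells interact well with functors of hypervirtual double categories: \corref{functors preserve horizontal units} and \corref{functors preserving cartesian cells} already tell us that the forgetful functor preserves such structure, so the only real work is the \emph{creation} direction, i.e.\ lifting the $T$-structure to the composite and checking that the lifted cell remains cocartesian in $\wAlg vwT$.

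First I would treat the case of an ordinary (non-pointwise) horizontal composite. Given a cocartesian cell $\cell\phi{(J_1, \dotsc, J_n)}K$ in $\K$, where each $J_i$ carries a horizontal $T$-morphism structure $\bar J_i$, I want to equip $K$ with a structure cell $\bar K$. Since $T$ preserves the composite, $T\phi$ exhibits $TK$ as the horizontal composite of $(TJ_1, \dotsc, TJ_n)$; hence the path $(a_0 \of \bar J_1, \dotsc, b \of \bar J_n)$ — more precisely, the composite of $(\bar J_1, \dotsc, \bar J_n)$ with the appropriate structure morphisms of the algebras — factors through $T\phi$ as a unique cell $\bar K$ with $\bar K \of T\phi = $ (that composite). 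The associativity axiom for $\bar K$ is then forced by the uniqueness of factorisations through $T\phi$ together with $T$ preserving the composite \emph{and} $T^2$ preserving it (so that $T^2\phi$ is cocartesian, which is needed to compare the two sides of the pentagon after whiskering by $T$); the unit axiom follows similarly from $T$ preserving the composite. Uniqueness of $\bar K$ making $\phi$ into a $T$-cell is immediate, and that $\phi$, now a $T$-cell, is cocartesian in $\wAlg vwT$ follows by precomposing any competitor factorisation with $\phi$ and using the $T$-cell axioms of the cells involved, exactly as in the proof of \propref{(fc, bl)-multicategory of horizontal T-morphisms}; this is the routine ``hands on'' computation I would not write out in full.

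For the right pointwise case I would reduce to the ordinary case via \defref{pointwise cocartesian path}: the hypothesis that the restrictions $J_n(\id, f)$ and $(J_1 \hc \dotsb \hc J_n)(\id, f)$ exist in $\K$ and are preserved by $T$, combined with \propref{creating restrictions}(b) (in the cases $v = \lax$ or $\psd$), ensures that these restrictions lift to $\wAlg vwT$; the factorisation \eqref{pointwise cocartesian factorisation} then lifts as well, since its defining cartesian cell is preserved and created by $U$, and one checks that the lifted path $(\phi_1, \dotsc, \phi_n')$ is cocartesian in $\wAlg vwT$ by applying the ordinary case just established at each stage. Finally, for $\lbcwAlg vT$, I would invoke the fact that the inclusion $\lbcwAlg vT \to \wAlg v\psd T$ reflects cocartesian cells, so it suffices to verify that the structure cell $\bar K$ constructed above satisfies the left Beck-Chevalley condition whenever each $\bar J_i$ does; this is a \lemref{coherence of pointwise cocartesian paths}-style argument — factor each $\bar J_i$ through the relevant restriction as a right pointwise cocartesian cell, paste using \lemref{pasting lemma for right pointwise cocartesian paths}, and conclude by uniqueness of factorisations that $\bar K$ inherits the same factorisation — entirely parallel to the proof of part (d) of \propref{creating restrictions}.

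The main obstacle I anticipate is bookkeeping the compatibility of the lifted structure cell with the \emph{iterated} multiplication: establishing associativity of $\bar K$ cleanly requires that $T^2\phi$ be cocartesian, and one must carefully track which of the two ways of forming the relevant composite of $\mu$'s and structure cells one is comparing. This is where the hypothesis ``$(J_1 \hc \dotsb \hc J_n)$ preserved by $T^2$'' is genuinely used, and it is the step most prone to sign-of-direction errors; everything else is a formal consequence of the pasting lemmas and the universal property of cocartesian cells.
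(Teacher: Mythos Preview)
Your proposal is essentially correct and follows the same route as the paper: define $\bar K$ as the unique factorisation of $\phi \of (\bar J_1, \dotsc, \bar J_n)$ through the cocartesian cell $T\phi$, verify associativity using that $T^2\phi$ is cocartesian and the unit axiom using that $\phi$ is cocartesian, then show the unique $\K$-factorisation $\psi'$ of any $T$-cell $\psi$ through $\phi$ is again a $T$-cell by cancelling $T\phi$ from the relevant identity; the pointwise and Beck--Chevalley cases are handled exactly as you describe.

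Two minor corrections. First, your opening references to \lemref{restrictions and composites} and \corref{extensions and composites} are red herrings: the argument is a direct factorisation through $T\phi$ and does not pass through companions or conjoints at all, so you should drop that framing. Second, the unit axiom for $\bar K$ is obtained by cancelling the cocartesian cell $\phi$ itself (not $T\phi$), and the step showing that the $\K$-factorisation $\psi'$ is a $T$-cell uses specifically that $T\phi$ is cocartesian---your reference to the proof of \propref{(fc, bl)-multicategory of horizontal T-morphisms} is not quite apt, since that proof concerns closure of $T$-cells under composition rather than under factorisation.
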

	\begin{proof}
		We consider the case $(v, w) = (\lax, \lax)$; the other cases are similar. Let $\phi$ be a cocartesian cell in $\K$, as in the left-hand side below, and assume that $T\phi$ is again cocartesian. It follows that the left-hand side factors uniquely through $T\phi$ as a cell $\bar K$, as shown.
		\begin{displaymath}
			\begin{tikzpicture}[textbaseline]
				\matrix(m)[math35]
					{ TA_0 & TA_1 &[1em] TA_{n'} & TA_n \\
						A_0 & A_1 & A_{n'} & A_n \\
						A_0 & & & A_n \\ };
				\path[map]	(m-1-1) edge[barred] node[above] {$TJ_1$} (m-1-2)
														edge node[left] {$a_0$} (m-2-1)
										(m-1-2) edge node[right, inner sep=2pt] {$a_1$} (m-2-2)
										(m-1-3) edge[barred] node[above] {$TJ_n$} (m-1-4)
														edge node[left, inner sep=0pt] {$a_{n'}$} (m-2-3)
										(m-1-4) edge node[right] {$a_n$} (m-2-4)
										(m-2-1) edge[barred] node[below] {$J_1$} (m-2-2)
										(m-2-3) edge[barred] node[below] {$J_n$} (m-2-4)
										(m-3-1) edge[barred] node[below] {$K$} (m-3-4);
				\path				(m-2-1) edge[eq] (m-3-1)
										(m-2-4) edge[eq] (m-3-4);
				\path[transform canvas={xshift=1.75em}]	(m-1-1) edge[cell] node[right] {$\bar J_1$} (m-2-1)
										(m-1-3) edge[cell] node[right] {$\bar J_n$} (m-2-3)
										(m-2-2) edge[cell, transform canvas={xshift=0.5em}] node[right] {$\phi$} (m-3-2);
				\draw				($(m-1-2)!0.5!(m-2-3)$) node {$\dotsb$};
			\end{tikzpicture} = \begin{tikzpicture}[textbaseline]
				\matrix(m)[math35]
					{ TA_0 & TA_1 &[1em] TA_{n'} & TA_n \\
						TA_0 & & & TA_n \\
						A_0 & & & A_n \\ };
				\path[map]	(m-1-1) edge[barred] node[above] {$TJ_1$} (m-1-2)
										(m-1-3) edge[barred] node[above] {$TJ_n$} (m-1-4)
										(m-2-1) edge[barred] node[below] {$TK$} (m-2-4)
														edge node[left] {$a_0$} (m-3-1)
										(m-2-4) edge node[right] {$a_n$} (m-3-4)
										(m-3-1) edge[barred] node[below] {$K$} (m-3-4);
				\path				(m-1-1) edge[eq] (m-2-1)
										(m-1-4) edge[eq] (m-2-4);
				\path[transform canvas={xshift=2.25em}]	(m-1-2) edge[cell] node[right] {$T\phi$} (m-2-2)
										(m-2-2) edge[cell, transform canvas={yshift=-0.25em}] node[right] {$\bar K$} (m-3-2);
				\draw				($(m-1-2)!0.5!(m-1-3)$) node {$\dotsb$};
			\end{tikzpicture}
		\end{displaymath}
		We claim that $\bar K$ makes $K$ into a horizontal $T$-morphism, that is it satisfies the associativity and unit axioms of \defref{horizontal T-morphism}. To prove the associativity axiom consider the equality below, where the identities follow from the factorisation above and its $T$-image; the associativity axioms for $\bar J_1, \dotsc, \bar J_n$ and the interchange axiom; the factorisation above; the naturality of $\mu$ with respect to $\phi$. Notice the left-hand and right-hand sides below equal the corresponding sides of the associativity axiom for $\bar K$ after composition with $T^2\phi$. Since the latter is cocartesian the associativity axiom itself follows.
		\begin{align*}

		\end{align*}
		Using the fact that $\phi$ is cocartesian, the unit axiom for $\bar K$ can be deduced from that for $\bar J_1, \dotsc, \bar J_n$ in a similar way. The uniqueness of $\bar K$ follows from the fact that its defining unique factorisation above forms the $T$-cell axiom for $\phi$.
		
		It remains to show that $\phi$ is cocartesian as a $T$-cell. Hence consider paths $\hmap{\ul J'}{A'_0}A_0$ and $\hmap{\ul J''}{A_n}{A_q}$ of horizontal $T$"/morphisms; we have to prove that any $T$"/cell $\cell\psi{\ul J' \conc \ul J \conc \ul J''}K$ factors uniquely through $\phi$ in $\wAlg\lax\lax T$, in the sense of \defref{cocartesian paths}. Since $\phi$ is cocartesian in $\K$ such a unique factorisation $\psi = \psi' \of (\id_{J'_1}, \dotsc, \id_{J'_p}, \phi, \id_{J''_1}, \dotsc, \id_{J''_q})$ certainly exists in $\K$, and it suffices to show that $\psi'$ satisfies the $T$-cell axiom. To see this consider the equation below, where the identities follow from the $T$-image of the factorisation of $\psi$; the $T$-cell axiom for $\psi$; the factorisation of $\psi$; the $T$-cell axiom for $\phi$.
		\begin{align*}

		\end{align*}
		Since $T\phi$ is assumed to be cocartesian, the top rows in both the left-hand and right-hand sides above are cocartesian. Hence, because their bottom two rows equal the sides of the $T$-cell axiom for $\psi'$, the axiom follows. This completes the proof of the first assertion.
		
		For the second assertion, we now assume that both $\phi$ and $T\phi$, in the above, are right pointwise cocartesian in $\K$; we have to show that $\phi$, as a $T$-cell, is right pointwise cocartesian in $\wAlg\lax\lax T$. To this end consider any lax $T$-morphism $\map fB{A_n}$. By assumpion the restriction $J_n(\id, f)$ exists in $\K$ so that, because $\phi$ is right pointwise cocartesian, the restriction $K(\id, f)$ does too; it follows from the previous proposition that these restrictions created in $\wAlg\lax\lax T$. Thus we obtain the following factorisation in $\wAlg\lax\lax T$; we have to show that the $T$-cell $\phi'$ is cocartesian.
		\begin{displaymath}
			\begin{tikzpicture}[textbaseline]
				\matrix(m)[math35]
					{ A_0 & A_1 & A_{n'} & B \\
						A_0 & A_1 & A_{n'} & A_n \\
						A_0 & & & A_n \\ };
				\path[map]	(m-1-1) edge[barred] node[above] {$J_1$} (m-1-2)
										(m-1-3) edge[barred] node[above, xshift=-2pt] {$J_n(\id, f)$} (m-1-4)
										(m-1-4) edge node[right] {$f$} (m-2-4)
										(m-2-1) edge[barred] node[below] {$J_1$} (m-2-2)
										(m-2-3) edge[barred] node[below] {$J_n$} (m-2-4)
										(m-3-1) edge[barred] node[below] {$K$} (m-3-4);
				\path				(m-1-2) edge[eq] (m-2-2)
										(m-1-3) edge[eq] (m-2-3)
										(m-1-1) edge[eq] (m-2-1)
										(m-2-1) edge[eq] (m-3-1)
										(m-2-4) edge[eq] (m-3-4)
										(m-2-2) edge[cell, transform canvas={xshift=1.75em}] node[right] {$\phi$} (m-3-2);
				\draw[font=\scriptsize]	($(m-1-3)!0.5!(m-2-4)$) node {$\cart$};
				\draw				($(m-1-2)!0.5!(m-2-3)$) node {$\dotsb$};
			\end{tikzpicture} = \begin{tikzpicture}[textbaseline]
				\matrix(m)[math35]
					{ A_0 & A_1 & A_{n'} & B \\
						A_0 & & & B \\
						A_0 & & & A_n \\ };
				\path[map]	(m-1-1) edge[barred] node[above] {$J_1$} (m-1-2)
										(m-1-3) edge[barred] node[above, xshift=-2pt] {$J_n(\id, f)$} (m-1-4)
										(m-2-1) edge[barred] node[below] {$K(\id, f)$} (m-2-4)
										(m-2-4) edge node[right] {$f$} (m-3-4)
										(m-3-1) edge[barred] node[below] {$K$} (m-3-4);
				\path				(m-1-1) edge[eq] (m-2-1)
										(m-2-1) edge[eq] (m-3-1)
										(m-1-4) edge[eq] (m-2-4)
										(m-1-2) edge[cell, transform canvas={xshift=1.75em}] node[right] {$\phi'$} (m-2-2);
				\draw[font=\scriptsize]	([yshift=-0.25em]$(m-2-2)!0.5!(m-3-3)$) node {$\cart$};
				\draw				($(m-1-2)!0.5!(m-1-3)$) node {$\dotsb$};
			\end{tikzpicture}
		\end{displaymath}
		To see this, use the assumption that both $\phi$ and $T\phi$ are right pointwise cocartesian, and that $T$ preserves the cartesian cells above: it follows that both $\phi'$ and $T\phi'$ are cocartesian. Applying the second part of the proof of the non-pointwise case above to the $T$-cell $\phi'$, we find that it is cocartesian.
		
		Finally assume that each of the $J_1, \dotsc, J_n$ satisfies the Beck-Chevalley condition. Since the restrictions $J_n(\id, f)$ and $K(\id, f)$, where $\map fB{A_n}$, are assumed to exist and be preserved by $T$, it follows from \propref{creating restrictions satisfying the left Beck-Chevalley condition} and \propref{creating restrictions} that such restrictions are created by both $\map U{\lbcwAlg vT}\K$ and \mbox{$\map U{\wAlg v\psd T}\K$}, and hence are preserved by the inclusion $\lbcwAlg vT \to \wAlg v\psd T$. Since this inclusion (being locally full and faithful) clearly reflects cocartesian cells, it follows that it reflects right pointwise cocartesian cells as well. Therefore, to prove creation of \mbox{$(J_1 \hc \dotsb \hc J_n)$} along $\map U{\lbcwAlg vT}\K$, it suffices to prove that the structure cell $\bar K$, as obtained above, satisfies the left Beck-Chevalley condition; that is we have to show its factorisation $\bar K'$ through $K(\id, a_n)$ to be right pointwise cocartesian. By the pasting lemma (\lemref{pasting lemma for right pointwise cocartesian paths}) we may equivalently show that $\bar K' \of T\phi$ is right pointwise cocartesian; to do so consider the following equation whose identities follow form the definition of $\bar K$ and the factorisation of $\bar J_n$ through $J(\id, a_n)$ and that of $\phi \of (\id, \dotsb, \cart)$, in the third composite, through $K(\id, a_n)$.
		\begin{displaymath}
			\begin{tikzpicture}[scheme]
				\draw	(1,3) -- (0,3) -- (0,0) -- (3,0) -- (3,3) -- (2,3) (0,1) -- (3,1) (0,2) -- (3,2);
				\draw	(1.5,0.5) node {c}
							(1.5,1.5) node {$\bar K'$}
							(1.5,2.5) node {$T\phi$}
							(1.5,3) node {$\dotsb$};
			\end{tikzpicture} \mspace{12mu} = \mspace{12mu} \begin{tikzpicture}[scheme, yshift=0.8em]
				\draw (0,1) -- (1,1) -- (1,2) -- (0,2) -- (0,0) -- (3,0) -- (3,2) -- (2,2) -- (2,1) -- (3,1);
				\draw	(0.5,1.5) node {$\bar J_1$}
							(1.5,0.5) node {$\phi$}
							(1.5,1.5) node {$\dotsb$}
							(2.5,1.5) node {$\bar J_n$};
			\end{tikzpicture} \mspace{12mu} = \mspace{12mu} \begin{tikzpicture}[scheme]
				\draw (0,1) -- (1,1) -- (1,3) -- (0,3) -- (0,0) -- (3,0) -- (3,3) -- (2,3) -- (2,1) -- (3,1) (0,2) -- (1,2) (2,2) -- (3,2);
				\draw	(0.5,2.5) node {$\bar J_1$}
							(1.5,0.5) node {$\phi$}
							(1.5,1.5) node {$\dotsb$}
							(1.5,2.5) node {$\dotsb$}
							(2.5,2.5) node {$\bar J_n'$}
							(2.5,1.5) node {c};
			\end{tikzpicture} \mspace{12mu} = \mspace{12mu} \begin{tikzpicture}[scheme]
				\draw	(0,2) -- (1,2) -- (1,3) -- (0,3) -- (0,0) -- (3,0) -- (3,3) -- (2,3) -- (2,2) -- (3,2) (0,1) -- (3,1);
				\draw	(0.5,2.5) node {$\bar J_1$}
							(1.5,0.5) node {c}
							(1.5,1.5) node {$\phi'$}
							(1.5,2.5) node {$\dotsb$}
							(2.5,2.5) node {$\bar J_n'$};
			\end{tikzpicture}
		\end{displaymath}
		Now observe that $(\bar J_1, \dotsc, \bar J_n)$ satisfies the left Beck-Chevalley condition, because all of $\bar J_1, \dotsc, \bar J_n$ do and \lemref{concatenation of paths satisfying the Beck-Chevalley condition}. By definition it follows that $(\bar J_1, \dotsc, \bar J_n')$ is right pointwise cocartesian and, by \lemref{coherence of pointwise cocartesian paths}, so is $\phi'$. Using the pasting lemma again we conclude that $\phi' \of (\bar J_1, \dotsc, \bar J_n') = \bar K' \of T\phi$ is right pointwise cocartesian, as required. This completes the proof.
	\end{proof}
	
	\subsection{Kan extensions}
	The main results of this section, \thmref{creating Kan extensions between lax algebras} and \thmref{creating Kan extensions between colax algebras} below, describe the creation of algebraic Kan extensions by the forgetful functors \mbox{$\map U{\wAlg\lax wT}\K$} and $\map U{\wAlg\clx wT}\K$ respectively. The second of these, concerning colax $T$"/algebras, generalises the main result of \cite{Koudenburg15a}, which describes creation of algebraic Kan extensions in the case of an oplax monad on a double category $\K$. The latter in turn is a generalisation of a result by Getzler, on the lifting of pointwise left Kan extensions along symmetric monoidal enriched functors, that was given in \cite{Getzler09}, where it was used to obtain a coherent way of freely generating many types of generalised operad. In \cite{Weber15} a variant of \thmref{creating Kan extensions between lax algebras}, considered in the setting of $2$-categories, is used to obtain algebraic Kan extensions along `morphisms of internal algebra classifiers'. The creation of algebraic Kan extensions was first considered by Melli\`es and Tabareau in \cite{Mellies-Tabareau08}, in the case of a pseudomonad on a `bicategory equipped with proarrows' (see the remarks preceding \defref{augmented virtual equipment}), who assume the existence of a `yoneda situation' (see the remarks following \defref{yoneda embedding}) to state their result.
	
	\begin{remark}
		Related to the results of this section, in the forthcoming \cite{Koudenburg15b} the `lifting' of left Kan extensions that preserve algebraic structures defined by `colax-idempotent' $2$-monads $T$, in the sense of e.g.\ \cite{Kelly-Lack97}, is considered. The `free category with finite products'-monad as a prime example, such monads are special in that their pseudo $T$-algebra structures are `essentially unique', like the `structure of finite products' on a category is essentially unique. Moreover, any morphism \mbox{$\map lBM$} between pseudo $T$-algebras is uniquely a colax $T$-morphism. In \cite{Koudenburg15b} we consider the situation where such a colax $T$-morphism \mbox{$\map{(l, \bar l)}BM$} is the left Kan extension of morphisms $\map dAM$ and $\map jAB$, where $A$ is not necessarily a $T$-algebra. Generalising a classical result on finite-product-preserving left Kan extensions, by Ad\'amek and Rosick\'y \cite{Adamek-Rosicky01}, the main results of \cite{Koudenburg15b} relate the invertibility of $\bar l$ to conditions involving the morphisms $d$ and $j$.
	\end{remark}
	
	The following definition is used in stating the theorems.
	\begin{definition} \label{algebraic structure preserving left Kan extensions}
		Let $T$ be a monad on an augmented virtual double category $\K$. Consider a (co"/)lax $T$-algebra $M = (M, m, \bar m, \tilde m)$ as well as a vertical morphism $\map d{A_0}M$ and a path $\hmap{\ul J = (J_1, \dotsc, J_n)}{A_0}{A_n}$ of horizontal morphisms in $\K$. We say that the \emph{algebraic structure of $M$ preserves} the (pointwise) (weak) left Kan extension of $d$ along $(J_1, \dotsc, J_n)$ if, for any cell
		\begin{displaymath}
			\begin{tikzpicture}
				\matrix(m)[math35, yshift=1.625em]{A_0 & A_1 & A_{n'} & A_n \\};
				\draw	([yshift=-3.25em]$(m-1-1)!0.5!(m-1-4)$) node (M) {$M$};
				\path[map]	(m-1-1) edge[barred] node[above] {$J_1$} (m-1-2)
														edge[transform canvas={yshift=-2pt}] node[below left] {$d$} (M)
										(m-1-3) edge[barred] node[above] {$J_n$} (m-1-4)
										(m-1-4) edge[transform canvas={yshift=-2pt}] node[below right] {$l$} (M);
				\path				($(m-1-2.south)!0.5!(m-1-3.south)$) edge[cell] node[right] {$\eta$} (M);
				\draw				($(m-1-2)!0.5!(m-1-3)$) node {$\dotsb$};
										
			\end{tikzpicture}
		\end{displaymath}
		that defines $l$ as the (pointwise) (weak) left Kan extension of $d$ along $(J_1, \dotsc, J_n)$, the composite $m \of T\eta$ defines $m \of Tl$ as the (pointwise) (weak) left Kan extension of $m \of Td$ along $(TJ_1, \dotsc, TJ_n)$. 
	\end{definition}
	In \exref{tensor products preserving weighted colimits in each variable} we will see that the monoidal structure of any (op-)lax monoidal $\V'$-category $M$, whose tensor product preserves small $\V$"/weighted colimits in each variable, preserves pointwise left Kan extensions along $\V$-profunctors $\hmap JAB$ with $A$ a small $\V$-category.
	
	We start with the creation of algebraic left Kan extensions between lax algebras. For the notion of left exactness see \defref{left exact}.
	\begin{theorem} \label{creating Kan extensions between lax algebras}
  	Let $T = (T, \mu, \iota)$ be a monad on an augmented virtual double category $\K$ and let `weak' mean either `colax', `lax' or `pseudo'. Given lax $T$-algebras $A_0, \dotsc, A_n$ and $M$, consider the following conditions on a path of horizontal $T$-morphisms $(A_0 \xbrar{J_1} A_1, \dotsc, A_{n'} \xbrar{J_n} A_n)$ and a weak vertical $T$-morphism $\map d{A_0}M$, where $m$ and $a_0$ denote the structure maps of $M$ and $A_0$:
  	\begin{enumerate}
  		\item[\textup{(p)}]	the algebraic structure of $M$ preserves both the (weak) left Kan extension of $d$ along $(J_1, \dotsc, J_n)$ and that of $m \of Td$ along $(TJ_1, \dotsc, TJ_n)$;
  		\item[\textup{(e)}]	the path of structure cells $(\bar J_1, \dotsc, \bar J_n)$ is (weak) left $d$-exact, while its $T$-image $(T\bar J_1, \dotsc, T\bar J_n)$ is (weak) left $(d \of a_0)$-exact;
  		\item[\textup{(l)}]	the forgetful functor $\map U{\wAlg\lax wT}\K$ creates the (weak) left Kan extension of $d$ along $(J_1, \dotsc, J_n)$.
  	\end{enumerate}
  	The following hold:
  	\begin{enumerate}[label=\textup{(\alph*)}]
  		\item if `weak' means `lax' then \textup{(p)} implies \textup{(l)};
  		\item if `weak' means `colax' then \textup{(e)} implies \textup{(l)};
  		\item if `weak' means `pseudo' then any two of \textup{(p)}, \textup{(e)} and \textup{(l)} imply the third.
  	\end{enumerate}
  	Moreover if the restrictions $J_n(\id, f)$ exist in $\K$ and are preserved by $T$, for all $\map fB{A_n}$, then the result analogous to \textup{(a)} for pointwise (weak) left Kan extensions holds as well. In this case the forgetful functor $\map U{\lbcwAlg\lax T}\K$ too creates the pointwise left Kan extension of $d$ along $(J_1, \dotsc, J_n)$ (which now satisfy the left Beck-Chevalley condition), under the assumption of \textup{(p)}.
  \end{theorem}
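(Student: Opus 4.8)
The plan is to adapt the proof of the main theorem of \cite{Koudenburg15a} from double categories to hypervirtual double categories, the essential new features being that one must work with nullary cells and with paths $(J_1, \dotsc, J_n)$ of horizontal $T$-morphisms in place of single $(1,1)$-ary structure cells. Fix a nullary cell $\eta$ in $\K$, as in \defref{weak left Kan extension}, defining $\map l{A_n}M$ as the (pointwise) (weak) left Kan extension of $d$ along $(J_1, \dotsc, J_n)$. Since $\map U{\wAlg\lax wT}\K$ is faithful, by \defref{creation} it suffices to (i) equip $l$ with a weak vertical $T$-morphism structure, (ii) check that $\eta$ then becomes a $T$-cell, and (iii) check that, as a $T$-cell, $\eta$ defines the corresponding Kan extension in $\wAlg\lax wT$; uniqueness of factorisations is then inherited from $\K$.

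For (i), write $m$ and $a_i$ for the structure maps of $M$ and of $A_i$. In the lax case \textup{(a)}, the first half of \textup{(p)} says that $m \of T\eta$ defines $m \of Tl$ as the (weak) left Kan extension of $m \of Td$ along $(TJ_1, \dotsc, TJ_n)$. Forming $\eta \of (\bar J_1, \dotsc, \bar J_n)$ and precomposing its vertical source with the lax structure cell $\cell{\bar d}{m \of Td}{d \of a_0}$ yields a cell $(TJ_1, \dotsc, TJ_n) \Rar M$ with vertical source $m \of Td$ and vertical target $l \of a_n$; its factorisation through $m \of T\eta$ is the sought structure cell $\cell{\bar l}{m \of Tl}{l \of a_n}$. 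By inspection of \defref{T-cell} this factorisation equation is exactly the $T$-cell axiom for $\eta$, so (ii) holds by construction. In the colax case \textup{(b)} one instead uses the first half of \textup{(e)}: since $(\bar J_1, \dotsc, \bar J_n)$ is (weak) left $d$-exact, $\eta \of (\bar J_1, \dotsc, \bar J_n)$ defines $l \of a_n$ as the (weak) left Kan extension of $d \of a_0$ along $(TJ_1, \dotsc, TJ_n)$; factoring $m \of T\eta$, with its vertical source precomposed by the colax cell $\cell{\bar d}{d \of a_0}{m \of Td}$, through this cell gives the colax structure cell $\cell{\bar l}{l \of a_n}{m \of Tl}$, again arranged so that $\eta$ is a $T$-cell. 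In the pseudo case \textup{(c)} the cell $\bar d$ is invertible, so both constructions are available.

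Next one verifies the (weak) $T$-morphism axioms for $\bar l$. The unit axiom follows from the unit axioms for $\bar d$ and the $\bar J_i$, the unit laws of $T$, and uniqueness of factorisations through $\eta$ (resp.\ through $\eta \of (\bar J_1, \dotsc, \bar J_n)$). The associativity axiom is where the second halves of \textup{(p)} and \textup{(e)} enter: one checks that both sides of the axiom, after composition with the $T^2$-level cell $m \of Tm \of T^2\eta$ (resp.\ with the composite of $\eta \of (\bar J_1, \dotsc, \bar J_n)$ and $(T\bar J_1, \dotsc, T\bar J_n)$), which by the second half of \textup{(p)} (resp.\ \textup{(e)}) again defines a (weak) left Kan extension, coincide; this is a diagram chase using the associativity axioms for $\bar d$ and the $\bar J_i$, the coherence data of the lax $T$-algebras $A_0$, $A_n$, $M$, the naturality of $\mu$ with respect to $\eta$ and the $\bar J_i$, and the interchange axioms (\lemref{horizontal composition}). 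For (iii), given a nullary $T$-cell $\phi$ of the shape tested in \defref{left Kan extension}, its unique factorisation $\phi'$ through $\eta$ in $\K$ is again a $T$-cell: the $T$-cell axiom for $\phi'$ is obtained by applying the universal property of $\eta$ at the $T$-level to the $T$-cell axioms for $\phi$ and for $\eta$.

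For the pseudo case \textup{(c)}, ``any two imply the third'' follows from the same universal properties: given \textup{(p)} and \textup{(e)} one builds $\bar l$ as in \textup{(a)} and checks, using the colax construction of \textup{(b)}, that it is invertible, whence \textup{(l)}; given \textup{(p)} and \textup{(l)}, so that $\eta$ lifts to a $T$-cell with invertible $\bar l$, one reads off the exactness conditions \textup{(e)} by rewriting the $T$-cell axiom for $\eta$ (and its $T$-image) with the help of $\inv{\bar d}$ and invoking uniqueness of Kan extensions; the case \textup{(e)} and \textup{(l)} is symmetric. Finally, for the pointwise addendum, assume the restrictions $J_n(\id, f)$ exist in $\K$ and are preserved by $T$. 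By \propref{creating restrictions}, parts \textup{(c)} and \textup{(d)}, these restrictions and their defining cartesian cells are created by $\map U{\wAlg\lax\psd T}\K$ and by $\map U{\lbcwAlg\lax T}\K$; hence for each $\map fB{A_n}$ the composite of $\eta$ with the cartesian cell defining $J_n(\id, f)$ is, by part \textup{(a)} applied to the path $\bigpars{J_1, \dotsc, J_n(\id, f)}$ (to which \textup{(p)} and the left-exactness conditions again apply, by \lemref{properties of pointwise left Kan extensions} and \lemref{coherence of pointwise cocartesian paths}), a $T$-cell defining the corresponding left Kan extension in $\wAlg\lax\psd T$, respectively in $\lbcwAlg\lax T$ where every horizontal $T$-morphism automatically satisfies the left Beck--Chevalley condition; as $f$ varies, this exhibits $\eta$ as the created pointwise left Kan extension. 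The main obstacle throughout is the associativity verification for $\bar l$: organising the $T^2$-level universal property together with all the coherence axioms correctly, now with paths of cells rather than single cells, is where the real bookkeeping lies.
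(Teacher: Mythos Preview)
Your overall strategy for parts (a), (b) and (c) is correct and is essentially the paper's own: the structure cell $\bar l$ is obtained as a factorisation through a Kan-extension-defining cell (through $m\of T\eta$ in the lax case, through $\eta\of(\bar J_1,\dotsc,\bar J_n)$ in the colax case), the coherence axioms for $\bar l$ are established by composing both sides with a $T^2$-level cell that again defines a Kan extension and invoking uniqueness, and the pseudo case is handled by comparing the two constructions. Your description of (iii) and of ``any two imply the third'' is accurate.

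The pointwise addendum, however, has two problems. First, a minor one: you invoke \propref{creating restrictions}(c) for $\wAlg\lax\psd T$, but the relevant forgetful functor for the pointwise version of (a) is $\map U{\wAlg\lax\lax T}\K$; the creation of the restrictions $J_n(\id,f)$ there follows from \propref{creating restrictions}(b), since $\id$ is a pseudo $T$-morphism. This is only a citation slip.

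Second, and more substantively: your treatment of the $\lbcwAlg\lax T$ case is incomplete. By definition $\lbcwAlg\lax T\subset\wAlg\lax\psd T$, so its vertical morphisms are \emph{pseudo} $T$-morphisms; hence for the created Kan extension $l$ to live in $\lbcwAlg\lax T$, its structure cell $\bar l$ must be invertible. You do not address this. The paper's argument supplies exactly the missing step: because each $J_i$ satisfies the left Beck--Chevalley condition, the concatenated path $(\bar J_1,\dotsc,\bar J_n)$ also does (\lemref{concatenation of paths satisfying the Beck-Chevalley condition}) and is therefore pointwise left $d$-exact (\propref{Beck-Chevalley}); combining this first half of (e) with the assumption (p) forces $\bar l$ to be invertible, exactly as in the (p)$+$(e)$\Rightarrow$(l) direction of your part (c). Without this observation, your conclusion that $\eta$ is created by $\map U{\lbcwAlg\lax T}\K$ does not follow.
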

  We remark that in the proof below of the pointwise case of (a) above we use the fact that, under the above assumptions, all restrictions in $\wAlg\lax\lax T$ of the form $J_n(\id, f)$ are created by $\map U{\wAlg\lax\lax T}\K$, as we have seen in \propref{creating restrictions}(b). That the same need not be true in the colax case $w = \clx$, prevents us from being able to prove a pointwise variant of (b) above. It is possible however to show that $\map U{\wAlg\lax\clx T}\K$ creates left Kan extensions satisfying a variation of \defref{pointwise left Kan extension}, by restricting it to restrictions $J_n(\id, f)$ in $\wAlg\lax\clx T$ with $\map fB{A_n}$ a pseudo $T$-morphism, for such restrictions are created by $\map U{\wAlg\lax\clx T}\K$; see \propref{creating restrictions}(a).
  \begin{proof}
		\emph{Part (a): `weak' means `lax'.} In this case $\map d{A_0}M$ is equipped with a lax $T$-morphism structure cell $\cell{\bar d}{m \of Td}{d \of a_0}$. We shall treat the creation of pointwise left Kan extensions; a proof for the other cases can be obtained by either choosing $f = \id_{A_n}$ in \eqref{T-cell factorisation} below, taking the path $\ul H = (H_1, \dotsc, H_m)$ to be empty, or both. Consider a nullary cell $\eta$ in $\K$, as in the composite on the left-hand side below, that defines $\map l{A_n}M$ as the pointwise left Kan extension of $d$ along $(J_1, \dotsc, J_n)$. We will equip $l$ with the structure of a lax $T$-morphism, with respect to which $\eta$ forms a $T$-cell. Thus a $T$-cell, we will show that $\eta$ defines $l$ as a pointwise left Kan extension in $\wAlg\lax\lax T$.
		
		Under the assumption of (p) the composite $m \of T\eta$ in the right-hand side below defines $m \of Tl$ as a left Kan extension so that the right-hand side, where $\bar d$ is the lax $T$-structure cell for $\map d{A_0}M$, factors as a vertical cell $\cell{\bar l}{m \of Tl}{l \of a_n}$ as shown.
  	\begin{equation} \label{lifted lax structure}

  	\end{equation}
  	
  	We claim that $\bar l$ makes $l$ into a lax $T$-morphism, that is it satisfies the associativity and unit axioms. To see that it satisfies the former consider the equation of composites below, whose identities are explained below. Notice that its left-hand and right-hand sides are given by the corresponding sides of the associativity axiom for $\bar l$, composed with $m \of Tm \of T^2 \eta$ on the left. By condition (p) the latter defines $m \of Tm \of T^2l$ as the left Kan extension of $m \of Tm \of T^2d$ along $(T^2J_1, \dotsc, T^2J_n)$, so that the axiom itself follows from uniqueness of factorisations through $m \of Tm \of T^2\eta$.
  	\begin{align*}

  	\end{align*}
  	The identities above follow from the $T$-image of \eqref{lifted lax structure}; the identity \eqref{lifted lax structure} itself; the associativity axioms for $\bar J_1, \dotsc \bar J_n$ (see \defref{horizontal T-morphism}); the associativity axiom for $\bar d$; the identity \eqref{lifted lax structure} again; the naturality of $\mu$.
  	
  	The unit axiom for $\bar l$ follows similarly from the equation below, whose sides form those of the unit axiom composed with $\eta$, and the fact that the latter defines $l$ as a left Kan extension. The identities here follow from the naturality of $\iota$; the identity \eqref{lifted lax structure}; the unit axiom for $\bar d$; the unit axioms for $\bar J_1, \dotsc, \bar J_n$.
  	\begin{align*}

  	\end{align*}
  	
  	Now notice that the factorisation \eqref{lifted lax structure} forms the $T$-cell axiom for $\eta$ (\defref{T-cell}). In fact, since the factorisation is unique, the lax $T$-structure cell $\bar l$ is unique in making $\eta$ into a $T$-cell. Thus a $T$-cell, it remains to prove that $\eta$ defines $(l, \bar l)$ as the pointwise left Kan extension of $d$ along $(J_1, \dotsc, J_n)$ in $\wAlg\lax\lax T$. In order to do so, consider any nullary $T$-cell $\phi$ as on the left-hand side below, where $J_n(\id, f)$ denotes any $T$-restriction of $J_n$ along any lax $T$-morphism $\map fB{A_n}$. Since we assume the underlying restriction $J_n(\id, f)$ to exist in $\K$ the cartesian $T$-cell defining $J_n(\id, f)$ is created, and thus preserved, by $\map U{\wAlg\lax\lax T}\K$.
  	\begin{multline} \label{T-cell factorisation}

  	\end{multline}
  	Because $\eta$ defines $l$ as a pointwise left Kan extension in $\K$, the cell $\phi$ factors uniquely in $\K$ as shown, and it remains to prove that the factorisation $\phi'$ is a $T$-cell. To see this consider the following equation of composites in $\K$, where `c' denotes the cartesian cell that defines $J_n(\id, f)$. Its identities follow from the $T$-cell axiom for $\eta$; the definition of $\overline{J_n(\id, f)}$ (see the proof of \propref{creating restrictions}); the identity above; the $T$-cell axiom for $\phi$; the $T$-image of the identity above. Now notice that the left-hand and right-hand sides of the equation below are the sides of the $T$-cell axiom for $\phi'$, composed with the composite $m \of T\eta \of (\id, \dotsc, T\cart)$ on the left. Since $m \of T\eta$ here defines a pointwise left Kan extension by the assumption of (p), the full composite does too, because we assume the cartesian cell to be preserved by $T$. From this we conclude that the $T$-cell axiom for $\phi'$ holds, which completes the proof of part (a).
  	\begin{align*}
  		&

  	\end{align*}
  	
  	\emph{Part (b): `weak' means `colax'.} In this case $\map d{A_0}M$ is equipped with a colax $T$-morphism structure cell $\cell{\bar d}{d \of a_0}{m \of Td}$. We shall treat the creation of left Kan extensions; a proof for weak Kan extensions is obtained by taking the path $\ul H = (H_1, \dotsc, H_m)$ in \eqref{T-cell factorisation} to be empty. Consider a nullary cell $\eta$ in $\K$, as in the composite on the right-hand side below, that defines $\map l{A_n}M$ as the pointwise left Kan extension of $d$ along $(J_1, \dotsc, J_n)$. Under the assumption of (e), the composite $\eta \of (\bar J_1, \dotsc, \bar J_n)$ in the left-hand side below defines $l \of a_n$ as a left Kan extension, so that the right-hand side factors as a vertical cell $\cell{\bar l}{l \of a_n}{m \of Tl}$ as shown.
  	\begin{equation} \label{lifted colax structure}

  	\end{equation}
  	Analogous to the proof of part (a) above, we can show that the cell $\bar l$ satisfies the associativity and unit axioms, so that it makes $\map l{A_n}M$ into a colax $T$"/morphism.
  	
  	Indeed, the associativity axiom follows from the identity below, which itself follows from the factorisation above, the associativity axiom for $\bar d$, the naturality of $\mu$ and the associativity axioms for $\bar J_1, \dotsc, \bar J_n$. As before notice that the identity below is the associativity axiom for $\bar l$ composed on the right with the composite \mbox{$\eta \of (\bar J_1 \of T\bar J_1, \dotsc, \bar J_n \of T\bar J_n)$}. As a consequence of condition (e), this composite defines a left Kan extension, so that the associativity axiom itself follows.
  	\begin{displaymath}

  	\end{displaymath}
  	
  	Analogous to the reasoning in the proof of part (a), the cell $\bar l$ is unique in making $\eta$ into a $T$-cell and, to show that $\eta$ defines $(l, \bar l)$ as a left Kan extension in $\wAlg\lax\clx T$, we have to prove that any unique factorisation $\phi'$ of a $T$-cell $\phi$, as in \eqref{T-cell factorisation}, where now $d$ and $k$ are colax $T$-morphisms, and where we take $f = \id_{A_n}$, is again a $T$"/cell. That its $T$-cell axiom holds after composition with $\eta \of (\bar J_1, \dotsc, \bar J_n)$, as shown below, follows from the $T$-cell axiom for $\eta$, the factorisation \eqref{T-cell factorisation} and the $T$-cell axiom for $\phi$. By assumption (e) the composite $\eta \of (\bar J_1, \dotsc, \bar J_n)$ defines a left Kan extension, so that the $T$-cell axiom follows. This completes the proof of part (b).
  	\begin{displaymath}
  		\begin{tikzpicture}[scheme, yshift=0.8em]
  			\draw (0,1) -- (1,1) -- (1,2) -- (0,2) -- (0,0) -- (4,0) -- (4,2) -- (5,2) (6,2) -- (7,2) -- (7,1) -- (4,1) (3,1) -- (2,1) -- (2,2) -- (4,2) (3,2) -- (3,0);
  			\draw	(0.5,1.5) node {$\bar J_1$}
  						(1.5,0.5) node {$\eta$}
  						(1.5,1.5) node[xshift=0.5pt] {$\dotsb$}
  						(2.5,1.5) node {$\bar J_n$}
  						(3.5,1) node {$\bar l$}
  						(5.5,1.5) node {$T\phi'$}
  						(5.5,2) node[xshift=0.5pt] {$\dotsb$};
  		\end{tikzpicture} \mspace{12mu} = \mspace{12mu} \begin{tikzpicture}[scheme, yshift=0.8em]
  			\draw (0,1) -- (1,1) -- (1,2) -- (0,2) -- (0,0) -- (7,0) -- (7,2) -- (5,2) -- (5,1) -- (6,1) (6,0) -- (6,2) (2,1) -- (2,2) -- (4,2) -- (4,1) -- (2,1) (3,0) -- (3,2);
  			\draw	(0.5,1.5) node {$\bar J_1$}
  						(1.5,0.5) node {$\eta$}
  						(1.5,1.5) node[xshift=0.5pt] {$\dotsb$}
  						(2.5,1.5) node {$\bar J_n$}
  						(3.5,1.5) node {$\bar H_1$}
  						(4.5,0.5) node {$\phi'$}
  						(4.5,1.5) node[xshift=0.5pt] {$\dotsb$}
  						(5.5,1.5) node {$\bar H_m$}
  						(6.5,1) node {$\bar k$};
  		\end{tikzpicture}
  	\end{displaymath}
  	
  	\emph{Part (c): `weak' means `pseudo'.} In this case $\map d{A_0}M$ is equipped with an invertible $T$-morphism structure cell $\cell{\bar d}{m \of Td}{d \of a_0}$. To see that any two of (e), (p) and (l) imply the third, first notice that the locally full inclusions \mbox{$\wAlg\lax\lax T \supset \wAlg\lax\psd T \subset \wAlg\lax\clx T$} reflect cells defining (weak) left Kan extensions. Assuming (e) and (p), we may apply (a) to find that the (weak) left Kan extension $\map{(l, \bar l)}{A_n}M$ of $d$ along $(J_1, \dotsc, J_n)$ is created by $\map U{\wAlg\lax\lax T}\K$, with structure cell $\bar l$ defined by the factorisation \eqref{lifted lax structure}. Using the fact that $(\bar J_1, \dotsc, \bar J_n)$ is (weak) left $d$-exact, by (e), and that $\bar d$ is invertible, it follows that the left-hand side of this factorisation defines a left-hand Kan extension, from which we conclude that $\bar l$ is invertible. We conclude that its defining $T$-cell $\eta$ defines $(l, \bar l)$ as a (weak) left Kan extension in $\wAlg\lax\psd T$, that is condition (l) holds.
  	\begin{displaymath}
  		\begin{tikzpicture}[scheme]
  			\draw	(1,1) -- (0,1) -- (0,3) -- (1,3) -- (1,0) -- (2,0) -- (2,3) -- (3,3) -- (3,1) -- (2,1) (1,2) -- (3,2) (2,0) -- (5,0) -- (5,3) -- (4,3) -- (4,1) -- (5,1) (4,2) -- (5,2);
  			\draw	(0.5,2) node {$T\bar d$}
  						(1.5,1) node {$\bar d$}
  						(2.5,1.5) node {$\bar J_1$}
  						(2.5,2.5) node {$T\bar J_1$}
  						(3.5,0.5) node {$\eta$}
  						(3.5,1.5) node[xshift=0.5pt] {$\dotsb$}
  						(3.5,2.5) node[xshift=0.5pt] {$\dotsb$}
  						(4.5,1.5) node {$\bar J_n$}
  						(4.5,2.5) node {$T\bar J_n$};
  		\end{tikzpicture} \mspace{12mu} = \mspace{12mu} \begin{tikzpicture}[scheme]
  			\draw	(1,3) -- (0,3) -- (0,2) -- (3,2) (3,3) -- (3,1) -- (4,1) (2,3) -- (4,3) -- (4,0) -- (5,0) -- (5,2) -- (4,2);
  			\draw	(1.5,2.5) node {$T^2\eta$}
  						(1.5,3) node[xshift=0.5pt] {$\dotsb$}
  						(3.5,2) node {$T\bar l$}
  						(4.5,1) node {$\bar l$};
  		\end{tikzpicture}
  	\end{displaymath}
  	
  	Next assume that both conditions (p) and (l) hold. From (l) it follows that the (weak) left Kan extension $\map l{A_0}M$ of $d$ along $(J_1, \dotsc, J_n)$ in $\K$ admits an invertible $T$-morphism structure cell that is unique in making its defining cell $\eta$ into a $T$-cell in $\wAlg\lax\psd T$. Of course, the same structure cell makes $\eta$ into a $T$"/cell in $\wAlg\lax\lax T$, so that it must coincide with the structure cell $\bar l$ that we obtain by applying part (a), as the unique factorisation in \eqref{lifted lax structure}. Thus, $\bar l$ in the right-hand side of \eqref{lifted lax structure} is invertible, and it follows that the full right-hand side defines a (weak) left Kan extension. Composing both sides on the left with $\inv{\bar d}$ we find that $\eta \of (J_1, \dotsc, J_n)$, in the left-hand side, defines a (weak) left Kan extension too, showing that $(J_1, \dotsc, J_n)$ is (weak) left $d$-exact. This proves the first half of (e); for the second half, that is $(T\bar J_1, \dotsc, T\bar J_n)$ is (weak) left $(d \of a_0)$-exact, a similar argument can applied to the identity above, which is itself a consequence of \eqref{lifted lax structure}. This shows that (p) and (l) together imply (e).
  	\begin{displaymath}
  		\begin{tikzpicture}[scheme]
  			\draw	(1,2) -- (0,2) -- (0,0) -- (1,0) -- (1,3) -- (3,3) (1,1) -- (2,1) -- (2,3) (2,2) -- (5,2) -- (5,3) -- (4,3);
  			\draw	(0.5,1) node {$\bar d$}
  						(1.5,2) node {$T\bar d$}
  						(3.5,2.5) node {$T^2\eta$}
  						(3.5,3) node[xshift=0.5pt] {$\dotsb$};
  		\end{tikzpicture} \mspace{12mu} = \mspace{12mu} \begin{tikzpicture}[scheme]
  			\draw	(0,1) -- (1,1) -- (1,3) -- (0,3) -- (0,0) -- (3,0) -- (3,3) -- (2,3) -- (2,1) -- (3,1) (0,2) -- (1,2) (2,2) -- (4,2) (3,0) -- (4,0) -- (4,3) -- (5,3) -- (5,1) -- (4,1);
  			\draw	(0.5,1.5) node {$\bar J_1$}
  						(0.5,2.5) node {$T\bar J_1$}
  						(1.5,0.5) node {$\eta$}
  						(1.5,1.5) node[xshift=0.5pt] {$\dotsb$}
  						(1.5,2.5) node[xshift=0.5pt] {$\dotsb$}
  						(2.5,1.5) node {$\bar J_n$}
  						(2.5,2.5) node {$T\bar J_n$}
  						(3.5,1) node {$\bar l$}
  						(4.5,2) node {$T\bar l$};
  		\end{tikzpicture}
  	\end{displaymath}
  	
  	The argument showing that (e) and (l) together imply (p) is horizontally dual to the previous one: in this case the unique factorisation $\bar l$ in \eqref{lifted colax structure} is invertible; it follows that $m \of T\eta$ in its left-hand side defines a (weak) left Kan extension. Hence the algebraic structure of $M$ preserves the (weak) left Kan extension of $d$ along $(J_1, \dotsc, J_n)$; that is the first half of (p) holds. That the (weak) left Kan extension of $m \of Td$ along $(TJ_1, \dotsc, TJ_n)$ is preserved as well follows from applying a similar argument to the identity above, which itself is a consequence of \eqref{lifted colax structure}.
  	
  	Finally we consider the creation of pointwise left Kan extensions by the forgetful functor $\map U{\lbcwAlg\lax T}\K$. Notice that, in the `pseudo'-case above, the proof that (p) and (e) implies (l) only uses the first part of (e), namely the left $d$-exactness of $(\bar J_1, \dotsc, \bar J_n)$. Analogously, if $(J_1, \dotsc, J_n)$ is a path in $\lbcwAlg\lax T$, so that $(\bar J_1, \dotsc, \bar J_n)$ satisfies the left Beck-Chevalley condition by \lemref{concatenation of paths satisfying the Beck-Chevalley condition} and thus is pointwise left $d$-exact by \propref{Beck-Chevalley}, it follows that the left Kan extension $(l, \bar l)$ created by $\map U{\wAlg\lax\lax T}\K$, of $d$ along $(J_1, \dotsc, J_n)$, is a pseudo $T$-morphism provided that $l$ is a pointwise Kan extension in $\K$. Now notice that, because the restrictions $J_n(\id, f)$ are assumed to exist in $\K$ and be preserved by $T$, it follows from \propref{creating restrictions satisfying the left Beck-Chevalley condition} that the inclusion $\lbcwAlg\lax T \to \wAlg\lax\lax T$ preserves such restrictions. Together with the fullness of this inclusion, from this we conclude that $(l, \bar l)$ forms a pointwise left Kan extension in $\lbcwAlg\lax T$ too, which completes the proof.
	\end{proof}
	
	Next is the colax case. We will only sketch its proof in the lax case $w = \lax$, since it is similar to parts of that above and parts of that of the main result of \cite{Koudenburg15a}.
	\begin{theorem} \label{creating Kan extensions between colax algebras}
  	Let $T = (T, \mu, \iota)$ be a monad on an augmented virtual double category $\K$ and let `weak' mean either `colax', `lax' or `pseudo'. Given colax $T$-algebras $A_0, \dotsc, A_n$ and $M$, consider the following conditions on a path of horizontal $T$-morphisms $(A_0 \xbrar{J_1} A_1, \dotsc, A_{n'} \xbrar{J_n} A_n)$ and a weak vertical $T$-morphism $\map d{A_0}M$, where $m$ and $a_0$ denote the structure maps of $M$ and $A_0$:
  	\begin{enumerate}
  		\item[\textup{(p)}]	the algebraic structure of $M$ preserves the (weak) left Kan extension of $d$ along $(J_1, \dotsc, J_n)$, while both paths $(\mu_{J_1}, \dotsc, \mu_{J_n})$ and $(\iota_{J_1}, \dotsc, \iota_{J_n})$ are (weak) left $(m \of Td)$-exact;
  		\item[\textup{(e)}]	the path of structure cells $(\bar J_1, \dotsc, \bar J_n)$ is (weak) left $d$-exact, while both paths $(\mu_{J_1}, \dotsc, \mu_{J_n})$ and $(\iota_{J_1}, \dotsc, \iota_{J_n})$ are (weak) left $(d \of a_0)$-exact;
  		\item[\textup{(l)}]	the forgetful functor $\map U{\wAlg\clx wT}\K$ creates the (weak) left Kan extension of $d$ along $(J_1, \dotsc, J_n)$.
  	\end{enumerate}
  	The following hold:
  	\begin{enumerate}[label=\textup{(\alph*)}]
  		\item if `weak' means `lax' then \textup{(p)} implies \textup{(l)};
  		\item if `weak' means `colax' then \textup{(e)} implies \textup{(l)};
  		\item if `weak' means `pseudo' then any two of \textup{(p)}, \textup{(e)} and \textup{(l)} imply the third.
  	\end{enumerate}
  	Moreover if the restrictions $J_n(\id, f)$ exist in $\K$ and are preserved by $T$, for all $\map fB{A_n}$, then the result analogous to \textup{(a)} for pointwise (weak) left Kan extensions holds as well. In this case the forgetful functor $\map U{\lbcwAlg\clx T}\K$ too creates the pointwise left Kan extension of $d$ along $(J_1, \dotsc, J_n)$ (which now satisfy the left Beck-Chevalley condition), under the assumption of \textup{(p)}
  \end{theorem}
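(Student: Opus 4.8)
The plan is to mirror, in the colax setting, the structure of the proof of \thmref{creating Kan extensions between lax algebras}, but with the roles of some cells transposed. In this situation a colax $T$-algebra $M = (M, m, \bar m, \tilde m)$ has its associator and unitor pointing ``the other way'' (i.e.\ $\bar m$ and $\tilde m$ are really the structure cells of a lax $\co T$-algebra), and a colax vertical $T$-morphism $\map d{A_0}M$ carries a structure cell in the colax direction. The extra hypotheses in (p) and (e) on the paths $(\mu_{J_1}, \dotsc, \mu_{J_n})$ and $(\iota_{J_1}, \dotsc, \iota_{J_n})$ being left exact are precisely what is needed to transport the associativity and unit axioms for the would-be structure cell $\bar l$ through the coherence data of a \emph{colax} algebra $M$, where previously (in the lax-algebra case) these axioms came for free from the structure of $M$ itself. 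So I expect the skeleton to be: construct $\bar l$ by a universal factorisation; verify its associativity and unit axioms using the colax-algebra coherence together with the left exactness of the $\mu$- and $\iota$-paths; check uniqueness via the $T$-cell axiom for $\eta$; and then check that $\eta$, as a $T$-cell, has the appropriate universal property in $\wAlg\clx wT$.

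In detail, for part (a) (`weak' means `lax', so $d$ is a lax vertical $T$-morphism with $\cell{\bar d}{m \of Td}{d \of a_0}$), I would start from a cell $\eta$ in $\K$ defining $l$ as the pointwise left Kan extension of $d$ along $(J_1, \dotsc, J_n)$. Assumption (p) gives that $m \of T\eta$ defines $m \of Tl$ as a left Kan extension; pasting $\bar d$ and the structure cells $\bar J_i$ onto this yields, by the universal property, a unique vertical cell $\cell{\bar l}{m \of Tl}{l \of a_n}$ — this is the analogue of \eqref{lifted lax structure}. To check the associativity axiom for $\bar l$ I would write out the now-familiar ``ladder'' of pasting diagrams; the point where the proof genuinely departs from the lax-algebra case is that the associator $\bar m$ of $M$ now points the wrong way to be used directly, so instead I would invoke the left $(m \of Td)$-exactness of $(\mu_{J_1}, \dotsc, \mu_{J_n})$ (together with that of the $\bar J_i$ via their own associativity axioms) to conclude that the two sides of the putative axiom agree after composing with a cell known to define a left Kan extension. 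The unit axiom is handled identically, using the $(\iota_{J_1}, \dotsc, \iota_{J_n})$-exactness hypothesis in place of the unitor $\tilde m$. Uniqueness of $\bar l$, and the fact that $\eta$ thereby becomes a $T$-cell, follow as in the earlier proof since the defining factorisation \emph{is} the $T$-cell axiom for $\eta$. The universal property of $\eta$ in $\wAlg\clx\lax T$ is then checked exactly as in \thmref{creating Kan extensions between lax algebras}, using \propref{creating restrictions} to know the relevant restrictions $J_n(\id, f)$ are created.

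Parts (b) and (c) follow the same pattern: for (b) (`weak' means `colax') the structure cell $\bar d$ points the other way, and one uses that $\eta \of (\bar J_1, \dotsc, \bar J_n)$ defines a left Kan extension — which is exactly the first half of (e) — to factor and obtain $\cell{\bar l}{l \of a_n}{m \of Tl}$, the analogue of \eqref{lifted colax structure}; the associativity and unit axioms again need the $\mu$- and $\iota$-exactness clauses of (e). For (c), given any two of (p), (e), (l) one deduces the third by the same ``invertibility-of-$\bar l$'' arguments used in part (c) of the lax-algebra theorem, noting that a pseudo $T$-morphism structure is what makes the constructed $\bar l$ invertible and conversely. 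The pointwise case, and the creation statement for $\map U{\lbcwAlg\clx T}\K$, are obtained by the same appeal to \propref{Beck-Chevalley} (giving pointwise left $d$-exactness of $(\bar J_1, \dotsc, \bar J_n)$ from the left Beck-Chevalley condition) and \lemref{concatenation of paths satisfying the Beck-Chevalley condition}, together with \propref{creating restrictions} ensuring the inclusion $\lbcwAlg\clx T \to \wAlg\clx\lax T$ is full and preserves the restrictions $J_n(\id, f)$.

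The main obstacle I anticipate is purely bookkeeping: in the colax-algebra setting one cannot use the coherence cells $\bar m$, $\tilde m$ of $M$ ``natively'' inside the associativity/unit verifications, so every step where the lax-algebra proof silently used them must be rerouted through an explicit left-exactness hypothesis, and one has to be careful that the cell one composes with at the end of each such verification genuinely defines a left Kan extension (this is where the $(\mu_{J_i})$- and $(\iota_{J_i})$-exactness, and the vertical and horizontal pasting lemmas \lemref{vertical pasting lemma} and \lemref{horizontal pasting lemma}, do the work). No new idea beyond those already deployed in \thmref{creating Kan extensions between lax algebras} and in \cite{Koudenburg15a} should be required; the content is in assembling the diagrams correctly.
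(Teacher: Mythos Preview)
Your proposal is correct and follows essentially the same approach as the paper's own (sketch of a) proof: construct $\bar l$ via the analogue of \eqref{lifted lax structure}, verify its coherence axioms by composing with $m \of T\eta \of (\mu_{J_1}, \dotsc, \mu_{J_n}) = m \of \mu_M \of T^2\eta$ and $m \of T\eta \of (\iota_{J_1}, \dotsc, \iota_{J_n})$ (which define left Kan extensions precisely by the exactness clauses of (p)), and then carry over the $T$-cell universality argument verbatim. Your diagnosis that ``$\bar m$ points the wrong way'' is the right intuition for why the exactness of the $\mu$- and $\iota$-paths replaces the second preservation hypothesis that appeared in condition (p) of \thmref{creating Kan extensions between lax algebras}.
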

  \begin{proof}[Sketch of the proof of part (a): `weak' means `lax'.]
  	In this case $\map d{A_0}M$ comes with a lax $T$-morphism structure cell $\cell{\bar d}{m \of Td}{d \of a_0}$. Using the assumption (p) we obtain, just like in the proof of the previous theorem, a cell $\cell{\bar l}{m \of Tl}{l \of a_n}$ as the unique factorisation in \eqref{lifted lax structure}. The proof that $\bar l$ makes $l$ into a lax $T$-morphism, that is it satisfies the associativity and unit axioms, is essentially the same as the horizontal dual of that given in proof of Theorem 5.7(a) of \cite{Koudenburg15a}, except that the setting there is that of double categories. Briefly, the proof follows from \eqref{lifted lax structure}, the associativity and unit axioms for $\bar d$, those for $\bar J_1$, \dots, $\bar J_n$ (see \defref{horizontal T-morphism}), the naturality of $\mu$ and $\iota$, and the fact that the composites
  	\begin{displaymath}
  		 m \of \mu_M \of T^2\eta = m \of T\eta \of (\mu_{J_1}, \dotsc, \mu_{J_n}) \quad \text{and} \quad m \of \iota_M \of \eta = m \of T\eta \of (\iota_{J_1}, \dotsc, \iota_{J_n})
  	\end{displaymath}
  	define left Kan extensions, where the latter is a consequence of the assumption (p) and the left exactness assumption on the paths $(\mu_{J_1}, \dotsc, \mu_{J_n})$ and $(\iota_{J_1}, \dotsc, \iota_{J_n})$. As before the unique factorisation \eqref{lifted lax structure} shows that $\bar l$ is unique in making $\eta$ into a $T$-cell in $\wAlg\clx\lax T$. Since the type of algebraic structure on objects does not figure in proving that $\eta$, as a $T$-cell, defines $(l, \bar l)$ as a (pointwise) (weak) left Kan extension in $\wAlg\clx\lax T$, the argument given in the proof of \thmref{creating Kan extensions between lax algebras}(a) applies verbatim.
  \end{proof}
  
  For completeness we show that, in the case of pseudo $T$-algebras, the conditions of \thmref{creating Kan extensions between lax algebras} and \thmref{creating Kan extensions between colax algebras} are equivalent, as follows.
  \begin{lemma}
  	Let $T$ be a monad on an augmented virtual double category. Consider pseudo $T$"/algebras $A_0, \dotsc, A_n$ and $M$, a path $(A_0 \xbrar{J_1} A_1, \dotsc, A_{n'} \xbrar{J_n} A_n)$ of horizontal $T$"/morphisms, and a vertical morphism $\map d{A_0}M$. If the (pointwise) (weak) left Kan extension of $d$ along $(J_1, \dotsc, J_n)$ exists then the conditions \textup{(p)}, of \thmref{creating Kan extensions between lax algebras} and \thmref{creating Kan extensions between colax algebras} respectively, are equivalent, and so are their conditions \textup{(e)}.
  \end{lemma}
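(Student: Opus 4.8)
�The plan is to exploit the fact that, for pseudo $T$-algebras, the associator and unitor cells $\bar m$, $\tilde m$ (and $\bar a_0$, $\tilde a_0$) are invertible, so that the various left-exactness conditions can be translated into one another. Concretely, I would first unwind the definitions: condition (e) of \thmref{creating Kan extensions between lax algebras} asks that $(\bar J_1, \dotsc, \bar J_n)$ be (weak) left $d$-exact and $(T\bar J_1, \dotsc, T\bar J_n)$ be (weak) left $(d \of a_0)$-exact, whereas condition (e) of \thmref{creating Kan extensions between colax algebras} asks that $(\bar J_1, \dotsc, \bar J_n)$ be (weak) left $d$-exact and that both $(\mu_{J_1}, \dotsc, \mu_{J_n})$ and $(\iota_{J_1}, \dotsc, \iota_{J_n})$ be (weak) left $(d \of a_0)$-exact. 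Since the first halves of these two conditions coincide verbatim, the content of the claim is that, under the standing hypothesis that the relevant left Kan extension of $d$ along $(J_1, \dotsc, J_n)$ exists (and that $(\bar J_1, \dotsc, \bar J_n)$ is left $d$-exact), the (weak) left $(d \of a_0)$-exactness of the single path $(T\bar J_1, \dotsc, T\bar J_n)$ is equivalent to the simultaneous (weak) left $(d \of a_0)$-exactness of $(\mu_{J_1}, \dotsc, \mu_{J_n})$ and $(\iota_{J_1}, \dotsc, \iota_{J_n})$. The analogous translation for the (p) conditions is identical, with $d \of a_0$ replaced by $m \of Td$ throughout.

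Next I would extract the structural identity that links these three paths of cells. The associativity axiom for each horizontal $T$-morphism $\bar J_i$ (\defref{horizontal T-morphism}) reads $\bar J_i \of (T\bar J_i, \id) $-style on one side and $\bar J_i \of (\mu_{J_i}, \bar a) $ on the other; since $A_i$ and $M$ are pseudo algebras with invertible $\bar a_i$, this exhibits $T\bar J_i$ (up to the invertible cells $\bar a_i$, $\mu_{\text{structure}}$, and a cartesian comparison) as a composite built from $\mu_{J_i}$ and $\bar J_i$. Dually the unit axiom for $\bar J_i$, together with the invertibility of $\tilde a_i$, relates $\iota_{J_i}$ to $\bar J_i$. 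The key point is then: because $(\bar J_1, \dotsc, \bar J_n)$ is assumed left $d$-exact (so its composition with the chosen Kan-extension cell $\eta$ again defines a left Kan extension), pasting the structural identities above onto $\eta$ and invoking the vertical pasting lemma (\lemref{vertical pasting lemma}) and the horizontal pasting lemma (\lemref{horizontal pasting lemma}) converts ``$(T\bar J_1, \dotsc, T\bar J_n)$ preserves the Kan extension'' into ``$(\mu_{J_1}, \dotsc, \mu_{J_n})$ and $(\iota_{J_1}, \dotsc, \iota_{J_n})$ each preserve it'' and vice versa. The invertible cells $\bar a_i$, $\tilde a_i$, $\bar m$, $\tilde m$ contribute only invertible vertical cells, which by \lemref{vertical cells defining left Kan extensions} neither create nor destroy the left-Kan-extension property. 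Carrying this out for the $(d \of a_0)$ version gives the equivalence of the (e) conditions; running the same argument with $m \of Td$ in place of $d \of a_0$ gives the equivalence of the (p) conditions.

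For the proof itself I would write: ``Since $A_0, \dotsc, A_n$ and $M$ are pseudo $T$-algebras, their associators $\bar a_i$, $\bar m$ and unitors $\tilde a_i$, $\tilde m$ are invertible. By the associativity axioms for the structure cells $\bar J_1, \dotsc, \bar J_n$ (\defref{horizontal T-morphism}), each $T\bar J_i$ equals, up to composition with the invertible cells $\bar a_i$ and $\mu_{\text{structure}}$, a paste of $\mu_{J_i}$ and $\bar J_i$; dually the unit axioms relate $\iota_{J_i}$ to $\bar J_i$ up to $\tilde a_i$. Composing a cell $\eta$ defining the (pointwise) (weak) left Kan extension of $d$ along $(J_1, \dotsc, J_n)$ with $(\bar J_1, \dotsc, \bar J_n)$ again defines such a Kan extension, by the first half of condition (e) in either theorem; hence, using the horizontal and vertical pasting lemmas together with \lemref{vertical cells defining left Kan extensions} to discard the invertible comparison cells, the composite $m \of T\eta \of (\mu_{J_1}, \dotsc, \mu_{J_n})$ (resp.\ with $\iota$) defines a left Kan extension if and only if $m \of T\eta \of (T\bar J_1, \dotsc, T\bar J_n)$ does, and similarly with $m$ replaced by the vertical identity and $Td$ by $d \of a_0$. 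This yields the asserted equivalence of the conditions (e) of the two theorems; the equivalence of the conditions (p) follows by the same argument with $d \of a_0$ replaced by $m \of Td$ throughout.''

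The main obstacle I anticipate is bookkeeping rather than conceptual: matching up precisely which invertible structure cells ($\bar a_i$ vs.\ $\bar m$, $\tilde a_i$ vs.\ $\tilde m$, and the various $\mu_J$, $\iota_J$ comparison cells) appear where, and confirming that each pasting really does land in the hypotheses of \lemref{vertical pasting lemma} (which requires the intervening path to be cocartesian) or of \lemref{horizontal pasting lemma}. In particular one must check that the comparison between $T\bar J_i$ and the $\mu_{J_i}$/$\bar J_i$ paste is built entirely from invertible cells and cartesian/cocartesian cells that the pasting lemmas can absorb; this is where the pseudo-algebra hypothesis is essential, since for merely lax or colax algebras the associator/unitor would obstruct the translation and the two (e) conditions would genuinely differ.
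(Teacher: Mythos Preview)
Your treatment of the (e) conditions is essentially the paper's: the first halves coincide, and the associativity and unit axioms for the $\bar J_i$, together with the invertibility of $\bar a_0$, $\tilde a_0$, $\bar a_n$, $\tilde a_n$, yield the identities
\begin{gather*}
\bigpars{\eta \of (\bar J_1 \of T\bar J_1, \dotsc, \bar J_n \of T\bar J_n)} \hc (l \of \bar a_n) = (d \of \bar a_0) \hc \bigpars{\eta \of (\bar J_1 \of \mu_{J_1}, \dotsc, \bar J_n \of \mu_{J_n})}, \\
\eta \hc (l \of \tilde a_n) = (d \of \tilde a_0) \hc \bigpars{\eta \of (\bar J_1 \of \iota_{J_1}, \dotsc, \bar J_n \of \iota_{J_n})},
\end{gather*}
from which the equivalence of the second halves follows.

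However, your claim that the (p) argument is ``identical, with $d \of a_0$ replaced by $m \of Td$ throughout'' is a genuine gap. You appear to have misread the second half of condition~(p) in \thmref{creating Kan extensions between lax algebras}: it does \emph{not} assert that $(T\bar J_1, \dotsc, T\bar J_n)$ is left $(m \of Td)$-exact. Rather, it asserts that the algebraic structure of $M$ preserves the Kan extension of $m \of Td$ along $(TJ_1, \dotsc, TJ_n)$, i.e.\ that $m \of Tm \of T^2\eta$ defines a left Kan extension. This composite involves neither the $\bar J_i$ nor their $T$-images, so the coherence axioms for the $\bar J_i$ play no role here. The paper instead uses the naturality of $\mu$ and $\iota$ as transformations (so $\mu_M \of T^2\eta = T\eta \of (\mu_{J_1}, \dotsc, \mu_{J_n})$ and $\iota_M \of \eta = T\eta \of (\iota_{J_1}, \dotsc, \iota_{J_n})$) together with the invertibility of $\bar m$ and $\tilde m$ (the coherence cells of $M$, not of the $A_i$) to obtain
\begin{gather*}
\bar m \of T^2\eta = (\bar m \of T^2d) \hc \bigpars{m \of T\eta \of (\mu_{J_1}, \dotsc, \mu_{J_n})}, \\
\tilde m \of \eta = (\tilde m \of d) \hc \bigpars{m \of T\eta \of (\iota_{J_1}, \dotsc, \iota_{J_n})}.
\end{gather*}
These identities show that $m \of Tm \of T^2\eta$ defines a Kan extension iff $m \of T\eta \of (\mu_{J_1}, \dotsc, \mu_{J_n})$ does, and that $m \of T\eta \of (\iota_{J_1}, \dotsc, \iota_{J_n})$ automatically does (since $\eta$ already defines one and $\tilde m$ is invertible). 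So while the overall shape of the (p) argument is similar in spirit---pairing up composites via invertible structure cells---the cells in play and the identities invoked are entirely different from those in the (e) case, and your proposed ``replace $d \of a_0$ by $m \of Td$'' shortcut does not land on the correct statement.
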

  \begin{proof}
  	We treat the case of left Kan extensions; the proof for the others is the same. Writing $\eta$ for the cell defining the left Kan extension of $d$ along $(J_1, \dotsc, J_n)$, notice that the first halves of the conditions (p) concide: they say that $m \of T\eta$ defines the left Kan extension of $m \of Td$ along $(TJ_1, \dotsc, TJ_n)$. Showing that their other halves are equivalent too means showing that $m \of Tm \of T^2\eta$ defines a left Kan extension if and only if both $m \of T\eta \of (\mu_{J_1}, \dotsc, \mu_{J_n})$ and $m \of T\eta \of (\iota_{J_1}, \dotsc, \iota_{J_n})$ do. Using the fact that the coherence cells $\cell{\bar m}{m \of Tm}{m \of \mu_M}$ and $\cell{\tilde m}{\id_M}{m \of \iota_M}$ are invertible, the latter follows from the identities below, which themselves are consequences of the interchange axiom (\lemref{horizontal composition}) and the naturality of $\mu$ and $\iota$ (see \defref{transformation}).
  	\begin{gather*}
  		\bar m \of T^2\eta = (\bar m \of T^2d) \hc (m \of \mu_M \of T^2\eta) = (\bar m \of T^2d) \hc \pars{m \of T\eta \of (\mu_{J_1}, \dotsc, \mu_{J_n})} \\
  		\tilde m \of \eta = (\tilde m \of d) \hc (m \of \iota_M \of \eta) = (\tilde m \of d) \hc \pars{m \of T\eta \of (\iota_{J_1}, \dotsc, \iota_{J_n})}
  	\end{gather*}
  	
  	The first halves of (e) coincide as well: they say that $\eta \of (J_1, \dotsc, J_n)$ defines the left Kan extension of $d \of a_0$ along $(TJ_1, \dotsc, TJ_n)$. Showing that their other halves are equivalent means showing that $\eta \of (\bar J_1 \of T\bar J_1, \dotsc, \bar J_n \of T\bar J_n)$ defines a left Kan extension if and only if both $\eta \of (\bar J_1 \of \mu_{J_1}, \dotsc, \bar J_n \of \mu_{J_n})$ and $\eta \of (\bar J_1 \of \iota_{J_1}, \dotsc, \bar J_n \of \iota_{J_1})$ do. Using the fact that the coherence cells $\bar a_0$, $\tilde a_0$, $\bar a_n$ and $\tilde a_n$ are invertible, the latter follows from the identities
  	\begin{gather*}
  		\bigpars{\eta \of (\bar J_1 \of T\bar J_1, \dotsc, \bar J_n \of T\bar J_n)} \hc (l \of \bar a_n) = (d \of \bar a_0) \hc \bigpars{\eta \of (\bar J_1 \of \mu_{J_1}, \dotsc, \bar J_n \of \mu_{J_n})}; \\
  		\eta \hc (l \of \tilde a_n) = (d \of \tilde a_0) \hc \bigpars{\eta \of (\bar J_1 \of \iota_{J_1}, \dotsc, \bar J_n \of \iota_{J_1})},
  	\end{gather*}
  	which themselves follow from the coherence axioms for the structure cells $\bar J_1, \dotsc, \bar J_n$; see \defref{horizontal T-morphism}. This completes the proof.
  \end{proof}
  
  \subsection{Cocomplete \texorpdfstring{$T$}T-algebras}
  In this final subsection we show how the creation of left Kan extensions can be used to ``lift'' cocomplete $T$-algebras along the forgetful functors $\map U{\wAlg v\lax T}\K$ and $\map U{\lbcwAlg vT}\K$, where $v \in \set{\clx, \lax, \psd}$. Given an ideal $\catvar S$ of left extension diagrams $M \xlar d A \xbrar J B$ in $\K$ (see \defref{cocompletion}), we denote for each $v \in \set{\clx, \lax, \psd}$ by
  \begin{displaymath}
  	\edpi{\catvar S}v\lax \dfn \set{(d, J) \mid (Ud, UJ) \in \catvar S}
  \end{displaymath}
  the ideal of left extension diagrams in $\wAlg vlT$ that is the preimage of $\catvar S$ under the forgetful functor $\map U{\wAlg v\lax T}\K$. Similarly we write
  \begin{displaymath}
  	\edpilbc{\catvar S}v \dfn \set{(d, J) \in \edpi{\catvar S}v\psd \mid \text{$J$ satisfies the left Beck-Chevalley condition}}
  \end{displaymath}
  for the restriction of $\edpi{\catvar S}v\psd$ to horizontal $T$-morphisms that satisfy the left Beck-Chevalley condition (see \defref{horizontal T-morphism}).
  \begin{proposition} \label{lifted cocomplete algebras}
  	Let $T = (T, \mu, \iota)$ be a monad on an augmented virtual equipment $\K$ that preserves unary restrictions. Let $\catvar S$ be a ideal of left extension diagrams in $\K$, and let `weak' mean either `colax' or `lax'. Consider a weak $T$-algebra $M = (M, m, \bar m, \tilde m)$ whose underlying object $M$ is $\catvar S$-cocomplete in $\K$ and whose algebraic structure preserves, for each $(d, J) \in \catvar S(M)$, the pointwise left Kan extension of $d$ along $J$. The following hold:
  	\begin{enumerate}[label=\textup{(\alph*)}]
  		\item if `weak' means `colax' then $M$ is both $\edpi{\catvar S}\clx\lax$-cocomplete in $\wAlg\clx\lax T$ and $\edpilbc{\catvar S}\clx$-cocomplete in $\lbcwAlg\clx T$ provided that, for each $(d, J) \in \catvar S(M)$, the cells $\mu_J$ and $\iota_J$ of are pointwise left $(m \of Td)$-exact;
  		\item if `weak' means `lax' then $M$ is both $\edpi{\catvar S}\lax\lax$-cocomplete in $\wAlg\lax\lax T$ as well as $\edpilbc{\catvar S}\lax$-cocomplete in $\lbcwAlg\lax T$ provided that its algebraic structure preserves, for each $(d, J) \in \catvar S(M)$, the pointwise left Kan extension of $m \of Td$ along $TJ$.
  	\end{enumerate}
  	In either case, for any $(d, J) \in \edpi{\catvar S}v\lax(M)$ (resp.\ $\edpilbc{\catvar S}v(M)$), the pointwise left Kan extension of $d$ along $J$ is created by the forgetful functor $\map U{\wAlg v\lax T}\K$ (resp.\ $\map U{\lbcwAlg vT}\K)$).
  	
  	Finally, any lax (resp.\ pseudo) $T$-morphism $\map{(f, \bar f)}MN$, between weak $T$"/algebras that both satisfy the appropriate condition above, is $\edpi{\catvar S}v\lax$-cocontinuous (resp.\ $\edpilbc{\catvar S}v$-cocontinuous) as soon as its underlying morphism $f$ is $\catvar S$-cocontinuous.
  \end{proposition}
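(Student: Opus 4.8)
The plan is to treat parts (a) and (b) as consequences of \thmref{creating Kan extensions between colax algebras} and \thmref{creating Kan extensions between lax algebras} respectively, applied pointwise at each left extension diagram in the relevant ideal, followed by a \propref{left adjoints preserve pointwise left Kan extensions}-style argument to handle cocontinuity of $\map{(f, \bar f)}MN$. First I would unwind the definitions: to show that $M$ is $\edpi{\catvar S}v\lax$-cocomplete in $\wAlg v\lax T$ (resp.\ $\edpilbc{\catvar S}v$-cocomplete in $\lbcwAlg vT$), I must show that for each $(d, J) \in \edpi{\catvar S}v\lax(M)$ (resp.\ $\edpilbc{\catvar S}v(M)$) the pointwise left Kan extension of $d$ along $J$ exists in the hypervirtual double category of $T$-algebras. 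Since $M$ is $\catvar S$-cocomplete in $\K$, the underlying pointwise left Kan extension of $Ud$ along $UJ$ exists in $\K$; it is defined by some nullary cell $\eta$. The task then reduces to lifting this $\eta$ to a $T$-cell that defines a pointwise left Kan extension in $\wAlg v\lax T$ (resp.\ $\lbcwAlg vT$), which is precisely the conclusion of the relevant creation theorem, provided its hypotheses hold.

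Next I would verify those hypotheses in each case. For part (a), `weak' means `colax', so I invoke \thmref{creating Kan extensions between colax algebras}(a): its condition (p) asks that the algebraic structure of $M$ preserve the (pointwise) left Kan extension of $d$ along $J$---which is exactly the standing hypothesis on $M$---and that the cells $\mu_J$ and $\iota_J$ be pointwise left $(m \of Td)$-exact, which is the extra hypothesis in (a). Part (b), where `weak' means `lax', invokes \thmref{creating Kan extensions between lax algebras}(a): condition (p) there asks that the algebraic structure of $M$ preserve both the pointwise left Kan extension of $d$ along $J$ and that of $m \of Td$ along $TJ$; the first is again the standing hypothesis, the second is the extra hypothesis in (b). In both cases, to obtain the Beck-Chevalley variant one restricts to $(d, J) \in \edpilbc{\catvar S}v(M)$, where $J$ satisfies the left Beck-Chevalley condition; by \lemref{concatenation of paths satisfying the Beck-Chevalley condition} the path of structure cells then satisfies it too, and the final clauses of the two creation theorems (concerning $\map U{\lbcwAlg vT}\K$ and requiring that the restrictions $J_n(\id, f)$ exist in $\K$ and be preserved by $T$---which holds since $\K$ is a hypervirtual equipment and $T$ preserves unary restrictions) give exactly creation of the pointwise left Kan extension by $\map U{\lbcwAlg vT}\K$. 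The statement that the pointwise left Kan extension is created by $\map U{\wAlg v\lax T}\K$ (resp.\ $\map U{\lbcwAlg vT}\K$) is then simply a restatement of what the creation theorems deliver.

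For the last clause, on cocontinuity of $\map{(f, \bar f)}MN$, I would argue as follows. Given $(d, J) \in \edpi{\catvar S}v\lax(M)$ and a $T$-cell $\eta$ defining $l$ as the pointwise left Kan extension of $d$ along $J$ in $\wAlg v\lax T$, I must show $(f, \bar f) \of \eta$ defines $(f, \bar f) \of l$ as a pointwise left Kan extension there. Applying the forgetful functor $U$, the underlying cell $U\eta$ defines $Ul$ as the pointwise left Kan extension of $Ud$ along $UJ$ in $\K$ (this is preservation, which follows from creation together with existence of the construction upstairs), and by $\catvar S$-cocontinuity of $f$ the composite $Uf \of U\eta$ defines $Uf \of Ul$ as a pointwise left Kan extension in $\K$. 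On the other hand, since $M$ and $N$ both satisfy the cocompleteness hypothesis, the pointwise left Kan extension of $(f, \bar f) \of d$ along $J$ exists in $\wAlg v\lax T$ and is created by $U$; by uniqueness of Kan extensions in $\K$, the $T$-cell $(f, \bar f) \of \eta$ must be the one created, hence it defines a pointwise left Kan extension in $\wAlg v\lax T$. The Beck-Chevalley variant is handled identically, replacing $\wAlg v\lax T$ by $\lbcwAlg vT$ and using that this inclusion reflects (pointwise) cocartesian cells, as established in the proof of \thmref{creating Kan extensions between lax algebras}.

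\textbf{Main obstacle.} The genuine content is entirely front-loaded into the two creation theorems, so the proof itself is bookkeeping; the one place demanding care is matching the left-exactness hypotheses on $\mu_J$ and $\iota_J$ to what condition (p) of each theorem literally requires, and---in the lax case---checking that ``the algebraic structure of $M$ preserves the pointwise left Kan extension of $m \of Td$ along $TJ$'' is genuinely an extra assumption that cannot be dropped, rather than something automatic. I expect the cleanest exposition simply cites \thmref{creating Kan extensions between lax algebras} and \thmref{creating Kan extensions between colax algebras} verbatim and does the ideal/Beck-Chevalley bookkeeping, leaving the cocontinuity clause as a short uniqueness-of-Kan-extension argument in the spirit of \propref{left adjoints preserve pointwise left Kan extensions}.
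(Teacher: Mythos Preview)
Your proposal is correct and follows essentially the same approach as the paper's own proof: reduce cocompleteness to creation of each individual pointwise left Kan extension via the appropriate part (a) of \thmref{creating Kan extensions between lax algebras} or \thmref{creating Kan extensions between colax algebras}, check that their condition (p) matches the proposition's hypotheses, and handle the cocontinuity clause by using that $\catvar S$ is an ideal so $(f \of d, J) \in \catvar S(N)$, whence creation (and thus reflection) by $U$ applies again. Your phrasing ``by uniqueness of Kan extensions in $\K$, the $T$-cell $(f, \bar f) \of \eta$ must be the one created'' is slightly roundabout---the paper says this more directly as ``created, and hence reflected''---but the logic is the same.
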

  Notice that, as a consequence of $\K$ being an augmented virtual equipment and \propref{creating restrictions}, the inclusion $\wAlg\psd\lax T \to \wAlg\lax\lax T$ preserves restrictions of the form $J(\id, f)$ and hence reflects pointwise left Kan extensions. It follows that any pseudo $T$-algebra $M$ is $\edpi{\catvar S}\psd\lax$-cocomplete in $\wAlg\psd\lax T$ whenever it is $\edpi{\catvar S}\lax\lax$-cocomplete in $\wAlg\lax\lax T$. An analogous implication holds for $\edpilbc{\catvar S}v$-cocompleteness.
  \begin{proof}
  	In either case consider $(d, J) \in \edpi{\catvar S}v\lax$ (resp.\ $(d, J) \in \edpilbc{\catvar S}v$); we have to show that the pointwise left Kan extension of $d$ along $J$ exists in $\wAlg v\lax T$ (resp.\ $\lbcwAlg vT$). The $\catvar S$-cocompleteness of $M$ in $\K$ ensures that the pointwise left Kan extension of the morphisms underlying $d$ and $J$ exists in $\K$. Since the remaining assumptions on $M$ allow us to apply the final part of either \thmref{creating Kan extensions between lax algebras} or \thmref{creating Kan extensions between colax algebras}, it follows that the pointwise left Kan extension of $d$ along $J$ is created by the forgetful functor $\map U{\wAlg v\lax T}\K$ (resp.\ $\map U{\lbcwAlg vT}\K$).
  	
  	For the final assertion consider a lax $T$-morphism $\map{(f, \bar f)}MN$ between weak $T$-algebras, whose $\edpi{\catvar S}v\lax$-cocompleteness is lifted along $\map U{\wAlg v\lax T}\K$ in the above sense, and assume that $f$ is $\catvar S$-cocontinuous. Let $\bigpars{(d, \bar d), (J, \bar J)} \in \edpi{\catvar S}v\lax(M)$ and let $\map{(l, \bar l)}BM$ be the lax $T$-morphism that is the pointwise left Kan extension of $(d, \bar d)$ along $(J, \bar J)$ created by $U$; we have to show that $(f \of l, \overline{f \of l})$ forms a pointwise left Kan extension as well. But this is clear: $\bigpars{(f \of d, \overline{f \of d}), (J, \bar J)} \in \edpi{\catvar S}v\lax$ because the latter is an ideal, so that the pointwise left Kan extension of $(f \of d, \overline{f \of d})$ along $(J, \bar J)$ is created, and hence reflected, along the forgetful functor $U$. By the $\catvar S$"/cocontinuity of $f$ the composite $f \of l$ forms the pointwise left Kan extension of $f \of d$ along $J$ in $\K$, from which we conclude that $(f \of l, \overline{f \of l})$ forms that of $(f \of d, \overline{f \of d})$ along $(J, \bar J)$. Clearly the previous argument applies to the case of $\edpilbc{\catvar S}v$-cocontinuity as well; this completes the proof.
  \end{proof}
  
  \begin{example} \label{monoidal cocompleteness}
  	Let $\V \to \V'$ be a symmetric universe enlargement (\defref{universe enlargement}) and let $\catvar S$ be the ideal of left extension diagrams in $\enProf{(\V, \V')}$ described in \exref{V-small}, consisting of pairs $(d, J)$ where $\hmap JAB$ is a $\V$-profunctor between $\V$"/categories with $A$ small. Consider the `free strict monoidal $\V'$-category'"/monad $T = (T, \mu, \iota)$ on $\enProf{(\V, \V')}$, that was described in \exref{free strict monoidal V-category monad}, whose (co-)lax algebras are (op-)lax monoidal $\V'$-categories $M = (M, \oslash, \mathfrak a, \mathfrak i)$ (\exref{monoidal V-categories}).
  	
  	Using \propref{cocartesian cells in (V, V')-Prof} it is straightforward to show that, for any $\V$-profunctor $\hmap JAB$, the cells $\mu_J$ and $\iota_J$ satisfy the left Beck-Chevalley condition (\defref{left exact}), so that by \propref{Beck-Chevalley} they are pointwise left exact. Moreover, given an (op-)lax monoidal category $M$ it follows from the next example that, for each $(d, J) \in \catvar S$, its monoidal structure preserves both the pointwise left Kan extension of $d$ along $J$ as well as that of $m \of Td$ along $J$, provided that its tensor product $\oslash$ preserves small $\V$-weighted colimits in each variable. Thus, if moreover $M$ is small $\V$-cocomplete as a $\V$-category, by the proposition above it admits all monoidal pointwise left Kan extensions of (op-)lax monoidal $\V'$-functors $\map dAM$ with $A$ small. Moreover, if $d$ is a monoidal $\V'$-functor (with invertible compositors) then a pointwise left Kan extension of $d$ along a monoidal $\V$-profunctor $\hmap JAB$ is again monoidal whenever $J$ satisfies the left Beck-Chevalley condition; see \exref{monoidal V-profunctor}.
  	
  	In \cite{Im-Kelly86} a monoidal $\V$-category $M$ is called `monoidally cocomplete' if it is small cocomplete as a $\V$-category and its tensor product $\dash \oslash \dash$ preserves small $\V$-weighted colimits on both sides. Thus, in terms of the previous and in view of the proposition above, monoidal cocompleteness implies $\edpi{\catvar S}\psd\lax$-cocompleteness in $\wAlg\psd\lax T$.
  \end{example}
  
  The following example is adapted from Example 5.3 in \cite{Koudenburg15a}. Recall the tensor product on $\enProf{(\V, \V')}$, as described before \lemref{enriched left Kan extensions along tensor products}.
	\begin{example} \label{tensor products preserving weighted colimits in each variable}
		Given an (op-)lax monoidal $\V'$-category $M = (M, \oslash, \mathfrak a, \mathfrak i)$, assume that each of its $n$-any tensor products $\oslash_n \dfn \brks{M^{\tens' n} \subset TM \xrar\oslash M}$ preserves small $\V$-weighted colimits in each variable; of course if $M$ is a monoidal category, with invertible $\mathfrak a$ and $\mathfrak i$, then it suffices that its binary tensor product $\map{\oslash_2}{M \tens' M}M$ does so.
		
		Given any $\V$-profunctor $\hmap JAB$ with $A$ a small $\V$-category, we claim that the algebraic structure of $M$ preserves all pointwise left Kan extensions along $J$, in the sense of \defref{algebraic structure preserving left Kan extensions}. To see this let $\eta$ be any cell in $\enProf{(\V, \V')}$ that defines a pointwise left Kan extension along $J$ into $M$; we have to show that $m \of T\eta$, as in the bottom of the left-hand side below, again defines a pointwise left Kan extension. Equivalently by \propref{enriched left Kan extensions in terms of weighted colimits} we may show that, for each \mbox{$\ul y = (y_1, \dotsc, y_n) \in TB$}, the composite of the bottom three cells below defines $(ly_1 \oslash \dotsb \oslash ly_n)$ as a $TJ(\id, \ul y)$"/weighted colimit of $m \of Td$.
		\begin{displaymath}

		\end{displaymath}
		Next we will describe a cell $\chi$, as in the left hand side above, that is right pointwise cocartesian, so that by the vertical pasting lemma (\lemref{vertical pasting lemma}), we may equivalently show that the whole composite on the left defines $(ly_1 \oslash \dotsb \oslash ly_n)$ as a weighted colimit. At the sequences of objects $\ul x_0 \in A^{\tens' n}, \ul x_1 \in A^{\tens' n'}, \dotsc, x_{n'} \in A$ the component of $\chi$ is given by
		\begin{align*}
			\sideset{}{'}\Tens_{i=1}^{n'}& A(x_{0i}, x_{1i}) \tens' J(x_{0n}, y_n) \tens' \sideset{}{'}\Tens_{i=1}^{n''} A(x_{1i}, x_{2i}) \tens' J(x_{1n'}, y_{n'}) \tens' \dotsb \tens' J(x_{n'}, y_1) \\
			&\mspace{-18mu}\iso \sideset{}{'}\Tens_{i=1}^n A(x_{0i}, x_{1i}) \tens' \dotsb \tens' A(x_{(n-i)'i}, x_{(n-i)i}) \tens' J(x_{(n-i)i}, y_i) \to \sideset{}{'}\Tens_{i=1}^n J(x_{0i}, y_i)
		\end{align*}
		where the isomorphism reorders the factors and the unlabelled map is given by the action of $A$ on $J$. Using \propref{cocartesian cells in (V, V')-Prof}, checking that $\chi$ is right pointwise cocartesian is straightforward.
		
		Finally consider the right-hand side above, where the nullary cells $\cell\delta{I_A}M$ are simply given by the action of $d$ on the hom-sets of $A$, and where each cell $\eta_i$ denotes the restriction of $\eta$ along $J(\id, y_i)$. That the two sides coincide follows from the equivariance of the cells $\eta_i$, with respect to the actions of $A$. Hence, using the horizontal pasting lemma (\lemref{horizontal pasting lemma}), we conclude that the claim follows if each of the cells in the right-hand side defines a pointwise left Kan extension. That that is the case, finally, follows easily from \lemref{enriched left Kan extensions along tensor products} and the assumption that the tensor products of $M$ preserve $J(\id, y_i)$-weighted colimits in each variable.
	\end{example}

  \section{Lifting algebraic yoneda embeddings}\label{algebraic yoneda embeddings section}
  The results of this section describe the lifting of yoneda embeddings along the forgetful functors $\map U{\wAlg vwT}\K$, for any monad $T$ on an augmented virtual double category $\K$. Stating and proving these results form the main motivation of this paper. Recall that an (co"/)lax $T$-algebra $A = (A, a, \bar a, \tilde a)$ is called normal if its unitor $\tilde a$ is invertible. Being slightly more natural, we start with the case of colax algebras.
  \begin{theorem} \label{lifting colax presheaf objects}
  	Let $T = (T, \mu, \iota)$ be a monad on an augmented virtual double category $\K$. Consider a colax $T$-algebra $A = (A, a, \bar a, \tilde a)$ and a good yoneda embedding $\map\yon A{\ps A}$ in $\K$. Assume that
  	\begin{enumerate}[label=-]
  		\item the conjoint $a^*$ exists;
  		\item the right pointwise composite $(a^* \hc Ty_*)$ exists (\defref{pointwise cocartesian path});
  		\item the cell $\mu_J$ is pointwise left $(\yon \of a)$-exact for each $\hmap JAB$ (\defref{left exact});
  		\item the cell $\mu_{T\yon_*}$ is pointwise left $(\yon \of a \of \mu_A)$-exact, while $\iota_{\yon_*}$ is pointwise left $(\yon \of a)$-exact;
  		\item $T$ preserves any unary cartesian cell with $\yon_*$ as horizontal target.
  	\end{enumerate}
  	The morphism $\map{v \dfn \cur{(a^* \hc T\yon_*)}}{T\ps A}{\ps A}$, that is given by the yoneda axiom (\defref{yoneda embedding}), and that comes equipped with a cartesian cell
  	\begin{displaymath}
  		\begin{tikzpicture}
				\matrix(m)[math35, column sep={2.75em,between origins}]{A & & T\ps A, \\ & \ps A & \\};
				\path[map]	(m-1-1) edge[barred] node[above] {$(a^* \hc T\yon_*)$} (m-1-3)
														edge node[below left] {$\yon$} (m-2-2)
										(m-1-3) edge node[below right] {$v$} (m-2-2);
				\draw				([yshift=0.333em]$(m-1-2)!0.5!(m-2-2)$) node[font=\scriptsize] {$\cart$};
			\end{tikzpicture}
  	\end{displaymath}
  	extends to a colax $T$-algebra structure $(v, \bar v, \tilde v)$ on $\ps A$. With respect to this structure $\map\yon A{\ps A}$ admits a pseudo $T$-morphism structure that makes it into a good yoneda embedding both in $\wAlg\clx\lax T$ and in $\lbcwAlg\clx T$.
  	
  	The following hold:
  	\begin{enumerate}[label=\textup{(\alph*)}]
  		\item	$\ps A$ is normal precisely if $A$ is;
  		\item assuming that $\bar a$ is invertible, the associator $\bar v$ is invertible if and only if the $T$-image of the cocartesian cell defining $(a^* \hc T\yon_*)$ is left $(\yon \of a)$-exact (e.g.\ if the cocartesian cell is preserved by $T$).
  	\end{enumerate}
  	If the conditions above are satisfied, so that the lift of $\yon$ forms a pseudo $T$-morphism between pseudo $T$-algebras, then $\yon$ forms a good yoneda embedding in $\wAlg\psd\lax T$ and $\lbcwAlg\psd T$ as well.
  \end{theorem}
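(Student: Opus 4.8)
The plan is to build the colax $T$-algebra structure on $\ps A$ in stages, leveraging the creativity results of \secref{creativity section} together with the fact that $\yon$ is a yoneda embedding, and then to verify the yoneda axioms for the lifted $\yon$ by reducing them to statements in $\K$ via the pointwise-exactness hypotheses. First I would construct the structure morphism $v$ and its defining cartesian cell exactly as stated, and then produce the associator $\bar v\colon v \of Tv \Rar v \of \mu_{\ps A}$ and unitor $\tilde v\colon \id_{\ps A} \Rar v \of \iota_{\ps A}$. The key observation is that $v$ is, by construction and by \lemref{density axioms}(c), a morphism defined by a pointwise left Kan extension: the cartesian cell above defines $v$ as the pointwise left Kan extension of $\yon$ along $(a^* \hc T\yon_*)$, since $\yon$ is dense. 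So the associator and unitor should arise as the unique factorisations of suitable composites through this Kan extension, with the needed universal properties supplied by the horizontal and vertical pasting lemmas (\lemref{horizontal pasting lemma}, \lemref{vertical pasting lemma}) and the $(\yon \of a)$-exactness of $\mu_{T\yon_*}$, $\iota_{\yon_*}$, $\mu_J$. Concretely, I would set up the relevant composite cells — using \corref{extensions and composites} to identify $(a^* \hc T\yon_*)$ as an extension, and \lemref{restrictions and composites} to manipulate the nested restrictions/composites $a^*$, $Ty_*$ — and then check the colax-algebra coherence axioms by noting that both sides become equal after composing with a cell that defines a pointwise left Kan extension (the one defining $v \of Tv$ or $v \of T^2v$ as such), hence are equal by uniqueness.

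Second, I would equip $\yon\colon A \to \ps A$ with a pseudo $T$-morphism structure. The structure cell $\bar\yon\colon \yon \of a \Rar v \of T\yon$ should be obtained by factoring the canonical map through the cartesian cell defining $v$; invertibility is forced because the cartesian cell defining $v$ already ``is'' $\yon$ restricted along $(a^* \hc T\yon_*)$, and the companion/conjoint identities (\lemref{companion identities lemma}) plus condition (v) (that $T$ preserves cartesian cells with target $\yon_*$) make the two candidate cells mutually inverse. Then I would verify that the lifted $\yon$ is a \emph{good} yoneda embedding in $\wAlg\clx\lax T$: density of the lifted $\yon$ follows from \propref{cells corresponding to vertical cells}-style arguments or, more directly, from the fact that forgetful functors reflect the relevant Kan extensions combined with \lemref{density axioms}; the yoneda axiom requires that for every horizontal $T$-morphism $\hmap{(J,\bar J)}A B$ the restriction $\ps A(\yon, \cur J)$ carries a $T$-structure, which is exactly the content of \propref{creating restrictions} — specifically part (d) and the creation of unary restrictions $K(\id, g)$ — applied to the cartesian cell given by the (un-lifted) yoneda axiom in $\K$. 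Here the hypothesis that $\mu_J$ is pointwise left $(\yon\of a)$-exact is what lets the horizontal $T$-morphism $J$ transfer its structure cleanly; this is the step where I expect the bookkeeping to be heaviest.

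For the two refinements: part (a) is a direct consequence of the defining factorisations — $\tilde v$ is invertible iff $\tilde a$ is, since the factorisation expressing $\tilde v$ is obtained from that expressing $\tilde a$ by composing with invertible cartesian data and a cell that defines a pointwise left Kan extension. Part (b) follows the pattern of \thmref{creating Kan extensions between colax algebras}(c): when $\bar a$ is invertible, invertibility of $\bar v$ is equivalent to one side of its defining factorisation defining a left Kan extension, which in turn (using the horizontal pasting lemma and the $(\yon\of a)$-exactness of $\mu_{T\yon_*}$) reduces to the $T$-image of the cocartesian cell defining $(a^*\hc T\yon_*)$ being left $(\yon\of a)$-exact — automatic if that cocartesian cell is $T$-preserved. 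Finally, for the Beck--Chevalley refinement, I would invoke \propref{creating restrictions}(d) and \propref{Beck-Chevalley}: if all restrictions $H(\id, a)$ exist in $\K$, then $\bar v$ (being built from $a^*$, whose defining cartesian cell satisfies Beck--Chevalley by \propref{Beck-Chevalley} since $(f^*\hc-)$ is a pointwise composite) lands in $\lbcwAlg\clx T$, and the lifted $\yon$, being a pseudo $T$-morphism, lands there too; the good-yoneda property is then inherited because the inclusion $\lbcwAlg\clx T \to \wAlg\clx\lax T$ reflects the relevant restrictions.

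The main obstacle I anticipate is verifying the colax-algebra coherence axioms for $(v, \bar v, \tilde v)$: this requires carefully assembling multi-row pasting diagrams involving $a^*$, $Ty_*$, $T^2y_*$, $\mu_A$, $\bar a$ and the various $\mu_J$, $\iota_J$ cells, and repeatedly invoking the right combination of pasting lemmas and exactness hypotheses to conclude equality ``after composing with a pointwise Kan extension cell''. The individual identities are routine in spirit but the diagram-chasing is intricate; the hypotheses (iii)--(v) are precisely calibrated to make each needed composite define a pointwise left Kan extension, so the real work is matching each coherence square to the hypothesis that kills it, rather than any genuinely new idea. Everything else — the construction of $v$, the pseudo structure on $\yon$, parts (a), (b), and the Beck--Chevalley refinement — should follow from the machinery already developed, especially \propref{creating restrictions}, \propref{cocompleteness of presheaf objects}, the pasting lemmas, and \thmref{creating Kan extensions between colax algebras}.
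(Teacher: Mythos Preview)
Your overall architecture matches the paper's, but there are concrete gaps in how you show the lifted $\yon$ is a yoneda embedding. For the yoneda axiom in $\wAlg\clx\lax T$ you invoke \propref{creating restrictions} to equip $\cur J$ with a $T$-structure, but that proposition goes the wrong way: it creates the $T$-structure on the \emph{horizontal} restriction $\ul K(f,g)$ assuming $f$ and $g$ are already $T$-morphisms, whereas here $J \iso \ps A(\yon,\cur J)$ already carries $\bar J$ and the task is to construct the lax structure $\bar{\cur J}$ on the \emph{vertical} morphism $\cur J$. The paper builds $\bar{\cur J}$ directly: writing $\cart'$ for the factorisation through $\yon_*$ of the cartesian cell defining $\cur J$ (again cartesian by the pasting lemma, and preserved by $T$ via hypothesis (v)), the composite $\eta \of T\cart'$ defines $v \of T\cur J$ as a pointwise left Kan extension, and $\bar{\cur J}$ is the unique factorisation of $\cart \of \bar J$ through it; the $\mu_J$-exactness is then used only to verify the associativity axiom for $\bar{\cur J}$. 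Similarly for density: ``forgetful functors reflect Kan extensions'' is not an available result --- you must show directly that the unique $\K$-factorisation $\phi'$ of a $T$-cell $\phi$ through a cartesian $T$-cell $\zeta$ is again a $T$-cell, and this comes down to a computation using that $\eta \of T\zeta'$ (with $\zeta'$ the factorisation of $\zeta$ through $\yon_*$) defines a pointwise left Kan extension. In the Beck--Chevalley refinement you speak of $\bar v$ satisfying Beck--Chevalley, but $\bar v$ is the associator of an algebra; what must actually be shown is that the created structure cell $\overline{\ps A(\yon,g)}$ on each restriction (for $g$ pseudo) is pointwise left $\yon$-exact, after which the equivalence (a)~$\Leftrightarrow$~(c) of \propref{Beck-Chevalley} applies --- the hypothesis on $H(\id,a)$ is precisely what makes that equivalence available.

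A minor slip: for a \emph{colax} algebra the coherence cells go $\bar v\colon v \of \mu_{\ps A} \Rar v \of Tv$ and $\tilde v\colon v \of \iota_{\ps A} \Rar \id_{\ps A}$; your directions are lax. This matters because $\bar v$ is obtained by factoring through $\eta \of \mu_{\yon_*}$, which (by the exactness of $\mu_{\yon_*}$, a special case of hypothesis (iii)) defines $v \of \mu_{\ps A}$, not $v \of Tv$, as a left Kan extension.
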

  
  \begin{example} \label{Day convolution}
    Let $\V \to \V'$ be a symmetric universe enlargement, such that $\V$ has an initial object that is preserved by its tensor product $\tens$ on both sides, and let $T = (T, \mu, \iota)$ be the `free strict monoidal $\V'$-category'-monad on $\enProf{(\V, \V')}$, that was described in \exref{free strict monoidal V-category monad}. Remember that $T$ is strong and preserves cocartesian cells, while it is easily checked to preserve all cartesian cells. That the cells $\mu_J$ and $\iota_J$, for each $\V$"/profunctor $\hmap JAB$, are pointwise left exact was briefly described in \exref{monoidal cocompleteness}.
    
    Thus given a monoidal $\V$-category $A = (A, \oslash, \mathfrak a, \mathfrak i)$ (\exref{monoidal V-categories}), whose yoneda embedding $\map\yon A{\ps A}$ exists by \propref{enriched yoneda embeddings}, we may apply the theorem above as soon as the $\V'$-coends on the right below are $\V$-objects; e.g.\ in the case that $A$ is small and $\V$ is small cocomplete.
    \begin{displaymath}
      (p_1 \oslash \dotsb \oslash p_n)(x) \iso \int^{u_1, \dotsc, u_n \in A} A\bigpars{x, (u_1 \oslash \dotsb \oslash u_n)} \tens' p_1u_1 \tens' \dotsb \tens' p_nu_n
    \end{displaymath}
    In that case the monoidal structure on $A$ lifts to one on $\ps A$, which we will again denote by $\map\oslash{TA}A$. Its defining cartesian cell, in the theorem above, ensures that its action on $\V$-presheaves $p_1, \dotsc, p_n$ on $A$ is given by the coends above, as shown; that is $(p_1 \oslash \dotsb \oslash p_n)$ is the \emph{Day convolution} of the presheaves $p_1, \dotsc, p_n$, as introduced (in a biased form) in Section 4 of \cite{Day70}.
    
    With respect to the monoidal structure on $\ps A$ above, the yoneda embedding $\map\yon A{\ps A}$ lifts to form a pseudomonoidal functor between $A$ and $\ps A$ such that it becomes a good yoneda embedding in the augmented virtual double categories $\wAlg\psd\lax T$ and $\lbcwAlg\psd T$, of monoidal $\V$-profunctors and monoidal $\V$-profunctors that satisfy the left Beck-Chevalley condition (see \exref{monoidal V-profunctor}). In particular it satisfies the following \emph{monoidal Yoneda's lemma}: for any monoidal $\V$-profunctor $\hmap JAB$ there exists a lax monoidal functor $\map{\cur J}B{\ps A}$ such that $J \iso \ps A(\yon, \cur J)$ as monoidal $\V$-profunctors. On objects $\cur J$ is given by $\cur Jy \dfn J(\dash, y)$, while is lax monoidal structure $\nat{\cur J_\oslash}{(\cur Jy_1 \oslash \dotsb \oslash \cur Jy_n)}{\cur J(y_1 \oslash \dotsb \oslash y_n)}$ is induced by the $\V'$-maps below; it is invertible precisely when $J$ satisfies the left Beck-Chevalley condition.
    \begin{displaymath}
      \begin{tikzpicture}
        \matrix(m)[math35]
          { A\bigpars{x, (u_1 \oslash \dotsb \oslash u_n)} \tens' J(u_1, y_1) \tens' \dotsb \tens' J(u_n, y_n) \\
            A\bigpars{x, (u_1 \oslash \dotsb \oslash u_n)} \tens' J\bigpars{(u_1 \oslash \dotsb \oslash u_n), (y_1 \oslash \dotsb \oslash y_n)} \\
            J\bigpars{x, (y_1 \oslash \dotsb \oslash y_n)} \\ };
        \path[map]  (m-1-1) edge node[left] {$\id \tens' J_\oslash$} (m-2-1)
                    (m-2-1) edge node[left] {$\lambda$} (m-3-1);
      \end{tikzpicture}
    \end{displaymath}
  \end{example}
  
  \begin{proof}[Proof of \thmref{lifting colax presheaf objects}]
  	The proof consists of the following steps: complete the definition of the colax $T$-algebra structure on $\ps A$, define the pseudo $T$-morphism structure on $\map\yon A{\ps A}$, show that $(\yon, \bar\yon)$ forms a good yoneda embedding in $\wAlg\clx\lax T$ and $\lbcwAlg\clx T$ and, finally, prove the assertions (a) and (b). It is useful to abbreviate by $\eta$ the composite on the left below, and by $\eta'$ its factorisation through the cartesian cell defining $\yon_*$, as shown.
  	\begin{equation} \label{eta}
  		\eta \dfn \begin{tikzpicture}[textbaseline]
  			\matrix(m)[math35, column sep={1.75em,between origins}]
  				{ & TA & & T\ps A & \\
  					A & & TA & & T\ps A \\
  					A & & & & T\ps A \\
  					& & \ps A & & \\ };
  			\path[map]	(m-1-2) edge[barred] node[above] {$T\yon_*$} (m-1-4)
  													edge[transform canvas={xshift=-2pt}] node[left] {$a$} (m-2-1)
  									(m-2-1) edge[barred] node[below] {$a^*$} (m-2-3)
  									(m-2-3) edge[barred] node[above] {$T\yon_*$} (m-2-5)
  									(m-3-1) edge[barred] node[below, inner sep=1pt] {$(a^* \hc T\yon_*)$} (m-3-5)
  													edge node[below left] {$\yon$} (m-4-3)
  									(m-3-5) edge node[below right] {$v$} (m-4-3);
  			\path				(m-1-2) edge[eq, transform canvas={xshift=1pt}] (m-2-3)
  									(m-1-4) edge[eq, transform canvas={xshift=1pt}] (m-2-5)
  									(m-2-1) edge[eq] (m-3-1)
  									(m-2-5) edge[eq, ps] (m-3-5);
  			\draw[font=\scriptsize]	([shift={(-1pt,-0.5em)}]$(m-1-2)!0.5!(m-2-2)$) node {$\cocart$}
  									($(m-2-3)!0.5!(m-3-3)$) node {$\cocart$}
  									([yshift=0.25em]$(m-3-3)!0.5!(m-4-3)$) node {$\cart$};
  		\end{tikzpicture} = \begin{tikzpicture}[textbaseline]
  			\matrix(m)[math35, column sep={1.75em,between origins}]
  				{ TA & & T\ps A \\
  					A & & \ps A \\
  					& \ps A & \\ };
  			\path[map]	(m-1-1) edge[barred] node[above] {$T\yon_*$} (m-1-3)
  													edge node[left] {$a$} (m-2-1)
  									(m-1-3) edge[ps] node[right] {$v$} (m-2-3)
  									(m-2-1) edge[barred] node[below] {$\yon_*$} (m-2-3)
  													edge[transform canvas={xshift=-1pt}] node[left] {$\yon$} (m-3-2);
  			\path				(m-2-3) edge[transform canvas={xshift=2pt}, eq, ps] (m-3-2)
  									(m-1-2) edge[cell] node[right] {$\eta'$} (m-2-2);
  			\draw[font=\scriptsize]	([yshift=0.25em]$(m-2-2)!0.5!(m-3-2)$) node {$\cart$};
  		\end{tikzpicture}
  	\end{equation}
  	We claim that $\eta$ defines $v$ as a pointwise left Kan extension. To see this first notice that, by \propref{Kan extension along conjoints}, we may equivalently prove that its composition with the cartesian cell defining $a^*$ does which, by the conjoint identities (horizontally dual to those in \lemref{companion identities lemma}), coincides with the composite of the bottom two rows on the left above. To see that the latter composite defines a pointwise left Kan extension remember that $\yon$ is dense (\lemref{density axioms}), so that the bottom cartesian cell defines a pointwise left Kan extension, followed by applying \corref{Kan extensions and composites} to the cocartesian cell defining $(a^* \hc T\yon_*)$, which is right pointwise by assumption.
  	
  	\emph{Step 1: the colax $T$-algebra structure on $\ps A$.} Having obtained a structure morphism $\map v{T\ps A}{\ps A}$, it remains to define associator and unitor cells $\cell{\bar v}{v \of \mu_{\ps A}}{v \of Tv}$ and $\cell{\tilde v}{v \of \iota_{\ps A}}{\id_{\ps A}}$, and show that $(\ps A, v, \bar v, \tilde v)$ satisfies the usual coherence axioms for colax algebras; see \eqref{colax algebra axioms} below. In order to define the associator $\bar v$ notice that the composite $\eta \of \mu_{\yon_*}$ in the right-hand side below defines $v \of \mu_{\ps A}$ as a left Kan extension, since $\mu_{\yon_*}$ is assumed pointwise left $(\yon \of a)$-exact; we take $\bar v$ to be the unique factorisation of the left-hand side through this composite, as shown.
  	\begin{equation} \label{bar v}
 \mspace{4mu} = \id_v
  	\end{equation}
  	To prove the associativity axiom, on the left above, consider the equation below, whose identities follow the associativity axiom for $T$; the identity \eqref{bar v}; the $T$-image of \eqref{bar v} after both of its sides have been factorised through the cartesian cell defining $\yon_*$; the associativity axiom for $A$; the identity \eqref{bar v} again; the naturality of $\mu$; the identity \eqref{bar v} once more. Now notice that the left and right-hand sides below coincide with both sides of the associativity axiom for $\ps A$ after composition on the left with $\eta \of \mu_{\yon_*} \of \mu_{T\yon_*}$. By the exactness assumptions on $\mu_{\yon_*}$ and $\mu_{T\yon_*}$, the latter composite defines $v \of \mu_{\ps A} \of \mu_{T\ps A}$ as a left Kan extension, so that the associativity axiom follows.
  	\begin{align*}

  	\end{align*}
  	To prove the first unit axiom, in the middle of \eqref{colax algebra axioms}, consider the following equation, where `c' denotes the cartesian cell defining $\yon_*$, and whose identities follow from the identity \eqref{bar v}; the naturality of $\iota$; the identity \eqref{tilde v}; the first unit axiom for $A$; the identity \eqref{eta} and the unit axiom for $T$. Notice that the left and right-hand sides below coincide with the first unit axiom for $\ps A$ composed on the left with $\eta \of \mu_{\yon_*} \of \iota_{T\yon_*} = \eta$. Since the latter defines a left Kan extension the unit axiom follows.
  	\begin{align*}

  	\end{align*}
  	
  	\emph{Step 2: the pseudo $T$-structure on $\map\yon A{\ps A}$.} Consider the composite $\gamma$ on the left below; we claim that it makes $\map\yon A{\ps A}$ into a colax $T$-morphism, that is it satisfies the associativity and unit axioms. Indeed: these follow directly from composing both sides of the identities \eqref{bar v} and \eqref{tilde v} with the ($T^2$-image of) the cocartesian cell that defines $\yon_*$, by using the naturality of $\mu$ and $\iota$ together with the fact that $\eta' = (\cocart \of a) \hc \eta$ (for the latter notice that composing either side with the cartesian cell defining $\yon_*$ results in $\eta$).
  	\begin{equation} \label{structure cell for yoneda embedding}
  		\gamma \dfn \begin{tikzpicture}[textbaseline]
  			\matrix(m)[math35, column sep={0.875em,between origins}]
  				{	& & TA & & \\
  					TA & & & & T\ps A \\
  					& A & & & \\
  					& & \ps A & & \\ };
  			\path[map]	(m-1-3) edge[transform canvas={xshift=4pt}, ps] node[right] {$T\yon$} (m-2-5)
  									(m-2-1) edge[barred] node[below] {$T\yon_*$} (m-2-5)
  													edge[transform canvas={xshift=-1pt}] node[left] {$a$} (m-3-2)
  									(m-2-5) edge[transform canvas={xshift=1pt}, ps] node[right] {$v$} (m-4-3)
  									(m-3-2) edge[transform canvas={xshift=-1pt}, ps] node[left] {$\yon$} (m-4-3);
  			\path				(m-1-3) edge[transform canvas={xshift=-5pt}, eq] (m-2-1)
  									(m-2-3) edge[cell, transform canvas={yshift=-0.5em}] node[right] {$\eta$} (m-3-3);
  			\draw[font=\scriptsize] ([yshift=-0.6em]$(m-1-3)!0.5!(m-2-3)$) node {$T\!\cocart$};
  		\end{tikzpicture} \qquad\qquad\qquad\qquad \id_{\yon} = \begin{tikzpicture}[textbaseline]
	    	\matrix(m)[math35, column sep={1.75em,between origins}]{& A & \\ A & & A \\ A & & \ps A \\ & \ps A & \\};
  	  	\path[map]	(m-2-1) edge[barred] node[below, inner sep=2pt] {$\ps A(\yon, \yon)$} (m-2-3)
  	  							(m-2-3) edge[ps] node[right] {$\yon$} (m-3-3)
  	  							(m-3-1) edge[barred] node[below, inner sep=2.5pt] {$\yon_*$} (m-3-3)
  	  											edge[transform canvas={xshift=-2pt}] node[left] {$\yon$} (m-4-2);
  	  	\path				(m-3-3)	edge[eq, ps, transform canvas={xshift=1pt}] (m-4-2)
  	  							(m-1-2) edge[cell, transform canvas={shift={(-0.5em,-0.5em)}}] node[right] {$\id_{\yon}'$} (m-2-2)
  	  							(m-1-2) edge[eq, transform canvas={xshift=-2pt}] (m-2-1)
  	  											edge[eq, transform canvas={xshift=2pt}] (m-2-3)
  	  							(m-2-1) edge[eq] (m-3-1);
  	  	\draw				([yshift=-0.25em]$(m-2-2)!0.5!(m-3-2)$) node[font=\scriptsize] {$\cart$}
  	  							([yshift=0.25em]$(m-3-2)!0.5!(m-4-2)$) node[font=\scriptsize] {$\cart$};
  		\end{tikzpicture}
  	\end{equation}
  	It remains to show that $\gamma$ is invertible. Since the cell $\eta$, in the composite $\gamma$, defines $v$ as a pointwise left Kan extension it suffices to prove that $T\yon$ is full and faithful, by \propref{pointwise left Kan extension along full and faithful map}. To this end consider the the factorisation of $\id_{\yon}$ on the right above and notice that both cartesian cells here are preserved by $T$: the bottom one by \corref{functors preserve companions and conjoints} and the one in the middle row by assumption. Because $\yon$ is full and faithful (\lemref{yoneda embedding full and faithful}) the top cell $\id_{\yon}'$ is cocartesian by \corref{unit in terms of full and faithful map}; it is preserved by $T$ as well by \corref{functors preserve horizontal units}. We conclude that $\id_{T\yon}$ factors as a cocartesian cell through the restriction $T\ps A(\yon, \yon)$, so that $T\yon$ is full and faithful by applying \corref{unit in terms of full and faithful map} once more. We write $\bar\yon \dfn \inv\gamma$, completing the definition of $\map{(\yon, \bar \yon)}A{\ps A}$ as a pseudo $T$-morphism.

  	\emph{Step 3: $\map{(\yon, \bar\yon)} A{\ps A}$ forms a good yoneda embedding in $\wAlg\clx\lax T$ and in $\lbcwAlg\clx T$.} To complete the proof we show that, with the $T$-algebra structures $(v, \bar v, \tilde v)$ and $\bar\yon$ on $\ps A$ and $\yon$ above, $(\yon, \bar\yon)$ defines a good yoneda embedding both in $\wAlg\clx\lax T$ and in $\lbcwAlg\clx T$. First notice that, because $(\yon, \bar\yon)$ is a pseudo $T$"/morphism, for any lax $T$-morphism $\map gB{\ps A}$ the restriction $\ps A(\yon, g)$ in $\wAlg\clx\lax T$ is created by the forgetful functor $\map U{\wAlg\clx\lax T}\K$, by \propref{creating restrictions}(b). Moreover, as follows from the proof of the latter combined with \eqref{eta}, the $T$"/structure cell of $\yon_* = \ps A(\yon, \id)$ coincides with the factorisation $\eta'$ of $\eta$ through $\yon_*$. Because $\eta$ defines $v$ as a pointwise left Kan extension and $\yon$ is dense (\lemref{density axioms}), $\eta'$ is pointwise left $\yon$-exact. It follows from \propref{Beck-Chevalley} that $\eta'$, and thus $\yon_*$ by definition, satisfies the left Beck"/Chevalley condition. Combining this with \propref{creating restrictions satisfying the left Beck-Chevalley condition} we find that restrictions $\ps A(\yon, g)$ in $\lbcwAlg\clx T$, where now $g$ is a pseudo $T$"/morphism, are created by $\map U{\lbcwAlg\clx T}\K$ as well. Thus, once we have shown that $(\yon, \bar\yon)$ forms a yoneda embedding both in $\wAlg\clx\lax T$ and in $\lbcwAlg\clx T$, it forms in fact a good yoneda embedding, in the sense described after \defref{yoneda embedding}.
  	
  	To show that $(\yon, \bar\yon)$ satisfies the yoneda axiom (\defref{yoneda embedding}), consider a horizontal $T$-morphism $\hmap JAB$. By the yoneda axiom for $\yon$ in $\K$ there exists a cartesian cell as on the left below; we have make it into a cartesian cell in $\wAlg\clx\lax T$ and $\lbcwAlg\clx T$, by supplying a lax $T$-morphism structure for $\map{\cur J}B{\ps A}$.
  	\begin{equation} \label{cart'}
			\begin{tikzpicture}[textbaseline]
				\matrix(m)[math35, column sep={1.75em,between origins}]{A & & B \\ & \ps A & \\};
				\path[map]	(m-1-1) edge[barred] node[above] {$J$} (m-1-3)
														edge[transform canvas={xshift=-2pt}] node[left] {$\yon$} (m-2-2)
										(m-1-3) edge[transform canvas={xshift=2pt}] node[right] {$\cur J$} (m-2-2);
				\draw				([yshift=0.333em]$(m-1-2)!0.5!(m-2-2)$) node[font=\scriptsize] {$\cart$};
			\end{tikzpicture} = \begin{tikzpicture}[textbaseline]
  			\matrix(m)[math35, column sep={1.75em,between origins}]
  				{ A & & B \\
  					A & & \ps A \\
  					& \ps A & \\ };
  			\path[map]	(m-1-1) edge[barred] node[above] {$J$} (m-1-3)
  									(m-1-3) edge[ps] node[right] {$\cur J$} (m-2-3)
  									(m-2-1) edge[barred] node[below] {$\yon_*$} (m-2-3)
  													edge[transform canvas={xshift=-1pt}] node[left] {$\yon$} (m-3-2);
  			\path				(m-1-1) edge[eq] (m-2-1)
  									(m-2-3) edge[transform canvas={xshift=2pt}, eq, ps] (m-3-2);
  			\draw[font=\scriptsize]	([yshift=0.25em]$(m-2-2)!0.5!(m-3-2)$) node {$\cart$}
  									($(m-1-2)!0.5!(m-2-2)$) node {$\cart'$};
  		\end{tikzpicture}
	  \end{equation}
	  Writing $\cart'$ for the factorisation as shown above, remember that it is again cartesian by the pasting lemma, and that it is preserved by $T$ by assumption. Because $\eta$ defines a pointwise left Kan extension, it follows that the composite $\eta \of T\cart'$ in the right-hand side below defines $v \of T\cur J$ as a left Kan extension; consequently the left-hand side, where $\bar J$ denotes the structure cell of $J$, factors uniquely as a vertical cell $\bar{\cur J}$ as shown. Notice that if $\bar J$ satisfies the left Beck"/Chevalley condition, so that it is pointwise left exact by \propref{Beck-Chevalley}, then $\bar{\cur J}$ is invertible.
	  \begin{equation} \label{bar cur J}
	  	\begin{tikzpicture}[textbaseline]
  			\matrix(m)[math35, column sep={1.75em,between origins}]
  				{ TA & & TB \\
  					A & & B \\
  					& \ps A & \\ };
  			\path[map]	(m-1-1) edge[barred] node[above] {$TJ$} (m-1-3)
  													edge node[left] {$a$} (m-2-1)
  									(m-1-3) edge node[right] {$b$} (m-2-3)
  									(m-2-1) edge[barred] node[below] {$J$} (m-2-3)
  													edge[transform canvas={xshift=-1pt}] node[left] {$\yon$} (m-3-2)
  									(m-2-3) edge[ps, transform canvas={xshift=2pt}] node[right] {$\cur J$} (m-3-2);
  			\path				(m-1-2) edge[cell] node[right] {$\bar J$} (m-2-2);
  			\draw[font=\scriptsize]	([yshift=0.25em]$(m-2-2)!0.5!(m-3-2)$) node {$\cart$};
  		\end{tikzpicture} = \begin{tikzpicture}[textbaseline]
  			\matrix(m)[math35, column sep={1.75em,between origins}]
  				{ & TA & & TB \\
  					TA & & T\ps A & \\
  					& A & & \\
  					& & & \ps A \\ };
  			\draw				([shift={(2.25em,0.25em)}]$(m-1-4)!0.5!(m-4-4)$) node (P) {$B$};
  			\path[map]	(m-1-2) edge[barred] node[above] {$TJ$} (m-1-4)
  									(m-1-4) edge[ps] node[below right, inner sep=0.5pt] {$T\cur J$} (m-2-3)
  													edge[bend left=18, transform canvas={xshift=1pt}] node[above right] {$b$} (P)
  									(m-2-1) edge[barred] node[below] {$T\yon_*$} (m-2-3)
  													edge node[left] {$a$} (m-3-2)
  									(m-2-3) edge[ps] node[above right, inner sep=1.5pt] {$v$} (m-4-4)
  									(m-3-2) edge node[below left] {$\yon$} (m-4-4)
  									(P) edge[ps, bend left=18, transform canvas={xshift=1pt}] node[below right] {$\cur J$} (m-4-4);
  			\path				(m-1-2) edge[eq, transform canvas={xshift=-1pt}] (m-2-1)
  									(m-2-2) edge[cell, transform canvas={shift={(1.125em,-1.125em)}}] node[right] {$\eta$} (m-3-2)
  									(m-2-4) edge[cell, transform canvas={xshift=0.25em}] node[right] {$\bar{\cur J}$} (m-3-4);
  			\draw[font=\scriptsize]	($(m-1-2)!0.5!(m-2-3)$) node {$T\!\cart'$};
  		\end{tikzpicture}
	  \end{equation}
	  We shall prove that $\bar{\cur J}$ makes $\cur J$ into a lax $T$-morphism. Combined with $\bar \yon = \inv\gamma$, with $\gamma$ defined in \eqref{structure cell for yoneda embedding}, the identity above then implies the $T$"/cell axiom for the cartesian cell defining $\cur J$ so that, because the forgetful functors from $\wAlg\clx\lax T$ and $\lbcwAlg\clx T$ into $\K$ reflect such cartesian cells, as we have already seen, it forms a cartesian $T$"/cell in both $\wAlg\clx\lax T$ and $\lbcwAlg\clx T$ as well.
	  
	  We return to proving that $(\cur J, \bar{\cur J})$ forms a lax $T$-morphism; that is it satisfies the associativity and unit axioms. That the two sides of the associativity axiom coincide after composition on the left with the composite $\eta \of \mu_{\yon_*} \of T^2\cart'$ is shown below, where the identities follow from the identity \eqref{bar v}; the $T$-image of the identity above factorised through the cartesian cell defining $\yon_*$; the identity above; the associativity axiom for $J$ (\defref{horizontal T-morphism}); the identity above; the naturality of $\mu$. The associativity axiom itself now follows from the fact that $\eta \of \mu_{\yon_*} \of T^2\cart' = \eta \of T\cart' \of \mu_J$ defines a left Kan extension, as $\mu_J$ is assumed to be pointwise left $(\yon \of a)$-exact.
	  \begin{align*}

  	\end{align*}
  	Finally, notice that $\zeta'$ is cartesian in $\K$ because $\zeta$ is, by the pasting lemma; hence $T$ preserves $\zeta'$ by assumption. Because $\eta$ defines $v$ as a pointwise left Kan extension it follows that the first column $\eta \of T\zeta'$, in the left-hand and right-hand sides above, defines $v \of Tl$ as a pointwise left Kan extension. Since these sides coincide with $T$-cell axiom for $\phi'$ after it has been composed with $\eta \of T\zeta'$ on the left, we conclude that the $T$-cell axiom itself holds. This completes the proof of the main assertion of the theorem.
  	
  	\emph{Step 4: the assertions \textup{(a)} and \textup{(b)}.} To prove (a) first notice that $\tilde a$ is invertible if and only if $\yon \of \tilde a$ is, where we use that $\yon$ is full and faithful (\lemref{yoneda embedding full and faithful}) so that $\id_{\yon}$ is cartesian. Now consider the unique factorisation $(\yon \of \tilde a) \hc \cart = (\eta \of \iota_{\yon_*}) \hc \tilde v$, see \eqref{tilde v} above, which defines $\tilde v$; the cartesian cell here defines $\yon_*$. Both the first column $\yon \of \tilde a$ in the right-hand side as well as the cartesian cell define left Kan extensions, the latter because of the density of $\yon$ (\lemref{density axioms}). That $\tilde v$ being invertible is implied by $\yon \of \tilde a$ being so now follows from uniqueness of left Kan extensions. Conversely, if $\tilde v$ is invertible then both sides above define left Kan extensions so that, composing the left-hand side with the weakly cocartesian cell defining $\yon_*$, we find that $\yon \of \tilde a$ is invertible by using \propref{pointwise left Kan extension along full and faithful map}.
  	
  	To prove (b) assume that $\bar a$ is invertible. Composing the unique factorisation \eqref{bar v}, that defines $\bar v$, with $\inv{\bar a}$ gives $\eta \of T\eta' = (\yon \of \inv{\bar a}) \hc (\eta \of \mu_{\yon_*}) \hc \bar v$, so that $\bar v$ is invertible if and only if $\eta \of T\eta'$ defines a pointwise left Kan extension. The defining identity \eqref{eta} of $\eta'$ implies that we can rewrite $\eta' = \cart' \of \cocart \of (\cocart, \id_{T\yon_*})$, where the cocartesian cells define $(a^* \hc T\yon_*)$ and $a^*$ respectively and where $\cart'$ is the factorisation $(a^* \hc T\yon_*) \Rar \yon_*$ of the cartesian cell defining $v$ through $\yon_*$, which is again cartesian by the pasting lemma (\lemref{pasting lemma for cartesian cells}). Composing $\eta \of T\eta'$ on the left with the $T$-image of the weakly cocartesian cell that defines $a_*$, we conclude that it defines a pointwise left Kan extension if and only if $\eta \of T\cart' \of T\cocart$ does; here we use \propref{Kan extension along conjoints} and the horizontal conjoint identity for $a^*$ (horizontally dual to that in \lemref{companion identities lemma}). In the latter composite $\eta \of T\cart'$ defines the pointwise left Kan extension of $\yon \of a$ along $(a^* \hc T\yon_*)$, because $\eta$ defines $v$ as a pointwise left Kan extension while $T$ preserves the cartesian cell $\cart'$ by assumption. We conclude that $\eta \of T\cart' \of T\cocart$ defining a pointwise left Kan extension is the same as the $T$-image of the cocartesian defining $(a^* \hc T\yon_*)$ being left $(\yon \of a)$-exact; since we have shown that the former is equivalent to $\bar v$ being invertible, the proof of (b) follows.
  	
  	That, in the case that $A$ and $\ps A$ are pseudo $T$"/algebras, $(\yon, \bar\yon)$ forms a yoneda embedding in $\wAlg\psd\lax T$ and $\lbcwAlg\psd T$ as well follows immediately from the fact that the inclusions $\wAlg\psd\lax T \to \wAlg\clx\lax T$ and $\lbcwAlg\psd T \to \lbcwAlg\lax T$ are locally full (see \defref{full and faithful functor}), so that they create both cartesian cells as well as cells defining left Kan extensions. This completes the proof.
	\end{proof}
	
	Next is the case of lax algebras.
	\begin{theorem} \label{lifting lax presheaf objects}
		Let $T = (T, \mu, \iota)$ be a monad on an augmented virtual double category $\K$. Consider a lax $T$-algebra $A = (A, a, \bar a, \tilde a)$ and a good yoneda embedding $\map\yon A{\ps A}$ in $\K$ and assume that
  	\begin{enumerate}[label=-]
  		\item the conjoint $a^*$ exists;
  		\item the right pointwise composite $(a^* \hc Ty_*)$ exists (\defref{pointwise cocartesian path}) and is preserved by both $T$ and $T^2$;
  		\item both $T$ and $T^2$ preserve any unary cartesian cell with $\yon_*$ as horizontal target.
  	\end{enumerate}
  	The morphism $\map{v \dfn \cur{(a^* \hc T\yon_*)}}{T\ps A}{\ps A}$, that is given by the yoneda axiom (\defref{yoneda embedding}), extends to a lax $T$-algebra structure $(v, \bar v, \tilde v)$ on $\ps A$. With respect to this structure $\map\yon A{\ps A}$ admits a pseudo $T$-morphism structure that makes it into a good yoneda embedding both in $\wAlg\lax\lax T$ and in $\lbcwAlg\lax T$.
  	
  	The following hold:
  	\begin{enumerate}[label=\textup{(\alph*)}]
  		\item	if $A$ is normal then $\ps A$ is normal precisely if $\iota_{\yon_*}$ is pointwise left $(\yon \of a)$-exact;
  		\item assuming that $\bar a$ is invertible, the associator $\bar v$ is invertible precisely if $\mu_{\yon_*}$ is pointwise left $(\yon \of a)$-exact.
  	\end{enumerate}
  \end{theorem}
  \begin{proof}
  	The proof coincides for a large part with that for the colax case; we briefly describe the differences. In order to define the associator $\cell{\bar v}{v \of Tv}{v \of \mu_{\ps A}}$ and the unitor $\cell{\tilde v}{\id_{\ps A}}{v \of \iota_{\ps A}}$ we again consider the cells $\eta$ and $\eta'$ that were defined in \eqref{eta}, and remember that $\eta$ defines $v$ as the pointwise left Kan extension. The assumptions on $T$ ensure that the composite $\eta \of T\eta'$, in the right"/hand side below, defines $v \of Tv$ as a pointwise left Kan extension, as we have seen in the proof of assertion (b) of \thmref{lifting colax presheaf objects}; that the composite $\eta \of T\eta' \of T^2\eta'$ defines a pointwise left Kan extension as well follows analogously from the assumptions on $T^2$. We can thus obtain the associator $\bar v$ as the unique factorisation in
  	\begin{equation} \label{lax bar v}
 \mspace{12mu} = \eta
  	\end{displaymath}
  	
  	This completes the definition of the lax $T$"/algebra $\ps A = (\ps A, v, \bar v, \tilde v)$. Just like in the colax case the invertible vertical cell $\cell{\bar\yon \coloneqq \inv\gamma}{v \of T\yon}{\yon \of a}$, with $\gamma$ as defined in \eqref{structure cell for yoneda embedding}, makes $\map\yon A{\ps A}$ into a pseudo $T$"/morphism: the coherence axioms for $\gamma$ are obtained by precomposing \eqref{lax tilde v} and \eqref{lax bar v} above with the cocartesian cell defining $\yon_*$ and its $T^2$"/image respectively. As before all restrictions of the form $\ps A(\yon, g)$ in $\wAlg\lax\lax T$ are created by the forgetful functor $\map U{\wAlg\lax\lax T}\K$. Also the companion of $(\yon, \bar\yon)$ again satisfies the left Beck"/Chevalley condition so that by \propref{creating restrictions satisfying the left Beck-Chevalley condition} all restrictions $\ps A(\yon, g)$, now with $g$ a pseudo $T$"/morphism, are created by $\map U{\lbcwAlg\lax T}\K$ as well.
  	
  	In order to show that $(\yon, \bar \yon)$ satisfies the yoneda axiom both in $\wAlg\lax\lax T$ and $\lbcwAlg\lax T$, let $\hmap JAB$ be any horizontal $T$"/morphism. Consider the morphism $\map{\cur J}B{\ps A}$ given by the yoneda axiom for $\yon$ in $\K$, that is defined by the cartesian cell in \eqref{cart'}; we claim that the vertical cell $\cell{\bar{\cur J}}{v \of T\cur J}{\cur J \of b}$, defined by the unique factorisation \eqref{bar cur J}, in the lax case too forms a lax $T$"/morphism structure for $\cur J$. Indeed its associativity and unit axioms follow from the equalities below, which form the composition of either axiom with the composite $\eta \of T\eta' \of T^2\cart'$ and the cartesian cell defining $\yon_*$ respectively, because both define left Kan extensions; here we use that the cartesian cell $\cart'$, defined in \eqref{cart'}, is preserved by $T^2$. The first equality here follows from the $T$"/image of \eqref{bar cur J} factorised through $T\yon_*$; \eqref{bar cur J} itself; the associativity axiom for the structure cell $\bar J$ of $J$; the naturality of $\mu$; \eqref{lax bar v}, while the second one follows from \eqref{lax tilde v}; the naturality of $\iota$; \eqref{bar cur J} again; the unit axiom for $\bar J$.
  	\begin{displaymath}
  		\begin{tikzpicture}[scheme, yshift=-0.8em]
  			\draw	(1,4) -- (1,0) -- (0,0) -- (0,4) -- (2,4) -- (2,0) -- (3,0) -- (3,4) -- (4,4) -- (4,2) -- (3,2) (0,2) -- (2,2) (0,3) -- (1,3) (2,3) -- (3,3);
  			\draw	(0.5,1) node {$\eta$}
  						(0.5,2.5) node {$T\eta'$}
  						(0.5,3.5) node {$T^2\mspace{-2mu}\textup c'$}
  						(1.5,3) node {$T\bar{\cur J}$}
  						(2.5,1.5) node {$\bar{\cur J}$}
  						(3.5,3) node {$\bar b$};
  		\end{tikzpicture} \mspace{12mu} = \mspace{12mu} \begin{tikzpicture}[scheme, yshift=-0.8em]
  			\draw	(1,0) -- (1,4) -- (0,4) -- (0,0) -- (2,0) -- (2,4) -- (3,4) -- (3,0) -- (4,0) -- (4,3) -- (3,3) (0,2) -- (1,2) (2,2) -- (3,2) (0,3) -- (2,3);
  			\draw	(0.5,1) node {$\eta$}
  						(0.5,2.5) node {$T\eta'$}
  						(0.5,3.5) node {$T^2\mspace{-2mu}\textup c'$}
  						(1.5,1.5) node {$\bar v$}
  						(3.5,1.5) node {$\bar{\cur J}$};
  		\end{tikzpicture} \qquad\qquad \begin{tikzpicture}[scheme]
  			\draw	(2,2) -- (0,2) -- (0,3) -- (1,3) -- (1,0) -- (2,0) -- (2,3) -- (3,3) -- (3,0) -- (4,0) -- (4,2) -- (3,2) (2,1) -- (3,1);
  			\draw	(0.5,2.5) node {c}
  						(1.5,1) node {$\tilde v$}
  						(3.5,1) node {$\bar{\cur J}$};
  		\end{tikzpicture} \mspace{12mu} = \mspace{12mu} \begin{tikzpicture}[scheme]
  			\draw	(1,3) -- (1,2) -- (0,2) -- (0,3) -- (3,3) -- (3,1) -- (2,1) -- (2,3) (1,2) -- (1,0) -- (2,0) -- (2,1);
  			\draw	(0.5,2.5) node {c}
  						(2.5,2) node {$\tilde b$};
  		\end{tikzpicture}
  	\end{displaymath}
  	Finally notice that, as in the colax case, the structure cell $\bar{\cur J}$ is invertible whenever $J$ satisfies the left Beck"/Chevalley condition, as follows from \eqref{bar cur J}. This identity also implies the $T$"/cell axiom for the cartesian cell that defines $\cur J$; that it is again cartesian as a $T$"/cell then follows from the fact that the forgetful functors of $\wAlg\lax\lax T$ and $\lbcwAlg\lax T$ create all restrictions of the form $\ps A(\yon, g)$. This completes the proof of the yoneda axiom for $(\yon, \bar \yon)$. It now remains to prove that $(\yon, \bar\yon)$ is dense both in $\wAlg\lax\lax T$ and $\lbcwAlg\lax T$: here the corresponding argument of the colax case applies verbatim. This completes the proof.
  \end{proof}
	
	In the following proposition we consider the uniqueness of the colax structures $(v, \bar v, \tilde v)$ on presheaf objects $\ps A$, as obtained in the previous theorem.
	\begin{proposition}
		When regarded as a colax $T$-morphism the lifted yoneda embedding $\map{(\yon, \gamma)}{(A, a, \bar a, \tilde a)}{(\ps A, v, \bar v, \tilde v)}$, obtained in \thmref{lifting colax presheaf objects}, is unique as follows. Any colax $T$-morphism $\map{(\yon, \phi)}{(A, a, \bar a, \tilde a)}{(\ps A, w, \bar w, \tilde w)}$ between colax $T$-algebras factors uniquely through $(\yon, \gamma)$ as a colax $T$-morphism $\map{(\id, \phi')}{\ps A}{\ps A}$. If $(\yon, \phi)$ is a pseudo $T$-morphism then $(\id, \phi')$ is too precisely if the composite below defines $w$ as a pointwise left Kan extension of $w \of T\yon$ along $T\yon_*$.
		\begin{displaymath}

		\end{displaymath}
		We claim that $\phi'$ makes $\id_{\ps A}$ into a colax $T$-morphism $(\ps A, v, \bar v, \tilde v) \to (\ps A, w, \bar w, \tilde w)$; notice that, by precomposing both sides above with the $T$-image of the weakly cocartesian cell defining $\yon_*$, it follows from the definition \eqref{structure cell for yoneda embedding} of $\gamma$ that $\phi'$ is unique such that $(\id, \phi') \of (\yon, \gamma) = (\yon, \phi)$. Also notice that the final assertion can be read off: if $\phi$ is invertible then, as follows from the identity obtained by composing both sides above with $\inv\phi$ on the left together with uniqueness of Kan extensions, $\phi'$ is invertible precisely if $w \of T\cart$ defines $w$ as a pointwise left Kan extension.
		
		We return to the claim that $\phi'$ forms a colax $T$-structure cell for $\id_{\ps A}$; that is it satisfies the associativity and unit axioms. The former follows from the equation below, where $\eta \of \mu_{\yon_*}$ defines a left Kan extension because $\eta$ does and the left exactness assumption on $\mu_{\yon_*}$ in \thmref{lifting colax presheaf objects}, and where the identities follow from the definition \eqref{bar v} of $\bar v$; the identity above and the definition \eqref{eta} of $\eta'$; the $T$-image of the identity above; the associativity axiom for $\phi$; the naturality of $\mu$; the identity above.
		\begin{align*}

		\end{displaymath}
		This completes the proof.
	\end{proof}
	
	By combining \thmref{presheaf objects as free cocompletions}, \propref{lifted cocomplete algebras} and the previous proposition, the theorem below describes the sense in which a lifted algebraic yoneda embedding, as obtained in \thmref{lifting colax presheaf objects}, defines a free cocompletion.
	\begin{theorem}
		Let $T = (T, \mu, \iota)$ be a monad on an augmented virtual equipment $\K$ whose underlying endofunctor preserves all unary restrictions. Given a colax $T$-algebra $M = (M, m, \bar m, \tilde m)$ and a good yoneda embedding $\map\yon M{\ps M}$ in $\K$ assume that the hypotheses of \thmref{lifting colax presheaf objects} are satisfied, so that $\yon$ lifts to a yoneda embedding \mbox{$\map{(\yon, \bar\yon)}{(M, m, \bar m, \tilde m)}{(\ps M, v, \bar v, \tilde v)}$} in $\wAlg\clx\lax T$ and $\lbcwAlg\clx T$. Let $\catvar S$ be an ideal of left extension diagrams in $\K$ such that
		\begin{enumerate}
			\item[\textup{(e)}] for each $(d, J) \in \catvar S(\ps M)$ there exists a pointwise left $\yon$-exact cell
				\begin{displaymath}
					\begin{tikzpicture}
						\matrix(m)[math35]{ M & A & B \\ M & & B \\ };
						\path[map]	(m-1-1) edge[barred] node[above] {$\ps M(\yon, d)$} (m-1-2)
												(m-1-2) edge[barred] node[above] {$J$} (m-1-3)
												(m-2-1) edge[barred] node[below] {$K$} (m-2-3);
						\path				(m-1-1) edge[eq] (m-2-1)
												(m-1-3) edge[eq] (m-2-3)
												(m-1-2) edge[cell] node[right] {$\phi$} (m-2-2);
					\end{tikzpicture}
				\end{displaymath}
				whose $T$-image is pointwise left $(\yon \of m)$-exact;
			\item[\textup{(m)}]	for each $(d, J) \in \catvar S(\ps M)$ both cells $\mu_J$ and $\iota_J$ are pointwise left $(m \of Td)$-exact;
			\item[\textup{(y)}] $(f, \yon_*) \in \catvar S$ for all $\map fMN$.
		\end{enumerate}
		The yoneda embedding $(\yon, \bar\yon)$ defines $\ps M$ both as the free $\edpi{\catvar S}\clx\lax$-cocompletion of $M$ in $\wAlg\clx\lax T$ and as the free $\edpilbc{\catvar S}\clx$-cocompletion of $M$ in $\lbcwAlg\clx T$. Moreover $\ps M$ is $\catvar S$-cocomplete in $\K$ and, for each $(d, J) \in \edpi{\catvar S}\clx\lax(\ps M)$ (resp.\ $\edpilbc{\catvar S}\clx(\ps M)$), the pointwise left Kan extension of $d$ along $J$ is created by $\map U{\wAlg\clx\lax T}\K$ (resp.\ $\map U{\lbcwAlg\clx T}\K$).
		
		Finally let $N$ be a colax $T$-algebra that satisfies the hypotheses of \propref{lifted cocomplete algebras}, so that its $\edpi{\catvar S}\clx\lax$-cocompleteness is lifted along $\map U{\wAlg\clx\lax T}\K$. A lax $T$-morphism $\map{(f, \bar f)}{\ps M}N$ is $\edpi{\catvar S}\clx\lax$-cocontinuous if and only if its underlying morphism $f$ is $\catvar S$"/cocontinuous. The analogous result for $\edpilbc{\catvar S}\clx$-cocontinuity of pseudo $T$"/morphisms $\ps M \to N$ in $\lbcwAlg\clx T$ holds as well.
	\end{theorem}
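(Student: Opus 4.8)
### Proof Proposal

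The plan is to assemble this final theorem entirely from the machinery already in place, treating it as a coherent packaging of three earlier results rather than a new argument. The strategy is: first invoke \thmref{lifting colax presheaf objects} to get the lifted yoneda embedding $\map{(\yon, \bar\yon)}{M}{\ps M}$ in $\wAlg\clx\lax T$; then verify that the hypotheses (e), (m), (y) here are exactly what \thmref{presheaf objects as free cocompletions} requires for the \emph{lifted} data, once one transports them through the forgetful functor; and finally use \propref{lifted cocomplete algebras} to handle the cocompleteness of $\ps M$ in $\K$ and the creation of pointwise left Kan extensions. The cocontinuity statement at the end is the last part of \propref{lifted cocomplete algebras} applied verbatim.

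First I would fix notation: write $\widetilde{\catvar S} \coloneqq \edpi{\catvar S}\clx\lax$ for the preimage ideal in $\wAlg\clx\lax T$. The key observation is that a good yoneda embedding $(\yon, \bar\yon)$ in $\wAlg\clx\lax T$ satisfies the two conditions of \thmref{presheaf objects as free cocompletions} with respect to $\widetilde{\catvar S}$ precisely when, roughly, the underlying data satisfies condition (e) of that theorem together with a compatibility with the $T$-structure. Condition (y) above gives exactly hypothesis (y) of \thmref{presheaf objects as free cocompletions} for $\widetilde{\catvar S}$, since $(f, \yon_*) \in \catvar S$ lifts to $((f, \bar f), (\yon_*, \bar{\yon_*})) \in \widetilde{\catvar S}$ by definition of the preimage ideal, using that the lifted $\yon$ is a pseudo (hence horizontal) $T$-morphism. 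For hypothesis (e) of \thmref{presheaf objects as free cocompletions}: given $(d, J) \in \widetilde{\catvar S}(\ps M)$, condition (e) above produces a pointwise left $\yon$-exact cell $\phi$ in $\K$ whose $T$-image is pointwise left $(\yon \of m)$-exact; combined with condition (m) (which supplies the left-exactness of $\mu_J$ and $\iota_J$ needed by the colax creation theorem) one is exactly in the situation of \thmref{creating Kan extensions between colax algebras}(a), so the pointwise left Kan extension of $d$ along $J$ lifts, and by inspecting the construction this lifted cell is the pointwise left $\yon$-exact cell required for $\widetilde{\catvar S}$ in $\wAlg\clx\lax T$. Thus \thmref{presheaf objects as free cocompletions} applies and $(\yon, \bar\yon)$ defines $\ps M$ as the free $\widetilde{\catvar S}$-cocompletion.

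For the remaining claims: that $\ps M$ is $\catvar S$-cocomplete in $\K$ and the creation of pointwise left Kan extensions along members of $\widetilde{\catvar S}(\ps M)$ both follow from \propref{lifted cocomplete algebras}(a), whose hypotheses are checked using (m) (for the $(m \of Td)$-exactness of $\mu_J$, $\iota_J$) and the fact that $\ps M$ is $\catvar S$-cocomplete in $\K$ because it is a presheaf object defined by a good yoneda embedding (\propref{cocompleteness of presheaf objects}). The $\lbcwAlg\clx T$ variants are obtained in parallel by restricting everything to horizontal $T$-morphisms satisfying the left Beck-Chevalley condition, using the final clauses of \thmref{lifting colax presheaf objects}, \thmref{presheaf objects as free cocompletions} (noting condition (e) there automatically holds via right pointwise composites, by \lemref{restrictions and composites}), \thmref{creating Kan extensions between colax algebras}, and \propref{lifted cocomplete algebras}. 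The cocontinuity statement is the last assertion of \propref{lifted cocomplete algebras} with $\K$, $\catvar S$ as given.

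I expect the main obstacle to be the bookkeeping in the step showing that the \emph{lifted} pointwise left Kan extension cell, produced by \thmref{creating Kan extensions between colax algebras}(a), is genuinely the pointwise left $\yon$-exact cell $\phi$ demanded by \thmref{presheaf objects as free cocompletions}(e) for the preimage ideal $\widetilde{\catvar S}$ --- i.e.\ that ``pointwise left $\yon$-exact in $\wAlg\clx\lax T$'' is correctly detected by the forgetful functor given our hypotheses. This should come down to the fact that $\map U{\wAlg\clx\lax T}\K$ reflects cells defining pointwise left Kan extensions once the relevant restrictions are created (\propref{creating restrictions}(b), using that $\yon$ is a pseudo $T$-morphism), together with \lemref{properties of pointwise left Kan extensions}; but spelling out that the exactness hypotheses (e) and (m) are jointly sufficient, rather than each handling a disjoint portion, is where care is needed. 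Everything else is a matter of citing the right earlier result with the right instantiation.
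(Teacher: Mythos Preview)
Your overall plan is right: the theorem is indeed assembled from \thmref{lifting colax presheaf objects}, \thmref{presheaf objects as free cocompletions}, \propref{lifted cocomplete algebras}, and the creation theorem \thmref{creating Kan extensions between colax algebras}. But there is a genuine gap at the point where you invoke \thmref{creating Kan extensions between colax algebras}(a). That result requires condition (p), whose first half says the algebraic structure of the target algebra---here $\ps M = (\ps M, v, \bar v, \tilde v)$---preserves the pointwise left Kan extension of $d$ along $J$. You assert that hypothesis (m) supplies what is needed, but (m) only gives the exactness of $\mu_J$ and $\iota_J$, which is the \emph{second} half of (p). The first half is not a hypothesis at all: it must be \emph{proved}, and this is the substantive step. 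The paper establishes it by showing that $v \of T\eta$ again defines a pointwise left Kan extension, where $\eta$ is the Kan-extension cell constructed in \propref{cocompleteness of presheaf objects}; the argument passes through the factorisation of $\eta$ via the cell $\phi$ of hypothesis (e), uses the previous uniqueness proposition to know that $v \of T\cart$ (for the cartesian cell defining $\yon_*$) defines $v$ as a pointwise left Kan extension, and then uses precisely the assumption that $T\phi$ is pointwise left $(\yon \of m)$-exact---together with the isomorphism $\yon \of m \iso v \of T\yon$ coming from $\bar\yon$ invertible---to conclude. So hypothesis (e)'s clause about the $T$-image of $\phi$ is doing real work here that your proposal does not account for.

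Two smaller points. First, the cell that must be shown pointwise left $(\yon, \bar\yon)$-exact in $\wAlg\clx\lax T$ is $\phi$ itself, regarded as a horizontal $T$-cell once $K$ acquires its induced $T$-structure via creation of the restriction $\ps M(\yon, l)$; one must verify the $T$-cell axiom for $\phi$ by a direct calculation using the $T$-cell axiom for $\eta$ and the definition of $\overline{\ps M(\yon, d)}$---it is not simply ``the lifted Kan extension cell''. Second, for the cocontinuity statement you cite only \propref{lifted cocomplete algebras}, but that gives one direction (underlying $\catvar S$-cocontinuity implies algebraic $\widetilde{\catvar S}$-cocontinuity). The converse uses density of the lifted $\yon$ together with \lemref{pointwise left Kan extensions along yoneda embeddings}: if $(f, \bar f)$ is $\widetilde{\catvar S}$-cocontinuous then $f$ composed with the cartesian $T$-cell for $\yon_*$ defines $f$ as a pointwise left Kan extension in $\wAlg\clx\lax T$, which is preserved by $U$ (since it is created), so $f$ is a pointwise left Kan extension along $\yon_*$ in $\K$ and therefore $\catvar S$-cocontinuous.
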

	\begin{proof}
		We first prove the final assertion. The assumptions (e) and (y) above ensure that we may apply \thmref{presheaf objects as free cocompletions} to find that $\ps M$ is $\catvar S$-cocomplete in $\K$. To prove that the forgetful functors $\map U{\wAlg\clx\lax T}\K$ and $\map U{\lbcwAlg\clx T}\K$ create all algebraic pointwise left Kan extensions of $T$-morphisms $d$ along a horizontal $T$"/morphisms $J$, with $(d, J)$ ranging over $\edpi{\catvar S}\clx\lax(\ps M)$ (resp.\ $\edpilbc{\catvar S}\clx(\ps M)$), it suffices to show that $\ps M = (\ps M, v, \bar v, \tilde v)$ satisfies the hypotheses of \propref{lifted cocomplete algebras}(a). Besides $\catvar S$-cocompleteness of $\ps M$ and assumption (m) above, the hypothesis that remains to be checked asks that the algebraic structure of $\ps M$ preserves the pointwise left Kan extension of $d$ along $J$, for each $(d, J) \in \catvar S(\ps M)$. From the proof of \propref{cocompleteness of presheaf objects} we know how to construct this Kan extension: it is defined by the cell $\eta$ that is the factorisation defined in \eqref{pointwise left Kan extension into presheaf object}, and we have to show that $v \of T\eta$ again defines a pointwise left Kan extension.
		\begin{equation} \label{pointwise left Kan extension into colax presheaf object}

		\end{equation}
		
		To see this we first factor either side of \eqref{pointwise left Kan extension into presheaf object} through $\yon_*$, as shown on the left above. Here $\phi$ is the pointwise left $\yon$-exact cell whose existence is assumed by condition~(e) above, while $\cart'$ and $\cart''$ are the factorisations of the cartesian cells in \eqref{pointwise left Kan extension into presheaf object} through $\yon_*$; they are again cartesian by the pasting lemma. Notice that $T\cart'$ is again cartesian by the hypotheses of \thmref{lifting colax presheaf objects}, so that it is pointwise left $(y \of m)$-exact by \lemref{properties of pointwise left Kan extensions}(b). It follows that the $T$-image of the composite $\cart' \of \phi$ in the left-hand side is pointwise left $(v \of T\yon)$-exact, by assumption (e) and the fact that $\yon \of m \iso v \of T\yon$; a consequence of $(\yon, \bar\yon)$ being a pseudo $T$-morphism.		
		
		Now consider the composite on the right above, whose top row is the $T$-image of the right-hand side of the identity on the left. Applying the previous proposition to the identity on $(\ps M, v, \bar v, \tilde v)$ we find that the composite $v \of T\cart$ here defines $v$ as a pointwise left Kan extension. As we have seen the $T$-image of both factorisations through $\yon_*$, on either side of the identity on the left, is pointwise left $(v \of T\yon)$-exact, so that it follows that the full composite on the right defines $v \of Tl$ as a pointwise left Kan extension. Finally, since $T$ preserves the cartesian cell $\cart'$, its first column defines $v \of Td$ as a pointwise left Kan extension, so that by the horizontal pasting lemma (\lemref{horizontal pasting lemma}) we may conclude that its second column, that is $v \of T\eta$, defines a $v \of Tl$ as a pointwise left Kan extension, as required. This completes the proof of the final assertion of the theorem.
		
		To prove the first assertion, that is $(\yon, \bar\yon)$ defines $\ps M = (\ps M, v, \bar v, \tilde v)$ as the free $\edpi{\catvar S}\clx\lax$-cocompletion of $M$ in $\wAlg\clx\lax T$, it suffices to show that we can apply \thmref{presheaf objects as free cocompletions} to the lifted yoneda embedding $(\yon, \bar \yon)$ in $\wAlg\clx\lax T$. That the second hypothesis of the latter theorem is satisfied, that is $\bigpars{(f, \bar f), (\yon, \bar \yon)} \in \edpi{\catvar S}\clx\lax$ for each lax $T$-morphism $\map{(f, \bar f)}MN$, follows immediately from the assumption (y) above. It remains to prove its first hypothesis, which asks for a pointwise left $(y, \bar y)$-exact horizontal $T$-cell $\bigpars{\ps M(\yon, d), J} \Rar K$ to exist for each $\bigpars{(d, \bar d), (J, \bar J)} \in \edpi{\catvar S}\clx\lax(\ps M)$. We will show that the horizontal cell $\phi$, that is supplied by the assumption (e) above, lifts to form such a $T$-cell.
		
		To see this first notice that $\bigpars{(d, \bar d), (J, \bar J)} \in \edpi{\catvar S}\clx\lax(\ps M)$ implies $(d, J) \in \catvar S(\ps M)$, so that the pointwise left Kan extension $\map lB{\ps M}$ of $d$ along $J$ exists in $\K$: indeed, it is defined by the cell $\eta$ in the left-hand side of the identity on the left above. We already know that the pointwise left Kan extension of $(d, \bar d)$ along $(J, \bar J)$ is created by $\map U{\wAlg\clx\lax T}\K$, that is there exists a lax $T$-structure $\bar l$ on $l$ that is unique in making $\eta$ a $T$-cell that defines $(l, \bar l)$ as this Kan extension. Likewise the nullary restriction $K$ of $\yon$ along $l$ in $\wAlg\clx\lax T$, as in the left-hand side of the identity above, is created by $U$; see \propref{creating restrictions}. We claim that the thus created horizontal $T$-structure cell $\bar K$ on $K$ makes $\phi$ into a $T$-cell. Indeed the equation below shows that its $T$-cell axiom holds after composition with the cartesian cell that defines $K$, where `c' has been used to denote both the cartesian cell defining $K$ and that defining $\ps M(\yon, d)$, and where $\gamma \dfn \inv{\bar\yon}$. The identities follow from the definition of $\bar K$ (see the proof of \propref{creating restrictions}); the $T$-image of the identity on the left of \eqref{pointwise left Kan extension into colax presheaf object}; the $T$-cell axiom for $\eta$; the definition of $\overline{\ps M(\yon, d)}$ (again see \propref{creating restrictions}); the identity on the left of \eqref{pointwise left Kan extension into colax presheaf object} itself.
		\begin{align*}
			\begin{tikzpicture}[scheme]
				\draw (0,0) -- (2,0) -- (2,3) -- (0,3) -- (0,0) (0,1) -- (2,1) (0,2) -- (2,2);
				\draw	(1,0.5) node {$\textup c$}
							(1,1.5) node {$\bar K$}
							(1,2.5) node {$T\phi$};
			\end{tikzpicture} \mspace{12mu} &= \mspace{12mu} \begin{tikzpicture}[scheme]
				\draw (1,3) -- (1,0) -- (0,0) -- (0,3) -- (4,3) -- (4,0) -- (3,0) -- (3,3) (1,1) -- (3,1) (1,2) -- (3,2);
				\draw	(0.5,1.5) node {$\gamma$}
							(2,1.5) node {$T\textup c$}
							(2,2.5) node {$T\phi$}
							(3.5,1.5) node {$\bar l$};
			\end{tikzpicture} \mspace{12mu} = \mspace{12mu} \begin{tikzpicture}[scheme, yshift=0.8em]
				\draw	(1,2) -- (1,0) -- (0,0) -- (0,2) -- (4,2) -- (4,0) -- (3,0) -- (3,2) (1,1) -- (3,1) (2,1) -- (2,2);
				\draw	(0.5,1) node {$\gamma$}
							(1.5,1.5) node {$T\textup c$}
							(2.5,1.5) node {$T\eta$}
							(3.5,1) node {$\bar l$};
			\end{tikzpicture} \\
			&= \mspace{12mu} \begin{tikzpicture}[scheme, yshift=0.8em]
				\draw	(1,2) -- (1,0) -- (0,0) -- (0,2) -- (4,2) -- (4,0) -- (2,0) -- (2,2) (1,1) -- (2,1) (3,0) -- (3,2) (3,1) -- (4,1);
				\draw	(0.5,1) node {$\gamma$}
							(1.5,1.5) node {$T\textup c$}
							(2.5,1) node {$\bar d$}
							(3.5,0.5) node {$\eta$}
							(3.5,1.5) node {$\bar J$};
			\end{tikzpicture} \mspace{12mu} = \mspace{12mu} \begin{tikzpicture}[scheme, yshift=0.8em]
				\draw	(0,0) -- (3,0) -- (3,2) -- (0,2) -- (0,0) (0,1) -- (3,1) (2,0) -- (2,2);
				\draw	(1,0.5) node {$\textup c$}
							(1,1.5) node {$\overline{\ps M(\yon, d)}$}
							(2.5,0.5) node {$\eta$}
							(2.5,1.5) node {$\bar J$};
			\end{tikzpicture} \mspace{12mu} = \mspace{12mu} \begin{tikzpicture}[scheme]
				\draw	(0,0) -- (3,0) -- (3,3) -- (0,3) -- (0,0) (0,1) -- (3,1) (0,2) -- (3,2) (2,2) -- (2,3);
				\draw	(1.5,0.5) node {$\textup c$}
							(1.5,1.5) node {$\phi$}
							(1,2.5) node {$\overline{\ps M(\yon, d)}$}
							(2.5,2.5) node {$\bar J$};
			\end{tikzpicture}
		\end{align*}
		Thus a $T$-cell, it remains to prove that $\phi$ is pointwise left $(\yon, \bar\yon)$-exact in $\wAlg\clx\lax T$. To see this we return to the identity on the left of \eqref{pointwise left Kan extension into colax presheaf object} once more, now regarding both its sides as compositions of $T$-cells. By the density of the lifted yoneda embedding $(\yon, \bar\yon)$ (\lemref{density axioms}) both composites of cartesian $T$-cells, in either side, define pointwise left Kan extensions. Because $\eta$ was reflected as a $T$-cell that defines $(l, \bar l)$ as a pointwise left Kan extension, it follows from the horizontal pasting lemma that the full left-hand side does so too. From this we conclude that $\phi$, now as a $T$-cell, is pointwise left $(\yon, \bar \yon)$-exact. This concludes the proof of the first assertion.
		
		Next, if $\bigpars{(d, \bar d), (J, \bar J)} \in \edpilbc{\catvar S}\clx$ then the $T$-cell $\cell\phi{\bigpars{\ps M(\yon, d), J}}K$ considered above is in fact a $T$-cell between horizontal $T$-morphisms satisfying the left Beck-Chevalley condition. Indeed, in that case both $\ps M(\yon, d)$ and $K$ are nullary restrictions of $\ps M$ along $(\yon, \bar\yon)$ and a pseudo $T$-morphism ($d$ and $l$ respectively), and such restrictions satisfy the the left Beck-Chevalley condition in $\lbcwAlg\clx T$: indeed $\yon_*$ does because $(\yon, \bar\yon)$ forms a good yoneda embedding, so that $\ps M(\yon, d)$ and $K$ do by \propref{creating restrictions satisfying the left Beck-Chevalley condition}. We conclude that in this case too the hypotheses of \thmref{presheaf objects as free cocompletions} are satisfied, so that the lift of $\yon$ defines $\ps M$ as the free $\edpilbc{\catvar S}\clx$"/cocompletion of $M$ in $\lbcwAlg\clx T$ as well.
		
		To complete the proof consider a lax $T$-morphism $\map{(f, \bar f)}{\ps M}N$ where $N$ is a colax $T$-algebra satisfying the hypotheses of \propref{lifted cocomplete algebras} so that its $\edpi{\catvar S}\clx\lax$"/cocompleteness lifts along $\map U{\wAlg\clx\lax T}\K$, while $(f, \bar f)$ is $\edpi{\catvar S}\clx\lax$-cocontinuous whenever $f$ is $\catvar S$-cocontinuous. Conversely assume that $(f, \bar f)$ is  $\edpi{\catvar S}\clx\lax$-cocontinuous. Using the density of $\yon$ in $\wAlg\clx\lax T$ (\lemref{density axioms}) it follows that the composite of $f$ and the cartesian $T$-cell defining $\yon_*$ defines $f$ as the pointwise left Kan extension of $f \of \yon$ along $\yon_*$. Because $N$ is assumed to be $\catvar S$-cocomplete in $\K$, the latter pointwise left Kan extension is created, and hence preserved, by $\map U{\wAlg\clx\lax T}\K$; it follows that $f$ forms a pointwise left Kan extension along $\yon_*$ in $\K$ too, so that it is $\catvar S$-cocontinuous by \propref{pointwise left Kan extensions along yoneda embeddings}. That the same argument applies in the case of $\edpilbc{\catvar S}\clx$-cocontinuity too is clear. This concludes the proof.
	\end{proof}
	
	\begin{example}
		Let $\V \to \V'$ be a symmetric universe enlargement such that $\V$ has an initial object preserved by $\tens$ on both sides, let $T$ be the `free strict monoidal $\V'$-category'-monad on $\enProf{(\V, \V')}$ (\exref{free strict monoidal V-category monad}), and let $\catvar S$ be the ideal of left extension diagrams in $\enProf{(\V, \V')}$ described in \exref{V-small}, consisting of pairs $(d, J)$ where $\hmap JAB$ is a $\V$-profunctor between $\V$-categories with $A$ small. It follows from the discussion at the beginning of \exref{Day convolution} that the hypotheses of the previous theorem are satisfied so that, for any monoidal $\V$-category $M = (M, \oslash, \mathfrak a, \mathfrak i)$, the presheaf $\V$-category $\ps M$ equipped with Day convolution (\exref{Day convolution}) forms both the free $\edpi{\catvar S}\clx\lax$-cocompletion of $M$ in $\wAlg\clx\lax T$ as well as the free $\edpilbc{\catvar S}\clx$"/cocompletion of $M$ in $\lbcwAlg\clx T$. In particular for any small $\V$-cocomplete monoidal $\V$-category $N$, whose tensor product $\oslash$ preserves small $\V$-weighted colimits in each variable (see \exref{tensor products preserving weighted colimits in each variable}), we obtain an equivalence of categories of ($\V$-small cocontinuous) lax monoidal functors
		\begin{displaymath}
				\MonCat_\textup{(l, cocts)}(\ps M, N) \simeq \MonCat_\textup{l}(M, N)
		\end{displaymath}
		that is given by precomposition with $\map\yon M{\ps M}$ and that restricts to an equivalence of categories of monoidal functors. This recovers Theorem 5.1 of \cite{Im-Kelly86}.
	\end{example}

  \bibliographystyle{alpha}
  \bibliography{../_other/main}
\end{document}